\definecolor{violet}{cmyk}{1,0,0,0}
\newcommand{\C}{\mathbb{C}}
\newcommand{\R}{\mathbb{R}}
\newcommand{\Z}{\mathbb{Z}}
\newcommand{\fS}{\mathfrak{S}}
\newcommand{\Aut}{\mathrm{Aut}}
\newcommand{\End}{\mathrm{End}}
\newcommand{\id}{\mathrm{id}}
\newcommand{\af}{\mathrm{aff}}
\newcommand{\nr}{\mathrm{nr}}
\newcommand{\dc}{\,\underline{\mathrm{d}}^c}
\newcommand{\mc}{\, \underline{\mathrm{m}}^c}
\newcommand{\resrad}{\mathrm{res}_{\mathrm{rad}(I_F)}}
\newcommand{\w}{\mathrm{w}}
\newcommand{\nw}{\mathrm{nw}}
\newtheorem{thm}{Theorem}[section]
\newtheorem{prop}[thm]{Proposition}
\newtheorem{lemma}[thm]{Lemma}
\newtheorem{cor}[thm]{Corollary}
\newtheorem{defn}{Definition}[section]
\newtheorem{ex}[defn]{Example}
\newtheorem{rem}[thm]{Remark}
\newtheorem{prob}{Problem}
\newtheorem{ack}{Acknowledgement}
\newcommand{\rad}{\operatorname{rad}}
\newcommand{\fg}{\mathfrak{g}}
\newcommand{\vep}{\varepsilon}
\definecolor{mygreen}{cmyk}{1,0,1,0}
\newcommand{\node}[1]{\begin{tikzpicture} \draw [fill=gray!#1] (0,0) circle(0.1); \end{tikzpicture}}
\newcommand{\white}{\begin{tikzpicture} \draw (0,0) circle(0.1); \end{tikzpicture}}
\newcommand{\black}{\begin{tikzpicture} \fill (0,0) circle(0.1); \end{tikzpicture}}
\newcommand{\gray}{\begin{tikzpicture} \draw [black,fill=gray!70] (0,0) circle(0.1); \end{tikzpicture}}
\begin{document}
\title[Classification of mERS with non-reduced quotient]
{Classification of marked elliptic root systems with non-reduced quotient}
\author{A. Fialowski, K. Iohara and Y. Saito}
\address{Alice Fialowski, Department Computer Algebra, 
E\"{o}tv\"{o}s Lor\'{a}nd University, H-1117 Budapest, 
P\'{a}zm\'{a}ny P\'{e}ter s\'{e}t\'{a}ny 1/C, Budapest, Hungary}
\email{alice.fialowski@gmail.com, fialowski@inf.elte.hu}
\address{Kenji Iohara, Universit\'{e} Claude Bernard Lyon 1, CNRS, Institut Camille Jordan UMR 5208, F-69622 Villeurbanne, France}
\email{iohara@math.univ-lyon1.fr}
\address{Yoshihisa Saito, Department of Mathematics, Rikkyo University, Toshima-ku, Tokyo 171-8501, Japan}
\email{yoshihisa@rikkyo.ac.jp}

\thanks{This research was partly supported through the program "Oberwolfach Research Fellows"  by the Mathematisches Forschungsinstitut Oberwolfach in 2023.   Y .S. is partially supported by JSPS KAKENHI Grant Numbers JP20K03568 and JP25K06942.\\}

\begin{abstract} K. Saito (Publ. RIMS 21 (1), 1985, 75-179) has introduced a class of root systems called elliptic root systems which lie in the real vector space $F$ with a metric $I$ whose signature is of type $(l,2,0)$. He also classified the pair $(R,G)$ of  elliptic root systems $R$ with one dimensional subspaces $G$ of the radical of $I$, under the assumption that the quotient root system $R/G$ is reduced. Here, we classify the pairs $(R,G)$ where $R/G$ is non-reduced. 
\end{abstract}

\subjclass[2020]{Primary 17B22;  Secondary 08A35, 20F55}
\keywords{marked elliptic root system, elliptic diagram}

\maketitle

\begin{center}
{\it To the memory of  Professor Kenji Iohara {\rm(}1970-2025{\rm)}}\footnote{We are 
deeply shocked that our good friend and collaborator, a bright and extremely productive 
mathematician and wonderful person, Kenji Iohara passed away on October 1, 2025 
at the age of 55.}.
\end{center}
\vskip 0.3in

\setcounter{tocdepth}{2}
\tableofcontents

\section*{Introduction}

The notion of root system of a Lie algebra has a long history. Originally it was introduced by W. Killing \cite{Killing1888, Killing1889a, Killing1889b, Killing1890} in 1888, in order to classify all simple finite dimensional Lie algebras over $\C$. His work was continued and completed by \'{E}. Cartan \cite{Cartan1894} in 1894. In the early 1930s,  H.S.M. Coxeter \cite{Coxeter1934} initiated the study of discrete groups  generated by reflections, which are now called Coxeter groups. Then comes a miraculous year 1934-35\footnote{See, e.g., MacTutor: \url{https://mathshistory.st-andrews.ac.uk}.}; Coxeter visited Princeton and attended lectures by H. Weyl. They soon realized a surprising connection between Coxeter's finite groups and Weyl's infinite groups, i.e., Coxeter's groups show up as the Weyl group of root systems. At the same time, P. DuVal had been visiting Princeton and  studied isolated rational surface singularities \cite{DuVal1934-I, DuVal1934-II, DuVal1934-III}. They found a curious resemblance between Coxeter graphs and what are today called simple singularities. Hence, a fabulous discovery of connections between Lie groups, Coxeter groups and simple singularities happened in 1934 !
In 1970 at the ICM Congress in Nice, E. Brieskorn \cite{Brieskorn1971} showed,
 that simple singularities of type A, D, E and their semi-universal deformations can be realized purely in terms of simple algebraic groups of type A, D, E over an algebraically closed field with good characteristic. Later P. Slodowy \cite{Slodowy1980a, Slodowy1980b} gave an accessible account of the Brieskorn theory and slightly generalized it, in particular, he also described them in terms of simple Lie algebras. \\

H. Coxeter studied properties of a product of generators (cf. \cite{Coxeter1951}), which plays a prominent role in invariant theory. Any such product is now called a Coxeter transformation. For a simple finite dimensional Lie algebra, B. Kostant \cite{Kostant1959} established a direct relationship between its principal simple 3-dimensional subalgebra, its adjoint representation, and the Coxeter transformation. For more historical account for finite root systems and Coxeter transformations see \cite{Bourbaki(Lie)4-6}. We note that the finiteness of the order of a Coxeter transformation plays an important role in the geometry of isolated surface singularities.  \\

There is a natural extension of a finite Weyl group, which is a semi-direct product of 
some lattice and a finite Weyl group. Such a group is called an affine Weyl group. 
In 1965, N. Iwahori and H. Matsumoto \cite{IwahoriMatsumoto1965} found that an affine 
Weyl group can be realized as the Weyl group of a Chevalley group over a $p$-adic field. 
A few years later, V. Kac \cite{Kac1969} and R. Moody \cite{Moody1969} invented a new 
class of infinite dimensional Lie algebras, called affine Lie algebras, whose Weyl group is 
an affine Weyl group. An affine Lie algebra can also be defined with a generalized Cartan 
matrix, introduced by R. Moody \cite{Moody1968}, and their complete classification was 
achieved by E.B.Vinberg \cite{Vinberg1971}. A. J. Coleman \cite{Coleman1989} studied 
Coxeter transformations for a wider class of Lie algebras, including the affine ones. 
Note that for an affine Weyl group, the Coxeter transformation is of infinite order. 
There are also other approaches to the classification of affine root systems, like one in 
relation with the classification of reductive groups over a local field by F. Bruhat and J. Tits 
\cite{BruhatTits1972}, another in relation with the denominator identity and 
powers of the Dedekind $\eta$-function by I. Macdonald \cite{Macdonald1972}. 
Both classifications give the same result, and the complete classification was given in 
Tits' paper \cite{Tits1979}. A remarkable feature of affine Lie algebras is that the 
character of their integrable highest weight representations can be described in terms of 
the Dedekind $\eta$-function and $\theta$-functions, as was studied by V. Kac and D. 
Peterson \cite{KacPeterson1984}. 
This {phenomenon} was one of the key ingredients in the development 
of theoretical physics such as conformal field theory. \\
 
In 1974 K. Saito \cite{Saito1974} studied and classified simple elliptic singularities. 
Inspired by Brieskorn's theory,he 
initiated 
a research on elliptic root systems in a series of papers {from} the middle of 1980s (cf.  \cite{Saito1985}).
An elliptic root system (called extended affine root system) is a generalized root system realized in
a real vector space of finite dimension equipped with a symmetric bilinear form $I$ of signature
$(l,2,0)$. He gave the classification of marked elliptic root systems $(R,G)$, which are pairs of 
an elliptic root system $R$ and  a $1$-dimensional subspace $G$ of the radical of $I$, 
under the condition that the quotient root system $R/G$ is a 
reduced affine root system. To each pair $(R,G)$, he attached a finite oriented diagram, 
called an elliptic diagram (first called an elliptic Dynkin diagram), and defined an element of the Weyl group, called
a Coxeter element. \\

The important part of this theory is that a Coxeter element here is an element of the Weyl group of finite order. Since then Saito has been added new ideas to his theory in a series of papers, like for example the structure of Weyl groups \cite{SaitoTakebayashi1997} or $\eta$-product \cite{Saito2001}. For each marked elliptic root system $(R,G)$, U. Pollmann, in her master thesis \cite{Pollmann1994}, showed that there exists a Lie algebra whose real root system is isomorphic to $R$. Indeed, she constructed such a Lie algebra via the twisted construction of the pair $(\fg, \sigma)$ of an affine Lie algebra $\fg$ with its finite order diagram automorphism $\sigma$ admitting a fixed point. She also examined $6$ other cases when the finite order diagram automorphisms do not have a fixed point; there are $2$ cases, whose real root systems do not correspond to those in the list of K. Saito \cite{Saito1985}. Here, we shall mention that Allison et al. \cite{AllisonBermanPianzola2014} studied the extended affine Lie algebras (cf. \cite{AllisonAzamBermanGaoPianzola1997}) with nullity $2$  in 2014, which are naturally related to the above Lie algebras. Apparently, such Lie algebras contain certain class of Lie algebras graded by a finite root system, among which $BC_l$-graded cases were studied in \cite{AllisonBenkartGao2002} for $l\geq 2$ and in \cite{BenkartSmirnov2003} for $l=1$. 
However, it is not so clear whether these Lie algebras and the root systems considered in this memoir are related or not.
\\

Beside these works, we mention 3 other directions of elliptic root systems. 
Firstly, Y. Billig \cite{Billig1999}  and K. Iohara, Y. Saito and M. Wakimoto 
\cite{IoharaSaitoWakimoto1999a, IoharaSaitoWakimoto1999b} in 1999 obtained 
soliton equations arriving from a representation of the Lie algebras of type 
$A_l^{(1,1)}, D_l^{(1,1)}, E_6^{(1,1)}, E_7^{(1,1)}$ and $E_8^{(1,1)}$. 
The other direction is the work of Helmke and P. Slodowy in 2004 
\cite{HelmkeSlodowy2004} in which they show the connections of loop groups and 
elliptic Lie algebras via principle bundles over an elliptic curve, in view of simply elliptic 
singularities (see \cite{HelmkeSlodowy2005} for an overview). 
The third direction is elliptic Hecke algebras by Y. Saito and M. Shiota in 2009 
\cite{SaitoShiota2009}. 
They studied the connection with the double affine Hecke algebra, which plays an 
important role to analyze Macdonald's polynomials (cf. \cite{Macdonald2003}). \\


In this article, we complete the classification of marked elliptic root systems $(R,G)$, namely, we classify the so far missing cases where the quotient root system $R/G$ is a non-reduced affine root system. We found 6 series of new reduced marked elliptic root systems (see Theorem \ref{thm_classification-reduced}, and Section \ref{sect_root-data-red} for their root data), among which 4  have been already found by K. Saito \cite{Saito1985}, namely $BC_l^{(1,2)}, BC_l^{(4,2)}, BC_l^{(2,2)\sigma}(1)$ and $BC_l^{(2,2)\sigma}(2)$, but with different marking (see \S \ref{sect:isom-root-sys}) and 34 series and 1 exceptional new non-reduced marked elliptic root systems (see Theorem \ref{thm_classification-non-reduced} and Sections \ref{sect_root-data-non-reduced-1}, \ref{sect_root-data-non-reduced-2}, \ref{sect_root-data-non-reduced-3}, and \ref{sect_root-data-non-reduced-4} for their root data).  It turns out that the $2$ missing cases in Pollmann's construction mentioned above, correspond to the $2$ 
newly found reduced marked elliptic root systems $(R,G)$ with non-reduced affine quotient $R/G$ ! \\

The logical dependencies of the main results of this memoir can be described as follows.
Theorems \ref{thm_classification-reduced}  and \ref{thm_classification-non-reduced} provide a list of marked elliptic root systems with non-reduced affine quotient, whose proofs are given by gluing K. Saito's marked elliptic root systems, thus their explicit construction is given. At this moment, we don't state whether any two marked elliptic root systems in the above are non-isomorphic as marked root systems or not. In Theorems \ref{thm:isom-red-mERS-nred_root-sys} and \ref{thm:isom-nred-mERS-nred_root-sys}, we classify the isomorphism classes of the marked elliptic root systems presented in the above two theorems, as root systems. 
These last theorems allow us to compare the list of our elliptic root systems with those obtained in \cite{Azam2002} and \cite{AzamKhaliliYousofzadeh2005}, as stated in Remark \ref{rem-comparison-Azam-etc}.
After the introduction of the notion of elliptic diagram in \S \ref{sect_elliptic-diagram-II}, where its well-definedness is assured in Proposition \ref{prop:indendence-Gamma}, we again classify the marked elliptic root systems with non-reduced affine quotient (cf. Corollary \ref{cor:classification-thm}) via their diagrams in Theorem \ref{thm:classification-thm}. This latter theorem states possible marked elliptic root systems up to isomorphism, as marked root systems, but their existence is not discussed. 
Comparing this last theorem with Theorems \ref{thm_classification-reduced}  and \ref{thm_classification-non-reduced}, where an explicit construction of each marked elliptic root system is given, we obtain the classification of marked elliptic root systems with non-reduced affine quotient. Thus, our results together with Theorem \ref{thm:classification-red}, due to K. Saito \cite{Saito1985}, 
allow us to conclude the complete classification of marked elliptic root systems. The explicit root data are given in Chapter 4. \\

The structure of the paper is the following. 
In Chapter \ref{chapter:review}, we provide a brief review of the known theory on elliptic root systems.
In Section \ref{sect:general} we recall the basic definitions. In Section \ref{sect:mers} we recall the technical tools necessary for the classification of reduced marked elliptic root systems from K.Saito. In particular, we present his classification with details of the proof to show how it works, and obtain the classification of marked elliptic root systems $(R,G)$ for which its quotient $R/G$ is reduced. For later use, we describe the structure of the group of automorphisms of a marked elliptic root systems $(R,G)$ in Section \ref{sect:auto-mers-reduced-case}. \\
In Chapter \ref{chapter:new-mERS}, we  provide the list of all marked elliptic root systems $(R,G)$ whose affine quotient $R/G$ is non-reduced. 
In Section \ref{sect:MERS-non-red-quot} we describe all the possible marked elliptic root systems $(R,G)$ with non-reduced affine quotient $R/G$,
and state a weak classification theorem, namely, a theorem which states that any such $(R,G)$ is isomorphic to one of those presented in this section. Section \ref{sect_mers-II} is devoted to the proof of these theorems. In Section \ref{sect:more-mERS}, 
we discuss the isomorphism classes of the elliptic root systems, as root systems. For each marked elliptic root system $(R,G)$, 
possible affine quotients of $R$ (not necessarily by $G$) are also given. \\
In Chapter \ref{chapter:classification}, we prove that any two of the marked elliptic root systems $(R,G)$ presented in Chapter \ref{chapter:new-mERS} are non isomorphic. 
For this purpose, another proof of the classification theorem of the marked elliptic root systems, that is completely independent of the previous one, is given, which allows us to associate each marked elliptic root system to a unique diagram. Section \ref{sect:elliptic-diagram} is a preparation for the proof of these theorems and in Section \ref{sect:main}, a strong classification theorem is  given. 
Chapter \ref{chapter:root-data} is the summary of the root systems we obtained in this 
article, namely, we collect the root data of the newly obtained marked elliptic root 
systems in Sections \ref{sect_root-data-red} - \ref{sect_root-data-non-reduced-4}. We 
conclude this article with three Appendices \ref{sect_finite-root-system}, 
\ref{sect_affine-root-system} and \ref{sect_clasfficiation-red-quot}, where we collect 
 root data for the finite, affine root systems and marked elliptic root systems with reduced 
affine quotient.\\
 
{
The present article is a first step in the study of (marked) elliptic root systems with 
non-reduced affine quotients. Therefore, there are many problems that need to be 
studied. For example, the following topics should be considered:
\begin{itemize}
\item[(i)] Coxeter elements for a (marked) elliptic root system $(R,G)$ with 
non-reduced affine quotients;
\item[(ii)] detailed structure of the Weyl group $W(R)$ and the group $\mathrm{Aut}(R)$
of automorphisms for such an $(R,G)$;
\item[(iii)] invariant theory for such an $(R,G)$. 
\end{itemize} 
For (i) and (ii), the results will appear in our forthcoming paper (cf. Remark 
\ref{rem:reconstruction} below).  For (iii), we have to discuss the relationship between 
our results and those of Looijenga
and van der Lek.  
Motivated by his work on semi-universal deformations of simple
elliptic singularities, Looijenga introduced an {\it extended Coxeter group}
$\widetilde{W}$ which is a semidirect product of a Coxeter group $W$ and 
a root lattice $Q$, and a certain tube domain $\Omega$ which has a natural action of 
$\widetilde{W}$ (\cite{Looijenga1976}, \cite{Looijenga1980}). 
When $W$ is an affine Weyl group, the orbit space $\Omega/\widetilde{W}$
appears also in K. Saito's theory by using the language of marked elliptic root 
systems with reduced affine quotients (see \cite{Saito1990})\footnote{In \cite{Saito1990}, 
this orbit space is denoted by $\widetilde{\mathbb{E}}/\widetilde{W}_R$. 
The method for identifying these two spaces is not 
necessarily obvious. However, since it is not relevant to the results of this paper, 
we will omit to give it in detail.}. After Looijenga's work, H. van der Lek 
computed the fundamental group of the regular $\widetilde{W}$-orbit space of 
$\Omega$ (see \cite{Lek1983}).  

Based on our results in this article, the invariant theory and the study on Artin group for 
$(R,G)$ should be extended to the case when $(R,G)$ has non-reduced affine 
quotient. We believe that this is an important problem in this area, and deserves 
future research.

\begin{ack}
{\rm The authors are grateful to the referees for their valuable comments. }
\end{ack}


}

\part{Marked elliptic root systems with reduced quotient}\label{chapter:review}

 In 1985, K. Saito \cite{Saito1985} introduced what one calls now elliptic root systems (\textit{n\'{e}} extended affine root systems). 
  The main purpose of this chapter is to provide a brief review of elliptic root systems and some of the key concepts,  among which marking, tier numbers, counting sets and counting numbers play a key role in the main part of this article. In particular, the classification theorem of marked elliptic root systems with reduced affine quotient is given, to show how the concepts like counting numbers work in the classification. \\
  
  In Section \ref{sect:general}, we recall the notion of generalized root system, state the relations between finite, affine and elliptic root systems, and present the finite root system of type $BC_l$, since this is one of the main objects in this article.  The Dynkin diagrams of finite and affine root systems are also reviewed. Section \ref{sect:mers} is devoted to several concepts in relation with marking such as tier numbers, countings and elliptic diagrams. In particular, we state the classification theorem of marked elliptic root systems with reduced affine quotient and give a proof of it. 
In Section  \ref{sect:auto-mers-reduced-case}, we {determine 
the structure of the automorphism groups of marked elliptic root systems with reduced 
affine quotient. This results will be used in Part 2, for studying marked elliptic root 
systems with non-reduced affine quotient. }



\section{Generalized Root Systems}\label{sect:general}

\subsection{General framework}\label{subsec:general}

\smallskip

Recall some important notions from K Saito \cite{Saito1985}. 
\smallskip

Let $(F,I)$ 
be a pair of a vector space $F$ over $\R$ \index[notations]{r111@$R$} and  a symmetric bilinear form of finite rank $I: F\times F \rightarrow \R$ with signature $(\mu_+, \mu_0, \mu_-)$. For a nonisotropic vector $\alpha \in F$ (i.e. $I(\alpha, \alpha) \neq 0$), define its \textbf{dual} by
$$
\alpha^\vee=\frac{2}{I(\alpha,\alpha)}\alpha 
$$
and the \textbf{reflection} $r_\alpha \in \mathrm{O}(F,I)$ by
$$
r_\alpha(\lambda)=\lambda-I(\alpha^\vee, \lambda)\alpha
$$
where $\mathrm{O}(F,I):={\{\,g \in GL(F)\,\vert\, I \circ g=I\,\}}$ is the orthogonal group of 
the metric $I$. In this article, we use the word \textit{dual} to distinguish from the 
preposition \textit{co} which indicates an object in the dual vector space.
\begin{defn}\label{defn_root-sys}
A subset $R \subset F \setminus \{0\}$ is called a (generalized) 
\textbf{root system} \index[index]{root system} belonging to $(F,I)$ if it satisfies the following:
\begin{enumerate}
\item[(R1)] the subgroup of $F$ generated by $R$, $Q(R):={\Z}R$ (the root lattice of $R$) is a full lattice in $F$, i.e., $\R \otimes_{\Z} Q(R) \cong F$, 
\item[(R2)] for any $\alpha \in R$, $I(\alpha, \alpha) \neq 0$, 
\item[(R3)] for any $\alpha \in R$, $r_\alpha(R)=R$,
\item[(R4)] for any $\alpha, \beta \in R$, $I(\alpha^\vee,\beta) \in \Z$,  
\item[(R5)] irreducibility: if there is a decomposition $R=R_1 \amalg R_2$ and $R_1 \perp R_2$ with respect to $I$ for some subsets $R_i \subset R\; (i=1,2)$, then either $R_1=\emptyset$ or $R_2=\emptyset$.
\end{enumerate}
\end{defn}

It is clear that if $R$ is a root system belonging to $(F,I)$, then so does its \textbf{dual root system}\index[index]{root system!dual@dual --} $R^\vee:=\{\alpha^\vee\, \vert \, \alpha \in R\}$. 

The condition (R3) implies that $\alpha \in F$ belongs to $R$ iff so does 
$-\alpha$. From condition (R4) it follows that if $\alpha \in R$ and $c \alpha \in R$ 
for some $c \in \R$, then $c \in \{ \pm \frac{1}{2}, \pm 1, \pm 2\}$.  
A root system $R$ is said to be 
\textbf{reduced}\index[index]{root system!reduced@reduced --}
if $\alpha \in R$ and $c \alpha \in R$ for some $c \in \R$ implies $c \in \{ \pm 1\}$. Otherwise, $R$ is said to be \textbf{non-reduced}. 
\index[index]{root system!non-reduced@non-reduced --}
\smallskip

The subgroup $W(R)$\index[notations]{w111@$W(R)$} of $\mathrm{O}(F,I)$ generated by the 
reflections $r_\alpha$ for 
$\alpha \in R$ is called the \textbf{Weyl group}\index[index]{Weyl group} of the root 
system $R$. The reflections are clearly the same for the dual root system.

Two root systems $R_1\subset (F_1,I_1)$ and $R_2 \subset (F_2,I_2)$ are \textbf{isomorphic} iff there exists a linear isomorphism 
$\varphi:F_1 \rightarrow F_2$ such that $\varphi(R_1)=R_2$. 
For a root system $R\subset (F,I)$, a linear isomorphism 
$\varphi\in GL(F)$ is called an \textbf{automorphism} of $R$, if 
$\varphi(R)=R$. The group of automorphisms of $R$ is denoted by 
$\mathrm{Aut}(R)$\index[notations]{aut110@$\mathrm{Aut}(R)$}. 
The following lemma is due to K. Saito.
\begin{lemma}[\cite{Saito1985}]\label{lemma:Aut(R)}
The group $\mathrm{Aut}(R)$ is a subgroup of $\mathrm{O}(F,I)$. 
\end{lemma}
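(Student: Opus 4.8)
The plan is to show that any automorphism $\varphi \in \mathrm{Aut}(R)$ preserves the bilinear form $I$, i.e., $I(\varphi(x),\varphi(y)) = I(x,y)$ for all $x,y \in F$; since $R$ spans $F$ by (R1), it suffices to check this on roots. First I would record the elementary fact that $\varphi$ intertwines reflections: for $\alpha \in R$ and any $\lambda \in F$, one has $\varphi \circ r_\alpha \circ \varphi^{-1} = r_{\varphi(\alpha)}$. This follows because $\varphi(R) = R$ guarantees $\varphi(\alpha) \in R$ (so $r_{\varphi(\alpha)}$ makes sense, being a reflection in a nonisotropic vector by (R2)), and the identity can be verified by a direct computation once one knows how $\varphi$ acts on the relevant Cartan-type integers. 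So the crux is to pin down the integers $I(\alpha^\vee,\beta)$.

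The key step is to argue that $\varphi$ preserves the pairing $\langle \alpha, \beta \rangle := I(\alpha^\vee, \beta)$ for $\alpha,\beta \in R$. The standard approach: the integer $n(\alpha,\beta) = I(\alpha^\vee,\beta)$ can be characterized intrinsically, without reference to $I$, as the unique integer such that $r_\alpha(\beta) = \beta - n(\alpha,\beta)\,\alpha$. Since $\varphi$ is linear and sends $R$ to $R$, applying $\varphi$ to the relation $r_\alpha(\beta) = \beta - n(\alpha,\beta)\alpha$ and using the conjugation identity $\varphi r_\alpha \varphi^{-1} = r_{\varphi(\alpha)}$, we get $r_{\varphi(\alpha)}(\varphi(\beta)) = \varphi(\beta) - n(\alpha,\beta)\,\varphi(\alpha)$. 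Because $\{\varphi(\alpha), \varphi(\beta)\} \subset R$, the coefficient appearing here must be $n(\varphi(\alpha),\varphi(\beta))$; as $\varphi(\alpha) \neq 0$, comparing coefficients yields $n(\varphi(\alpha),\varphi(\beta)) = n(\alpha,\beta)$, i.e., $I(\varphi(\alpha)^\vee, \varphi(\beta)) = I(\alpha^\vee,\beta)$. One must be slightly careful to note that $\varphi(\alpha)$ and $\alpha$ are linearly independent from $\beta$ in the right way, or more simply: if $\varphi(\beta) \in \R\varphi(\alpha)$ then $\beta \in \R\alpha$ and the scalar case is handled separately using $I(\alpha^\vee,\alpha)=2$ together with the fact (noted after Definition \ref{defn_root-sys}) that only rescalings by $\pm\tfrac12,\pm1,\pm2$ occur.

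Next I would upgrade this to a statement about $I$ itself. Writing $\varphi(\alpha)^\vee = \frac{2}{I(\varphi(\alpha),\varphi(\alpha))}\varphi(\alpha)$, the identity $I(\varphi(\alpha)^\vee,\varphi(\beta)) = I(\alpha^\vee,\beta)$ becomes
$$
\frac{I(\varphi(\alpha),\varphi(\beta))}{I(\varphi(\alpha),\varphi(\alpha))} = \frac{I(\alpha,\beta)}{I(\alpha,\alpha)}.
$$
Taking $\beta = \alpha$ gives no information, so I need one more input: the ratio $I(\alpha,\alpha)/I(\varphi(\alpha),\varphi(\alpha))$ must be shown constant over $\alpha \in R$. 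This follows by irreducibility (R5): define $c(\alpha) = I(\varphi(\alpha),\varphi(\alpha))/I(\alpha,\alpha)$; the displayed identity shows that whenever $I(\alpha,\beta)\neq 0$ one gets $c(\alpha)=c(\beta)$ after a short symmetric manipulation (swap the roles of $\alpha$ and $\beta$ and combine), and the graph on $R$ with edges $\{\alpha,\beta\}$ when $I(\alpha,\beta)\neq 0$ is connected precisely because $R$ is irreducible. Hence $c(\alpha) \equiv c$ is a single positive constant, and then $I(\varphi(\alpha),\varphi(\beta)) = c\, I(\alpha,\beta)$ for all $\alpha,\beta\in R$, extending bilinearly to $I \circ \varphi = c\, I$ on all of $F$. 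Finally, to get $c = 1$ and conclude $\varphi \in O(F,I)$: the lattice $Q(R) = \Z R$ is $\varphi$-stable (as $\varphi(R)=R$), so $\varphi$ restricted to $Q(R)$ is a $\Z$-linear automorphism, whence $|\det\varphi| = 1$; but $I\circ\varphi = c\,I$ forces $(\det\varphi)^2$ to equal $c^{\operatorname{rank} I}$ up to the contribution of the radical, and a clean way around the radical subtlety is to instead observe that $\varphi$ permutes $R$, hence the finite set of roots of any fixed $I$-length, so $c$ must send the (discrete, bounded-below) set of values $\{I(\alpha,\alpha) : \alpha\in R\}$ to itself bijectively, forcing $c=1$. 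I expect this last normalization—ruling out $c\neq 1$—to be the only genuinely delicate point; everything before it is the routine "reflections determine the Cartan integers" argument, and it may well be that the paper simply cites the finiteness of root lengths or the lattice-automorphism determinant argument to close it.
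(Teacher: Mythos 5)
The paper itself does not reprove this lemma (it is quoted from \cite{Saito1985}), so I am judging your argument on its own terms. Your overall architecture is the standard and correct one: show $I(\varphi(\alpha),\varphi(\beta))=c\,I(\alpha,\beta)$ for a single constant $c>0$, then force $c=1$ because $\varphi$ permutes the set of root lengths; the irreducibility/connectedness step, the bilinear extension via (R1), and the final normalization are all fine. The genuine gap is at the very first step: your justification of the intertwining identity $\varphi\circ r_\alpha\circ\varphi^{-1}=r_{\varphi(\alpha)}$ is circular. You say it ``can be verified by a direct computation once one knows how $\varphi$ acts on the relevant Cartan-type integers,'' and then in the next paragraph you derive the invariance of those Cartan integers \emph{from} the intertwining identity. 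Since $r_{\varphi(\alpha)}$ is defined in terms of $I$, and (quasi-)preservation of $I$ is exactly what the lemma asserts, this identity cannot be taken as input; it is the crux of the whole statement. Note also that merely knowing $\varphi(\beta)-n(\alpha,\beta)\varphi(\alpha)\in R$ does not identify the coefficient as $n(\varphi(\alpha),\varphi(\beta))$ without the intertwining identity, because a root string contains several roots.

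Closing this gap is not routine in the elliptic setting. The classical fix sets $u:=\varphi r_\alpha\varphi^{-1}\circ r_{\varphi(\alpha)}$, observes that both factors are congruent to the identity modulo $\R\varphi(\alpha)$ and that $u$ fixes $\varphi(\alpha)$, so $u(\lambda)=\lambda+f(\lambda)\varphi(\alpha)$ with $f(\varphi(\alpha))=0$ and $u^n(\lambda)=\lambda+nf(\lambda)\varphi(\alpha)$; for a \emph{finite} root system one then concludes $f=0$ because a unipotent map preserving a finite spanning set is the identity. For an elliptic root system $R$ is infinite and that argument fails outright --- the translations $E_F(a\otimes\mu)$ are precisely nontrivial unipotent automorphisms of this shape. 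What saves the argument is that here the discrepancy points in the \emph{non-isotropic} direction $\varphi(\alpha)$: from $I(u^n\beta,u^n\beta)=I(\beta,\beta)+2nf(\beta)I(\beta,\varphi(\alpha))+n^2f(\beta)^2I(\varphi(\alpha),\varphi(\alpha))$ and the finiteness of $\{I(\gamma,\gamma)\mid\gamma\in R\}$ (Saito's Lemma 1.9, recalled in \S\ref{sect_tier-number}), one forces $f(\beta)=0$ for all $\beta\in R$, hence $f=0$ by (R1). That same finiteness is also what legitimizes your final ``the length spectrum is bounded below, so $c=1$'' step. With this lemma inserted your proof goes through; as written, it does not get off the ground.
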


In this article, we sometimes denote $I_F$ in place of $I$ to indicate the underlying vector space $F$ on which the symmetric bilinear form $I$ is defined.

\bigskip
 
We consider the case when $\mu_-=0$.
\begin{enumerate}
\item the case $\mu_0=0$ corresponds to the \textbf{finite (classical) root systems};
\index[index]{root system!finite@finite --} this means there are finitely many roots and the Weyl group is finite,
\item the case $\mu_0=1$ corresponds to the \textbf{affine root systems},
\index[index]{root system!affine@affine --} 
\item the case $\mu_0=2$ corresponds to the so-called \textbf{elliptic root systems}.
\index[index]{root system!elliptic@elliptic --}
\end{enumerate}
In such a case, it follows that
\[ I(\alpha^\vee,\beta) \in \Z \cap [-4,4] \qquad \forall\; \alpha, \beta \in R.
\]
\subsection{Dynkin diagrams: finite and affine cases} 
Let $R$ be a finite or affine root system. There exists a minimal generating system $\Pi \subset R$ of $F$ such that for any root $\alpha=\sum_{\beta \in \Pi}c_\beta \beta \in R$, either $c_\beta \in \Z_{\geq 0}$ for all $\beta \in \Pi $  or  $c_\beta \in \Z_{\leq 0}$ for all $\beta \in \Pi$ holds. For a finite root system, any such system $\Pi'$
is $W(R)$-conjugate to $\Pi$ and for an affine root system $\Pi'$ is either in $W(R).\Pi$ or in $W(R).(-\Pi)$. Fix such a system $\Pi$\index[notations]{p111@$\Pi$} and call it a 
\textbf{simple system}\index[index]{simple system} of $R$. One can associate $\Pi$ with 
a graph, called the \textbf{Dynkin diagram}\index[index]{Dynkin diagram} $\Gamma$.

\begin{defn}\label{defn_Dynkin-diag}
A \textbf{Dynkin diagram} $\Gamma$ of a (finite or affine) root system $R$ is a graph whose set of nodes $\vert \Gamma \vert$ consists of $\alpha \in \Pi$ and is represented by
\begin{enumerate}
\item[(Node1)] 
\begin{tikzpicture} \draw (0,0) circle(0.1); \draw (0,0.1) node[above] {$\alpha$}; \end{tikzpicture}
if $\alpha \in R$ but $2 \alpha \not\in R$  and
\item[(Node2)]
\begin{tikzpicture} \fill (0,0) circle(0.1); \draw (0,0.1) node[above] {$\alpha$}; \end{tikzpicture}
if $\alpha \in R$ and $2 \alpha \in R$, 
\end{enumerate}
and for two vertices $\alpha, \beta \in \Pi$, 
we connect them by a bond with arrows according to the following rules:
for $\node{40}, \node{80} \in \{ \white, \black\}$
\begin{enumerate}
\item[(Edge0)] \begin{tikzpicture} \draw [fill=gray!40] (0,0) circle(0.1); \draw (0,0.1) node[above] {$\alpha$};
                                     \draw [fill=gray!80] (1,0) circle(0.1); \draw (1,0.1) node[above] {$\beta$};
           \end{tikzpicture}   
if $I(\alpha, \beta^\vee)=0 \quad \Longleftrightarrow \quad I(\beta, \alpha^\vee)=0$,
\item[(Edge1)] \begin{tikzpicture} \draw [fill=gray!40] (0,0) circle(0.1); \draw (0,0.1) node[above] {$\alpha$};
                                     \draw (0.1,0) -- (0.9,0); 
                                     \draw [fill=gray!80] (1,0) circle(0.1); \draw (1,0.1) node[above] {$\beta$};
           \end{tikzpicture}   
if $I(\alpha, \beta^\vee)=I(\beta, \alpha^\vee)=-1$,
\item[(Edge2)] \begin{tikzpicture} \draw [fill=gray!40] (0,0) circle(0.1); \draw (0,0.1) node[above] {$\alpha$};
                                     \draw (0.1,0) -- (0.9,0);  \draw (0.5,0.05) node[above] {\tiny{$t$}};
                                     \draw (0.55,0.1) -- (0.45,0); \draw (0.55,-0.1) -- (0.45,0); 
                                     \draw [fill=gray!80] (1,0) circle(0.1); \draw (1,0.1) node[above] {$\beta$};
           \end{tikzpicture} 
if $I(\alpha, \beta^\vee)=-1$ and $I(\beta, \alpha^\vee)=-t$ for $t=2,3,4$ and
\item[(Edge3)] \begin{tikzpicture} \draw [fill=gray!40] (0,0) circle(0.1); \draw (0,0.1) node[above] {$\alpha$};
                                     \draw (0.1,0) -- (0.9,0); \draw (0.5,0.03) node[above] {\tiny{$\infty$}};
                                     \draw [fill=gray!80] (1,0) circle(0.1); \draw (1,0.1) node[above] {$\beta$};
           \end{tikzpicture}   
if $I(\alpha, \beta^\vee)=I(\beta, \alpha^\vee)=-2$.
\end{enumerate}
\end{defn}
Some necessary data about the finite and affine root systems are recalled in
the Appendices \ref{sect_finite-root-system} and \ref{sect_affine-root-system}, respectively. 

For any root system $R$, let us denote the subset of short roots, middle length roots 
and long roots by $R_s$\index[notations]{r112@$R_s$}, 
$R_m$\index[notations]{r113@$R_m$} and 
$R_l$\index[notations]{r113@$R_l$}, respectively. 
For a non-reduced root system $R$, let $R^{\dc}$\index[notations]{r114@$R^{\dc}$} and 
$R^{\mc}$\index[notations]{r115@$R^{\mc}$} be the root subsystems of indivisible roots 
and non-multiplicable roots of $R$, respectively (cf. \cite{Macdonald1972}), that is,
\[
R^{\dc}:=
\left\{ \alpha \in R\, \left\vert \, \frac{1}{2}\alpha \not\in R\right\}, \right. \qquad
R^{\mc}:=
\{ \alpha \in R\, \vert\, 2\alpha \not\in R\}.
\]
We call the pair $(R^{\dc}, R^{\mc})$ 
the \textbf{reduced pair}\index[index]{reduced pair} of $R$. 
It can be shown that both $R^{\dc}$ and $R^{\mc}$ are root subsystems of $R$, and the 
Weyl groups $W(R), W(R^{\dc})$ and $W(R^{\mc})$ are the same. 
\medskip
\subsection{Root system of type $BC_l$ ($l\geq 1$)}\label{sect_BC-l}
In this subsection, we present root data of the finite root system $R_f$ of type $BC_l$
for reader's convenience. We realize $R_f$ in the real vector space $F_f=\bigoplus_{i=1}^l \R \vep_i$ equipped with the symmetric bilinear form $I_{F_f}$ satisfying
\begin{equation}\label{normalization-I} 
I_{F_f}(\vep_i, \vep_j)=\delta_{i,j} \qquad 1\leq i,j \leq l. 
\end{equation}
As is given in \S \ref{sect_BC}, the root system $R_f$ is the union of the set of short roots $(R_f)_s$, middle length roots $(R_f)_m$ and long roots $(R_f)_l$, given by
\begin{align*}
(R_f)_s=
&\{ \, \pm \vep_i\, \vert\, 1\leq i\leq l\, \}, \\
(R_f)_m=
&\{ \, \pm(\vep_i \pm \vep_j)\, \vert\, 1\leq i<j\leq l\, \}, \\
(R_f)_l=
&\{\, \pm 2\vep_i\, \vert\, 1\leq i\leq l\, \}.
\end{align*}
Notice that $(R_f)_m=\emptyset$ if $l=1$. 
For $l>1$, $(R_f)^{\dc}=(R_f)_s \cup (R_f)_m$ is a root system of type $B_l$ and $(R_f)^{\mc}=(R_f)_m \cup (R_f)_l$ is a root system of type $C_l$. It should also be mentioned that $(R_f)_m$ is a root system of type $D_l$. Here and in the rest of this article, the root system of type $D_l$ is defined for $l\geq 2$ as follows:
\[ R(D_l)=\{\,\pm (\vep_i \pm \vep_j)\; (1\leq i<j\leq l)\, \}. \]
In particular, we choose its simple roots and its fundamental dual weights as usual:
\begin{align*}
 \alpha_i= 
 &\begin{cases} \; \vep_i-\vep_{i+1}\;, & 1\leq i<l, \\ 
                                         \; \vep_{l-1}+\vep_l\; & i=l, \end{cases} \\
 \varpi_i^\vee=
 &\begin{cases} \; \vep_1+\cdots +\vep_i\; &1\leq i\leq l-2, \\
                                               \;  \frac{1}{2}(\vep_1+\cdots +\vep_{l-1}-\vep_l)\; &i=l-1,  \\
                                               \; \frac{1}{2}(\vep_1+\cdots +\vep_{l-1}+\vep_l)\; &i=l. \end{cases}
\end{align*}
 The rank $l$ of a root system of type $D_l$ is usually assumed to be $l\geq 4$. 
It is known that 
\[ R(D_2) \cong R(A_1) \times R(A_1), \qquad
R(D_3) \cong R(A_3) \]
as abstract root systems. The Weyl group of type $D_l$ is isomorphic to $\fS_l \ltimes (\Z/2\Z)^{l-1}$ for $l \geq 4$. For $l=2$ and $3$, we have
\begin{align*}
& W(D_2) \cong W(A_1) \times W(A_1) \cong 
\fS_2 \times \fS_2 \left( \cong (\Z/2\Z) \times (\Z/2\Z) \right), \\
&  W(D_3) \cong W(A_3)\cong \fS_4. 
\end{align*}

It is clear that the Weyl group $W(R_f)$ is isomorphic to the Weyl group of type $B_l$ and $C_l$, that is, $\fS_l \ltimes (\Z/2\Z)^l$ for $l\geq 2$, and for $l=1$ it is isomorphic to the Weyl group of type $A_1$. \\

Set $\alpha_i=\vep_i-\vep_{i+1}$ for $1\leq i<l$ and $\alpha_l=\vep_l$. Then $\Pi_f^{\dc}:=\{\alpha_i\}_{1\leq i\leq l}$ is a simple system of $(R_f)^{\dc}$ and $\Pi_f^{\mc}:=\{\alpha_i\}_{1\leq i<l} \cup \{2\alpha_l\}$ is a simple system of $R_f^{\mc}$. Notice that 
\[ (\Pi_f^{\dc})^\vee:=\{\, \alpha^\vee\,\vert \, \alpha \in \Pi_f^{\dc}\, \}=\Pi_f^{\mc} \qquad  \text{and} \qquad  (\Pi_f^{\mc})^\vee:=\{ \, \alpha^\vee\, \vert\, \alpha \in \Pi_f^{\mc}\, \}=\Pi_f^{\dc}. 
\]
The simple system $\Pi_f^{\dc}$ is also a simple system of $R_f$, which we also denote by $\Pi_f$. 
Indeed, we have
\begin{align*}
\vep_i=
&\alpha_i+\alpha_{i+1}+\cdots+\alpha_l, \\
\vep_i-\vep_j=
&\alpha_i+\alpha_{i+1}+\cdots+\alpha_{j-1}, \\
\vep_i+\vep_j=
&\alpha_i+\alpha_{i+1}\cdots +\alpha_{j-1}+2(\alpha_j+\alpha_{j+1}+\cdots+\alpha_l), 
\end{align*}
and the root lattice $Q(R_f)$ of $R_f$, which is generated by the elements of $R_f$,  is indeed generated by $\{\alpha_i\}_{1\leq i\leq l}$:  
\[ Q(R_f)=\bigoplus_{\alpha \in \Pi_f} \Z \alpha. \]

As for the dual roots, it follows that
\[ (\vep_i)^\vee=2\vep_i, \qquad (\vep_i\pm \vep_j)^\vee=\vep_i\pm \vep_j, \qquad (2\vep_i)^\vee=\vep_i, \]
in particular, one has $R_f^\vee=R_f$. Hence, the dual root lattice $Q(R_f^\vee)$ and the root lattice $Q(R_f)$ are the same. Since, $\alpha_i^\vee=\alpha_i$ ($1\leq i<l$) and $\alpha_l^\vee=2\alpha_l$, we have
\[ Q(R_f^\vee) =\Z \alpha_1^\vee \oplus \Z \alpha_2^\vee \oplus \cdots \oplus \Z \alpha_{l-1}^\vee \oplus \frac{1}{2}\Z \alpha_l^\vee,
\]
i.e., the dual roots $\{\alpha_i^\vee\}_{1\leq i\leq l}$ \textit{do not} generate $Q(R_f^\vee)$. 
The lattice generated by the dual roots $\{\alpha_i^\vee\}_{1\leq i\leq l}$ is the root lattice of type $C_l$.
\medskip

Recall that the fundamental weights $\varpi_i$  (resp. dual weights $\varpi_i^\vee$) ($1\leq i\leq l$) are the dual base of simple dual roots $\alpha_i^\vee$ (resp. roots $\alpha_i$) ($1\leq i\leq i$). Explicitly, they are given by
\[
\varpi_i=\begin{cases} \vep_1+\vep_2+\cdots +\vep_i \qquad &1\leq i<l, \\
           \frac{1}{2}(\vep_1+\vep_2+\cdots+\vep_l) \quad &i=l, \end{cases}
\qquad
\varpi_i^\vee= \vep_1+\vep_2+\cdots+\vep_i \; (1\leq i\leq l).
\]
In particular, the weight lattice $P(R_f)$, which is the set of all $\lambda \in F_f$ with $I_{F_f}(\lambda, R_f^\vee) \subset \Z$, does not contain $\varpi_l$:
\[ P(R_f)=\Z \varpi_1 \oplus \Z \varpi_2 \oplus \cdots \oplus \Z \varpi_{l-1}\oplus 2\Z \varpi_l,
\]
and the lattice generated by $\{\varpi_i\}_{1\leq i\leq l}$ is the weight lattice of type $B_l$. 
The dual weight lattice $P(R_f^\vee)$, which is the set of all $\lambda \in F_f$ with $I_{F_f}(\lambda, R_f)\subset \Z$, is indeed generated by $\{\varpi_i^\vee\}_{1\leq i\leq l}$:
\[ P(R_f^\vee)=\bigoplus_{i=1}^l \Z \varpi_i^\vee. \]
This lattice is the weight lattice of type $C_l$. 
\begin{rem}\label{rem_normalization}{\rm
\begin{enumerate}
\item With the normalization \eqref{normalization-I}  of $I_{F_f}$ in this section, 
all the $4$ lattices $Q(R_f), \, Q(R_f^\vee), P(R_f)$ and $P(R_f^\vee)$ coincide with
\[ \bigoplus_{i=1}^l \Z \vep_i .
\]
\item In general, we only have 
$P(R_f)=Q(R_f)$, $P(R_f^\vee)=Q(R_f^\vee)$ and $Q(R_f^\vee) \cong Q(R_f)$. 
\end{enumerate}}
\end{rem}


\section{Marked elliptic root systems}\label{sect:mers}
Let $(F,I_F)$ be an $\R$-vector space with a symmetric bilinear form $I_F$ on it. 
\index[notations]{i211@$I_F$}
First we 
introduce the notion of a marking $G$ which is a certain linear subspace of $F$, and 
then we shall restrict ourselves to the elliptic case, i.e., to those where the signature of 
$I_F$ is $(l,2,0)$ for some $l>0$. 
\subsection{Marking}\label{sect_marking}
We say that a subspace $G$ of $F$ is defined over 
${\Z}$ if we have 
$$
\text{rank}_{\R}G=\text{rank}_{\Z} \left(G \cap Q(R)\right).
$$
The radical
\[ \rad(I_F):=\{x \in F \, \vert \, I_F(x,y)=0 \quad \forall\; y \in F\} 
\index[notations]{r211@$\rad(I_F)$}
\] 
is defined over $\Z$, i.e., 
\[\rad_\Z(I_F)=\rad(I_F) \cap Q(R)\index[notations]{r212@$\rad_\Z(I_F)$}\]
 is a full lattice of $\rad(I_F)$. 
A subspace $G$ of $\rad(I_F)$ is called a \textbf{marking}\index[index]{marking} of $R$ 
if it is defined over 
${\Z}$. Let $G$ be a marking, and $\pi_G:F \rightarrow F/G$ the canonical projection. 
It induces a bilinear form $I_{F/G}$\index[notations]{i212@$I_{F/G}$} on $F/G$ 
defined by $I_{F/G}(\pi_G(x), \pi_G(y))=I_F(x,y)$ for 
$x,y \in F$. The image of $R$ in $F/G$, denoted by 
$R/G$\index[notations]{r211@$R/G$}, 
is a root system called the 
\textbf{quotient root system of $R$ by $G$}\index[index]{root system!quotient}.  
It is a root system belonging to $(F/G,I_{F/G})$. 

The pair $(R,G)$ of a root system and a subspace of $\rad(I_F)$ defined over ${\Z}$ is 
called a \textbf{marked root system}. Two marked root systems $(R_1,G_1)$ with 
$R_1 \subset (F_1,I_1)$ and $(R_2,G_2)$ with $R_2 \subset (F_2,I_2)$ are 
\textbf{isomorphic} iff there exists a linear isomorphism $\varphi:F_1 \rightarrow F_2$ 
such that $\varphi(R_1)=R_2$ and $\varphi(G_1)=G_2$. 
In particular, $(R,G)$ is called a 
{\bf marked elliptic root system}\index[index]{root system!marked elliptic
@marked elliptic -- (mERS)} if 
$\dim G=1$.  \\

\begin{rem}\label{rem_quot-root-sys} {\rm
\begin{enumerate}
\item
For a marked root system $(R,G)$, its quotient $R/G$ is still a generalized root 
system $($cf. \S 1.8 in \cite{Saito1985}$)$. In particular, for any  marked elliptic root 
system $(R,G)$, its quotient root system is an affine root system, if $G$ is of rank 
$1$. 
\item Note that, if the quotient $R/G$ is reduced, then so is $R$. Indeed, assume $R$ 
is non-reduced. Then there is a root 
$\alpha\in R$ such that $r\alpha\in R$ for $r=\pm 2,\pm1/2$.  Taking their
images under the canonical projection $\pi_G: F\to F/G$, it follows that
there exists a root $\pi_G(\alpha)\in R/G$ such that 
$r(\pi_G(\alpha))=\pi_G(r\alpha)\in R/G$. That is, $R/G$ is non-reduced. 
\item Therefore, for any marked root system $(R,G)$, one of the 
following three holds{\rm :} 
\begin{itemize}
\item[(i)] both $R$ and $R/G$ are reduced{\rm ;} 
\item[(ii)] $R$ is reduced and $R/G$ is non-reduced{\rm ;} 
\item[(iii)] both $R$ and $R/G$ are non-reduced.  
\end{itemize}
\end{enumerate} }
\end{rem}


In this article we classify marked elliptic root systems  
$(R,G)$ with non-reduced quotient $R/G$. K. Saito \cite{Saito1985} has already classified such root systems in the case 
when the quotient root system $R/G$ is reduced. 
Hence, this gives a complete classification result. For short, we will abbreviate \textbf{marked elliptic root system} to \textbf{mERS}. 
{Here and after, for $\alpha\in R$, 
we sometimes denote $\overline{\alpha}=\pi_G(\alpha)$ for simplicity.}
\index[notations]{a111@$\overline{\alpha}$}
\\

Let $(R,G)$ be a mERS. 
\begin{defn}\label{defn:basis}
A set $\Pi \subset R$ of roots
is called a \textbf{simple system}\index[index]{simple system} of a 
marked elliptic root system $(R,G)$, if its image under the canonical projection 
$\pi_G:F\to F/G$ is a simple system of the quotient affine root system $R/G$. 
\end{defn}
K. Saito \cite{Saito1985} showed, that if  $R/G$ is reduced, then for any two 
{simple systems} $\Pi_1$ and $\Pi_2$ of $(R,G)$, there exists an automorphism $\varphi$ of 
$(R,G)$ such that $\varphi(\Pi_1)=\Pi_2$ (see \cite{Saito1985}, (6.2) 
Corollary, for details).  That is, a {simple system} $\Pi$ of $(R,G)$
is unique up to automorphism of $(R,G)$, if $R/G$ is reduced. In 
his proof, the reducedness of $R/G$ is essentially used. {Here and after,
we denote $\Pi_G:=\pi_G(\Pi)$.}\index[notations]{p112@$\Pi_G$}\\

On the other hand, if $R/G$ is non-reduced, the situation is more 
complicated. For a non-reduced mERS $(R,G)$ belonging to $(F,I_F)$, 
let $(R^{\dc},R^{\mc})$ be the reduced pair of $R$ and $((R/G)^{\dc},(R/G)^{\mc})$ of the 
quotient affine root system $R/G$. Since both $(R^{\dc},G)$ and $(R^{\mc},G)$ are 
(reduced) mERSs, their quotients $R^{\dc}/G$ and $R^{\mc}/G$ are 
affine.
However, since $R^{\dc}/G$ and $R^{\mc}/G$ are
not necessarily reduced, they are again the union of their reduced parts:
\[R^{\dc}/G=(R^{\dc}{/}G)^{\dc}\cup (R^{\dc}/G)^{\mc},\quad R^{\mc}/G=(R^{\mc}/G)^{\dc}\cup (R^{\mc}/G)^{\mc}.\]
Hence, there are six reduced affine root subsystems 
\[(R/G)^{\dc},\ (R/G)^{\mc},\ (R^{\dc}/G)^{\dc},\ (R^{\dc}/G)^{\mc},\ (R^{\mc}/G)^{\dc},\ (R^{\mc}/G)^{\mc}\]
of $R/G$ belonging to $(F/G,I_{F/G})$. A natural question now is the following: 

\vskip 2.5mm
\begin{quotation}
{\bf Question 1}. 
{\it What are the explicit relationships between these six affine root systems\,?}
\end{quotation}
\vskip 2.5mm
A partial answer for this question will be given in 
$\S$ \ref{sect:ans-Q1} (see Lemma \ref{lemma:reduced_pair2}).  \\

The uniqueness of a basis of a non-reduced $(R,G)$, 
after preparing certain terminologies in the first half of Section \ref{sect:elliptic-diagram},  will be treated in 
Proposition \ref{prop:nr-counting-Autom1} in $\S$ \ref{sect:unique-basis}.  \\

The relationship between a simple system of $(R/G)^{\dc}$ and 
of $(R/G)^{\mc}$ is described as follows.
Denote by  $(\Pi_G)^{\dc}$ a simple system of $(R/G)^{\dc}$. By definition (cf. \cite{Macdonald1972}), 
it is also a simple system of $R/G$. 
\begin{lemma} \label{lemma_base} 
There exists a simple system $(\Pi_G)^{\mc}$ of $(R/G)^{\mc}$ such that $(\Pi_G)^{\mc} \setminus (\Pi_G)^{\dc}$ 
contains only $1$ root for $R/G$ being of type $BCC_l, C^\vee BC_l$ and $BB_l^\vee$,
and $2$ roots of type $C^\vee C_l$.
\end{lemma}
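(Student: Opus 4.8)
The plan is to build $(\Pi_G)^{\mc}$ out of $(\Pi_G)^{\dc}$ by doubling the multipliable simple roots, and then to reduce the statement to a count of how many such roots occur. Fix the simple system $\Pi:=(\Pi_G)^{\dc}$ of $(R/G)^{\dc}$; as recalled just before the lemma, $\Pi$ is also a simple system of the affine root system $R/G$. A simple root is necessarily indivisible (an identity $\tfrac{1}{2}\alpha=\sum_{\beta\in\Pi}c_\beta\beta$ with $c_\beta\in\Z$ is impossible), so $\Pi\subset(R/G)^{\mc}$ except for those simple roots that are multipliable. Set $\Pi_0:=\{\alpha\in\Pi\mid 2\alpha\in R/G\}$; for $\alpha\in\Pi_0$ one has $2\alpha\in(R/G)^{\mc}$ since $4\alpha\notin R/G$. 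I would then put
\[ (\Pi_G)^{\mc}:=(\Pi\setminus\Pi_0)\cup\{\,2\alpha\mid\alpha\in\Pi_0\,\}, \]
so that $(\Pi_G)^{\mc}\setminus(\Pi_G)^{\dc}=\{2\alpha\mid\alpha\in\Pi_0\}$ has cardinality $|\Pi_0|$. The lemma is thus equivalent to the two assertions that $(\Pi_G)^{\mc}$ is a simple system of $(R/G)^{\mc}$ and that $|\Pi_0|$ equals $1$ for $R/G$ of type $BCC_l$, $C^\vee BC_l$, $BB_l^\vee$ and $2$ for type $C^\vee C_l$.

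For the first assertion, the key point is that the reflections $r_\alpha$ with $\alpha\in R/G$, hence the Weyl group and the entire alcove geometry on $F/G$, are common to $R/G$, $(R/G)^{\dc}$ and $(R/G)^{\mc}$ (the three systems differ only by multipliable/divisible roots, which have the same fixed hyperplanes as their indivisible counterparts). Starting from the alcove attached to $\Pi$, the corresponding simple system of the reduced affine root system $(R/G)^{\mc}$ is obtained by selecting, on each of its walls, the root of $(R/G)^{\mc}$ that is positive and proportional to the given $\Pi$-root; since for each indivisible $\beta$ the roots of $R/G$ proportional to $\beta$ form either $\{\pm\beta\}$ or $\{\pm\beta,\pm2\beta\}$ (Definition \ref{defn_root-sys}), this selected root is $\alpha$ for $\alpha\in\Pi\setminus\Pi_0$ and $2\alpha$ for $\alpha\in\Pi_0$, that is, it is precisely the set $(\Pi_G)^{\mc}$ defined above. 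Equivalently, one checks that every $\gamma\in(R/G)^{\mc}$ which is $\Pi$-positive has $\alpha$-coefficient divisible by $2$ for each $\alpha\in\Pi_0$, so that $\gamma$ is a non-negative integral combination of $(\Pi_G)^{\mc}$; this can be read off the root data.

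Finally I would determine $|\Pi_0|$ in each of the four cases, and this case analysis is the only real obstacle. Using the explicit realizations and reduced pairs of $BCC_l$, $C^\vee BC_l$, $BB_l^\vee$ and $C^\vee C_l$ listed in Appendix \ref{sect_affine-root-system}, one reads off directly which simple roots $\alpha$ of $(R/G)^{\dc}$ satisfy $2\alpha\in R/G$: there is exactly one such root, at the ``doubled'' end of the diagram, in the first three types, and exactly two, one at each end, in type $C^\vee C_l$ — matching the Node1/Node2 pattern of the respective Dynkin diagrams. Since all simple systems of a reduced affine root system are Weyl-conjugate up to sign and multipliability in $R/G$ is invariant under $W(R/G)$, the number $|\Pi_0|$ does not depend on the chosen $\Pi$, and this establishes the lemma.
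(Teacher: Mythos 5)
Your proposal is correct and follows essentially the same route as the paper: define $(\Pi_G)^{\mc}$ by replacing each multipliable simple root $\alpha$ of $(\Pi_G)^{\dc}$ with $2\alpha$, and then verify by a case-by-case inspection of the four non-reduced affine types (using the Weyl/automorphism conjugacy of simple systems to reduce to the standard realizations of Appendix \ref{sect_non-red-affine}) that exactly one, respectively two, simple roots are multipliable. The only difference is that you spell out, via the shared alcove geometry of $R/G$, $(R/G)^{\dc}$ and $(R/G)^{\mc}$, why the doubled set is indeed a simple system of $(R/G)^{\mc}$ — a step the paper leaves as ``it can be shown'' — so your write-up is, if anything, more complete.
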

\begin{proof}
Write $(\Pi_G)^{\dc}=\{\overline{\alpha_i}\}_{0\leq i\leq l}$ and set
\[ \overline{\beta_i}=\begin{cases} \overline{\alpha_i} \qquad & \text{if} \; 
2\overline{\alpha_i} \not\in R/G, \\
          2\overline{\alpha_i} \qquad & \text{otherwise}. \end{cases}
\]
It can be shown that $\{\overline{\beta_i}\}_{0\leq i\leq l}$ is a simple system of 
$(R/G)^{\mc}$ we are looking for. Indeed, by \cite{Bourbaki(Lie)4-6}, there 
exists an automorphism $\varphi$ of $R/G$ such that $\varphi((\Pi_G)^{\dc})$ is the one 
in \S  \ref{sect_non-red-affine}. By case by case checking, we see that 
$\varphi((\Pi_G)^{\mc})$ satisfies this property.
\end{proof}

Here and after, we fix $a \in G$ in such a way that 
\begin{equation}\label{defn:a}
G \cap Q(R)=\Z a,
\end{equation} 
and 
take $b \in \rad(I_F)$ so that $\rad_\Z(I_F)=\Z b \oplus 
\Z a$.

\begin{rem}{\rm
For a mERS $(R,G)$, an element $a\in \rad(I_F)$ is uniquely determined by 
\eqref{defn:a} up to sign. On the other hand, there are
many choices for $b\in\rad(I_F)$. More precisely, 
only the class $b\,\mathrm{mod}\,(Q(R)\cap G) \in \rad_\Z(I_F)/(Q(R)\cap G)
$ is uniquely determined up to sign. }
\end{rem}
\subsection{Tier numbers}\label{sect_tier-number}
Let $R$ be a root system belonging to $(F, I)$. It is shown in \cite{Saito1985}
 (cf. Lemma 1.9) that 
there exists a non-zero scalar $c \in \R$ such that the set 
$\{ cI(\alpha, \alpha) \, \vert \, \alpha \in R\}$ is a finite subset of $\Z$. Set
\[ t(R)=\max\left\{ \left. \frac{I(\alpha, \alpha)}{I(\beta,\beta)} \right\vert\, \alpha, 
\beta \in R\right\},  \]
and call it the \textbf{total tier number}\index[index]{tier number!total@total --} of $R$. 
With such a constant $c$, we also 
define
\begin{equation}\label{def_normalized-form}
\begin{split}
&I_R=\frac{2c}{\mathrm{gcd}\{ cI(\alpha, \alpha)\, \vert \, \alpha \in R\}}I, \\
&I_{R^\vee}=\frac{\mathrm{lcm}\{ cI(\alpha, \alpha)\, \vert \, \alpha \in R\}}{2c}I.
\end{split}
\end{equation}
Here is the list of total tier numbers: \\

\begin{center}
\begin{tabular}{|c||c|c|c|c|c|c|c|c|} \hline
Type of $R/\rad(I)$ & $A_l$ & $B_l$ & $C_l$ & $BC_l$ & $D_l$ & $E_l$ & $F_4$ & 
$G_2$ \\ \hline
$t(R)$ & $1$ & $2 $ & $2$ & $4$ & $1$ & $1$ & $2$ & $3$ \\ \hline
\end{tabular}
\end{center}
\vskip 0.1in

Let $(R,G)$ be a marked elliptic root system and $(R^\vee, G)$ its dual. Let 
$a^\vee \in \R a$ and $b^\vee \in \rad(I)$ such that $Q(R^\vee) \cap G=\Z a^\vee$ 
and $Q(R^\vee) \cap \rad(I)=\Z a^\vee \oplus \Z b^\vee$. 
The \textbf{first and the second tier numbers}
\index[index]{tier number!first@first --}\index[index]{tier number!second@second --}for $(R,G)$ and 
$(R^\vee,G)$ are 
defined by
\begin{align*}
t_1(R,G)=
&\vert (b^\vee \; \mathrm{mod}\; a^\vee) : (b\; \mathrm{mod}\;a)\vert \times 
(I_{R^\vee}:I), \\
t_1(R^\vee,G)=
&\vert (b\; \mathrm{mod}\;a) : (b^\vee\; \mathrm{mod}\;a^\vee)\vert \times (I_{R}:I), \\
t_2(R,G)=
&\vert (a^\vee: a) \vert \times  (I_{R^\vee}:I), \\
t_2(R^\vee,G)=
&\vert (a: a^\vee) \vert \times  (I_{R}:I),
\end{align*}
where $A : B$\index[notations]{aa112@$A : B$} signifies the constant $c$ such that $A=cB$. 
It is known that 
\[ t(R)=t_1(R,G)t_1(R^\vee,G)=t_2(R,G)t_2(R^\vee,G). \]
\subsection{Counting set}\label{sect:counting}
The classification of the mERSs $(R,G)$ with reduced affine quotient $R/G$ has been carried out 
by K. Saito in \cite{Saito1985}. In that work, even if the result itself is clearly stated, the 
details of its proof is omitted. 
Here we briefly present a proof to show how the notion like counting works. \\

In the first half of this subsection, we return to the general setting. 
Let $R$ be an elliptic root system belonging to $(F,I)$ and $G$ be a 
subspace of $\rad (I)$ defined over $\Z$ (marking). That is, $G$ is {\it not} 
necessarily one dimensional, and $R/G$ is {\it not} assumed to be reduced. 

For each $\alpha \in R$, let us define the subset $K_G(\alpha)$ of the lattice 
$Q(R) \cap G$ by
\begin{equation}\label{defn:counting-set}
 K_G(\alpha)=\{x \in G \, \vert \, \alpha+x \in R\, \}, 
 \index[notations]{k111@$K_G(\alpha)$}
\end{equation}
and call it the \textbf{counting set}\index[index]{counting set} of $\alpha \in R$. 
It has the next nice properties
 (cf. p. 97 in \cite{Saito1985}):

\begin{lemma}\label{lemma-counting}
\begin{enumerate}
\item $0 \in K_G(\alpha)$ and $K_G(\alpha)=-K_G(\alpha)$ for $\alpha \in R$.
\item $K_G(\varphi(\alpha))=K_G(\alpha)$ for an automorphism $\varphi$ of $R$ and 
$\alpha \in R$.
\item $K_G(\alpha)$ is closed under the {reflection} centered at each point of 
$K_G(\alpha)$, 
\newline
i.e. $2x-y \in K_G(\alpha)$ for any $x,y \in K_G(\alpha)$.
\item $K_G(\alpha)$ is closed under the translation by 
$I(\alpha, \beta^\vee)K_G(\beta)$ for $\alpha, \beta \in R$,
\newline
 i.e., $K_G(\alpha) \supset K_G(\alpha)+I(\alpha, \beta^\vee)K_G(\beta)$.
\item $\dfrac{2}{I(\alpha, \alpha)}K_G(\alpha)=K_G^\vee(\alpha^\vee)$, where 
$K_G^\vee$ is the counting set of $(R^\vee, G)$. 
\end{enumerate}
\end{lemma}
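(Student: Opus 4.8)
The plan is to prove the five assertions of Lemma \ref{lemma-counting} essentially by unwinding the definition of the counting set \eqref{defn:counting-set} and invoking the root system axioms (R1)--(R5), together with the fact that $R$ is an elliptic root system with $G\subset\rad(I_F)$. The key observation used throughout is that if $x\in G\subset\rad(I_F)$ then $I(\alpha+x,\alpha+x)=I(\alpha,\alpha)$ for every $\alpha\in F$, so adding an element of $G$ to a root does not change its square length, and in particular $(\alpha+x)^\vee=\alpha^\vee+\tfrac{2}{I(\alpha,\alpha)}x$ and the reflection $r_{\alpha+x}$ is well-defined; moreover $r_{\alpha+x}(\lambda)=\lambda-I((\alpha+x)^\vee,\lambda)(\alpha+x)$ will differ from $r_\alpha(\lambda)$ in a controlled way. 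These are the ingredients that make each part go through.

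First I would dispatch (1): $0\in K_G(\alpha)$ because $\alpha\in R$, and $K_G(\alpha)=-K_G(\alpha)$ because if $\alpha+x\in R$ then by (R3) also $-(\alpha+x)=-\alpha-x\in R$, hence (applying $-\alpha\in R$ and the same reasoning, or directly) $\alpha+(-x)\cdot(\text{?})$ — more carefully, $-\alpha-x\in R$ gives $-x\in K_G(-\alpha)$, and then I use that $r_\alpha(-\alpha-x)=\alpha-x\in R$ since $r_\alpha$ fixes $x\in\rad(I_F)$ up to the $\alpha$-component: $r_\alpha(x)=x-I(\alpha^\vee,x)\alpha=x$ as $I(\alpha^\vee,x)=0$. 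So $\alpha-x\in R$, i.e. $-x\in K_G(\alpha)$. For (2): if $\varphi\in\Aut(R)$ then $\varphi$ preserves $\rad(I_F)$ (it lies in $O(F,I)$ by the cited lemma), and by hypothesis — wait, one must be slightly careful that $\varphi$ preserves $G$; actually part (2) as stated only needs $K_G(\varphi\alpha)\supseteq \varphi(K_G(\alpha))\cap G$, but the cleanest route is: for $x\in G$, $\alpha+x\in R\iff\varphi(\alpha+x)=\varphi\alpha+\varphi x\in R$; if additionally $\varphi$ preserves $G$ (which holds for automorphisms of the marked system, and for automorphisms of $R$ one checks $G$ is characterized intrinsically in the elliptic case) then $\varphi x\in G$ and running the substitution $x\mapsto\varphi x$ over all of $G$ gives the equality. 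I would state this with the same standing assumptions as in \cite{Saito1985}, p.~97.

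Next, (3) and (4) are the substantive combinatorial parts. For (3): given $x,y\in K_G(\alpha)$, so $\alpha+x,\alpha+y\in R$, apply the reflection $r_{\alpha+x}$ to the root $\alpha+y$. Compute $I((\alpha+x)^\vee,\alpha+y)=I(\alpha^\vee,\alpha+y)+\tfrac{2}{I(\alpha,\alpha)}I(x,\alpha+y)=I(\alpha^\vee,\alpha)+I(\alpha^\vee,y)+0=2+0=2$ since $y\in\rad(I_F)$ and $I(x,\cdot)=0$. Hence $r_{\alpha+x}(\alpha+y)=(\alpha+y)-2(\alpha+x)=-\alpha+(y-2x)$, which lies in $R$; applying (R3) again (negation) gives $\alpha-(y-2x)=\alpha+(2x-y)\in R$, i.e. $2x-y\in K_G(\alpha)$. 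For (4): given $y\in K_G(\beta)$ so $\beta+y\in R$, apply $r_{\beta+y}r_\beta$ — or more directly compute $r_{\beta+y}(\alpha+x)$ for $x\in K_G(\alpha)$ and compare with $r_\beta(\alpha+x)$. One gets $r_{\beta+y}(\alpha+x)-r_\beta(\alpha+x)=\big(I(\beta^\vee,\alpha)-I((\beta+y)^\vee,\alpha+x)\big)\beta - I((\beta+y)^\vee,\alpha+x)\,y$, and a short computation of the inner products (using $I(y,\cdot)=0$ on $F$ and $I(\cdot,x)=0$) shows the $\beta$-coefficient difference vanishes and the $y$-coefficient is $-I(\beta^\vee,\alpha)=-I(\alpha,\beta^\vee)$; combining with the fact that $r_\beta(\alpha+x)\in R$ has $\alpha$-image a root, and that negations/reflections stay in $R$, yields $\alpha+x+I(\alpha,\beta^\vee)y\in R$ for all $x\in K_G(\alpha), y\in K_G(\beta)$, which is exactly the asserted inclusion $K_G(\alpha)\supseteq K_G(\alpha)+I(\alpha,\beta^\vee)K_G(\beta)$. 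Finally (5) is immediate: $x\in K_G(\alpha)\iff\alpha+x\in R\iff(\alpha+x)^\vee\in R^\vee$, and $(\alpha+x)^\vee=\alpha^\vee+\tfrac{2}{I(\alpha,\alpha)}x$ because $x\in\rad(I_F)$, so $\tfrac{2}{I(\alpha,\alpha)}x\in K_G^\vee(\alpha^\vee)$, giving $\tfrac{2}{I(\alpha,\alpha)}K_G(\alpha)=K_G^\vee(\alpha^\vee)$.

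The main obstacle I anticipate is purely bookkeeping in parts (3) and (4): keeping straight which inner products vanish because the relevant vector lies in $\rad(I_F)$, correctly handling the normalization factor $\tfrac{2}{I(\alpha,\alpha)}$ inside $(\alpha+x)^\vee$, and making sure that after applying a reflection one lands on a vector of the form $\pm\gamma+z$ with $\gamma\in R$ and $z\in G$ so that axiom (R3) can be reapplied to extract membership in $K_G$ — there is a small sign/negation dance in each case. There is also a minor subtlety in (2) about whether an arbitrary automorphism of $R$ (as opposed to of the marked pair $(R,G)$) preserves $G$; I would either add the hypothesis that $\varphi$ fixes $G$ or remark that in the elliptic setting $G$ appears as a distinguished line once the marking is fixed, matching the convention of \cite{Saito1985}. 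None of this is deep, but it is the part where an error is easiest to make, so I would write out the inner-product computations explicitly rather than leaving them to the reader.
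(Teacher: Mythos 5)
The paper does not actually prove this lemma; it is quoted from \cite{Saito1985} (p.\ 97), so there is no in-text argument to compare against. Your proof is correct and is the standard one: every step reduces to the observation that adding $x\in G\subset\rad(I_F)$ to a root changes neither its length nor any inner product, so $(\alpha+x)^\vee=\alpha^\vee+\tfrac{2}{I(\alpha,\alpha)}x$ and $I((\alpha+x)^\vee,\lambda)=I(\alpha^\vee,\lambda)$. Parts (1), (3), (5) are fine as written. For (4) your ``compare $r_{\beta+y}$ with $r_\beta$'' version first lands on a statement about $K_G(r_\beta(\alpha)+x)$ rather than $K_G(\alpha)$, and you need one more application of $r_\beta$ to get back; the cleanest packaging is to note that $r_\beta\circ r_{\beta+y}$ is the translation $\lambda\mapsto\lambda-I(\beta^\vee,\lambda)y$ (exactly the computation the paper later carries out in Lemma \ref{lemma:translation}), apply it to $\alpha+x$ to get $\alpha+x-I(\alpha,\beta^\vee)y\in R$, and then use part (1) to flip the sign of $y$.

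Your reservation about part (2) is a genuine issue, not bookkeeping, and your proposed fix is the right one but your fallback is not: $G$ is \emph{not} intrinsically characterized by $R$ (the radical is two-dimensional and the paper itself exploits automorphisms such as $a\leftrightarrow b$ that move the marking), and an automorphism of $R$ that does not preserve $G$ can genuinely change the counting set. For instance, for $R(A_1^{(1,1)\ast})=R(A_1)+\{ma+nb\mid mn\equiv 0\ [2]\}$ the linear map $\vep\mapsto\vep+b$, $a\mapsto a+b$, $b\mapsto b$ is an automorphism of $R$, yet $K_{\R a}(\vep)=\Z a$ while $K_{\R a}(\vep+b)=2\Z a$. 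So the hypothesis $\varphi(G)=G$ must be imposed; with it, $K_G(\varphi(\alpha))=\varphi(K_G(\alpha))$, and this equals $K_G(\alpha)$ because $\varphi$ acts by $\pm 1$ on the rank-one lattice $Q(R)\cap G$ and $K_G(\alpha)$ is symmetric by part (1).
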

In particular, the first statement implies that if $x \in K_G(\alpha)$, then $\Z x \subset 
K_G(\alpha)$. Indeed, as $\alpha, \alpha+x \in R$, $-x \in K_G(\alpha+x)$ which implies 
$x \in K_G(\alpha+x)$, so $\alpha+2x \in R$. In the same way, one can show that 
$\Z_{\geq 0}x \subset K_G(\alpha)$, hence $\Z x \subset K_G(\alpha)$ again by (1).
The statements (2) and (4) imply that if $\beta \in W(R).\alpha$ and $I(\alpha,\beta^\vee) \in 
\{\pm 1\}$, then $K_G(\alpha)=K_G(\beta)$ is additively closed. 

\begin{lemma}\label{lemma_counting-rootsystem}
Suppose that there exists a subspace $L \subset F$ satisfying
\begin{align}
&F=L \oplus G, \label{cond1:lemma_counting-rootsystem}\\
&\pi_G \vert_{R\cap L}: 
R \cap L\rightarrow R/G \quad \text{is surjective}.
\label{cond2:lemma_counting-rootsystem}
\end{align}
Then
\begin{enumerate}
\item $K_G(\alpha)$ contains a full lattice of $G$ for $\alpha \in R$.
\vskip 1mm
\item $R=\amalg_{\alpha \in R\cap L} \left( \alpha+K_G(\alpha)\right)$. 
\vskip 1mm
\item $\bigcup_{\alpha \in R \cap L}K_G(\alpha)$ generates the lattice $Q(R) \cap G$.
\end{enumerate}
\end{lemma}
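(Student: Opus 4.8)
The plan is to verify the three assertions in turn, using the splitting $F = L \oplus G$ together with the surjectivity of $\pi_G|_{R \cap L}$ and the structural properties of counting sets collected in Lemma~\ref{lemma-counting}. Throughout, I would write each $\gamma \in R$ uniquely as $\gamma = \alpha + x$ with $\alpha \in L$ and $x \in G$; the content of \eqref{cond2:lemma_counting-rootsystem} is that one may always arrange $\alpha \in R \cap L$, so that $x \in K_G(\alpha)$. This is essentially a restatement of (2), so I would actually prove (2) first, then deduce (1) and (3).

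For (2): given $\gamma \in R$, its image $\pi_G(\gamma) \in R/G$ lifts, by \eqref{cond2:lemma_counting-rootsystem}, to some $\alpha \in R \cap L$ with $\pi_G(\alpha) = \pi_G(\gamma)$; then $\gamma - \alpha \in G$, and since both $\alpha, \gamma \in R$ we get $\gamma - \alpha \in K_G(\alpha)$, i.e. $\gamma \in \alpha + K_G(\alpha)$. Uniqueness (disjointness of the union) is immediate from $F = L \oplus G$: if $\alpha + x = \alpha' + x'$ with $\alpha, \alpha' \in L$ and $x \in K_G(\alpha)$, $x' \in K_G(\alpha')$, then $\alpha = \alpha'$ and $x = x'$. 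Hence $R = \amalg_{\alpha \in R \cap L}(\alpha + K_G(\alpha))$.

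For (1): fix $\alpha \in R \cap L$ (the case of general $\alpha \in R$ reduces to this via $K_G(\varphi(\alpha)) = K_G(\alpha)$ under automorphisms, or simply via a reflection taking $\alpha$ into $R \cap L$'s $W$-orbit — but really it suffices to treat $\alpha \in R \cap L$ since every root is $\alpha + x$ with $K_G(\alpha+x) \supseteq K_G(\alpha) - x$, and $K_G$ of a root and of its translate inside $R$ differ by a translation, so all have the same "size"). Since $R/G$ is an elliptic root system quotient, $\rad(I)$ has rank $2$ and $G \subsetneq \rad(I)$; the image $\pi_G(\alpha)$ lies in the affine root system $R/G$, whose root lattice is full in $F/G$, so in particular $\pi_G(\rad(I))$ is a rank-one radical direction hit by translations: there exists $\beta \in R \cap L$ and $x_0 \in G$, $x_0 \neq 0$, with $\pi_G(\alpha) + \pi_G(\beta)\text{-translation}$ — more precisely, the affine root system $R/G$ contains, for the image of $\alpha$, translates $\pi_G(\alpha) + n\delta$ ($n \in \Z$) along its null root $\delta$, and lifting these to $R \cap L$ and subtracting $\alpha$ produces a $\Z$-worth of elements of $K_G(\alpha)$ spanning a rank-one sublattice of $G$. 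Combined with Lemma~\ref{lemma-counting}(4), translating by $I(\alpha,\beta^\vee)K_G(\beta)$ for suitable $\beta$, and the fact (remarked after Lemma~\ref{lemma-counting}) that $\Z x \subset K_G(\alpha)$ whenever $x \in K_G(\alpha)$, one builds up a full-rank lattice inside $G$; here one uses that $G$ is defined over $\Z$, so $G \cap Q(R)$ is full in $G$, and that the various root-string translations exhaust all of $Q(R) \cap G$ up to finite index — this is the step I expect to be the main obstacle, since it requires knowing that the affine quotient genuinely "sees" the full null direction of $R$ that lies over $G$, which is where the hypothesis that $R$ is elliptic (signature $(l,2,0)$) and $G \subset \rad(I)$ is really used.

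For (3): by (2), every element of $Q(R) \cap G$ is a $\Z$-combination of elements of $R$, each written as $\alpha + x$ with $\alpha \in R \cap L$, $x \in K_G(\alpha)$; taking differences $(\alpha + x) - (\alpha + x') = x - x'$ and using that $R \cap L$ spans $L$ over $\R$ (from \eqref{cond1:lemma_counting-rootsystem} and \eqref{cond2:lemma_counting-rootsystem}, since $\pi_G$ restricted to $\mathrm{span}(R \cap L)$ surjects onto the full lattice $Q(R/G)$), one checks that the $L$-parts cancel precisely when the combination lands in $G$, so $Q(R) \cap G$ is generated by $\bigcup_{\alpha \in R \cap L} \big(K_G(\alpha) - K_G(\alpha)\big)$; but $0 \in K_G(\alpha)$ by Lemma~\ref{lemma-counting}(1), so $K_G(\alpha) - K_G(\alpha) \subseteq \Z\text{-span of } \bigcup K_G(\alpha)$ already (again using $\Z x \subseteq K_G(\alpha)$ and closure under point reflections), giving the claim. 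The only delicate point is ensuring the generated lattice is all of $Q(R) \cap G$ and not merely a finite-index sublattice, which follows once (1) is in hand together with $\bigcup_\alpha K_G(\alpha) \subseteq Q(R) \cap G$ and the observation that $Q(R)$ is generated by $R = \amalg(\alpha + K_G(\alpha))$.
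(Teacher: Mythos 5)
The paper states this lemma without proof (it is imported from \cite{Saito1985}), so your proposal has to stand on its own. Your arguments for parts (2) and (3) are essentially correct: (2) is exactly the lift-and-split argument via $F=L\oplus G$, and (3) follows from (2) by writing $y=\sum_i n_i(\alpha_i+x_i)\in Q(R)\cap G$ and observing that $\sum_i n_i\alpha_i\in L\cap G=\{0\}$, so $y=\sum_i n_i x_i$ lies in the $\Z$-span of $\bigcup_\alpha K_G(\alpha)$ directly --- you do not need the detour through $K_G(\alpha)-K_G(\alpha)$, nor part (1), to rule out a finite-index sublattice.

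Part (1) is where your proposal breaks. The mechanism you describe --- taking the translates $\pi_G(\alpha)+n\delta$ inside the affine quotient $R/G$, lifting them to $R\cap L$ and subtracting $\alpha$ --- cannot produce elements of $K_G(\alpha)$. The null root $\delta$ of $R/G$ lives in $\rad(I_{F/G})=\rad(I)/G$, a direction transverse to $G$; if $\beta\in R\cap L$ satisfies $\pi_G(\beta)=\pi_G(\alpha)+n\delta$ with $n\neq 0$, then $\beta-\alpha\in L$ and $\pi_G(\beta-\alpha)=n\delta\neq 0$, so $\beta-\alpha\notin G$ and contributes nothing to $K_G(\alpha)=\{x\in G:\alpha+x\in R\}$. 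The whole point of the counting set is that the $G$-direction is invisible in the quotient $R/G$, so the quotient's internal translations cannot detect it. The correct source of full rank is axiom (R1): since $Q(R)$ is full in $F$ and, by part (2), every root decomposes as $\alpha+x$ with $\alpha\in R\cap L$ and $x\in K_G(\alpha)$, the projection of $R$ to $G$ along $L$ --- which is exactly $\bigcup_{\beta\in R\cap L}K_G(\beta)$ --- must span $G$ over $\R$. One then transfers full rank from this union to each individual $K_G(\alpha)$ using irreducibility (R5) to find, for any $\beta$, a chain $\alpha=\gamma_0,\dots,\gamma_k=\beta$ with $I(\gamma_i,\gamma_{i+1}^\vee)\neq 0$, and iterating Lemma \ref{lemma-counting}(4) along the chain to get $K_G(\alpha)\supset c_\beta K_G(\beta)$ with $c_\beta\in\Z\setminus\{0\}$, together with closedness of $K_G(\alpha)$ under these translations and under negation. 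Your proposal gestures at the second half of this but omits the appeal to (R1) and (R5) entirely, which is precisely the content you flagged as "the main obstacle".
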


\begin{rem}\label{rem:dimL}
As we already remarked above,  the subspace $G\subset \rad(I)$ is {\it not} 
necessarily one dimensional. That is, we may assume $G=\rad(I)$. In this setting,
there exists a mERS $(R,G)$ such that the surjective map 
\eqref{cond2:lemma_counting-rootsystem} is not injective {\rm(}see also below{\rm )}.
\end{rem}

Here and after, we assume that $G=\R a$. Recall that we already 
fixed a generator $a$ of the lattice $Q(R)\cap G$ of rank 1:
$Q(R)\cap G=\Z a$ (see \eqref{defn:a}), and define the 
{\bf counting number}\index[index]{counting number} for 
$\alpha\in R$ by
\begin{equation}\label{defn:counting-number}
k(\alpha)=\mathrm{min}\,\{k\in \Z_{>0}\,|\,\alpha+ka\in R\}.
\end{equation}
\index[notations]{k221@$k(\alpha)$}
For $\alpha\in R$, set 
\begin{equation}\label{defn:ast}
\alpha^*=\alpha+k(\alpha)a.\index[notations]{a211@$\alpha^*$}
\end{equation}

Fix a simple system $\{\alpha_i\}_{0\leq i\leq l}$ of the mERS $(R,G)$ in the sense of
Definition \ref{defn:basis}, and define an $(l+1)$-dimensional subspace $F_a$ 
of $F$ by
\begin{equation}\label{defn:L^{l+1}}
F_a=\bigoplus_{i=0}^l \R \alpha_i.
\end{equation}
The following lemma is a generalization of the result of K. Saito 
(\cite{Saito1985}, (6.1), Assertion).

\begin{lemma}\label{lemma:countings1}
Let $\alpha\in R$.
\begin{enumerate}
\item One has $K_G(\alpha)=\Z k(\alpha)a$. That is, 
the counting number $k(\alpha)$ satisfies 
\begin{equation}\label{counting-number}
\alpha+\Z k(\alpha)a=R\cap (\alpha+\Z a).
\end{equation}
\item For any two roots $\alpha, \beta \in R$, 
\begin{equation}\label{countings-division}
k(\beta) \vert I(\beta, \alpha^\vee)k(\alpha). 
\end{equation}
In particular, if $I(\beta, \alpha^\vee)\in \{\pm 1\}$, then
\[ 1 \vert k(\alpha)k(\beta)^{-1} \vert I(\alpha, \beta^\vee). \]
\item For every automorphism $\varphi$ of $R$, one has $k(\varphi(\alpha))=k(\alpha)$. 
\end{enumerate}
For the following two statements, we assume $R/G$ is reduced. 
\begin{itemize}
\item[(4)] For the $(l+1)$-dimensional subspace $F_a$ of $F$ introduced in 
\eqref{defn:L^{l+1}}, one has 
\begin{equation}\label{eq:description-redR}
R=\bigsqcup_{\alpha \in R \cap F_a}(\alpha+\Z k(\alpha)a).
\end{equation} 
\item[(5)] $g.c.d.\{k(\alpha_i)\,|\,0\leq i\leq l\}=1$.
\end{itemize}
\end{lemma}

\begin{rem}\label{rem:countings1}
{\rm (1)}
If $R/G$ is reduced, 
it is known that the canonical projection $\pi_G:F\to F/G$ induces a bijection
\begin{equation}\label{eq:isom-p_G}
\pi_G|_{R\cap F_a}:R \cap F_a\xrightarrow{\sim}R/G
\end{equation}
{\rm (}see \cite{Saito1985}, {\rm (3.3), Note 2}{\rm )}. Therefore, the subspace $F_a$ satisfies the
conditions \eqref{cond1:lemma_counting-rootsystem} and 
\eqref{cond2:lemma_counting-rootsystem}, and the results in Lemma 
\ref{lemma_counting-rootsystem} hold for $F_a$. Indeed, in \cite{Saito1985}, 
K. Saito proved the statements {\rm ((6.1), Assertion)} under the assumption that
$R/G$ is reduced, and used Lemma \ref{lemma_counting-rootsystem}. 
\vskip 1mm
\noindent
{\rm (2)}
Otherwise, the induced map 
$\pi_G|_{R\cap F_a}:R\cap F_a \longrightarrow R/G$ is {\it not} surjective in general.
As a consequence, Lemma \ref{lemma_counting-rootsystem} {\it does not} 
hold in general. Therefore, we have to prove the statements {\rm (1), (2), (3)} 
in the lemma without using Lemma  \ref{lemma_counting-rootsystem}.
\end{rem}

\begin{proof}
Let us prove statement (1). Note that the following equality holds:
\begin{equation}\label{k=kast}
k(\alpha)=k(\alpha^\ast).
\end{equation}
Indeed, since $\alpha,\alpha^\ast=\alpha+k(\alpha)a\in R$, we have 
$-k(\alpha)a\in K_G(\alpha^\ast)$. By Lemma \ref{lemma-counting} (1), 
 it follows that
$k(\alpha)a \in K_G(\alpha^\ast)$. This shows that $k(\alpha^\ast)\leq k(\alpha)$.   
By Lemma \ref{lemma-counting} (1), 
$-k(\alpha^\ast)a\in K_G(\alpha^\ast)$. Hence, 
\[\alpha^*-k(\alpha^\ast)a=\alpha+(k(\alpha)-k(\alpha^\ast))a\in R,\]
from which the minimality of $k(\alpha)$ implies $k(\alpha^\ast) \geq k(\alpha)$,
 thus we have \eqref{k=kast}. By using \eqref{k=kast} repeatedly, we obtain
\[k(\alpha)=k(\alpha+rk(\alpha)a)\quad\text{for every $r\in\Z$}.\]
Thus, statement (1) follows.

Statement (2) is an immediate consequence of statement (1) and Lemma 
\ref{lemma-counting} (4).
Statement (3) follows from Lemma \ref{lemma-counting} (2) and 
statement (1). 
\end{proof}
\vskip 3mm
\subsection{Elliptic diagrams I: reduced affine quotient}\label{sect_elliptic-diagram-I}
Let $(R,G)$ be a mERS belonging to $(F,I_F)$ whose quotient system $R/G$ is 
\textit{reduced}, and fix a simple system $\{\alpha_i\}_{0\leq i\leq l}$ of $(R,G)$.
Recall that the canonical projection $\pi_G:F\to F/G$ induces a bijection
\eqref{eq:isom-p_G}. 
Via this bijection, the set $\{\alpha_i\}_{0\leq i\leq l}$ can be
regarded as the set of nodes of the Dynkin diagram of the affine root system $R/G$. 
Let $\Gamma_a$ be the Dynkin diagram of the affine root system 
$R/G$. {Denote $\overline{\alpha}_i=\pi_G(\alpha_i)$ for $0\leq i\leq l$.
Let $n_i$'s be the positive coprime integers  such that 
$\overline{b}:=\sum_{i=0}^l n_i \overline{\alpha}_i$ generates the lattice 
$Q(R/G)\cap \rad(I_{F/G})$ of rank $1$. Define
\begin{equation}\label{defn:b}
b=\sum_{i=0}^l n_i \alpha_i.
\end{equation}
Then $(b,a)$ is an integral basis of $\rad_\Z(I_F)$, {\it i.e.}, 
$\rad_\Z(I_F)=\Z b\oplus \Z a$.  
Let $k(\alpha_i)$ be the counting of the root $\alpha_i$.
The \textbf{exponent}\index[index]{exponent} $m_{\alpha_i}$ is defined by
\[ m_{\alpha_i}=\frac{I_R(\alpha_i, \alpha_i)}{2k(\alpha_i)}n_i.\]
We denote the set of 
nodes by $\vert \Gamma_a \vert$. \\

Set $m_{\max}=\max\{m_{\alpha_0}, \cdots, m_{\alpha_l}\}$. Let $\Gamma_m$ be 
the subdiagram of $\Gamma_a$  consisting of nodes 
\[ \vert \Gamma_m \vert=\{ \alpha_{\alpha_i} \in \vert \Gamma_a \vert\, 
\vert m_{\alpha_i}=m_{\max}\}. \]
Define
\begin{align*}
&\vert \Gamma_m^\ast \vert=\{\alpha^\ast\, \vert \, \alpha \in \vert\Gamma_m \vert\, \}.
\end{align*}

\begin{defn}\label{defn_elliptic-diagram-I}
The \textbf{elliptic diagram}\index[index]{elliptic diagram} 
$\Gamma(R,G)$\index[notations]{G1111@$\Gamma(R,G)$} 
for a marked elliptic root 
system $(R,G)$ 
is the graph whose set of nodes is 
\[ \vert \Gamma(R,G)\vert :=\vert \Gamma_a \vert \cup \vert \Gamma_m^\ast \vert, \]
and any two nodes $\alpha, \beta \in \vert\Gamma(R,G)\vert $ are connected by the rules 
$($Edge0$)$ - $($Edge3$)$ with the additional case as follows:
\begin{itemize}
\item[(Edge4)] 
\begin{tikzpicture} 
\draw (0,0) circle(0.1); \draw (0,0.1) node[above] {$\alpha$};
\draw  [dashed, double distance=2] (0.1,0,0) -- (0.9,0,0); 
\draw (1,0) circle(0.1); \draw (1,0.1) node[above] {$\beta$};
\end{tikzpicture} if $I_F(\alpha, \beta^\vee)=I_F(\beta, \alpha^\vee)=2$.
\end{itemize}
\end{defn}
\label{sect_elliptic}

\subsection{Classification of mERSs with reduced quotient}
In this subsection, we provide the classification theorem of the mERSs $(R,G)$ with reduced affine quotient $R/G$. Assume that the root system $R$ belongs to a real vector space $F$ equipped with a symmetric bilinear form $I_F$ whose signature is $(l,2,0)$. 
 Fix a basis $(\vep_1,\vep_2,\cdots, \vep_l, b,a)$ of $F$ satisfying $G=\R a$ and 
\begin{equation}\label{def_base-F}
I_F(\vep_i,\vep_j)=\delta_{i,j},\; I_F(F,b)=I_F(F,a)=0.
\end{equation}

Here, we use freely the notations on affine root systems as is given in Appendix \ref{sect_affine-root-system}. Fix an isometric section $F/G \hookrightarrow F$ so that its image is spanned by $\{ \vep_i \}_{1\leq i\leq l} \cup \{ b\}$ and that it induces $R/G \hookrightarrow R$. Via such a section, we identify $F/G$ with its image and regard $R/G$ as a root subsystem of $R$.  \\

Depending on the type of the full quotient $R/\rad(I_F)$, we introduce some mERSs $(R,G)$ as follows (classical type):
\begin{enumerate}
\item $R/\rad(I_F)$: of type $A_l\; (l\geq 1)$, $D_l\; (l\geq 4)$, $E_6$, $E_7$ or $E_8$. \quad
Let $R(X_l)$ be one of such finite root systems. The mERS of type $X_l^{(1,1)}$ is defined by
\[ R(X_l^{(1,1)})=R(X_l^{(1)})+\Z a=R(X_l)+\Z a+\Z b. \]
\item $R/\rad(I_F)$: of type $B_l\; (l\geq 3)$, $C_l\; (l\geq 2)$, $F_4$ or $G_2$. \quad
Let $R(X_l)$ be one of such finite root systems and let $t_1, t_2 \in \{1,2\}$ for $X_l$ of type $B_l, C_l$ or $F_4$ and $t_1,t_2 \in \{1,3\}$ for $X_l$ of type $G_2$. The mERS of type $X_l^{(t_1,t_2)}$ is defined by
\begin{align*}
R(X_l^{(t_1,t_2)})=
&(R(X_l^{(t_1)})_s+\Z a) \cup (R(X_l^{(t_1)})_l+t_2\Z a) \\
=
&(R(X_l)_s+\Z a+\Z b) \cup (R(X_l)_l+t_2\Z a+t_1\Z b). 
\end{align*}
The defining range of the root system $R(X_l^{(t_1,t_2)})$ with respect to $l$ is the same as that of the affine root system $R(X_l^{(t_1)})$. 
\item $R/\rad{I_F}$: of type $BC_l\; (l\geq 1)$.
\quad By the assumption that $R/G$ is reduced, it should be of type $BC_l^{(2)}$. That
is, the first tier number $t_1$ is equal to $2$ in this case. Recall that
\[ R(BC_l^{(2)})=(R(BC_l)_s+\Z b) \cup (R(BC_l)_m+\Z b) \cup (R(BC_l)_l+(1+2\Z)b). \]
Let $t_2 \in \{1,2,4\}$ be a positive integer.
\begin{enumerate}
\item[i)] For $t_2=1$, the root system of type 
$BC_l^{(2,1)}$ ($l\geq 1$) is defined by
\[
R(BC_l^{(2,1)})=R(BC_l^{(2)})+\Z a.
\]
\item[ii)] For $t_2=2$, let $i \in \{1,2\}$ be a positive integer. 
The root system of type $BC_l^{(2,2)}(i)$ 
($l\geq 2$ for $i=1$ and $l \geq 1$ for $i=2$) is defined by
\[
R(BC_l^{(2,2)}(i))=
(R(BC_l^{(2)})_s+\Z a) \cup (R(BC_l^{(2)})_m+i\Z a) \cup (R(BC_l^{(2)})_l+ 2\Z a).
\]
\item[iii)] For $t_2=4$, the root system of type $BC_l^{(2,4)}$ 
($l\geq 1$) is defined by
\[
R(BC_l^{(2,4)})=
(R(BC_l^{(2)})_s+\Z a) \cup (R(BC_l^{(2)})_m+2\Z a) \cup (R(BC_l^{(2)})_l+ 4\Z a).
\]
\end{enumerate}
\end{enumerate}
Surprisingly, there are another type of mERSs, called  $\ast$-type:
\begin{enumerate}
\item The root system of type $A_1^{(1,1)\ast}$:
\[ R(A_1^{(1,1)\ast})=R(A_1)+\{\, ma+nb\, \vert\, mn \equiv 0 [2]\, \}. \]
\item The root system of type $B_l^{(2,2)\ast}\; (l\geq 2)$:
\[ R(B_l^{(2,2)\ast})=(R(B_l)_s+\{\, ma+nb\, \vert\, mn \equiv 0 [2]\, \}) \cup (R(B_l)_l+2\Z a+2\Z b). \]
\item The root system of type $C_l^{(1,1)\ast}\; (l\geq 2)$:
\[ R(C_l^{(1,1)\ast})=(R(C_l)_s+\Z a+\Z b) \cup (R(C_l)_l+\{\, ma+nb\, \vert\, mn \equiv 0 [2]\, \}). \]
\end{enumerate}
Remark that the superscripts are the tier numbers. See Appendix \ref{sect_clasfficiation-red-quot} for detail. \\

K. Saito \cite{Saito1985} obtained the following classification theorem:
\begin{thm}\label{thm:classification-red} Let $(R,G)$ be a marked elliptic root system belonging to a real vector space $F=\R^{l+2}$ equipped with a symmetric bilinear form $I$ whose signature is $(l,2,0)$ for some positive integer $l\in \Z_{>0}$. Suppose that the affine quotient $R/G$ is reduced. Then $(R,G)$ is isomorphic to one of the following marked elliptic root systems:
\begin{align*}
&A_l^{(1,1)}\,(l\geq 1),\;  D_l^{(1,1)}\, (l\geq 4),\;  E_6^{(1,1)}, \; E_7^{(1,1)}, \; E_8^{(1,1)}, \\
&B_l^{(1,t)}\, (l\geq 3),\; C_l^{(1,t)}\, (l\geq 2),\; F_4^{(1,t)} \quad t \in \{1,2\}, \qquad 
  G_2^{(1,t)} \quad t \in \{1,3\}, \\
&B_l^{(2,t)}\, (l\geq 2),\; C_l^{(2,t)}\, (l\geq 3), \; F_4^{(2,t)} \quad t \in \{1,2\}, \qquad
  G_2^{(3,t)}\quad t \in \{1,3\}, \\
&BC_l^{(2,1)}, \; BC_l^{(2,4)} \quad (l\geq 1), \\
&BC_l^{(2,2)}(1)\, (l\geq 2), \; BC_l^{(2,2)}(2)\, (l\geq 1), \\
&A_1^{(1,1)\ast}, \; B_l^{(2,2)\ast}, \; C_l^{(1,1)\ast}\quad (l\geq 2).
\end{align*}
\end{thm}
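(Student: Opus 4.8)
The plan is to reduce the elliptic classification to the already-known classification of \emph{reduced affine} root systems (Appendix \S\ref{sect_affine-root-system}), using the marking data $a\in G$ together with the counting numbers $k(\alpha)$ and the exponents $m_i$ introduced in \S\ref{sect_elliptic-diagram-I}. First I would fix a simple system $\{\alpha_i\}_{0\le i\le l}$ of $(R,G)$ in the sense of Definition \ref{defn:basis}; since $R/G$ is reduced by hypothesis, $\{\overline\alpha_i\}$ is an honest simple system of a reduced affine root system, so $R/\rad(I_F)=R/G\bmod\overline b$ is one of the finite types $A_l,B_l,C_l,BC_l,D_l,E_l,F_4,G_2$ (note $BC_l$ occurs only through the affine quotient $BC_l^{(2)}$, forcing the first tier number $t_1=2$, as recorded in the statement). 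By the bijection \eqref{eq:isom-p_G}, $R\cap F_a\xrightarrow{\sim}R/G$, and by Lemma \ref{lemma_rootsystem-prototype}(4) the full root system is recovered as
\[
R=\bigsqcup_{\alpha\in R\cap F_a}\bigl(\alpha+\Z k(\alpha)a\bigr),
\]
so the only remaining freedom, once the affine quotient is fixed, is the assignment $\alpha\mapsto k(\alpha)$ of counting numbers.

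Next I would analyze the constraints on the function $k(\cdot)$. By Lemma \ref{lemma_rootsystem-prototype}(2), $k(\beta)\mid I(\beta,\alpha^\vee)k(\alpha)$ for all $\alpha,\beta$, and in particular $k$ is constant on $W(R\cap F_a)$-orbits whenever consecutive roots are joined by a single bond; combined with Lemma \ref{lemma-counting}(5) relating the counting sets of $R$ and $R^\vee$, this forces $k$ to be constant on each \emph{root length} in the affine quotient. Thus $k$ is determined by its values on the (at most three) length classes $R_s, R_m, R_l$, subject to the divisibility relations read off from the edges of $\Gamma_a$ together with the normalization $\gcd\{k(\alpha_i)\}=1$ from Lemma \ref{lemma_rootsystem-prototype}(5). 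Running through the affine Dynkin diagrams case by case, the edge $t$-values ($t\in\{1,2,3,4\}$) pin the allowed ratios of counting numbers between adjacent length classes: for the simply-laced quotients only $k\equiv 1$ survives, giving the $X_l^{(1,1)}$ families; for $B_l,C_l,F_4,G_2$ one gets the second tier number $t_2=k(R_l)/k(R_s)\in\{1,2\}$ (resp.\ $\{1,3\}$ for $G_2$), giving $X_l^{(t_1,t_2)}$; and for $BC_l^{(2)}$ the three lengths give $t_2\in\{1,2,4\}$ with the extra parameter $i\in\{1,2\}$ when $t_2=2$, exactly the $BC_l^{(2,t_2)}$ and $BC_l^{(2,2)}(i)$ of the statement. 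Finally I would argue that the resulting $R$ is honestly an elliptic root system — i.e.\ the $\Z a$-periodic union above satisfies (R1)--(R5), the nontrivial points being (R3) that each $r_\alpha$ preserves the union (which follows from $k$ being reflection-invariant on each length class and from (R4)-type integrality) and (R1) that $Q(R)$ is full (which holds because the $\alpha_i$ together with $a$ span $F$).

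The subtle part, producing the $\ast$-type systems $A_1^{(1,1)\ast}$, $B_l^{(2,2)\ast}$, $C_l^{(1,1)\ast}$, is where a single counting \emph{number} $k(\alpha)$ is \emph{not} enough: here the counting set of a given root depends not only on its length but genuinely on the coset of that root modulo $\Z a$, so that $K_G(\alpha)=\Z k(\alpha)a$ with $k(\alpha)$ varying within a fixed length class in the pattern ``$mn\equiv 0\ [2]$''. To capture these I would not work with a single affine quotient $R/G$ but exploit the two-dimensional $\rad_\Z(I_F)=\Z b\oplus\Z a$ and study, for each length class, the sublattice $\{ma+nb : \alpha+ma+nb\in R\}$ of $\Z b\oplus\Z a$; Lemma \ref{lemma-counting}(3)--(4) shows this is a reflection-closed, translation-closed subset, hence (after the integrality constraints from (R4), which bound the index) one of a short explicit list of sublattices of $\Z^2$ of index $\le 4$ — precisely the ``checkerboard'' lattice $\{mn\equiv 0\ [2]\}$ in the exceptional cases. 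Matching these sublattice patterns against the allowed finite types (only $A_1$, $B_l$, $C_l$ admit the checkerboard consistently with the divisibility of Lemma \ref{lemma_rootsystem-prototype}(2)) yields the three $\ast$-systems and closes the case analysis. The main obstacle is thus organizing this length-class-by-length-class, diagram-by-diagram bookkeeping so that \emph{no} admissible counting pattern is overlooked — in particular verifying the defining-range restrictions on $l$ (e.g.\ $B_l^{(2,t)}$ needs $l\ge2$, $C_l^{(2,t)}$ needs $l\ge3$, $BC_l^{(2,2)}(1)$ needs $l\ge2$) which come from the small-rank coincidences of finite root systems and from degeneration of the middle-length class when $l$ is small.
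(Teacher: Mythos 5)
Your overall strategy --- fix the reduced affine quotient $R/G$, use the bijection \eqref{eq:isom-p_G} and Lemma \ref{lemma_rootsystem-prototype}(4) to reduce everything to the counting function $k$, and then pin $k$ down by the divisibility relations of Lemma \ref{lemma_rootsystem-prototype}(2) together with the gcd normalization of Lemma \ref{lemma_rootsystem-prototype}(5) --- is exactly the paper's. The one place where you genuinely diverge is in how the $\ast$-systems are detected: the paper organizes the quotients into four groups by their $W(R/G)$-orbit structure and, for the critical group $A_1^{(1)}, C_l^{(1)}, B_l^{(2)}$, asks whether the order-two diagram automorphism $\alpha_0\leftrightarrow\alpha_l$ lifts to an automorphism of $(R,G)$; when it does not lift, necessarily $\{k(\alpha_0),k(\alpha_l)\}=\{1,2\}$, the checkerboard sets appear, and explicit linear maps identify the $(1,2)$ and $(2,1)$ orderings. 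Your alternative --- classifying, for each length class, the reflection- and translation-closed subset of $\Z a\oplus\Z b$ it occupies --- is essentially the method the paper reserves for the rank-one cases in \S\ref{sect_rank-1}; it works here too and makes the ``no pattern overlooked'' claim mechanical, at the cost of having to classify non-lattice subsets (the checkerboard $\{\,ma+nb : mn\equiv 0\,[2]\,\}$ is not a sublattice, only a reflection-closed union of cosets), which your phrase ``short explicit list of sublattices of index $\le 4$'' glosses over.

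The one real flaw is the sentence deriving that ``$k$ is constant on each root length in the affine quotient.'' That statement is false precisely for the quotients $A_1^{(1)}$, $C_l^{(1)}$ and $B_l^{(2)}$ --- i.e.\ precisely where the $\ast$-systems live --- and Lemma \ref{lemma-counting}(5) does not yield it: duality relates $K_G(\alpha)$ to $K_G^\vee(\alpha^\vee)$ but says nothing about two roots of the same length lying in different Weyl orbits. The correct invariance, which follows from Lemma \ref{lemma_rootsystem-prototype}(3) applied to the elements of $W(R\cap F_a)$, is that $k$ is constant on $W(R/G)$-orbits; these coincide with the length classes except for the three families above, where one length class splits into the two orbits through $\overline{\alpha_0}$ and $\overline{\alpha_l}$ (see the orbit data in Appendix \ref{sect_affine-root-system}). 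You do recover in your final paragraph by treating exactly $A_1$, $B_l$, $C_l$ separately, so the list you reach is the right one, but as written your second paragraph asserts something your third paragraph contradicts; the fix is to replace ``constant on each root length'' by ``constant on each $W(R/G)$-orbit'' and to let the orbit count, not the number of lengths, dictate how many free parameters the function $k$ has.
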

A sketchy proof of this theorem is given by K. Saito in  \cite{Saito1985}. Here, we provide a simple complete proof as an application of Lemma \ref{lemma-counting}.
It should be noticed that none of the two mERSs in the above theorem are isomorphic, since two tier numbers are different, except for types $BC_l^{(2,2)}(i)$ $(i=1,2)$ where the number of Weyl group orbits on the root systems are different.
Therefore, in practice, the above theorem also gives the classification of the isomorphism classes of mERS with reduced affine quotient. 
\medskip
\subsection{Proof of Theorem \ref{thm:classification-red}}
A key idea of the proof comes from Lemma \ref{lemma-counting}, as the quotient root system $R/G$ is a reduced affine root system. In view of this lemma, it is convenient to regroup the reduced affine root systems, depending on the type of the orbit decomposition of $R/G$ with respect to the affine Weyl group $W(R/G)$, as follows:
\begin{enumerate}
\item $A_l^{(1)}\, (l\geq 2)$, \; $D_l^{(1)}\, (l\geq 4)$,\; $E_6^{(1)}$, \; $E_7^{(1)}$ and 
$E_8^{(1)}$.
\item $B_l^{(1)}\, (l\geq 3), \; F_4^{(1)}, \; G_2^{(1)}, \; C_l^{(2)}\, (l\geq 3), \; F_4^{(2)}$ 
and $G_2^{(3)}$.
\item $A_1^{(1)}, \; C_l^{(1)}\, (l\geq 2)$ and $B_l^{(2)}\, (l\geq 2)$.
\item $BC_l^{(2)}\, (l\geq 1)$.
\end{enumerate}
See Appendix \ref{sect_affine-root-system} for details on the affine root systems. 
\begin{rem} {\rm
Here we use the nomenclature for the reduced affine root systems due to 
K. Saito \cite{Saito1985}, since they reflect the structure of root systems 
themselves more than those of V. Kac \cite{Kac1990}. For the sake of reader's 
convenience, we have recalled them in  Appendix  \ref{sect_affine-root-system}.  
Here is the correspondence$:$ \\
\begin{center}
\scalebox{0.9}{%
\begin{tabular}{|c||c|c|c|c|c||c|} \hline
$\begin{matrix}
\text{\small V. Kac } \\
\text{\small $($1969, \cite{Kac1969}$)$ }
\end{matrix}$ 
& $X_l^{(1)}$ & $\begin{matrix} {\tiny D_{l+1}^{(2)}} \\ {\tiny l \geq 2} \end{matrix}$  & $\begin{matrix} {\tiny A_{2l-1}^{(2)}} \\ {\tiny l\geq 3} \end{matrix}$ & 
 $E_6^{(2)}$ & $D_4^{(3)}$  & $A_{2l}^{(2)}$\\ \hline
$\begin{matrix} \text{\small R. Moody} \\ \text{\small  $($1969, \cite{Moody1969}$)$} 
\end{matrix}$ 
& $X_{l,1}$ & $B_{l,2}$ & $C_{l,2}$  
& $F_{4,2}$ & $G_{2,3}$ & 
$\begin{matrix}
{\tiny A_{1,2}} & \text{\tiny $l=1$}\\ 
{\tiny BC_{l,2}} & \text{\tiny $l>1$} 
\end{matrix}$
\\ \hline
$\begin{matrix}
\text{\small I. Macdonald} \\ \text{\small $($1972, \cite{Macdonald1972}$)$}
\end{matrix}$
& 
$\begin{matrix}
{\tiny X_l=X_l^\vee} & \text{\tiny $X=ADE$}\\ 
{\tiny X_l} &  \text{\tiny $X=BCFG$} 
\end{matrix}$
& $C_{l}^\vee$ & $B_{l}^\vee$  
& $F_{4}^\vee$ & $G_{2}^\vee$ & $BC_l=BC_l^\vee$\\ \hline
$\begin{matrix}
\text{\small K. Saito } \\ 
\text{\small $($1985, \cite{Saito1985}$)$}
\end{matrix}$
& $X_l^{(1)}$  & $B_l^{(2)}$ & $C_l^{(2)}$ 
& $F_4^{(2)}$ & $G_2^{(3)}$ & $BC_l^{(2)}$\\ \hline
$\begin{matrix}
\text{\small R. Carter} \\
\text{\small  $($2005, \cite{Carter2005}$)$}
\end{matrix}$ 
& $\widetilde{X}_l$
& $\widetilde{C}_l^{\mathrm{t}}$ & $\widetilde{B}_l^{\mathrm{t}}$ 
& $\widetilde{F}_4^{\mathrm{t}}$ & $\widetilde{G}_2^{\mathrm{t}}$
& $\widetilde{C}_l'$ \\ \hline
\end{tabular}%
}
\end{center} }
\end{rem}
\vskip 0.1in
From now on, we fix a basis of the vector space $F=\R^{l+2}$ with the symmetric 
bilinear form $I_F$ with signature $(l,2,0)$. Let $(\vep_1,\vep_2,\cdots, \vep_l, b, a)$ be a basis of $F$ satisfying \eqref{def_base-F}. 
\subsubsection{$R/G$ of type $A_l^{(1)}\, (l\geq 2), \; D_l^{(1)}\, (l\geq 4), \; E_6^{(1)}, 
\, E_7^{(1)}, \, E_8^{(1)}$}

\smallskip

Let $X_l^{(1)}$ be one of such root systems. 
In this case, the real root system $R/G$ has a single Weyl group orbit. Hence, by 
Lemma \ref{lemma:countings1} (3), $k(\alpha)=1$ for any $\alpha \in R$. Thus, the 
root system $R$ can be described as follows:
\[ R=R(X_l^{(1)})+\Z a=R(X_l)+\Z b+\Z a.
\]
This is exactly the root system of type $X_l^{(1,1)}$.


\subsubsection{$R/G$ of type $B_l^{(1)}\, (l\geq 3), \; F_4^{(1)}, \; G_2^{(1)}, \; C_l^{(2)}\, 
(l\geq 3), \; F_4^{(2)}, \; G_2^{(3)}$}
Let $X_l^{(r)}$ be one of such root systems. In this case, the decomposition 
$R(X_l^{(r)})=R(X_l^{(r)})_s \amalg R(X_l^{(r)})_l$ gives the decomposition into the Weyl 
group orbits.  Hence the counting number only depends on the length of the roots. 
Moreover, for a long root $\alpha$ and short root $\beta$, one has 
$I_F(\beta, \alpha^\vee) \in \{ \pm 1\}$ if it is not $0$. Thus, by Lemma 
\ref{lemma:countings1} (2), we have $k(\beta)=1$. 
For each $s:=k(\alpha) \in \left\{ 1, \frac{I_F(\alpha, \alpha)}{I_F(\beta, \beta)}\right\}$, we obtain
\begin{align*}
R=
&(R(X_l^{(r)})_s+\Z a) \cup (R(X_l^{(r)})_l+s\Z a)\\
=
&(R(X_l)_s+\Z b +\Z a) \cup (R(X_l)_l+r\Z b+s\Z a),
\end{align*}
which is of type $X_l^{(r,s)}$. This  gives us the next list: \\
\begin{center}
\begin{tabular}{|c||c|c|c|c|c|c|} \hline
Type of $R/G$ & $B_l^{(1)}\; (l\geq 3)$ & $F_4^{(1)}$ & $G_2^{(1)}$ 
                       & $C_l^{(2)}\; (l\geq 3)$ & $F_4^{(2)}$ & $G_2^{(3)}$ \\ \hline
       $s=1$      & $B_l^{(1,1)}$ & $F_4^{(1,1)}$ & $G_2^{(1,1)}$ 
                      & $C_l^{(2,1)}$ & $F_4^{(2,1)}$ & $G_2^{(3,1)}$ \\ \hline
       $s=\dfrac{I_F(\alpha, \alpha)}{I_F(\beta, \beta)}$      
                      & $B_l^{(1,2)}$ & $F_4^{(1,2)}$ & $G_2^{(1,3)}$ 
                      & $C_l^{(2,2)}$ & $F_4^{(2,2)}$ & $G_2^{(3,3)}$ \\ \hline
\end{tabular}
\end{center}

\subsubsection{$R/G$ of type $A_1^{(1)}, \; C_l^{(1)}\, (l\geq 2), \; B_l^{(2)}\, (l\geq 2)$}

In this case,  $R/G$ admits a diagram automorphism $\overline{\varphi}$, which is defined 
by $\overline{\varphi}(\beta_i)=\beta_{l-i}$ ($0\leq i\leq l$) where $\{\beta_0, \cdots, 
\beta_l\}$ is a simple system of $R/G$. If 
$\overline{\varphi}$ lifts up to an automorphism $\varphi$ of $(R,G)$, then the counting 
number depends only on the length of the roots, in which case it follows that
\begin{enumerate}
\item $k(\alpha)=1$ for any $\alpha \in R$ if $R/G$ is of type $A_1^{(1)}$ and
\item $k(\alpha)=1$ for any $\alpha \in R_s$ and $k(\alpha) \in \{1,2\}$ for any 
$\alpha \in R_l$\\
 if $R/G$ is either of type $B_l^{(2)}$ or $C_l^{(1)}$.
\end{enumerate}
Hence, we consider the case when the automorphism $\overline{\varphi}$ of $R/G$ 
cannot be lifted up to an automorphism of $(R,G)$. In this case, one should have 
$\{ k(\alpha_0), \, k(\alpha_l) \}=\{1,2\}$. \\

\noindent{\fbox{The case $(k(\alpha_0), k(\alpha_l))=(1,2)$}} \quad \\
\begin{enumerate}
\item $R/G$: of type $A_1^{(1)}$
\[ R=\{ \alpha+nb+ma\, \vert\, \alpha \in R(A_1), \; m(n-1) \equiv 0\; [2]\, \}. \]
\item $R/G$: of type $B_l^{(2)}$
\[ R_s=\{ \alpha+nb+ma\, \vert\, \alpha \in R(B_l)_s, \; m(n-1) \equiv 0\; [2]\, \}. \]
By Lemma \ref{lemma:countings1} (2), one has $k(\alpha)=2$ for any 
$\alpha \in R_l$. 
\item $R/G$: of type $C_l^{(1)}$
\[ R_l=\{ \alpha+nb+ma\, \vert\, \alpha \in R(C_l)_l, \; m(n-1) \equiv 0\; [2]\, \}. \]
By Lemma \ref{lemma:countings1} (2), we have $k(\alpha)=1$ for any 
$\alpha \in R_s$. 
\end{enumerate}
\vskip+5mm

\noindent{\fbox{The case $(k(\alpha_0), k(\alpha_l))=(2,1)$}} \quad \\
\begin{enumerate}
\item $R/G$ of type $A_1^{(1)}$ 
\[ R=\{ \alpha+nb+ma\, \vert\, \alpha \in R(A_1), \; mn \equiv 0\; [2]\, \}. \]
\item $R/G$ of type $B_l^{(2)}$
\[ R_s=\{ \alpha+nb+ma\, \vert\, \alpha \in R(B_l)_s, \; mn \equiv 0\; [2]\, \}. \]
By Lemma \ref{lemma:countings1} (2), one has $k(\alpha)=2$ for any 
$\alpha \in R_l$. 
\item $R/G$: of type $C_l^{(1)}$. 
\[ R_l=\{ \alpha+nb+ma\, \vert\, \alpha \in R(C_l)_l, \; mn \equiv 0\; [2]\, \}. \]
By Lemma \ref{lemma:countings1} (2), one has $k(\alpha)=1$ for any 
$\alpha \in R_s$. 
\end{enumerate}
\vskip+5mm

Let us examine whether the cases $(k(\alpha_0), k(\alpha_l))=(1,2)$ and 
$(k(\alpha_0), k(\alpha_l))\\ =(2,1)$ provide isomorphic marked root systems or not. \\

In the case when $R/G$ is of type $A_1^{(1)}$, it can be checked that the linear map of 
$F$ defined by 
\[ \alpha\,  \mapsto\,  \alpha+b, \qquad b\,  \mapsto\,  b, \qquad a\,  \mapsto\,  a, \]
with a fixed positive root $\alpha$ of $R(A_1)$, 
induces an isomorphism of the two marked root systems which is of type $A_1^{(1,1)\ast}$.

In the case when $R/G$ is of type $B_l^{(2)}$,  the linear map of $F$ defined by 
\[ \vep_i \,  \mapsto\,  \vep_i+b, \qquad b\,  \mapsto\,  b, \qquad a\,  \mapsto\,  a \]
for $1\leq i\leq l$, 
induces an isomorphism of the two marked root systems which is of type $B_l^{(2,2)\ast}$.

In the case when $R/G$ is of type $C_l^{(1)}$,  the linear map of $F$ defined by 
\[ \vep_i \,  \mapsto\,  \vep_i+\frac{1}{2}b, \qquad b\,  \mapsto\,  b, \qquad a\,  
\mapsto\,  a \]
for $1\leq i\leq l$, 
induces an isomorphism of the two marked root systems which is of type $C_l^{(1,1)\ast}$.\\

Summarizing, we obtain the next table: \\

\begin{center}
\begin{tabular}{|c||c|c|c|} \hline
Type of $R/G$ & $A_1^{(1)}$ & $B_l^{(2)}\; (l\geq 2)$ & $C_l^{(1)}\; (l\geq 2)$ \\ \hline
$k(\alpha_0)k(\alpha_l)=1$
& $A_1^{(1,1)}$ & $B_l^{(2,1)}$ & $C_l^{(1,1)}$ \\ \hline
$k(\alpha_0)k(\alpha_l)=2$
&  $A_1^{(1,1)\ast}$ & $B_l^{(2,2)\ast}$ & $C_l^{(1,1)\ast}$ \\ \hline
$k(\alpha_0)k(\alpha_l)=4$
& & $B_l^{(2,2)}$ & $C_l^{(1,2)}$ \\ \hline
\end{tabular}
\end{center}

\subsubsection{$R/G$ of type $BC_l^{(2)}$}
Denote the finite root system of type $BC_l$ by $R_f$. 
In this case, there can be $3$ different values, say 
$k_s:=k(\vep_i), \; k_m:=k(\vep_i+\vep_j), \; k_l:=k(2\vep_i)$ with any $1\leq i\neq j\leq l$. 
By Lemma \ref{lemma:countings1} (2), we have
\begin{equation}\label{counting_red-red}
k_s \vert k_l \vert 4k_s, \qquad k_s \vert k_m \vert 2k_s, \qquad k_m \vert k_l \vert 2k_m.
\end{equation}
From Lemma \ref{lemma:countings1} (5), it follows that $k_s=1$. Hence the 
above equality implies $k_m \in \{1,2\}$. 
\begin{enumerate}
\item If $k_m=1$, then $k_l \in \{1,2\}$ by \eqref{counting_red-red}.
\begin{enumerate}
\item[i)] If $k_l=1$, then 
\begin{align*}
R=
&\{ \alpha+\Z a+\Z b\, \vert\,  \alpha \in (R_f)_s \cup (R_f)_m \} \\
&\cup
    \{ \alpha+\Z a+(2\Z -1)b\, \vert \, \alpha \in (R_f)_l \} 
\end{align*}
is of type $BC_l^{(2,1)}$, and 
\item[ii)] if $k_l=2$, then 
\begin{align*}
R=
&\{ \alpha+\Z a+\Z b\, \vert\,  \alpha \in (R_f)_s \cup (R_f)_m\} \\
&\cup
  \{ \alpha+2\Z a+(2\Z -1)b\, \vert \, \alpha \in (R_f)_l \} 
\end{align*}
is of type $BC_l^{(2,2)}(1)$.
\end{enumerate}
\item If $k_m=2$, then $k_l \in \{2,4\}$ by \eqref{counting_red-red}.
\begin{enumerate}
\item[i)] If $k_l=2$, then
\begin{align*}
R=
&\{ \alpha+\Z a+\Z b\, \vert\,  \alpha \in (R_f)_s \} \\
&\cup 
   \{ \alpha+2\Z a+\Z b\, \vert \, \alpha \in (R_f)_m \} \\
&\cup
    \{ \alpha+2\Z a+(2\Z -1)b\, \vert \, \alpha \in (R_f)_l \} 
\end{align*}
is of type $BC_l^{(2,2)}(2)$, and 
\item[ii)] if $k_l=4$, then 
\begin{align*}
R=
&\{ \alpha+\Z a+\Z b\, \vert\,  \alpha \in (R_f)_s \} \\
&\cup
    \{ \alpha+2\Z a+\Z b\, \vert \, \alpha \in (R_f)_m\} \\
&\cup
    \{ \alpha+4\Z a+(2\Z -1)b\, \vert \, \alpha \in (R_f)_l \} 
\end{align*}
is of type $BC_l^{(2,4)}$.
\end{enumerate}
\end{enumerate}

\section{Automorphisms of a mERS with reduced quotient}\label{sect:auto-mers-reduced-case}
\subsection{Preliminaries}
For the description of Weyl groups, we recall a notion of 
the Eichler-Siegel map, following \cite{Saito1985}.
Let $V$ be a finite-dimensional real vector space with a symmetric 
bilinear form $J:V\times V\to \R$ on $V$.  Define a semigroup structure 
$\circ$ on $V\otimes V/\mbox{rad}J$ by
\begin{equation*}
\Bigl(\sum_i u_i\otimes v_i\Bigr)\circ \Bigl(\sum_j w_j\otimes x_j\Bigr):=
\sum_i u_i\otimes v_i+\sum_j w_j\otimes x_j-\sum_{i,j}J(v_i,w_j)u_i\otimes x_j.
\end{equation*}
\begin{defn} The Eichler-Siegel map $E_V$ for $(V,J)$ is the map from 
$V\otimes V/\mathrm{rad}(J)$ to $\End(V)$ defined by
\[ E_V(\sum_i u_i \otimes v_i)(u)=u-\sum_i  I(v_i,u)u_i. \]
\end{defn}
From the definition, it follows that
\begin{enumerate}
\item[i)] the map $E_V:V \otimes V/\mathrm{rad}(J) \rightarrow \End(V)$ is
a homomorphism of semi-groups;
\item[ii)] it is injective, and is bijective if and only if $\mathrm{rad}{J}=0$;
\item[iii)] for a non-isotropic $\alpha \in V$, one has $r_\alpha=E_V(\alpha \otimes 
\alpha^\vee)$;
\item[iv)] the inverse of Eichler-Siegel map on the Weyl group $W(R)$ is well-defined;
\item[v)] for $\xi, \eta \in \mathrm{rad}(J) \otimes V/\mathrm{rad}(J)$, one has
\[E_V(\xi+\eta)=E_V(\xi)E_V(\eta). \]
\end{enumerate}
(See \cite{Saito1985}, for details.) \\

We return to the setting in \S \ref{subsec:general}.  Let $(F,I_F)$ be a pair of 
a finite-dimensional real vector space $F$ and a symmetric bilinear form 
$I_F:F\times F\to \R$ on $F$ with signature $(\mu_+,\mu_0,\mu_-)$ and 
$R$ a root system belonging to $(F,I_F)$. Furthermore, let $G$ be a subspace of 
$\rad (I_F)$ defined over $\Z$ (marking). 

Since every element $w\in W(R)$ preserves the subspace $G$ of 
$\mathrm{rad}(I_F)$, the 
canonical projection $\pi_G: F\to F/G$ induces the surjective group homomorphism
$(\pi_G)_*:W(R)\to W(R/G)$. Define a subgroup $T_G(R)$ of $W(R)$ by
\begin{equation}
T_G(R)=W(R)\cap E_F\big(G\otimes (F/\mathrm{rad}\, I_F)\big),
\end{equation}
where $E_F:F\otimes F/\mathrm{rad}\, I_F\to \mathrm{End} (F)
$\index[notations]{e311@$E_F$} is the 
Eichler-Siegel map. The following lemma is useful for describing the structure
of the Weyl group. 

\begin{lemma}[\cite{Saito1985}, (1.15), Assertion and Lemma]
\label{lemma:Saito(1.15)} 
\begin{enumerate}
\item There is an exact sequence
\begin{equation}\label{eq:exact-Weyl}
1 \longrightarrow T_G(R)\longrightarrow W(R) \xrightarrow{(\pi_G)_*} W(R/G) 
\longrightarrow 1.
\end{equation}
\item Assume there exists a vector subspace $L$ of $F$ which has 
the following properties{\rm :}
\begin{itemize}
\item[(i)] $F=L\oplus G${\rm ;}
\item[(ii)] The restriction $(\pi_G)_*\big|_{W(R\cap L)}:W(R\cap L)\to W(R/G)$ is 
surjective. Here, $W(R\cap L)$ is the subgroup of $W(R)$
generated by the reflections $r_\alpha\ (\alpha\in R\cap L)$.
\end{itemize}
Then, the map $(\pi_G)_*\big|_{W(R\cap L)}$ is an isomorphism. 
\vskip 1mm
\noindent
\item Under the assumption that such an $L$ exists, 
the map $\big((\pi_G)_*\big|_{W(R\cap L)}\big)^{-1}:W(R/G)\to W(R\cap L)$
gives a section of \eqref{eq:exact-Weyl}. That is, this exact sequence splits, and
 $W(R)$ can be described as a semidirect product{\rm :}
\begin{equation}
W(R)\cong T_G(R)\rtimes W(R\cap L).
\end{equation}
\vskip 1mm
\noindent
\item Under the same assumption, $E_F^{-1}\big(T_G(R)\big)$ is a lattice of 
$G\otimes F/\mathrm{rad}(I_F)$, which is generated by
\begin{equation}
\alpha_G\otimes \big(\alpha_L^\vee\, \mathrm{mod}\, \mathrm{rad}(I_F)\big)\quad\text{for}\quad \alpha\in R.
\end{equation} 
Here, $\alpha=\alpha_L+\alpha_G$ is the decomposition of $\alpha\in R$ with 
respect to the direct sum decomposition $F=L\oplus G$.
\end{enumerate}
\end{lemma}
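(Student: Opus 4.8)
The plan is to prove Lemma \ref{lemma:Saito(1.15)} in the four parts as stated, building each part on the preceding ones, since part (1) is what everything else rests on.

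\textbf{Part (1): the exact sequence.} First I would check that $(\pi_G)_*$ is well-defined and surjective. Surjectivity is immediate: $W(R/G)$ is generated by the reflections $r_{\bar\alpha}$ for $\bar\alpha\in R/G$, and since $R\to R/G$ is surjective, each such $\bar\alpha$ lifts to some $\alpha\in R$ with $(\pi_G)_*(r_\alpha)=r_{\bar\alpha}$ (one checks $\pi_G(r_\alpha(\lambda))=r_{\bar\alpha}(\pi_G(\lambda))$ directly from the reflection formula and the fact that $I_{F/G}$ is induced from $I_F$). The content is the identification $\ker((\pi_G)_*)=T_G(R)$. For the inclusion $\ker\subseteq T_G(R)$: if $w\in W(R)$ acts trivially on $F/G$, then for every $u\in F$ we have $w(u)-u\in G$, and moreover $w$ preserves $I_F$, so one shows $w-\mathrm{id}$ factors through $F/\rad(I_F)$ (because $\rad(I_F)\supseteq G$ is fixed by $w$, and the image lands in $G$). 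Writing $w-\mathrm{id}$ in the form $u\mapsto -\sum_i I_F(v_i,u)u_i$ with $u_i\in G$ and $v_i$ well-defined mod $\rad(I_F)$, this exhibits $w=E_F(\sum_i u_i\otimes v_i)\in E_F(G\otimes F/\rad(I_F))$, hence $w\in T_G(R)$. The reverse inclusion $T_G(R)\subseteq\ker$ is the easy direction: any $E_F(\sum u_i\otimes v_i)$ with $u_i\in G$ sends $u$ to $u-\sum I_F(v_i,u)u_i\equiv u\ \mathrm{mod}\ G$. This step — the careful factorization through $F/\rad(I_F)$ and the invocation of properties ii) and iv) of the Eichler–Siegel map to know the preimage is unambiguous — is where I expect the main subtlety to lie.

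\textbf{Part (2): injectivity of the restriction.} Given $L$ with $F=L\oplus G$ and $(\pi_G)_*|_{W(R\cap L)}$ surjective, I must show it is injective. Suppose $w\in W(R\cap L)$ lies in the kernel; by Part (1), $w\in T_G(R)$, so $w(u)-u\in G$ for all $u$. But $w$ is generated by reflections $r_\alpha$ with $\alpha\in R\cap L\subseteq L$, and each such reflection preserves $L$; hence $w(L)\subseteq L$, so for $u\in L$ we get $w(u)-u\in L\cap G=\{0\}$. Thus $w$ fixes $L$ pointwise; since $w$ also fixes $G\subseteq\rad(I_F)$ (as $G$ is in the radical and $w$ is an isometry fixing... — more precisely $w$ acts trivially on $F/\rad I_F$ combined with the $T_G$-shape forces it), and $F=L\oplus G$, one concludes $w=\mathrm{id}$. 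A small point to nail down is why $w$ fixes $G$ and not merely preserves it — this follows because elements of $W(R\cap L)$ with $\alpha\in L$ act on any $x\in\rad(I_F)$ by $r_\alpha(x)=x-I_F(\alpha^\vee,x)\alpha=x$.

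\textbf{Parts (3) and (4): splitting and the lattice.} Part (3) is then formal: $((\pi_G)_*|_{W(R\cap L)})^{-1}$ is a genuine group homomorphism $W(R/G)\to W(R)$ splitting \eqref{eq:exact-Weyl}, so $W(R)\cong T_G(R)\rtimes W(R\cap L)$ by the standard semidirect-product criterion, using that $W(R\cap L)\cap T_G(R)=\{1\}$ from Part (2) and that $W(R\cap L)$ surjects onto the quotient. For Part (4), I would identify $E_F^{-1}(T_G(R))$ inside $G\otimes F/\rad(I_F)$: by property v) of the Eichler–Siegel map, $E_F$ restricted to $G\otimes F/\rad(I_F)$ is an injective homomorphism from an abelian group (the semigroup $\circ$ is abelian when the first tensor factor lies in the radical, since the correction term $\sum J(v_i,w_j)u_i\otimes x_j$ vanishes — $v_i\in F$ paired against... wait, here the radical factor is the first one, so one checks commutativity directly), so its image is a subgroup and $T_G(R)$ corresponds to a subgroup $\Lambda$ of $G\otimes F/\rad(I_F)$. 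To see $\Lambda$ is generated by the $\alpha_G\otimes(\alpha_L^\vee\ \mathrm{mod}\ \rad I_F)$ for $\alpha\in R$: for each $\alpha\in R$, both $r_\alpha$ and its ``$L$-conjugate'' (the section's image of $(\pi_G)_*(r_\alpha)$, which is $r_{\alpha_L + \text{(correction)}}$ — more simply, compare $r_\alpha$ with $r_{\alpha'}$ where $\alpha'\in R\cap L$ has the same image) lie in $W(R)$, and their ratio lies in $T_G(R)$; computing this ratio via the Eichler–Siegel formula and iii) gives exactly $E_F(\alpha_G\otimes\alpha_L^\vee\ \mathrm{mod}\ \rad I_F)$ up to the usual bookkeeping. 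That these generate all of $\Lambda$ (and that $\Lambda$ is a full lattice, i.e., discrete and spanning $G\otimes F/\rad I_F$) follows because $W(R)$ is generated by the $r_\alpha$, so $T_G(R)$ is generated by commutators/ratios of the above type, and the root lattice $Q(R)$ spanning $F$ (axiom (R1)) guarantees the $\alpha_G$ span $G$ and the $\alpha_L^\vee$ span $F/\rad I_F$. I expect Part (4) to require the most careful calculation, but it is routine manipulation of the Eichler–Siegel formalism rather than a genuine obstacle; the real crux is the kernel computation in Part (1).
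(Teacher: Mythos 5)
The paper does not prove this lemma: it is imported verbatim from K.~Saito's 1985 paper, (1.15) Assertion and Lemma, so there is no in-text proof to compare yours against. Your reconstruction is correct and is essentially Saito's argument: the kernel computation in Part (1) rests on the fact that every $r_\alpha$ fixes $\rad(I_F)$ pointwise, so $w-\mathrm{id}$ descends to a map $F/\rad(I_F)\to G$, which the nondegeneracy of $I$ on $F/\rad(I_F)$ converts into an element of $E_F\bigl(G\otimes F/\rad(I_F)\bigr)$; Parts (2) and (3) then go exactly as you say. The one place your write-up is loose is in Part (4): to see that $E_F\bigl(\alpha_G\otimes\alpha_L^\vee\bigr)$ actually lies in $W(R)$ (and not merely in $E_F(G\otimes F/\rad(I_F))$) you must show that the formal reflection $E_F(\alpha_L\otimes\alpha_L^\vee)$ belongs to $W(R\cap L)$ even when $\alpha_L\notin R$; this is where hypothesis (ii) enters, combined with the injectivity argument of Part (2) applied to $s^{-1}E_F(\alpha_L\otimes\alpha_L^\vee)$ for $s\in W(R\cap L)$ a preimage of $r_{\overline{\alpha}}$ — your phrase ``the section's image of $(\pi_G)_*(r_\alpha)$'' is the right fix, but it should be made explicit. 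You should also say a word about why the generated subgroup is discrete (hence a lattice), which does not follow from the spanning statement alone.
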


\begin{rem}{\rm
In the proof of the above lemma (see \cite{Saito1985}, (1.15), 
Assertion and Lemma), both of the root systems $R$ and $R/G$ 
are not assumed to be reduced. 
}\end{rem}

\subsection{Elliptic Weyl groups}\label{sect:Weyl1}
In  the following two subsections, for a mERS $(R,G)$ belonging to $(F,I_F)$, 
the quotient affine root system $R/G$ is assumed to be reduced. 
In this subsection, under this assumption, we {study 
the explicit structure of the elliptic Weyl group $W(R)$. These results will be used in
Part 2 for studying the case that $R/G$ is non-reduced.} \\
\\
{\bf Step 1}. First, we fix an integral basis $(b,a)$ of $\rad_\Z(I_F)$. Indeed, take a basis $(\alpha_0,\ldots,\alpha_l)$ of 
$(R,G)$ as in Appendix \ref{sect_clasfficiation-red-quot}. Note that the element
$\alpha_0$ is chosen as the condition
\begin{equation*}\label{cond:n_0=1}
n_0=1
\end{equation*}
is satisfied. The elements $a$ and $b$ are introduced in \eqref{defn:a} and 
\eqref{defn:b}, respectively. 
\vskip 3mm
\noindent
{\bf Step 2}.
Second, recall the $(l+1)$-dimensional subspace $F_a=\bigoplus_{i=0}^l \R\alpha_i$
of $F$,  introduced in \eqref{defn:L^{l+1}}. By \eqref{eq:isom-p_G}, we see 
that $L=F_a$ satisfies the properties {\rm (i)} and {\rm (ii)} 
of Lemma \ref{lemma:Saito(1.15)} {\rm (2)}. 
Thus, the results of the lemma can be applied for such an $(R,G)$. Recall 
the split exact sequence \eqref{eq:exact-Weyl}:
\begin{equation*}
1 \longrightarrow T_G(R)\longrightarrow W(R) \xrightarrow{(\pi_G)_*} W(R/G) 
\longrightarrow 1.
\end{equation*}
By Lemma \ref{lemma:countings1} (4) and Lemma
\ref{lemma:Saito(1.15)} {\rm (4)}, the lattice $E_F^{-1}\big(T_G(R)\big)$ is generated by
\[k(\alpha)a\otimes \big(\alpha^\vee\,\mathrm{mod}\,\rad (I_F)\big)
=a\otimes \big(\alpha^\dagger\,\mathrm{mod}\,\rad (I_F)\big)
\quad \text{for}\quad \alpha\in R\cap L,\]
where
\begin{equation}\label{defn:dagger}
\alpha^\dagger:=k(\alpha)\alpha^\vee.\index[notations]{a311@$\alpha^\dagger$}
\end{equation}
The following is known:
\begin{itemize}
\item[(i)] The set $\{\alpha^\dagger\,|\, \alpha\in R\cap F_a\}$
is an affine root system which is denoted by 
$(R,G)_a$\index[notations]{r311@$(R,G)_a$}, 
and its ordered subset 
$\{\alpha_i^\dagger\}_{0\leq i\leq l}$ is a simple system of $(R,G)_a$
(see \cite{SaitoTakebayashi1997}),
\end{itemize}
{and the next formula follows easily by the definition:
\begin{itemize}
\item[(ii)] $Q((R,G)_a)\equiv \bigoplus_{i=0}^l\Z\alpha_i^\dagger\,
(\mathrm{mod}\,\rad(I_F))$.
\end{itemize}}
Define a lattice $Q_{F/\mathrm{rad}(I_F),a}\subset F/\rad (I_F)$ of rank $l$ by
\begin{equation}\label{defn:Q_{f,a}}
Q_{{F/\mathrm{rad}(I_F),a}}=\bigoplus_{i=0}^l\Z\alpha_i^\dagger\, (\mathrm{mod}\,\rad(I_F)).
\index[notations]{q311@$Q_{F/\mathrm{rad}(I_F),a}$}
\end{equation}
Combining the above results, we see that
\begin{equation}
{T_G(R)}=E_F(a\otimes Q_{F/\mathrm{rad}(I_F),a}),
\end{equation}
and there exists a semidirect product expression
\begin{equation}
W(R)\cong E_F(a\otimes Q_{F/\mathrm{rad}(I_F),a})\rtimes W(R\cap F_a).
\end{equation} 
The explicit form of $Q_{F/\mathrm{rad}(I_F),a}$ is given in Table 1 below.
\vskip 3mm
\noindent
{\bf Step 3}. Third, study the group $W(R/G)\cong W(R\cap F_a)$. 
Since $R/G$ is a reduced affine root system
belonging to $(F/G,I_{F/G})$, the structure of $W(R/G)$ is well-known. There 
exists a lattice $Q_{F/\mathrm{rad}(I_F),b}\subset (F/G)/\rad (I_{F/G})$ 
\index[notations]{q312@$Q_{F/\mathrm{rad}(I_F),b}$}
of rank $l$ so that
\begin{equation}
W(R/G)\cong 
E_{F/G}\big(\overline{b}\otimes Q_{F/\mathrm{rad}(I_F),b}\big)\rtimes 
W\big((R/G)/\rad (I_{F/G})\big),
\end{equation}
{where $E_{F/G}:(F/G)\otimes (F/G)/\mathrm{rad}(I_{F/G})\longrightarrow 
\mathrm{End}(F/G)$ is the Eichler-Siegel map for $(F/G,I_{F/G})$. 
\index[notations]{e312@$E_{F/G}$}}
By the canonical isomorphism $(F/G)/\rad (I_{F/G})\cong F/\rad (I_F)$, the lattice
$Q_{F/\mathrm{rad}(I_F),b}$ is regarded as a sublattice in 
$F/\rad(I_F)$, and the root system 
$(R/G)/\rad (I_{F/G})$ is identified with $R/\rad (I_F)$. 
Note that $R/ \rad (I_F)$ is a finite root system belonging to 
$(F/\mathrm{rad}(I_F),I_{F/\mathrm{rad}(I_F)})${\color{red},} which is not 
{necessarily} reduced.

Hence, we have
\begin{equation}
W(R/G)\cong 
E_{F/G}\big(\overline{b}\otimes Q_{F/\mathrm{rad}(I_F),b}\big)\rtimes W(R/\rad(I_F)).
\end{equation}
The explicit form of $Q_{F/\mathrm{rad}(I_F),b}$ is given in Table 1 below.
\vskip 3mm
\noindent
{\bf Step 4}. 
Finally, applying Lemma \ref{lemma:Saito(1.15)} for
the pair $(R,\rad (I_F))$, we have the following proposition. 

\begin{prop}\label{prop:SaitoSaito1}
\begin{enumerate}
\item
Define a subspace $F_f$ of $F$ by
\begin{equation}\label{eq:F_f}
F_f=\bigoplus_{i=1}^l\R \alpha_i.\index[notations]{f311@$F_f$}
\end{equation}
Then,  it satisfies properties {\rm (i)} and {\rm (ii)} of Lemma 
\ref{lemma:Saito(1.15)} {\rm (2)} for the pair $(R,\mathrm{rad}(I_F))$. 
That is, the results of the lemma can be applied for such an $(R,\mathrm{rad}(I_F))$. 
Especially, 
there exists a split exact sequence
\begin{equation}\label{eq:exact-Weyl2}
1 \longrightarrow T_{\mathrm{rad}(I_F)}(R) \longrightarrow W(R) 
\xrightarrow{(\pi_{\rad (I_F)})_*} W(R/\mathrm{rad}(I_F)) 
\longrightarrow 1.
\end{equation}
\vskip 1mm
\noindent
\item Recall the lattices $Q_{F/\rad(I_F),a},Q_{F/\rad(I_F),b}\subset  F/\mathrm{rad}(I_F)$ of 
rank $l$ introduced above. We have 
\begin{equation}
T_{\mathrm{rad}(I_F)}(R)=E_F\big(a\otimes Q_{F/\rad (I_F),a}+b\otimes Q_{F/\rad (I_F),b}\big).
\end{equation} 
Therefore, we get a semidirect product expression
\[W(R)\cong E_F\big(a\otimes Q_{F/\rad(I_F),a}+b\otimes Q_{F/\rad(I_F),b}\big)\rtimes 
W(R/\rad (I_F)). \]
\end{enumerate}
\end{prop}
\begin{center}
\begin{table}[h]
\caption{An explicit description of $Q_{F/\rad (I_F),\sharp}\ (\sharp=a,b)$}
\label{table2-auto}
\renewcommand{\arraystretch}{1.4}
\begin{tabular}{|c|c|c|}
\noalign{\hrule height0.8pt}
$(R,G)$&  $Q_{F/\rad (I_F),a}$ & $Q_{F/\rad (I_F),b}$ \\
\noalign{\hrule height0.8pt}
\hline
$X_l^{(1,1)}$ & $Q\big((R/\rad(I_F))^{\vee}\big)$ & $Q\big((R/\rad(I_F))^{\vee}\big)$ \\
\hline
$X_l^{(1,t)}$ & 
$\widetilde{Q}\big((R/\rad (I_F))^\vee\big)$
& $Q\big((R/\rad (I_F))^\vee\big)$ \\
\hline
$X_l^{(t,1)}$ & $Q\big((R/\rad (I_F))^\vee\big)$ &
$\widetilde{Q}\big((R/\rad (I_F))^\vee\big)$ \\
\hline
$X_l^{(t,t)}$  & 
$\widetilde{Q}\big((R/\rad (I_F))^\vee\big)$ &
$\widetilde{Q}\big((R/\rad (I_F))^\vee\big)$ \\
\hline
\hline
$BC_l^{(2,1)}$ & 
$Q\big((R/\rad (I_F))^\vee\big)$ & $Q\big((R/\rad (I_F))^\vee\big)$ \\
\hline
$BC_l^{(2,4)}$ & 
$2Q\big((R/\rad (I_F))^\vee\big)$ & $Q\big((R/\rad (I_F))^\vee\big)$ \\
\hline
$BC_l^{(2,2)}(1)$  & 
$Q\big((R/\rad(I_F))^{\dc})^\vee\big)$
& $Q\big((R/\rad (I_F))^\vee\big)$
\\
\hline
$BC_l^{(2,2)}(2)$ & 
$\widetilde{Q}\big((R/\rad (I_F))^\vee\big)$ & $Q\big((R/\rad (I_F))^\vee\big)$\\
\hline
\hline
$A_1^{(1,1)\ast}$  & $Q\big((R/\rad (I_F))^\vee\big)$ & $Q\big((R/\rad (I_F))^\vee\big)$\\
\hline
$B_l^{(2,2)\ast}$  & 
$\widetilde{Q}\big((R/\rad (I_F))^\vee\big)$ & 
$\widetilde{Q}\big((R/\rad (I_F))^\vee\big)$ \\
\hline
$C_l^{(1,1)\ast}$  & 
$Q\big((R/\rad (I_F))^\vee\big)$ & $Q\big((R/\rad (I_F))^\vee\big)$ \\
\noalign{\hrule height0.8pt}
\end{tabular}
\end{table}
\end{center}
Here $X=A,B,C,D,E,F$ or $G$, and $t=2,3$. In addition, 
$\widetilde{Q}\big((R/\rad (I_F))^\vee\big)$ is the sub-lattice of $Q\big((R/\rad (I_F))^\vee\big)$
defined by 
\[\widetilde{Q}\big((R/\rad (I_F))^\vee\big)=\sum_{\gamma\in R/\rad(I_F)}
\dfrac{I_{F/\rad(I_F)}(\gamma,\gamma)}{I_{F/\rad(I_F)}(\gamma_s,\gamma_s)}
\mathbb{Z}\gamma^\vee
\index[notations]{q313@$\widetilde{Q}\big((R/\rad (I_F))^\vee\big)$}
\]
where $\gamma_s\in R/\rad (I_F)$ is a short root.
\medskip
\subsection{Modular part}\label{sect_Auto-I}

{
In the following three subsections, we study the structure of $\mathrm{Aut}(R)$.
We use these results in the weak classification of mERSs with non-reduced
affine quotients (see $\S$ \ref{sect:Classification-weak} and Section \ref{sect_mers-II}
below).}

\begin{rem}
In \cite{Satake1995}, Satake also gave a description of the group $\mathrm{Aut}(R)$. 
However, he excluded the 
mERSs $(R,G)$ of types $BC_l^{(2,1)}$, 
$BC_l^{(2,4)}$, $BC_l^{(2,2)}(1)$, $BC_l^{(2,2)}(2)$, $A_1^{(1,1)\ast}$, 
$B_l^{(2,2)\ast}$ and $C_l^{(1,1)\ast}$ as ``exceptions''. 
Some of these exceptions are necessary for the weak 
classification of mERSs with non-reduced affine quotients in 
$\S$ \ref{sect:Classification-weak} and Section \ref{sect_mers-II}.
\end{rem}

By Lemma \ref{lemma:Aut(R)}, 
an element $\psi\in \mathrm{Aut}(R)$ preserves
$\mathrm{rad}_{\Z}(I_F)=\rad (I_F)\cap Q(R)$. In other words, the restriction
$\psi|_{\mathrm{rad}(I_F)}$ belongs to $GL(\mathrm{rad}_{\Z}(I_F))$,
where
\[GL(\mathrm{rad}_{\Z}(I_F)):=\big\{g\in GL(\mathrm{rad}(I_F))\,\big|\,
g(\mathrm{rad}_{\Z}(I_F))=\mathrm{rad}_{\Z}(I_F)\big\}.\]
The map defined by $\psi\mapsto \psi|_{\mbox{\scriptsize rad}(I_F)}$ is 
denoted by $\resrad:\mathrm{Aut}(R)\to GL(\mbox{rad}_{\Z}(I_F))$.
Its image
\[\Gamma(R):=\resrad (\mathrm{Aut}(R))\index[notations]{g311@$\Gamma(R)$}\]
is called the {\bf modular part}\index[index]{modular part} of the automorphism group. %
Define the {\bf elliptic modular group}\index[index]{elliptic modular group} 
$\Gamma^+(R)$ attached to $R$ by
\[\Gamma^+(R)=\Gamma(R)\cap SL(\mbox{rad}_{\Z}(I_F)),
\index[notations]{g312@$\Gamma^+(R)$}\]
where 
\[SL(\mbox{rad}_{\Z}(I_F))
:=\big\{g\in GL(\mbox{rad}_{\Z}(I_F))\,\big|\,\det g=1\big\}.\]
Consider the integral basis $(b,a)$ of $\rad_\Z(I_F)$.  There  is
an isomorphism 
$SL(\mbox{rad}_{\Z}(I_F))\xrightarrow{\sim}SL_2(\Z)$
defined by 
\begin{equation}\label{eqn:SL2}
g\longmapsto \begin{pmatrix} p & q \\ r & s\end{pmatrix}\quad \text{where}
\quad
(g(b),g(a))=(b,a)
\begin{pmatrix} p & q \\ r & s\end{pmatrix}.
\end{equation}

Define the subgroup $\mathrm{Aut}^+(R)$
of orientation preserving automorphisms by
\[\mathrm{Aut}^+(R)
=\bigl\{\psi\in \mathrm{Aut}(R)\,\bigl|\,\psi|_{\mbox{\scriptsize rad}(I_F)}\in
SL(\mbox{rad}_{\Z}(I_F))\bigr\}.\index[notations]{aut311@$\mathrm{Aut}^+(R)$}\]
Hence, we have the following commutative diagram
{\small
\begin{equation}\label{comm-diagram-Autom}
\begin{array}{l}\xymatrix@C=35pt
{& & 1 \ar[d] & 1 \ar[d]\\
1 \ar[r] & \mathrm{Aut}(R,\rad (I_F)) \ar[r] \ar@{=}[d] & \mathrm{Aut}^+(R)
\ar[r]^-{\resrad} \ar[d] & \Gamma^+(R) \ar[r] \ar[d] & 1\\
1\ar[r] & \mathrm{Aut}(R,\rad (I_F)) \ar[r] & \mathrm{Aut}(R) \ar[r]^-{\resrad} 
\ar[d] & \Gamma(R) \ar[r] \ar[d] & 1\\
& & \Z/2\Z \ar[r]^{\sim} \ar[d] & \Z/2\Z  \ar[d]& \\
& &1& 1&
}\end{array},
\end{equation}}
where
\[\mathrm{Aut}(R,\rad (I_F)):=\big\{\psi\in\mathrm{Aut}(R)\,\big|\,\psi|_{\rad (I_F)}
=\mathrm{id}_{\rad (I_F)}\big\}.
\index[notations]{aut313@$\mathrm{Aut}(R,\rad (I_F))$}\]

Both the left and right vertical exact sequences in 
\eqref{comm-diagram-Autom}
split. Indeed, 
recalling the subspace $F_f$ of $F$ {introduced in \eqref{eq:F_f}}, 
we have a decomposition of $F$:
\begin{equation}\label{eq:DecompositionF}
F = F_f \oplus \rad(I_F) = F_f \oplus (\R b \oplus \R a).
\end{equation}
Define an involutive automorphism $\iota_{F_f} \in \mathrm{Aut}(R)$ by
\begin{equation}\label{eq:iota_Ff}
\iota_{F_f}|_{F_f} =\mathrm{id}_{F_f},\quad \iota_{F_f} (b) = b,\quad
\iota_{F_f} (a) = -a.
\end{equation}
By the construction, we immediately have the
semidirect product expression
\[\mathrm{Aut}(R) =\mathrm{Aut}^+(R) \rtimes \langle\iota_{F_f}\rangle,\]
 where 
$\langle\iota_{F_f}\rangle\cong \Z/2\Z$ is the group generated by 
$\iota_{F_f} \in \mathrm{Aut(R)}$, and this induces a semidirect 
expression
$\Gamma(R) = \Gamma^+(R) \rtimes \iota_{F_f} |_{\rad_\Z (I_F)} \cong
\Gamma^+(R) \rtimes \Z/2\Z$. Therefore, the study of $\mathrm{Aut}(R)$
is reduced to one of $\mathrm{Aut}^+(R)$. \\

{
Let us study the group $\Gamma^+(R)$. 
Recall some notations from the theory of congruence subgroups 
(see \cite{Koblitz1993}, for example). For
a positive integer $N$, set
\begin{align*}
\Gamma_0(N)=
&\left\{\left.\begin{pmatrix} p & q\\ r & s\end{pmatrix}
\in SL_2(\Z)\,\right|\,
r\equiv 0\, (\mathrm{mod}\, N)
\right\},\\
\Gamma^0(N)=
&\left\{\left.\begin{pmatrix} p & q\\ r & s\end{pmatrix}
\in SL_2(\Z)\,\right|\,
q\equiv 0\, (\mathrm{mod}\, N)
\right\}.
\end{align*}
Define a subgroup $\Gamma_0^+(R)$ of $SL_2(\Z)$ by
\[\Gamma_0^+(R)=\begin{cases}
SL_2(\Z) & \text{for}\ X_l^{(t,t)}\ \text{and}\ X_l^{(t,t)\ast} \, (t=1,2,3),\\
\Gamma_0(2) & \text{for}\ X_l^{(1,2)},\ BC_l^{(2,4)},\ 
BC_l^{(2,2)}(1)\ \text{and}\ BC_l^{(2,2)}(2),\\
\Gamma^0(2)& \text{for}\ X_l^{(2,1)}\ \text{and}\ BC_l^{(2,1)},\\
\Gamma_0(3)& \text{for}\ G_2^{(1,3)},\\
\Gamma^0(3)& \text{for}\ G_2^{(3,1)}.
\end{cases}\]

The first step in studying the elliptic modular group $\Gamma^+(R)$
is the following lemma.

\begin{lemma}\label{lemma:Gamma0+}
Under the identification \eqref{eqn:SL2}, we regard $\Gamma^+(R)$ as 
a subgroup of $SL_2(\Z)$. Then, $\Gamma_0^+(R)$ is a subgroup of 
$\Gamma^+(R)$. 
\end{lemma}

Before proving the above lemma, we need certain preparations.

\vskip 3mm
\noindent
{\bf 1}. First, let us introduce an element $\rho\in F_f$ by
\begin{equation*}\label{eqn:rho}
\rho=\begin{cases}
\tfrac{1}{2}(\varepsilon_1+\cdots+\varepsilon_l) & 
\mbox{for $BC_l^{(2,4)}$ and $C_l^{(1,1)\ast}$},\\
\tfrac12 \varepsilon & \mbox{for $A_1^{(1,1)\ast}$},\\ 
\varepsilon_1+\cdots+\varepsilon_l& 
\mbox{for $B_l^{(2,2)\ast}$},\\
0 & \mbox{otherwise}.
\end{cases}
\end{equation*}
Here, $\varepsilon_1,\ldots, \varepsilon_l$ is an orthonormal basis of $F_f$ 
(see \eqref{def_base-F}), and
$\varepsilon$ is an element of $F_f$ such that $I_F(\varepsilon,\varepsilon) = 2$
(see Appendix \ref{sect_clasfficiation-red-quot}).
\begin{rem}
For later convenience, we denote $\varepsilon_1=\varepsilon$ in the case of 
$A_1^{(1,1)\ast}$.  
\end{rem}
Define an element $\mathcal{T}=\mathcal{T}_{(R,G)}\in 
O(F,I_F)$ by
\begin{equation*}\label{eqn:T}
\mathcal{T}=\begin{cases}
E_F(b\otimes \rho) & \mbox{if $(R,G)$ is of type $BC_l^{(2,4)}$},\\
E_F((b+a)\otimes \rho) & 
\!\!\! \hspace*{0.2mm}\begin{array}{l}\mbox{if $(R,G)$ is either of type}\\
\qquad \mbox{$A_1^{(1,1)\ast}$, $B_l^{(2,2)\ast}$ or $C_l^{(1,1)\ast}$,}
\end{array}\\
\mathrm{id}_F & \mbox{otherwise}.
\end{cases}
\end{equation*}
Note that the operator $\mathcal{T}$ does not belong to $\mathrm{Aut}(R)$ in general. 
\\

Set $F_f^{sp}=\mathcal{T}(F_f)$. Then, we have a decomposition 
\begin{equation}\label{eqn:Lf'}
F=F_f^{sp}\oplus\mathrm{rad}(I_F).
\end{equation}
(A) If $(R,G)$ is an mERS except for of type $BC_l^{(2,4)}$, $A_1^{(1,1)\ast}$, 
$B_l^{(2,2)\ast}$ and $C_l^{(1,1)\ast}$, the operator $\mathcal{T}$ is $\mathrm{id}_F$.
Therefore, we have $F_f^{sp}=F_f$ and the direct sum decomposition \eqref{eqn:Lf'} 
of $F$ coincides with \eqref{eq:DecompositionF}.
\vskip 3mm
\noindent 
(B) Assume $(R,G)$ is of type $BC_l^{(2,4)}$, $A_1^{(1,1)\ast}$, 
$B_l^{(2,2)\ast}$ or $C_l^{(1,1)\ast}$. Recalling the explicit description of 
$\alpha_i\ (1\leq i\leq l)$ in Appendix \ref{sect_clasfficiation-red-quot}, we have
\begin{equation}\label{eq:Ff}
F_f=\bigoplus_{i=1}^l \R \varepsilon_i
\end{equation}
for such an $(R,G)$.  
Set $\varepsilon_i^{sp}=-\mathcal{T}(\varepsilon_i)\ (1\leq i\leq l)$.
The explicit form of $\varepsilon_i^{sp}$ is given as
\begin{equation}\label{eq:eps-sp}
\varepsilon_i^{sp}=
\left\{\begin{array}{ll}
-\varepsilon_i+\tfrac12 b & \mbox{if } (R,G)=BC_l^{(2,4)},\\
-\varepsilon_i+b+a & \mbox{if } (R,G)=A_1^{(1,1)\ast}, B_l^{(2,2)\ast},\\
-\varepsilon_i+\tfrac12 b+\tfrac12 a & \mbox{if } (R,G)=C_l^{(1,1)\ast}.
\end{array}\right.
\end{equation}
By \eqref{eq:Ff}, it follows immediately that
\[F_f^{sp}=\bigoplus_{i=1}^l\R\varepsilon_i^{sp}.\]
Note that $F_f^{sp}\ne F_f$ in these cases.  
\vskip 5mm
\noindent
{\bf 2}. Second, 
let $(R,G)$ be a mERS of type  $BC_l^{(2,4)}$, $A_1^{(1,1)\ast}$, 
$B_l^{(2,2)\ast}$ or $C_l^{(1,1)\ast}$. By the explicit description of 
the set $R$ in Appendix \ref{sect_clasfficiation-red-quot} for such a mERS and 
the formula \eqref{eq:eps-sp}, we have a description of $R$ in terms of a basis
$\varepsilon_1^{sp},\ldots,\varepsilon_l^{sp},b,a$ of $F$ as follows:
\begin{equation}\label{eqn:R-*}
\begin{array}{lll}
R&\!\!=\left\{\!\!\begin{array}{ll}
\pm \varepsilon_i^{sp}+\left(r+\tfrac12\right)b+sa & (1\leq i\leq l,\, r,s\in\mathbb{Z}),\\
\pm 2\varepsilon_i^{sp}+2rb+4sa& (1\leq i\leq l,\, r,s\in\mathbb{Z}),\\
\pm\varepsilon_i^{sp}\pm\check{\varepsilon}_j+rb+2sa & 
(1\leq i<j\leq l,\, r,s\in\mathbb{Z})
\end{array}\right\} & \!\!\mbox{for }BC_l^{(2,4)},\\[20pt]
R&\!\! =\left\{\!\!\begin{array}{ll}
\pm\varepsilon_1^{sp}+rb+sa \ \ 
\left(\!\!\begin{array}{c}1\leq i\leq l,\, r,s\in\mathbb{Z}
\mbox{ such that}\\
(r-1)(s-1)\equiv 0\mbox{ mod }2
\end{array}\!\!\right)
\end{array}\!\!\right\} & \!\!\mbox{for }A_1^{(1,1)\ast},\\[12pt]
R&\!\! =\left\{\!\! \begin{array}{ll}
\pm \varepsilon_i^{sp}+rb+sa \ \  \left(
\!\!\begin{array}{c}1\leq i\leq l,\, r,s\in\mathbb{Z}
\mbox{ such that}\\
(r-1)(s-1)\equiv 0\mbox{ mod }2
\end{array}\!\!\right),\\[10pt]
\pm\varepsilon_i^{sp}\pm\varepsilon_j^{sp}+2rb+2sa \ \   
(1\leq i<j\leq l,\, r,s\in\mathbb{Z})
\end{array}\!\!\right\} & \!\!\mbox{for }B_l^{(2,2)\ast},\\[20pt]
R&\!\! =\left\{\!\! \begin{array}{ll}
\pm 2\varepsilon_i^{sp}+rb+sa \ \ 
\left(\!\!\begin{array}{c}1\leq i\leq l,\, r,s\in\mathbb{Z}
\mbox{ such that}\\
(r-1)(s-1)\equiv 0\mbox{ mod }2
\end{array}\!\!\right),\\[10pt]
\pm\varepsilon_i^{sp}\pm\varepsilon_j^{sp}+rb+sa \ \  
(1\leq i<j\leq l,\, r,s\in\mathbb{Z})
\end{array}\!\!\right\} & \!\!\mbox{for }C_l^{(1,1)\ast}.
\end{array}\end{equation}
These descriptions are used in the proof of Lemma \ref{lemma:Gamma0+}.  
\vskip 5mm
\noindent
{\bf 3}. Third, introduce two subgroups $O_{F_f}(F,I_F)$ 
and $O_{F_f^{sp}}(F,I_F)$ of $O(F,I_F)$ by 
\[
O_{F_f^{\natural}}(F,I_F) = \{ \psi \in O(F,I_F) \ | \ 
\psi(F_f^{\natural})\subset F_f^{\natural}\}
\quad (\natural =\emptyset\, \mbox{or}\, sp).
\]
Then, we have a group isomorphism
\begin{equation*}\label{eqn:AdT2}
Ad_{\mathcal{T}}\ :\ O_{F_f}(F,I_F)\xrightarrow{\sim} 
O_{F_f^{sp}}(F,I_F),\quad 
\psi\ \longmapsto \ \mathcal{T}\psi\mathcal{T}^{-1}.
\end{equation*}

Consider an injective group homomorphism
\begin{equation}\label{eq:split}
\chi_{F_f} \ : \ 
GL(\mathrm{rad}_{\mathbb{Z}}(I_F)) \longrightarrow O_{F_f}(F,I_F), \quad 
g\ \longmapsto \ \chi_{F_f}(g)
\end{equation}
where
$\chi_{F_f}(g)|_{L_f}=\mathrm{id}_{F_f}, \chi_{F_f}(g)|_{\mathrm{rad}(I_F)}=g$, a
nd define
\begin{equation*}\label{eq:split'}
\chi_{F_f^{sp}} \ : \ 
GL(\mathrm{rad}_{\mathbb{Z}}(I_F)) \longrightarrow O_{F_f^{sp}}(F,I_F), \quad 
g\ \longmapsto \ Ad_{\mathcal{T}}\big(\chi_{F_f}(g)\big).
\end{equation*}
Since the restriction $\mathcal{T}|_{\mathrm{rad}(I_F)}$ is equal to 
$\mathrm{id}_{\mathrm{rad}(I_F)}$, we have
\begin{equation*}\label{eqn:section'}
\chi_{F_f^{sp}}(g)|_{\mathrm{rad}(I_F)}=g
\end{equation*}
for every $g\in GL_2(\mathbb{Z})$. 

\begin{rem}\label{rem:chi_Lf}
{\rm (1) Restricting these homomorphisms, we have group homomorphisms
\[\chi_{F_f^{\natural}}|_{SL(\mathrm{rad}_{\mathbb{Z}}(I_F))} \ : \ 
SL(\mathrm{rad}_{\mathbb{Z}}(I_F)) \longrightarrow O_{F_f^{\natural}}(F,I_F)
\quad\mbox{for }\natural=\emptyset\ \mbox{or}\ sp. \]
Unless there is a risk of confusion, we denote them by 
$\chi_{F_f^{\natural}}=\chi_{F_f^{\natural}}|_{SL(\mathrm{rad}_{\mathbb{Z}}(I_F))}$, for
simplicity. 
\vskip 1mm
\noindent
(2) By the definition, the operator $\iota_{F_f}$ introduced in \eqref{eq:iota_Ff}
coincides with $\chi_{F_f}\left(\begin{pmatrix} 1 & 0\\ 0 & -1\end{pmatrix}\right)$.
}\end{rem}
\vskip 3mm
Now, let us prove Lemma \ref{lemma:Gamma0+}. It is enough to prove the following
claim. 
\vskip 3mm
\noindent
{\bf Claim}. 
{\it For every $g\in \Gamma_0^+(R)$, the element $\chi_{L_f^{sp}}(g)$ belongs to 
$\mathrm{Aut}^+(R)$. That is, $\Gamma_0^+(R)$ is a subgroup of 
$\Gamma^+(R)$. }

\begin{proof}
Assume $F_f^{sp}=F_f$. Then, the statement follows from case-by-case 
brute-force computation by using the explicit description of the set $R$ in 
Appendix \ref{sect_clasfficiation-red-quot}. Since it is straightforward,  
we leave the proof to the reader. 

If $F_f^{sp}\ne F_f$, we use the description \eqref{eqn:R-*} of $R$. In the following, 
we give a proof only for of type $B_l^{(2,2)\ast}$. For the other case, the lemma is 
obtained by similar method. 

Recalling the description \eqref{eqn:R-*} of the set $R$ for $B_l^{(2,2)\ast}$, 
it is divided into two pieces $R=R_s\sqcup R_l$ where
\begin{equation*}
\begin{array}{ll}
R_s:=\left\{\pm \varepsilon_i^{sp}+rb+sa\,\left|\begin{array}{c}
1\leq i\leq l,\, r,s\in\mathbb{Z}\mbox{ such that}\\
(r-1)(s-1)\equiv 0\,(\mathrm{mod}\,2)
\end{array}\right.\!\! \right\},\\[10pt]
R_l:=\{\pm\varepsilon_i^{sp}\pm\varepsilon_j^{sp}+2rb+2sa\,|\,
1\leq i<j\leq l,\, r,s\in\mathbb{Z}\}.
\end{array}
\end{equation*}

Since $R_l$ is obviously preserved by $\chi_{F_f^{sp}}(g)$ for every 
$g\in {SL}_2(\mathbb{Z})$, it is enough to show
\begin{equation}\label{eqn:stablility}
\chi_{F_f^{sp}}(g)(R_s)\subset R_s\quad \mbox{for every }g=
\begin{pmatrix} x & y \\ z & w\end{pmatrix}
\in {SL}_2(\mathbb{Z}).
\end{equation}

For $\alpha_s=\pm\varepsilon_i^{sp}+rb+sa\in R_s$, we have
\begin{align*}
\chi_{F_f^{sp}}(g)(\alpha_s)=\pm \varepsilon_i^{sp}+(rx+sy)b+(rz+sw)a.
\end{align*}
By congruent expressions
$xw+yz\equiv 1\ \mathrm{mod}\,2$, $rs+1\equiv r+s\ \mathrm{mod}\,2$, 
$r^2\equiv r\ \mathrm{mod}\,2$ and $s^2\equiv s\ \mathrm{mod}\,2$ which
are verified in this setting, we have
\begin{align*}
(rx+sy-1)(rz+sw-1)&\equiv r(x-1)(z-1)+s(y-1)(w-1)\ \mathrm{mod}\ 2.
\end{align*}
Since $xw-yz=1$, at least one of $x-1$ and $z-1$ ({\it resp}. $y-1$ and $w-1$) is 
an even integer. Therefore,  we have
\[\mbox{the right hand side} \equiv 0 \ \mathrm{mod}\ 2.\] 
Thus, \eqref{eqn:stablility} is obtained, and we have 
$\chi_{F_f^{sp}}(g)\in \mathrm{Aut}^+(R)$.
\end{proof}
\begin{prop}\label{prop:Gamma}
\begin{equation}
\Gamma_0^+(R)=\Gamma^+(R).
\end{equation}
\end{prop}
\begin{proof}
By the well-known fact in the theory of congruence subgroups 
(see \cite{Koblitz1993}, for example)
, we have
\[[{SL}_2(\mathbb{Z}):\Gamma_0^+(R)]=\begin{cases}
1 & \mbox{if }\,\Gamma_0^+(R)={SL}_2(\mathbb{Z}) ,\\
3 & \mbox{if }\,\Gamma_0^+(R)=\Gamma_0(2), \Gamma^0(2),\\
4 & \mbox{if }\,\Gamma_0^+(R)=\Gamma_0(3), \Gamma^0(3).
\end{cases}\]
1) If $\Gamma_0^+(R)={SL}_2(\mathbb{Z})$, 
the assertion is obviously obtained by Lemma \ref{lemma:Gamma0+}.  \\
\\
2) Assume $\Gamma_{0}^+(R)$ is either $\Gamma_0(2)$ or $\Gamma^0(2)$. 
Namely, $(R,G)$ is assumed to be of type $X_l^{(1,2)},X_l^{(2,1)},
BC_l^{(2,4)},BC_l^{(2,2)}(1)$ or $BC_l^{(2,2)}(2)$.
Since the index $[{SL}_2(\mathbb{Z}):\Gamma_{0}^+(R)]=3$ is a prime number
in this case, the group $\Gamma^+(R)$ is equal to either $\Gamma_{0}^+(R)$ or 
${SL}_2(\mathbb{Z})$. Therefore, for proving the statement, 
it is enough to show that $\Gamma^+(R)\ne {SL}_2(\mathbb{Z})$.  
More precisely, since the element 
\[S:=\begin{pmatrix} 0 & -1 \\ 1 & 0\end{pmatrix}\in {SL}_2(\mathbb{Z})\] 
belongs to neither $\Gamma_0(2)$ nor $\Gamma^0(2)$, it is enough to prove that 
there is no element $\psi\in \mathrm{Aut}^+(R)$ such that $\mathrm{res}_{\mathrm{rad}(I),F}(\psi)=S$.

In the following, we will show the assertion only for of type $BC_l^{(2,1)}$. 
For the other cases, the statement is obtained by a similar method. 
Assume there exists an element $\psi\in \mathrm{Aut}^+(R)$ such that 
$\mathrm{res}_{\mathrm{rad}(I_F),F}(\psi)=S$. That is, we assume
\[\psi(b)=a\quad \mbox{and}\quad \psi(a)=-b.\]
Recall the explicit description of the sets $R_s$ of short roots, and $R_l$ of long roots 
for of type $BC_l^{(2,1)}$:
\[\begin{aligned}
R_s&=\{\pm\varepsilon_i+rb+sa\, |\,1\leq i\leq l,\,r,s\in\mathbb{Z}\},\\[3pt]
R_l&=\{\pm2\varepsilon_i+(2r+1)b+sa\, |\,1\leq i\leq l,\,r,s\in\mathbb{Z}\},
\end{aligned}\]
in Appendix \ref{sect_clasfficiation-red-quot} . 
Since $\varepsilon_i\in R_s$ and $\psi$ preserves the 
set $R_s$, the element $\psi(\varepsilon_i)\in R_s$ has the following form:
\[\psi(\varepsilon_i)=\kappa_i\varepsilon_i+r_ib+s_ia\quad 
(\kappa_i\in\{\pm 1\},\ r_i,s_i\in\mathbb{Z}).\]
Therefore, we have
\[\begin{aligned}
\psi(2\varepsilon_i+b)&=2\psi(\varepsilon_i)+\psi(b)
=2\kappa_i\varepsilon_i+2r_i b+(2s_i+1) a.
\end{aligned}\]
On the other hand , since $2\varepsilon_i+b$ is a long root of type $BC_l^{(2,1)}$, 
the image $\psi(2\varepsilon_i+b)$ should belong to $R_l$. Therefore, it has the  
following form:
\[\psi(2\varepsilon_i+b)=2\kappa'_i\varepsilon_i+(2r+1)b+sa\quad
\mbox{for some }\kappa'_i\in \{\pm 1\},\, r,s\in\mathbb{Z}.\]
Therefore, we have
\[2r_i=2r+1.\]
But, this is a contradiction. Thus, the group
$\Gamma^+(R)$ does not coincide with ${SL}_2(\mathbb{Z})$, and it should be 
equal to $\Gamma_0^+(R)=\Gamma_0(2)$, as desired. \\
\\
3) Assume $(R,G)$ is of type $G_2^{(1,3)}$.  In this case, the group 
$\Gamma_0^+(R)$ coinsides with $\Gamma_0(3)$, and 
$[{SL}_2(\mathbb{Z}),\Gamma_0^+(R)]=4$. A complete set of coset 
representatives of ${SL}_2(\mathbb{Z})/\Gamma_0^+(R)$ is given by
\[\left\{E:=\begin{pmatrix}1 & 0\\ 0 & 1\end{pmatrix},\, S,\, 
U:=\begin{pmatrix} 1 & 0 \\ -1 & 1\end{pmatrix},\, U^2=
\begin{pmatrix} 1 & 0 \\ -2 & 1\end{pmatrix}\right\}.\]
For each $g=S,U,U^2$, one can check that there is no element 
$\psi\in\mathrm{Aut}^+(R)$ such that $\mathrm{res}_{\mathrm{rad}(I_F),F}(\psi)=g$ 
by a similar method as in the previous case. Thus, we have 
\[\Gamma^+(R)=\Gamma_0(3)=\Gamma^+_0(R).\]
In the case of $(R,G)$ being of type $G_2^{(3,1)}$, the statement can be proved by 
a similar method. Thus, the proposition is obtained. 
\end{proof}
\noindent
\begin{thm}\label{thm:1}
Both the upper and lower horizontal short exact sequences in 
\eqref{comm-diagram-Autom} split.  That is, there are semidirect product 
expressions
\begin{equation}\label{eq:horizontal1}
\mathrm{Aut}^+(R)\cong\mathrm{Aut}(R,\rad (I_F))\rtimes \Gamma^+(R),
\end{equation}
\begin{equation}
\mathrm{Aut}(R)\cong \label{eq:horizontal2}
\mathrm{Aut}(R,\rad (I_F))\rtimes \big(\Gamma^+(R)\rtimes \Z/2\Z\big).
\end{equation}
\end{thm}

\begin{proof}
First, we prove the upper horizontal short exact sequence
\[1 \longrightarrow  \mathrm{Aut}(R,\mathrm{rad}(I_F)) \longrightarrow 
\mathrm{Aut}^+(R) \ \xrightarrow{\mathrm{res}_{\mathrm{rad}(I_F),F}} \ \Gamma^+(R) 
\longrightarrow  1\]
in \eqref{comm-diagram-Autom} splits. By Lemma \ref{lemma:Gamma0+} and
Proposition \ref{prop:Gamma}, it follows that the restriction 
$\chi_{F_f^{sp}}|_{\Gamma^+(R)}$ induces an injective 
homomorphism{\rm :} $\Gamma^+(R)\to \mathrm{Aut}^+(R)$ such that 
$\mathrm{res}_{\mathrm{rad}(I_F),F}\circ\chi_{F_f^{sp}}=\mathrm{id}_{\Gamma^+(R)}$,
as desired.

Next, let us prove the lower horizontal short exact sequence
\[1 \longrightarrow  \mathrm{Aut}(R,\mathrm{rad}(I_F)) \longrightarrow 
\mathrm{Aut}(R) \ \xrightarrow{\mathrm{res}_{\mathrm{rad}(I_F),F}} \ \Gamma(R) 
\longrightarrow  1\]
in \eqref{comm-diagram-Autom} splits. Recall the involutive automorphism 
$\iota_{L_f}=\chi_{L_f}\left(\begin{pmatrix}
1 & 0 \\ 0 & -1\end{pmatrix}\right)\in \mathrm{Aut}(R)$
introduced in \eqref{eq:iota_Ff} (see Remark \ref{rem:chi_Lf} (2), also), and set
\begin{equation}
\iota_{F_f^{sp}}=Ad_{\mathcal{T}}(\iota_{F_f}).
\end{equation}

\begin{lemma}
The involution $\iota_{F_f^{sp}}$ belongs to $\mathrm{Aut}(R)$. 
\end{lemma}
\vskip 1mm
\noindent
{\it Proof of the lemma}. 
If $F_f^{sp}=F_f$, there is nothing to prove. Assume $F_f^{sp}\ne F_f$. That is, 
$(R,G)$ is assumed to be of type $BC_l^{(2,4)}$, $A_1^{(1,1)\ast}$, $B_l^{(2,2)\ast}$ 
or $C_l^{(1,1)\ast}$. By the definition \eqref{eqn:T} of $\mathcal{T}$, we have
\[\iota_{F_f^{sp}}\ :\ \left\{
\begin{array}{lllllll}
\varepsilon_i^{sp} &\!\!\!\! \longmapsto &\!\!\! 
\varepsilon_i^{sp} & \mbox{for }1\leq i\leq l,\\
b &\!\!\!\! \longmapsto &\!\!\! b,\\
a &\!\!\!\! \longmapsto &\!\!\! -a.
\end{array}\right.\]
By the explicit description \eqref{eqn:R-*} of $R$, it is easy to see that 
the set $R$ is preserved by $\iota_{F_f^{sp}}$. Thus, we have the lemma. 
\hfill $\square$
\vskip 3mm
Consider the semidirect product $\Gamma^+(R)\rtimes \langle \iota_{F_f^{sp}}\rangle$
in $GL(F)$. By the above lemma, it follows that this group coincides with $\Gamma(R)$ 
and the restriction $\chi_{F_f^{sp}}|_{\Gamma(R)}$ induces an injective 
homomorphism{\rm :} $\Gamma(R)\to \mathrm{Aut}(R)$ such that 
$\mathrm{res}_{\mathrm{rad}(I_F),F}\circ\chi_{F_f^{sp}}=\mathrm{id}_{\Gamma(R)}$.
Thus, the theorem is proved.
\end{proof}
}
\subsection{The group $\mathrm{Aut}(R,\rad (I_F))$}

Let us study the group $\mathrm{Aut}(R,\rad (I_F))$. 
The canonical projection $\pi_{\rad(I_F)}:F\to F/\mathrm{rad}(I_F)$ 
induces a 
morphism
\[(\pi_{\rad(I_F)})_*:\mathrm{Aut}(R,\mathrm{rad}(I_F))\to 
\mathrm{Aut}\big(R/\rad (I_F)\big), \]
where
\begin{align*}
&\mbox{Aut}\big(R/\rad (I_F)\big)\\
&\quad :=\bigl\{\phi\in O\bigl(F/\mathrm{rad}(I_F),I_{F/\mathrm{rad}(I_F)}\bigr)\,
\bigl|\,\phi\big(R/\rad(I_F)\big)=R/\rad (I_F)\bigr\}.
\end{align*}

\begin{prop}\label{prop:2} \hspace{0.1in} 
\begin{enumerate}
\item The morphism $(\pi_{\rad(I_F)})_*:\mathrm{Aut}(R,\mathrm{rad}(I_F))\to 
\mathrm{Aut}\big(R/\rad(I_F)\big)$ is surjective. Therefore, there exists the canonical exact 
sequence
\begin{equation}\label{eqn:exact-seq-Aut}
1\to \mathrm{Ker}\bigl((\pi_{\rad(I_F)})_* \bigr)\to 
\mathrm{Aut}(R,\mathrm{rad}(I_F))
\xrightarrow{(\pi_{\rad(I_F)})_*} \mathrm{Aut}\big(R/\rad(I_F)\big)\to 1.
\end{equation}
\item The above exact sequence splits. The group 
$\mathrm{Aut}(R,\mathrm{rad}(I_F))$ is isomorphic to the
semidirect product
$\mathrm{Ker}\bigl((\pi_{\rad(I_F)})_* \bigr)\rtimes \mathrm{Aut}\big(R/\rad (I_F)\big)$.
\end{enumerate}
\end{prop}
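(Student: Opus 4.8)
The plan is to reduce the statement to the well-understood structure of the automorphism group of a finite root system, combined with the Weyl-group machinery recalled above. Fix a simple system $\{\alpha_0,\alpha_1,\dots,\alpha_l\}$ of $(R,G)$ (Definition \ref{defn:basis}) and recall the subspace $F_f=\bigoplus_{i=1}^l\R\alpha_i$ of Proposition \ref{prop:SaitoSaito1}. The images of $\alpha_1,\dots,\alpha_l$ under $\pi_{\rad(I_F)}$ form a simple system of the (irreducible, possibly non-reduced) finite root system $R/\rad(I_F)$; by \cite{Bourbaki(Lie)4-6} one has a semidirect product decomposition $\mathrm{Aut}\big(R/\rad(I_F)\big)=W\big(R/\rad(I_F)\big)\rtimes\Sigma$, where $\Sigma$ is the automorphism group of the finite Dynkin diagram, i.e.\ the (finite) group of automorphisms of $R/\rad(I_F)$ stabilising this simple system. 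I would prove (1) by lifting each of the two factors $W\big(R/\rad(I_F)\big)$ and $\Sigma$ to $\mathrm{Aut}(R,\rad(I_F))$, and (2) by showing that the two lifts assemble into a homomorphic section.

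For the Weyl factor: by Proposition \ref{prop:SaitoSaito1} the subspace $F_f$ satisfies conditions (i) and (ii) of Lemma \ref{lemma:Saito(1.15)} (2) for the pair $(R,\rad(I_F))$, so $(\pi_{\rad(I_F)})_*$ restricts to an isomorphism $W(R\cap F_f)\xrightarrow{\sim}W\big(R/\rad(I_F)\big)$; write $\tau$ for its inverse. Since $F_f\perp\rad(I_F)$, each reflection $r_\beta$ with $\beta\in R\cap F_f$ fixes $\rad(I_F)$ pointwise, whence $W(R\cap F_f)\subset\mathrm{Aut}(R,\rad(I_F))$ and $\tau$ takes values in $\mathrm{Aut}(R,\rad(I_F))$. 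For the factor $\Sigma$: inspecting the list of Theorem \ref{thm:classification-red}, $\Sigma$ is non-trivial only when $R/\rad(I_F)$ is simply laced, i.e.\ of type $A_l\,(l\geq 2)$, $D_l\,(l\geq 4)$ or $E_6$, and in exactly those cases the proof of Theorem \ref{thm:classification-red} yields $k(\alpha)=1$ for every $\alpha\in R$, so $R=(R\cap F_f)+\Z b+\Z a$ with $R\cap F_f$ a finite root system realised inside $F_f$. Using the isometry $j:=\pi_{\rad(I_F)}|_{F_f}\colon F_f\xrightarrow{\sim}F/\rad(I_F)$, I set $\rho(\sigma):=(j^{-1}\circ\sigma\circ j)\oplus\mathrm{id}_{\rad(I_F)}$ for $\sigma\in\Sigma$; this is an orthogonal automorphism of $F$ preserving $R\cap F_f$, hence $R$, and restricting to $\mathrm{id}$ on $\rad(I_F)$, so $\rho(\sigma)\in\mathrm{Aut}(R,\rad(I_F))$ with $(\pi_{\rad(I_F)})_*\rho(\sigma)=\sigma$, and $\rho\colon\Sigma\to\mathrm{Aut}(R,\rad(I_F))$ is a group homomorphism. (When $\Sigma=1$ there is nothing to lift.) For an arbitrary $\phi\in\mathrm{Aut}\big(R/\rad(I_F)\big)$, writing $\phi=w\sigma$ with $w\in W\big(R/\rad(I_F)\big)$ and $\sigma\in\Sigma$, the element $\tau(w)\rho(\sigma)$ is a lift of $\phi$; this establishes the surjectivity asserted in (1), and hence the exact sequence \eqref{eqn:exact-seq-Aut}.

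For (2) I would show that $s(w\sigma):=\tau(w)\rho(\sigma)$ is a well-defined group homomorphism $\mathrm{Aut}\big(R/\rad(I_F)\big)\to\mathrm{Aut}(R,\rad(I_F))$ splitting \eqref{eqn:exact-seq-Aut}. The key point is that each $\tau(w)$ and each $\rho(\sigma)$ preserves $F_f$, acts there as $j^{-1}\circ w\circ j$ resp.\ $j^{-1}\circ\sigma\circ j$, and restricts to $\mathrm{id}$ on $\rad(I_F)$; since $F=F_f\oplus\rad(I_F)$, an element of $\mathrm{Aut}(R,\rad(I_F))$ with these two properties is determined by its restriction to $F_f$. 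It follows that $\rho(\sigma)\tau(w)\rho(\sigma)^{-1}$ and $\tau(\sigma w\sigma^{-1})$ coincide, since both preserve $F_f$, fix $\rad(I_F)$, and induce $j^{-1}\circ(\sigma w\sigma^{-1})\circ j$ on $F_f$; this is precisely the compatibility of $\tau$ and $\rho$ with the semidirect product structure $\mathrm{Aut}\big(R/\rad(I_F)\big)=W\big(R/\rad(I_F)\big)\rtimes\Sigma$, so $s$ is a homomorphism. As $(\pi_{\rad(I_F)})_*\circ s=\mathrm{id}$ by construction, the sequence \eqref{eqn:exact-seq-Aut} splits and $\mathrm{Aut}(R,\rad(I_F))\cong\mathrm{Ker}\big((\pi_{\rad(I_F)})_*\big)\rtimes\mathrm{Aut}\big(R/\rad(I_F)\big)$.

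The step requiring the most care is the treatment of the diagram-automorphism factor $\Sigma$: it is here that one must invoke the classification of Theorem \ref{thm:classification-red} to observe that $\Sigma$ is non-trivial only for simply-laced finite quotients, for which all counting numbers equal $1$ and $R$ has the product form $(R\cap F_f)+\Z b+\Z a$ that makes the lift $\rho$ completely explicit and visibly multiplicative. Without this reduction one would have to lift a permutation of the finite simple roots through a possibly non-split elliptic structure, which is not automatic. All remaining ingredients are either quoted from \cite{Saito1985} and \cite{SaitoSaito} (the content of Lemma \ref{lemma:Saito(1.15)} and Proposition \ref{prop:SaitoSaito1}) or reduce to straightforward linear algebra on the orthogonal decomposition $F=F_f\oplus\rad(I_F)$.
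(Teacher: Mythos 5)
The paper does not prove this proposition; it is quoted from \cite{SaitoSaito} without argument, so there is no in-paper proof to compare yours against. Your proposal is, as far as I can check, a correct self-contained argument. The reduction of the problem to lifting the two factors of $\mathrm{Aut}\big(R/\rad(I_F)\big)=W\big(R/\rad(I_F)\big)\rtimes\Sigma$ is sound: the Weyl factor is handled verbatim by Lemma \ref{lemma:Saito(1.15)} applied to $(R,\rad(I_F))$ with $L=F_f$, and you correctly identify that the only genuinely delicate point is the diagram-automorphism factor. Your observation that $\Sigma\neq 1$ forces $R/\rad(I_F)$ to be simply laced (confirmed by the column $\mathrm{Aut}\big(\Gamma\big(R/\rad(I_F)\big)\big)$ of Table \ref{table3-auto}), and that in exactly those cases $R=(R\cap F_f)+\Z a+\Z b$, is what makes the explicit lift $\rho(\sigma)=(j^{-1}\circ\sigma\circ j)\oplus\mathrm{id}_{\rad(I_F)}$ manifestly well defined and multiplicative; there is no circularity, since Theorem \ref{thm:classification-red} is established independently of \S\ref{sect:auto-mers-reduced-case}. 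The verification that $s(w\sigma)=\tau(w)\rho(\sigma)$ is a homomorphism is also correct: both $\tau(w)$ and $\rho(\sigma)$ preserve the decomposition $F=F_f\oplus\rad(I_F)$ and act trivially on the second summand, so the required identity $\rho(\sigma)\tau(w)\rho(\sigma)^{-1}=\tau(\sigma w\sigma^{-1})$ follows by comparing restrictions to $F_f$. The one presentational caveat is that your argument is tied to the classification, whereas the cited reference presumably aims at a classification-free proof; but as a proof of the statement for the mERSs actually under consideration here, it is complete.
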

{
\begin{proof}
(1) Let $\phi\in \mathrm{Aut}\big(R/\mathrm{rad}(I_F)\big)$. 
For $1\leq i\leq l$, there exist integers 
$m_{j,i}\in\mathbb{Z}$ such that
\[\phi(\pi_{\mathrm{rad}(I_F)}(\alpha_i))
=\sum_{j=1}^lm_{j,i}\pi_{\mathrm{rad}(I_F)}(\alpha_j).\]  
Define $\psi_{\phi}\in {GL}(F)$ by
\[\psi_{\phi}(\alpha_i)=\sum_{j=1}^lm_{j,i}\alpha_j \quad (1\leq i\leq l),\qquad
\psi_{\phi}(\delta_{\sharp}):=\delta_{\sharp} \quad (\sharp=b,a). \]
Then, it is easy to see that the map $\psi_{\phi}$ belongs to 
$\mbox{\rm Aut}(R,\mathrm{rad}(I_F))$ and 
$(\pi_{\mathrm{rad}(I_F)})_*(\psi_{\phi})=\phi$. 
\vskip 2mm
\noindent
(2) It is immediate to see that the correspondence 
$\phi\mapsto \psi_{\phi}$ defines a group homomorphism 
$\zeta:\mathrm{Aut}\big(R/\mathrm{rad}(I_F)\big)\to 
\mathrm{Aut}(R,\mathrm{rad}(I_F))$
such that the condition
$(\pi_{\mathrm{rad}(I_F)})_*\circ \zeta=\mathrm{id}$
is satisfied. That is, it gives a section for the exact sequence \eqref{eqn:exact-seq-Aut},
and the assertion (2) is verified. 
\end{proof}
}

Let $\Gamma\big(R/\rad (I_F)\big)$ be the Dynkin diagram of the finite root system 
$R/\rad(I_F)$ and denote the group of diagram automorphisms of 
$\Gamma\big(R/\rad(I_F)\big)$  by 
$\mathrm{Aut}\big(\Gamma\big(R/\rad (I_F)\big)\big)$. 
It is known that the group $\mathrm{Aut}\big(R/\rad (I_F)\big)$ of automorphisms of 
the finite root system $R/\rad (I_F)$ is isomorphic to
the semidirect product $W\big(R/\rad(I_F)\big)\rtimes 
\mathrm{Aut}\big(\Gamma\big(R/\rad (I_F)\big)\big)$. \\

Combining these results, we have the following corollary:
\begin{cor}\label{cor:normal}
\begin{equation*}\label{eqn:normal} 
\begin{split}
&\mathrm{Aut}^+(R)\\
\cong 
&\bigg(\mathrm{Ker}\bigl((\pi_{\rad(I_F)})_* \bigr)\rtimes 
\Big(W\big(R/\rad(I_F)\big)\rtimes \mathrm{Aut}\big(\Gamma\big(R/\rad (I_F)\big)\big)\Big)
\bigg)\rtimes \Gamma^+(R).
\end{split}
\end{equation*}
\end{cor}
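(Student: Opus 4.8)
The plan is to assemble the stated isomorphism by composing the two semidirect product decompositions established just above: Theorem \ref{thm:1} for $\mathrm{Aut}^+(R)$ and Proposition \ref{prop:2} for $\mathrm{Aut}(R,\rad(I_F))$, together with the classical structure of the automorphism group of a finite root system. First I would invoke Theorem \ref{thm:1}, which gives the split exact sequence
\[1\to \mathrm{Aut}(R,\rad(I_F))\to \mathrm{Aut}^+(R)\xrightarrow{\resrad}\Gamma^+(R)\to 1,\]
hence $\mathrm{Aut}^+(R)\cong \mathrm{Aut}(R,\rad(I_F))\rtimes \Gamma^+(R)$. Thus it remains to expand the normal subgroup $\mathrm{Aut}(R,\rad(I_F))$.

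Second, I would apply Proposition \ref{prop:2}: the map $(\pi_{\rad(I_F)})_*$ restricts to a surjection $\mathrm{Aut}(R,\rad(I_F))\twoheadrightarrow \mathrm{Aut}(R/\rad(I_F))$ whose kernel is $\mathrm{Ker}\big((\pi_{\rad(I_F)})_*\big)$, and the resulting exact sequence splits, so
\[\mathrm{Aut}(R,\rad(I_F))\cong \mathrm{Ker}\big((\pi_{\rad(I_F)})_*\big)\rtimes \mathrm{Aut}(R/\rad(I_F)).\]
Third, I would substitute the standard fact (recalled in the paragraph preceding the corollary) that the automorphism group of the finite root system $R/\rad(I_F)$ decomposes as $W(R/\rad(I_F))\rtimes \mathrm{Aut}(\Gamma(R/\rad(I_F)))$, with the diagram automorphism group acting on the Weyl group. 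Plugging this into the previous display yields
\[\mathrm{Aut}(R,\rad(I_F))\cong \mathrm{Ker}\big((\pi_{\rad(I_F)})_*\big)\rtimes\Big(W(R/\rad(I_F))\rtimes \mathrm{Aut}(\Gamma(R/\rad(I_F)))\Big).\]

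Finally, I would combine the two displays, substituting the last expression for $\mathrm{Aut}(R,\rad(I_F))$ into $\mathrm{Aut}^+(R)\cong \mathrm{Aut}(R,\rad(I_F))\rtimes \Gamma^+(R)$, obtaining precisely the claimed formula \eqref{eqn:normal}. The one point requiring a little care — and the main obstacle — is that iterating semidirect products is only legitimate when the relevant subgroups are normal in the correct ambient groups and the conjugation actions are compatible; here one must check that $\mathrm{Ker}\big((\pi_{\rad(I_F)})_*\big)$ is normal not merely in $\mathrm{Aut}(R,\rad(I_F))$ but is preserved by the chosen splitting image of $\Gamma^+(R)$ inside $\mathrm{Aut}^+(R)$, so that the nested expression is well-formed as an iterated semidirect product. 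This follows because $\mathrm{Ker}\big((\pi_{\rad(I_F)})_*\big)$ is characteristic in $\mathrm{Aut}(R,\rad(I_F))$ (it is the kernel of a canonical map) and $\mathrm{Aut}(R,\rad(I_F))$ is normal in $\mathrm{Aut}^+(R)$, hence $\mathrm{Ker}\big((\pi_{\rad(I_F)})_*\big)$ is normal in $\mathrm{Aut}^+(R)$; once this is noted, the assembly is purely formal.
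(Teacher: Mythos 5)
Your proposal is correct and follows exactly the route the paper intends: the corollary is obtained by substituting Proposition \ref{prop:2} (the decomposition of $\mathrm{Aut}(R,\rad(I_F))$) and the classical splitting $\mathrm{Aut}(R/\rad(I_F))\cong W(R/\rad(I_F))\rtimes \mathrm{Aut}(\Gamma(R/\rad(I_F)))$ into the semidirect product of Theorem \ref{thm:1}. Your closing remark on the normality of $\mathrm{Ker}\bigl((\pi_{\rad(I_F)})_*\bigr)$ in $\mathrm{Aut}^+(R)$ is a sensible justification of a step the paper leaves implicit, but it introduces no new ingredient.
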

{
Note that the structure of the finite Weyl group $W\big(R/\rad (I_F)\big)$ and the group 
$\mathrm{Aut}\big(\Gamma\big(R/\rad (I_F)\big)\big)$ of diagram automorphisms are 
known. In addition, the explicit form of the elliptic modular group $\Gamma^+(R)$ of 
$R$ is already studied in the previous subsection. Therefore, 
the study of $\mathrm{Aut}^+(R)$ reduces to one of the kernel 
$\mathrm{Ker}\bigl((\pi_{\rad(I_F)})_* \bigr)$.  }\\

The dual weight lattice $P\big((R/\rad (I_F))^{\vee}\big)$
of the finite quotient root system $R/\rad(I_F)$} is defined by
\begin{align*}
& P\big((R/\rad (I_F))^{\vee}\big) \\
=
&\big\{\mu\in F/\mathrm{rad}(I_F)\,\big|\, I_{F/\rad(I_F)}(\mu,\gamma)\in \Z 
\mbox{ for every }\gamma\in R/\rad(I_F)\big\}.
\end{align*}

Let $\Pi=\{\alpha_i\}_{0\leq i\leq l}$ be a simple system of $(R,G)$ in the sense of 
Definition \ref{defn:basis}. Set
\[\Pi_f=\Pi\setminus \{\alpha_0\}.\]

\begin{rem}\label{rem:simple-system-Pif}
{\rm The image $\pi_{\rad(I_F)}(\Pi_f)=
\big\{\pi_{\rad(I_F)}(\alpha_i)\big\}_{1\leq i\leq l}$ is a simple system of the finite 
quotient root system $R/\rad(I_F)$, even when $R/\rad(I_F)$ is non-reduced. 
}\end{rem}

Since the restriction $I_{F_f}$ $:=I\big|_{F_f\times F_f}$  is non-degenerate, one can take the
dual basis $(\varpi_1^\vee,\ldots,\varpi_l^\vee)$ of $(\alpha_1,\ldots,\alpha_l)$
with respect to $I_{F_f}$. That is, $\{\varpi_i^\vee\}_{1\leq i\leq l}$  is a subset of $F_f$
so that 
\[I_{F_f}(\varpi_i^\vee,\alpha_j)=\delta_{i,j}\quad \text{for every }1\leq i,j\leq l.\]
Then  the image $\big(\pi_{\rad(I_{F_f})}(\varpi_1^\vee),\ldots,
\pi_{\rad(I_{F_f})}(\varpi_l^\vee)\big)$ is a $\Z$-basis of the lattice 
$P\big((R/\rad (I_F))^{\vee}\big)$:
\[P\big((R/\rad (I_F))^{\vee}\big)=\bigoplus_{i=1}^l \Z \pi_{\rad(I_{F_f})}(\varpi_i^\vee).\]

{
Set
\[P\big((R/\mathrm{rad}(I_F))^{\vee}\big)_{\sharp}
={\sharp}\otimes P\big((R/\mathrm{rad}(I_F))^{\vee}\big)\quad\mbox{for }
\sharp=b,a.\] 
The following lemma describes the structure of the group
$\mbox{\rm Ker}\bigl((\pi_{\mathrm{rad}(I_F)})_*\bigr)$. 
\begin{lemma}\label{lemma:Ker1} 
We have
\begin{equation}\label{eqn:Ker1}
\begin{aligned}
&\mbox{\rm Ker}\bigl((\pi_{\mathrm{rad}(I_F)})_*\bigr)\\
&=
\bigl\{\psi\in E_F\bigl(P\big((R/\mathrm{rad}(I_F))^{\vee}\big)_b\oplus 
P\big((R/\mathrm{rad}(I_F))^{\vee}\big)_a\bigr)\,\bigl|\,
\psi(R)=R\bigr\}.
\end{aligned}
\end{equation}
\end{lemma}
\begin{proof}
Assume that $\psi$ is an element of the right hand side of $(\ref{eqn:Ker1})$.
Then, by direct computation, one can easily check that
$\psi\in \mbox{\rm Ker}\bigl((\pi_{\mathrm{rad}(I_F)})_*\bigr)$.

Conversely, let $\psi\in \mbox{\rm Ker}\bigl((\pi_{\mathrm{rad}(I_F)})_*\bigr)$. Then, 
by the definition, we have
$$\psi|_{\mathrm{rad}(I_F)}=\mbox{id},\quad \psi(\alpha_i)\equiv 
\alpha_i \mbox{ mod rad}(I_F)\quad \mbox{for every }1\leq i\leq l.$$
Since $\psi$ preserves the root lattice 
$Q(R)=\bigl(\oplus_{i=1}^l\mathbb{Z}\alpha_i\bigr)\oplus\mathbb{Z}b
\oplus\mathbb{Z}a$, there exist $\mu_b,\mu_a\in 
\mbox{Hom}_{\mathbb{Z}}\big(Q(R/\mathrm{rad}(I_F)),\mathbb{Z}\big)$ such that 
\[\psi(\beta)=\beta-\mu_b(\pi_{\mathrm{rad}(I_F)}(\beta))b
-\mu_a(\pi_{\mathrm{rad}(I_F)}(\beta))a\quad\mbox{for every }
\beta\in Q(R).\]
As $I_{\mathrm{rad}(I_F)}$ is non-degenerate, there exist 
$u_b,u_a\in P\big((R/\mathrm{rad}(I_F))^\vee\big)$ such that
\[\mu_{\sharp}(\pi_{\mathrm{rad}(I_F)}(\beta))
=I_{\mathrm{rad}(I_F)}(\pi_{\mathrm{rad}(I_F)}(\beta),u_\sharp)\quad 
\mbox{for }\sharp=b,a.\]
Therefore, we have
\[\psi=E_{F}(b\otimes u_b+a\otimes u_a)\in
E_F\big(P\big((R/\mathrm{rad}(I_F))^\vee\big)_b\oplus 
P\big((R/\mathrm{rad}(I_F))^\vee\big)_a\big).\]
Thus, we have the lemma.
\end{proof}
For $\psi=E_F(b\otimes u_b+a\otimes u_a)$, set 
\begin{equation*}\label{eqn:psi_sharp}
\psi_{\sharp}:=E_F({\sharp}\otimes u_{\sharp})
\in E_F\bigr(P\big((R/\mathrm{rad}(I_F))^\vee\big)_{\sharp}\bigr)\quad
\mbox{for }\sharp=b,a.
\end{equation*}
Then, by the definition of the Eichler-Siegel map $E_F$, it is easy to see 
\begin{equation}\label{eqn:comm-psi}
\psi=\psi_a\psi_b=\psi_b\psi_a.
\end{equation}
In other words, the group $E_F\bigl(P\big((R/\mathrm{rad}(I_F))^\vee\big)_b\oplus 
P\big((R/\mathrm{rad}(I_F))^\vee\big)_a\bigr)$
decomposes into the (internal) direct sum of 
$E_F\bigl(P\big((R/\mathrm{rad}(I_F))^\vee\big)_\sharp\bigr)$ $(\sharp=b,a)$:
\begin{equation}
\begin{aligned}
&E_F\bigl(P\big((R/\mathrm{rad}(I_F))^\vee\big)_b\oplus 
P\big((R/\mathrm{rad}(I_F))^\vee\big)_a\bigr)\\
&=E_F\bigl(P\big((R/\mathrm{rad}(I_F))^\vee\big)_b\bigr)\oplus 
E_F\bigl(P\big((R/\mathrm{rad}(I_F))^\vee\big)_a\bigr).
\end{aligned}
\end{equation}

\begin{lemma}\label{lemma:ker2}
For $\psi=E_F(b\otimes u_b+a\otimes u_a)$, the following conditions are 
equivalent{\rm :}
\begin{itemize}
\item[(a)] It belongs to $\mbox{\rm Ker}\bigl((\pi_{\mathrm{rad}(I_F)})_*\bigr).$
\vskip 1mm
\item[(b)] For every $\sharp=b,a$, $\psi_\sharp(R)=R.$
\end{itemize}
\end{lemma}

\begin{proof}
Note that, by Lemma \ref{lemma:Ker1},  the condition (a) is equivalent to $\psi(R)=R$.
\vskip 1mm
\noindent
(b) $\Rightarrow$ (a):\, This part follows obviously by (\ref{eqn:comm-psi}).
\vskip 1mm
\noindent
(a) $\Rightarrow$ (b):\,  Using an explicit description for the set $R$ in Appendix 
\ref{sect_clasfficiation-red-quot},
one can show the statement by case by case analysis. In the following, we give a 
explicit proof only for the case of $B_l^{(2,2)\ast}$. For other cases, the statement
is  proved by a similar way to this case with an easier argument. 

Assume $(R,G)$ is of type $B_l^{(2,2)\ast}$. Recall the explicit description
of the set $R=R_s\sqcup R_l$ in Appendix \ref{sect_clasfficiation-red-quot}:
\begin{equation}\label{eqn:R-B_l^{(2,2)star}}
\begin{array}{ll}
R_s=\{\pm \varepsilon_i+rb+sa\,|\, 1\leq i\leq l,\, r,s\in\mathbb{Z}
\mbox{ such that }rs\equiv 0\,(\mathrm{mod}\,2)\},\\[3pt]
R_l=\{\pm\varepsilon_i\pm\varepsilon_j+2rb+2sa\,|\,1\leq i<j\leq l,\, r,s\in\mathbb{Z}\}.
\end{array}.
\end{equation}
Take $\psi=E_F(b\otimes u_b+a\otimes u_a)\in 
\mbox{\rm Ker}\bigl((\pi_{\mathrm{rad}(I_F)})_*\bigr)$. Since $\psi(R_s)=R_s$, 
an element
\begin{align*}
\psi(\pm \varepsilon_i+rb+sa)&=
\pm\varepsilon_i+(r\mp I_F(u_b,\varepsilon_i))b+(s\mp I_F(u_a,\varepsilon_i))a
\end{align*} 
should belong to $R_s$ for every pair $r,s$ such that
$rs\equiv 0\, (\mathrm{mod}\, 2)$. That is,
\begin{align*}
(r\mp I_F(u_b,\varepsilon_i))(s\mp I_F(u_a,\varepsilon_i))\equiv 0\ (\mathrm{mod}\, 2)
.
\end{align*} 
Therefore, we have
\begin{equation}\label{eqn:psib-psia}
I_F(u_b,\varepsilon_i)\in 2\mathbb{Z}\quad \mbox{and}\quad I_F(u_a,\varepsilon_i)
\in 2\mathbb{Z}.
\end{equation}
By the condition \eqref{eqn:psib-psia}, it follows that
\begin{align*}
\psi_b(\pm \varepsilon_i+rb+sa)&=
\pm\varepsilon_i+(r\mp I_F(u_b,\varepsilon_i))b+sa
\end{align*} 
and 
\[(r\mp I_F(u_b,\varepsilon_i))s\equiv 0\ (\mathrm{mod}\, 2).\]
Therefore, $\psi_b$ preserves the set $R_s$ of short roots. 
 
On the other hand, for every $r,s\in\mathbb{Z}$, an element
\begin{align*}
\psi_b(\pm\varepsilon_i\pm\varepsilon_j+2rb+2sa)&=
\pm \varepsilon_i\pm\varepsilon_j
+(\mp I_F(u_b,\varepsilon_i)\mp I_F(u_b,\varepsilon_j)+2r)b+2sa
\end{align*}
belongs to $R_l$ under the condition \eqref{eqn:psib-psia}. That is,  
$\psi_b$ preserves the set $R_l$ of long roots. Combining these two results, 
we have $\psi_b(R)=R$.  

By a similar way, we have $\psi_a(R)=R$. Thus, the condition (b) is satisfied, and 
the statement is proved in this case. 
\end{proof}

For $\sharp=b,a$, define
\[\mbox{\rm Ker}\bigl((\pi_{\mathrm{rad}(I_F)})_*\bigr)_\sharp
=\mbox{\rm Ker}\bigl((\pi_{\mathrm{rad}(I_F)})_*\bigr)\cap
E_F\bigl(P\big((R/\mathrm{rad}(I_F))^\vee\big)_{\sharp}\bigr).\]
The following corollary is immediately verified. 

\begin{cor}\label{cor:ker3}
{\rm (1)} For each $\sharp=b,a$, we have
\begin{equation*}\label{eqn:(B.2.4)}
\mbox{\rm Ker}\bigl((\pi_{\mathrm{rad}(I_F)})_*\bigr)_\sharp=
\bigl\{\psi_\sharp\in E_F\bigl(P\big((R/\mathrm{rad}(I_F))^\vee\big)_{\sharp}\bigr)\,\bigl|\,
\psi_\sharp(R)=R\bigr\}.
\end{equation*}
{\rm (2)} There is an induced direct sum decomposition of 
$\mbox{\rm Ker}\bigl((\pi_{\mathrm{rad}(I_F)})_*\bigr)${\rm :}
$$\mbox{\rm Ker}\bigl((\pi_{\mathrm{rad}(I_F)})_*\bigr)
=\mbox{\rm Ker}\bigl((\pi_{\mathrm{rad}(I_F)})_*\bigr)_b\bigoplus
\mbox{\rm Ker}\bigl((\pi_{\mathrm{rad}(I_F)})_*\bigr)_a.$$
\end{cor}
\vskip 3mm

Since $E_F\big(P\big((R/\mathrm{rad}(I_F))^\vee\big)_{\sharp}\big)
=E_F\big(\sharp\otimes P\big((R/\mathrm{rad}(I_F))^\vee\big)\big)$ is a free abelian 
group and $\mbox{\rm Ker}\bigl((\pi_{\mathrm{rad}(I_F)})_*\bigr)_\sharp$ is a 
subgroup of it for each $\sharp=b,a$, there exist a lattice 
$M_{F/\mathrm{rad}(I_F),\sharp}\subset P\big((R/\mathrm{rad}(I_F))^\vee\big)$ 
such that
\begin{equation*}\label{eqn:def-M}
\mbox{\rm Ker}\bigl((\pi_{\mathrm{rad}(I_F)})_*\bigr)_\sharp
=E_F(\delta_{\sharp}\otimes
M_{F/\mathrm{rad}(I_F),\sharp})\quad\mbox{for }\sharp=b,a.
\end{equation*}
Thus, we have the following proposition:

\begin{prop}\label{prop:ker}
Let $(R,G)$ be a mERS in the K. Saito's list.  There
exist sublattices $M_{F/\rad(I_F),a},M_{F/\rad(I_F),b}\subset 
P\big((R/\rad(I_F))^{\vee}\big)$ 
such that
\[\mathrm{Ker}\bigl((\pi_{\mathrm{rad}(I_F)})_* \bigr)=
E_F\big(a\otimes M_{F/\rad(I_F),a}\big)
\bigoplus E_F\big(b\otimes M_{F/\rad(I_F),b}\bigr).\]
\end{prop}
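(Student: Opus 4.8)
The plan is to determine $\mathrm{Ker}\bigl((\pi_{\mathrm{rad}(I_F)})_*\bigr)$ via the Eichler--Siegel formalism, exactly as $T_G(R)$ and $T_{\mathrm{rad}(I_F)}(R)$ were handled in Lemma \ref{lemma:Saito(1.15)} and Proposition \ref{prop:SaitoSaito1}. First I would observe that $\mathrm{Ker}\bigl((\pi_{\mathrm{rad}(I_F)})_*\bigr)$ consists precisely of those $\psi\in\mathrm{Aut}(R,\mathrm{rad}(I_F))$ which in addition act trivially on $F/\mathrm{rad}(I_F)$; since such a $\psi$ fixes $\mathrm{rad}(I_F)$ pointwise and induces the identity on $F/\mathrm{rad}(I_F)$, it is unipotent and lies in the image of the Eichler--Siegel map restricted to $\mathrm{rad}(I_F)\otimes F/\mathrm{rad}(I_F)$. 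More precisely, using property (v) of the Eichler--Siegel map, any $\psi$ with $\psi|_{\mathrm{rad}(I_F)}=\mathrm{id}$ and $(\pi_{\mathrm{rad}(I_F)})_*(\psi)=\mathrm{id}$ has the form $\psi=E_F(\xi)$ for a unique $\xi\in\mathrm{rad}(I_F)\otimes F/\mathrm{rad}(I_F)$; writing $\xi=a\otimes\mu_a+b\otimes\mu_b$ with $\mu_a,\mu_b\in F/\mathrm{rad}(I_F)$, the condition $\psi(R)=R$ translates into a lattice condition on the pair $(\mu_a,\mu_b)$.

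Next I would make that lattice condition explicit. For $\psi=E_F(a\otimes\mu_a+b\otimes\mu_b)$ and a root $\gamma\in R$ one computes $\psi(\gamma)=\gamma - I_F(\mu_a,\gamma)\,a - I_F(\mu_b,\gamma)\,b$, so $\psi$ preserves $R$ iff for every $\gamma\in R$ one has $I_F(\mu_a,\gamma)\in K_G(\gamma)/a$-type shifts compatible with the counting structure of $R$; concretely, using the explicit description \eqref{eq:description-redR} of $R$ in terms of $R\cap F_a$ and the countings $k(\alpha)$, the requirement becomes that $I_{F/\mathrm{rad}(I_F)}(\mu_\sharp,\gamma)$ lie in the appropriate sublattice of $\Z$ dictated by $k(\gamma)$ and the tier numbers, for $\sharp=a,b$. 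This forces $\mu_a\in M_{F/\mathrm{rad}(I_F),a}$ and $\mu_b\in M_{F/\mathrm{rad}(I_F),b}$ for suitable sublattices of the dual weight lattice $P\bigl((R/\mathrm{rad}(I_F))^\vee\bigr)$ (the latter being the largest lattice on which all $I_{F/\mathrm{rad}(I_F)}(\,\cdot\,,\gamma)$ are integral), and conversely any such $\mu_a,\mu_b$ give an automorphism. One then reads off $M_{F/\mathrm{rad}(I_F),a}$ and $M_{F/\mathrm{rad}(I_F),b}$ case by case from K. Saito's list, in parallel with Table \ref{table2-auto}.

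I expect the main obstacle to be the case-by-case verification that the lattice conditions on $\mu_a$ and $\mu_b$ are exactly the stated sublattices of $P\bigl((R/\mathrm{rad}(I_F))^\vee\bigr)$, rather than something larger or smaller. The subtlety is that $R$ has up to three Weyl orbits (for the $BC_l$-type quotients) with distinct countings $k_s,k_m,k_l$, so the constraint coming from long roots can be strictly stronger than that from short roots; one must check that the divisibility relations \eqref{counting_red-red} among the countings, together with the tier numbers, pin down the sublattices uniquely, and that the $\ast$-type root systems (where the shift lattice is $\{ma+nb\mid mn\equiv 0\,[2]\}$ rather than a product of one-dimensional lattices) genuinely contribute the claimed $M_{F/\mathrm{rad}(I_F),\sharp}$. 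Since the statement only asserts the existence of such sublattices, it suffices to exhibit them; the honest work of identifying each one explicitly can be deferred, but the argument that $\mathrm{Ker}\bigl((\pi_{\mathrm{rad}(I_F)})_*\bigr)$ is $E_F$ of a lattice of the asserted shape follows cleanly from the Eichler--Siegel machinery and the description \eqref{eq:description-redR} of $R$.
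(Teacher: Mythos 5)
The paper itself gives no proof of Proposition \ref{prop:ker}: it is quoted from \cite{SaitoSaito}, and only the resulting lattices $M_{F/\rad(I_F),\sharp}$ are recorded in Table \ref{table1-auto}. So there is no in-paper argument to compare against; judged on its own, your Eichler--Siegel strategy is the right one and essentially complete in outline. The key points are all present: an element of $\mathrm{Ker}\bigl((\pi_{\rad(I_F)})_*\bigr)$ fixes $\rad(I_F)$ pointwise and induces the identity modulo $\rad(I_F)$, hence (by non-degeneracy of $I_{F/\rad(I_F)}$) equals $E_F(a\otimes\mu_a+b\otimes\mu_b)$ for unique $\mu_a,\mu_b\in F/\rad(I_F)$; the condition $\psi(R)=R$ then becomes integrality/divisibility constraints on $I_{F/\rad(I_F)}(\mu_\sharp,\gamma)$, the weakest of which (from short roots) already forces $\mu_\sharp\in P\bigl((R/\rad(I_F))^\vee\bigr)$.

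Two small repairs to the write-up. First, the intermediate formula ``$I_F(\mu_a,\gamma)\in K_G(\gamma)/a$-type shifts'' is garbled, and the appeal to \eqref{eq:description-redR} is not quite the right tool: that formula only records the shifts of $R$ along $a$ relative to $R\cap F_a$, whereas here you need the full two-parameter descriptions $R=(R_s+\Z a+\Z b)\cup(R_l+t_2\Z a+t_1\Z b)$, etc., from the classification in \S 2.5, so that the conditions on $\mu_a$ and $\mu_b$ can be read off orbit by orbit. Second, for the $\ast$-type systems the condition is a priori \emph{coupled} in $(p,q)=(I(\mu_a,\gamma),I(\mu_b,\gamma))$, since the shift set $\{ma+nb\mid mn\equiv 0\,[2]\}$ is not a product; the proposition's product form $a\otimes M_a+b\otimes M_b$ is only correct because the coupled congruences decouple (e.g.\ for $A_1^{(1,1)\ast}$, requiring $(m-p)(n-q)\equiv 0\,[2]$ whenever $mn\equiv 0\,[2]$ forces both $p$ and $q$ to be even, giving $M_a=M_b=2P$). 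You flag this as the main obstacle, which is exactly right, but the decoupling should be stated as a fact to be verified rather than assumed from the shape of the conclusion.
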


Let us determine the lattice $M_{F/\mathrm{rad}(I_F),\sharp}\ 
(\sharp=b,a)$ in explicit way. Since the results depend on specific calculations for each 
case, we only describe the strategy here. 

Recall the dual basis 
$(\varpi_1^{\vee},\ldots,\varpi_l^{\vee})$ of $(\alpha_1,\ldots,\alpha_l)$ with respect
to $I_F$ which was introduced after Remark \ref{rem:simple-system-Pif}.
Under the identification 
$\pi_{\mathrm{rad}(I_F)}:F_f\overset{\sim}{\to} F/\mathrm{rad}(I_F)$, 
they are regarded as the fundamental coweights of $R/\mathrm{rad}(I_F)$, and 
the coweight lattice $P\big(R/\mathrm{rad}(I_F))^{\vee}\big)$ is identified with 
$\oplus_{i=1}^l\mathbb{Z} \varpi_i^{\vee}$. 
Recall $\psi_{\sharp}=E_F(\delta_{\sharp}\otimes u_{\sharp})$ with
$u_{\sharp}\in P(R_f^{\vee})$ for $\sharp=b,a$ (see (\ref{eqn:psi_sharp})). By  using
an $\mathbb{Z}$-basis $\varpi_i^{\vee}\ (1\leq i\leq l)$, the condition (b) of Lemma
\ref{lemma:ker2} is described in an explicit way. }\\

We give an explicit description of the lattice 
$M_{F/\rad(I_F),\sharp}\ (\sharp=a,b)$ for each $(R,G)$ in the following table. 
\newpage
\begin{center}
\begin{table}[h]
\caption{An explicit description of $M_{F/\rad(I_F),\sharp}$}\label{table1-auto}
\renewcommand{\arraystretch}{1.4}
\begin{tabular}{|c|c|c|}
\noalign{\hrule height0.8pt}
$(R,G)$&  $M_{F/\rad(I_F),a}$ & $M_{F/\rad(I_F),b}$\\
\noalign{\hrule height0.8pt}
\hline
$X_l^{(1,1)}$ & $P\big((R/\rad(I_F))^{\vee}\big)$ & $P\big((R/\rad(I_F))^{\vee}\big)$ \\
\hline
$X_l^{(1,t)}$ & $\widetilde{P}\big((R/\rad(I_F))^{\vee}\big)$ & 
$P\big((R/\rad(I_F))^{\vee}\big)$ \\
\hline
$X_l^{(t,1)}$ & $P\big((R/\rad(I_F))^{\vee}\big)$
& $\widetilde{P}\big((R/\rad(I_F))^{\vee}\big)$\\
\hline
$X_l^{(t,t)}$ & $\widetilde{P}\big((R/\rad(I_F))^{\vee}\big)$ &
$\widetilde{P}\big((R/\rad(I_F))^{\vee}\big)$ \\
\hline
\hline
$BC_l^{(2,1)}$ & 
$P\big((R/\rad(I_F))^{\vee}\big)$ & $P\big((R/\rad(I_F))^{\vee}\big)$ \\
\hline
$BC_l^{(2,4)}$ & 
$2P\big((R/\rad(I_F))^{\vee}\big)$ & $P\big((R/\rad(I_F))^{\vee}\big)$ \\
\hline
$BC_l^{(2,2)}(1)$  & 
$P\big(((R/\rad(I_F))^{\dc})^{\vee}\big)$ & 
$P\big((R/\rad(I_F))^{\vee}\big)$ \\
\hline
$
BC_l^{(2,2)}(2)$ & 
$\widetilde{P}\big((R/\rad(I_F))^{\vee}\big)$ & $P\big((R/\rad(I_F))^{\vee}\big)$\\
\hline
\hline
$A_1^{(1,1)\ast}$  & $2P\big((R/\rad(I_F))^{\vee}\big)$ & $2P\big((R/\rad(I_F))^{\vee}\big)$ \\
\hline
$B_l^{(2,2)\ast}\, (l\geq 2)$  & $2P\big((R/\rad(I_F))^{\vee}\big)$ & 
$2P\big((R/\rad(I_F))^{\vee}\big)$ \\
\hline
$C_l^{(1,1)\ast}\, (l\geq 2)$  & 
$\widetilde{P}\big((R/\rad(I_F))^{\vee}\big)$ & 
$\widetilde{P}\big((R/\rad(I_F))^{\vee}\big)$ \\
\noalign{\hrule height0.8pt}
\end{tabular}
\end{table}
\end{center}
Here, $\widetilde{P}\big((R/\rad(I_F))^\vee\big)$ is the sub-lattice of 
$P\big((R/\rad(I_F))^\vee\big)$ defined by
\[ \widetilde{P}\big((R/\rad(I_F))^\vee\big)=
\bigoplus_{i=1}^l \frac{I_{F/\rad(I_F)}\big(\pi_{\rad(I_F)}(\alpha_i),\pi_{\rad(I_F)}(\alpha_i)\big)}
{I_{F/\rad(I_F)}(\gamma_s,\gamma_s)}\Z 
\pi_{\rad(I_F)}(\varpi_i^\vee), \]
where $\gamma_s \in R/\rad (I_F)$ is a short root. 

\subsection{The outer automorphisms}

It is easy to see that the Weyl group $W(R)$ is a normal subgroup of 
$\mathrm{Aut}^+(R)$. Set 
\[\mathrm{Out}^+(R)=\mathrm{Aut}^+(R)/W(R). \]
Call an element of $\mathrm{Out}^+(R)$ (orientation preserving) {\it outer
automorphism} of $R$. Since an element $w\in W(R)$ acts on the radical
$\mathrm{rad}(I_F)$ trivially, $W(R)$ is a (normal) subgroup of  
$\mathrm{Aut}(R,\mathrm{rad}(I_F))$. Dividing the upper horizontal exact sequence 
in \eqref{comm-diagram-Autom} by $W(R)$, we have a new exact sequence
\begin{equation}\label{eq:outer}
1  \longrightarrow  \Omega(R)\longrightarrow \mathrm{Out}^+(R) \ 
\longrightarrow \Gamma^+(R) \longrightarrow  1 ,
\end{equation}
where
\[\Omega(R):=\mathrm{Aut}(R,\mathrm{rad}({I_F}))/W(R),\]
and call it the {\bf $\Omega$-part} of $\mathrm{Out}^+(R)$.  
Furthermore, as an immediate 
consequence of Theorem \ref{thm:1}, we get the following corollary.
\begin{cor}
The exact sequence \eqref{eq:outer} also splits. That is, the group 
$\mathrm{Out}^+(R)$ is isomorphic to the
semidirect product $\Omega(R)\rtimes\Gamma^+(R)$. 
\end{cor}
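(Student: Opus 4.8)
The plan is to deduce the splitting of \eqref{eq:outer} directly from the splitting of the upper horizontal short exact sequence of \eqref{comm-diagram-Autom} established in Theorem \ref{thm:1}, by pushing a section down modulo $W(R)$. First I would record the two elementary facts that make this legitimate. On one hand, $W(R)$ is a normal subgroup of $\mathrm{Aut}^+(R)$; on the other hand, every reflection $r_\alpha$ acts as the identity on $\rad(I_F)$, because $I_F(\alpha^\vee,x)=0$ for all $x\in\rad(I_F)$. Hence $W(R)\subseteq\mathrm{Aut}(R,\rad(I_F))$ and $\resrad(W(R))=\{1\}$, so $\resrad\colon\mathrm{Aut}^+(R)\to\Gamma^+(R)$ factors through the canonical projection $p\colon\mathrm{Aut}^+(R)\to\mathrm{Out}^+(R)=\mathrm{Aut}^+(R)/W(R)$; the induced map is precisely the surjection $\mathrm{Out}^+(R)\to\Gamma^+(R)$ appearing in \eqref{eq:outer}.

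Next I would fix, by Theorem \ref{thm:1}, a group homomorphism $s\colon\Gamma^+(R)\to\mathrm{Aut}^+(R)$ with $\resrad\circ s=\mathrm{id}_{\Gamma^+(R)}$, and set $\bar s:=p\circ s\colon\Gamma^+(R)\to\mathrm{Out}^+(R)$. Being a composite of homomorphisms, $\bar s$ is a homomorphism, and by the previous paragraph its composite with $\mathrm{Out}^+(R)\to\Gamma^+(R)$ equals $\resrad\circ s=\mathrm{id}_{\Gamma^+(R)}$. Thus $\bar s$ is a section of \eqref{eq:outer}, so that exact sequence splits; since $\Omega(R)$ is the kernel of $\mathrm{Out}^+(R)\to\Gamma^+(R)$ it is normal, and a split short exact sequence with normal kernel yields the semidirect product decomposition $\mathrm{Out}^+(R)\cong\Omega(R)\rtimes\Gamma^+(R)$, which is the assertion.

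There is essentially no hard step: the proof is a short diagram chase in \eqref{comm-diagram-Autom}, and the only input beyond formal group theory is the existence of the section $s$ provided by Theorem \ref{thm:1}. If I wanted the argument to be completely self-contained I would also spell out, via the third isomorphism theorem applied to the chain $W(R)$ inside $\mathrm{Aut}(R,\rad(I_F))$ inside $\mathrm{Aut}^+(R)$, that quotienting the upper row of \eqref{comm-diagram-Autom} by $W(R)$ really produces \eqref{eq:outer} with the stated terms; but this is exactly the exactness already asserted for \eqref{eq:outer}, so nothing new is needed.
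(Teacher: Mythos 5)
Your proof is correct and is precisely the argument the paper intends: it states the corollary as an immediate consequence of Theorem \ref{thm:1}, and pushing the section $s\colon\Gamma^+(R)\to\mathrm{Aut}^+(R)$ down modulo $W(R)$ (using that $W(R)$ acts trivially on $\rad(I_F)$, hence lies in $\mathrm{Aut}(R,\rad(I_F))$ and is normal) is exactly that deduction. Nothing is missing.
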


{
In the rest of this subsection, we study the group $\Omega(R)$. 
By Proposition \ref{prop:SaitoSaito1} and Proposition \ref{prop:ker}, we have
\[ \mathrm{Ker}\bigl((\pi_{\mathrm{rad}(I_F)})_*\bigr)\cap W(R)
=E_F(T_{\rad (I_F)}(R)).\]
Therefore, by exact sequences \eqref{eq:exact-Weyl2} and \eqref{eqn:exact-seq-Aut},
we have the following commutative diagram in which all vertical and horizontal 
sequences are exact:
{\small
\begin{equation}\label{eqn:comm-diagram}
\begin{array}{l}\xymatrix @C=18pt{
&1 \ar[d] & 1 \ar[d] & 1 \ar[d]\\
1 \ar[r] & E_F(T_{\rad (I_F)}(R)) \ar[r] \ar[d] & W(R) \ar[r]^-{(\pi_{\mathrm{rad}(I_F)})_*} 
\ar[d] & W\big(R/\rad(I_F)\big) \ar[r] \ar[d]& 1\\
1\ar[r] & \mathrm{Ker}\bigl((\pi_{\mathrm{rad}(I_F)})_*\bigr) \ar[r] \ar[d] 
& \mbox{\rm Aut}(R,\mbox{\rm rad}(I_F)) \ar[r]^-{(\pi_{\mathrm{rad}(I_F)})_*} \ar[d] 
& \mathrm{Aut}\big(R/\rad(I_F)\big) \ar[r] \ar[d]& 1\\
1 \ar[r] & \dfrac{\mathrm{Ker}\bigl((\pi_{\mathrm{rad}(I_F)})_*\bigr)}
{E_F(T_{\rad (I_F)}(R))} 
\ar[r] \ar[d] & \Omega(R)
 \ar[r]^-{\overline{(\pi_{\mathrm{rad}(I_F)})_*}} \ar[d] 
& \mathrm{Aut}\big(\Gamma\big(R/\rad(I_F)\big)\big)\ar[r] 
 \ar[d]& 1\\
&1&1& 1&
}\end{array},
\end{equation}}
where $\overline{(\pi_{\mathrm{rad}(I_F)})_*}:\Omega(R)\longrightarrow
\mathrm{Aut}\big(\Gamma(R/\rad(I_F))\big)$ is the induced group homomorphism.  
As a corollary of Proposition \ref{prop:2}, we have the following result.

\begin{cor}\label{cor:H-red}
The lower horizontal exact sequence
\begin{equation*}\label{eqn:lower-splits}
1 \longrightarrow \dfrac{\mathrm{Ker}\bigl((\pi_{\mathrm{rad}(I_F)})_*\bigr)}
{E_F(T_{\rad (I_F)}(R))} 
\longrightarrow \Omega(R)\xrightarrow{\overline{(\pi_{\mathrm{rad}(I_F)})}_*}
\mathrm{Aut}\big(\Gamma\big(R/\rad(I_F)\big)\big)\longrightarrow 1
\end{equation*}
of {\rm (\ref{eqn:comm-diagram})} splits. In other words, the group $\Omega(R)$ is
isomorphic to a semidirect product 
$\dfrac{\mathrm{Ker}\bigl((\pi_{\mathrm{rad}(I_F)})_*\bigr)}{E_F(T_{\rad (I_F)}(R))}
\rtimes\mathrm{Aut}\big(\Gamma\big(R/\rad(I_F)\big)\big)$. 
\end{cor}

\begin{proof}
Recall the section $\zeta:\mathrm{Aut}(R/\mathrm{rad}(I_F))\longrightarrow 
\mathrm{Aut}(R,\mathrm{rad}(I_F))$
of the exact sequence (\ref{eqn:exact-seq-Aut}) which is constructed in the proof of
Proposition \ref{prop:2} (2). Consider a composition
\[\mathrm{Aut}(R/\mathrm{rad}(I_F))\overset{\zeta}{\longrightarrow} 
\mathrm{Aut}(R,\mathrm{rad}(I_F)) \longrightarrow 
\dfrac{\mathrm{Aut}(R,\mathrm{rad}(I_F))}{W(R)}\ (=\Omega(R)).\]
Since $\zeta\big(W(R/\mathrm{rad}(I_F))\big)\subset W(R)$, a group homomorphism
\[\overline{\zeta}:\dfrac{\mathrm{Aut}(R/\mathrm{rad}(I_F))}{W(R/\mathrm{rad}(I_F))}
\longrightarrow \Omega(R)\]
is naturally induced. Furthermore, by the definition, the condition
\[\overline{(\pi_{\mathrm{rad}(I_F)})}_*\circ \overline{\zeta}=\mathrm{id}\]
is satisfied. That is, the lemma is obtained.   
\end{proof}

Combining the above results, we have
\begin{equation*}\label{eq:Omega2}
\begin{split}
\Omega(R) 
\cong 
&\dfrac{\mathrm{Ker}\bigl((\pi_{\mathrm{rad}(I_F)})_*\bigr)}{E_F(T_{\rad (I_F)}(R))}\rtimes 
\mathrm{Aut}\big(\Gamma\big(R/\rad(I_F)\big)\big) \\
\cong
&
\left(\dfrac{M_{F/\rad(I_F),a}}{Q_{F/\rad(I_F),a}}\bigoplus \dfrac{M_{F/\rad(I_F),b}}{Q_{F/\rad(I_F),b}}\right)
\rtimes\mathrm{Aut}\big(\Gamma\big(R/\rad(I_F)\big)\big).
\end{split}
\end{equation*}
}
In the following table, we give an explicit description of the group $\Omega(R)$.

\begin{center}

\begin{table}[h]
\caption{{An explicit description of} $\Omega(R)$}\label{table3-auto}
\scalebox{0.83}{
\renewcommand{\arraystretch}{1.4}
\begin{tabular}{|c|c|c|c|c|}
\noalign{\hrule height0.8pt}
$(R,G)$& $\frac{M_{F/\rad(I_F),a}}{Q_{F/\rad(I_F),a}}$ & $\frac{M_{F/\rad(I_F),b}}{Q_{F/\rad(I_F),b}}$ & 
$\mathrm{Aut}\big(\Gamma\big(R/\rad(I_F)\big)\big)$ & $\Omega(R)$ 
\rule[-3mm]{0mm}{8mm}
\\
\noalign{\hrule height0.8pt}
$A_1^{(1,1)}$ & $\Z/2\Z$ & $\Z/2\Z$ & $\mathrm{id}$ & $(\Z/2\Z)^2$\\
\hline
$A_l^{(1,1)}\,(l\geq 2)$ & $\Z/(l+1)\Z$ & $\Z/(l+1)\Z$ & $\Z/ 2\Z$ &
$(\Z/(l+1)\Z)^{2}\rtimes (\Z/2\Z)$\\
\hline
$D_4^{(1,1)}$ & $(\Z/ 2\Z)^2$ & $(\Z/ 2\Z)^2$ & $\mathfrak{S}_3$ &
$(\Z/2\Z)^4\rtimes \mathfrak{S}_3$\\
\hline
$D_l^{(1,1)}\, \bigg(\!\! \begin{array}{lll}l\geq 5,\\[-5pt] l:\mathrm{odd}\end{array}
\!\! \bigg)$ 
& $\Z/4\Z$  & $\Z/4\Z$ & $\Z/ 2\Z$ &
$(\Z/4\Z)^{2}\rtimes (\Z/2\Z)$\\
\hline 
$D_l^{(1,1)}\, \bigg(\!\! \begin{array}{lll}l\geq 6,\\[-5pt] l:\mathrm{even}\end{array}
\!\! \bigg)$
& $(\Z/2\Z)^2$ & $(\Z/2\Z)^2$ & 
$\Z/ 2\Z$ & $(\Z/2\Z)^4\rtimes (\Z/2\Z)$\\
\hline 
$E_6^{(1,1)}$ & $\Z/3\Z$ & $\Z/3\Z$ & $\Z/2\Z$ & $(\Z/3\Z)^2\rtimes (\Z/2\Z)$\\
\hline
$E_7^{(1,1)}$ & $\Z/2\Z$ & $\Z/2\Z$ & $\mathrm{id}$ & $(\Z/2\Z)^2$\\
\hline
$E_8^{(1,1)}$ & $\mathrm{id}$ & $\mathrm{id}$ & $\mathrm{id}$ & $\mathrm{id}$ \\
\hline 
\hline
$B_l^{(t_1,t_2)}$ & $\Z/2\Z$ & $\Z/2\Z$ & $\mathrm{id}$ &$(\Z/2\Z)^2$\\
\hline
$C_l^{(t_1,t_2)}$ & $\Z/2\Z$ & $\Z/2\Z$ & $\mathrm{id}$ & $(\Z/2\Z)^2$\\
\hline
$F_4^{(t_1,t_2)}$ & $\mathrm{id}$ & $\mathrm{id}$ & $\mathrm{id}$ & $\mathrm{id}$ \\
\hline
$G_2^{(t_1,t_2)}$ & $\mathrm{id}$ & $\mathrm{id}$ & $\mathrm{id}$ & $\mathrm{id}$ \\
\hline
\hline
$BC_l^{(2,1)}$ & $\mathrm{id}$ & $\mathrm{id}$ & $\mathrm{id}$ & $\mathrm{id}$\\
\hline
$BC_l^{(2,4)}$ & $\mathrm{id}$ & $\mathrm{id}$ & $\mathrm{id}$& $\mathrm{id}$\\
\hline
$BC_l^{(2,2)}(1)$  & $\Z/2\Z$ & $\mathrm{id}$  & $\mathrm{id}$ & $\Z/2\Z$\\
\hline
$BC_l^{(2,2)}(2)$ & $\Z/2\Z$ & $\mathrm{id}$ & $\mathrm{id}$& $\Z/2\Z$ \\
\hline
\hline
$A_1^{(1,1)\ast}$  & $\mathrm{id}$ & $\mathrm{id}$ & $\mathrm{id}$ & 
$\mathrm{id}$\\
\hline
$B_l^{(2,2)\ast}$  & $\mathrm{id}$ & $\mathrm{id}$ & $\mathrm{id}$ & 
$\mathrm{id}$\\
\hline
$C_l^{(1,1)\ast}$  & $\mathrm{id}$ & $\mathrm{id}$ & $\mathrm{id}$ &
$\mathrm{id}$\\
\noalign{\hrule height0.8pt}
\end{tabular}
}
\end{table}
\end{center}

For simplicity, in the rest of this article, 
we extend the range of $l$ in the definition of the mERS of type $B_l^{(1,1)}, \, B_l^{(1,2)}, \, C_l^{(2,1)}, \, C_l^{(2,2)}$ and $D_l^{(1,1)}$ to $l\geq 2$. It turns out that the structure of the automorphism groups described in the previous subsubsection 
is still valid; for type $D_l^{(1,1)}$, one should read ($l\geq 3$, $l$: odd) and ($l\geq 2$, $l$: even $\neq 4$), in their appropriate places.


\part{New marked elliptic root systems}\label{chapter:new-mERS}

In this chapter, we construct all possible marked elliptic root systems $(R,G)$ with non-reduced affine quotient $R/G$. 
Let $R_s, R_m$ and $R_l$ be the subsets of the short (resp. middle length and long) roots in $R$. Since the subsets
$R_s \cup R_m$ and $R_m \cup R_l$ are reduced elliptic root systems, this can be achieved
by ``gluing''  two reduced elliptic root systems. \\

In Section \ref{sect:MERS-non-red-quot}, we give all possible marked elliptic root systems explicitly and state the weak classification (Theorems \ref{thm_classification-reduced} and \ref{thm_classification-non-reduced}); we do \textit{not} prove that two marked elliptic root systems in the above list are non-isomorphic as marked root systems, in this chapter.  Section \ref{sect_mers-II} is devoted to the proof of the last two theorems. Here, after explaining what we mean by ``gluing root systems'', we prove these two theorems by performing gluing all possible pairs of elliptic root systems of type $B_l$ and of type $C_l$. In Section \ref{sect:more-mERS}, we 
study some properties of marked elliptic root systems $(R,G)$ with non-reduced affine quotient $R/G$. In $\S$ \ref{sect:isom-root-sys}, we classify the isomorphism classes of the marked elliptic root systems listed above, as root systems (Theorems \ref{thm:isom-red-mERS-nred_root-sys} and \ref{thm:isom-nred-mERS-nred_root-sys}). In $\S$ \ref{sect_two-affine-quot}, we study affine quotients of a given elliptic root system. 


\section{MERSs with non-reduced quotient}\label{sect:MERS-non-red-quot}
Let $(F,I)$ be an $\R$-vector space with a quadratic form whose signature is $(l,2,0)$.
Here we introduce several mERSs $(R,G)$ with non-reduced affine quotient $R/G$. It should be noticed that the finite quotient of any such root system is of type $BC_l$. In this section, we fix the marking $G=\R a$. 
\subsection{Reduced mERSs}\label{sect:intro_red-non-red-quot}
First consider the reduced $R$'s. Since there are only 6 such cases, we call each type of the mERS by making reference to $BC_l$. 
\subsubsection{Types $BC_l^{(1,2)}, BC_l^{(4,2)}, BC_l^{(2,2)\sigma}(1), BC_l^{(2,2)\sigma}(2)$}
\label{sect:intro_red-classic}

Each root system $R$ of the above type is defined by
\[ R=(R(X_l)_s+\Z a) \cup (R(X_l)_m+\Z a) \cup (R(X_l)_l+(1+2\Z)a), \]
where $R(X_l)$, for each $R$, is the affine root system given by the following table:
\begin{center}
\begin{tabular}{|c||c|c|c|c|} \hline
Type of $R$ & $BC^{(1,2)}_l$ & $BC_l^{(4,2)}$ & $BC_l^{(2,2)\sigma}(1)$ & $BC_l^{(2,2)\sigma}(2)$ 
 \\ \hline
$X_l$ & $BCC_l$ & $C^\vee BC_l $ & $BB_l^\vee$ & $C^\vee C_l$  \\ \hline
\end{tabular}
\end{center}
Recall that these $4$ infinite series of non-reduced affine root systems are defined as follows (cf. \cite{Macdonald1972}):
\begin{align*}
R(BCC_l)=
&R(BC_l)+\Z b, \\
R(BB_l^\vee)=
&(R(BC_l)_s+\Z b) \cup (R(BC_l)_m+\Z b) \cup (R(BC_l)_l+2\Z b), \\
R(C^\vee C_l)=
&(R(BC_l)_s+\Z b) \cup (R(BC_l)_m+2\Z b) \cup (R(BC_l)_l+2\Z b), \\
R(C^\vee BC_l)=
&(R(BC_l)_s+\Z b) \cup (R(BC_l)_m+2\Z b) \cup (R(BC_l)_l+2\Z b),
\end{align*}
and the root systems of type $BC_l$ are defined by 
\[ R(BC_l)=\{ \, \pm \vep_i\, \}_{1\leq i\leq l} \cup \{ \, \pm (\vep_i\pm \vep_j)\, \vert \, 1\leq i<j\leq l\, \}
\cup \{ \, \pm 2\vep_i\, \}_{1\leq i\leq l}, \]
where $\{\vep_i\}_{1\leq i \leq l}$ satisfy $I(\vep_i,\vep_j)=\delta_{i,j}$, the Kronecker delta. 

For detailed information about these $4$ infinite series of root systems, see the following sections: \\
\begin{center}
\begin{tabular}{|c||c|c|c|c|} \hline
Type of $R$ & $BC^{(1,2)}_l$ & $BC_l^{(4,2)}$ & $BC_l^{(2,2)\sigma}(1)$ & $BC_l^{(2,2)\sigma}(2)$ 
 \\ \hline
 & \S \ref{data:BC-1} & \S \ref{data:BC-3} & \S \ref{data:BC-5} & \S \ref{data:BC-6}  \\ \hline
\end{tabular}
\end{center}

\subsubsection{Reduced $\ast$-type}\label{section:intro_red-star}
There are two mERS of $\ast$-type, and they are defined by
\begin{align*}
R(BC_l^{(1,1)\ast})=
&(R(BCC_l)_s+\Z a) \cup (R(BCC_l)_m+\Z a) \cup (R(BC_l)_l+L_{1,1}), \\
R(BC_l^{(4,4)\ast})=
&(R(BC_l)_s+L_{1,1}) \cup (R(C^\vee BC_l)_m+2\Z a) \cup (R(C^\vee BC_l)_l+4\Z a),
\end{align*}
where we set 
\[ L_{1,1}=\{ \, ma+nb\, \vert \, (m-1)(n -1) \equiv 0 [2]\, \}. \]

For detailed information about $BC_l^{(1,1)\ast}$ (resp. $BC_l^{(4,4)\ast}$), see \S \ref{data:BC-2} and \ref{data:BC-4}, respectively.

Notice that they had been discovered by S. Azam \cite{Azam2002} in 2002. 
Hence, there is essentially no newly found reduced mERS with non-reduced affine quotient.

\subsection{Non-reduced mERSs}
Now consider the non-reduced $(R,G)$'s. Since there are many such mERSs, we call each type of the mERS by making reference to its affine quotient $R(X_l)=R/G$,  which is one of the types $BCC_l, C^\vee BC_l, BB_l^\vee$ and $C^\vee C_l$. See Appendix \ref{sect_affine-root-system} for detailed information on non-reduced affine root systems.

\subsubsection{Classical type}\label{sect:intro_non-red-classic}
Introduce now $4$ infinite series of the mERSs: 
$X_l^{(1)}$, \, $X_l^{(2)}(1)$, \,$X_l^{(2)}(2)$ and 
$X_l^{(4)}$. They are defined by
\begin{align*}
R(X_l^{(1)})=
&R(X_l)+\Z a, \\
R(X_l^{(2)}(i))=
&(R(X_l)_s+\Z a) \cup (R(X_l)_m+i \Z a) \cup (R(X_l)_l+2\Z a) \quad i=1,2, \\
R(X_l^{(4)})=
&(R(X_l)_s+\Z a) \cup (R(X_l)_m+2\Z a) \cup (R(X_l)_l+4\Z a). 
\end{align*}
Here, $R(X_l)$ is one of the $4$ types: $BCC_l, C^\vee BC_l, BB_l^\vee$ and $C^\vee C_l$.
\smallskip

For detailed information about these $16$ infinite series of mERSs, see the following sections: \\
\begin{center}
\begin{tabular}{|c||c|c|c|c|} \hline
$X_l$ &  $X_l^{(1)}$ & $X_l^{(2)}(1)$ & $X_l^{(2)}(2)$  & $X_l^{(4)}$ \\ \hline
$BCC_l$ & \S \ref{data:BCC-1} & \S \ref{data:BCC-4} & \S \ref{data:BCC-5} & \S \ref{data:BCC-8}
\\ \hline
$C^\vee BC_l $ & \S \ref{data:CBC-1} & \S \ref{data:CBC-2} & \S \ref{data:CBC-3} &\S \ref{data:CBC-6} \\ \hline
$BB_l^\vee$ & \S \ref{data:BB-1} &\S \ref{data:BB-2} & \S \ref{data:BB-3}& \S \ref{data:BB-5}\\ \hline
$C^\vee C_l$  & \S \ref{data:CC-1} & \S \ref{data:CC-4}& \S \ref{data:CC-5}& \S \ref{data:CC-12}\\ \hline
\end{tabular}
\end{center}
\subsubsection{$\ast$-type}\label{sect:intro_non-red-star}
For a non-reduced mERS $(R,G)$, there can be several $\ast$-types : $\ast_i, \, \ast_{i'}$ ($i=0,1$), $\ast_s$ and $\ast_l$. We introduce them gradually. For this purpose, let us  introduce the subsets $L_{i,j}, L_{i,j}^{s_1,s_2}$ ($i,j=0,1$) and ($s_1, s_2 \in \Z_{>0}$) of the lattice $\rad_{\Z}(I)=\Z a\oplus \Z b$ as follows ($L_{1,1}$ has already been introduced):
\begin{align*}
L_{i,j}=
&\{\, ma+nb\, \vert \, (m-i)(n-j)\equiv 0 [2]\, \}, \\
L_{i,j}^{s_1,s_2}=
&\{\, s_2ma+s_1nb\, \vert \, (m-i)(n-j)\equiv 0 [2]\, \}. 
\end{align*}
Let $t_1$ be an integer defined by the following table: \\
\begin{center}
\begin{tabular}{|c||c|c|c|} \hline
$X_l$ & $BCC_l$ & $C^\vee BC_l $ & $C^\vee C_l$  \\ \hline
$t_1$ & $1$ & $4$ & $2$  \\ \hline
\end{tabular}
\end{center}
\vskip 0.1in

and $t_2$  an integer such that $(t_1, t_2) \in \{1,2,4\}^2 \setminus \{ (1,4), (4,1)\}$.  \\

The set $R_m$ of middle length roots is always of the form
\[ R_m=R(X_l)_m+\min\{2,t_2\}\Z a, \]
and the set $R_s$ (resp. $R_l$) of short (resp. long) roots are described as follows: \\
\vskip 0.2in
\noindent{\fbox{\textbf{$\ast_i$-type ($i=0,1$)}}} \hskip 0.1in 
For each such pair $(t_1,t_2)$, the subsets $R_s$ and $R_l$  are given by
\begin{enumerate}
\item for $i=0$ and
\begin{enumerate}
\item[i)] $t_1, t_2 \in \{1,2\}$ and $(t_1, t_2) \neq (2,2)$, 
\[ R_s=R(X_l)_s+\Z a \qquad  \text{and} \qquad R_l=R(BC_l)_l+L_{0,0}^{t_1,t_2}, \]
\item[ii)] $t_1=t_2=2$, 
\[ R_s=R(BC_l)_s+L_{0,0} \qquad \text{and} \qquad R_l=R(BC_l)_l+L_{0,0}^{2,2}, \]
\item[ii)] $t_1,t_2 \in \{2,4\}$ and $(t_1, t_2) \neq (2,2)$, 
\[ R_s=R(BC_l)_s+L_{0,0} \qquad \text{and} \qquad R_l=R(X_l)_l+t_2\Z a,\]
\end{enumerate}
\item for $i=1$ and 
\begin{enumerate}
\item[i)] $t_1, t_2 \in \{1,2\}$ and $(t_1, t_2) \neq (2,2)$, 
\[ R_s=R(X_l)_s+\Z a \qquad \text{and} \qquad R_l=R(BC_l)_l+L_{1,1}^{t_1,t_2}, \]
\item[ii)] $t_1=t_2=2$,
\[ R_s=R(BC_l)_s+L_{0,0} \qquad \text{and} \qquad R_l=R(BC_l)_l+L_{1,1}^{2,2}, \]
\item[iii)] $t_1,t_2 \in \{2,4\}$ and $(t_1, t_2) \neq (2,2)$, 
\[ R_s=R(BC_l)_s+L_{1,1} \qquad \text{and} \qquad R_l=R(X_l)_l+t_2\Z a. \]
\end{enumerate}
\end{enumerate}
For $i=0,1$, the nomenclature of each mERS given by the above formulas, together with the sections with their root data, is as follows:
{\small
\begin{center}
\scalebox{0.8}{
\begin{tabular}{|c||c|c|c|c|c|c|c|} \hline
$(t_1,t_2)$ & $(1,1)$ & $(1,2)$ & $(2,1)$ & $(2,2)$ & $(2,4)$ & $(4,2)$ & $(4,4)$\\ \hline
Type of $(R,G)$ & $BCC_l^{(1)\ast_i}$ & $BCC_l^{(2)\ast_i}$ & $C^\vee C_l^{(1)\ast_i}$  
& $C^\vee C_l^{(2)\ast_i}$ & $C^\vee C_l^{(4)\ast_i}$ & $C^\vee BC_l^{(2)\ast_i}$ & $C^\vee BC_l^{(4)\ast_i}$\\ \hline
$i=0$ & \S \ref{data:BCC-2} & \S \ref{data:BCC-6} & \S \ref{data:CC-2} & \S \ref{data:CC-8}
& \S \ref{data:CC-13} & \S \ref{data:CBC-4} & \S \ref{data:CBC-7} \\ \hline
$i=1$ &  & \S \ref{data:BCC-7} & \S \ref{data:CC-3} & \S \ref{data:CC-10} 
& \S \ref{data:CC-14} & \S \ref{data:CBC-5} &  \\ \hline
\end{tabular}}
\end{center}}
Remark that the ERSs
$R(BCC_l^{(1)\ast_1})$ and $R(C^\vee BC_l^{(4)\ast_1})$ are reduced and they are exactly the ERSs $R(BC_l^{(1,1)\ast})$ and $R(BC_l^{(4,4)\ast})$, respectively. 
\vskip 0.2in

\noindent{\fbox{\textbf{$\ast_{i'}$-type ($i=0,1$)}}} \hskip 0.1in 
We have $t=t_1=t_2 \in \{1,2,4\}$, and the subsets $R_s$ and $R_l$  are given by
\begin{enumerate}
\item for $t=1$,
\[ R(BCC_l^{(1)\ast_{0'}})_s=R(BCC_l)_s+\Z a \qquad \text{and} \qquad R(BCC_l^{(1)\ast_{0'}})_l=R(BC_l)_l+L_{0,1}, \]
\item for $t=2$,
\[ R(C^\vee C_l^{(2)\ast_{1'}})_s=R(BC_l)_s+L_{0,0} \qquad \text{and} \qquad 
R(C^\vee C_l^{(2)\ast_{1'}})_l=R(BC_l)_l+L_{1,0}^{2,2} \]
\item for $t=4$, 
\[ R(C^\vee BC_l^{(4)\ast_{0'}})_s=R(BC_l)_s+L_{0,1} \qquad \text{and} \qquad
   R(C^\vee BC_l^{(4)\ast_{0'}})_l=R(C^\vee BC_l)_l+4\Z a.  \]
\end{enumerate}
For detailed information about these $3$ root systems, see the following sections: \\
\begin{center}
\begin{tabular}{|c||c|c|c|} \hline
$X_l$ &  $BCC_l$ & $C^\vee BC_l$ & $C^\vee C_l$  \\ \hline
$\ast_{0'}$ &\S \ref{data:BCC-3} & & \S \ref{data:CBC-8}\\ \hline
$\ast_{1'}$ & &\S \ref{data:CC-9} & \\ \hline
\end{tabular}
\end{center}

\vskip 0.2in 

\noindent{\fbox{\textbf{$\ast_{\natural}$-type ($\natural \in \{s,l\}$)}}} \hskip 0.1in 
When $t_1=t_2=2$,  the subsets $R_s$ and $R_l$  are given by
\begin{enumerate}
\item for $\natural=s$,
\[ R(C^\vee C_l^{(2)\ast_{s}})_s=R(BC_l)_s+L_{0,0}  \qquad \text{and} \qquad
   R(C^\vee C_l^{(2)\ast_{s}})_l=R(C^\vee C_l)_l+2\Z a, \]
\item for $\natural=l$,
\[  R(C^\vee C_l^{(2)\ast_{l}})_s=R(C^\vee C_l)_s+\Z a \qquad \text{and} \qquad 
    R(C^\vee C_l^{(2)\ast_{l}})_l=R(BC_l)_l+L_{0,0}^{2,2}. \]
\end{enumerate}
\begin{center}
\begin{tabular}{|c||c|c|} \hline
$\ast_\natural$ ($\natural \in \{s,l\}$) &  $\ast_s$ & $\ast_l$  \\ \hline
$C^\vee C_l^{(2)\ast_\natural}$ &\S \ref{data:CC-6} &  \S \ref{data:CC-7}\\ \hline
\end{tabular}
\end{center}
\vskip 0.2in
Remark that the numbers $t_i$ ($i=1,2$) are the so-called first (resp. second) \textbf{tier numbers} (see I.3.2). Finally, there is one isolated case (cf. \S \ref{data:BB-4}):
\[ R(BB_2^{\vee (2)\ast})=(R(BC_2)_s+\Z a+\Z b) \cup (R(BC_2)_m+L_{0,0}) \cup (R(BC_2)_l+2\Z a+2\Z b). \]

\subsubsection{$\diamond$-type}\label{sect:intro_non-red-diamond}
There is only one such root system which we may call of type $C^\vee C_l^{(2)\diamond}$. This root system $R$ is defined as follows:
\[ R=(R(C^\vee C_l)_s+\Z a) \cup (R(C^\vee C_l)_m+\Z a) \cup (R(BC_l)_l+\{ \, ma+2nb\, \vert\,  m-n\equiv 0 [2]\, \}). \]
For detailed information about this root system, see \S \ref{data:CC-11}.
\medskip

\subsection{Weak classification}\label{sect:Classification-weak}

\begin{thm}\label{thm_classification-reduced} 
Let $(R,G)$ be a reduced marked elliptic root system belonging to the real vector space
with symmetric bilinear form of signature $(l,2,0)$. Assume that the quotient root system $R/G$ is non-reduced. Then it is isomorphic to one of the following:
\[ BC_l^{(1,2)}, \qquad BC_l^{(1,1)\ast}, \qquad BC_l^{(4,2)}, \qquad BC_l^{(4,4)\ast},  \qquad  BC_l^{(2,2)\sigma}(2) \]
for $l\geq 1$ and for $l\geq 2$, there is also 
\[ BC_l^{(2,2)\sigma}(1). \] 
\end{thm}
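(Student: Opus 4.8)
The plan is to exploit the reduction principle already in place: since $R_s \cup R_m$ and $R_m \cup R_l$ are each \emph{reduced} marked elliptic root systems, the classification of Theorem~\ref{thm:classification-red} applies to both, and a reduced $(R,G)$ with non-reduced quotient $R/G$ of finite type $BC_l$ is obtained by ``gluing'' a reduced mERS whose finite quotient is of type $B_l$ (carrying $R_s\cup R_m$) with one whose finite quotient is of type $C_l$ (carrying $R_m\cup R_l$), along their common middle-length part $R_m$, which is of type $D_l$. First I would set up the gluing data precisely: fix a simple system $\{\alpha_i\}_{0\le i\le l}$ of $(R,G)$, write down $R_s\cup R_m$ and $R_m\cup R_l$ in the $(\vep_i,b,a)$-coordinates of \eqref{def_base-F}, and record for each the counting numbers $k_s,k_m,k_l$ of the short, middle and long roots, together with the element $b$ of \eqref{defn:b}. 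The compatibility condition is that the two reduced pieces induce the \emph{same} middle-length elliptic root system $R_m$ together with the same marking $G=\R a$; this pins down a finite list of numerical possibilities for $(k_s,k_m,k_l)$ and for how $b$ sits relative to the two pieces.

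The key steps, in order, are: (1) enumerate the reduced mERSs from Theorem~\ref{thm:classification-red} whose full finite quotient is of type $B_l$ (these govern $R_s\cup R_m$) and of type $C_l$ (these govern $R_m\cup R_l$), namely types $B_l^{(1,t)}, B_l^{(2,t)}, B_l^{(2,2)\ast}$ and $C_l^{(1,t)}, C_l^{(2,t)}, C_l^{(1,1)\ast}$; (2) for each candidate pair, check whether the induced root systems on the common $D_l$-part (together with the common marking and the common choice of $b\bmod \Z a$) agree, using the counting-number constraints from Lemma~\ref{lemma_rootsystem-prototype}~(2) applied across the three lengths --- the divisibility chain $k_s\mid k_m\mid k_l$ with $k_m\mid 2k_s$, $k_l\mid 2k_m$ analogous to \eqref{counting_red-red}, but now also involving the $\ast$-type shearings; (3) for each surviving pair, assemble $R=R_s\cup R_m\cup R_l$ and identify it with one of $BC_l^{(1,2)}, BC_l^{(1,1)\ast}, BC_l^{(4,2)}, BC_l^{(4,4)\ast}, BC_l^{(2,2)\sigma}(2)$ (and $BC_l^{(2,2)\sigma}(1)$ for $l\ge 2$) by comparing with the explicit root data of \S\ref{sect:intro_red-non-red-quot}; (4) handle the small-rank degeneracies ($l=1$, where $R_m=\emptyset$, so the gluing is vacuous and only the constraints from $R_s$ and $R_l$ directly against each other remain; and $l=2$, where $D_2\cong A_1\times A_1$ is disconnected, which is exactly where the extra possibility $BC_l^{(2,2)\sigma}(1)$ with its lower rank bound arises) separately. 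Throughout, the reducedness of $R$ forces the ``$\ast$''-type shearing to appear on at most the short or the long root family, never creating a divisible root, which is what cuts the list down to six.

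The main obstacle I expect is step~(2) combined with step~(4): the bookkeeping of \emph{which} lattice coset each length-class of roots occupies in $\rad_\Z(I_F)=\Z b\oplus\Z a$ is delicate, because the $\ast$-types involve the index-$2$ sublattices $L_{i,j}$ rather than honest full sublattices, and two a priori different gluings can produce isomorphic $(R,G)$ after an automorphism of the $D_l$-part (e.g.\ a diagram automorphism swapping the two $A_1$-factors when $l=2$). So the real work is to show the list is \emph{exhaustive} and \emph{irredundant at this weak level}, i.e.\ every admissible gluing lands in the stated six families; verifying that these six are moreover pairwise non-isomorphic as marked root systems is deliberately postponed (as the excerpt says) to Part~\ref{chapter:classification}. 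A secondary subtlety is ensuring the glued object genuinely satisfies axioms (R1)--(R5) --- in particular irreducibility (R5) and that $Q(R)$ is a full lattice --- which follows because each reduced piece already spans $F$ and they share the rank-$l$ subsystem $R_m$, but this should be stated cleanly rather than taken for granted.
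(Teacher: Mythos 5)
Your proposal follows essentially the same route as the paper's proof in \S\ref{sect_mers-II}: decompose $R$ into the reduced pieces $R_s\cup R_m$ and $R_m\cup R_l$, invoke Theorem \ref{thm:classification-red} for each, and glue along the common $D_l^{(1,1)}$-part modulo automorphisms of that part, with the rank-one case (where $R_m=\emptyset$) handled by a separate direct lattice analysis. The paper organizes your compatibility check in step (2) as a double-coset computation in $\Aut'(R_m,G)$ rather than via counting-number divisibility alone, but the substance --- tracking which coset of $\rad_\Z(I_F)$ each length class occupies and identifying gluings that become equivalent under an automorphism of the $D_l$-part --- is the same.
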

Here, by the same way as we have stated after Theorem \ref{thm:classification-red}, we see that all of these mERSs are non-isomorphic. Thus, this classification gives the classification of the isomorphism classes of reduced mERSs with non-reduced affine quotient. 

\begin{thm}\label{thm_classification-non-reduced}
Let $(R,G)$ be a non-reduced marked elliptic root system belonging to the real vector space
with symmetric bilinear form of signature $(l,2,0)$.  Then, 
as marked elliptic root system, it is isomorphic to one of the following:

\item 1. $R/G$ of type $BCC_l$ $(l\geq 1)$:
\begin{align*}
& BCC_l^{(1)}, \; BCC_l^{(1)\ast_0},\; BCC_l^{(1)\ast_{0'}}, \\
& BCC_l^{(2)}(1)\,  (l>1), \; BCC_l^{(2)}(2), \; BCC_l^{(2)\ast_p} \, (p \in \{0,1\}), \\
& BCC_l^{(4)}.
\end{align*}
\item 2. $R/G$ of type $C^\vee BC_l$ $(l\geq 1)$:
\begin{align*}
& C^\vee BC_l^{(1)}, \\
& C^\vee BC_l^{(2)}(1)\,  (l>1), \; C^\vee BC_l^{(2)}(2), \; C^\vee BC_l^{(2)\ast_p} \, (p \in \{0,1\}), 
\\
& C^\vee BC_l^{(4)}, \; C^\vee BC_l^{(4)\ast_0},\; C^\vee BC_l^{(4)\ast_{0'}}.
\end{align*}
\item 3. $R/G$ of type $BB_l^\vee$ $(l\geq 2)$:
\begin{align*}
& BB_l^{\vee \, (1)}, \; BB_l^{\vee \, (2)}(1)\,  (l>1), \;  BB_l^{\vee \, (2)}(2), \; BB_l^{\vee \, (4)}, \\
& BB_2^{\vee (2) \ast}.
\end{align*}
\item 4. $R/G$ of type $C^\vee C_l$ $(l\geq 1)$:
\begin{align*}
&C^\vee C_l^{(1)}, \; C^\vee C_l^{(1)\ast_p} \, (p \in \{0,1\}), \\
&C^\vee C_l^{(2)}(1)\,  (l>1), \; C^\vee C_l^{(2)}(2), \; C^\vee C_l^{(2)\ast_s}, \; C^\vee C_l^{(2)\ast_l}, \\
&C^\vee C_l^{(2)\ast_{0}}, \; C^\vee C_l^{(2)\ast_1}, \; 
C^\vee C_l^{(2)\ast_{1'}}, \; C^\vee C_l^{(2)\diamond}, \\
&C^\vee C_l^{(4)}, \; C^\vee C_l^{(4)\ast_p} \, (p \in \{0,1\}).
\end{align*}
\end{thm}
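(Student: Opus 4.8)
The plan is to prove Theorems~\ref{thm_classification-reduced} and \ref{thm_classification-non-reduced} together, by a ``gluing'' procedure that reconstructs every mERS $(R,G)$ with non-reduced affine quotient $R/G$ from its two reduced constituents. First I would observe that, since the finite quotient $R/\rad(I)$ must be of type $BC_l$, the root set decomposes as $R = R_s \sqcup R_m \sqcup R_l$ (short, middle, long roots), and both $R^{\dc} = R_s \cup R_m$ and $R^{\mc} = R_m \cup R_l$ are \emph{reduced} mERSs whose affine quotients $R^{\dc}/G$ and $R^{\mc}/G$ are reduced affine root systems of types among $B_l^{(1)}, B_l^{(2)}, C_l^{(1)}, C_l^{(2)}$ (and their small-rank degenerations). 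By Theorem~\ref{thm:classification-red}, each of $R^{\dc}$ and $R^{\mc}$ is therefore isomorphic to a mERS on K.~Saito's list. The task then reduces to: (i) enumerate the compatible pairs $(R^{\dc}, R^{\mc})$ --- those sharing the common middle-root elliptic root system $R_m$ on which the two glue --- and (ii) for each compatible pair, enumerate the distinct ways the long-root part $R_l$ can sit relative to the rest, controlled by the counting sets $K_G(\alpha)$ from Lemma~\ref{lemma-counting}.

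The key steps, in order, are as follows. \emph{Step 1: fix a common frame.} Choose a simple system of $(R,G)$ in the sense of Definition~\ref{defn:basis}, realize $F_f = \bigoplus \R\vep_i$ with $I_F(\vep_i,\vep_j)=\delta_{ij}$, fix $G=\R a$, and fix $b\in\rad(I_F)$ with $\rad_\Z(I_F)=\Z b\oplus\Z a$; this puts $R_s$, $R_m$, $R_l$ all into the coordinates of $\S\ref{sect_BC-l}$. \emph{Step 2: describe $R_m$.} The middle-length part $R_m$ is a reduced elliptic root system on $R(D_l)$-type roots; by the counting-set arguments of Lemma~\ref{lemma_counting-rootsystem} and Lemma~\ref{lemma-counting}(4) its lattice of translations is essentially forced, giving $R_m = R(X_l)_m + c\,\Z a + \Z b$ for a small constant $c\in\{1,2\}$ (the value recorded as $\min\{2,t_2\}$ in $\S\ref{sect:intro_non-red-star}$). \emph{Step 3: glue the short part.} Using Lemma~\ref{lemma-counting} applied to roots $\alpha\in R_s$, $\beta\in R_m$ with $I(\alpha,\beta^\vee)\in\{\pm1,\pm2\}$, constrain $K_G(\alpha)$; because $R^{\dc}$ is a mERS of type $B_l^{(\cdot)}$ or $C_l^{(\cdot)}$ from Saito's list, the possible $R_s$ is one of $R(X_l)_s+\Z a$, $R(BC_l)_s+L_{i,j}$, etc. \emph{Step 4: glue the long part.} Symmetrically, using $R^{\mc}$ being of type $B_l^{(\cdot)}$ or $C_l^{(\cdot)}$, and Lemma~\ref{lemma-counting}(4),(5) relating $K_G(2\vep_i)$ to $K_G(\vep_i)$ and $K_G(\vep_i\pm\vep_j)$, enumerate the finitely many admissible $R_l$; this is where the lattices $L_{i,j}^{s_1,s_2}$ and the various $\ast$-subscripts arise. \emph{Step 5: reducedness dichotomy and bookkeeping.} Separate the cases $R$ reduced (Theorem~\ref{thm_classification-reduced}) from $R$ non-reduced (Theorem~\ref{thm_classification-non-reduced}), identify each surviving combination with one of the named types from $\S\S\ref{sect:intro_red-non-red-quot}$--$\ref{sect:intro_non-red-diamond}$, and verify the rank restrictions (e.g. $l>1$ for the ``$(2)(1)$'' series, the isolated $BB_2^{\vee(2)\ast}$, etc.) by checking where the would-be middle roots disappear or where two candidate gluings coincide.

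I expect \emph{Step 4} --- controlling the long-root part $R_l$ --- to be the main obstacle. The counting sets $K_G(\alpha)$ for $\alpha\in R_l$ are only constrained up to the divisibility relations of Lemma~\ref{lemma-counting}(3),(4),(5), so a priori there are several lattices $L\subset\rad_\Z(I_F)$ satisfying all the congruence conditions simultaneously; moreover $R_l$ need not be a full coset lattice of the form $\gamma+c\Z a+c'\Z b$ (this is exactly the phenomenon producing the $\ast$-types, where $L_{i,j}$ is a genuine sublattice-union rather than a sublattice). The delicate point is to show the list of such $L$'s is \emph{exactly} the stated one and not larger: one must use both the compatibility with the already-determined $R_s$ and $R_m$ (via reflections $r_\alpha$ for $\alpha\in R_m$ permuting long roots and translating their counting sets) and the constraint that the resulting $R$ actually be irreducible and satisfy (R3)--(R4) globally. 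A second, more clerical obstacle is ruling out redundancy: several of the gluings produce isomorphic marked root systems (e.g. $BCC_l^{(1)\ast_1}\cong BC_l^{(1,1)\ast}$, as noted in $\S\ref{sect:intro_non-red-star}$), and since this part of the paper only claims the \emph{weak} classification, I would here merely record that each constructed $R$ appears in the list, deferring non-isomorphism to Part~\ref{chapter:classification}.
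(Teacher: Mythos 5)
Your overall strategy --- decompose $R$ into $R^{\dc}=R_s\cup R_m$ and $R^{\mc}=R_m\cup R_l$, invoke Theorem~\ref{thm:classification-red} to identify each with an entry of K.~Saito's list, and then reassemble --- is exactly the paper's starting point (\S\ref{sect_3.1}). But there are two genuine gaps in how you propose to carry out the reassembly.

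First, and most seriously, your Step~4 identifies the right obstacle but supplies no mechanism to overcome it. The divisibility relations of Lemma~\ref{lemma-counting} for $K_G(\alpha)$, $\alpha\in R_l$, are only \emph{necessary} conditions, and you concede that showing the list of admissible $R_l$ is ``exactly the stated one and not larger'' is the delicate point --- that concession is the whole theorem. The paper's resolution is that the relative position of $R^{\mc}$ with respect to $R^{\dc}$ is not a free lattice to be enumerated: once both constituents are put in standard position, the glued system $R_-\amalg_\varphi R_+$ depends only on the class of $\varphi$ in the double coset space $\Aut^+(R_-,G)\backslash \Aut'(R_m,G)/\Aut^+(R_+,G)$, since $R_-\amalg_{\varphi_-\varphi_0\varphi_+}R_+\cong R_-\amalg_{\varphi_0}R_+$. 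This converts the classification into a finite computation with explicit coset representatives ($\tau_\delta$ for $\delta$ in a finite set, and powers of ${}^tT$), and it is precisely why the paper first develops the full structure of $\Aut^+(R,\rad(I_F))$, the lattices $M_{F/\rad(I_F),\sharp}$ and $Q_{F/\rad(I_F),\sharp}$, and $\Gamma^+(R)$ in \S\ref{sect:auto-mers-reduced-case}. Without this (or an equivalent rigidity statement), congruence conditions on counting sets cannot separate, e.g., $C^\vee C_l^{(2)\ast_0}$, $C^\vee C_l^{(2)\ast_1}$ and $C^\vee C_l^{(2)\ast_{1'}}$, whose counting sets satisfy identical divisibility constraints and differ only by the translation offsets that the double cosets track.

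Second, your Step~2 is false as stated: $R_m$ is \emph{not} always of the form $R(X_l)_m+c\,\Z a+\Z b$ with $c\in\{1,2\}$. The exceptional system $BB_2^{\vee(2)\ast}$ has middle part $R(BC_2)_m+\{ma+nb\mid mn\equiv 0\,[2]\}$, a union of lattice cosets that is not itself a coset; it arises from gluing $C_2^{(1,1)\ast}$ with $B_2^{(2,2)\ast}$ along $R(D_2^{(1,1)\ast})$. Your forced form for $R_m$ would exclude this case and hence miss an entry of the theorem. Relatedly, the rank-one case has no middle roots at all, so the gluing scheme degenerates there; the paper handles $l=1$ by a separate direct analysis of the pair $(m_0,M)$ with $M\supset 4L_0$ (Lemma~\ref{lemma_rank-1} and \S\ref{sect_rank-1}), which your outline does not address.
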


\section[Proof of weak classification theorems]{Proof of Theorems \ref{thm_classification-reduced} and \ref{thm_classification-non-reduced} }
\label{sect_mers-II}
Even if the obtained root systems are always reduced elliptic root systems, in view of the reduced mERSs, but still, Saito's classification is not complete. 

We classify now the reduced mERSs $(R,G)$ where the quotient $R/G$ is non-reduced. 

\subsection{Preliminary}
Here, we fix the following convention which will be used throughout this section. \\
Let $(F,I)$ be the real vector space $F=\R^{l+2}$ equipped with a positive semi-definite symmetric bilinear form $I$ with $\dim \rad(I)=2$. Fix a basis $(\vep_1, \vep_2, \cdots, \vep_l, b,a)$ of $F$ (cf. \eqref{eqn:SL2}) satisfying \eqref{def_base-F}, i.e., 
\[ I(\vep_i, \vep_j)=\delta_{i,j}, \qquad \rad(I)=\R a\oplus  \R b. \]
We realize the root systems of type $B_l, C_l$ and $D_l$ in $F_f:=\bigoplus_{1\leq i\leq l} \R \vep_i$ as in Appendix \ref{sect_finite-root-system}, i.e., 
\begin{align*}
& R(D_l)=\{ \, \pm (\vep_i \pm \vep_j)\, \vert\, 1\leq i<j\leq l\, \}, \\
& R(B_l)=R(D_l) \cup \{ \, \pm \vep_i\,\}_{1\leq i\leq l}, \quad R(C_l)=R(D_l) \cup \{\, \pm 2\vep_i\ \}_{1\leq i\leq l}. 
\end{align*}
As the group of diagram automorphisms of type  $B_l$ and $C_l$ are trivial, it follows that
\[ \Aut(R_f)=W(R_f), \qquad R_f=R(B_l)\; \text{or} \; R(C_l). \]
For type $D_l$, the automorphism group of its Dynkin diagram is 
\[ \Aut(\Gamma(D_l)) \cong 
\begin{cases} \; \Z/2\Z \quad & l\geq 2\; \text{s.t.}\; l\neq 4, \\
                     \;\fS_3 \quad &l=4. \end{cases}
\]
An outer automorphism of $R(D_l)$ is given by
\[  \sigma: \; F_f \longrightarrow F_f; \quad \vep_i \mapsto  
                   \begin{cases} \; \vep_i \; & 1\leq i<l  \\ \;  -\vep_l \; & i=l \end{cases}, 
\]
and this is realized as
\[ r_{\vep_l}=r_{2\vep_l} \in W(B_l)=W(C_l). \]
In particular, 
\[ W(B_l)=W(C_l) \cong W(D_l) \rtimes \langle \sigma \rangle. \]
For later use, set
\[ \Aut'(R(D_l))=W(D_l) \rtimes \langle \sigma \rangle. \]
Remark that, with this notation,
\[ \Aut'(R(D_l))=\Aut(R(B_l))=\Aut(R(C_l)) \cong (\Z/2\Z)^l \rtimes \fS_l{\color{red},} \]
and gives exactly the automorphism group $\Aut(R(D_l))$ for $l\neq 4$ (including the cases $l=2,3$). Nevertheless,
$\Aut'(R(D_4))$ is not even a normal subgroup of $\Aut(R(D_4)) =W(D_4) \rtimes \fS_3 \cong W(F_4)$. 


\subsection{Key idea}\label{sect_3.1}
Let $(R,G)$ be a mERS whose quotient $R_{\af}:=R/G$ is non-reduced, $R_l,\, R_m$ and $R_s$ be the subset of $R$ consisting of the long, middle length and short roots, respectively. Both of the subsets $R_+:=R_l \cup R_m$ and $R_-:=R_m \cup R_s$ are reduced elliptic root systems. In particular, as we assume that $R_{\af}$ is a non-reduced affine root system, their quotients 
by the full radical $\rad(I)$ are of type $C_l$ and $B_l$, respectively. By the classification of mERSs $(R,G)$ whose quotient $R/G$ is reduced due to K. Saito \cite{Saito1985} (cf. Appendix \ref{sect_clasfficiation-red-quot}), the possible type of the mERSs $(R_{-},G)$ and $(R_{+},G)$ are given in the next table:

\begin{table}[h]
\resizebox{.98\textwidth}{!}{%
\begin{tabular}{|c||c|c|c|c|c|c|c|c|c|c|} \hline
Type of $(R_{-},G)$ 
& $B_l^{(1,1)}$ & $B_l^{(1,2)}$ & $C_2^{(1,1)}$ & $C_2^{(1,2)}$ & $B_l^{(2,1)}$ & $B_l^{(2,2)}$ & $B_l^{(2,2)\ast}$ & $C_2^{(1,1)\ast}$ & $A_1^{(1,1)}$ \\ 
\cline{1-9}
Type of $(R_{+},G)$ 
& $C_l^{(2,2)}$ & $C_l^{(2,1)}$ & $B_2^{(2,2)}$ & $B_2^{(2,1)}$ &$C_l^{(1,2)}$ &  $C_l^{(1,1)}$ & $C_l^{(1,1)\ast}$ & $B_2^{(2,2)\ast}$ & $A_1^{(1,1)\ast}$ \\ \hline
Range of $l$ &  \multicolumn{2}{c|}{$l> 2$} & \multicolumn{2}{c|}{$l=2$} & \multicolumn{3}{c|}{$l\geq 2$}  & $l=2$ & $l=1$\\ \hline
\end{tabular}}
\end{table}
For $t \in \{1,2\}$, set 
\begin{align*}
R(B_2^{(1,t)})=
&R(B_2)+t\Z a+\Z b, \\
R(C_2^{(2,t)})=
&(R(C_2)_s+\Z a+\Z b) \cup (R(C_2)_l+t\Z a+2\Z b),
\end{align*}
to simplify further discussion.

Thus, to classify the mERSs $(R,G)$ of rank $>1$, we just have to glue two mERSs, the one $(R_{-},G)$  in the first line and the other $(R_{+},G)$ in the second line, in the above table. Here, by the term ``glue'', we mean the pushout
\[ (R_+,G) \bigcup_{(R_m,G)} (R_-,G) \]
where  $R_m=R_+ \cap R_-$ is isomorphic to the elliptic root system of type $D_l^{(1,1)}$, except for the case where $R_\pm$ are of type $C_2^{(1,1)\ast}$ and $B_2^{(1,1)\ast}$, respectively. In the latter case, $R_m$ is given by $R(D_2^{(1,1)\ast}):=R(D_2)+\{\, ma+nb\, \vert\, mn \equiv 0\, [2]\, \}$.

\smallskip

The \textbf{gluing procedure} can be understood as follows. Let $R_+$ and $R_-$ be elliptic root systems realised in $(F,I)$ as in Appendix \ref{sect_clasfficiation-red-quot}, whose full quotients are of type $C_l$ and $B_l$ respectively, such that $R_m:=R_+ \cap R_-=(R_+)_s = (R_-)_l$ is either of type $D_l^{(1,1)}$ or $D_2^{(1,1)\ast}$. As any automorphism $\varphi$ of $(R_m,G)$ is the restriction of a linear isomorphism of $(F,I)$, we can define a subset $R$ of $(F,I)$ by
\[ R=R_- \amalg_\varphi R_+:=R_- \cup \varphi(R_+). 
\]

\begin{rem} {\rm One can also consider the opposite gluing, i.e., $R_+ \amalg_\varphi R_-=R_+ \cup \varphi(R_-)$. 
As the group of automorphisms of $C_l$-type elliptic root systems realized in 
\[ \Aut'(R(D_l^{(1,1)}),G):=\{ \, \varphi \in \Aut'(R(D_l^{(1,1)}))\, \vert \, \varphi\vert_G=\id_G\} 
\] 
has a smaller index ($1, 2$ or $4$) than that of $B_l$-type, we have chosen the gluing procedure as above for rank $>2$. For rank $2$ cases, a convenient one will be chosen. }
\end{rem}

Since the full quotient $R/\rad(I)$ is a root system of type $BC_l$, we may consider only those $\varphi$ which belong to the subgroup $\Aut'(R(D_l^{(1,1)}),G)$ of $\Aut^+(R(D_l^{(1,1)}),G)$,  isomorphic to
\[ \left(E_F(a \otimes P(D_l^\vee) + b \otimes P(D_l^\vee))\rtimes \Aut'(D_l)\right) \rtimes U_-  \]
under the isomorphism in Corollary \ref{cor:normal}, where we set $U_-$ being the subgroup of $SL(2,\Z)$ generated by ${}^tT$. As we are interested in the classification of mERSs, we shall concentrate on $\Gamma^+(R) \cap U_-$.  For a positive integer $n>0$, let $U_-(n)$ be the subgroup of $U_-$ generated by $({}^tT)^n$. 
By Proposition \ref{prop:Gamma}, we have the next list:
\[ \Gamma^+(R) \cap U_-=\begin{cases} \; U_-\; 
                  & R\, \text{of type}\; X_l^{(t,t)}, \, X_l^{(t,t)\ast}, \, X_l^{(2,1)}, \, BC_l^{(2,1)}, \, G_2^{(3,1)}, \\
                                                                \; U_-(2)\; 
                  & R\, \text{of type}\; X_l^{(1,2)}, \, BC_l^{(2,4)}, \, BC_l^{(2,2)}(1), \, BC_l^{(2,2)}(2), \\
                                                               \; U_-(3)\; 
                  & R\, \text{of type}\; G_2^{(1,3)}.
                                          \end{cases}
\]
\begin{lemma}
The set $R=R_- \amalg_\varphi R_+$ is a root system, i.e., it is stable under the action of the reflection group of $R$. 
\end{lemma}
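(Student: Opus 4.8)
The plan is to verify directly that $R = R_- \amalg_\varphi R_+$ is stable under $r_\alpha$ for every $\alpha \in R$, since the remaining root-system axioms (R1), (R2), (R4), (R5) are inherited trivially from $R_-$ and $\varphi(R_+)$: the root lattice spans $F$ because already $Q(R_-)$ does, nonisotropy and integrality of the Cartan pairings hold because all roots lie in one of two genuine root systems realized in $(F,I)$, and irreducibility follows because $R_m = R_- \cap \varphi(R_+)$ is nonempty (it contains the common $D_l$-type part) and both pieces are irreducible. So the content is precisely the assertion that $r_\alpha(R) = R$ for all $\alpha \in R$, which is what the lemma literally states.

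First I would split the roots of $R$ by length. Write $R_- = (R_-)_m \amalg (R_-)_s$ and $\varphi(R_+) = \varphi((R_+)_m) \amalg \varphi((R_+)_l)$, and recall from the gluing set-up that $R_m := (R_-)_l$ (the "long" part of the $B_l$-type system $R_-$, which is the common middle-length part after gluing) coincides with $(R_+)_s$, so that $\varphi$ restricted to $R_m$ is an automorphism of the mERS $(R_m,G)$ and in particular $\varphi(R_m) = R_m$. Thus the middle-length roots of $R$ are exactly $R_m$, the short roots are $(R_-)_s$, and the long roots are $\varphi((R_+)_l)$. Now take $\alpha \in R$ and $\beta \in R$ and check $r_\alpha(\beta) \in R$, organizing by which of the three length classes each of $\alpha, \beta$ belongs to. The key structural inputs are: (i) $R_-$ is a root system, so $r_\alpha(\beta) \in R_-$ whenever both $\alpha, \beta \in R_-$; (ii) $\varphi(R_+)$ is a root system, so $r_\alpha(\beta) \in \varphi(R_+)$ whenever both lie there; and (iii) the cross terms, where exactly one of $\alpha, \beta$ is short and the other is long.

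The crux is therefore the cross terms. Here I would use the observation that $r_\alpha$ and $r_{\alpha^\vee}$ act the same way (the reflection depends only on the ray, and $\alpha^\vee$ is a positive multiple of $\alpha$), together with the fact that, in the ambient $BC_l$-type geometry, if $\alpha$ is short and $\beta$ is long (or vice versa) with $I(\alpha,\beta^\vee) \neq 0$ and $\alpha,\beta$ not proportional, then $\{\alpha,\beta\}$ spans a rank-two $BC_2$-type configuration inside a plane; more usefully, $r_\alpha(\beta)$ differs from $\beta$ by an integer multiple of $\alpha$, and that multiple can be controlled. The cleanest route is: if $\alpha \in (R_-)_s$ is short then $r_\alpha$ preserves $R_-$; it also preserves $\varphi(R_+)$ because $\alpha$, being short in $R_-$, lies in $R_m \cup (R_-)_s$, and one checks using the counting sets (Lemma~\ref{lemma-counting}, especially parts (3) and (4)) that reflection in a short root sends a long root $\gamma + x$ (with $\gamma \in R(BC_l)_l$, $x$ in the appropriate coset of $\mathrm{rad}_\Z(I)$) to another element of $\varphi((R_+)_l)$, since $r_\gamma$ of the underlying finite long root is again a finite long root and the translation parts add compatibly because $\varphi$ was chosen in $\Aut'(R(D_l^{(1,1)}),G)$ and hence commutes appropriately with the radical directions dictated by $\Gamma^+(R)\cap U_-$. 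The symmetric statement holds for $\alpha$ long acting on a short $\beta$. I would phrase this uniformly: every $r_\alpha$ with $\alpha \in R$ preserves \emph{both} $R_-$ and $\varphi(R_+)$ — for $\alpha \in R_m$ this is immediate since $\varphi$ is an automorphism of $(R_m,G)$ and $R_m$ generates the reflections of both pieces in the relevant directions; for $\alpha$ short it preserves $R_-$ (obvious) and preserves $\varphi(R_+)$ by the counting-set compatibility; for $\alpha$ long, symmetrically. Hence $r_\alpha(R) = r_\alpha(R_-) \cup r_\alpha(\varphi(R_+)) = R_- \cup \varphi(R_+) = R$, and since the $r_\alpha$ generate $W(R)$ the whole group preserves $R$.

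The main obstacle I anticipate is exactly the verification that reflection in a short root preserves $\varphi((R_+)_l)$ (and dually) — i.e., that the translation/lattice parts match up. This is where the specific choice of $\varphi \in \Gamma^+(R) \cap U_-$ is used: one must check that the coset structure of the long-root part of $\varphi(R_+)$ (governed by sets like $L_{i,j}$, $L_{i,j}^{s_1,s_2}$, or plain $t_2\Z a + t_1 \Z b$) is stable under the translations $I(\alpha,\beta^\vee) K_G(\beta)$ coming from short roots $\beta$, exactly as guaranteed by Lemma~\ref{lemma-counting}(4) combined with the classification tier-number constraints. I would handle this once and for all at the level of counting sets rather than case-by-case per type: since $r_\alpha(\beta) = \beta - I(\alpha^\vee,\beta)\alpha$ and $K_G$ is closed under translation by $I(\cdot,\cdot^\vee)K_G(\cdot)$ and under the finite Weyl group (Lemma~\ref{lemma-counting}(2),(4)), stability of $R$ under $r_\alpha$ reduces to the finite-root-system fact that $r_\alpha$ permutes the three length classes of $R(BC_l)$ correctly, which is standard. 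Everything else is bookkeeping.
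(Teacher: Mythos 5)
Your reduction is the right one and is the same as the paper's: since $R_-=R_s\cup R_m$ and $\varphi(R_+)=R_m\cup R_l$ are already root systems and reflections are isometries (hence permute length classes), the only thing left to check is the two cross cases, namely that $r_\alpha$ for $\alpha\in R_l$ preserves $R_s$ and that $r_\alpha$ for $\alpha\in R_s$ preserves $R_l$. Where you diverge is in how these are settled. The paper's mechanism is a sandwich: $R_s\cup R_l$ is contained in each of the two ``maximal'' glued systems $R_-\amalg_{\id}R(C_l^{(1,1)})$ and $R(B_l^{(r,r)})\amalg_\varphi R_+$, whose long (resp.\ short) parts are the full translates $R(BC_l)_l+\Z a+\Z b$ (resp.\ $R(BC_l)_s+\Z a+\Z b$); these maximal systems are root systems essentially by inspection, reflections in their long roots preserve their short part, which is exactly $R_s$, and dually for $R_l$. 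This disposes of the cross terms with no case analysis and no reference to the choice of $\varphi$.

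Your proposed uniform justification, by contrast, is circular as written. Lemma~\ref{lemma-counting}(3),(4) record properties of the counting sets of a set that is \emph{already known} to be a root system; applied to $R_-$ or to $\varphi(R_+)$ separately they say nothing about a pair $(\alpha,\beta)$ with $\alpha$ short and $\beta$ long, since no such pair lies in either piece, and applied to $R$ itself they assume the conclusion. What actually has to be verified is elementary but not automatic: writing $\alpha=\gamma+x$, $\beta=\delta+y$ with $\gamma\in R(BC_l)_s$, $\delta\in R(BC_l)_l$ and $x\in L$, $y\in M$ the respective translation sets, one has $I(\gamma^\vee,\delta)=0$ unless $\delta=\pm2\gamma$, in which case $I(\gamma^\vee,\delta)=\pm4$ and $I(\delta^\vee,\gamma)=\pm1$; so the whole content is the pair of containments $M+4L\subseteq M$ and $L+M\subseteq L$ for the specific sets $L,M$ (full lattices, $t_2\Z a+t_1\Z b$, the $L_{i,j}$, \dots) occurring in each glued pair. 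This is true and quickly checked, but it is a genuine verification on the explicit cosets, not a consequence of Lemma~\ref{lemma-counting}; you should either carry it out or adopt the paper's embedding trick, which renders it unnecessary.
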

\begin{proof}
It is sufficient to show that 
\begin{enumerate}
\item
$R_s$ is stable by $r_\alpha$ for $\alpha \in R_l\cong R(BC_l)_l+\Z a+\Z b$, and
\item
$\varphi(R_l)$ is stable by $r_\alpha$ for $\alpha \in R_s \cong R(BC_l)_s+\Z a+\Z b$,
\end{enumerate}
since any such set is a subset of $R_- \amalg_{\id} R(C_l^{(1,1)})$ and $R(B_l^{(r,r)}) \amalg_{\varphi} R_+$ with $r \in \{1,2\}$. 
\end{proof}

Let  $\Aut^+(R_\pm,G)= \Aut^+(R_\pm) \cap \Aut'(R_m,G)$. For $\varphi_{\pm} \in \Aut^+(R_\pm,G)$ and $\varphi_0 \in \Aut'(R_m,G)$, setting  $\varphi=\varphi_-\circ \varphi_0 \circ \varphi_+$, it follows that
\begin{align*}
R_- \amalg_{\varphi} R_+=
&\varphi_-(R_-) \cup (\varphi_-\circ \varphi_0)(\varphi_+(R_+)) 
=\varphi_-(R_- \cup \varphi_0(R_+)) \\
=
&\varphi_-(R_- \amalg_{\varphi_0} R_+),
\end{align*}
namely, $R_- \amalg_\varphi R_+$ is isomorphic to $R_- \amalg_{\varphi_0} R_+$. Hence, 
it is enough to analyse $R_- \amalg_\varphi R_+$ for each coset in $\Aut^+(R_-,G) \backslash \Aut'(R_m,G) \slash \Aut^+(R_+,G)$. 

For later use, for $\delta \in \rad(I)$, set 
\begin{equation}\label{def_translation}
\tau_\delta=E_F\left((-\delta)\otimes (\vep_1+\vep_2+\cdots +\vep_l)\right).
\end{equation}

First classify non-reduced mERSs for $l\geq 2$. The cases when $l=1$ will be treated separately. \\

Some cases require detailed analysis, and for each type of the subtleties we will indicate a way to handle them when it first appears; \S \ref{subsubsect_BCC-l}.(2)(I) and \ref{subsect_CC-l}.(2)(I).



\subsection {${R/G}$ of type $BCC_l\; (l> 1)$}\label{subsubsect_BCC-l}
In this case, since 
\[ R_+/G \cong R(C_l^{(1)}) \qquad \text{and}\qquad R_-/G \cong \begin{cases} \; R(B_l^{(1)}) \; &l>2, \\ \; R(C_2^{(1)})\; &l=2, \end{cases} 
\]
the mERS $(R_+,G)$ is either of type $C_l^{(1,1)}, \, C_l^{(1,2)}$ or $C_l^{(1,1)\ast}$, and
$(R_-,G)$ is either of type
\begin{enumerate}
\item $B_l^{(1,1)}$ or $B_l^{(1,2)}$ for $l>2$ and
\item $C_2^{(1,1)}$ or $C_2^{(1,2)}$ for $l=2$.
\end{enumerate}
\vskip 0.1in

\noindent{(1).\; \fbox{The case $R_+=R(C_l^{(1,1)})$\; ($l>1$)}} \quad \\

In this case, $\Aut^+(R_+,G)=\Aut'(R_m,G)$ and it is enough to consider the gluing via trivial automorphism.
\begin{enumerate}
\item[(I)] $R_-=\begin{cases} \; R(B_l^{(1,1)})\; & l>2, \\ \;R(C_2^{(1,1)})\; &l=2. \end{cases}$ \quad Here, we obtain the root system
\begin{align*}
R=
&(R(BC_l)_s +\Z a+\Z b) \cup (R(BC_l)_m+\Z a+\Z b) \cup (R(BC_l)_l+\Z a+\Z b) \\
=
&R(\mathbf{BCC_l^{(1)}}).
\end{align*}
\item[(II)] $R_-=\begin{cases}\; R(B_l^{(1,2)})\; &l>2, \\ \; R(C_2^{(1,2)})\; &l=2. \end{cases}$ \quad Here, we obtain the root system
\begin{align*}
R=
&(R(BC_l)_s+\Z a+\Z b)\cup (R(BC_l)_m+2\Z a+\Z b) \cup (R(BC_l)_l+2\Z a+\Z b) \\ =
&R(\mathbf{BCC_l^{(2)}(2)}). 
\end{align*}
\end{enumerate}

\vskip+0.1in


\noindent{(2).\; \fbox{The case $R_+=R(C_l^{(1,2)})$\; ($l>1$)}} \quad \\

In this case, the index $[\Aut'(R_m,G):\Aut^+(R(C_l^{(1,2)}),G)]$ is $4$. Notice that the set 
$U_-/U_-(2)$ is represented by $\{ 1, {}^tT\}$.
\begin{enumerate}
\item[(I)] $R_-=\begin{cases}\; R(B_l^{(1,1)})\; &l>2, \\ \; R(C_2^{(1,1)})\; &l=2.\end{cases}$ \\ Here, the coset $\Aut'(R_m,G)/\Aut^+(R(C_l^{(1,2)}),G)$ is represented by 
$\{1, {}^tT \} \circ \{ \id, \tau_{\frac{1}{2}a} \}$.\footnote{Set $M=E_F( a\otimes \tilde{P}(C_l^\vee)+b\otimes P(C_l^\vee))$. Then, since $M \cup \tau_{\frac{1}{2}a}M=E_F(P(D_l^\vee)\otimes a+P(D_l^\vee)\otimes b)$, 
\begin{align*}
\Aut'(R_m)/\Aut'(D_l)=
&(M \cup \tau_{\frac{1}{2}a}M) \cdot (\chi(1)U_-(2) \cup \chi({}^tT)U_-(2)) \\
=
&\chi(1)(M\cup \tau_{\frac{1}{2}a}M)U_-(2)\cup \chi({}^tT)(M\cup \tau_{\frac{1}{2}a}M)U_-(2),
\end{align*}
whence $\Aut'(R_m)/\Aut^+(R(C_l^{(1,2)}))$ is represented by $\{1, {}^tT\}\circ \{\id, \tau_{\frac{1}{2}a}\}$ as $\Aut^+(R(C_l^{(1,2)}))/\Aut'(D_l)\cong MU_-(2)$.}
Only $\tau_0=\id$ and $\chi({}^tT)$ extend to automorphisms of $R_-$. 
\begin{enumerate}
\item[(i)] For $\varphi=\id$, we obtain the root system
\begin{align*}
R=
& (R(BC_l)_s+\Z a+\Z b)\cup (R(BC_l)_m+\Z a+\Z b) \cup (R(BC_l)_l+2\Z a+\Z b) \\ =
&R(\mathbf{BCC_l^{(2)}(1)}). 
\end{align*}
\item[(ii)] For $\varphi=\tau_{\frac{1}{2}a}$, we obtain the root system
\begin{align*}
R=
& (R(BC_l)_s+\Z a+\Z b)\cup (R(BC_l)_m+\Z a+\Z b) \\
\cup 
&(R(BC_l)_l+(1+2\Z) a+\Z b) 
= R(\mathbf{BC_l^{(1,2)}}).
\end{align*}
\end{enumerate}
\item[(II)] $R_-=\begin{cases}\; R(B_l^{(1,2)})\; &l>2, \\ \; R(C_2^{(1,2)})\; &l=2.\end{cases}$ \\
Here, the coset $\Aut'(R_m,G)/\Aut^+(R(C_l^{(1,2)}),G)$ is represented by $\{ \id, \tau_{a} \} \circ \{1, {}^tT \}$. Only $\tau_0=\id$ and $\tau_a$ extend to automorphisms of $R_-$.
\begin{enumerate}
\item[(i)] For $\varphi=\id$, we obtain the root system
\begin{align*}
R=
&(R(BC_l)_s+\Z a+\Z b)\cup (R(BC_l)_m+2\Z a+\Z b) \cup (R(BC_l)_l+4\Z a+\Z b) \\
=
&R(\mathbf{BCC_l^{(4)}}).
\end{align*}
\item[(ii)] For $\varphi=\chi({}^tT)$, we have $R\cong R(BCC_l^{(4)})$ via $\chi(({}^tT)^{-2})$. 
\end{enumerate}
\end{enumerate}

\vskip+0.1in

\medskip

\noindent{(3).\; \fbox{The case $R_+=R(C_l^{(1,1)\ast})$\; ($l>1$)}} \quad \\

In this case, the index $[\Aut'(R_m,G):\Aut^+(R(C_l^{(1,1)\ast})),G]$ is $4$.  

\begin{enumerate}
\item[(I)] $R_-=\begin{cases}\; R(B_l^{(1,1)})\; &l>2, \\ \; R(C_2^{(1,1)})\; &l=2.\end{cases}$ \\ Here, the coset $\Aut'(R_m,G)/\Aut^+(R(C_l^{(1,1)\ast}),G)$ is represented by the next $4$ elements: $\tau_\delta$ with $\delta \in \{0, \frac{1}{2}a, \frac{1}{2}b, \frac{1}{2}(a+b)\}$. Except for $\tau_0=\id$, they don't extend to automorphisms of $R_-$.
\begin{enumerate}
\item[(i)] For $\varphi=\id$, we obtain the root system
\begin{align*}
R=
&(R(BC_l)_s+\Z a+\Z b) \cup (R(BC_l)_m+\Z a+\Z b) \\
\cup
&(R(BC_l)_l+ \{\, ma+nb\, \vert\, m,n \in \Z\, \text{s.t.}\, mn \equiv 0\, [2]\, \}) 
= 
R(\mathbf{BCC_l^{(1)\ast_0}}).
\end{align*}
\item[(ii)] For $\varphi=\tau_{\frac{1}{2}a}$, we have  $R\cong R(BCC_l^{(1)\ast_0})$ via $\chi(({}^tT)^{-1})$.
\item[(iii)] For $\varphi=\tau_{\frac{1}{2}b}$, we obtain the root system
\begin{align*}
R=
&(R(BC_l)_s+\Z a+\Z b) \cup (R(BC_l)_m+\Z a+\Z b) \\
\cup
&(R(BC_l)_l+ \{\, ma+nb\, \vert\, m,n \in \Z\, \text{s.t.}\, m(n-1) \equiv 0\, [2]\, \}) \\
=
&R(\mathbf{BCC_l^{(1)\ast_{0'}}}).
\end{align*}
\item[(iv)] For $\varphi=\tau_{\frac{1}{2}(a+b)}$, we obtain the root system
\begin{align*}
R=
&(R(BC_l)_s+\Z a+\Z b) \cup (R(BC_l)_m+\Z a+\Z b) \\
\cup
&(R(BC_l)_l+ \{\, ma+nb\, \vert\, m,n \in \Z\, \text{s.t.}\, (m-1)(n-1) \equiv 0\, [2]\, \}) \\
=
&R(\mathbf{BC_l^{(1,1)\ast}}).
\end{align*}
\end{enumerate}
\item[(II)] $R_-=\begin{cases}\; R(B_l^{(1,2)})\; &l>2, \\ \; R(C_2^{(1,2)})\; &l=2.\end{cases}$ \\ Now, the coset $\Aut'(R_m,G)/\Aut^+(R(C_l^{(1,1)\ast}),G)$ is represented by the next $4$ elements: $\tau_\delta$ with $\delta \in \{0, a, \frac{1}{2}b, a+\frac{1}{2}b\}$. The only $\tau_0=\id$ and $\tau_a$ extend to automorphisms of $R_-$. 
\begin{enumerate}
\item[(i)] For $\varphi=\id$, we obtain the root system
\begin{align*}
R=
&(R(BC_l)_s+\Z a+\Z b) \cup (R(BC_l)_m+2\Z a+\Z b) \\
\cup
&(R(BC_l)_l+ \{\, 2ma+nb\, \vert\, m,n \in \Z\, \text{s.t.}\, mn \equiv 0\, [2]\, \})
=
R(\mathbf{BCC_l^{(2)\ast_0}}).
\end{align*}
\item[(ii)] For $\varphi=\tau_{a+\frac{1}{2}b}$, we get the root system
\begin{align*}
R=
&(R(BC_l)_s+\Z a+\Z b) \cup (R(BC_l)_m+2\Z a+\Z b) \\
\cup
&(R(BC_l)_l+ \{\, 2ma+nb\, \vert\, m,n \in \Z\, \text{s.t.}\, (m-1)(n-1) \equiv 0\, [2]\, \}) \\
=
&R(\mathbf{BCC_l^{(2)\ast_1}}).
\end{align*}
\end{enumerate}
\end{enumerate}

\subsection{$R/G$ of type $C^\vee BC_l (l> 1)$}  
In this case, since 
\[ R_+/G \cong \begin{cases} \; R(C_l^{(2)}) \; &l>2, \\ \; R(B_2^{(2)})\; &l=2, \end{cases} \qquad 
\text{and} \qquad
  R_-/G \cong R(B_l^{(2)}), 
\]
the mERS $(R_+,G)$ is either of type
\begin{enumerate}
\item $C_l^{(2,1)}$ or $C_l^{(2,2)}$ for $l>2$ and
\item $B_2^{(2,1)}$ or $B_2^{(2,2)}$ for $l=2$,
\end{enumerate}
and $(R_-,G)$ is either of type $B_l^{(2,1)}, \, B_l^{(2,2)}$ or $B_l^{(2,2)\ast}$. 
\vskip 0.1in

\noindent{(1).\; \fbox{The case $R_+=\begin{cases}\; R(C_l^{(2,1)})\; &l>2, \\ \; R(B_2^{(2,1)})\; &l=2.\end{cases}$}} \quad \\

In this case, the index $[\Aut'(R_m):\Aut^+(R_+)]$ is $2$. 
\begin{enumerate}
\item[(I)] $R_-=R(B_l^{(2,1)})$\; ($l>1$). The coset $\Aut'(R_m,G)/\Aut^+(R_+,G)$ is represented by $\{ \id, \tau_{b} \}$. All of them extend to automorphisms of $R(B_l^{(2,1)})$ and we obtain the root system
\begin{align*} 
R=
&(R(BC_l)_s+\Z a+\Z b) \cup (R(BC_l)_m+\Z a+2\Z b) \cup (R(BC_l)_l+\Z a+4\Z b) \\
=
& R(\mathbf{C^\vee BC_l^{(1)}}).
\end{align*}
\item[(II)] $R_-=R(B_l^{(2,2)})$\; ($l>1$). Now, the coset $\Aut'(R_m,G)/\Aut^+(R_+,G)$ is represented by $\{ \id, \tau_{b} \}$.  All of them extend to automorphisms of $R(B_l^{(2,2)})$
and we obtain the root system
\begin{align*}
R=
&(R(BC_l)_s+\Z a+\Z b) \cup (R(BC_l)_m+2\Z a+2\Z b) \cup (R(BC_l)_l+2\Z a+4\Z b) \\
=
&R(\mathbf{C^\vee BC_l^{(2)}(2)}).
\end{align*}
\item[(III)] $R_-=R(B_l^{(2,2)\ast})$\; ($l>1$). Now, the coset $\Aut'(R_m,G)/\Aut^+(R_+,G)$ is represented by $\{ \id, \tau_{b} \}$.   The map $\tau_b$ does not extend to automorphisms of $R(B_l^{(2,2)\ast})$. 
\begin{enumerate}
\item[(i)] For $\varphi=\id$, we obtain the root system
\begin{align*}
R=
&(R(BC_l)_s+\{\,ma+nb\, \vert\, m,n \in \Z\, \text{s.t.}\, mn \equiv 0\, [2]\, \}) \\
\cup
&(R(BC_l)_m+2\Z a+2\Z b) \cup (R(BC_l)_l+2\Z a+4\Z b)= R(\mathbf{C^\vee BC_l^{(2)\ast_0}}).
\end{align*}
\item[(ii)] For $\varphi=\tau_{b}$, we have $R \cong R(C^\vee BC_l^{(2)\ast_1})$ via $\tau_{a+b}$, where 
\begin{align*}
R(\mathbf{C^\vee BC_l^{(2)\ast_1}})=
&(R(BC_l)_s +\{\, ma+nb\, \vert\, m,n \in \Z\, \text{s.t.}\, (m-1)(n-1)\equiv 0\, [2]\, \}) \\
\cup 
&(R(BC_l)_m+2\Z a +2\Z b) \cup (R(BC_l)_l+2\Z a+4\Z b).
\end{align*}
\end{enumerate}
\end{enumerate}

\vskip+0.1in

\medskip

\noindent{(2).\; \fbox{The case $R_+=\begin{cases}\; R(C_l^{(2,2)})\; &l>2, \\ \; R(B_2^{(2,2)})\; &l=2. \end{cases}$}} \quad \\

In this case, the index $[\Aut'(R_m,G):\Aut^+(R_+,G)]$ is $4$.   

\begin{enumerate}
\item[(I)] $R_-=R(B_l^{(2,1)})$\; ($l>1$). The coset $\Aut'(R_m,G)/\Aut^+(R_+,G)$ is represented by the next $4$ elements: $\tau_\delta$ with $\delta \in \{0, \frac{1}{2}a, b, \frac{1}{2}a+b\}$. Only $\tau_0=\id$ and $\tau_b$ extend to automorphisms of $R(B_l^{(2,1)})$.
\begin{enumerate}
\item[(i)] For $\varphi=\id$, we obtain the root system
\begin{align*}
 R=
 &(R(BC_l)_s+\Z a+\Z b) \cup (R(BC_l)_m+\Z a+2\Z b) \cup (R(BC_l)_l+2\Z a+4\Z b) \\
 =
 &R(\mathbf{C^\vee BC_l^{(2)}(1)}).
\end{align*}
\item[(ii)] For $\varphi=\tau_{\frac{1}{2}a}$, we obtain the root system
\begin{align*}
R=
&(R(BC_l)_s+\Z a+\Z b) \cup (R(BC_l)_m+\Z a+2\Z b)  \\
\cup
&(R(BC_l)_l+(1+2\Z) a+4\Z b)= R(\mathbf{BC_l^{(4,2)}}).
\end{align*}
\end{enumerate}
\item[(II)] $R_-=R(B_l^{(2,2)})$\; ($l>1$). Now, the coset $\Aut'(R_m,G)/\Aut^+(R_+,G)$ is represented by the next $4$ elements: $\tau_\delta$ with $\delta \in \{0, a, b, a+b)\}$. All of them extend to automorphisms of $R(B_l^{(2,2)})$ and we obtain the root system
\begin{align*}
R=
&(R(BC_l)_s+\Z a+\Z b) \cup (R(BC_l)_m+2\Z a+2\Z b) \cup (R(BC_l)_l+4\Z a+4\Z b) \\
=
& R(\mathbf{C^\vee BC_l^{(4)}}).
\end{align*}
\item[(III)] $R_-=R(B_l^{(2,2)\ast})$\; ($l>1$). Here, the coset $\Aut'(R_m,G)/\Aut^+(R_+,G)$ is represented by the next $4$ elements: $\tau_\delta$ with $\delta \in \{0, a, b, a+b\}$. Except for $\tau_0=\id$, they don't extend to automorphisms of $R(B_l^{(2,2)\ast})$.
\begin{enumerate}
\item[(i)] For $\varphi=\id$, we obtain the root system
\begin{align*}
R=
&(R(BC_l)_s+\{\, ma+nb\, \vert\, m,n \in \Z\, \text{s.t.}\, mn\equiv 0\, [2]\, \}) \\
\cup
&(R(BC_l)_m+2\Z a+2\Z b)  \cup (R(BC_l)_l+4\Z a+4\Z b) =R(\mathbf{C^\vee BC_l^{(4)\ast_0}}).
\end{align*}
\item[(ii)] For $\varphi=\tau_a$, we have $R\cong (R(C^\vee BC_l^{(4)\ast_0}))$ via $\chi(({}^tT)^{-1}) \circ \tau_a$. 
\item[(iii)] For $\varphi=\tau_b$, we have $R \cong R(C^\vee BC_l^{(4)\ast_{0'}})$ via $\tau_b$, where
\begin{align*}
R(\mathbf{C^\vee BC_l^{(4)\ast_{0'}}})=
&(R(BC_l)_s+\{\, ma+nb\, \vert\, m,n \in \Z\, \text{s.t.}\, m(n-1)\equiv 0\, [2]\, \}) \\
\cup
&(R(BC_l)_m+2\Z a+2\Z b)  \cup (R(BC_l)_l+4\Z a+4\Z b).
\end{align*}
\item[(iv)] For $\varphi=\tau_{a+b}$, we have $R \cong R(BC_l^{(4,4)\ast})$ via $\tau_{a+b}$, where
\begin{align*}
R(\mathbf{BC_l^{(4,4)\ast}})=
&(R(BC_l)_s+\{\, ma+nb\, \vert\, m,n \in \Z\, \text{s.t.}\, (m-1)(n-1)\equiv 0\, [2]\, \}) \\
\cup
&(R(BC_l)_m+2\Z a+2\Z b)  \cup (R(BC_l)_l+4\Z a+4\Z b).
\end{align*}
\end{enumerate}
\end{enumerate}

\subsection{$R/G$ of type $BB_l^\vee \; (l\geq 2)$}\label{subsubsect_BB-l}
In this case, since 
\[ R_+/G \cong \begin{cases} \; R(C_l^{(2)}) \; &l>2, \\ \; R(B_2^{(2)})\; &l=2, \end{cases} \qquad 
\text{and} \qquad
  R_-/G \cong \begin{cases} \; R(B_l^{(1)}) \; &l>2, \\ \; R(C_2^{(1)})\; &l=2, \end{cases}
\]
the mERS $(R_+,G)$ is either of type
\begin{enumerate}
\item $C_l^{(2,1)}$ or $C_l^{(2,2)}$ for $l>2$ and
\item $B_2^{(2,1)}$, $B_2^{(2,2)}$ or $B_2^{(2,2)\ast}$ for $l=2$,
\end{enumerate}
and the mERS $(R_-,G)$ is either of type 
\begin{enumerate}
\item $B_l^{(1,1)}$ or $B_l^{(1,2)}$ for $l>2$ and
\item $C_2^{(1,1)}$, $C_2^{(1,2)}$ or $C_2^{(1,1)\ast}$ for $l=2$. 
\end{enumerate}
\vskip 0.1in


\noindent{(1).\; \fbox{The case $R_+ = R(C_l^{(2,1)})$\; ($l>2)$}} \quad \\

Here, the index $[\Aut'(R_m,G):\Aut^+(R(C_l^{(2,1)}),G)]$ is $2$. 
\begin{enumerate}
\item[(I)] $R_-=R(B_l^{(1,1)})$. The coset $\Aut'(R_m,G)/\Aut^+(R(C_l^{(2,1)}),G)$ is represented by $ \{ \id, \tau_{\frac{1}{2}b} \}$.  $\tau_{\frac{1}{2}b}$ does not extend to automorphisms of $R(B_l^{(1,1)})$. 
\begin{enumerate}
\item[(i)] For $\varphi=\id$, we obtain the root system
\begin{align*}
R=
&(R(BC_l)_s+\Z a+\Z b) \cup (R(BC_l)_m+\Z a+\Z b) \cup (R(BC_l)_l+\Z a+2\Z b) \\
=
&R(\mathbf{BB_l^{\vee (1)}}).
\end{align*}
\item[(ii)] For $\varphi=\tau_{\frac{1}{2}b}$, we obtain the root system of type $BC_l^{(2,1)}$. 
\end{enumerate}
\item[(II)] $R_-=R(B_l^{(1,2)})$. Now, the coset $\Aut'(R_m,G)/\Aut^+(R(C_l^{(2,1)}),G)$ is represented by $\{ \id, \tau_{\frac{1}{2}b} \}$.  The map $\tau_{\frac{1}{2}b}=\id$ does not extend to an automorphism of $R(B_l^{(1,2)})$. 
\begin{enumerate}
\item[(i)] For $\varphi=\id$, we obtain the root system
\begin{align*}
R=
&(R(BC_l)_s+\Z a+\Z b) \cup (R(BC_l)_m+2\Z a+\Z b) \cup (R(BC_l)_l+2\Z a+2\Z b) \\
=
&R(\mathbf{BB_l^{\vee (2)}(2)}).
\end{align*}
\item[(ii)] For $\varphi=\tau_{\frac{1}{2}b}$, we get the root system of type $BC_l^{(2,2)}(2)$.
\end{enumerate}
\end{enumerate}

\vskip+0.1in

\medskip

\noindent{(2).\; \fbox{The case $R_+=R(C_l^{(2,2)})$\; ($l>2$)}} \quad \\

In this case, the index $[\Aut'(R_m,G):\Aut^+(R(C_l^{(2,2)}),G)]$ is $4$. 

\begin{enumerate}
\item[(I)] $R_-=R(B_l^{(1,1)})$. Now, the coset $\Aut'(R_m,G)/\Aut^+(R(C_l^{(2,2)}),G)$ is represented by the next $4$ elements: $\tau_\delta$ with $\delta \in \{0, \frac{1}{2}a, \frac{1}{2}b, \frac{1}{2}(a+b)\}$. Except for $\tau_0=\id$, they don't extend to automorphisms of $R(B_l^{(1,1)})$.
\begin{enumerate}
\item[(i)] For $\varphi=\id$, we obtain the root system
\begin{align*}
R=
&(R(BC_l)_s+\Z a+\Z b) \cup (R(BC_l)_m+\Z a+\Z b) \cup (R(BC_l)_l+2\Z a+2\Z b) \\
=
&R(\mathbf{BB_l^{\vee (2)}(1)}).
\end{align*}
\item[(ii)] For $\varphi=\tau_{\frac{1}{2}a}$, we get the root system
\begin{align*}
 R=
 &(R(BC_l)_s+\Z a+\Z b) \cup (R(BC_l)_m+\Z a+\Z b)\\
 \cup
&(R(BC_l)_l+(1+2\Z) a+2\Z b)= R(\mathbf{BC_l^{(2,2)\sigma}(1)}).
\end{align*}
\item[(iii)] For $\varphi=\tau_{\frac{1}{2}b}$, we obtain the root system of type $BC_l^{(2,2)}(1)$.
\item[(iv)] For $\varphi=\tau_{\frac{1}{2}(a+b)}$, we have $R\cong R(BC_l^{(2,2)}(1))$ via $\chi(({}^tT)^{-1})$. 
\end{enumerate}
\item[(II)] $R_-=R(B_l^{(1,2)})$. Here, the coset $\Aut'(R_m,G)/\Aut^+(R(C_l^{(2,2)}),G)$ is represented by the next $4$ elements: $\tau_\delta$ with $\delta \in \{0, a, \frac{1}{2}b, a+\frac{1}{2}b\}$. Only $\tau_0=\id$ and $\tau_a$ extend to automorphisms of $R(B_l^{(1,2)})$.
\begin{enumerate}
\item[(i)] For $\varphi=\id$, we obtain the root system
\begin{align*}
R=
&(R(BC_l)_s+\Z a+\Z b) \cup (R(BC_l)_m+2\Z a+\Z b) \cup (R(BC_l)_l+4\Z a+2\Z b) \\
=
& R(\mathbf{BB_l^{\vee (4)}}).
\end{align*}
\item[(ii)] For $\varphi=\tau_{\frac{1}{2}b}$, we get the root system of type $BC_l^{(2,4)}$.
\end{enumerate}
\end{enumerate}

\vskip+0.1in

\medskip

\noindent{(3).\; \fbox{The case $R_-=R(C_2^{(1,1)})$}} \quad \\

In this case, the index $[\Aut'(R_m,G):\Aut^+(R_-,G)]$ is $4$. 

\begin{enumerate}
\item[(I)] $R_+=R(B_2^{(2,1)})$. In this case, the coset $\Aut'(R_m,G)/\Aut^+(R_-,G)$ is represented by the next $4$ elements: $\tau_\delta$ with $\delta \in \{0, \frac{1}{2}a, \frac{1}{2}b, \frac{1}{2}(a+b)\}$. Only $\tau_0=\id$ and $\tau_{\frac{1}{2}a}$ extend to automorphisms of $R(B_2^{(2,1)})$.
\begin{enumerate}
\item[(i)] For $\varphi=\id$, we obtain the root system
\begin{align*}
R=&(R(BC_2)_s+\Z a+\Z b) \cup (R(BC_2)_m+\Z a+\Z b) \cup (R(BC_2)_l+\Z a+2\Z b)\\
=&R(\mathbf{BB_2^{\vee (1)}}).
\end{align*}
\item[(ii)] For $\varphi=\tau_{\frac{1}{2}b}$, we have $R \cong  R(BC_2^{(2,1)})$ via $\tau_{\frac{1}{2}b}$.
\end{enumerate}
\item[(II)] $R_+=R(B_2^{(2,2)})$. In this case, the coset $\Aut'(R_m,G)/\Aut^+(R_-,G)$ is represented by the next $4$ elements: $\tau_\delta$ with $\delta \in \{0, \frac{1}{2}a, \frac{1}{2}b, \frac{1}{2}(a+b)\}$. Only $\tau_0=\id$ extends to an automorphism of $R(B_2^{(2,2)})$.
\begin{enumerate}
\item[(i)] For $\varphi=\id$, we obtain the root system
\begin{align*}
R=&(R(BC_2)_s+\Z a+\Z b) \cup (R(BC_2)_m+\Z a+\Z b) \cup (R(BC_2)_l+2\Z a+2\Z b)\\
=&R(\mathbf{BB_2^{\vee (2)}(1)}).
\end{align*}
\item[(ii)] For $\varphi=\tau_{\frac{1}{2}a}$, we get $R \cong R(BC_2^{(2,2)\sigma}(1))$ via $\tau_{\frac{1}{2}a}$, where
\begin{align*}
R(\mathbf{BC_2^{(2,2)\sigma}(1)})=
&(R(BC_2)_s+\Z a+\Z b) \cup (R(BC_2)_m+\Z a+\Z b) \\
\cup 
&(R(BC_2)_l+(1+2\Z)a+2\Z b).
\end{align*} 
\item[(iii)] For $\varphi=\tau_{\frac{1}{2}b}$, we have $R \cong R(BC_2^{(2,2)}(1))$ via $\tau_{\frac{1}{2}b}$. 
\item[(iv)] For $\varphi=\tau_{\frac{1}{2}(a+b)}$, we have $R \cong R(BC_2^{(2,2)}(1))$ via $\chi(({}^tT)^{-1})\circ \tau_{\frac{1}{2}(a+b)}$. 
\end{enumerate}
\end{enumerate}

\vskip+0.1in

\medskip

\noindent{(4).\; \fbox{The case $R_-=R(C_2^{(1,1)\ast})$}} \quad \\

In this case, the index $[\Aut'(R_m,G):\Aut^+(R(C_2^{(1,1)\ast}),G)]$ is $4$. 

\begin{enumerate}
\item[(I)] $R_+=R(B_2^{(2,2)\ast})$. \\
Now, the coset $\Aut'(R_m,G)/\Aut^+(R(C_2^{(1,1)\ast}),G)$ is represented by the next $4$ elements: $\tau_\delta$ with $\delta \in \{ 0, \frac{1}{2}a, \frac{1}{2}b, \frac{1}{2}(a+b)\}$. All of them extend to automorphisms of $R(B_2^{(2,2)\ast})$, thus we obtain
\begin{align*}
R=&(R(BC_2)_s+\Z a+\Z b) \cup (R(BC_2)_m+\{\, ma+nb\, \vert\, mn \equiv 0\, [2]\, \}) \\
\cup &(R(BC_2)_l+2\Z a+2\Z b) = R(\mathbf{BB_2^{\vee (2)\ast}}).
\end{align*}
\end{enumerate}

\vskip+0.1in

\medskip

\noindent{(5).\; \fbox{The case $R_-=R(C_2^{(1,2)})$}} \quad \\

In this case, the index $[\Aut'(R_m,G):\Aut^+(R(C_2^{(1,2)}),G)]$ is $4$.  
Notice that the set $U_-/U_-(2)$ is represented by $\{ 1,{}^tT \}$.
\begin{enumerate}
\item[(I)] $R_+=R(B_2^{(2,1)})$. Here, the coset $\Aut'(R_m,G)/\Aut^+(R(C_2^{(1,2)}),G)$ is represented by $\{1, {}^tT\} \circ \{ \id, \tau_{\frac{1}{2}b}\}$.
Only $\tau_0=\id$ and $\chi({}^tT)$ extend to automorphisms of $R(B_2^{(2,1)})$. 
\begin{enumerate} 
\item[(i)] For $\varphi=\id$, we obtain the root system 
\begin{align*}
R=&(R(BC_2)_s+\Z a+\Z b) \cup (R(BC_2)_m+2\Z a+\Z b) \cup (R(BC_2)_l+2\Z a+2\Z b)\\
=&R(\mathbf{BB_2^{\vee(2)}(2)}).
\end{align*}
\item[(ii)] For $\varphi=\tau_{\frac{1}{2}b}$, we have  $R \cong R(BC_2^{(2,2)}(2))$ via $\tau_{\frac{1}{2}b}$. 
\end{enumerate}
\item[(II)] $R_+=R(B_2^{(2,2)})$. Now, the coset $\Aut'(R_m,G)/\Aut^+(R(C_2^{(1,2)}),G)$ is represented by $\{ \id, \tau_{b}\} \circ \{1,{}^tT\}$. 
All of them extend to automorphisms of $R(B_2^{(2,2)})$. Thus, we obtain
\begin{align*}
R=&(R(BC_2)_s+\Z a+\Z b) \cup (R(BC_2)_m+2\Z a+\Z b) \cup (R(BC_2)_l+4\Z a+2\Z b)\\
=&R(\mathbf{BB_2^{\vee(4)}}).
\end{align*}
\end{enumerate}





\subsection{$R/G$ of type $C^\vee C_l\; (l> 1)$}\label{subsect_CC-l}
In this case, since 
\[ R_+/G=R(C_l^{(1)}) \qquad \text{and} \qquad  R_-/G=R(B_l^{(2)}), \] 
the mERS $(R_+,G)$ is either of type \\
\centerline{$C_l^{(1,1)}, \, C_l^{(1,2)}$ or $C_l^{(1,1)\ast}$,}
and the mERS $(R_-,G)$ is either of type \\
\centerline{$B_l^{(2,1)}, \; B_l^{(2,2)}$ or $B_l^{(2,2)\ast}$. }


\noindent{(1).\; \fbox{The case $R_+=R(C_l^{(1,1)})$\; ($l>1$)}} \quad \\

In this case, $\Aut^+(R_+,G)=\Aut'(R_m,G)$ and it is sufficient to consider the gluing via trivial automorphism.
\begin{enumerate}
\item[(I)] $R_-=R(B_l^{(2,1)})$. Now, we obtain the root system
\begin{align*}
R=
&(R(BC_l)_s+\Z a+\Z b) \cup (R(BC_l)_m+\Z a+2\Z b) \cup (R(BC_l)_l+\Z a+2\Z b) \\
=
&R(\mathbf{C^\vee C_l^{(1)}}).
\end{align*}
\item[(II)] $R_-=R(B_l^{(2,2)})$. Here, we get the root system
\begin{align*}
R=
&(R(BC_l)_s+\Z a+\Z b) \cup (R(BC_l)_m+2\Z a+2\Z b) \cup (R(BC_l)_l+2\Z a+2\Z b) \\
=
&R(\mathbf{C^\vee C_l^{(2)}(2)}).
\end{align*}
\item[(III)] $R_-=R(B_l^{(2,2)\ast})$. We obtain the root system
\begin{align*}
R=
&(R(BC_l)_s+\{\, ma+nb\, \vert\, m,n \in \Z\, \text{s.t.}\, mn\equiv 0\, [2]\, \}) \\
\cup 
&(R(BC_l)_m+2\Z a+2\Z b) \cup (R(BC_l)_l+2\Z a+2\Z b)= R(\mathbf{C^\vee C_l^{(2)\ast_s}}).\end{align*}
\end{enumerate}

\vskip+0.1in

\medskip

\noindent{(2).\; \fbox{The case $R_+ =R(C_l^{(1,2)})$\; ($l>1$)}} \quad \\

In this case, the index $[\Aut'(R_m,G):\Aut^+(R(C_l^{(1,2)}),G)]$ is $4$. 
Notice that the set $U_-/U_-(2)$ is represented by $\{ 1, {}^tT\}$.  
\begin{enumerate}
\item[(I)] $R_-=R(B_l^{(2,1)})$. Here, the coset $\Aut'(R_m,G)/\Aut^+(R(C_l^{(1,2)}),G)$ is represented by $\{1, {}^tT \} \circ \{ \id, \tau_{\frac{1}{2}a} \}$. Except for $\tau_0=\id$,  they don't extend to automorphisms of $R(B_l^{(2,1)})$.\footnote{As $R_-=(R(BC_l)_s+\Z a+\Z b) \cup (R(BC_l)_m+\Z a+2\Z b)$, $R_+$ is realized as $(R(BC_l)_m+\Z a+\Z(2b)) \cup (R(BC_l)_l+2\Z a+\Z(2b))$. Since the matrix expression of $\chi({}^tT)$ with respect to the basis $(2b,a)$ is $\begin{pmatrix} 1 & 0 \\ 1 & 1\end{pmatrix}$, it follows that $\chi({}^tT)$ is not extendable to $\Aut^+(R_-,G)$.}
\begin{enumerate}
\item[(i)] For $\varphi=\id$, we obtain the root system
\begin{align*}
R=
&(R(BC_l)_s+\Z a+\Z b) \cup (R(BC_l)_m+\Z a+2\Z b) \cup (R(BC_l)_l+2\Z a+2\Z b) \\
=
&R(\mathbf{C^\vee C_l^{(2)}(1)}).
\end{align*}
\item[(ii)] For $\varphi=\tau_{\frac{1}{2}a}$, we get the root system
\begin{align*}
R=
&(R(BC_l)_s+\Z a+\Z b) \cup (R(BC_l)_m+\Z a+2\Z b) \\
\cup 
&(R(BC_l)_l+(1+2\Z) a+2\Z b)= R(\mathbf{BC_l^{(2,2)\sigma}(2)}).
\end{align*}
\item[(iii)] For $\varphi=\chi({}^tT)$, we obtain the root system
\begin{align*}
R=
&(R(BC_l)_s+\Z a+\Z b) \cup (R(BC_l)_m+\Z a+2\Z b) \\
\cup 
&(R(BC_l)_l+\{\, ma+2nb\, \vert\, m-n \equiv 0\, [2]\,\})=R(\mathbf{C^\vee C_l^{(2)\diamond}}).
\end{align*}
\item[(iv)] For $\varphi=\chi({}^tT)\circ \tau_{\frac{1}{2}a}$, we have $R \cong R(C^\vee C_l^{(2)\diamond})$ via $\tau_b$.
\end{enumerate}
\item[(II)] $R_-=R(B_l^{(2,2)})$.  Now, the coset $\Aut'(R_m,G)/\Aut^+(R(C_l^{(1,2)}),G)$ is represented by $\{ \id, \tau_{a} \} \circ \{1,  {}^tT \}$.  All of them extend to automorphisms of $R(B_l^{(2,2)})$. Thus we have
\begin{align*}
R=
&(R(BC_l)_s+\Z a+\Z b) \cup (R(BC_l)_m+2\Z a+2\Z b) \cup (R(BC_l)_l+4\Z a+2\Z b) \\
=
&R(\mathbf{C^\vee C_l^{(4)}}).
\end{align*}
\item[(III)] $R_-=R(B_l^{(2,2)\ast})$.  The coset $\Aut'(R_m,G)/\Aut^+(R(C_l^{(1,2)}),G)$ is represented by $ \{1, {}^tT \}\circ \{ \id, \tau_{a} \}$.  Only $\tau_0=\id$ and $\chi({}^tT)$ extend to automorphisms of $R(B_l^{(2,2)\ast})$. 
\begin{enumerate}
\item[(i)] For $\varphi=\id$, we obtain the root system
\begin{align*}
R=
&(R(BC_l)_s+\{\, ma+nb\, \vert\, m,n \in \Z\, \text{s.t.}\, mn\equiv 0\, [2]\, \}) \\
\cup
&(R(BC_l)_m+2\Z a+2\Z b) \cup (R(BC_l)_l+4\Z a+2\Z b)= R(\mathbf{C^\vee C_l^{(4)\ast_0}}).
\end{align*}
\item[(ii)] For $\varphi=\tau_{a}$, we have $R\cong R(C^\vee C_l^{(4)\ast_1})$ via $\tau_{a+b}$, where 
\begin{align*}
R(\mathbf{C^\vee C_l^{(4)\ast_1}})=
&(R(BC_l)_s+\{\, ma+nb\, \vert\, m,n \in \Z\, \text{s.t.}\, (m-1)(n-1)\equiv 0\, [2]\, \}) \\
\cup
&(R(BC_l)_m+2\Z a+2\Z b) \cup (R(BC_l)_l+4\Z a+2\Z b).
\end{align*}
\end{enumerate}
\end{enumerate}

\vskip+0.1in


\noindent{(3).\; \fbox{The case $R_+ =R(C_l^{(1,1)\ast})$\; ($l>1$)}} \quad \\

In this case, the index $[\Aut'(R_m,G):\Aut^+(R(C_l^{(1,1)\ast}),G)]$ is $4$. 
\begin{enumerate}
\item[(I)] $R_-=R(B_l^{(2,1)})$. Here, the coset $\Aut'(R_m,G)/\Aut^+(R(C_l^{(1,1)\ast}),G)$ is represented by the next $4$ elements: $\tau_\delta$ with $\delta \in \{0, \frac{1}{2}a, b, \frac{1}{2}a+b\}$. Only $\tau_0=\id$ and $\tau_b$ extend to automorphisms of $R(B_l^{(2,1)})$. 
\begin{enumerate}
\item[(i)] For $\varphi=\id$, we obtain the root system
\begin{align*}
R=
&(R(BC_l)_s+\Z a+\Z b) \cup (R(BC_l)_m+\Z a+2\Z b) \\
\cup
&(R(BC_l)_l+\{\, ma+2nb\, \vert\, m,n \in \Z\, \text{s.t.}\, mn \equiv 0\, [2]\, \})
= R(\mathbf{C^\vee C_l^{(1)\ast_0}}).
\end{align*}
\item[(ii)] For $\varphi=\tau_{\frac{1}{2}a+b}$, we get the root system
\begin{align*}
R=
&(R(BC_l)_s+\Z a+\Z b) \cup (R(BC_l)_m+\Z a+2\Z b) \\
\cup
&(R(BC_l)_l+\{\, ma+2nb\, \vert\, m,n \in \Z\, \text{s.t.}\, (m-1)(n-1) \equiv 0\, [2]\, \}) \\
= 
&R(\mathbf{C^\vee C_l^{(1)\ast_1}}).
\end{align*}
\end{enumerate}
\item[(II)] $R_-=R(B_l^{(2,2)})$. Now, the coset $\Aut'(R_m,G)/\Aut^+(R(C_l^{(1,1)\ast}),G)$ is represented by the next $4$ elements: $\tau_\delta$ with $\delta \in \{0, a, b, a+b\}$. All of them extend to automorphisms of $R(B_l^{(2,2)})$. Thus we obtain
\begin{align*}
R=
&(R(BC_l)_s+\Z a+\Z b) \cup (R(BC_l)_m+2\Z a+2\Z b) \\
\cup
&(R(BC_l)_l+\{\, 2ma+2nb\, \vert\, m,n \in \Z\, \text{s.t.}\, mn\equiv 0\, [2]\, \})
= R(\mathbf{C^\vee C_l^{(2)\ast_l}}).
\end{align*}
\item[(III)] $R_-=R(B_l^{(2,2)\ast})$.  The coset $\Aut'(R_m,G)/\Aut^+(R(C_l^{(1,1)\ast}),G)$ is represented by the next $4$ elements: $\tau_\delta$ with $\delta \in \{0, a, b, a+b\}$. Except for $\tau_0=\id$, they don't extend to automorphisms of $R(B_l^{(2,2)\ast})$.   
\begin{enumerate}
\item[(i)] For $\varphi=\id$, we get the root system
\begin{align*}
R=
&(R(BC_l)_s+\{\, ma+nb\, \vert\, m,n \in \Z\, \text{s.t.}\, mn\equiv 0\, [2]\, \}) \\
\cup
&(R(BC_l)_m+2\Z a+2\Z b) \\
\cup
&(R(BC_l)_l+\{\, 2ma+2nb\, \vert\, m,n \in \Z\, \text{s.t.}\, mn\equiv 0\, [2]\, \})
= R(\mathbf{C^\vee C_l^{(2)\ast_0}}).
\end{align*}
\item[(ii)] For $\varphi=\tau_a$, we obtain the root system
\begin{align*}
R=
&(R(BC_l)_s+\{\, ma+nb\, \vert\, m,n \in \Z\, \text{s.t.}\, mn\equiv 0\, [2]\, \}) \\
\cup
&(R(BC_l)_m+2\Z a+2\Z b) \\
\cup
&(R(BC_l)_l+\{\, 2ma+2nb\, \vert\, m,n \in \Z\, \text{s.t.}\, (m-1)n\equiv 0\, [2]\, \})
= R(\mathbf{C^\vee C_l^{(2)\ast_{1'}}}).
\end{align*}
\item[(iii)] For $\varphi=\tau_{a+b}$, we get  the root system
\begin{align*}
R=
&(R(BC_l)_s+\{\, ma+nb\, \vert\, m,n \in \Z\, \text{s.t.}\, mn\equiv 0\, [2]\, \}) \\
\cup
&(R(BC_l)_m+2\Z a+2\Z b) \\
\cup
&(R(BC_l)_l+\{\, 2ma+2nb\, \vert\, m,n \in \Z\, \text{s.t.}\, (m-1)(n-1)\equiv 0\, [2]\, \}) \\
= 
&R(\mathbf{C^\vee C_l^{(2)\ast_1}}).
\end{align*}
\item[(iv)] For $\varphi=\tau_b$, we have $R\cong R(C^\vee C_l^{(2)\ast_1})$ via $\tau_a \circ \chi(({}^tT)^{-1})$. 
\end{enumerate}
\end{enumerate}
\vskip 0.2in

\begin{rem}\label{rem_CC2}
\begin{enumerate}
\item
The isomorphism classes of the root systems
\begin{align*}
R =
&\big( R(BC_l)_s+\{ \, ma+nb\, \vert\, m,n \in \Z, \; (m-p')(n-q')\equiv 0\, [2]\, \}\big) \\
\cup
&\big( R(BC_l)_m+2\Z a+2\Z b \big)  \\
\cup
&\big( R(BC_l)_l+ \{ \, 2ma+2nb\, \vert\, 1\leq i\leq l, \; m,n \in \Z, \\
& \phantom{\big( R(BC_l)_l+ \{ \, 2ma+2nb\, \vert\,}(m-p-p')(n-q-q')\equiv 0\, [2]\, \}\big)
\end{align*}
for any $p,q, p', q' \in \{0,1\}$
are independent of $p'$ and $q'$; we shall denote such root systems by $\mathbf{C^\vee C_l^{(2)\ast_{p,q}}}$. 
\item The name in the above list corresponds as follows:
\begin{center}
\begin{tabular}{|c|c|c|} \hline
$C^\vee C^{(2)\ast_0}$  & 
$C^\vee C^{(2)\ast_1}$  & 
$C^\vee C^{(2)\ast_{1'}}$ \\ \hline
$C^\vee C^{(2)\ast_{0,0}}$ & 
$C^\vee C^{(2)\ast_{1,1}}$ & 
$C^\vee C^{(2)\ast_{1,0}}$ \\ \hline
\end{tabular}
\end{center}
\item The mERSs $(C^\vee C^{(2)\ast_{0,1}}, \R a)$ and $(C^\vee C^{(2)\ast_{1,1}}, \R a)$ are isomorphic. As a corollary, one sees that the three root systems of type $C^\vee C_l^{(2)\ast_{p,q}}$ with $(p,q) \neq (0,0)$ are all isomorphic, since $C^\vee C_l^{(2)\ast_{1,0}}$ and $C^\vee C_l^{(2)\ast_{0,1}}$ are isomorphic via $a \leftrightarrow b$.
\end{enumerate}
\end{rem}

\subsection{Rank $1$ cases}\label{sect_rank-1} 
Here, we briefly explain how one can classify the mERSs $(R,G)$ of rank $1$ whose quotient $R/G$ are non-reduced. We shall realise such root systems in the $\R$-vector space $F=\R \vep \oplus \R a \oplus \R b$ equipped with the symmetric bilinear form $I$ that satisfies $I(\vep, \vep)=1$ and $\R a \oplus \R b$ coincides with the radical of $I$. 

A reduced mERS of rank $1$ is either of type $A_1^{(1,1)}$ or $A_1^{(1,1)\ast}$.  Contrary to higher rank cases, we cannot proceed the classification
of the mERSs of rank $1$ by ``gluing'' two root systems, since there is no middle length root in this case. Nevertheless, we shall classify these root systems with an analogous idea. First fix a realisation of $R_s$ in $(F,I)$ as follows:
$R_s=\{ \pm \vep\}+L$ where
\[ L:=\begin{cases} \; \Z a\oplus \Z b \qquad & R_s:\,\text{of type}\, A_{1}^{(1,1)} \\
          \; \{\, ma+nb\, \vert\, m,n \in \Z\, \text{s.t.}\, mn\equiv 0\, [2]\, \} \qquad 
          & R_s:\, \text{of type}\, A_1^{(1,1)\ast}. \end{cases}
\]
Remark that the lattice $\langle L\rangle$, generated by the elements of $L$ by definition, is always $\rad(I)_{\Z}=\Z a\oplus \Z b$. Next, realize $R_l$ in the form
\[ R_l=\{ \pm (2\vep+m_0)\}+M, \]
where $m_0 \in \rad(I)$ and $M \subset \rad(I)$ is a subset containing $0$, assuming that $R_l$ is a root system of type $A_1^{(1,1)}$ or $A_1^{(1,1)\ast}$. Set $\gamma=2\vep+m_0$. Now, we give several properties of these data. 
\begin{enumerate}
\item For $m, m' \in M$, we have
\[ r_{\gamma+m'}(\gamma+m)=-\gamma+m-2m' \]
which implies that $M+\langle 2M\rangle \subset M$. In particular, 
$\langle 2M \rangle \subset M$ as $M$ contains $0$ and $M=-M$ by setting $m'=m$ in the above formula. 
\item Assume that $R=R_s \cup R_l$ is a root system. By assumption, 
\[ r_\vep(\gamma)=\gamma-4\vep=-\gamma+2m_0 \in -\gamma+M \]
which implies that $2m_0 \in M$.  Now, for any $m \in M$ and $l \in L$, 
\[ r_{\gamma+m}(-\vep+l)=\vep+m_0+m+l \in \vep+L, \]
that is, $m_0+M+L \subset L$. As both $M$ and $L$ contains $0$,  $m_0 \in L$. In particular, it follows that $M+L_0 \subset L_0$ as $L_0=\langle L \rangle$ which implies  $M \subset L_0$. 
\item Now, for any $l \in L$ and $m \in M$, 
\[ r_{\vep+l}\circ r_\vep (\gamma+m)=\gamma+m+4l, \]
which implies that $M+4L \subset M$, i.e., $M+4L_0 \subset M$. In particular, as $M$ contains $0$, it follows that $M \supset 4L_0$.
\end{enumerate}
{Summarising} the above arguments, we have
\begin{lemma}\label{lemma_rank-1}
\begin{enumerate}
\item $M$ is a subset of $L_0$ satisfying
\begin{enumerate}
\item[(i)] $M+4L_0 \subset M$, in particular, $4L_0 \subset M$, and 
\item[(ii)] $M+\langle 2M \rangle \subset M$.
\end{enumerate}
\item $m_0 \in L$ satisfies
\begin{enumerate}
\item[(i)] $2m_0 \in M$ and
\item[(ii)] $m_0+M+L \subset L$. 
\end{enumerate}
\end{enumerate}
\end{lemma}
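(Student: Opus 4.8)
The plan is to prove Lemma \ref{lemma_rank-1} by carefully running the reflection-stability constraints that a root system must satisfy, exactly as sketched in the three itemized remarks preceding the statement. The setup is fixed: $R_s=\{\pm\vep\}+L$, $R_l=\{\pm\gamma\}+M$ with $\gamma=2\vep+m_0$, and we assume throughout that $R=R_s\cup R_l$ is a (generalized) root system in $(F,I)$, so in particular $r_\alpha(R)=R$ for every $\alpha\in R$. Note $I(\vep,\vep)=1$, so $\vep^\vee=2\vep$; and $I(\gamma,\gamma)=I(2\vep+m_0,2\vep+m_0)=4$ since $m_0\in\rad(I)$, so $\gamma^\vee=\tfrac12\gamma=\vep+\tfrac12 m_0$. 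These normalizations are what make all the coefficients below come out as stated.

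First I would establish part (1). For $m,m'\in M$, compute $r_{\gamma+m'}(\gamma+m)$: since $\gamma+m'$ and $\gamma+m$ are long roots with $I(\gamma+m',\gamma+m')=4$ and $I(\gamma+m',\gamma+m)=4$, one gets $r_{\gamma+m'}(\gamma+m)=(\gamma+m)-2(\gamma+m')=-\gamma+m-2m'\in R_l$, hence $m-2m'\in M$. Taking $m'=0$ (legitimate since $0\in M$) gives $M=-M$; then for general $m,m'$ the element $m-2m'\in M$ together with $-M=M$ yields $m+2m'\in M$, i.e. $M+2M\subseteq M$, and iterating (the standard induction: $2M\subseteq M+2M\subseteq M$, so $\langle 2M\rangle\subseteq M$) gives $M+\langle 2M\rangle\subseteq M$, which is (1)(ii). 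For (1)(i), use the short roots: $r_{\vep+l}\circ r_\vep(\gamma+m)$. First $r_\vep(\gamma+m)=(\gamma+m)-I(2\vep,\gamma+m)\vep=(\gamma+m)-4\vep=-\gamma+m+2m_0$ (using $I(2\vep,\gamma)=I(2\vep,2\vep)=4$ and $I(2\vep,m)=0$). Then applying $r_{\vep+l}$ (noting $I(\vep+l,\vep+l)=1$, $I((\vep+l)^\vee,-\gamma+m+2m_0)=I(2\vep+2l,-2\vep-m_0+m+2m_0)=-4$) shifts by $+4(\vep+l)$, returning $\gamma+m+4l$; hence $M+4L\subseteq M$. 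Since $L$ generates $L_0=\rad_\Z(I)$ additively and $M+4M\subseteq M+4L_0$ is what we want, I'd note $4L\subseteq M$ (from $0\in M$) and then $M+4L_0\subseteq M$ follows because $L_0=\langle L\rangle$ and $M+4(l_1+l_2)=(M+4l_1)+4l_2\subseteq M+4l_2\subseteq M$ by induction on the number of generators; in particular $4L_0\subseteq M$.

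Next I would establish part (2), which is where $m_0$ enters genuinely. Apply $r_\vep$ to $\gamma=\gamma+0\in R_l$: as computed above $r_\vep(\gamma)=-\gamma+2m_0$, which must lie in $-\gamma+M$, so $2m_0\in M$, giving (2)(i). For (2)(ii): for $m\in M$, $l\in L$, compute $r_{\gamma+m}(-\vep+l)$. Here $I(\gamma+m,\gamma+m)=4$ and $I((\gamma+m)^\vee,-\vep+l)=I(\vep+\tfrac12 m_0+\tfrac12 m,-\vep+l)=-I(\vep,\vep)=-1$ (all radical pairings vanish), so $r_{\gamma+m}(-\vep+l)=(-\vep+l)+(\gamma+m)=\vep+m_0+m+l$, which must lie in $\vep+L$; hence $m_0+m+l\in L$ for all $m\in M,l\in L$, i.e. $m_0+M+L\subseteq L$. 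This is (2)(ii). (Taking $m=0$, $l=0$ recovers $m_0\in L$ as a byproduct, consistent with the discussion in the text.)

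I expect the main obstacle to be nothing deep but rather bookkeeping: getting every pairing $I(\alpha^\vee,\beta)$ right with the nonstandard normalization $I(\gamma,\gamma)=4$ (so $\gamma^\vee=\tfrac12\gamma$), and being careful that "$L_0=\langle L\rangle$" is used correctly — in the $A_1^{(1,1)\ast}$ case $L$ is not itself a subgroup, so the passage from "$M+4L\subseteq M$" to "$M+4L_0\subseteq M$" needs the explicit additive-generation argument rather than a one-line appeal. A secondary point worth a sentence: one should check that the reflections used ($r_\vep$, $r_{\vep+l}$, $r_{\gamma+m}$, $r_{\gamma+m'}$) are indeed reflections in roots of $R$, which holds because $\vep+l\in R_s$ for $l\in L$ and $\gamma+m\in R_l$ for $m\in M$ by construction, and $0\in L\cap M$. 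Beyond that the proof is a direct, finite verification.
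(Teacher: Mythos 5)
Your proof is correct and follows exactly the route the paper takes: the lemma there is obtained by "summarising" the same three reflection computations — $r_{\gamma+m'}(\gamma+m)=-\gamma+m-2m'$, $r_\vep(\gamma)=-\gamma+2m_0$ together with $r_{\gamma+m}(-\vep+l)=\vep+m_0+m+l$, and $r_{\vep+l}\circ r_\vep(\gamma+m)=\gamma+m+4l$ — with the same normalizations $\vep^\vee=2\vep$ and $\gamma^\vee=\tfrac12\gamma$. The only point you leave implicit is the assertion in part (1) that $M\subseteq L_0$, which is part of the statement but drops out of your computation for (2)(ii): setting $l=0$ gives $m_0+m\in L\subseteq L_0$ for every $m\in M$, and since $m_0\in L\subseteq L_0$ this forces $M\subseteq L_0$.
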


With this technical lemma, we shall analyse each case separately. 
\subsubsection{$\mathbf{R_s}$ $\&$ $\mathbf{R_l}$ of type $\mathbf{A_1^{(1,1)}}$}
In this case, $M$ is a sub-lattice of $L=L_0$ of full rank containing $4L$. 
By direct computation, such lattices can be classified as follows:

\begin{enumerate}
\item The case $[L:M]=1$.  In this case, $M_1=L$.  
\begin{center}
\begin{tikzpicture}[scale=0.5]
\draw (4.5,0) node[right] {\small $\Z a$}; \draw (0,4.5) node[above] {\small $\Z b$};

\foreach \x in {0,1,...,4} \fill (\x,0) circle (0.1) ; 
\foreach \x in {0,1,...,4} \fill (\x,1) circle (0.1) ; 
\foreach \x in {0,1,...,4} \fill (\x,2) circle (0.1) ; 
\foreach \x in {0,1,...,4} \fill (\x,3) circle (0.1) ; 
\foreach \x in {0,1,...,4} \fill (\x,4) circle (0.1) ; 

\draw (-2,2) node[]  {$M_1:$};

\end{tikzpicture}
\end{center}
\item The case $[L:M]=2$.  In this case, 
\begin{enumerate}
\item[i)] $M_{2,1}=2\Z a\oplus \Z b$, 
\item[ii)] $M_{2,2}=\Z a \oplus 2\Z b$, 
\item[iii)] $M_{2,3}=\Z(a+b)\oplus \Z(a-b)=\Z (a+b)\oplus 2\Z b=2\Z a \oplus \Z(b+a)$.
\end{enumerate}
\begin{center}
\vskip 0.05in
\begin{tikzpicture}[scale=0.5]

\foreach \y in {0,1,...,4} \fill (0,\y) circle (0.1) ; 
\foreach \y in {0,1,...,4} \draw (1,\y) circle (0.1) ; 
\foreach \y in {0,1,...,4} \fill (2,\y) circle (0.1) ; 
\foreach \y in {0,1,...,4} \draw (3,\y) circle (0.1) ; 
\foreach \y in {0,1,...,4} \fill (4,\y) circle (0.1) ; 

\draw (2,-2) node[above]  {$M_{2,1}$};

\foreach \x in {7,8,...,11} \fill (\x,0) circle (0.1) ; 
\foreach \x in {7,8,...,11} \draw (\x,1) circle (0.1) ; 
\foreach \x in {7,8,...,11} \fill (\x,2) circle (0.1) ; 
\foreach \x in {7,8,...,11} \draw (\x,3) circle (0.1) ; 
\foreach \x in {7,8,...,11} \fill (\x,4) circle (0.1) ; 

\draw (9,-2) node[above]  {$M_{2,2}$};

\foreach \x in {14,16,18} \fill (\x,0) circle (0.1) ; 
\foreach \x in {15,17} \draw (\x,0) circle (0.1) ; 
\foreach \x in {14,16,18} \draw (\x,1) circle (0.1) ; 
\foreach \x in {15,17} \fill (\x,1) circle (0.1) ; 
\foreach \x in {14,16,18} \fill (\x,2) circle (0.1) ; 
\foreach \x in {15,17} \draw (\x,2) circle (0.1) ; 
\foreach \x in {14,16,18} \draw (\x,3) circle (0.1) ; 
\foreach \x in {15,17} \fill (\x,3) circle (0.1) ; 
\foreach \x in {14,16,18} \fill (\x,4) circle (0.1) ; 
\foreach \x in {15,17} \draw (\x,4) circle (0.1) ; 

\draw (16,-2) node[above]  {$M_{2,3}$};
\end{tikzpicture}
\end{center}

\item The case $[L:M]=4$. In this case, 
\begin{enumerate}
\item[i)] $M_{4,1}=4\Z a \oplus \Z b$,
\item[ii)] $M_{4,2}=\Z a \oplus 4\Z b$,
\item[iii)] $M_{4,3}=2\Z a \oplus 2\Z b=2L$, 
\item[iv)] $M_{4,4}=\Z(a+b)\oplus 4\Z b=4\Z a\oplus \Z(b+a)$,
\item[v)] $M_{4,5}=\Z(-a+b)\oplus 4\Z b=4\Z a \oplus \Z(-a+b)$,
\item[vi)] $M_{4,6}=\Z(2a+b)\oplus 2\Z b=4\Z a+\Z (b+2a)$,
\item[vii)] $M_{4,7}=\Z(a+2b) \oplus 4\Z b=2\Z a \oplus \Z(2b+a) $.
\end{enumerate}
\begin{center}
\vskip 0.05in
\begin{tikzpicture}[scale=0.37]

\foreach \y in {0,1,...,4} \fill (0,\y) circle (0.1) ; 
\foreach \y in {0,1,...,4} \draw (1,\y) circle (0.1) ; 
\foreach \y in {0,1,...,4} \draw (2,\y) circle (0.1) ; 
\foreach \y in {0,1,...,4} \draw (3,\y) circle (0.1) ; 
\foreach \y in {0,1,...,4} \fill (4,\y) circle (0.1) ; 

\draw (2,-2) node[above]  {$M_{4,1}$};

\foreach \x in {7,8,...,11} \fill (\x,0) circle (0.1) ; 
\foreach \x in {7,8,...,11} \draw (\x,1) circle (0.1) ; 
\foreach \x in {7,8,...,11} \draw (\x,2) circle (0.1) ; 
\foreach \x in {7,8,...,11} \draw (\x,3) circle (0.1) ; 
\foreach \x in {7,8,...,11} \fill (\x,4) circle (0.1) ; 

\draw (9,-2) node[above]  {$M_{4,2}$};

\foreach \x in {14,16,18} \fill (\x,0) circle (0.1) ; 
\foreach \x in {15,17} \draw (\x,0) circle (0.1) ; 
\foreach \x in {14,...,18} \draw (\x,1) circle (0.1) ; 
\foreach \x in {14,16,18} \fill (\x,2) circle (0.1) ; 
\foreach \x in {15,17} \draw (\x,2) circle (0.1) ; 
\foreach \x in {14,...,18} \draw (\x,3) circle (0.1) ; 
\foreach \x in {14,16,18} \fill (\x,4) circle (0.1) ; 
\foreach \x in {15,17} \draw (\x,4) circle (0.1) ; 

\draw (16,-2) node[above]  {$M_{4,3}$};
\end{tikzpicture}
\end{center}

\begin{center}
\begin{tikzpicture}[scale=0.37]

\foreach \y in {0,1,...,4} \fill (\y,\y) circle (0.1) ; 
\foreach \y in {0,2,3,4} \draw (1,\y) circle (0.1) ; 
\foreach \y in {0,1,3,4} \draw (2,\y) circle (0.1) ; 
\foreach \y in {0,1,2,4} \draw (3,\y) circle (0.1) ; 
\foreach \y in {1,2,3} \draw (0,\y) circle (0.1);
\foreach \y in {1,2,3} \draw (4,\y) circle (0.1);
\fill (0,4) circle (0.1) ; \fill (4,0) circle (0.1) ; 

\draw (2,-2) node[above]  {$M_{4,4}$};

\foreach \x in {7,8,...,11} \fill (\x,11-\x) circle (0.1) ; 
\foreach \x in {7,8,9,11} \draw (\x,1) circle (0.1) ; 
\foreach \x in {7,8,10,11} \draw (\x,2) circle (0.1) ; 
\foreach \x in {7,9,10,11} \draw (\x,3) circle (0.1) ; 
\foreach \x in {8,9,10} \draw (\x,4) circle (0.1) ; 
\foreach \x in {8,9,10} \draw (\x,0) circle (0.1);
\fill (7,0) circle (0.1); \fill (11,4) circle (0.1);

\draw (9,-2) node[above]  {$M_{4,5}$};

\foreach \x in {14,18} \fill (\x,0) circle (0.1) ; 
\foreach \x in {15,16,17} \draw (\x,0) circle (0.1) ; 
\foreach \x in {14,15,17,18} \draw (\x,1) circle (0.1) ; 
\foreach \x in {14,18} \fill (\x,2) circle (0.1) ; 
\foreach \x in {15,16,17} \draw (\x,2) circle (0.1) ; 
\foreach \x in {14,15,17,18} \draw (\x,3) circle (0.1) ; 
\foreach \x in {14,18} \fill (\x,4) circle (0.1) ; 
\foreach \x in {15,16,17} \draw (\x,4) circle (0.1) ; 
\foreach \y in {1,3} \fill (16,\y) circle (0.1);

\draw (16,-2) node[above]  {$M_{4,6}$};

\foreach \x in {21,23,25} \fill (\x,0) circle (0.1) ; 
\foreach \x in {22,24} \draw (\x,0) circle (0.1) ; 
\foreach \x in {21,...,25} \draw (\x,1) circle (0.1) ; 
\foreach \x in {21,23,25} \draw (\x,2) circle (0.1) ; 
\foreach \x in {22,24} \fill (\x,2) circle (0.1) ; 
\foreach \x in {21,...,25} \draw (\x,3) circle (0.1) ; 
\foreach \x in {21,23,25} \fill (\x,4) circle (0.1) ; 
\foreach \x in {22,24} \draw (\x,4) circle (0.1) ; 

\draw (23,-2) node[above]  {$M_{4,7}$};

\end{tikzpicture}
\end{center}

\item The case $[L:M]=8$.
In this case, 
\begin{enumerate}
\item[i)] $M_{8,1}=4\Z a\oplus 2\Z b$,
\item[ii)] $M_{8,2}=2\Z a \oplus 4\Z b$,
\item[iii)] $M_{8,3}=2\Z(a+b)\oplus 2\Z(a-b)=2 \Z (a+b)\oplus 4\Z b=4\Z a \oplus 2\Z(b+a)$.
\end{enumerate}
\begin{center}
\vskip 0.05in
\begin{tikzpicture}[scale=0.5]

\foreach \y in {0,2,4} \fill (0,\y) circle (0.1) ; 
\foreach \y in {1,3} \draw (0,\y) circle (0.1) ; 
\foreach \y in {0,1,...,4} \draw (1,\y) circle (0.1) ; 
\foreach \y in {0,1,...,4} \draw (2,\y) circle (0.1) ; 
\foreach \y in {0,1,...,4} \draw (3,\y) circle (0.1) ; 
\foreach \y in {0,2,4} \fill (4,\y) circle (0.1) ; 
\foreach \y in {1,3} \draw (4,\y) circle (0.1) ; 

\draw (2,-2) node[above]  {$M_{8,1}$};

\foreach \x in {7,9,11} \fill (\x,0) circle (0.1) ; 
\foreach \x in {8,10} \draw (\x,0) circle (0.1) ; 
\foreach \x in {7,8,...,11} \draw (\x,1) circle (0.1) ; 
\foreach \x in {7,8,...,11} \draw (\x,2) circle (0.1) ; 
\foreach \x in {7,8,...,11} \draw (\x,3) circle (0.1) ; 
\foreach \x in {7,9,11} \fill (\x,4) circle (0.1) ; 
\foreach \x in {8,10} \draw (\x,4) circle (0.1) ;

\draw (9,-2) node[above]  {$M_{8,2}$};

\foreach \x in {14,18} \fill (\x,0) circle (0.1) ; 
\foreach \x in {15,16,17} \draw (\x,0) circle (0.1) ; 
\foreach \x in {14,15,...,18} \draw (\x,1) circle (0.1) ; 
\fill (16,2) circle (0.1) ; 
\foreach \x in {14,15,17,18} \draw (\x,2) circle (0.1) ; 
\foreach \x in {14,15,...,18} \draw (\x,3) circle (0.1) ; 
\foreach \x in {14,18} \fill (\x,4) circle (0.1) ; 
\foreach \x in {15,16,17} \draw (\x,4) circle (0.1) ; 

\draw (16,-2) node[above]  {$M_{8,3}$};
\end{tikzpicture}
\end{center}

\item The case $[L:M]=16$. In this case, we have $M_{16}=4L$. 
\begin{center}
\vskip 0.05in
\begin{tikzpicture}[scale=0.5]

\foreach \x in {0,4} \fill (\x,0) circle (0.1) ; 
\foreach \x in {1,2,3} \draw (\x,0) circle (0.1);
\foreach \x in {0,1,...,4} \draw(\x, 1) circle (0.1) ; 
\foreach \x in {0,1,...,4} \draw (\x,2) circle (0.1) ; 
\foreach \x in {0,1,...,4} \draw (\x,3) circle (0.1) ; 
\foreach \x in {0,4} \fill (\x,4) circle (0.1) ; 
\foreach \x in {1,2,3} \draw (\x,4) circle (0.1);

\draw (2,-2) node[above]  {$M_{16}$};
\end{tikzpicture}
\end{center}

\end{enumerate}
Here, each figure describes the set $\{\, ma+nb\, \vert\, m,n \in \{0,1,2,3,4\}\, \}$ with the rules: $\white \in L \setminus M$ and $\black \in M$, the leftmost and lowest circle indicates $0 \in L$, the next right circle to $0$ indicates $a$ and the next up circle to $0$ indicates $b$.

Let us now classify the possible lattices $M$, together with possible $m_0$'s, up to $SL(\rad_{\Z}(I))$-action on $L$ fixing $\Z a$. Fix a basis $(b,a)$ of $\rad_{\Z}(I)$ to identify $SL(\rad_{\Z}(I))$ with $SL_2(\Z)$. \\

It is clear that $M_{2,3}=\chi({}^tT)(M_{2,1})$, $M_{4,4}=\chi({}^tT)(M_{4,1})$, $M_{4,5}=\chi(({}^tT)^{-1})(M_{4,1})$, $M_{4,6}=\chi(({}^tT)^2)(M_{4,1})$, and $M_{8,3}=\chi({}^tT)(M_{8,1})$.  This implies the following classification: 

\begin{enumerate}
\item The case $[L:M]=1$.   In this case, $M=M_1=L$, and 
we obtain the root system 
\[ R=(\{ \pm\vep \}+\Z a+\Z b)\cup (\{ \pm 2\vep \}+\Z a+\Z b)=R(\mathbf{BCC_1^{(1)}}).
\]
\item The case $[L:M]=2$.  In this case, we have
\begin{enumerate}
\item[(I)] $M=M_{2,1}=2\Z a\oplus \Z b$.
\begin{enumerate}
\item[(i)] For $m_0=0$,  
\[ R=(\{ \pm\vep \}+\Z a+\Z b) \cup (\{ \pm 2\vep \}+2\Z a+\Z b) = R(\mathbf{BCC_1^{(2)}(2)}).
\]
\item[(ii)] For $m_0=a$,  
\[ R=(\{ \pm\vep \}+\Z a+\Z b) \cup (\{ \pm 2\vep \}+(1+2\Z) a+\Z b) =R(\mathbf{BC_1^{(1,2)}}).
\]
\end{enumerate}
\item[(II))] $M=M_{2,2}=\Z a \oplus 2\Z b$. 
\begin{enumerate}
\item[(i)] For $m_0=0$,  
\[ R=(\{ \pm\vep \}+\Z a+\Z b) \cup (\{ \pm 2\vep \}+\Z a+2\Z b) =R(\mathbf{C^\vee C_1^{(1)}}).
\]
\item[(ii)] For $m_0=b$,  we obtain the root system of type $\mathbf{BC_1^{(2,1)}}$.
\end{enumerate}
\end{enumerate}
\item The case $[L:M]=4$. In this case, we have
\begin{enumerate}
\item[(I)] $M=M_{4,1}=4\Z a \oplus \Z b$. In this case, 
\[ R=(\{ \pm\vep \}+\Z a+\Z b) \cup (\{ \pm 2\vep \}+4\Z a+\Z b) =R(\mathbf{BCC_1^{(4)}}).
\]
\item[(II)] $M=M_{4,2}=\Z a \oplus 4\Z b$. In this case,  
\[ R=(\{ \pm\vep \}+\Z a+\Z b) \cup (\{ \pm 2\vep \}+\Z a+4\Z b) =R(\mathbf{C^\vee BC_1^{(1)}}).\]
\item[(III)] $M=M_{4,3}=2\Z a \oplus 2\Z b=2L$.
\begin{enumerate}
\item[(i)] For $m_0=0$, 
\[ R=(\{ \pm\vep \}+\Z a+\Z b) \cup (\{ \pm 2\vep \}+2\Z a+2\Z b) =R(\mathbf{C^\vee C_1^{(2)}(2)}).
\]
\item[(ii)] For $m_0=a$,  
\[ R=(\{ \pm\vep \}+\Z a+\Z b) \cup (\{ \pm 2\vep \}+(1+2\Z) a+2\Z b) 
=R(\mathbf{BC_1^{(2,2)\sigma}(2)}).
\]
\item[(iii)] For $m_0=b$, we obtain the root system of type
$\mathbf{BC_1^{(2,2)}(2)}$.
\item[(iv)] For $m_0=b+a$, we have
$\chi({}^tT)(R(BC_1^{(2,2)}(2)))$. 
\end{enumerate}
\item[(IV)] $M=M_{4,7}=\Z(a+2b) \oplus 4\Z b=2\Z a \oplus \Z(2b+a) $. 
\begin{enumerate}
\item[(i)] For $m_0=0$, 
\[ R=(\{ \pm\vep \}+\Z a+\Z b) \cup (\{ \pm 2\vep \}+2\Z a+\Z (2b+a)) 
=R(\mathbf{C^\vee C_1^{(2)\diamond}}).
\]
\item[(ii)] For $m_0=a$, we have 
$\tau_b(R(C^\vee C_1^{(2)\diamond}))$.
\end{enumerate}
\end{enumerate}
\item The case $[L:M]=8$.
In this case, we have
\begin{enumerate}
\item[(I)] $M=M_{8,1}=4\Z a\oplus 2\Z b$.
\begin{enumerate}
\item[(i)] For $m_0=0$,  
\[ R=(\{ \pm\vep \}+\Z a+\Z b) \cup (\{ \pm 2\vep \}+4\Z a+2\Z b) =R(\mathbf{C^\vee C_1^{(4)}}).
\]
\item[(ii)] For $m_0=b$, we obtain the root system of type $\mathbf{BC_1^{(2,4)}}$.
\end{enumerate}
\item[(II)] $M=M_{8,2}=2\Z a \oplus 4\Z b$.
\begin{enumerate}
\item[(i)] For $m_0=0$, 
\[ R=(\{ \pm\vep \}+\Z a+\Z b) \cup (\{ \pm 2\vep \}+2\Z a+4\Z b) =R(\mathbf{C^\vee BC_1^{(2)}(2)}).
\]
\item[(ii)] For $m_0=a$,  
\[ R=(\{ \pm\vep \}+\Z a+\Z b) \cup (\{ \pm 2\vep \}+(1+2\Z) a+4\Z b) =R(\mathbf{BC_1^{(4,2)}}). 
\]
\end{enumerate}
\end{enumerate}
\item The case $[L:M]=16$. In this case, we have $M=M_{16}=4L$ and 
\[ R=(\{ \pm\vep \}+\Z a+\Z b) \cup (\{ \pm 2\vep \}+4\Z a+4\Z b) =R(\mathbf{C^\vee BC_1^{(4)}}).
\]
\end{enumerate}

\begin{rem}\label{rem_m_0} If $m_0 \in 2L$, by the translation $\tau_{\frac{1}{2}m_0}$ $($cf. \eqref{def_translation}$)$, we see that it is isomorphic to the case $m_0=0$. Hence, the above choice of $m_0$ is given up  to the translation by $2L$. 
\end{rem}

\subsubsection{$\mathbf{R_s}$ of type $\mathbf{A_1^{(1,1)}}$ and $\mathbf{R_l}$ of type $\mathbf{A_1^{(1,1)\ast}}$}
In this case, as $M$ is contained in $L_0$, it is clear that $\langle 2M \rangle \subset 2L_0$. By Lemma \ref{lemma_rank-1}, it follows that the set $M$ is stable under the translation of $\langle 2M \rangle +4L_0$ and the latter is a lattice satisfying
\[ 2L_0 \supset \langle 2M \rangle +4L_0 \supset 4L_0. \]
Hence, the index of the sub-lattice $M':=\langle 2M \rangle +4L_0$ of $2L_0$ is either $1,2$ or $4$. Thus, up to the $SL_2(\Z)$-action on $L$ fixing $\Z a$, it is equivalent either to
$2L_0, 2 \Z a+4\Z b, 4 \Z a+2\Z b$ or $4L_0$. We classify $R$ in each case:
\begin{enumerate}
\item The case $M'=2L_0$. We have
\[ M=\{\, ma+nb\, \vert\, m,n \in \Z \, \text{s.t.}\, mn\equiv 0\, [2]\, \}. \]
\begin{enumerate}
\item[(I)] For $m_0=0$, 
\begin{align*}
R=
&(\{ \pm\vep \}+\Z a+\Z b) \\
\cup
&(\{ \pm 2\vep \}+\{\, ma+nb\, \vert\, m,n \in \Z\, \text{s.t.}\, mn \equiv 0\, [2]\, \})
= R(\mathbf{BCC_1^{(1)\ast_0}}).
\end{align*}
\item[(II)] For $m_0=a$, we have $\chi({}^tT)(R(BCC_1^{(1)\ast_0}))$.
\item[(III)] For $m_0=b$, 
\begin{align*}
R=
&(\{ \pm\vep \}+\Z a+\Z b) \\
\cup 
&(\{ \pm 2\vep \}+\{\, ma+nb\, \vert\, m,n \in \Z\, \text{s.t.}\, m(n-1) \equiv 0\, [2]\, \})
= R(\mathbf{BCC_1^{(1)\ast_{0'}}}).
\end{align*}
\item[(IV)] For $m_0=a+b$, 
\begin{align*}
R=
&(\{ \pm\vep \}+\Z a+\Z b) \\
\cup 
&(\{ \pm 2\vep \}+\{\, ma+nb\, \vert\, m,n \in \Z\, \text{s.t.}\, (m-1)(n-1) \equiv 0\, [2]\, \}) \\
= 
&R(\mathbf{BC_1^{(1,1)\ast}}).
\end{align*}
\end{enumerate}
\item The case $M'=2\Z a+4\Z b$. In this case, we have
\[ M=\{\, ma+2nb\, \vert\, m,n \in \Z \, \text{s.t.}\, mn\equiv 0\, [2]\, \}. \]
\begin{enumerate}
\item[(I)] For $m_0=0$, 
\begin{align*}
R=
&(\{ \pm\vep \}+\Z a+\Z b) \\
\cup 
&(\{ \pm 2\vep \}+\{\, ma+2nb\, \vert\, m,n \in \Z\, \text{s.t.}\, mn \equiv 0\, [2]\, \})
= R(\mathbf{C^\vee C_1^{(1)\ast_0}}).
\end{align*}
\item[(II)] For $m_0=a+2b$, 
\begin{align*}
R=
&(\{ \pm\vep \}+\Z a+\Z b) \\
\cup 
&(\{ \pm 2\vep \}+\{\, ma+2nb\, \vert\, m,n \in \Z\, \text{s.t.}\, (m-1)(n-1) \equiv 0\, [2]\, \}) \\
= 
&R(\mathbf{C^\vee C_1^{(1)\ast_1}}).
\end{align*}
\end{enumerate}
\item The case $M'=4\Z a+2\Z b$. In this case, we have
\[ M=\{\, 2ma+nb\, \vert\, m,n \in \Z \, \text{s.t.}\, mn\equiv 0\, [2]\, \}. \]
\begin{enumerate}
\item[(I)] For $m_0=0$, 
\begin{align*}
R=
&(\{ \pm\vep \}+\Z a+\Z b) \\
\cup 
&(\{ \pm 2\vep \}+\{\, 2ma+nb\, \vert\, m,n \in \Z\, \text{s.t.}\, mn \equiv 0\, [2]\, \})
= R(\mathbf{BCC_1^{(2)\ast_0}}).
\end{align*}
\item[(II)] For $m_0=2a+b$, 
\begin{align*}
R=
&(\{ \pm\vep \}+\Z a+\Z b) \\
\cup 
&(\{ \pm 2\vep \}+\{\, 2ma+nb\, \vert\, m,n \in \Z\, \text{s.t.}\, (m-1)(n-1) \equiv 0\, [2]\, \}) \\
= 
&R(\mathbf{BCC_1^{(2)\ast_1}}).
\end{align*}
\end{enumerate}
\item The case $M'=4L_0$. In this case, we have
\[ M=\{\, 2ma+2nb\, \vert\, m,n \in \Z \, \text{s.t.}\, mn\equiv 0\, [2]\, \} \]
and  
\begin{align*}
R=
&(\{ \pm\vep \}+\Z a+\Z b) \\
\cup 
&(\{ \pm 2\vep \}+\{\, 2ma+2nb\, \vert\, m,n \in \Z\, \text{s.t.}\, mn \equiv 0\, [2]\, \})
= R(\mathbf{C^\vee C_1^{(2)\ast_l}}).
\end{align*}
\end{enumerate}


\subsubsection{$\mathbf{R_s}$ of type $\mathbf{A_1^{(1,1)\ast}}$ and $\mathbf{R_l}$  of type $\mathbf{A_1^{(1,1)}}$}
This case is dual to the previous section, so let us just compute their duals and give the results. We write only those cases where we obtain new root systems, and keep the same numbering system as in the previous section. 

\begin{enumerate}
\item 
\begin{enumerate}
\item[(I)] We get the root system 
\begin{align*}
R=
&(\{ \pm\vep \}+\{\, ma+nb\, \vert\, m,n \in \Z\, \text{s.t.}\, mn \equiv 0\, [2]\, \})\\
\cup 
&(\{ \pm 2\vep \}+4\Z a+4\Z b) = R(\mathbf{C^\vee BC_1^{(4)\ast_0}}).
\end{align*}
\item[(III)] We obtain the root system 
\begin{align*}
R=
 &(\{ \pm\vep \}+\{\, ma+nb\, \vert\, m,n \in \Z\, \text{s.t.}\, m(n-1) \equiv 0\, [2]\, \})\\
\cup 
&(\{ \pm 2\vep \}+4\Z a+4\Z b) = R(\mathbf{C^\vee BC_1^{(4)\ast_{0'}}}).
\end{align*}
\item[(IV)] For $m_0=a+b$, we  get the root system 
\begin{align*}
R=
 &(\{ \pm\vep \}+\{\, ma+nb\, \vert\, m,n \in \Z\, \text{s.t.}\, (m-1)(n-1) \equiv 0\, [2]\, \})\\
\cup 
&(\{ \pm 2\vep \}+4\Z a+4\Z b) = R(\mathbf{BC_1^{(4,4)\ast}}).
\end{align*}
\end{enumerate}
\item 
\begin{enumerate}
\item[(I)] We obtain the root system 
\begin{align*}
R=
 &(\{ \pm\vep \}+\{\, ma+nb\, \vert\, m,n \in \Z\, \text{s.t.}\, mn \equiv 0\, [2]\, \})\\
\cup 
&(\{ \pm 2\vep \}+4\Z a+2\Z b) = R(\mathbf{C^\vee C_1^{(4)\ast_0}}).
\end{align*}
\item[(II)] We get the root system 
\begin{align*}
R=
&(\{ \pm\vep \}+\{\, ma+nb\, \vert\, m,n \in \Z\, \text{s.t.}\, (m-1)(n-1) \equiv 0\, [2]\, \})\\
\cup 
&(\{ \pm 2\vep \}+4\Z a+2\Z b) = R(\mathbf{C^\vee C_1^{(4)\ast_1}}).
\end{align*}
\end{enumerate}
\item 
\begin{enumerate}
\item[(I)] We obtain the root system 
\begin{align*}
R=
&(\{ \pm\vep \}+\{\, ma+nb\, \vert\, m,n \in \Z\, \text{s.t.}\, mn \equiv 0\, [2]\, \})\\
\cup 
&(\{ \pm 2\vep \}+2 \Z a+4\Z b) = R(\mathbf{C^\vee BC_1^{(2)\ast_0}}).
\end{align*}
\item[(II)] We get the root system 
\begin{align*}
R=
&(\{ \pm\vep \}+\{\, ma+nb\, \vert\, m,n \in \Z\, \text{s.t.}\, (m-1)(n-1) \equiv 0\, [2]\, \})\\
\cup 
&(\{ \pm 2\vep \}+2\Z a+4\Z b) = R(\mathbf{C^\vee BC_1^{(2)\ast_1}}).
\end{align*}
\end{enumerate}
\item We obtain the root system 
\begin{align*}
R=
&(\{ \pm\vep \}+\{\, ma+nb\, \vert\, m,n \in \Z\, \text{s.t.}\, mn \equiv 0\, [2]\, \}) \\
\cup 
&(\{ \pm 2\vep \}+2\Z a+2\Z b) = R(\mathbf{C^\vee C_1^{(2)\ast_s}}).
\end{align*}
\end{enumerate}

\subsubsection{$\mathbf{R_s}$ $\&$ $\mathbf{R_l}$ of type $\mathbf{A_1^{(1,1)\ast}}$}
In this case, fix 
\[ L=\{\, ma+nb\, \vert\, m,n \in \Z\, \text{s.t.} \, mn \equiv 0\, [2]\, \} \]
as in the head of \S \ref{sect_rank-1}. By Lemma \ref{lemma_rank-1} (2) (ii), both $m_0$ and $M$ are contained in $2L_0$. Hence the only possibility for $M$ is $2L$, that is,
\[ M=\{\, 2ma+2nb\, \vert\, m,n \in \Z\, \text{s.t.}\, mn \equiv 0\, [2]\, \}. \]
Thus, we have the next classification:
\begin{enumerate}
\item For $m_0=0$, we have the root system
\begin{align*}
R=
&(\{ \pm\vep \}+\{\, ma+nb\, \vert\, m,n \in \Z\, \text{s.t.} \, mn \equiv 0\, [2]\, \})  \\
\cup
&(\{ \pm 2\vep \}+\{\, 2ma+2nb\, \vert\, m,n \in \Z\, \text{s.t.}\, mn \equiv 0\, [2]\, \})
= R(\mathbf{C^\vee C_1^{(2)\ast_0}}).
\end{align*}
\item For $m_0=2a$, 
\begin{align*}
R=
&(\{ \pm\vep \}+\{\, ma+nb\, \vert\, m,n \in \Z\, \text{s.t.} \, mn \equiv 0\, [2]\, \})  \\
\cup
&(\{ \pm 2\vep \}+\{\, 2ma+2nb\, \vert\, m,n \in \Z\, \text{s.t.}\, (m-1)n \equiv 0\, [2]\, \})
= R(\mathbf{C^\vee C_1^{(2)\ast_{1'}}}).
\end{align*}
\item For $m_0=2b$, we get $R=\tau_a\circ \chi({}^tT)((R(C^\vee C_1^{(2)\ast_1})))$. 
\item For $m_0=2(a+b)$, we obtain
\begin{align*}
R=
&(\{ \pm\vep \}+\{\, ma+nb\, \vert\, m,n \in \Z\, \text{s.t.} \, mn \equiv 0\, [2]\, \})  \\
\cup
&(\{ \pm 2\vep \}+\{\, 2ma+2nb\, \vert\, m,n \in \Z\, \text{s.t.}\, (m-1)(n-1) \equiv 0\, [2]\, \}) \\
= 
&R(\mathbf{C^\vee C_1^{(2)\ast_1}}).
\end{align*}
\end{enumerate}



\section{More on new mERSs}\label{sect:more-mERS}

\subsection{Isomorphic root systems}\label{sect:isom-root-sys}
Among mERSs, there exist several pairs, say $(R_1,G_1)$ and $(R_2,G_2)$ who are not isomorphic as mERS but are isomorphic as ERS. In this subsection, we provide those  where at least one of the two mERSs has a non-reduced affine quotient. 

\begin{thm} \label{thm:isom-red-mERS-nred_root-sys} Among the reduced $BC_l$-type marked elliptic root systems with non-reduced affine quotient, we have the following isomorphisms
$($as root systems$):$
\begin{align*}
&R(BC_l^{(1,2)}) \cong R(BC_l^{(2,1)}), \qquad R(BC_l^{(4,2)}) \cong R(BC_l^{(2,4)}), \\
&R(BC_l^{(2,2)\sigma}(i)) \cong R(BC_l^{(2,2)}(i)) \qquad i \in \{1,2\}.
\end{align*}

\end{thm}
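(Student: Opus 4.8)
The plan is to exhibit, for each claimed isomorphism, an explicit linear automorphism $\varphi$ of $F = \R^{l+2}$ that fixes the metric $I$ (hence lies in $O(F,I)$) and carries one root system onto the other, \emph{without} being required to preserve the marking $G = \R a$. All these root systems are realized inside the same $F$ with the fixed basis $(\vep_1,\dots,\vep_l,b,a)$, so the candidates for $\varphi$ are the same ones used throughout \S\ref{sect_mers-II}: the transvections $\chi(T), \chi({}^tT) \in SL(\rad_\Z(I))$ extended by the identity on $F_f$, the translations $\tau_\delta = E_F\big((-\delta)\otimes(\vep_1+\cdots+\vep_l)\big)$ for $\delta\in\rad(I)$, and the swap $a\leftrightarrow b$. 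The strategy is: normalize one root system by such a map so that its short/middle/long strata become those of the target, checking stratum by stratum using the explicit root-data formulas given in \S\ref{sect:intro_red-non-red-quot} (and recalled via the $R(BC_l)_s$, $R(BC_l)_m$, $R(BC_l)_l$ decomposition).

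Concretely, for $R(BC_l^{(1,2)})\cong R(BC_l^{(2,1)})$: recall
$R(BC_l^{(1,2)}) = (R(BC_l)_s+\Z a+\Z b)\cup(R(BC_l)_m+\Z a+\Z b)\cup(R(BC_l)_l+(1+2\Z)a+\Z b)$ from \S\ref{subsect_CBC}.(2)(I)(ii),
and the dual-type description of $R(BC_l^{(2,1)}) = (R(BC_l)_s+\Z a+\Z b)\cup(R(BC_l)_m+\Z a+\Z b)\cup(R(BC_l)_l+\Z a+(1+2\Z)b)$ from Appendix \ref{sect_clasfficiation-red-quot}. The linear map swapping $a\leftrightarrow b$ and fixing each $\vep_i$ is in $O(F,I)$ since $a,b\in\rad(I)$, and it visibly interchanges the two; one only checks it is a bijection of the two subsets, which is immediate from the formulas. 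The same $a\leftrightarrow b$ swap handles $R(BC_l^{(4,2)})\cong R(BC_l^{(2,4)})$, comparing $(R(BC_l)_s+\Z a+\Z b)\cup(R(BC_l)_m+2\Z a+2\Z b)\cup(R(BC_l)_l+(1+2\Z)a+4\Z b)$ with the corresponding reduced-quotient description of $R(BC_l^{(2,4)})$; here one must be mildly careful because the middle stratum uses $2\Z a+2\Z b$ (symmetric under the swap) while the short stratum uses $\Z a+\Z b$ (also symmetric), so the swap indeed works once the long strata are matched as $\{(1+2\Z)a+4\Z b\}\leftrightarrow\{4\Z a+(1+2\Z)b\}$.

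For the two isomorphisms $R(BC_l^{(2,2)\sigma}(i))\cong R(BC_l^{(2,2)}(i))$, $i\in\{1,2\}$, the relevant maps are already essentially recorded in the gluing analysis: in \S\ref{subsubsect_BB-l}.(2)(I)(ii) it is shown $R(BC_l^{(2,2)\sigma}(1))$ arises from the same gluing data as $R(BC_l^{(2,2)}(1))$ twisted by $\tau_{\frac12 a}$, and in \S\ref{subsect_CC-l}.1.(2)(I)(ii) the analogous statement holds for $R(BC_l^{(2,2)\sigma}(2))$ versus $R(BC_l^{(2,2)}(2))$ via $\tau_{\frac12 a}$. So the plan is to write down $\tau_{\frac12 a}$ explicitly (which sends $\vep_i\mapsto\vep_i$, $b\mapsto b$, $a\mapsto a$ and shifts the long roots $\pm2\vep_i\mapsto \pm2\vep_i \mp \tfrac12 a\cdot I(\dots)$, producing exactly the shift $(1+2\Z)a \leftrightarrow 2\Z a$ on the long stratum while fixing the short and middle strata) and verify it is in $O(F,I)$ and carries one onto the other. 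The rank-$1$ cases $R(BC_1^{(2,2)\sigma}(i))\cong R(BC_1^{(2,2)}(i))$ are covered by \S\ref{sect_rank-1} where the same $\tau_{\frac12 a}$ (resp.\ $\tau_{\frac12 b}$, $\chi({}^tT)$) relations were recorded.

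The main obstacle is bookkeeping rather than conceptual: one must be scrupulous that each purported $\varphi$ maps the \emph{whole} root system bijectively, i.e.\ that after applying $\varphi$ no spurious roots appear and none are lost in \emph{any} of the three length strata simultaneously — the danger being that a translation $\tau_\delta$ which fixes one stratum setwise may shift another out of the target. This is why the explicit formulas for $R(BC_l)_s,R(BC_l)_m,R(BC_l)_l$ and the exact coset of the translation lattice matter, and why I would present the verification as a short table of stratum-by-stratum images. Since every one of these maps has already implicitly appeared in \S\ref{sect_mers-II}, the cleanest write-up is to simply quote those computations and assemble them; I expect no genuinely new difficulty, only the need to state the maps and check the $O(F,I)$-membership (automatic, as all ingredients fix $F_f$ pointwise or act within $\rad(I)$, and $\rad(I)\perp F_f$).
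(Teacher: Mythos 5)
Your treatment of the first two isomorphisms is exactly the paper's: the paper's entire proof is the one line ``They follow from the isomorphism $a \leftrightarrow b$,'' and your stratum-by-stratum check of the swap for $R(BC_l^{(1,2)})\cong R(BC_l^{(2,1)})$ and $R(BC_l^{(4,2)})\cong R(BC_l^{(2,4)})$ is the right verification (though note you misquote the middle stratum of $R(BC_l^{(4,2)})$ as $2\Z a+2\Z b$; it is $\Z a+2\Z b$, which the swap sends to the $2\Z a+\Z b$ stratum of $R(BC_l^{(2,4)})$ --- the argument survives, but not for the reason you state).

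For the two isomorphisms $R(BC_l^{(2,2)\sigma}(i))\cong R(BC_l^{(2,2)}(i))$ your proposed map $\tau_{\frac12 a}$ does not work, and the error is precisely the danger you flag yourself at the end. By definition $\tau_{\delta}=E_F\big((-\delta)\otimes(\vep_1+\cdots+\vep_l)\big)$ acts by $x\mapsto x+I(\vep_1+\cdots+\vep_l,x)\,\delta$; it is \emph{not} the identity on $F_f$. Hence $\tau_{\frac12 a}(\vep_i)=\vep_i+\tfrac12 a$, so the short stratum $R(BC_l)_s+\Z a+\Z b$ is carried to $R(BC_l)_s+\tfrac12 a+\Z a+\Z b$, which is disjoint from the short stratum of the target. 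The reason $\tau_{\frac12 a}$ appears in \S\ref{sect_mers-II} is that there it is applied only to the sub-root-system $R_+$ of middle and long roots inside the gluing $R_-\cup\varphi(R_+)$; it is not a global automorphism of $F$ carrying one full root system onto the other. The repair is immediate and is what the paper does: the same swap $a\leftrightarrow b$ handles these cases too, since the long stratum $(1+2\Z)a+2\Z b$ of $R(BC_l^{(2,2)\sigma}(i))$ goes to $2\Z a+(2\Z-1)b$, which is the long stratum of $R(BC_l^{(2,2)}(i))$, and the short and middle strata are either symmetric in $a,b$ (for $i=1$) or exchanged appropriately (for $i=2$, where $\Z a+2\Z b\mapsto 2\Z a+\Z b$).
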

\medskip

\begin{proof}They follow from the isomorphism $a \leftrightarrow b$.
\end{proof}

Therefore, this theorem implies that there exist $6$ infinite series of non-isomorphic reduced mERSs whose full quotient is of type $BC_l$: 
$R(BC_l^{(1,2)}) \cong R(BC_l^{(2,1)})$, 
$R(BC_l^{(2,2)\sigma}(1)) \cong R(BC_l^{(2,2)}(1))$, $R(BC_l^{(2,2)\sigma}(2)) \cong R(BC_l^{(2,2)}(2))$, $R(BC_l^{(4,2)}) \cong R(BC_l^{(2,4)})$, $R(BC_l^{(1,1)\ast})$ and $R(BC_l^{(4,4)\ast})$.\\

\smallskip

\begin{thm} \label{thm:isom-nred-mERS-nred_root-sys}Among the above $34$ infinite series and $1$ exceptional type of non-reduced marked elliptic root systems, we have the next isomorphisms $($as root systems$):$
\begin{enumerate}
\item Via the isomorphism $a \leftrightarrow b:$
\begin{align*}
&R(BCC_l^{(2)}(1)) \cong R(BB_l^{\vee (1)}), \qquad 
  R(C^\vee BC_l^{(2)}(1)) \cong R(BB_l^{\vee (4)}), \\
&R(BCC_l^{(2)}(2)) \cong R(C^\vee C_l^{(1)}), \qquad 
  R(C^\vee BC_l^{(2)}(2)) \cong R(C^\vee C_l^{(4)}), \\
&R(BCC_l^{(2)\ast_p}) \cong R(C^\vee C_l^{(1)\ast_p}), \quad
  R(C^\vee BC_l^{(2)\ast_p}) \cong R(C^\vee C_l^{(4)\ast_p}) \quad (p \in \{0,1\}), \\
&\phantom{ABCDEFGHI} R(BB_l^{\vee (2)}(2)) \cong R(C^\vee C_l^{(2)}(1)).
\end{align*}
\item Via the isomorphism $a \mapsto a+b, \; b \mapsto b:$
\begin{align*}
&R(BCC_l^{(1)\ast_{0'}}) \cong R(BCC_l^{(1)\ast_0}), \qquad
  R(C^\vee BC_l^{(4)\ast_{0'}}) \cong R(C^\vee BC_l^{(4)\ast_0}), \\
&\phantom{ABCDEFGHI} R(C^\vee C_l^{(2)\ast_{1'}}) \cong R(C^\vee C_l^{(2)\ast_1}).
\end{align*}
\item Via an exotic isomorphism $\&$ $a \leftrightarrow b:$
\[ R(BCC_l^{(4)}) \cong R(C^\vee BC_l^{(1)}) \cong R(C^\vee C_l^{(2)\diamond}). \]
\end{enumerate}
\end{thm}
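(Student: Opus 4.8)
The statement to prove is Theorem~\ref{thm:isom-nred-mERS-nred_root-sys}, which asserts three families of isomorphisms among the non-reduced mERSs (regarded as root systems, i.e. forgetting the marking $G$). The plan is to treat each of the three items by exhibiting an explicit linear isomorphism of the ambient space $F = \bigoplus_{i=1}^l \R\vep_i \oplus \R b \oplus \R a$ which preserves $I$ and carries one root set onto the other. In every case the isometry fixes the finite part $F_f = \bigoplus_i \R\vep_i$ pointwise and acts only on $\rad(I) = \R a \oplus \R b$; this suffices because all the root systems in question share the same decomposition $R = R_s \sqcup R_m \sqcup R_l$ with $R_\natural = R(BC_l)_\natural + (\text{a subset of }\rad_\Z(I))$, so checking that $\varphi$ sends one system to the other reduces to checking, separately for each of the three length classes, that $\varphi$ maps the relevant translation-subset of $\rad_\Z(I)$ onto the corresponding one. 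I would organize the whole proof around the root data in Part~\ref{chapter:root-data} (the sections \S\ref{sect_root-data-non-reduced-1}--\S\ref{sect_root-data-non-reduced-4}) and simply match the three translation lattices/cosets term by term.

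For item~(1), the isometry is the involution $\iota$ swapping $a \leftrightarrow b$ and fixing each $\vep_i$; it clearly preserves $I$. Then each claimed isomorphism is immediate from the defining formulas: e.g. $R(BCC_l^{(2)}(1))$ has $R_s + \Z a + \Z b$, $R_m + \Z a + \Z b$, $R_l + 2\Z a + \Z b$, while $R(BB_l^{\vee(1)})$ has $R_s + \Z a + \Z b$, $R_m + \Z a + \Z b$, $R_l + \Z a + 2\Z b$, and $\iota$ interchanges $2\Z a + \Z b$ with $\Z a + 2\Z b$ while fixing $\Z a + \Z b$. The other six lines of item~(1) are verified the same way, reading off the $a$- and $b$-components of the long (and, where relevant, short and middle) root cosets from the root data and observing that swapping $a$ and $b$ interchanges the two tables; for the $\ast_p$-cases one uses that $\iota$ carries $L_{i,j}^{s_1,s_2}$ to $L_{j,i}^{s_2,s_1}$, which is compatible with the asserted matches.

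For item~(2), the isometry is the unipotent map $\psi\colon a \mapsto a+b,\ b\mapsto b,\ \vep_i\mapsto\vep_i$ (it fixes $I$ since $a,b\in\rad(I)$; its matrix in the basis $(b,a)$ is $\begin{pmatrix}1&1\\0&1\end{pmatrix}\in SL_2(\Z)$, so it also preserves $\rad_\Z(I)$). Here the point is that $\psi$ sends the coset-type subsets $L_{0,1}$, $L_{1,0}^{2,2}$, etc. (the ``primed'' $\ast$-types) to the corresponding unprimed ones. Concretely, for $R(BCC_l^{(1)\ast_{0'}})$ the long roots carry $L_{0,1} = \{ma+nb : m(n-1)\equiv 0\,[2]\}$; applying $\psi$ one computes $\psi(ma+nb) = ma + (m+n)b$, and under the substitution $m'=m$, $n'=m+n$ the congruence $m(n-1)\equiv 0$ becomes $m'(n'-m'-1)\equiv m'(n'-1)\equiv 0 \pmod 2$ (since $m'^2\equiv m'$), which is exactly the defining congruence $mn\equiv 0\,[2]$ of $L_{0,0}$, i.e. of $R(BCC_l^{(1)\ast_0})$ — while the short- and middle-root cosets $\Z a + \Z b$ and $\Z a$ are patently $\psi$-stable. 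The same computation (possibly after composing with a translation $\tau_\delta$ as in \eqref{def_translation}, which I would introduce where needed) handles the $C^\vee BC_l^{(4)\ast_{0'}}$ and $C^\vee C_l^{(2)\ast_{1'}}$ lines; note that this is consistent with the reductions already recorded in the rank~$>1$ and rank~$1$ classifications (e.g. \S\ref{subsect_CC-l}.(2)(III)(ii) and \S\ref{sect_rank-1}), so I can partly quote those identifications.

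The genuinely delicate part is item~(3): the chain $R(BCC_l^{(4)}) \cong R(C^\vee BC_l^{(1)}) \cong R(C^\vee C_l^{(2)\diamond})$ cannot be realized by a map of $\rad(I)$ alone, because the three root systems have \emph{different} middle-root cosets ($2\Z a + \Z b$ versus $\Z a + 4\Z b$ versus $\Z a + 2\Z b$, up to the chosen normalization) — so an ``exotic'' isometry is needed, one that rescales the short/middle/long sublattices of $F_f$ relative to each other while compensating on $\rad(I)$. I expect the construction to go as follows: write $R(C^\vee BC_l^{(1)})$ with $R_s + \Z a + \Z b$, $R_m + 2\Z a + 2\Z b$(?), $R_l + \Z a + 4\Z b$ (reading off \S\ref{data:CBC-1}), and look for $\varphi\in GL(F)$ sending $\vep_i \mapsto \vep_i$ on a scaled copy but permuting the roles of the three root-length strata via a ``mean-folding''-type map (cf. \S\ref{sect:mers-topics}); the $a\leftrightarrow b$ swap then accounts for the transposition of the two lattice directions. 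The main obstacle — and where I would spend most of the write-up — is pinning down this single linear map explicitly and verifying it is an isometry of $I$ sending the full root set to the full root set, stratum by stratum; once the map is written down the verification is a finite check on each of $R_s, R_m, R_l$. I would first do the rank-$1$ case (where \S\ref{sect_rank-1} already shows $R(\mathbf{BCC_1^{(4)}})$, $R(\mathbf{C^\vee BC_1^{(1)}})$ and $R(\mathbf{C^\vee C_1^{(2)\diamond}})$ all arise, hinting at the right change of basis), and then observe the same $\rad(I)$-map works uniformly in $l$ because the $\vep_i$-part is untouched.
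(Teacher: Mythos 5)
Your treatment of items (1) and (2) is correct and amounts to the same verification the paper performs, just organized differently: the paper reads the isomorphisms off its gluing-data tables (writing each $R$ as $R_-\amalg_\varphi R_+$ and observing that $a\leftrightarrow b$ transposes the tables), whereas you compare the translation lattices/cosets of $R_s$, $R_m$, $R_l$ directly from the root data. Your route is slightly more self-contained and, as a bonus, covers rank $1$ uniformly, which the paper's gluing argument cannot do and must patch by case-by-case checking. Your congruence computation for $\psi(L_{0,1})=L_{0,0}$ in item (2) is exactly the content behind the paper's one-line remark that these isomorphisms are ``essentially given by $a\mapsto a+b$, $b\mapsto b$.''

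Item (3), however, contains a genuine gap. The obstruction you identify is spurious: first, your lattice data are off ($R(C^\vee BC_l^{(1)})$ and $R(C^\vee C_l^{(2)\diamond})$ have the \emph{same} middle-root coset $\Z a+2\Z b$, and $\Z a+4\Z b$ is the long-root coset of the former), and second, even genuinely different cosets pose no obstruction to an isometry supported on $\rad(I)$, since \emph{any} element of $GL(\rad_\Z(I))\cong GL_2(\Z)$ extended by the identity on $F_f$ is an isometry of $(F,I)$ — you implicitly restrict yourself to the swap and the unipotent $a\mapsto a+b$. Consequently the map you propose instead, one that ``rescales the short/middle/long sublattices of $F_f$ relative to each other,'' is both unnecessary and ill-defined as a single linear map (the strata $\pm\vep_i$ and $\pm 2\vep_i$ are scalar multiples of one another, so they cannot be rescaled independently). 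The actual ``exotic'' isomorphism is just another unimodular change of basis of $\rad_\Z(I)$: since $(a+2b,\,b)$ is an integral basis, one rewrites
\[
\Z a+\Z b=\Z(a+2b)+\Z b,\qquad \Z a+2\Z b=\Z(a+2b)+2\Z b,\qquad
\{\,ma+2nb\,:\,m-n\equiv 0\ [2]\,\}=\Z(a+2b)+4\Z b,
\]
which exhibits $R(C^\vee C_l^{(2)\diamond})$, in the basis $(a+2b,b)$ of $\rad_\Z(I)$, as $R(C^\vee BC_l^{(1)})$; and $R(C^\vee BC_l^{(1)})\cong R(BCC_l^{(4)})$ is already an $a\leftrightarrow b$ isomorphism of the item-(1) type (compare $R_m+\Z a+2\Z b$, $R_l+\Z a+4\Z b$ with $R_m+2\Z a+\Z b$, $R_l+4\Z a+\Z b$). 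No folding construction and no separate rank-$1$ bootstrap are needed.
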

As a consequence, there are only $20$ infinite series and $1$ exceptional type of non-isomorphic elliptic root systems.
\vskip 0.2in

\begin{proof}

To see the isomorphism of elliptic root systems induced from the switching $a \leftrightarrow b$, it is convenient to look at their gluing data. 
Let $(R,G)$ be a mERS with non-reduced affine quotient $R/G$. As we see, the root system $R$ can be expressed as $R_- \amalg_\varphi R_+$. Denote the linear isomorphism of $F=\bigoplus_{i=1}^l \R \vep_i \oplus \R a \oplus \R b$ defined by $\vep_i \mapsto \vep_i$ ($1\leq i\leq l$) and $a \mapsto b, \; b \mapsto a$ by $\Phi$. By definition, it follows that $\Phi(R)=\Phi(R_-) \amalg_{\Phi \circ \varphi \circ \Phi^{-1}} \Phi(R_+)$. Here are the tables of such data: 
\begin{enumerate}
\item The case $\varphi=\id$:

\begin{center}
\scalebox{0.87}{
\begin{tabular}{|c||c|c|c|c|c|} \hline 
& $C_l^{(2,2)}$ & $C_l^{(2,1)}$ & $C_l^{(1,2)}$ & $C_l^{(1,1)}$ & $C_l^{(1,1)\ast}$ \\ \hline \hline
$B_l^{(1,1)}$ &
$BB_l^{\vee (2)}(1)$ & $BB_l^{\vee (1)}$ & $BCC_l^{(2)}(1)$ & $BCC_l^{(1)}$
& $BCC_l^{(1)\ast_0}$ \\ \hline
$B_l^{(1,2)}$ &
$BB_l^{\vee (4)}$ & $BB_l^{\vee (2)}(2)$ & $BCC_l^{(4)}$ & $BCC_l^{(2)}(2)$
& $BCC_l^{(2)\ast_0}$ \\ \hline
$B_l^{(2,1)}$ & $C^\vee BC_l^{(2)}(1)$ & $C^\vee BC_l^{(1)}$ & $C^\vee C_l^{(2)}(1)$ 
& $C^\vee C_l^{(1)}$ & $C^\vee C_l^{(1)\ast_0}$\\ \hline
$B_l^{(2,2)}$ &
$C^\vee BC_l^{(4)}$ & $C^\vee BC_l^{(2)}(2)$ & $C^\vee C_l^{(4)}$ & $C^\vee C_l^{(2)}(2)$ 
& $C^\vee C_l^{(2)\ast_l}$ \\ \hline
$B_l^{(2,2)\ast}$ &
$C^\vee BC_l^{(4)\ast_0}$ & $C^\vee BC_l^{(2)\ast_0}$ & $C^\vee C_l^{(4)\ast_0}$ & $C^\vee C_l^{(2)\ast_s}$ & $C^\vee C_l^{(2)\ast_0}$ \\ \hline
\end{tabular}%
}
\end{center}
\vskip 0.2in

\item The case $\varphi$ is of the form $\tau_\delta$ for some $\delta \in \rad(I)$:

\begin{center}

\resizebox{0.85\textwidth}{!}{%
\begin{tabular}{|c||c|c|c|c|} \hline 
&  $C_l^{(2,2)}$ & $C_l^{(2,1)}$ & $C_l^{(1,2)}$ & $C_l^{(1,1)\ast}$ \\ \hline \hline
$B_l^{(1,1)}$ &
$\begin{matrix} BC_l^{(2,2)}(1) \\ BC_l^{(2,2)\sigma}(1) \end{matrix}$
& $BC_l^{(2,1)}$  & $BC_l^{(1,2)}$ 
& $\begin{matrix} BCC_l^{(1)\ast_{0'}} \\ BC_l^{(1,1)\ast} \end{matrix}$ 
\\ \hline
$B_l^{(1,2)}$ &
$BC_l^{(2,4)}$  & $BC_l^{(2,2)}(2)$ & & $BCC_l^{(2)\ast_1}$
\\ \hline
$B_l^{(2,1)}$ &
$BC_l^{(4,2)}$ & 
& $BC_l^{(2,2)\sigma}(2)$ 
& $C^\vee C_l^{(1)\ast_1}$ 
\\ \hline
$B_l^{(2,2)\ast}$ 
& $\begin{matrix} C^\vee BC_l^{(4)\ast_{0'}} \\ BC_l^{(4,4)\ast} \end{matrix}$  
& $C^\vee BC_l^{(2)\ast_1}$ & $C^\vee C_l^{(4)\ast_1}$
& $\begin{matrix} C^\vee C_l^{(2)\ast_{1'}} \\ C^\vee C_l^{(2)\ast_1} \end{matrix}$ 
\\ \hline
\end{tabular}%
}
\end{center}
\item The case $\varphi \in SL_2(\Z)$. Here we only have $C^\vee C_l^{(2)\diamond}$.
\end{enumerate}

\begin{rem} In the above tables, the duality $R \leftrightarrow R^\vee$ of root systems can be seen via the symmetry with respect to the diagonal. 
\end{rem}
From the above tables, we obtain the isomorphisms for reduced root systems as follows : 
\begin{align*}
 &R(BC_l^{(1,2}) \cong R(BC_l^{(2,1)}), \qquad R(BC_l^{(4,2)}) \cong R(BC_l^{(2,4)}), \\
 &R(BC_l^{(2,2)\sigma}(1)) \cong R(BC_l^{(2,2)}(1)), \qquad 
    R(BC_l^{(2,2)\sigma}(2)) \cong R(BC_l^{(2,2)}(2)). 
\end{align*}
The quotient root systems of the left hand side give $BCC_l, C^\vee BC_l, BB_l^\vee$ and $C^\vee C_l$ respectively, whereas those of the right hand side are always $BC_l^{(2)}$, which is a reduced affine root system.  
Similarly, for non-reduced root systems, we obtain
\begin{align*}
& R(BCC_l^{(2)}(1)) \cong R(BB_l^{\vee\, (1)}), \qquad 
    R(C^\vee BC_l^{(2)}(1)) \cong R(BB_l^{\vee (4)}), \\
& R(BCC_l^{(2)}(2)) \cong R(C^\vee C_l^{(1)}),  \qquad
   R(C^\vee BC_l^{(2)}(2)) \cong R(C^\vee C_l^{(4)}), \\
& R(BCC_l^{(2)\ast_p}) \cong R(C^\vee C_l^{(1)\ast_p}), \qquad 
   R(C^\vee BC_l^{(2)\ast_p}) \cong R(C^\vee C_l^{(4)\ast_p}), \\
& R(BCC_l^{(4)}) \cong R(C^\vee BC_l^{(1)}), \qquad
    R(BB_l^{\vee\, (2)}(2)) \cong R(C^\vee C_l^{(2)}(1)).
\end{align*}
\begin{rem} Strictly speaking, the above argument using the construction of a root system via gluing procedure cannot be applied to the rank $1$ case, but we can show that any of the above isomorphisms are valid for the rank $1$ root systems by case by case checking.
\end{rem}
The next isomorphisms explain the symbol $\ast_{i'}$ ($i=0,1$):
\begin{align*}
& R(BCC_l^{(1)\ast_{0'}}) \cong R(BCC_l^{(1)\ast_0}), \qquad 
R(C^\vee BC_l^{(4)\ast_{0'}}) \cong R(C^\vee BC_l^{(4)\ast_0}), \\
& R(C^\vee C_l^{(2)\ast_{1'}}) \cong R(C^\vee C_l^{(2)\ast_{1}}).
\end{align*}
Indeed, these isomorphisms are essentially given by $a \mapsto a+b, \; b \mapsto b$. 
Let us show that the root systems $R(BCC_l^{(2)\ast_0})$ and $R(BCC_l^{(2)\ast_1})$ are not isomorphic. If they were, there would exist an isometry $\Phi$ of $(F,I)$, to which they belong, such that $\Phi(R(BCC_l^{(2)\ast_0}))=R(BCC_l^{(2)\ast_1}))$. The restriction of the linear map $\Phi$ to their subsets of short roots and long roots impose an incompatible condition which shows that such a linear map cannot exist. Likewise, one can show that  the root systems $R(C^\vee BC_l^{(2)\ast_0})$ and $R(C^\vee BC_l^{(2)\ast_1})$ are not isomorphic.  Details are left to the reader.

Finally, there is one exotic case:
 \begin{align*}
 R(C^\vee C_l^{(2)\diamond})=
 &\left(\{ \pm \vep_i  \pm \vep_j\, \vert\, 1\leq i<j\leq l\, \}+\Z(a+2b)+ 2\Z b\right) \\
 & \cup \left( \{ \pm \vep_i\, \vert\, 1\leq i\leq l\, \}+\Z(a+2b)+ \Z b\right) \\
 & \cup \left( \{ \pm 2\vep_i\, \vert\, 1\leq i\leq l\, \}+\Z(a+2b)+4\Z b\right) \\
 \cong 
& R(C^\vee BC_l^{(1)}) \cong R(BCC_l^{(4)}),
 \end{align*}
thus, we have the next isomorphisms (as root systems):
\begin{align*}
R(C^\vee C_l^{(2)\diamond}) \cong R(C^\vee BC_l^{(1)}) \cong R(BCC_l^{(4)}).
\end{align*}
\end{proof}

\begin{rem}\label{rem-comparison-Azam-etc}
S. Azam et al. $($\cite{Azam2002} and \cite{AzamKhaliliYousofzadeh2005}$)$ obtained a list of possible ERSs via combinatorial method. 
\begin{enumerate}
\item The next table shows the {correspondence} between the two descriptions$:$ \\
\begin{center}
\resizebox{.85\textwidth}{!}{%
\begin{tabular}{|c||c|} \hline
$BC$-triple & mERS $(l\geq 2)$ \\ \hline \hline
$(\Lambda, \Lambda, \Lambda^{(1)}+\sigma_1)$ & $BC_l^{(2,1)}$ and $BC_l^{(1,2)}$  \\ \hline
$(\Lambda, \Lambda, S(0,\tilde{\sigma_1}, \tilde{\sigma_2})+\sigma_1+\sigma_2)$ & $BC_l^{(1,1)\ast}$ \\ \hline
$(\Lambda, \Lambda^{(1)}, 2\Lambda^{(1)}+\sigma_2)$ & $BC_l^{(4,2)}$ and $BC_l^{(2,4)}$ \\ \hline
$(S(0,\tilde{\sigma_1}, \tilde{\sigma_2}),2\Lambda, 4\Lambda+2\sigma_1+2\sigma_2)$
& $BC_l^{(4,4)\ast}$ \\ \hline
$(\Lambda, \Lambda, 2\Lambda+\sigma_1)$ & $BC_l^{(2,2)}(1)$ and $BC_l^{(2,2)\sigma}(1)$ \\ \hline
$(\Lambda, \Lambda^{(1)}, 2\Lambda+\sigma_2)$ & $BC_l^{(2,2)}(2)$ and $BC_l^{(2,2)\sigma}(2)$ 
\\ \hline 
\end{tabular}}
\end{center}

\begin{center}
\resizebox{.85\textwidth}{!}{%
\begin{tabular}{|c||c|} \hline
$BC$-triple & mERS $(l\geq 2)$ \\
\hline
$(\Lambda, \Lambda, \Lambda)$ & $BCC_l^{(1)}$ \\ \hline
$(\Lambda, \Lambda,S(0,\tilde{\sigma_1},\tilde{\sigma_2}))$ &
$BCC_l^{(1)\ast_0}$ and $BCC_l^{(1)\ast_{0'}}$ \\ \hline
$(\Lambda, \Lambda, \Lambda^{(1)})$ & $BCC_l^{(2)}(1)$ and $BB_l^{\vee (1)}$ \\ \hline
$(\Lambda, \Lambda^{(1)}, \Lambda^{(1)})$ & $BCC_l^{(2)}(2)$ and $C^\vee C_l^{(1)}$ \\ \hline
$(\Lambda, \Lambda^{(1)}, S(0,2\tilde{\sigma_1}, \tilde{\sigma_2}))$
& $BCC_l^{(2)\ast_0}$ and $C^\vee C_l^{(1)\ast_0}$ \\ \hline
$(\Lambda, \Lambda^{(1)}, S(0,2\tilde{\sigma_1}, \tilde{\sigma_2})+\sigma_2)$
& $BCC_l^{(2)\ast_1}$ and $C^\vee C_l^{(1)\ast_1}$ \\ \hline
$(\Lambda, \Lambda^{(1)}, 4\Z \sigma_1\oplus \Z \sigma_2)$ &
$BCC_l^{(4)}, \, C^\vee BC_l^{(1)}$ and $C^\vee C_l^{(2)\diamond}$ \\ \hline
$(\Lambda, \Lambda^{(1)}, 2\Lambda^{(1)})$ & $C^\vee BC_l^{(2)}(1)$ and $BB_l^{\vee (4)}$ \\ \hline
$(\Lambda, 2\Lambda, 2\Lambda^{(1)})$ & $C^\vee BC_l^{(2)}(2)$ and $C^\vee C_l^{(4)}$ \\ \hline
$(S(0,\tilde{\sigma_1}, \tilde{\sigma_2}), 2\Lambda, 2\Lambda^{(1)})$ & $C^\vee BC_l^{(2)\ast_0}$ and $C^\vee C_l^{(4)\ast_0}$ \\ \hline
$(S(0,\tilde{\sigma_1}, \tilde{\sigma_2}), 2\Lambda, 2\Lambda^{(1)}+2\sigma_1)$ & $C^\vee BC_l^{(2)\ast_1}$ and $C^\vee C_l^{(4)\ast_1}$ \\ \hline
$(\Lambda, 2\Lambda, 4\Lambda)$ & $C^\vee BC_l^{(4)}$ \\ \hline
$(S(0,\tilde{\sigma_1}, \tilde{\sigma_2}), 2\Lambda, 4\Lambda)$ & $C^\vee BC_l^{(4)\ast_0}$ and $C^\vee BC_l^{(4)\ast_{0'}}$\\ \hline
$(\Lambda, \Lambda, 2\Lambda)$ & $BB_l^{\vee (2)}(1)$ \\ \hline
$(\Lambda, \Lambda^{(1)}, 2\Lambda)$ & $BB_l^{\vee (2)}(2)$ and $C^\vee C_l^{(2)}(1)$\\ \hline
$(\Lambda, 2\Lambda, 2\Lambda)$ & $C^\vee C_l^{(2)}(2)$\\ \hline
$(S(0,\tilde{\sigma_1}, \tilde{\sigma_2}), 2\Lambda, 2\Lambda)$ & $C^\vee C_l^{(2)\ast_s}$\\ \hline
$(\Lambda, 2\Lambda, 2S(0,\tilde{\sigma_1}, \tilde{\sigma_2}))$ & $C^\vee C_l^{(2)\ast_l}$\\ \hline
$(S(0,\tilde{\sigma_1}, \tilde{\sigma_2}), 2\Lambda, 2S(0,\tilde{\sigma_1}, \tilde{\sigma_2}))$ & $C^\vee C_l^{(2)\ast_0}$\\ \hline
$(S(0,\tilde{\sigma_1}, \tilde{\sigma_2}), 2\Lambda, 2S(0,\tilde{\sigma_1}, \tilde{\sigma_2})+2\sigma_1)$ & $C^\vee C_l^{(2)\ast_1}$ and $C^\vee C_l^{(2)\ast_{1'}}$\\ \hline
$(\Lambda, S(0,\tilde{\sigma_1}, \tilde{\sigma_2}),2\Lambda)$ & $BB_2^{\vee (2)\ast}$ \\ \hline
\end{tabular}}
\end{center}
\vskip 0.1in
Here, a $BC$-triple is a triple $(S,L,E)$ consisting of subsets of $\Lambda:=\rad_{\Z}(I)=\Z a\oplus \Z b$ which corresponds to the root system $R=(R(BC_l)_s+S) \cup (R(BC_l)_m+L) \cup (R(BC_l)_l+E)$.\footnote{For these admitted rank $1$ case, it suffices to think of the pair $(S,E)$, called $BC$-pair.}  \\
\item It is clear that each ERS corresponds to a $BC$-triple, but it is not evident whether any $BC$-triple gives an ERS. Indeed, in \cite{AzamKhaliliYousofzadeh2005}, the last $BC$-triple in the above table was defined for any finite rank $l \geq 2$ whereas the corresponding root system exists only if $l=2$. 
\end{enumerate}
\end{rem}
\newpage
\subsection{On affine quotient root systems}\label{sect_two-affine-quot}

Set $G_a=\R a$ and $G_b=\R b$. Let $R$ be a reduced elliptic root system. It is known that the pair $(R/G_b, R/G_a)$ determines uniquely the type of the root system $R$. 
The next table shows that this is not the case for non-reduced elliptic root systems
with given two affine root systems $R/G_b$ and $R/G_a$: \\

\begin{center}
\resizebox{.985\textwidth}{!}{%
\begin{tabular}{|c||c|c|c|c|c|} \hline 
 & $BCC_l$ & $C^\vee BC_l$ & $BB_l^\vee$ & $C^\vee C_l$ & $BC_l^{(2)}$ \\ \hline \hline
$BCC_l$ &
{\small $\begin{matrix} 
BCC_l^{(1)}  \\ 
BCC_l^{(1)\ast_i}\; i \in \{0, 0'\} \\
BC_l^{(1,1)\ast} 
\end{matrix}$}
&
$C^\vee BC_l^{(1)}$ 
&
$BB_l^{\vee (1)}$
&
{\small $\begin{matrix}
C^\vee C_l^{(1)}  \\
C^\vee C_l^{(1)\ast_i} \; i \in \{0,1\} \\
C^\vee C_l^{(2)\diamond} 
\end{matrix}$}

&
$BC_l^{(2,1)}$ \\ \hline
$C^\vee BC_l$ &
$BCC_l^{(4)}$
&
{\small $\begin{matrix}
C^\vee BC_l^{(4)}  \\
C^\vee BC_l^{(4)\ast_i} \; i \in \{0,0'\} \\
BC_l^{(4,4)\ast} 
\end{matrix}$}
&
$BB_l^{\vee (4)}$
&
{\small $\begin{matrix}
C^\vee C_l^{(4)}  \\
C^\vee C_l^{(4)\ast_i} \; i \in \{0,1\} \\
(C^\vee C_l^{(2)\diamond})^\vee \; (\sharp)
\end{matrix}$}

&
$BC_l^{(2,4)}$ \\ \hline
$BB_l^\vee$ &
$BCC_l^{(2)}(1)$
&
$C^\vee BC_l^{(2)}(1)$
&
{\small $\begin{matrix} 
BB_l^{\vee (2)}(1) \\ 
BB_2^{\vee (2)\ast} 
\end{matrix}$}

&
$C^\vee C_l^{(2)}(1)$
&
$BC_l^{(2,2)}(1)$ \\ \hline
$C^\vee C_l$ &
{\small $\begin{matrix} 
BCC_l^{(2)}(2)  \\
BCC_l^{(2)\ast_i} \; i\in \{0,1\}
\end{matrix}$} 

&
{\small $\begin{matrix}
C^\vee BC_l^{(2)}(2)  \\
C^\vee BC_l^{(2)\ast_i} \; i \in \{0,1\}
\end{matrix}$}

&
$BB_l^{\vee (2)}(2)$
&
{\small $\begin{matrix}
C^\vee C_l^{(2)}(2) & \\
C^\vee C_l^{(2)\ast_s} & C^\vee C_l^{(2)\ast_l} \\
C^\vee C_l^{(2)\ast_i} & i \in \{0,1,1'\} 
\end{matrix}$}
&
$BC_l^{(2,2)}(2)$ \\ \hline
$BC_l^{(2)}$ &
$BC_l^{(1,2)}$
&
$BC_l^{(4,2)}$ 
&
$BC_l^{(2,2)\sigma}(1)$
&
$BC_l^{(2,2)\sigma}(2)$ 
&
($\flat$)
\\ \hline
\end{tabular}%
}
\end{center}
\vskip 0.2in

Here $(\flat)$ is the root system defined by
\begin{align*}
R=
&(R(BC_l)_s+\Z a+\Z b) 
\cup 
(R(BC_l)_m+\Z a+\Z b) \\
\cup
&(R(BC_l)_l+(1+2\Z)a+(1+2\Z)b).
\end{align*}
This root system is isomorphic to $R(BC_l^{(2,2)}(1))$ since
\[
 (1+2\Z)a+(1+2\Z)b 
=
2\Z a+(1+2\Z)(b+a) 
\]
which implies that 
\[ R=\chi(T)(R(BC_l^{(2,2)}(1))). \]
Nevertheless, both $R/G_a$ and $R/G_b$ are isomorphic to $BC_l^{(2)}$. \\

Let us now describe the dual root system $(\sharp)$ of $R(C^\vee C_l^{(2)\diamond})$. By definition, it is given by
\begin{align*}
R((C^\vee C_l^{(2)\diamond})^\vee):=R(C^\vee C_l^{(2)\diamond})^\vee=
&(R(BC_l)_s+\{\, ma'+nb'\, \vert\, m-n\equiv 0\, [2]\, \}) \\
\cup
&(R(BC_l)_m+2\Z a'+2\Z b') \\
\cup 
&(R(BC_l)_l+4\Z a'+2\Z b'),
\end{align*}
where we set $a'=\frac{1}{2}a$ and $b'=b$. Note that $R((C^\vee C_l^{(2)\diamond})^\vee)/\R b' \cong R(C^\vee BC_l)$. 
Since we have
\begin{align*}
\{\, ma'+nb'\, \vert\, m-n\equiv 0\, [2]\, \}=
&2\Z a'+\Z (b'+a'), \\
2\Z a'+2\Z b'=
&2\Z a'+2\Z (b'+a'), 
\end{align*}
and
\begin{align*}
4\Z a'+2\Z b'=
&\{ \, 2(2m-n)a'+2n(b'+a')\, \vert\, m,n \in \Z\, \} \\
=
&\{ \, m' (2a')+2n'(b'+a')\, \vert\, m'-n'\equiv 0\, [2]\, \},
\end{align*}
it follows that the marked root system $C^\vee C_l^{(2)\diamond}$ is self-dual. \\

\begin{rem} Given an elliptic root system $R$, one may ask what can be the possible affine quotients. 
It  can be shown that 
\begin{enumerate}
\item if $R$ is isomorphic to $R(BCC_l^{(4)}) \cong R(C^\vee BC_l^{(1)}) \cong R(C^\vee C_l^{((2) \diamond})$, there are $3$ non-isomorphic affine quotients, 
\item otherwise there are at most $2$ affine quotients.
\end{enumerate}
These facts can be shown using the automorphism groups of $R$, which will be treated in our forthcoming paper. 
\end{rem}

Remark that the table also reflects nicely the duality $R \leftrightarrow R^\vee$, as $R^\vee/G_a$ and $R^\vee /G_b$ are isomorphic to $(R/G_a)^\vee$ and $(R/G_b)^\vee$, respectively.  For the reader's convenience, here is the duality of non-reduced affine root systems (cf. \S \ref{sect_non-red-affine}): \\

\begin{center}
\begin{tabular}{|c||c|c|c|c|} \hline
Type of $R$ & $BCC_l$ & $C^\vee BC_l$ & $BB_l^\vee$ & $C^\vee C_l$ \\ \hline
Type of $R^\vee$ & $C^\vee BC_l$ & $BCC_l$ & $BB_l^\vee$ & $C^\vee C_l$  \\ \hline
\end{tabular}
\end{center}

\vskip 0.1in
\subsection{Reduced pairs}\label{sect_red-pair}

For later use, we provide the list of reduced pairs $(R^{\dc}, R^{\mc})$ for each 
non-reduced mERS $(R,G)$:


\begin{enumerate}
\item $R/G$: type $BCC_l$ \quad $(l\geq 1)$\\
\begin{center}
\resizebox{.93\textwidth}{!}{%
\begin{tabular}{|c|c|c||c|c|c|} \hline
Type of $R$ & Type of $R^{\dc}$ & Type of $R^{\mc}$ & Type of $R$ & Type of $R^{\dc}$ & Type of $R^{\mc}$
\\ \hline
$BCC_l^{(1)}$ & $BC_l^{(1,1)\ast}$ & $C_l^{(1,1)}$ &
$\begin{matrix} BCC_l^{(2)}(1)\\ (l\geq 2) \end{matrix}$& $BC_l^{(2,2)}(1)$ & $C_l^{(1,2)}$  
\\ \hline
$BCC_l^{(1)\ast_0}$ & $BC_l^{(1,2)}$ & $C_l^{(1,1)\ast}$ &
$BCC_l^{(2)}(2)$ & $BC_l^{(2,2)}(2)$ & $C_l^{(1,1)}$
\\ \hline
$BCC_l^{(1)\ast_{0'}}$ & $BC_l^{(2,1)}$ & $C_l^{(1,1)\ast}$ &
$BCC_l^{(2)\ast_0}$ & $BC_l^{(2,4)}$ & $C_l^{(1,1)\ast}$ 
\\ \hline
$BCC_l^{(4)}$ & $BC_l^{(2,4)}$ & $BC_l^{(1,2)}$ &
$BCC_l^{(2)\ast_1}$ & $BC_l^{(2,2)}(2)$ & $BC_l^{(1,1)\ast}$ 
\\ \hline
\end{tabular}}

\end{center}
\vskip 0.2in

\item $R/G$: type $C^\vee BC_l$ \quad $(l\geq 1)$ \\
\begin{center}
\resizebox{.93\textwidth}{!}{%
\begin{tabular}{|c|c|c||c|c|c|} \hline
Type of $R$ & Type of $R^{\dc}$ & Type of $R^{\mc}$ & Type of $R$ & Type of $R^{\dc}$ & Type of $R^{\mc}$
\\ \hline
$C^\vee BC_l^{(1)}$ & $BC_l^{(4,2)}$ & $BC_l^{(2,1)}$ &
$\begin{matrix} C^\vee BC_l^{(2)}(1) \\ (l\geq 2) \end{matrix}$ & $B_l^{(2,1)}$ & $BC_l^{(2,2)}(1)$  
\\ \hline
$C^\vee BC_l^{(4)}$ & $B_l^{(2,2)}$ & $BC_l^{(4,4)\ast}$ &
$C^\vee BC_l^{(2)}(2)$ & $B_l^{(2,2)}$ & $BC_l^{(2,2)}(2)$
\\ \hline
$C^\vee BC_l^{(4)\ast_0}$ & $B_l^{(2,2)\ast}$ & $BC_l^{(4,2)}$ &
$C^\vee BC_l^{(2)\ast_0}$ & $B_l^{(2,2)\ast}$  & $BC_l^{(2,1)}$
\\ \hline
$C^\vee BC_l^{(4)\ast_{0'}}$ & $B_l^{(2,2)\ast}$ & $BC_l^{(2,4)}$ &
$C^\vee BC_l^{(2)\ast_1}$ & $BC_l^{(4,4)\ast}$ & $BC_l^{(2,2)}(2)$ 
\\ \hline
\end{tabular}}

\end{center}
\vskip 0.2in

\item $R/G$: type $BB_l^\vee$ \quad $(l\geq 2)$\\
\begin{center}
\resizebox{.93\textwidth}{!}{%
\begin{tabular}{|c|c|c||c|c|c|} \hline
Type of $R$ & Type of $R^{\dc}$ & Type of $R^{\mc}$ & Type of $R$ & Type of $R^{\dc}$ & Type of $R^{\mc}$
\\ \hline
$BB_l^{\vee (1)}$ & $BC_l^{(2,2)\sigma}(1)$ & $C_l^{(2,1)}$ &
$BB_l^{\vee (2)}(1)$ & $B_l^{(1,1)}$ & $C_l^{(2,2)}$
\\ \hline
$BB_l^{\vee (4)}$ & $B_l^{(1,2)}$ & $BC_l^{(2,2)\sigma}(1)$ &
$BB_l^{\vee (2)}(2)$ & $B_l^{(1,2)}$ & $C_l^{(2,1)}$
\\ \hline
 & &  &
$BB_2^{\vee (2)\ast}$ & $C_2^{(1,1)\ast}$ & $B_2^{(2,2)\ast}$
\\ \hline
\end{tabular}}

\end{center}
\vskip 0.2in
\item $R/G$: type $C^\vee C_l$ \quad $(l\geq 1)$\\
\begin{center}
\resizebox{.93\textwidth}{!}{%
\begin{tabular}{|c|c|c||c|c|c|} \hline
Type of $R$ & Type of $R^{\dc}$ & Type of $R^{\mc}$ & Type of $R$ & Type of $R^{\dc}$ & Type of $R^{\mc}$
\\ \hline
$C^\vee C_l^{(1)}$ & $BC_l^{(2,2)\sigma}(2)$ & $C_l^{(1,1)}$ &
$C^\vee C_l^{(4)}$ & $B_l^{(2,2)}$ & $BC_l^{(2,2)\sigma}(2)$
\\ \hline
$C^\vee C_l^{(1)\ast_0}$ & $BC_l^{(4,2)}$ & $C_l^{(1,1)\ast}$ &
$C^\vee C_l^{(4)\ast_0}$ & $B_l^{(2,2)\ast}$ & $BC_l^{(1,2)}$
\\ \hline
$C^\vee C_l^{(1)\ast_1}$ & $BC_l^{(2,2)\sigma}(2)$ & $BC_l^{(1,1)\ast}$ &
$C^\vee C_l^{(4)\ast_1}$ & $BC_l^{(4,4)\ast}$ & $BC_l^{(2,2)\sigma}(2)$
\\ \hline
$\begin{matrix}C^\vee C_l^{(2)}(1) \\ (l\geq 2) \end{matrix}$ & $B_l^{(2,1)}$ & $C_l^{(1,2)}$ &
$C^\vee C_l^{(2)}(2)$ & $B_l^{(2,2)}$ & $C_l^{(1,1)}$
\\ \hline
$C^\vee C_l^{(2)\ast_s}$ & $BC_l^{(4,4)\ast}$ & $C_l^{(1,1)}$ &
$C^\vee C_l^{(2)\ast_l}$ & $B_l^{(2,2)}$ & $BC_l^{(1,1)\ast}$
\\ \hline
$C^\vee C_l^{(2)\ast_0}$ & $B_l^{(2,2)\ast}$ & $C_l^{(1,1)\ast}$ &
$C^\vee C_l^{(2)\ast_1}$ & $BC_l^{(4,4)\ast}$ & $BC_l^{(1,1)\ast}$
\\ \hline
$C^\vee C_l^{(2) \diamond}$ & $BC_l^{(4,2)}$ & $BC_l^{(1,2)}$ &
$C^\vee C_l^{(2)\ast_{1'}}$ & $BC_l^{(4,4)\ast}$ & $BC_l^{(1,1)\ast}$
\\ \hline
\end{tabular}}
\vskip 0.2in
\end{center}
\end{enumerate}
Here we set 
\begin{align*}
&C_1^{(1,1)}=B_1^{(2,2)}=A_1^{(1,1)}, \qquad C_1^{(1,1)\ast}=B_1^{(2,2)\ast}=A_1^{(1,1)\ast}, \\
&B_2^{(1,t)}=C_2^{(1,t)} \quad \text{and} \quad C_2^{(2,t)}=B_2^{(2,t)} \qquad (t=1,2),
\end{align*}
for simplicity.\\

We close this chapter summarizing what is achieved so far and what is still necessary to work out to complete the classification of mERSs with non-reduced 
affine quotient. 
At this stage, one can not tell whether two given non-reduced mERSs in the above theorems are non-isomorphic. 
In Section \ref{sect:main}, we discuss this question.
By Proposition \ref{prop:indendence-Gamma}, Theorem \ref{thm:classification-thm} and Corollary \ref{cor:classification-thm}, 
it turns out that the above theorem gives a complete list of isomorphism classes of non-reduced mERSs (Theorem \ref{thm:main}).

\part{Classification}\label{chapter:classification}

 {Let $(R,G)$ be a mERS with the non-reduced affine quotient $R/G$.  
In $\S$ \ref{sect:Classification-weak},  we already showed that $(R,G)$ is isomorphic to 
one of the 6 infinite series 
if $R$ is reduced 
(Theorem \ref{thm_classification-reduced}), and
is isomorphic to one of the 34 infinite series 
or 1 exceptional type if $R$ is 
non-reduced (Theorem 
\ref{thm_classification-non-reduced}). However, we have not {discussed}
the following fundamental question:
\begin{center}
{\it Are these 
mERSs really non-isomorphic to each other?}
\end{center}
In this section, we give an affirmative
answer to
this question, by using the 
``elliptic diagram'' $\Gamma(R,G)$ of a mERS $(R,G)$ with the non-reduced 
affine quotient. The elliptic diagram $\Gamma(R,G)$ of $(R,G)$ is a 
colored graph consisting of finite number of nodes, and edges connecting them, which
depends only on an isomorphism class of mERSs. One has that 
\begin{itemize}
\item if two mERSs $(R_1,G_1)$ and $(R_2,G_2)$ are isomorphic, their elliptic diagrams 
$\Gamma(R_1,G_1)$ and $\Gamma(R_2,G_2)$ are isomorphic to each other
(Proposition \ref{prop:indendence-Gamma}), and 
\vskip 1mm
\item if $(R_1,G_1)$ and $(R_2,G_2)$ are two different mERSs in the sense of 
Theorem \ref{thm_classification-reduced} and Theorem 
\ref{thm_classification-non-reduced}), then $\Gamma(R_1,G_1)$
is not isomorphic to $\Gamma(R_2,G_2)$ (see Theorem \ref{thm:classification-thm}
and $\S$ \ref{sect_Remarks-classification} (i)).
\end{itemize}
That is, the results in $\S$ \ref{sect:Classification-weak} give the complete list of the 
isomorphism classes of mERSs with non-reduced affine quotients.
\vskip 3mm

Let us briefly summarize the results of this section.  
In $\S$ \ref{sect:ans-Q1}, we give an answer for
Question 1 in $\S$ \ref{sect_marking}, that is, 
we describe some relationships between the six reduced affine quotient root 
systems: $(R/G)^{\dc}$, $(R/G)^{\mc}$, $(R^{\dc}/G)^{\dc}$, $(R^{\dc}/G)^{\mc}$, 
$(R^{\mc}/G)^{\dc}$ and $(R^{\mc}/G)^{\mc}$, appeared in $\S$ \ref{sect_marking}. 
\S \ref{sect_PRB} to \ref{sect:unique-basis} are devoted to 
preparations for defining elliptic diagrams. More precisely, we introduce the technical 
tools: the {\it prime map} (see $\S$ {\ref{sect_PRB}), 
 {\it paired reduced simple systems} (see $\S$ \ref{sect_PRB}), 
 {\it non-reduced counting numbers} (see $\S$ \ref{sect:NRCN}), 
 and study their
basic properties. The goals in these preparations are Proposition 
\ref{prop:description-R} in $\S$ \ref{sect:description-R} and 
Proposition \ref{prop:nr-counting-Autom1} in $\S$ \ref{sect:unique-basis}. 
The former expresses the root system $R$ in terms of its affine quotient $R/G$ and 
the countings. {This} is  a ``non-reduced analogue'' of
the formula \eqref{eq:description-redR} for mERSs with reduced affine quotients. 
The later says that a paired simple system is uniquely determined (up to 
isomorphism) for a given $(R,G)$. This result plays one of the central roles for classifying 
isomorphism classes of mERSs with non-reduced affine quotients in Section
\ref{sect:main}. 
 
In $\S$ \ref{sect_elliptic-diagram-II}, we define the elliptic diagram $\Gamma(R,G)$
for a mERS $(R,G)$ with the non-reduced affine quotient (Definition 
\ref{defn_elliptic-diagram-II}), and show that this diagram depends only on the 
isomorphism classes of mERSs (Proposition \ref{prop:indendence-Gamma}).  
Section \ref{sect:main} is devoted to proving Theorem 
\ref{thm:classification-thm} which we call the classification theorem of mERSs with
non-reduced affine quotient. In the proof of this theorem, we need case-by-case detailed
analysis. We note that this theorem does not give us a complete list of such mERSs. 
Indeed, it only lists all possibilities, but does not state 
whether a root 
system with such a diagram really exists. In other words, we need to solve the 
``existence problem'' of 
a mERS with a given elliptic diagram. However, comparing
the results of Chapter 2, we show that a mERS really exists for every possible elliptic
diagrams in Theorem \ref{thm:classification-thm}. Thus, we have a complete list of 
isomorphism classes of mERS with non-reduced affine quotients (see $\S$ 
\ref{sect_Remarks-classification}, in detail).   
In Section \ref{sect_str-red-pair}, we give detailed description of $R^{\dc}$ and 
$R^{\mc}$. 
}


\section[Reduced subsystems of affine quotients]{Reduced subsystems of affine quotients}
\label{sect:elliptic-diagram}

We define the elliptic diagram $\Gamma(R,G)$ for a mERS 
$(R,G)$ with non-reduced affine quotient and show that the isomorphism classes of 
such mERSs are classified by these diagrams. These are generalizations of 
the results in \cite{Saito1985} for mERSs with reduced quotients. 
In this section, we simplify the symbols as far as possible, e.g., omitting $(R,G)$ in the notations of the elliptic diagrams, the subspace $F$ of $I_F$, etc. 
 
\subsection{On reduced affine subquotients}\label{sect:ans-Q1} 
First of all, we give an answer to
Question 1 in $\S$ \ref{sect_marking}. 
Here and after, we assume that $(R,G)$ is an arbitrary mERS, unless 
otherwise specified. Recall the canonical projection $\pi_G:F\to F/G$. 
For $\alpha\in R$, we denote 
$\overline{\alpha}:=\pi_G(\alpha)$ for simplicity. 
\\

Let $(R^{\dc},R^{\mc})$ be a reduced pair of $R$.
For a reduced $R$, we regard $R^{\dc}=R=R^{\mc}$. 
Similarly, for the quotient affine root system $R/G$, let 
$((R/G)^{\dc},(R/G)^{\mc})$ be its reduced pair.

\begin{lemma}\label{lemma:reduced_pair1}
Let $\alpha\in R$.
\vskip 1mm
\noindent
{\rm (1)} If $\overline{\alpha}\in (R/G)^{\dc}$, $\alpha$ belongs to $R^{\dc}$.
\vskip 1mm
\noindent
{\rm (2)} If $\overline{\alpha}\in (R/G)^{\mc}$, $\alpha$ belongs to $R^{\mc}$.
\end{lemma}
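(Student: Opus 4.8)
The statement concerns the compatibility of the reduced-pair decomposition with the quotient map $\pi_G$: a root $\alpha$ whose image is indivisible (resp.\ non-multipliable) in $R/G$ is itself indivisible (resp.\ non-multipliable) in $R$. The natural approach is to argue by contraposition in each part, using only the definitions of $R^{\dc}$, $R^{\mc}$, $(R/G)^{\dc}$, $(R/G)^{\mc}$ together with the linearity of $\pi_G$. For (1): suppose $\alpha\notin R^{\dc}$, i.e.\ $\tfrac12\alpha\in R$. Then $\pi_G(\tfrac12\alpha)=\tfrac12\overline{\alpha}$ lies in $R/G$, so $\overline{\alpha}$ is divisible in $R/G$, hence $\overline{\alpha}\notin (R/G)^{\dc}$. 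For (2): suppose $\alpha\notin R^{\mc}$, i.e.\ $2\alpha\in R$. Then $\pi_G(2\alpha)=2\overline{\alpha}\in R/G$, so $\overline{\alpha}$ is multipliable in $R/G$, hence $\overline{\alpha}\notin (R/G)^{\mc}$. Both directions are immediate once one unwinds the definitions.

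The one point that requires a small amount of care is the case when $R$ (equivalently, by Remark \ref{rem_quot-root-sys}(2), $R/G$) is reduced: there the conventions $R^{\dc}=R=R^{\mc}$ and $(R/G)^{\dc}=R/G=(R/G)^{\mc}$ make both assertions trivial, since then every $\alpha\in R$ lies in $R^{\dc}=R^{\mc}$. So one may assume $R$ is non-reduced, in which case the multiplicities $c$ with $c\alpha\in R$ are constrained to $\{\pm\tfrac12,\pm1,\pm2\}$ by the remark following Definition \ref{defn_root-sys}, and likewise for $R/G$; this is exactly what is needed to conclude that ``not indivisible'' means ``$\tfrac12\alpha\in R$'' and ``not non-multipliable'' means ``$2\alpha\in R$''. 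A further minor point: one should note that $\overline{\alpha}\neq 0$, so that $\overline{\alpha}$ genuinely is a root of $R/G$ and the reduced-pair decomposition of $R/G$ applies to it; this holds because $\alpha\in R$ and $\pi_G$ restricted to a suitable complement of $G$ is injective on roots (or simply because $\overline\alpha\in R/G\subset (F/G)\setminus\{0\}$ by definition of the quotient root system).

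There is no serious obstacle here; the ``hard part'' is merely to phrase the contrapositive cleanly and to remember the reduced-case convention. The proof will be a handful of lines. I would write it as follows: first dispose of the reduced case by the convention, then in the non-reduced case prove the two contrapositives by applying $\pi_G$ to $\tfrac12\alpha$ and to $2\alpha$ respectively, invoking the definitions of $(R/G)^{\dc}=\{\beta\in R/G\mid \tfrac12\beta\notin R/G\}$ and $(R/G)^{\mc}=\{\beta\in R/G\mid 2\beta\notin R/G\}$ recalled in \S\ref{sect:general}.

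\begin{proof}
If $R$ (equivalently $R/G$, by Remark \ref{rem_quot-root-sys}(2)) is reduced, then by convention $R^{\dc}=R=R^{\mc}$ and $(R/G)^{\dc}=R/G=(R/G)^{\mc}$, so both statements are trivial. Assume therefore that $R$ is non-reduced.

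(1) Suppose, to the contrary, that $\alpha\notin R^{\dc}$. By definition of $R^{\dc}$ this means $\tfrac12\alpha\in R$. Applying the linear map $\pi_G$, we get $\tfrac12\overline{\alpha}=\pi_G(\tfrac12\alpha)\in R/G$. Hence $\overline{\alpha}$ is a divisible root of $R/G$, i.e.\ $\overline{\alpha}\notin (R/G)^{\dc}$, contradicting the hypothesis. Therefore $\alpha\in R^{\dc}$.

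(2) Suppose, to the contrary, that $\alpha\notin R^{\mc}$. By definition of $R^{\mc}$ this means $2\alpha\in R$. Applying $\pi_G$, we get $2\overline{\alpha}=\pi_G(2\alpha)\in R/G$. Hence $\overline{\alpha}$ is a multipliable root of $R/G$, i.e.\ $\overline{\alpha}\notin (R/G)^{\mc}$, contradicting the hypothesis. Therefore $\alpha\in R^{\mc}$.
\end{proof}
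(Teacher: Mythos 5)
Your proof is correct and follows essentially the same argument as the paper: contraposition via applying $\pi_G$ to $\tfrac12\alpha$ (resp. $2\alpha$), with part (2) handled symmetrically. The extra remarks about the reduced-case convention and $\overline{\alpha}\neq 0$ are harmless but not needed.
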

\begin{proof}
Let us prove statement (1). Assume $\alpha\not\in R^{\dc}$. 
By definition, it means 
$\alpha/2 \in R$.  Therefore, we have 
$\overline{\alpha/2}=\overline{\alpha}/2 \in R/G$, i.e., 
$\overline{\alpha}$ does not belongs to $(R/G)^{\dc}$. 
 Statement (2) is obtained by a similar method.   
\end{proof}

Let us recall the setting of Question 1. Assume that $R/G$ is non-reduced.
We note that both of $(R^{\dc},G)$ and $(R^{\mc},G)$ are reduced mERSs. Since the 
corresponding affine quotients $R^{\dc}/G$ and $R^{\mc}/G$ are not necessarily 
reduced, four reduced affine root systems $(R^{\dc}/G)^{\dc}$, $(R^{\dc}/G)^{\mc}$, 
$(R^{\mc}/G)^{\dc}$, $(R^{\mc}/G)^{\mc}$
may be distinct. In addition, there are two affine 
root systems $(R/G)^{\dc}$ and $(R/G)^{\mc}$. Question 1 asks 
the relationships between these
six root systems. The following lemma gives a (partial) answer for this question. 

\begin{lemma}\label{lemma:reduced_pair2}
{\rm (1)} $(R/G)^{\dc}=(R^{\dc}/G)^{\dc}$.
\vskip 1mm
\noindent
{\rm (2)} $(R/G)^{\mc}=(R^{\mc}/G)^{\mc}$.
\end{lemma}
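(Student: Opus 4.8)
\textbf{Proof proposal for Lemma \ref{lemma:reduced_pair2}.}

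The plan is to prove the two statements in parallel, each by a double inclusion of root systems inside $F/G$, using Lemma \ref{lemma:reduced_pair1} for one direction and the elementary behaviour of divisibility/multipliability under the linear projection $\pi_G$ for the other. I will detail statement (1); statement (2) is entirely analogous with the roles of ``divisible'' and ``multipliable'' interchanged, so I would only indicate the change at the end.

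First I would prove $(R/G)^{\dc} \subseteq (R^{\dc}/G)^{\dc}$. Take $\overline{\alpha} \in (R/G)^{\dc}$, so $\overline{\alpha} \in R/G$ with $\tfrac{1}{2}\overline{\alpha} \notin R/G$; write $\overline{\alpha} = \pi_G(\alpha)$ for some $\alpha \in R$. By Lemma \ref{lemma:reduced_pair1}(1), $\alpha \in R^{\dc}$, hence $\overline{\alpha} = \pi_G(\alpha) \in R^{\dc}/G$. It remains to check $\tfrac{1}{2}\overline{\alpha} \notin R^{\dc}/G$; but $R^{\dc}/G \subseteq R/G$ since $\pi_G$ is a set map and $R^{\dc} \subseteq R$, and $\tfrac12\overline{\alpha}\notin R/G$ by hypothesis, so $\tfrac12\overline{\alpha}\notin R^{\dc}/G$ a fortiori. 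Thus $\overline{\alpha} \in (R^{\dc}/G)^{\dc}$.

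For the reverse inclusion $(R^{\dc}/G)^{\dc} \subseteq (R/G)^{\dc}$, take $\overline{\alpha} \in (R^{\dc}/G)^{\dc}$, i.e.\ $\overline{\alpha} \in R^{\dc}/G \subseteq R/G$ and $\tfrac{1}{2}\overline{\alpha} \notin R^{\dc}/G$. I must upgrade the last condition to $\tfrac{1}{2}\overline{\alpha} \notin R/G$. Suppose for contradiction that $\tfrac{1}{2}\overline{\alpha} \in R/G$; pick $\beta\in R$ with $\pi_G(\beta)=\tfrac12\overline{\alpha}$. Then $\pi_G(2\beta)=\overline{\alpha}\in R^{\dc}/G$, so by Lemma \ref{lemma:reduced_pair1}(1) applied to $2\beta$ — wait, I need $2\beta\in R$ first. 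Here is the subtle point and the place where I expect the main obstacle: I cannot directly conclude $2\beta\in R$ from $\beta\in R$. The resolution I would use is that $\overline\alpha\in (R^{\dc}/G)^{\dc}$ means in particular $\overline\alpha\in R^{\dc}/G$, so there is $\alpha'\in R^{\dc}$ with $\pi_G(\alpha')=\overline\alpha$; and $\overline\alpha\in R/G$ is non-reduced-divisible-below only if $\tfrac12\overline\alpha\in R/G$. Now $\tfrac12\overline\alpha=\tfrac12\pi_G(\alpha')=\pi_G(\tfrac12\alpha')$, so the statement ``$\tfrac12\overline\alpha\in R/G$'' says there exists a root of $R$ whose image equals $\pi_G(\tfrac12\alpha')$, equivalently $\tfrac12\alpha'+x\in R$ for some $x\in G$. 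I would then invoke the counting-set formalism (Lemma \ref{lemma-counting}, and the fact that $G\subseteq\rad(I_F)$ so reflections fix $G$ pointwise and lengths are $G$-translation invariant): $\tfrac12\alpha'+x\in R$ forces $\tfrac12(\alpha'+2x)\in R$ with $\alpha'+2x\in R$ as well (since $\alpha'\in R$ and $x\in K_G(\tfrac12(\alpha'+2x))$ by Lemma \ref{lemma-counting}(1),(3)), hence $\alpha'+2x\notin R^{\dc}$, so $\pi_G(\alpha'+2x)=\overline\alpha\notin (R^{\dc}/G)^{\dc}$ — wait, that is not immediate either. The cleanest route is: from $\tfrac12\alpha'+x\in R$ and $\alpha'\in R^{\dc}$ we get that $\tfrac12\alpha'+x$ and $\alpha'+2x$ are roots of $R$ of strictly smaller/equal length, and since lengths are constant on $G$-cosets, $\alpha'$ itself satisfies $\tfrac12\alpha'\in R$ (translate back by $-x$ using that $R$ is $\langle$closed under the relevant reflections$\rangle$), contradicting $\alpha'\in R^{\dc}$. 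I would make this last step precise by noting $r_{\tfrac12\alpha'+x}(\alpha') = \alpha' - I((\tfrac12\alpha'+x)^\vee,\alpha')(\tfrac12\alpha'+x)$ and computing the integer coefficient, which equals $2$, giving $r_{\tfrac12\alpha'+x}(\alpha')=-\tfrac12\alpha'-2x\cdot(\text{stuff})$; cleaning this up shows $\tfrac12\alpha'\in R+G$-translate, and then one more reflection by $\tfrac12\alpha'+x$ moves it onto $\tfrac12\alpha'$, contradiction. Hence $\tfrac12\overline\alpha\notin R/G$, so $\overline\alpha\in (R/G)^{\dc}$.

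For statement (2) I would run the mirror argument: $(R/G)^{\mc}\subseteq(R^{\mc}/G)^{\mc}$ follows from Lemma \ref{lemma:reduced_pair1}(2) exactly as above (with $2\overline\alpha\notin R/G\Rightarrow 2\overline\alpha\notin R^{\mc}/G$), and the reverse inclusion uses the same counting-set/reflection computation with ``$2\alpha'$'' in place of ``$\tfrac12\alpha'$'' and ``$\alpha'\in R^{\mc}$'' (i.e.\ $2\alpha'\notin R$) as the contradicted hypothesis. The main obstacle in both halves is the one flagged above: transporting a divisibility (resp.\ multipliability) relation from the quotient $R/G$ back up to $R$, where the literal scalar multiple of a root need not be a root, and the fix is to absorb the ambiguity into a $G$-translation and kill it with reflections, exactly as in the proofs of Lemma \ref{lemma-counting} and Lemma \ref{lemma_rootsystem-prototype} in \cite{Saito1985}.
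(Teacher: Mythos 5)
Your first inclusion $(R/G)^{\dc}\subseteq (R^{\dc}/G)^{\dc}$ is correct and is exactly the paper's argument. The reverse inclusion, however, has a genuine gap. Your plan is to take a lift $\alpha'\in R^{\dc}$ of $\overline{\alpha}$, note that $\tfrac12\overline{\alpha}\in R/G$ gives $\tfrac12\alpha'+x\in R$ for some $x\in G$, and then ``translate back by $-x$'' to conclude $\tfrac12\alpha'\in R$, contradicting $\alpha'\in R^{\dc}$. That middle step is false: $R$ is only stable under translation by the counting set $K_G(\cdot)$, not by arbitrary elements of $G$, and $x$ need not even lie in $Q(R)\cap G$. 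Concretely, in $R(BC_l^{(1,2)})$ take $\alpha'=2\vep_l+a$; then $\tfrac12\alpha'=\vep_l+\tfrac12 a\notin R$ (so $\alpha'\in R^{\dc}$), yet $\tfrac12\alpha'+x=\vep_l\in R$ for $x=-\tfrac12 a\in G$. Your reflection computation cannot rescue this: $r_{\alpha'}$ and $r_{\frac12\alpha'+x}$ only generate the translates $\tfrac12\alpha'+x+\Z(2x)$, which never reach $\tfrac12\alpha'$ unless $x=0$ (and the Cartan integer $I\big((\tfrac12\alpha'+x)^\vee,\alpha'\big)$ is $4$, not $2$). Worse, the contradiction you are aiming at is the wrong one: $\alpha'\in R^{\dc}$ with $\tfrac12\pi_G(\alpha')\in R/G$ is a perfectly consistent configuration, as the example above shows; what is inconsistent is $\tfrac12\overline{\alpha}\in R/G$ together with $\tfrac12\overline{\alpha}\notin R^{\dc}/G$.

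The paper's proof closes the gap by working entirely downstairs. If $\tfrac12\overline{\alpha}\in R/G$, pick a lift $\beta\in R$ of $\tfrac12\overline{\alpha}$. Since $2\overline{\beta}=\overline{\alpha}\in R/G$, the root $\overline{\beta}$ is multipliable, hence short, and since $\pi_G$ preserves lengths, $\beta$ is a short root of $R$. Short roots always satisfy $\tfrac12\beta\notin R$, so $\beta\in R^{\dc}$ and therefore $\tfrac12\overline{\alpha}=\overline{\beta}\in R^{\dc}/G$. This contradicts $\overline{\alpha}\in (R^{\dc}/G)^{\dc}$ directly, with no need to lift the divisibility relation from $R/G$ back to $R$ — which, as your counterexample-prone step shows, cannot be done in general. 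The same repair applies to your argument for statement (2).
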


\begin{proof}
Since  statement (2)  can be proved in a similar way as statement (1), we only prove (1).

First, we prove the inclusion
\begin{equation}\label{eq:step1-1}
(R/G)^{\dc}\subset (R^{\dc}/G)^{\dc}.
\end{equation}
By Lemma \ref{lemma:reduced_pair1} (1), we have
\[(R/G)^{\dc}\subset R^{\dc}/G.\]
Let $\overline{\alpha}$ be an element of $(R/G)^{\dc}$, i.e., $\overline{\alpha}/2 \not\in R/G$.
It follows from the inclusion $R^{\dc}/G \subset R/G$ that $\overline{\alpha}/2 \not\in R^{\dc}/G$, 
i.e., $\overline{\alpha} \in (R^{\dc}/G)^{\dc}$, which implies \eqref{eq:step1-1}.

Second, 
assume that the inclusion \eqref{eq:step1-1} is strict, i.e., there exists $\overline{\alpha}\in (R^{\dc}/G)^{\dc}\setminus (R/G)^{\dc}$. 
By the condition $\overline{\alpha}\not\in (R/G)^{\dc}$, we have 
$\overline{\alpha}/2\in R/G$. 
In other words, there exists $\beta\in R$ such that 
$\overline{\beta}=\overline{\alpha}/2$. 
In particular, $\beta$ is a short root. 
On the other hand, by the assumption $\overline{\alpha}\in (R^{\dc}/G)^{\dc}$, 
one has $2\overline{\beta}\in (R^{\dc}/G)^{\dc}$. Therefore, we have 
$\overline{\beta}\not\in R^{\dc}/G$. Since $R^{\dc}$ contains the set $R_s$ of short roots 
of $R$, the condition $\overline{\beta}\not\in R^{\dc}/G$  
implies that $\beta$ is not 
a short root. This is a contradiction. Thus, we have statement (1).
\end{proof}

\begin{rem}
In the following, we use only $(R/G)^{\dc}=(R^{\dc}/G)^{\dc}$ and $(R/G)^{\mc}=(R^{\mc}/G)^{\mc}$, and
neither $(R^{\dc}/G)^{\mc}$ nor $(R^{\mc}/G)^{\dc}$ will be used. So, we omit to give explicit descriptions
of them.
\end{rem}
\vskip 5mm
\subsection{Paired simple system}\label{sect_PRB}
Let $(R^{\dc},R^{\mc})$ be the reduced pair of a mERS $(R,G)$ and 
$\Pi^{\dc}=\Pi_{R^{\dc}}$ be a simple system of 
the reduced mERS $(R^{\dc},G)$ in the sense of Definition \ref{defn:basis}. It is immediate
to see that $\Pi^{\dc}=\Pi_{R^{\dc}}$ also gives a simple system of $(R,G)$. 
However, because of 
non-reducedness, the information on $\Pi^{\dc}=\Pi_{R^{\dc}}$ is {\it not} sufficient for describing 
the structure of the set $R$ of all roots, that is,  Lemma 
\ref{lemma:countings1} {\rm (4)} {holds} in general, 
which plays an important role 
in the detailed study for a mERS $(R,G)$ with reduced $R/G$.

To fill the missing information, 
we introduce the notion of a ``paired simple system'' $(\Pi^{\dc},\Pi^{\mc})$ for
the reduced pair $(R^{\dc},R^{\mc})$ of a mERS $(R,G)$ (see Definition
\ref{defn:PRB} below). Here, $\Pi^{\mc}$ 
is a linearly independent 
subset of $R^{\mc}$, which satisfies certain properties.
With this additional information, we generalize  the description 
\eqref{eq:description-redR} of $R$ for an arbitrary case in Proposition 
\ref{prop:description-R} below. \\

Let $\Pi^{\dc}=\Pi_{R^{\dc}}$ be a simple system of the reduced mERS $(R^{\dc},G)$. 
As we mentioned above, it gives a simple system of $(R,G)$ also. 
Furthermore, its image 
$\pi_G(\Pi^{\dc})=\pi_G(\Pi_{R^{\dc}})$ is a simple system of 
$(R^{\dc}/G)^{\dc}=(R/G)^{\dc}$ (see Lemma \ref{lemma:reduced_pair2} (1)). 
Motivated by Lemma \ref{lemma_base}, we want to 
take a simple system $\Pi_{R^{\mc}}$ of $(R^{\mc},G)$ such that 
\begin{enumerate}
\item[i)] $\pi_G(\Pi_{R^{\mc}})$ is a simple system of $(R/G)^{\mc}$, and 
\item[ii)] $\pi_G(\Pi_{R^{\mc}})\setminus \pi_G(\Pi_{R^{\dc}})$ contains only 1 root for 
$R/G$ being of type $BCC_l$, $C^\vee BC_l$ and $BB^\vee_l$, and 2 roots 
of type $C^\vee C_l$. 
\end{enumerate}
But it is {\it impossible}. 
Indeed, let $\Pi_{R^{\mc}}$ be a simple system of $(R^{\mc},G)$. 
Then, its image $\pi_G(\Pi_{R^{\mc}})$ is a simple system of the affine root system
$(R^{\mc}/G)^{\dc}$. On the other hand, by Lemma \ref{lemma:reduced_pair2} (2), 
we already know $(R/G)^{\mc}=(R^{\mc}/G)^{\mc}$. Therefore, the set 
$\pi_G(\Pi_{R^{\mc}})$ is not necessary included in $(R/G)^{\mc}$, and there is {\it no}
simple system $\Pi_{R^{\mc}}$ of $(R^{\mc},G)$ which satisfies condition i) in general.

\begin{ex}{\rm
Let $(R,G)$ be a mERS of type $C^\vee C_l^{(4)}$ with $l\geq 2$
(see $\S$ \ref{subsect_CC-l} (2) (II) or $\S$ \ref{data:CC-12}).
As in the list in $\S$ \ref{sect_red-pair} (4), 
the reduced mERS $(R^{\mc},G)$ is of type 
$BC_l^{(2,2)\sigma}(2)$. Furthermore, as in the list in $\S$ \ref{sect_two-affine-quot},
$R^{\mc}/G$ is a non-reduced affine root system of type $C^\vee C_l$. 
In other words, $(R^{\mc}/G)^{\dc}$ is of type $B_l^{(2)}$ and 
$(R^{\mc}/G)^{\mc}=(R/G)^{\mc}$ is of type $C_l^{(1)}$, respectively.  Especially, 
$(R^{\mc}/G)^{\dc}$ (resp. $(R^{\mc}/G)^{\mc}$) 
consists of short and middle (resp. middle and long) roots. Therefore, any simple 
system $\Pi_{R^{\mc}}$ of $(R^{\mc},G)$ consists of short and middle roots also, and 
the image $\pi_G(\Pi_{R^{\mc}})$ is not a subset of $(R/G)^{\mc}$.
}\end{ex}

Instead, we take a linearly independent subset $\Pi^{\mc}$ of $R^{\mc}$ such that both 
the conditions i) and ii) are satisfied. However, if we take it unconditionally, we will 
lose control. Therefore, we have to select a reasonable $\Pi^{\mc}$ 
in accordance with the first 
choice of $\Pi^{\dc}=\Pi_{R^{\dc}}$. For this purpose, we introduce  the ``prime map'' 
$(\ \cdot\ )^{pr}:R\to R^{\mc}$. \\

For each $\alpha \in R$, we introduce a root 
$\alpha'\in R$\index[notations]{a711@$\alpha'$} by 
the next rules: 
\begin{itemize}
\item[i)] If $2\overline{\alpha}\not\in R/G$, set
\[\alpha'=\alpha.\]
It is obvious that $\alpha'\in R$ and $2\alpha'\not\in R$.
\vskip 1mm
\item[ii)]  
Otherwise, one can take a
representative $\beta\in \pi_G^{-1}(2\overline{\alpha})\cap R$ of 
$2\overline{\alpha}$. Since 
$\overline{\beta}=2\overline{\alpha}$, we have
$\beta-2\alpha\in Q(R)\cap G=\Z a$. 
Hence,
\[\pi_G^{-1}(2\overline{\alpha})\cap R\subset 2\alpha+\Z a.\]
Let $\alpha'\in 
\pi_G^{-1}(2\overline{\alpha})\cap R$ be a unique element satisfying
\[\alpha'\in 2\alpha+\Z_{\leq 0}a\quad\text{and}\quad
(\alpha')^*=\alpha'+k(\alpha')a\in 2\alpha+\Z_{>0}a.\]
Since $\alpha'$ is a long root, we have $2\alpha'\not\in R$, 
as in the previous case.
\end{itemize} 
This is the definition of $\alpha'\in R$ for $\alpha\in R$. We note that the 
correspondence $\alpha \mapsto \alpha'$ defines a map 
$(\,\cdot\,)^{pr}:R\to R^{\mc}$ which we call the 
\textbf{prime map}\index[index]{prime map} 
for $R$. 

\begin{rem}\label{rem:a}
The definition of the map $(\,\cdot\,)^{pr}:R\to R^{\mc}$ depends on the choice of a 
generator of the lattice $Q(R)\cap G$ of rank 1.
\end{rem}

Define a subgroup $\Aut(R,G)$ of $\Aut(R)$ by
\[ \Aut(R,G)=\{\varphi\in \Aut(R)\,|\, \varphi|_G=\mathrm{id}_G\}.\]

\begin{lemma}\label{lemma:commutativity-prime}
Let $\varphi\in  \Aut(R,G)$. We have 
\begin{equation}\label{commutativity-prime}
(\varphi(\alpha))'=\varphi(\alpha')\quad  \text{for every $\alpha\in R$}.
\end{equation}
That is, every automorphism $\varphi\in \Aut(R,G)$ commutes with 
the map $(\,\cdot\, )^{pr}:R\to R^{\mc}$. 
\end{lemma}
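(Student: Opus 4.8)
The plan is to verify \eqref{commutativity-prime} by following the case division in the definition of the prime map and using the properties of automorphisms already established. The key point is that every $\varphi \in \Aut(R,G)$ fixes $G$ pointwise, hence fixes the chosen generator $a$ of $Q(R)\cap G$ (or at most sends $a \mapsto a$; note $\varphi|_G = \id_G$ forces $\varphi(a)=a$ exactly, not just up to sign), and by Lemma~\ref{lemma:countings1}~(3),(4) it preserves counting numbers and commutes with the star operation: $k(\varphi(\beta))=k(\beta)$ and $(\varphi(\beta))^\ast = \varphi(\beta^\ast)$ for all $\beta \in R$. I would also record at the outset that $\varphi$ induces an automorphism $\overline{\varphi}$ of $R/G$ with $\overline{\varphi}(\overline{\beta}) = \overline{\varphi(\beta)}$, so the condition ``$2\overline{\alpha}\in R/G$'' is preserved: $2\overline{\varphi(\alpha)} = \overline{\varphi}(2\overline{\alpha}) \in R/G$ if and only if $2\overline{\alpha}\in R/G$. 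This means $\alpha$ and $\varphi(\alpha)$ fall into the same case (i) or (ii) of the definition.

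First I would treat case (i): if $2\overline{\alpha}\notin R/G$, then $2\overline{\varphi(\alpha)}\notin R/G$ as well, so by definition $\alpha' = \alpha$ and $(\varphi(\alpha))' = \varphi(\alpha)$, whence $(\varphi(\alpha))' = \varphi(\alpha) = \varphi(\alpha')$ trivially.

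Next, case (ii): suppose $2\overline{\alpha}\in R/G$. Then $\alpha'$ is characterized as the unique element of $\pi_G^{-1}(2\overline{\alpha})\cap R$ with $\alpha' \in 2\alpha + \Z_{\le 0}a$ and $(\alpha')^\ast = \alpha' + k(\alpha')a \in 2\alpha + \Z_{>0}a$. I would apply $\varphi$ to these three defining conditions. Since $\varphi$ is linear, $\varphi(\alpha') \in 2\varphi(\alpha) + \Z_{\le 0}\varphi(a) = 2\varphi(\alpha) + \Z_{\le 0}a$; and $\pi_G \circ \varphi = \overline{\varphi}\circ \pi_G$ gives $\varphi(\alpha') \in \pi_G^{-1}(2\overline{\varphi(\alpha)})\cap R$; finally, using Lemma~\ref{lemma:countings1}~(4), $(\varphi(\alpha'))^\ast = \varphi((\alpha')^\ast) \in 2\varphi(\alpha) + \Z_{>0}a$. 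Thus $\varphi(\alpha')$ satisfies exactly the defining conditions of $(\varphi(\alpha))'$, and by the uniqueness clause in the definition of the prime map, $(\varphi(\alpha))' = \varphi(\alpha')$. This completes the proof.

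I do not anticipate a genuine obstacle here: the statement is essentially a formal consequence of the equivariance of $\pi_G$, the linearity of $\varphi$, and the already-proved compatibilities in Lemma~\ref{lemma:countings1}. The only point requiring a little care is to justify $\varphi(a) = a$ (rather than $\pm a$) from $\varphi|_G = \id_G$ — this is immediate since $a \in G$ — and to make sure the case split is genuinely preserved, which follows from the induced automorphism $\overline{\varphi}$ of $R/G$. If one wanted to be fully careful about Remark~\ref{rem:a}, one should note that the map $(\,\cdot\,)^{pr}$ is defined relative to the \emph{same} generator $a$ on both sides, and since $\varphi$ fixes $a$ this causes no inconsistency.
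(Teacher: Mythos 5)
Your proof is correct and follows essentially the same route as the paper's: dispose of the case $2\overline{\alpha}\notin R/G$ trivially, then in the remaining case verify that $\varphi(\alpha')$ satisfies the uniqueness-characterizing conditions of $(\varphi(\alpha))'$, using $\varphi(a)=a$ and the invariance of counting numbers (the paper invokes Lemma~\ref{lemma:countings1}~(3) directly where you invoke its consequence (4), which is an immaterial difference). No gaps.
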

\begin{proof}
Note that, since $\varphi(a)=a$, the induced automorphism 
$\overline{\varphi}\in\mathrm{Aut}(R/G)$ is well-defined. 
Since the statement is trivial if $2\overline{\alpha}\not\in R/G$, we may 
assume $2\overline{\alpha}\in R/G$. 
In this case, we have $2\overline{\varphi(\alpha)}\in R/G$. 
By definition, we have
$\pi_G^{-1}(2\overline{\varphi(\alpha)})\cap R\subset 2\varphi(\alpha)+\Z a$. 
Hence, $(\varphi(\alpha))'\in R$ is determined uniquely by the following conditions:
\begin{align*}
(\varphi(\alpha))'\in 
&\; 2\varphi(\alpha)+\Z_{\leq 0}a, \\
((\varphi(\alpha))')^*=
(\varphi(\alpha))'+k((\varphi(\alpha))')a\in 
&\; 2\varphi(\alpha)+\Z_{> 0}a.
\end{align*}

Denote $\alpha'=2\alpha+pa$ with $p\in\Z_{\leq 0}$. Recall that 
$p+k(\alpha')>0$. We have
\[\begin{aligned}
\varphi(\alpha')&=\varphi(2\alpha+pa)\in 2\varphi(\alpha)+\Z_{\leq 0}a.
\end{aligned}\]
On the other hand, we have
\[\begin{aligned}
(\varphi(\alpha'))^*&=\varphi(\alpha')+k(\varphi(\alpha'))a\\
&=\varphi(\alpha')+k(\alpha')a \quad (\text{by Lemma \ref{lemma:countings1} (3)})\\
&=2\varphi(\alpha)+(p+k(\alpha'))a\\
&\in 2\varphi(\alpha)+\Z_{>0}a\quad (\because\ p+k(\alpha')>0).
\end{aligned}\]
Thus, the element $\varphi(\alpha')$ satisfies the defining
condition of $(\varphi(\alpha))'$, and we have \eqref{commutativity-prime}.
\end{proof}
\medskip 
Let us introduce a ``paired simple system'' $(\Pi^{\dc},\Pi^{\mc})$ for a mERS $(R,G)$ 
as follows. 

First, take a simple system 
$\Pi^{\dc}=\{\alpha_i\}_{0\leq i\leq l}$\index[notations]{p711@$\Pi^{\dc}$} 
of the reduced 
mERS $(R^{\dc},G)$ in the sense of Definition \ref{defn:basis}. 
Then, its image $(\Pi^{\dc})_G:=\pi_G(\Pi^{\dc})=\{\overline{\alpha_i}\}_{0\leq i\leq l}$
is a simple system of the affine root system $(R^{\dc}/G)^{\dc}=(R/G)^{\dc}$.  
For later use, in addition, we assume that
\begin{equation}\label{condition-Pi'}
\text{if $\big(\pi_G^{-1}(\overline{\alpha})\cap R\big)
\setminus \frac12 R_l\ne\emptyset$ 
for $\alpha\in \Pi^{\dc}$, $\alpha$ belongs to 
$\big(\pi_G^{-1}(\overline{\alpha})\cap R\big) \setminus \frac12 R_l$.}
\end{equation}

Second, define a linearly independent subset $\Pi^{\mc}\subset R^{\mc}$ by
\[\Pi^{\mc}=(\Pi^{\dc})^{pr}=\{\alpha_i'\}_{0\leq i\leq l}\index[notations]{p712@
$\Pi^{\mc}$},\]
where $\alpha_i'$ is the image of $\alpha_i\in\Pi^{\dc}$ under the prime map 
$(\,\cdot\,)^{pr}:R\to R^{\mc}$. 
By definition, the image $\overline{\alpha'_i}$ of $\alpha_i$ ($0\leq i\leq l$) under 
the projection $\pi_G: F \rightarrow F/G$ satisfies 
\begin{equation}\label{eq:paired-basis}
\overline{\alpha'_i}=\begin{cases}
\overline{\alpha_i} & \text{if }2\overline{\alpha_i}\not \in R/G,\\
2\overline{\alpha_i} & \text{otherwise}.
\end{cases}
\end{equation}
Therefore, as we already claimed in the proof of Lemma \ref{lemma_base}, 
$(\Pi^{\mc})_G:=\pi_G(\Pi^{\mc})$ is a simple system of $(R/G)^{\mc}$. 
\begin{defn}\label{defn:PRB}
Such a pair $(\Pi^{\dc},\Pi^{\mc})$ is called a 
\textbf{paired simple system}\index[index]{paired simple system} of 
$(R,G)$. 
\end{defn}

By the above discussion, we may assume that the pair $((\Pi^{\dc})_G, (\Pi^{\mc})_G)$ 
is a paired simple system of $R/G$ satisfying Lemma \ref{lemma_base}. For 
this reason, we will write $\Pi^{\dc}_G$\index[notations]{p713@$\Pi^{\dc}_G$}, 
$\Pi^{\mc}_G$\index[notations]{p714@$\Pi^{\mc}_G$} instead of 
$(\Pi^{\dc})_G, (\Pi^{\mc})_G$, respectively, to simplify the notations.
\medskip 
\subsection{Descriptions of $R$ attached to $(\Pi^{\dc}, \Pi^{\mc})$}
In this subsection, we introduce two descriptions of the set $R$
of roots attached to a paired simple system $(\Pi^{\dc}, \Pi^{\mc})$
(see \eqref{description-R0} in Corollary \ref{cor:nr-countings0} and 
\eqref{description-R1} in Corollary \ref{cor:description-R}, below). 
These are ``non-reduced analogues'' of  the description \eqref{eq:description-redR} 
which is one of the key formulas for analyzing detailed structures
of mERSs with reduced affine quotients. Similar to the previous cases, they play 
central roles for studying mERSs with non-reduced affine quotients.

\begin{rem}
In the rest of the article, we mainly use the description \eqref{description-R1}.
The reason why we choose \eqref{description-R1} is explained in the next subsection. 
\end{rem}
\medskip

Define two subgroups $\W[\Pi^{\dc}]$ and $\W[\Pi^{\mc}]$ of $W(R)$ by 
\[\W[\Pi^{\dc}]=\langle r_\alpha\, |\,\alpha\in \Pi^{\dc}\rangle
\index[notations]{w711@$\W[\Pi^{\dc}]$}
\quad\text{and}\quad
\W[\Pi^{\mc}]=\langle r_{\alpha'}\, |\,\alpha'\in \Pi^{\mc}\rangle.
\index[notations]{w712@$\W[\Pi^{\mc}]$}\]
Since every element in the Weyl group
$W(R)$ stabilizes the marking $G$, the canonical projection $\pi_G:F\to F/G$
induces a surjective group homomorphism 
\[(\pi_G)_*:W(R)\to W(R/G).\]
For each $\alpha\in R$, one has
$(\pi_G)_*(r_{\alpha})=\overline{r}_{\overline{\alpha}}$, where 
$\overline{r}_{\overline{\alpha}}$ is the reflection
in $W(R/G)$ attached to the root $\overline{\alpha}\in R/G$. 
Let $\overline{\W[\Pi^{\dc}_G]}$ ({\it resp}. $\overline{\W[\Pi^{\mc}_G]}$) be the group 
generated by $\overline{r}_{\overline{\alpha}}\, (\overline{\alpha}\in \Pi_G^{\dc})$ 
({\it resp}. $\overline{r}_{\overline{\alpha}}\, (\overline{\alpha}\in \Pi_G^{\mc}))$. 

\begin{lemma}\label{lemma:paired-reduced-basis}
{\rm (1)} $\overline{\W[\Pi^{\dc}_G]}=W(R/G)=\overline{\W[\Pi^{\mc}_G]}$.
\vskip 1mm
\noindent
{\rm (2)} $\pi_G(\W[\Pi^{\dc}].\Pi^{\dc})=(R/G)^{\dc}$ and 
$\pi_G(\W[\Pi^{\mc}].\Pi^{\mc})=(R/G)^{\mc}$.
\end{lemma}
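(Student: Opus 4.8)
The statement to prove is Lemma \ref{lemma:paired-reduced-basis}: that $\overline{\W[\Pi^{\dc}_G]}=W(R/G)=\overline{\W[\Pi^{\mc}_G]}$, and that $\pi_G(\W[\Pi^{\dc}].\Pi^{\dc})=(R/G)^{\dc}$ and $\pi_G(\W[\Pi^{\mc}].\Pi^{\mc})=(R/G)^{\mc}$. The whole argument should reduce to transporting the corresponding facts about affine root systems through the projection $\pi_G$ and the prime map.

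For statement (1), the plan is to recall that $\Pi^{\dc}_G=\pi_G(\Pi^{\dc})$ is, by construction in $\S$ \ref{subsubsect:paired-root-basis}, a simple system of the reduced affine root system $(R^{\dc}/G)^{\dc}=(R/G)^{\dc}$, and similarly $\Pi^{\mc}_G$ is a simple system of $(R/G)^{\mc}$. Since for a reduced affine root system the Weyl group is generated by the simple reflections, we get $\overline{\W[\Pi^{\dc}_G]}=W((R/G)^{\dc})$ and $\overline{\W[\Pi^{\mc}_G]}=W((R/G)^{\mc})$. Now invoke the fact recalled in $\S$ \ref{subsec:general}: for a (possibly non-reduced) root system, the Weyl groups $W(R/G)$, $W((R/G)^{\dc})$ and $W((R/G)^{\mc})$ all coincide, since passing to $R^{\dc}$ or $R^{\mc}$ does not change the reflections. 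Combining these two observations gives $\overline{\W[\Pi^{\dc}_G]}=W(R/G)=\overline{\W[\Pi^{\mc}_G]}$.

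For statement (2), I would argue for the $\dc$ part (the $\mc$ part being symmetric, using $\Pi^{\mc}=(\Pi^{\dc})^{pr}$ and $(\pi_G)_*(r_{\alpha'})=\overline{r}_{\overline{\alpha'}}$). Since $\pi_G$ intertwines $\W[\Pi^{\dc}]$ with $\overline{\W[\Pi^{\dc}_G]}$ — concretely, $\pi_G(w.\beta)=(\pi_G)_*(w).\overline{\beta}$ for $w\in\W[\Pi^{\dc}]$ and $\beta\in\Pi^{\dc}$ — we get $\pi_G(\W[\Pi^{\dc}].\Pi^{\dc})=\overline{\W[\Pi^{\dc}_G]}.\Pi^{\dc}_G=W(R/G).\Pi^{\dc}_G$. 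Finally, for a reduced affine root system the orbit of a simple system under its Weyl group is the whole root system: $W((R/G)^{\dc}).\Pi^{\dc}_G=(R/G)^{\dc}$. Since $W(R/G)=W((R/G)^{\dc})$, this yields $\pi_G(\W[\Pi^{\dc}].\Pi^{\dc})=(R/G)^{\dc}$, as desired.

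The main subtlety — and what I expect to be the only real obstacle — is statement (2) for the $\mc$ part: one must be careful that $\Pi^{\mc}=(\Pi^{\dc})^{pr}$ need not be a simple system of $(R^{\mc},G)$ (indeed $\pi_G(\Pi^{\mc})$ generally is not contained in $(R^{\mc}/G)^{\dc}$, as discussed in $\S$ \ref{subsubsect:map-prime}), so one cannot naively say ``$\W[\Pi^{\mc}]$ is the affine Weyl group of $R^{\mc}/G$''. What saves the argument is that \eqref{eq:paired-basis} guarantees $\pi_G(\Pi^{\mc})=\Pi^{\mc}_G$ \emph{is} a simple system of $(R/G)^{\mc}=(R^{\mc}/G)^{\mc}$ (this is exactly the content recorded after Definition \ref{defn:PRB}, via Lemma \ref{lemma_base}), so the reflections $\overline{r}_{\overline{\alpha'}}$ for $\alpha'\in\Pi^{\mc}$ are precisely the simple reflections of $(R/G)^{\mc}$, and the orbit of $\Pi^{\mc}_G$ under the group they generate is all of $(R/G)^{\mc}$. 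I would spell this out carefully to make clear the argument works downstairs in $F/G$ even though the chosen lifts $\Pi^{\mc}$ are not themselves a simple system upstairs.
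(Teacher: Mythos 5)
Your proof is correct and follows essentially the same route as the paper's: both deduce (1) from the fact that $\Pi^{\dc}_G$ and $\Pi^{\mc}_G$ are simple systems of $(R/G)^{\dc}$ and $(R/G)^{\mc}$ together with $W(R/G)=W((R/G)^{\dc})=W((R/G)^{\mc})$, and both obtain (2) by pushing the orbit computation down through $\pi_G$ and using that a reduced affine root system is the Weyl orbit of a simple system. Your extra remark on why the $\mc$ case works downstairs even though $\Pi^{\mc}$ is not a simple system upstairs is exactly the point the paper leaves implicit.
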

\begin{proof}
(1) Since $\Pi_G^{\dc}$ is a simple system of the reduced affine root system of 
$(R/G)^{\dc}$, we have 
$W((R/G)^{\dc})=\overline{\W[\Pi^{\dc}_G]}$. In addition, 
since $W(R/G)=W((R/G)^{\dc})$,
we have the first equality.  The second equality 
follows form the definition of $\Pi^{\mc}_G$.
\vskip 1mm
\noindent
(2) As the second equality is obtained by a similar way to the first one,
we only give a proof of the {first} one. 
Since $\overline{\W[\Pi^{\dc}_G]}=W((R/G)^{\dc})$, we have
\[\pi_G(\W[\Pi^{\dc}].\Pi^{\dc})=\overline{\W[\Pi^{\dc}_G]}.\Pi^{\dc}_G
=W((R/G)^{\dc}).\Pi^{\dc}_G=(R/G)^{\dc}.\]
as desired. 
\end{proof}

Define vector subspaces $L_{\Pi^{\dc}}$ and 
$L_{\Pi^{\mc}}$ of $F$ by 
\begin{equation}\label{defn:L}
L_{\Pi^{\dc}}= \bigoplus_{i=0}^l \R \alpha_i\index[notations]{l711@$L_{\Pi^{\dc}}$}
\quad\text{and}\quad
L_{\Pi^{\mc}}=\bigoplus_{i=0}^l \R \alpha'_i,\index[notations]{l712@$L_{\Pi^{\mc}}$}
\end{equation}
respectively. Then, we have two decompositions 
\begin{equation}\label{eq:decomp-L'-L''}
F=L_{\Pi^{\dc}}\bigoplus G=L_{\Pi^{\mc}}\bigoplus G
\end{equation}
of the vector space $F$, and they induce isomorphisms of vector spaces
\begin{equation}\label{affine-lift}
\pi_G|_{L_{\Pi^{\dc}}}:L_{\Pi^{\dc}}\xrightarrow{\sim} F/G\quad \text{and}\quad
\pi_G|_{L_{\Pi^{\mc}}}:L_{\Pi^{\mc}}\xrightarrow{\sim} F/G,
\end{equation}
respectively. \\

It is easy to see that the subset $\W[\Pi^{\dc}].\Pi^{\dc}$ 
({\it resp}. $\W[\Pi^{\mc}].\Pi^{\mc}$) 
is a root system belonging to 
$(L_{\Pi^{\dc}},I_{L_{\Pi^{\dc}}})$ ({\it resp}. $(L_{\Pi^{\mc}},I_{L_{\Pi^{\mc}}})$).
Furthermore, $\Pi^{\dc}=\{\alpha_0,\ldots,\alpha_l\}$ ({\it resp}.
$\Pi^{\mc}=\{\alpha_0',\ldots,\alpha_l'\}$) is a simple system of $\W[\Pi^{\dc}].\Pi^{\dc}$ 
({\it resp}. $\W[\Pi^{\mc}].\Pi^{\mc}$).
Especially, we have
\begin{equation}\label{affine_root_lattices}
Q(\W[\Pi^{\dc}].\Pi^{\dc})=\bigoplus_{i=0}^l \Z \alpha_i \quad \text{and}\quad 
Q(\W[\Pi^{\mc}].\Pi^{\mc})=\bigoplus_{i=0}^l \Z \alpha'_i,
\end{equation} 
where $Q(\W[\Pi^{\dc}].\Pi^{\dc})$ ({\it resp}. $Q(\W[\Pi^{\mc}].\Pi^{\mc})$) is the root lattice of 
$\W[\Pi^{\dc}].\Pi^{\dc}$ ({\it resp}. $\W[\Pi^{\mc}].\Pi^{\mc}$).
Furthermore, the isomorphisms in \eqref{affine-lift} induce bijections
\begin{equation}\label{(R/G)'-(R/G)''}
\W[\Pi^{\dc}].\Pi^{\dc} \xrightarrow{\sim} (R/G)^{\dc}\quad \text{and} \quad 
\W[\Pi^{\mc}].\Pi^{\mc} \xrightarrow{\sim} (R/G)^{\mc}.
\end{equation}

\begin{prop}\label{prop:description-R0}
{\rm (1)} The sets $R^{\dc}\cap L_{\Pi^{\dc}}$ and $R^{\mc}\cap L_{\Pi^{\mc}}$ are reduced affine root 
systems belonging to $(L_{\Pi^{\dc}},I_{L_{\Pi^{\dc}}})$ and $(L_{\Pi^{\mc}},I_{L_{\Pi^{\mc}}})$, respectively.
\vskip 1mm
\noindent
{\rm (2)} One has $R^{\dc}\cap L_{\Pi^{\dc}}=\W[\Pi^{\dc}].\Pi^{\dc}$ and 
$R^{\mc}\cap L_{\Pi^{\mc}}=\W[\Pi^{\mc}].\Pi^{\mc}$.
\end{prop}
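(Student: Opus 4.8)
The plan is to treat the two halves symmetrically, so I would only write out the argument for $R^{\dc}\cap L_{\Pi^{\dc}}$ and remark that $R^{\mc}\cap L_{\Pi^{\mc}}$ is handled identically (replacing $\dc$ by $\mc$, $\Pi^{\dc}$ by $\Pi^{\mc}$, and invoking Lemma \ref{lemma:reduced_pair2} (2) and the second halves of Lemmas \ref{lemma:paired-reduced-basis} and the displays in \S \ref{subsubsect:paired-root-basis} in place of the first halves). So the core task is the pair of assertions about $R^{\dc}\cap L_{\Pi^{\dc}}$. First I would observe that $\Pi^{\dc}=\{\alpha_i\}_{0\le i\le l}$ is by construction a simple system of the \emph{reduced} mERS $(R^{\dc},G)$ in the sense of Definition \ref{defn:basis}, and $G=\R a$ is one-dimensional; hence all the machinery for reduced mERSs applies to $(R^{\dc},G)$. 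In particular, the $(l+1)$-dimensional subspace attached to this simple system via \eqref{defn:L^{l+1}} is exactly $L_{\Pi^{\dc}}=\bigoplus_{i=0}^l\R\alpha_i$, and the bijection \eqref{eq:isom-p_G} gives $\pi_G|_{R^{\dc}\cap L_{\Pi^{\dc}}}:R^{\dc}\cap L_{\Pi^{\dc}}\xrightarrow{\sim}R^{\dc}/G$.

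For statement (1): since $(R^{\dc},G)$ is a reduced mERS, its affine quotient $R^{\dc}/G$ is an affine root system belonging to $(F/G,I_{F/G})$ — this is Remark \ref{rem_quot-root-sys} (1), as $G$ has rank $1$. Transporting this structure back along the vector-space isomorphism $\pi_G|_{L_{\Pi^{\dc}}}:L_{\Pi^{\dc}}\xrightarrow{\sim}F/G$ of \eqref{affine-lift}, which is an isometry for $I_{L_{\Pi^{\dc}}}$ and $I_{F/G}$, one gets that $R^{\dc}\cap L_{\Pi^{\dc}}$ is a reduced affine root system belonging to $(L_{\Pi^{\dc}},I_{L_{\Pi^{\dc}}})$; reducedness is inherited because $R^{\dc}$ is reduced and $\pi_G$ restricted to $L_{\Pi^{\dc}}$ is injective on roots. (Alternatively, one can note directly that $\W[\Pi^{\dc}].\Pi^{\dc}$ is a reduced affine root system belonging to $(L_{\Pi^{\dc}},I_{L_{\Pi^{\dc}}})$ with simple system $\Pi^{\dc}$, as already recorded just before \eqref{affine_root_lattices}, and then (1) follows from (2).)

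For statement (2): the inclusion $\W[\Pi^{\dc}].\Pi^{\dc}\subseteq R^{\dc}\cap L_{\Pi^{\dc}}$ is immediate, since $\Pi^{\dc}\subseteq R^{\dc}$, the subspace $L_{\Pi^{\dc}}$ is $\W[\Pi^{\dc}]$-stable and contains $\Pi^{\dc}$, and $R^{\dc}$ is $W(R)$-stable. For the reverse inclusion I would apply $\pi_G$: by Lemma \ref{lemma:paired-reduced-basis} (2), $\pi_G(\W[\Pi^{\dc}].\Pi^{\dc})=(R/G)^{\dc}$, which equals $(R^{\dc}/G)^{\dc}$ by Lemma \ref{lemma:reduced_pair2} (1). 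But $R^{\dc}/G$ is reduced — this is exactly where the reducedness of $(R^{\dc},G)$ enters, giving $(R^{\dc}/G)^{\dc}=R^{\dc}/G$ — so $\pi_G(\W[\Pi^{\dc}].\Pi^{\dc})=R^{\dc}/G$. Now take any $\beta\in R^{\dc}\cap L_{\Pi^{\dc}}$. Its image $\pi_G(\beta)\in R^{\dc}/G$ coincides with $\pi_G(\gamma)$ for some $\gamma\in\W[\Pi^{\dc}].\Pi^{\dc}\subseteq R^{\dc}\cap L_{\Pi^{\dc}}$ by surjectivity; both $\beta$ and $\gamma$ lie in $L_{\Pi^{\dc}}$, on which $\pi_G$ is injective by \eqref{eq:decomp-L'-L''}, so $\beta=\gamma\in\W[\Pi^{\dc}].\Pi^{\dc}$. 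This proves $R^{\dc}\cap L_{\Pi^{\dc}}=\W[\Pi^{\dc}].\Pi^{\dc}$, hence also (1).

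The step I expect to require the most care is making sure the ``reduced mERS theory'' genuinely applies to $(R^{\dc},G)$ with $\Pi^{\dc}$ as chosen — i.e.\ that the additional constraint \eqref{condition-Pi'} imposed in \S \ref{subsubsect:paired-root-basis} does not interfere with $\Pi^{\dc}$ being a legitimate simple system of $(R^{\dc},G)$ — and checking that $\pi_G|_{L_{\Pi^{\dc}}}$ is the isometry identifying $R^{\dc}\cap L_{\Pi^{\dc}}$ with the affine root system $R^{\dc}/G$; once these identifications are set up cleanly, both statements are short. The rest is bookkeeping with the reduced pair identities of Lemma \ref{lemma:reduced_pair2} and the description of $\W[\Pi^{\dc}].\Pi^{\dc}$ already in hand.
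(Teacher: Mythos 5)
The central step of your argument for (2) --- and the bijection you invoke for (1) --- rests on the claim that $R^{\dc}/G$ is reduced because $R^{\dc}$ is. That implication goes the wrong way: Remark \ref{rem_quot-root-sys} (2) says reducedness of the \emph{quotient} forces reducedness of $R$, not conversely, and the failure of the converse is precisely the subject of this paper. For instance, for $(R,G)$ of type $BC_l^{(1,2)}$ one has $R=R^{\dc}=R^{\mc}$ reduced while $R^{\dc}/G$ is the non-reduced affine system $BCC_l$; and the example given in \S\ref{subsubsect:map-prime} ($R$ of type $C^\vee C_l^{(4)}$, $R^{\mc}$ of type $BC_l^{(2,2)\sigma}(2)$ with $R^{\mc}/G$ of type $C^\vee C_l$) shows the same on the $\mc$ side. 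Consequently $(R^{\dc}/G)^{\dc}$ may be a proper subsystem of $R^{\dc}/G$, the injection $\pi_G|_{R^{\dc}\cap L_{\Pi^{\dc}}}$ is in general \emph{not} surjective onto $R^{\dc}/G$ (its image turns out to be only $(R^{\dc}/G)^{\dc}$), and your surjectivity step --- ``every $\pi_G(\beta)$ is hit by some $\gamma\in\W[\Pi^{\dc}].\Pi^{\dc}$ because $\pi_G(\W[\Pi^{\dc}].\Pi^{\dc})=R^{\dc}/G$'' --- collapses. Note also that \eqref{eq:isom-p_G} is stated only under the standing hypothesis that the affine quotient is reduced, so it cannot be applied to $(R^{\dc},G)$ here.

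What is actually needed, and what the paper supplies, is a proof that there is no reduced subsystem strictly between $(R^{\dc}/G)^{\dc}$ and $R^{\dc}/G$ (resp.\ between $(R^{\mc}/G)^{\mc}$ and $R^{\mc}/G$), since a priori $\pi_G(R^{\dc}\cap L_{\Pi^{\dc}})$ is only some reduced subsystem sandwiched between the two. For the $\dc$ case this follows from a root-lattice comparison, $Q\big((R^{\dc}/G)^{\dc}\big)=Q(R^{\dc}/G)$, so a strictly larger reduced subsystem would have a strictly larger root lattice; for the $\mc$ case that lattice argument fails (the non-multipliable part can generate a strictly smaller lattice) and the paper resorts to a case-by-case check over the four non-reduced affine types. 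This also defeats your plan of handling the two halves ``identically'': the $\dc$ and $\mc$ cases genuinely require different arguments. For statement (1), your ``transport along $\pi_G$'' route suffers from the same non-surjectivity; the paper instead verifies the hypotheses of K.~Saito's criterion (that $(\pi_G)_*$ restricted to $W(R^{\dc}\cap L_{\Pi^{\dc}})$ surjects onto $W(R^{\dc}/G)$, which follows from $\Pi^{\dc}\subset R^{\dc}\cap L_{\Pi^{\dc}}$ and Lemma \ref{lemma:paired-reduced-basis} (1)). Your parenthetical fallback --- deduce (1) from (2) using that $\W[\Pi^{\dc}].\Pi^{\dc}$ is already known to be a reduced affine root system --- is sound, but only once (2) has been repaired.
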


\begin{proof}
(1) Since the statements for $R^{\mc}$ are obtained  {in a} similar way to those for 
$R^{\dc}$, we only prove the later case. First, we prove $R^{\dc}\cap L_{\Pi^{\dc}}$ 
is a root system belonging to $(L_{\Pi^{\dc}},I_{L_{\Pi^{\dc}}})$. 
Recall a result due to K. Saito \cite{Saito1985}. 
Let $R$ be a root system belonging to $(F,I)$ in the sense of Definition 
\ref{defn_root-sys}, $G$ be a subspace of $\rad{I}$ defined over $\Z$ and 
$\pi_G:F\longrightarrow F/G$ be the canonical projection. Denote $W(R)$ 
(resp. $W(R/G)$) the Weyl group of $R$ (resp. the quotient root system $R/G$).

\begin{lemma}[\cite{Saito1985} (1.15) Note]
Let $L$ be a subspace of $F$ such that
\begin{itemize}
\item[(a)] $F=L\bigoplus G$,
\vskip 1mm
\item[(b)] $(\pi_G)_*|_{W(R\cap L)}:W(R\cap L)\longrightarrow W(R/G)$ is surjective,
\end{itemize}
where $(\pi_G)_*:W(R)\longrightarrow W(R/G)$ is the group homomorphism induced
form $\pi_G:F\longrightarrow F/G$, and $W(R\cap L)$ is the subgroup of $W(R)$
generated by reflections $r_\alpha\ (\alpha\in R\cap L)$. Then, we have
\begin{itemize}
\item[(i)] $R\cap L$ is a root system belonging to $(L,I|_L)$,
\vskip 1mm
\item[(ii)] the linear isomorphism $\pi_G|_L:L\xrightarrow{\sim} F/G$ induces an injective
map $R\cap L\longrightarrow R/G$ which induces an isomorphism $W(R\cap L)
\xrightarrow{\sim} W(R/G)$. 
\end{itemize}
\end{lemma}

Apply this lemma for the case  $R=R^{\dc}$ and  $L=L_{\Pi^{\dc}}$. 
Condition (a) is satisfied by \eqref{eq:decomp-L'-L''}. Since $\Pi^{\dc}\subset
R^{\dc}\cap L_{\Pi^{\dc}}$ and Lemma \ref{lemma:paired-reduced-basis} (1), 
condition (b) is also satisfied. 
Therefore, $R^{\dc}\cap L_{\Pi^{\dc}}$ 
is a root system belonging to $(L_{\Pi^{\dc}},I_{L_{\Pi^{\dc}}})$ by the above lemma.

Second, let us prove $R^{\dc}\cap L_{\Pi^{\dc}}$ is reduced. However, this is obvious by 
the reducibility of $R^{\dc}$. 
\vskip 1mm
\noindent
(2) Let us prove the first equality. 
If $R^{\dc}/G$ is reduced, there is nothing to prove. 
Otherwise, it is of type $BCC_l$, $C^\vee BC_l$, 
$BB^\vee_l$ or $C^\vee C_l$. The reduced root system $R^{\dc}\cap L_{\Pi^{\dc}}$ 
is embedded into $R^{\dc}/G$ by the isomorphism 
$\pi_G|_{L_{\Pi^{\dc}}}:L_{\Pi^{\dc}}\xrightarrow{\sim} F/G$.
{Furthermore}, since $\W[\Pi^{\dc}].\Pi^{\dc}$ is a subset of $R\cap L_{\Pi^{\dc}}$, the 
image $\pi_G(R^{\dc}\cap L_{\Pi^{\dc}})$ is a reduced subsystem of $R^{\dc}/G$ which 
contains $\pi_G(\W[\Pi^{\dc}]\cdot \Pi^{\dc})=(R^{\dc}/G)^{\dc}$ 
by Lemma \ref{lemma:paired-reduced-basis} (2).
That is, we have a sequence of root systems
\[(R^{\dc}/G)^{\dc}\subset \pi_G(R^{\dc}\cap L_{\Pi^{\dc}})\subset R^{\dc}/G\]
such that both $(R^{\dc}/G)^{\dc}$ and $\pi_G(R^{\dc}\cap L_{\Pi^{\dc}})$ are reduced. 

Assume $(R^{\dc}/G)^{\dc}\subsetneq \pi_G(R^{\dc}\cap L_{\Pi^{\dc}})$. Since both 
$(R^{\dc}/G)^{\dc}$ and $\pi_G(R^{\dc}\cap L_{\Pi^{\dc}})$ are reduced, the root lattice 
$Q((R^{\dc}/G)^{\dc})$ is a proper sublattice of $Q(\pi_G(R^{\dc}\cap L_{\Pi^{\dc}}))$. 
On the other hand, by Lemma \ref{lemma:reduced_pair1} (1), we have 
$Q((R^{\dc}/G)^{\dc})=Q(R^{\dc}/G)$. 
This is a contradiction. Thus
\[ \pi_G(R^{\dc}\cap L_{\Pi^{\dc}})=(R^{\dc}/G)^{\dc}=\pi_G(\W[\Pi^{\dc}].\Pi^{\dc}). \]
Since $\pi_G|_{L_{\Pi^{\dc}}}:L_{\Pi^{\dc}}\xrightarrow{\sim} F/G$ is an isomorphism, 
we have $R^{\dc}\cap L_{\Pi^{\dc}}=\W[\Pi^{\dc}].\Pi^{\dc}$ as desired.  \\

For the second equality, we have
\[(R^{\mc}/G)^{\mc}\subset \pi_G(R^{\mc}\cap L_{\Pi^{\mc}})\subset R^{\mc}/G\]
by a similar way to the first case. In addition, we note that both 
$(R^{\mc}/G)^{\mc}=\pi_G(\W[\Pi^{\mc}].\Pi^{\mc})$ and 
$\pi_G(R^{\mc}\cap L_{\Pi^{\mc}})$ are reduced. 
Hence, the statement is reduced to the following claim.
\vskip 3mm
\noindent
{\bf Claim}. {\it  There is no reduced subsystem $R_1$ such that 
$(R^{\mc}/G)^{\mc}\subsetneq R_1\subset R^{\mc}/G$. }
\begin{proof}
This claim follows from case-by-case detailed analysis. In the following, we give 
a proof for the case when $R^{\mc}/G$ is of type $C^\vee C_l$, for example. Since 
the other cases are obtained by a similar way,  we leave the detailed proof to 
the reader.

Recall an explicit description of the set $R^{\mc}/G$ of roots of type $C^\vee C_l$ 
(see $\S$ \ref{sect:CveeC_l} in Appendix \ref{sect_affine-root-system}):
\[R^{\mc}/G=\left\{\begin{array}{lll}
\pm \vep_i+nb\ (1\leq i\leq l,\, n\in\Z),\\
\pm\vep_i\pm\vep_j+2nb\ (1\leq i<j\leq l,\, n\in\Z),\\
\pm 2\vep_i+2nb\ (1\leq i\leq l,\, n\in\Z)
\end{array}\right\}.\]
Here, we normalize a symmetric bilinear form $I_{F/G}$ on $F/G$ by 
$I_{F/G}(\vep_i,\vep_j)=\delta_{i,j}$, 
and $b$ is a fixed generator of the lattice $\rad(I_{F/G})\cap Q(R^{\mc}/G)$ of rank $1$. 
Note that in this case $(R^{\mc}/G)^{\mc}$ consists of all middle and long roots of $R^{\mc}/G$. 

Assume  there exists a reduced subsystem $R_1$ of $R^{\mc}/G$ such that 
$(R^{\mc}/G)^{\mc}\\ \subsetneq R_1$. Then $R_1$ must contain a short root $\alpha$. Since $R_1$
is a root system, $-\alpha$ is an element of $R_1$. So, we may assume 
$\alpha=\vep_i+mb$ for some $1\leq i\leq l$ and $m\in\Z$. Since $(R^{\mc}/G)^{\mc}$ is a 
subsystem of $R_1$, the root system $R_1$ contains all middle and long roots of 
$R^{\mc}/G$. Therefore, $2\alpha=2\vep_i+2mb$ is an element of $R_1$. 
This contradicts the reducedness of $R_1$. Thus, there is no such an $R_1$.
\end{proof}

By the above claim, $\pi_G(R^{\mc}\cap L_{\Pi^{\mc}})$ should be equal to 
$\pi_G(\W[\Pi^{\mc}]\cdot \Pi^{\mc})=(R^{\mc}/G)^{\mc}$. By the same reason as in the first case, we have
$R^{\mc}\cap L_{\Pi^{\mc}}=\W[\Pi^{\mc}].\Pi^{\mc}$, which completes the proof of statement (2).
\end{proof}

\begin{cor}\label{cor:nr-countings0}
We have the following description of the set $R$ of roots{\rm :}
\begin{equation}\label{description-R0}
R=\Big(\bigsqcup_{\gamma\in R^{\dc}\cap L_{\Pi^{\dc}}}(\gamma+\Z k(\gamma)a)\Big)
\bigcup
\Big(\bigsqcup_{\gamma\in R^{\mc}\cap L_{\Pi^{\mc}}}
(\gamma+\Z k(\gamma)a)\Big).
\end{equation}
\end{cor}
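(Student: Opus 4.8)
The plan is to prove the two inclusions in \eqref{description-R0} separately, the reverse inclusion being essentially trivial and the forward one being a direct consequence of Proposition \ref{prop:description-R0}.

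First I would dispatch the inclusion $\supseteq$ together with the internal disjointness. Both $R^{\dc}\cap L_{\Pi^{\dc}}$ and $R^{\mc}\cap L_{\Pi^{\mc}}$ are subsets of $R$, and for any $\gamma\in R$ one has $\gamma+\Z k(\gamma)a\subset R$ because $K_G(\gamma)=\Z k(\gamma)a$ by Lemma \ref{lemma:countings1} (1); hence the right-hand side of \eqref{description-R0} is contained in $R$. For the disjointness within the first piece: if $\gamma_1,\gamma_2\in R^{\dc}\cap L_{\Pi^{\dc}}$ satisfy $\gamma_1-\gamma_2\in\Z a\subset G$, then $\pi_G(\gamma_1)=\pi_G(\gamma_2)$, and since $\pi_G|_{L_{\Pi^{\dc}}}$ is an isomorphism (see \eqref{affine-lift}) this forces $\gamma_1=\gamma_2$; the same argument works for the second piece via $\pi_G|_{L_{\Pi^{\mc}}}$. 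I would also point out that the two pieces themselves may overlap — the middle-length roots occur in both — which is why \eqref{description-R0} uses an ordinary union between them rather than a disjoint union.

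The heart of the matter is the inclusion $\subseteq$. Given $\alpha\in R$, set $\overline{\alpha}=\pi_G(\alpha)$. Since $R/G=(R/G)^{\dc}\cup(R/G)^{\mc}$ — no root $\overline{\gamma}$ of a generalized root system can satisfy both $\overline{\gamma}/2\in R/G$ and $2\overline{\gamma}\in R/G$, by the remark after Definition \ref{defn_root-sys} — we have $\overline{\alpha}\in(R/G)^{\dc}$ or $\overline{\alpha}\in(R/G)^{\mc}$. If $\overline{\alpha}\in(R/G)^{\dc}$, then combining Lemma \ref{lemma:reduced_pair2} (1), Proposition \ref{prop:description-R0} (2), and the bijection \eqref{(R/G)'-(R/G)''} gives $(R/G)^{\dc}=(R^{\dc}/G)^{\dc}=\pi_G(R^{\dc}\cap L_{\Pi^{\dc}})$, so there exists $\gamma\in R^{\dc}\cap L_{\Pi^{\dc}}$ with $\pi_G(\gamma)=\overline{\alpha}=\pi_G(\alpha)$. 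Then $\alpha-\gamma\in G\cap Q(R)=\Z a$, and since $\alpha,\gamma\in R$, Lemma \ref{lemma:countings1} (1) yields $\alpha\in\gamma+\Z k(\gamma)a$. The case $\overline{\alpha}\in(R/G)^{\mc}$ is symmetric, using Lemma \ref{lemma:reduced_pair2} (2) and the identification $(R/G)^{\mc}=(R^{\mc}/G)^{\mc}=\pi_G(R^{\mc}\cap L_{\Pi^{\mc}})$ to produce $\gamma\in R^{\mc}\cap L_{\Pi^{\mc}}$ with $\alpha\in\gamma+\Z k(\gamma)a$.

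I do not expect a genuine obstacle here: once Proposition \ref{prop:description-R0} is in hand the corollary is formal, and the only thing to watch is that the lift of $\alpha$ via $\pi_G$ really lands in $R^{\dc}\cap L_{\Pi^{\dc}}$ (resp. $R^{\mc}\cap L_{\Pi^{\mc}}$) and not merely in $R^{\dc}/G$ (resp. $R^{\mc}/G$) — which is exactly what Proposition \ref{prop:description-R0} (2) guarantees via the equalities $R^{\dc}\cap L_{\Pi^{\dc}}=\W[\Pi^{\dc}].\Pi^{\dc}$ and $R^{\mc}\cap L_{\Pi^{\mc}}=\W[\Pi^{\mc}].\Pi^{\mc}$.
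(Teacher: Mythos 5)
Your proof is correct and follows essentially the same route as the paper's: both arguments reduce to showing that $\pi_G(R^{\dc}\cap L_{\Pi^{\dc}})\cup\pi_G(R^{\mc}\cap L_{\Pi^{\mc}})=(R/G)^{\dc}\cup(R/G)^{\mc}=R/G$ (you via Lemma \ref{lemma:reduced_pair2} and Proposition \ref{prop:description-R0}, the paper via Lemma \ref{lemma:paired-reduced-basis} (2) and the same proposition) and then invoke $R\cap(\gamma+\Z a)=\gamma+\Z k(\gamma)a$ from Lemma \ref{lemma:countings1} (1). Your explicit verification of the internal disjointness via the injectivity of $\pi_G|_{L_{\Pi^{\dc}}}$ and $\pi_G|_{L_{\Pi^{\mc}}}$ is a detail the paper leaves implicit, but it does not change the argument.
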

\begin{proof}
By Lemma \ref{lemma:paired-reduced-basis} (2) and Proposition 
\ref{prop:description-R0} (2), it follows that
\[\begin{aligned}
&\pi_G(R^{\dc}\cap L_{\Pi^{\dc}}) \cup \pi_G(R^{\mc}\cap L_{\Pi^{\mc}}) \\
=
&\pi_G(\W[\Pi^{\dc}].\Pi^{\dc})\cup \pi_G(\W[\Pi^{\mc}].\Pi^{\mc}) \\
=
&(R/G)^{\dc}\cup (R/G)^{\mc}=R/G.
\end{aligned}\]
Hence, the statement is an immediate consequence of Lemma 
\ref{lemma:countings1} (1).
\end{proof}

On the other hand, by replacing $\W[\Pi^{\mc}].\Pi^{\mc}=R^{\mc}\cap L_{\Pi^{\mc}}$ with $\W[\Pi^{\dc}].\Pi^{\mc}$, 
we have another description of $R$. 

\begin{cor}\label{cor:description-R}
{\rm (1)} The canonical projection $\pi_G:F\to F/G$ induces a bijection 
$\pi_G|_{\W[\Pi^{\dc}].\Pi^{\mc}}:\W[\Pi^{\dc}].\Pi^{\mc}
\xrightarrow{\sim} (R/G)^{\mc}$.
\vskip 1mm
\noindent
{\rm (2)} There is another description of the set $R$ of roots{\rm :}
\begin{equation}\label{description-R1}
R=\Big(\bigsqcup_{\gamma\in \W[\Pi^{\dc}].\Pi^{\dc}}(\gamma+\Z k(\gamma)a)\Big)
\bigcup\Big(\bigsqcup_{\gamma\in \W[\Pi^{\dc}].\Pi^{\mc}}
(\gamma+\Z k(\gamma)a)\Big).
\end{equation}
\end{cor}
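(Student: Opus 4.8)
The plan is to establish the two statements in turn, with Corollary \ref{cor:nr-countings0} and Proposition \ref{prop:description-R0} as the main inputs. For statement (1), I would first recall that by \eqref{(R/G)'-(R/G)''} the isomorphism $\pi_G|_{L_{\Pi^{\mc}}}$ restricts to a bijection $\W[\Pi^{\mc}].\Pi^{\mc}\xrightarrow{\sim}(R/G)^{\mc}$. Now compare $\W[\Pi^{\dc}].\Pi^{\mc}$ with $\W[\Pi^{\mc}].\Pi^{\mc}$. Applying $\pi_G$ and using that $(\pi_G)_*$ sends $r_{\alpha'}$ to $\overline{r}_{\overline{\alpha'}}$, both sets have image inside $R/G$, and by Lemma \ref{lemma:paired-reduced-basis}~(1) the groups $\overline{\W[\Pi^{\dc}_G]}$ and $\overline{\W[\Pi^{\mc}_G]}$ coincide (both equal $W(R/G)$). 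Hence $\pi_G(\W[\Pi^{\dc}].\Pi^{\mc})=\overline{\W[\Pi^{\dc}_G]}.\Pi^{\mc}_G=\overline{\W[\Pi^{\mc}_G]}.\Pi^{\mc}_G=\pi_G(\W[\Pi^{\mc}].\Pi^{\mc})=(R/G)^{\mc}$. So $\pi_G|_{\W[\Pi^{\dc}].\Pi^{\mc}}$ is surjective onto $(R/G)^{\mc}$. For injectivity, I would argue that $\W[\Pi^{\dc}].\Pi^{\mc}\subset R^{\mc}$ lies in a single coset structure controlled by the decomposition $F=L_{\Pi^{\dc}}\oplus G$: each $w.\alpha_i'$ with $w\in\W[\Pi^{\dc}]$ can be written uniquely as (its $L_{\Pi^{\dc}}$-component) $+$ (a multiple of $a$), and the $L_{\Pi^{\dc}}$-component already determines the $\pi_G$-image; conversely two elements of $\W[\Pi^{\dc}].\Pi^{\mc}$ with the same image differ by an element of $G\cap Q(R)=\Z a$, and one checks using the counting-number normalization (the prime map picks out a canonical representative, cf.\ \eqref{defn:ast} and the defining property of $\alpha_i'$) that they must be equal. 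The cleanest route is: $\pi_G|_{\W[\Pi^{\dc}].\Pi^{\mc}}$ is a bijection onto its image because $\W[\Pi^{\dc}]$ acts through $\overline{\W[\Pi^{\dc}_G]}$ faithfully on $L_{\Pi^{\dc}}\cong F/G$, hence faithfully on the lifted set $\W[\Pi^{\dc}].\Pi^{\mc}$ once we know the $\W[\Pi^{\dc}]$-orbit of $\Pi^{\mc}$ projects injectively — which follows since $w\mapsto \overline{w}$ is injective on $\W[\Pi^{\dc}]$ (it is isomorphic to $W(R^{\dc}\cap L_{\Pi^{\dc}})$, which maps isomorphically to $W((R/G)^{\dc})=W(R/G)$ by Proposition \ref{prop:description-R0}~(2) and the quoted \cite{Saito1985} (1.15) Note).

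For statement (2), I would deduce \eqref{description-R1} from \eqref{description-R0} by substituting $\W[\Pi^{\dc}].\Pi^{\mc}$ for $R^{\mc}\cap L_{\Pi^{\mc}}=\W[\Pi^{\mc}].\Pi^{\mc}$. Two things must be checked. First, that $\bigcup_{\gamma\in\W[\Pi^{\dc}].\Pi^{\mc}}(\gamma+\Z k(\gamma)a)$ equals $\bigcup_{\gamma\in\W[\Pi^{\mc}].\Pi^{\mc}}(\gamma+\Z k(\gamma)a)$: by part (1) and by \eqref{(R/G)'-(R/G)''} both index sets biject via $\pi_G$ onto $(R/G)^{\mc}$, and for each $\overline{\beta}\in(R/G)^{\mc}$ the fiber $\pi_G^{-1}(\overline{\beta})\cap R$ is a single coset $\beta_0+\Z k(\beta_0)a$ by Lemma \ref{lemma:countings1}~(1), independent of which lift $\beta_0$ (in $\W[\Pi^{\dc}].\Pi^{\mc}$ or $\W[\Pi^{\mc}].\Pi^{\mc}$) we choose; using $k(\gamma)=k(\gamma+rk(\gamma)a)$ from the proof of Lemma \ref{lemma:countings1}~(1), the union over the whole coset is the same set either way. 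Second, that the union over $\W[\Pi^{\dc}].\Pi^{\dc}=R^{\dc}\cap L_{\Pi^{\dc}}$ is unchanged — this is literally the first half of \eqref{description-R0} rewritten via Proposition \ref{prop:description-R0}~(2). Combining, \eqref{description-R1} follows.

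The main obstacle I anticipate is the injectivity half of statement (1): making rigorous that lifting the affine Weyl orbit $\overline{\W[\Pi^{\dc}_G]}.\Pi^{\mc}_G$ back to $\W[\Pi^{\dc}].\Pi^{\mc}$ is bijective rather than merely surjective. This requires knowing that the $\Z a$-ambiguity in each lift is pinned down — equivalently, that $\W[\Pi^{\dc}].\Pi^{\mc}$ meets each fiber of $\pi_G$ in at most one point. The safe way is to observe that $\W[\Pi^{\dc}].\Pi^{\mc}$ is contained in the reduced affine root system $\W[\Pi^{\dc}].(\Pi^{\dc}\cup\Pi^{\mc})$ sitting inside $(L_{\Pi^{\dc}}, I_{L_{\Pi^{\dc}}})$ (one must verify $\W[\Pi^{\dc}]$ stabilizes $\Pi^{\mc}$ up to sign and translation by checking $r_{\alpha_i}(\alpha_j')$ stays in $R\cap L_{\Pi^{\dc}}$ — a case check on the $BCC_l$, $C^\vee BC_l$, $BB_l^\vee$, $C^\vee C_l$ diagrams), and since $\pi_G|_{L_{\Pi^{\dc}}}$ is an isomorphism of vector spaces, its restriction to any subset of $L_{\Pi^{\dc}}$ is injective. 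That reduces everything to the inclusion $\W[\Pi^{\dc}].\Pi^{\mc}\subset L_{\Pi^{\dc}}$, which I would handle by the same diagram-by-diagram computation already used implicitly in Lemma \ref{lemma_base} and in $\S\ref{sect_mers-II}$; the remaining identities are then routine.
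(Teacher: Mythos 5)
Your surjectivity argument for (1) and your argument for (2) are fine and coincide with the paper's: both reduce to $\pi_G(\W[\Pi^{\dc}].\Pi^{\mc})=\pi_G(\W[\Pi^{\mc}].\Pi^{\mc})=(R/G)^{\mc}$ via Lemma \ref{lemma:paired-reduced-basis} and then to the fiber description $\pi_G^{-1}(\overline{\gamma})\cap R=\gamma+\Z k(\gamma)a$ from Lemma \ref{lemma:countings1}~(1); note that (2) only needs surjectivity, so it is independent of the delicate half of (1).

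The injectivity argument, however, has a genuine gap. Both your ``cleanest route'' and your ``safe way'' ultimately rest on the inclusion $\W[\Pi^{\dc}].\Pi^{\mc}\subset L_{\Pi^{\dc}}$, and this inclusion is \emph{false} in general. For $\alpha_j\in\Pi^{\dc}$ with $2\overline{\alpha_j}\in R/G$, the prime map gives $\alpha_j'=2\alpha_j+p_ja$ with $p_j\in\Z_{\leq 0}$, and $p_j$ is strictly negative whenever $2\alpha_j\not\in R$ (by \eqref{p=-k(alpha'/2)}, $p_j=-\tfrac12 k(\alpha_j')$); since $F=L_{\Pi^{\dc}}\oplus G$, such an $\alpha_j'$ does not lie in $L_{\Pi^{\dc}}$, and no diagram-by-diagram check will repair this. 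Consequently one cannot invoke injectivity of $\pi_G|_{L_{\Pi^{\dc}}}$ directly, nor does faithfulness of $w\mapsto\overline{w}$ on $\W[\Pi^{\dc}]$ settle the matter, because the ambiguity to be excluded is between $w_1(\alpha_{j_1}')$ and $w_2(\alpha_{j_2}')$ for possibly different $j_1,j_2$ and different $w_1,w_2$. The correct repair --- which is what the paper does --- is to observe that since $\W[\Pi^{\dc}]$ preserves $L_{\Pi^{\dc}}$ and fixes $a$, one has $w(\alpha_j')=2w(\alpha_j)+p_ja$, so the whole orbit $\W[\Pi^{\dc}].\alpha_j'$ lies in the single affine translate $L_{\Pi^{\dc}}+p_ja$, on which $\pi_G$ \emph{is} injective; one then splits $\Pi^{\mc}=(\Pi^{\dc}\cap\Pi^{\mc})\amalg(\Pi^{\mc}\setminus\Pi^{\dc})$ and uses the standard fact about non-reduced affine root systems that the $W(R/G)$-orbit through an element of $\Pi^{\mc}_G\setminus\Pi^{\dc}_G$ meets no other orbit, which forces $j_1=j_2$ and hence puts both elements in the \emph{same} translate before the injectivity of $\pi_G|_{L_{\Pi^{\dc}}+p_ja}$ is applied. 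Without this orbit-separation step and the ``constant $a$-component along each orbit'' observation, your argument does not close.
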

\begin{proof}
(1) It is easy to see that the map $\pi_G|_{\W[\Pi^{\dc}].\Pi^{\mc}}$ is surjective. Indeed, by Lemma 
\ref{lemma:paired-reduced-basis} (1), (2), we have 
$\pi_G(\W[\Pi^{\dc}].\Pi^{\mc})=\pi_G(\W[\Pi^{\mc}].\Pi^{\mc})\\=(R^{\mc}/G)^{\mc}$. Therefore, it is enough {to} show that the map is injective.

Recall the definition $\Pi^{\mc}:=(\Pi^{\dc})^{pr}$. More precisely, for $0\leq i\leq l$, 
$\alpha_i'\in\Pi^{\mc}$ is defined by
\[\alpha_i':=\begin{cases}
\alpha_i & \text{if $2\overline{\alpha_i}\not\in R/G$},\\
2\alpha_i+p_i a \text{ for some $p_i \in\Z_{\leq 0}$} & 
\text{if $2\overline{\alpha_i}\in R/G$}.
\end{cases}\]
Recall  also that the set $\Pi_G^{\dc}:=\{\overline{\alpha_i}\}_{0\leq i\leq l}$ \big({\it resp}.
$\Pi_G^{\mc}:=\big\{\overline{\alpha_i'}\big\}_{0\leq i\leq l}$\big) is a simple 
system of the affine root system $(R^{\dc}/G)^{\dc}=(R/G)^{\dc}$({\it resp}. $(R^{\mc}/G)^{\mc}=(R/G)^{\mc}$). 

Let $\gamma_1=w_1(\alpha_{j_1}'),
\gamma_2=w_2(\alpha_{j_2}')$
be elements in $\W[\Pi^{\dc}].\Pi^{\mc}$ ($w_1,w_2\in \W[\Pi^{\dc}]$)
such that 
\begin{equation}\label{gamma1=gamma2}
\overline{\gamma_1}=\overline{\gamma_2}\quad \Longleftrightarrow\quad
\overline{w_1}(\overline{\alpha_{j_1}'})=\overline{w_2}(\overline{\alpha_{j_2}'}).
\end{equation}
Here, $\overline{w_q}\in \overline{\W[\Pi^{\dc}]}=W(R/G)$ for each $q=1,2$. 
Our goal is to show $\gamma_1=\gamma_2$. 

Note that \eqref{gamma1=gamma2} is equivalent to the condition that
$\overline{\alpha_{i_1}'}$ and $\overline{\alpha_{i_2}'}$ are included in the same 
$W(R/G)$-orbit. Let us partition
the set $\Pi^{\mc}$ into the following two pieces:
\[\Pi^{\mc}=(\Pi^{\dc}\cap\Pi^{\mc})\amalg (\Pi^{\mc}\setminus \Pi^{\dc}).\]
Taking the image under $\pi_G$, we have
\[\Pi^{\mc}_G=(\Pi^{\dc}_G\cap\Pi^{\mc}_G)\amalg (\Pi^{\mc}_G\setminus \Pi^{\dc}_G).\] 
Assume $\overline{\alpha_{j_1}'}\in \Pi^{\mc}_G\setminus \Pi^{\dc}_G$. Then, 
in the theory of non-reduced affine root systems, it is known that the $W(R/G)$-orbit 
passing through $\overline{\alpha_{j_1}'}$ does not intersect with other 
$W(R/G)$-orbits. Therefore, it is enough to consider the following two cases: 
\begin{enumerate}
\item[(a)] $\overline{\alpha_{j_1}'}
=\overline{\alpha_{j_2}'}\in \Pi^{\mc}_G\setminus \Pi^{\dc}_G$; 
\item[(b)] both $\overline{\alpha_{j_1}'}$ and $\overline{\alpha_{j_2}'}$ belong to 
$\Pi^{\dc}_G\cap \Pi^{\mc}_G$.
\end{enumerate}
\vskip 1mm
\noindent
(a) The condition $\overline{\alpha_{j_1}'}=\overline{\alpha_{j_2}'}$ implies 
$\alpha_{j_1}'=\alpha_{j_2}'$. 
Therefore,we have $j:=j_1=j_2$, and
denote the element $\alpha_{j_1}'=\alpha_{j_2}'$ by $\alpha_j'=2\alpha_j+p_ja$. 
Hence, we have
\begin{equation}\label{split-gamma}
\gamma_q=2w_q(\alpha_j)+p_ja\quad \text{ for }q=1,2.
\end{equation}
Since $2w_q(\alpha_j)\in L_{\Pi^{\dc}}$ and 
$\pi_G|_{L_{\Pi}^{\dc}}:L_{\Pi^{\dc}}\xrightarrow{\sim} F/G$ is an isomorphism, condition
\eqref{gamma1=gamma2} implies $2w_1(\alpha_j)=2w_2(\alpha_j)$. Thus, we 
have
\[\gamma_1=2w_1(\alpha_j)+p_ja=2w_2(\alpha_j)+p_ja=\gamma_2.\]
(b) In this case, $\alpha_{j_q}'=\alpha_{j_q}$ and 
$\gamma_p=w_p(\alpha_{j_q})\in L_{\Pi^{\dc}}$ for $q=1,2$. Furthermore, 
 condition \eqref{gamma1=gamma2} is equivalent to
$\overline{w_1}(\overline{\alpha_{j_1}})=\overline{w_2}(\overline{\alpha_{j_2}})$. 
Hence, by using the isomorphism $\pi_G|_{L_{\Pi^{\dc}}}:L_{\Pi^{\dc}}\xrightarrow{\sim} F/G$, 
the condition 
$\overline{w_1}(\overline{\alpha_{j_1}})=\overline{w_2}(\overline{\alpha_{j_2}})$
implies 
\[\gamma_1=w_1(\alpha_{j_1})=w_2(\alpha_{j_2})=\gamma_2.\]

Thus, we complete the proof of injectivity of the map $\pi_G|_{\W[\Pi^{\dc}].\Pi^{\mc}}$, and
statement (1) is proved.
\vskip 1mm
\noindent
(2) By Lemma \ref{lemma:paired-reduced-basis} (1), we have 
$\pi_G(\W[\Pi^{\dc}].\Pi^{\mc})=\pi_G(\W[\Pi^{\mc}].\Pi^{\mc})$. Hence, the statement 
follows from the same reason as the previous corollary. 
\end{proof}

\begin{rem}
Since {the} two groups $W(\Pi^{\dc})$ and $W(\Pi^{\mc})$ are different 
in general, 
the descriptions \eqref{description-R0} are \eqref{description-R1} are not same.
\end{rem}

\subsection{Choice of the splitting $F=L_{\Pi^{\dc}}\oplus G$}\label{sect:radZ}
By using the method in the proof of the above corollary, one can rewrite the
description \eqref{description-R1} by
\begin{equation}\label{description-R1-1}
R=\Big(\bigsqcup_{\gamma\in \W[\Pi^{\dc}].\Pi^{\dc}}(\gamma+\Z k(\gamma)a)\Big)
\bigcup\Big(\bigsqcup_{\gamma\in \W[\Pi^{\dc}].(\Pi^{\mc} \setminus \Pi^{\dc})}
(\gamma+\Z k(\gamma)a)\Big).
\end{equation}

As we already mentioned above, 
in the rest of the article, 
we mainly use the expression
\eqref{description-R1-1} of $R$ 
(or its modified version \eqref{description-R-2}).
On the other hand, 
the expression \eqref{description-R0} is used only in the proof of Lemma 
\ref{lemma:delta_b} below.\\

Let us briefly explain why we choose  the expression  \eqref{description-R1-1}.
Roughly speaking, the reason is that this expression  is compatible with 
the decomposition $F=L_{\Pi^{\dc}}\oplus G$ of the vector space $F$. 
Indeed, since $\W[\Pi^{\dc}].\Pi^{\dc}\subset L_{\Pi^{\dc}}$, the compatibility of the first term
$\bigsqcup_{\gamma\in \W[\Pi^{\dc}].\Pi^{\dc}}(\gamma+\Z k(\gamma)a)$ of 
\eqref{description-R1-1} is obvious. Let us discuss the second term. By using \eqref{split-gamma}, we have
\begin{align*}
&\bigsqcup_{\gamma\in \W[\Pi^{\dc}].(\Pi^{\mc}\setminus \Pi^{\dc})}
(\gamma+\Z k(\gamma)a) \\
=
&\bigsqcup_{\begin{subarray}{c}
\gamma=w(\alpha_j'):\\
w\in \W[\Pi^{\dc}],\, \alpha_j'\in \Pi^{\mc}\setminus \Pi^{\dc}
\end{subarray}}
\big(2w(\alpha_j)+\big(p_j+\Z k(\gamma)\big)a\big).
\end{align*}
Since $2w(\alpha_j)\in L_{\Pi^{\dc}}$, the second term 
of \eqref{description-R1-1} is also 
compatible with the decomposition $F=L_{\Pi^{\dc}}\oplus G$. 
In other words, in the expression  \eqref{description-R1-1}, the only 
hyperplane
$L_{\Pi^{\dc}}$ is used. This fact is suitable for studying the detailed structure of the set
$R$ of roots.  \\

For comparison, let us recall the expression \eqref{description-R0}:
\[R=\Big(\bigsqcup_{\gamma\in R^{\dc}\cap L_{\Pi^{\dc}}}(\gamma+\Z k(\gamma)a)\Big)
\bigcup
\Big(\bigsqcup_{\gamma\in R^{\mc}\cap L_{\Pi^{\mc}}}
(\gamma+\Z k(\gamma)a)\Big).\]
The first term of the right hand side is compatible
with the decomposition $F=L_{\Pi^{\dc}}\oplus G$. On the other hand, the second term
is compatible with the decomposition $F=L_{\Pi^{\mc}}\oplus G$, but not with 
$F=L_{\Pi^{\dc}}\oplus G$. That is, two distinct hyperplanes $L_{\Pi^{\dc}}$
and $L_{\Pi^{\mc}}$ are used in this description. There is no problem when studying the first
term and the second term individually, but it is complicated to analyze 
the whole structure of $R$.  This is the reason why the description 
\eqref{description-R0} is {\it not} suitable for studying the structure of $R$. \\

Recall that $\Pi_G^{\dc}=\pi_G(\Pi^{\dc})$ is a simple system of $R/G$. 
As is well-known, 
there exists a unique set
$\{n_{\alpha}\}_{\alpha\in\Pi^{\dc}}$ of positive integers such that 
\begin{itemize}
\item[(a)] $\delta:=
\sum_{\alpha\in \Pi^{\dc}} n_\alpha \overline{\alpha}$
is a generator of the lattice $Q(R/G)\cap \mathrm{rad}(I_{F/G})$ of 
rank $1$;
\vskip 1mm
\item[(b)] there exists a node $\alpha_0\in \Pi^{\dc}$ such that $n_{\alpha_0}=1$,
\end{itemize}
(see \cite{Kac1990}).
Define an element $\delta_b\in Q(\W[\Pi^{\dc}].\Pi^{\dc})\cap \mathrm{rad}(I)$ by
\begin{equation}\label{defn:delta_b}
\delta_b:=\sum_{\alpha\in \Pi^{\dc}}n_{\alpha} \alpha.
\end{equation}
By definition, it is immediate to see that 
\begin{itemize}
\item[(c)] $\delta_b$ is a generator 
of the lattice $Q(\W[\Pi^{\dc}].\Pi^{\dc})\cap \mathrm{rad}(I)$ of rank $1$.
\end{itemize}

\begin{lemma}\label{lemma:delta_b}
The lattice $\mathrm{rad}_\Z(I)=Q(R)\cap \mathrm{rad}(I)$ is
generated by $\delta_b$ and $a$. That is, 
\begin{equation}
\mathrm{rad}_\Z(I)=\Z \delta_b \bigoplus \Z a. 
\end{equation}
\end{lemma}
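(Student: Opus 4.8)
The plan is to show both inclusions $\mathrm{rad}_\Z(I) \supseteq \Z\delta_b \oplus \Z a$ and $\mathrm{rad}_\Z(I) \subseteq \Z\delta_b \oplus \Z a$, where the first is essentially immediate and the second requires transporting the statement through the canonical projection $\pi_G$. First I would note that $\delta_b \in Q(\W[\Pi^{\dc}].\Pi^{\dc})\cap\mathrm{rad}(I) \subset \mathrm{rad}_\Z(I)$ by property (c) of $\delta_b$, and $a \in Q(R)\cap G \subset \mathrm{rad}_\Z(I)$ by the choice of $a$; moreover $\delta_b$ and $a$ are linearly independent because $\delta_b \notin G$ (since $\overline{\delta_b} = \delta \neq 0$ in $F/G$), so $\Z\delta_b \oplus \Z a$ is a well-defined rank-$2$ sublattice of $\mathrm{rad}_\Z(I)$, which is itself a full lattice of the $2$-dimensional space $\mathrm{rad}(I)$. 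Hence the quotient $\mathrm{rad}_\Z(I)/(\Z\delta_b \oplus \Z a)$ is a finite group, and the task is to show it is trivial.

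For the reverse inclusion, let $x \in \mathrm{rad}_\Z(I) = Q(R)\cap\mathrm{rad}(I)$. The image $\overline{x} = \pi_G(x)$ lies in $Q(R/G)\cap\mathrm{rad}(I_{F/G})$, which by property (a) of $\delta_b$ equals $\Z\delta = \Z\overline{\delta_b}$. So $\overline{x} = n\overline{\delta_b}$ for some $n\in\Z$, which gives $x - n\delta_b \in \ker\pi_G \cap Q(R) = G\cap Q(R) = \Z a$. Therefore $x \in n\delta_b + \Z a \subset \Z\delta_b \oplus \Z a$, completing the proof. The only subtle point to verify carefully is that $Q(R)\cap\mathrm{rad}(I)$ actually surjects onto $Q(R/G)\cap\mathrm{rad}(I_{F/G})$ under $\pi_G$ — or rather, that $\overline{x}\in Q(R/G)$ whenever $x\in Q(R)$ (clear, since $\pi_G(R)=R/G$ hence $\pi_G(Q(R))=Q(R/G)$) and that $\overline{x}\in\mathrm{rad}(I_{F/G})$ whenever $x\in\mathrm{rad}(I)$ (clear from the definition $I_{F/G}(\pi_G(x),\pi_G(y))=I_F(x,y)$).

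I anticipate no serious obstacle here; the one place demanding a line of justification is the identification $Q(R/G)\cap\mathrm{rad}(I_{F/G}) = \Z\delta$, which is exactly property (a) in the construction of $\delta_b$ in \S\ref{sect:radZ}, and the identification $\ker\pi_G\cap Q(R) = \Z a$, which is the defining property \eqref{defn:a} of $a$ together with $\ker\pi_G = G$. Everything else is bookkeeping: linear independence of $\{\delta_b,a\}$ and the fact that a rank-$2$ sublattice of a rank-$2$ lattice with trivial quotient is the whole lattice. So the proof is short and I would simply write out the two inclusions as above, with the surjectivity/compatibility remarks inserted inline.
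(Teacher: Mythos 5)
Your argument is correct, but it is not the route the paper takes, and it is worth noting the difference. The paper proves the inclusion $\mathrm{rad}_\Z(I)\subset \Z\delta_b\oplus\Z a$ by working entirely upstairs in $F$: it invokes the description of $R$ from Proposition \ref{prop:description-R0} (2) to get $Q(R)\subset \big(Q(\W[\Pi^{\dc}].\Pi^{\dc})\oplus\Z a\big)\cup\big(Q(\W[\Pi^{\mc}].\Pi^{\mc})\oplus\Z a\big)$, observes that $\alpha_i'\in\Z\alpha_i\oplus\Z a$ (so the second lattice sits inside the first), and then intersects $Q(\W[\Pi^{\dc}].\Pi^{\dc})\oplus\Z a$ with $\mathrm{rad}(I)$ and applies property (c). You instead push an arbitrary $x\in Q(R)\cap\mathrm{rad}(I)$ down through $\pi_G$, use $\pi_G(Q(R))=Q(R/G)$ and the compatibility $I_{F/G}\circ(\pi_G\times\pi_G)=I_F$ to land in $Q(R/G)\cap\mathrm{rad}(I_{F/G})=\Z\delta$ (property (a)), and then lift back via $\ker\pi_G\cap Q(R)=\Z a$. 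Your route is shorter and more elementary: it bypasses the paired simple system and the structural description of $R$ altogether, resting only on the definitions of $\pi_G$, $\delta$, and $a$. What the paper's detour buys is the intermediate fact $Q(R)\subset\big(\bigoplus_i\Z\alpha_i\big)\oplus\Z a$, which is a stronger statement about the root lattice itself, though the lemma as stated does not require it. The only points in your sketch that deserve an explicit line in a write-up are the two you already flagged — $\pi_G(Q(R))=Q(R/G)$ and $\overline{x}\in\mathrm{rad}(I_{F/G})$ for $x\in\mathrm{rad}(I)$ — plus the observation that $\delta_b\in Q(R)$ (clear, since $\Pi^{\dc}\subset R$), which you use implicitly when concluding $x-n\delta_b\in Q(R)$.
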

\begin{proof}
Since both $\delta_b$ and $a$ belong to $\mathrm{rad}_\Z (I)$, it is enough to
show $\mathrm{rad}_\Z(I)\subset \Z \delta_b \bigoplus \Z a$. 
By Proposition \ref{prop:description-R0} (2), we have
\begin{align*}
R&=\Big(\bigsqcup_{\gamma\in \W[\Pi^{\dc}].\Pi^{\dc}}(\gamma+\Z k(\gamma)a)\Big)
\bigcup \Big(\bigsqcup_{\gamma\in \W[\Pi^{\mc}].\Pi^{\mc}}(\gamma+\Z k(\gamma)a)\Big)\\
&\subset \Big(Q(\W[\Pi^{\dc}].\Pi^{\dc})\bigoplus\Z a\Big)\bigcup 
\Big(Q(\W[\Pi^{\mc}].\Pi^{\mc})\bigoplus\Z a\Big).
\end{align*}
It follows from \eqref{affine_root_lattices} that
\begin{align*}
&Q(\W[\Pi^{\dc}].\Pi^{\dc})\bigoplus\Z a=\Big(\bigoplus_{i=0}^l \Z\alpha_i\Big)\bigoplus\Z a, \\
&Q(\W[\Pi^{\mc}].\Pi^{\mc})\bigoplus\Z a=\Big(\bigoplus_{i=0}^l \Z\alpha_i'\Big)\bigoplus\Z a.
\end{align*}
By the definition of the prime map, we have
\[\alpha_i'\in \Z \alpha_i\bigoplus\Z a\quad \text{for every }0\leq i\leq l.\] 
Therefore, $Q(\W[\Pi^{\mc}].\Pi^{\mc})\bigoplus\Z a$ is a sublattice of 
$Q(\W[\Pi^{\dc}].\Pi^{\dc})\bigoplus\Z a$, and
\[\mathrm{rad}_{\Z}(I)=Q(R)\cap \mathrm{rad}(I)\subset 
\Big(Q(\W[\Pi^{\dc}].\Pi^{\dc})\bigoplus\Z a \Big)\cap \mathrm{rad}(I).\]
Since the right hand side $=\Z\delta_b\bigoplus \Z a$ by (c), we have 
$\mathrm{rad}_\Z(I)\subset \Z \delta_b\bigoplus \Z a$, as desired.
\end{proof}
\vskip 5mm

\subsection{Non-reduced counting numbers}\label{sect:NRCN}
In this subsection, we introduce a positive number $k^{\nr}(\alpha)$ for 
$\alpha\in\Pi^{\dc}\cup (\Pi^{\mc})^*$, called the ``{\it non-reduced counting number}'' of 
$\alpha$, and study its basic properties. This terminology plays a central role for 
defining the elliptic diagram for an arbitrary mERS 
(see $\S$ \ref{sect_elliptic-diagram-II}). Furthermore, 
we have an explicit formula for the counting number of $(\alpha')^\ast\in (\Pi^{\mc})^\ast$ 
in terms of the counting number 
of $\alpha\in \Pi^{\dc}$ (see Proposition \ref{prop:countings-prime}). 
This formula is used for describing the detailed structure of $R$ in the next 
subsection.     \\
 
Let us define the non-reduced counting numbers.
\begin{defn}
For an element $\alpha \in \Pi^{\dc}$, define a positive integer
$k^{\nr}((\alpha')^\ast)\in \Z_{>0}$\index[notations]{k711@$k^{\nr}((\alpha')^\ast)$} and 
a positive half integer 
$k^{\nr}(\alpha)\in\frac12 \Z_{>0}$ by the next equalities{\rm :}
\begin{align} 
(\alpha')^\ast=&((\alpha')^\ast:\alpha)_G\,\alpha + k^{\nr}((\alpha')^\ast)a, 
\label{defn:nr-countings_alpha'-ast}
\\
\alpha=&(\alpha:(\alpha')^\ast)_G\,(\alpha')^\ast-k^{\nr}(\alpha)a,
\label{defn:nr-countings_alpha}
\end{align}
where 
\begin{equation*}
(x:y)_G:=(\overline{x}:\overline{y}).
\end{equation*}
We call $k^{\nr}(\alpha)$\index[notations]{k712@$k^{\nr}(\alpha)$} the 
\textbf{non-reduced counting number}\index[index]{counting number!non-reduced 
counting number@non-reduced --}  
of an element $\alpha\in \Pi^{\dc}\cup (\Pi^{\mc})^\ast$.
\end{defn}

By definition, one has
\[((\alpha')^\ast:\alpha)_G \in \{1,2\}\quad \text{and}\quad 
((\alpha')^\ast:\alpha)_G\, (\alpha:(\alpha')^\ast)_G=1.\] 
\begin{lemma}\label{lemma:nr-countings1} 
Let $\alpha$ be an element of $\Pi^{\dc}$. 
\vskip 1mm
\noindent
{\rm (1)} $k^{\mathrm{nr}}((\alpha')^\ast)=
((\alpha')^\ast:\alpha)_G\, k^{\mathrm{nr}}(\alpha)$.
\vskip 1mm
\noindent
{\rm (2)}
$k^{\mathrm{nr}}((\alpha')^\ast)=\begin{cases}
\frac12 k((\alpha')^\ast) & \text{if }\alpha'\ne 2\alpha\text{ and }
\overline{\alpha'}=2\overline{\alpha},\\
k((\alpha')^\ast) & \text{otherwise}.
\end{cases}$
\end{lemma}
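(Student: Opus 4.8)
\textbf{Proof plan for Lemma \ref{lemma:nr-countings1}.}

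The plan is to unwind the definitions \eqref{defn:nr-countings_alpha'-ast} and \eqref{defn:nr-countings_alpha} and combine them with the basic facts about counting numbers from Lemma \ref{lemma:countings1}. For statement (1): starting from \eqref{defn:nr-countings_alpha} one has $\alpha = (\alpha:(\alpha')^\ast)_G\,(\alpha')^\ast - k^{\nr}(\alpha)a$, and substituting \eqref{defn:nr-countings_alpha'-ast} into this and using that $(\alpha:(\alpha')^\ast)_G\,((\alpha')^\ast:\alpha)_G = 1$, the coefficient of $\alpha$ matches automatically, while comparing the $a$-components yields $0 = (\alpha:(\alpha')^\ast)_G\, k^{\nr}((\alpha')^\ast)\, a - k^{\nr}(\alpha)\, a$. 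Since $a \neq 0$ in $\rad(I)$, this gives $k^{\nr}((\alpha')^\ast) = ((\alpha')^\ast:\alpha)_G\, k^{\nr}(\alpha)$ after multiplying through by $((\alpha')^\ast:\alpha)_G$ and using $((\alpha')^\ast:\alpha)_G \in \{1,2\}$ is invertible up to the reciprocal relation. (One should be slightly careful here: $(\alpha:(\alpha')^\ast)_G$ may be $1/2$, so the clean way is to write $(\alpha:(\alpha')^\ast)_G = ((\alpha')^\ast:\alpha)_G^{-1}$ and clear denominators.)

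For statement (2), I would split into the three cases dictated by \eqref{eq:paired-basis} and the construction of the prime map in \S \ref{subsubsect:map-prime}. If $2\overline{\alpha}\not\in R/G$, then $\alpha' = \alpha$ by definition of $(\,\cdot\,)^{pr}$, so $(\alpha')^\ast = \alpha^\ast = \alpha + k(\alpha)a$; here $\overline{\alpha'} = \overline{\alpha}$, $\alpha' \neq 2\alpha$ fails only in degenerate situations, and in any case $((\alpha')^\ast:\alpha)_G = 1$, $k^{\nr}((\alpha')^\ast) = k(\alpha) = k((\alpha')^\ast)$ (the last equality by \eqref{k=kast}), landing in the ``otherwise'' branch. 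If $2\overline{\alpha}\in R/G$ but $\alpha' = 2\alpha$ (i.e. $2\alpha \in R$), then $\overline{\alpha'} = 2\overline{\alpha}$, $((\alpha')^\ast:\alpha)_G = 2$, and $(\alpha')^\ast = 2\alpha + k(\alpha')a$, so $k^{\nr}((\alpha')^\ast) = k(\alpha') = k((\alpha')^\ast)$ by \eqref{k=kast}; again the ``otherwise'' branch. The remaining case is $2\overline{\alpha}\in R/G$ and $\alpha'\ne 2\alpha$: by the construction of the prime map, $\alpha' \in 2\alpha + \Z_{\leq 0}a$ and $(\alpha')^\ast = \alpha' + k(\alpha')a \in 2\alpha + \Z_{>0}a$, so writing $(\alpha')^\ast = 2\alpha + m\,a$ with $m = k^{\nr}((\alpha')^\ast) \in \Z_{>0}$, we have $((\alpha')^\ast:\alpha)_G = 2$ and need to show $m = \tfrac12 k((\alpha')^\ast)$, i.e. $k((\alpha')^\ast) = 2m$. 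Since $\alpha' = 2\alpha + (m - k(\alpha'))a$ and $\alpha'\ne 2\alpha$, we get $k(\alpha') > m$... wait, more precisely: $\alpha' + k(\alpha')a = 2\alpha + ma$ forces $\alpha' = 2\alpha + (m-k(\alpha'))a$ with $m - k(\alpha') < 0$, and by \eqref{k=kast}, $k((\alpha')^\ast) = k(\alpha')$; on the other hand by Lemma \ref{lemma:countings1} (1) the set $(\alpha' + \Z a)\cap R = \alpha' + \Z k(\alpha')a$, and since $2\alpha + ma$ lies in this set while $2\overline{\alpha}\in R/G$ is the indivisible (i.e. $\tfrac12(2\overline\alpha)=\overline\alpha$ is a short root with a representative in $R$), the elements of $(\alpha'+\Z a)\cap R$ projecting to $2\overline\alpha$ and those projecting to $\overline\alpha$ interleave, forcing $k(\alpha')$ to be twice the ``gap'' $k^{\nr}$ of the short-root chain — this is where the factor $2$ enters. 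I would make this last point precise by explicitly comparing the chain $\alpha + \Z k(\alpha)a$ of short roots (via \eqref{eq:paired-basis} and \eqref{condition-Pi'}) with the chain of long roots above it.

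The main obstacle, and the step deserving the most care, is precisely this last case: justifying the factor $\tfrac12$ requires knowing how the long-root counting chain $\alpha' + \Z k(\alpha')a$ sits inside $2\alpha + \Z a$ relative to the short-root chain $\alpha + \Z k(\alpha)a$ inside $\alpha + \Z a$. The cleanest route is to invoke Lemma \ref{lemma:countings1} (1) and (2) for the pair $\alpha, \alpha'$ together with Lemma \ref{lemma-counting} (3)(4), and to use the defining normalization of the prime map (that $\alpha'$ is the ``bottom'' representative with $(\alpha')^\ast$ the next one up) to pin down that $k^{\nr}((\alpha')^\ast)$ is exactly the step length of the short chain $k(\alpha)$ doubled appropriately — but since the statement only compares $k^{\nr}((\alpha')^\ast)$ with $k((\alpha')^\ast)$, I expect the argument reduces to: $2\alpha$ and $2\alpha + k^{\nr}((\alpha')^\ast)a$ are consecutive long roots in the chain, hence $k((\alpha')^\ast) = k(\alpha') = k^{\nr}((\alpha')^\ast) \cdot (\text{number of }\Z a\text{-steps between consecutive long roots})$, and a short computation with the three possible interleaving patterns (governed by whether $k(\alpha')/k(\alpha) \in \{1,2,4\}$, cf. \eqref{counting_red-red}) shows this multiplier is $2$ exactly when $\alpha' \neq 2\alpha$ and $\overline{\alpha'} = 2\overline\alpha$, and $1$ otherwise. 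I would organize the write-up as: (i) prove (1) by the substitution argument; (ii) prove (2) by the three-case split, handling the first two cases in one line each via \eqref{k=kast}, and devoting the bulk to the interleaving analysis in the third case.
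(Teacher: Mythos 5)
Your treatment of statement (1) and of the two easy branches of statement (2) (the cases $\alpha'=\alpha$ and $\alpha'=2\alpha$) is correct and coincides with the paper's argument: substitute the two defining identities into one another, compare $a$-components, and invoke \eqref{k=kast}. The problem is the remaining case $\alpha'\ne 2\alpha$, $\overline{\alpha'}=2\overline{\alpha}$, which is exactly where you flag uncertainty. Writing $\alpha'=2\alpha+pa$ with $-k(\alpha')<p\le 0$ and $p\ne 0$ (so that $k^{\nr}((\alpha')^\ast)=k(\alpha')+p$), the claim amounts to $p=-\tfrac12 k(\alpha')$. None of the tools you propose can produce this: the divisibility relations $k(\alpha)\mid k(\alpha')\mid 4k(\alpha)$ coming from Lemma \ref{lemma:countings1} (2) (or \eqref{counting_red-red}) constrain only the \emph{ratio} of the two counting numbers, not the \emph{offset} $p$ of the long-root chain relative to the point $2\alpha$; and Lemma \ref{lemma-counting} (3) is the symmetry of $K_G(\alpha')$ about its own points, which is automatic for the subgroup $\Z k(\alpha')a$ and says nothing about symmetry about $2\alpha$, which is not a root here. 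Your assertion that ``$2\alpha$ and $2\alpha+k^{\nr}((\alpha')^\ast)a$ are consecutive long roots'' is false in this case precisely because $2\alpha\notin R$, and the ``interleaving'' of roots over $\overline{\alpha}$ with roots over $2\overline{\alpha}$ cannot happen inside a single coset of $\Z a$, since these lie in different cosets.

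The missing idea is a reflection argument. The automorphism $(-\mathrm{Id})\circ r_{\alpha}$ of $R$ sends $2\alpha+qa$ to $2\alpha-qa$ (since $I(2\alpha+qa,\alpha^\vee)=4$), so the set $R\cap(2\alpha+\Z a)=2\alpha+pa+\Z k(\alpha')a$ is symmetric about $2\alpha$; in particular $2\alpha-pa$ is a root, i.e.\ $-2p\in\Z k(\alpha')$. Together with $0<-p<k(\alpha')$ this forces $p=-\tfrac12k(\alpha')$, whence $k^{\nr}((\alpha')^\ast)=k(\alpha')+p=\tfrac12 k(\alpha')=\tfrac12 k((\alpha')^\ast)$ by \eqref{k=kast}. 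This is exactly how the paper closes the case; without this symmetry step (or an equivalent one) your case analysis does not terminate, so you should replace the interleaving heuristic by this computation.
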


\begin{proof}
(1) By definition, we have
\[\begin{aligned}
(\alpha')^\ast&=
((\alpha')^\ast:\alpha)_G\,
\big\{(\alpha:(\alpha')^\ast)_G\,(\alpha')^\ast-k^{\nr}(\alpha)a\big\} 
+ k^{\nr}((\alpha')^\ast)a\\
&=(\alpha')^\ast+\big\{k^{\nr}((\alpha')^\ast)
-((\alpha')^\ast:\alpha)_G \, k^{\nr}(\alpha)\big\}a.
\end{aligned}\] 
Thus, we have the statement.
\vskip 1mm
\noindent
(2) Assume $((\alpha')^\ast:\alpha)_G=1$. Since 
$((\alpha')^\ast:\alpha)_G=(\alpha':\alpha)_G$, this assumption is equivalent to
$\overline{\alpha'}=\overline{\alpha}$. In this case, $\alpha'=\alpha$. Therefore, 
the statement is obviously verified. Otherwise, we have 
$((\alpha')^\ast:\alpha)_G=2$, and 
$\overline{\alpha'}=2\overline{\alpha}$. 
By the definition of $\alpha'$, 
there exists a unique non-positive integer $p\in\Z_{\leq 0}$ such that
$\alpha'=2\alpha+pa$ and 
\begin{equation*}
(\alpha')^\ast=\alpha'+k(\alpha')a=2\alpha+(k(\alpha')+p)a\quad
\text{with }k(\alpha')+p>0.
\end{equation*}
Comparing 
this equality with the definition $k^{\mathrm{nr}}((\alpha')^\ast)$, we have
\begin{equation}\label{nr-countings41}
k^{\mathrm{nr}}((\alpha')^\ast)=k(\alpha')+p.
\end{equation}
If $p=0\ (\Longleftrightarrow\, \alpha'=2\alpha)$, we have
\[k^{\mathrm{nr}}((\alpha')^\ast)=k(\alpha')=k((\alpha')^\ast)\]
by  \eqref{k=kast}, as desired. Otherwise, consider the image of the roots $\alpha'$ 
and $(\alpha')^\ast$ by the automorphism $(-\mathrm{Id}) \circ r_{\alpha}$, where 
$r_{\alpha}$ signifies the reflection with respect to the root $\alpha$. 
By direct computation, we see that both
$2\alpha-k^{nr}((\alpha')^\ast)a$ and $2\alpha-pa$ are roots. 
Thus, the only possible case is $p=-k^{\mathrm{nr}}((\alpha')^\ast)$. 
Substituting this result to \eqref{nr-countings41}, we have
\[2k^{\mathrm{nr}}((\alpha')^\ast)=k(\alpha')=k((\alpha')^*),\]
as desired.  
\end{proof} 

By Lemma  \ref{lemma:nr-countings1} and \eqref{k=kast}, 
the following formula is verified immediately: for $\alpha\in \Pi^{\dc}$,  we have
\begin{equation}\label{nr-countings_alpha1}
k^{\mathrm{nr}}(\alpha)=(\alpha\, :\, (\alpha')^\ast)_G\, k^\nr((\alpha')^*)=
\begin{cases}
\frac14 k(\alpha') & \text{if }\alpha'\ne 2\alpha\text{ and }
\overline{\alpha'}=2\overline{\alpha},\\
\frac12 k(\alpha') & \text{if }\alpha'=2\alpha
,\\
k(\alpha') & \text{otherwise}.
\end{cases}
\end{equation}
On the other hand, there is another description of the non-reduced counting
number $k^\nr(\alpha)$ for $\alpha\in \Pi^{\dc}$. 

\begin{prop}\label{prop:nr-countings1}
For every $\alpha\in \Pi^{\dc}$, the next formula is valid{\rm :}
\begin{equation}\label{nr-countings_alpha2}
k^{\mathrm{nr}}(\alpha)=
\begin{cases}
\frac12 k(\alpha) & \text{if } (\alpha')^\ast\ne 2\alpha^\ast
\text{ and }
\overline{\alpha'}= 2\overline{\alpha},\\
k(\alpha) & \text{otherwise}.
\end{cases}
\end{equation}
\end{prop}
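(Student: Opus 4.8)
The plan is to relate the two descriptions of the non-reduced counting number: formula \eqref{nr-countings_alpha1}, which expresses $k^\nr(\alpha)$ in terms of $k(\alpha')$, and the desired formula \eqref{nr-countings_alpha2}, which expresses it in terms of $k(\alpha)$. So the whole task reduces to understanding the relationship between $k(\alpha)$ and $k(\alpha')$ for $\alpha\in\Pi^{\dc}$, together with a careful reading of the three cases that occur in \eqref{nr-countings_alpha1}: (i) $\alpha'\neq 2\alpha$ but $\overline{\alpha'}=2\overline{\alpha}$; (ii) $\alpha'=2\alpha$; (iii) $\alpha'=\alpha$ (i.e. $2\overline{\alpha}\notin R/G$). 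First I would dispose of case (iii): here $\alpha'=\alpha$, so trivially $k^\nr(\alpha)=k(\alpha)$, and also $\overline{\alpha'}=\overline{\alpha}\neq 2\overline{\alpha}$, so we land in the ``otherwise'' branch of \eqref{nr-countings_alpha2}, consistent.

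Next I would handle case (ii), $\alpha'=2\alpha$. Then \eqref{nr-countings_alpha1} gives $k^\nr(\alpha)=\tfrac12 k(\alpha')=\tfrac12 k(2\alpha)$. I need to compare $k(2\alpha)$ with $k(\alpha)$. Since $\alpha,2\alpha\in R$ and $I(2\alpha,\alpha^\vee)=2$, while $I(\alpha,(2\alpha)^\vee)=\tfrac12$ — more usefully, $k(\alpha)\mid I(\alpha,(2\alpha)^\vee)k(2\alpha)$ is not integral, so I would instead use Lemma \ref{lemma:countings1}(2) in the form $k(2\alpha)\mid I(2\alpha,\alpha^\vee)k(\alpha)=2k(\alpha)$ and $k(\alpha)\mid I(\alpha,(2\alpha)^\vee)k(2\alpha)$; the cleanest route is the direct geometric argument: if $\alpha+k(\alpha)a\in R$ then reflecting or using closure of counting sets, $2\alpha+2k(\alpha)a\in R$, giving $k(2\alpha)\mid 2k(\alpha)$; conversely if $2\alpha+k(2\alpha)a\in R$ then since $\alpha\in R$ and $I(2\alpha,\alpha^\vee)=2$, Lemma \ref{lemma-counting}(4) yields $\alpha+2k(2\alpha)a\in R$ hence $k(\alpha)\mid 2k(2\alpha)$; combined with parity considerations from the root-string structure of $BC$-type systems one gets $k(2\alpha)=2k(\alpha)$ or $k(2\alpha)=k(\alpha)$, and a case-check (using that $2\alpha\in R$ forces $\alpha$ short) pins down $k(2\alpha)=k(\alpha)$. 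Then $k^\nr(\alpha)=\tfrac12 k(\alpha)$; and since $\alpha'=2\alpha$ means $(\alpha')^\ast=2\alpha+k(2\alpha)a=2\alpha+k(\alpha)a$ whereas $2\alpha^\ast=2\alpha+2k(\alpha)a$, these are unequal (as $k(\alpha)>0$), and $\overline{\alpha'}=2\overline{\alpha}$, so we are in the first branch of \eqref{nr-countings_alpha2}, giving $\tfrac12 k(\alpha)$ — consistent.

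Then case (i), $\alpha'\neq 2\alpha$ and $\overline{\alpha'}=2\overline{\alpha}$: here \eqref{nr-countings_alpha1} gives $k^\nr(\alpha)=\tfrac14 k(\alpha')$. From the definition of the prime map, $\alpha'=2\alpha+pa$ with $p\in\Z_{<0}$, and the proof of Lemma \ref{lemma:nr-countings1}(2) already established $p=-k^\nr((\alpha')^\ast)$ and $2k^\nr((\alpha')^\ast)=k(\alpha')$, so $\alpha'=2\alpha-\tfrac12 k(\alpha')a$. I would compute $(\alpha')^\ast=\alpha'+k(\alpha')a=2\alpha+\tfrac12 k(\alpha')a$. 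Now $\alpha\in R$ and $I(2\alpha,\alpha^\vee)=2$, so as before $\alpha+\Z\cdot(\text{stuff})$; the point is to extract $k(\alpha)$ from $k(\alpha')$. Using that $2\alpha+\tfrac12 k(\alpha')a=(\alpha')^\ast\in R$, Lemma \ref{lemma-counting}(4) with the pair $(\alpha,(\alpha')^\ast)$, and the reflection $r_\alpha$, one derives constraints forcing $\tfrac12 k(\alpha')=2k(\alpha)$ in this branch, i.e. $k(\alpha')=4k(\alpha)$, whence $k^\nr(\alpha)=\tfrac14\cdot 4k(\alpha)=k(\alpha)$. Meanwhile $(\alpha')^\ast=2\alpha+2k(\alpha)a=2\alpha^\ast$, so we are precisely in the ``otherwise'' branch of \eqref{nr-countings_alpha2}, giving $k(\alpha)$ — consistent. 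Assembling the three cases completes the proof.

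The main obstacle I anticipate is the arithmetic bookkeeping in cases (i) and (ii): pinning down the exact divisibility relations between $k(\alpha)$, $k(2\alpha)$, and $k(\alpha')$ requires using Lemma \ref{lemma:countings1}(2) (equivalently Lemma \ref{lemma-counting}(4)) in both directions plus the structure of $BC$-type root strings, and one must be careful that the ``non-positive integer $p$'' in the prime map construction is exactly $-k^\nr((\alpha')^\ast)$ (which is $-\tfrac12 k(\alpha')$) rather than some other multiple — this is the content already extracted inside the proof of Lemma \ref{lemma:nr-countings1}, so I would lean on it rather than re-deriving it. Once the relations $k(2\alpha)=k(\alpha)$ (case ii) and $k(\alpha')=4k(\alpha)$ (case i) are in hand, matching the branch conditions of \eqref{nr-countings_alpha1} with those of \eqref{nr-countings_alpha2} via the identities $(\alpha')^\ast=2\alpha^\ast$ versus $(\alpha')^\ast\neq 2\alpha^\ast$ is purely formal.
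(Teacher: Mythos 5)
Your overall strategy (translate \eqref{nr-countings_alpha1} into a statement about $k(\alpha)$ by pinning down the ratio $k(\alpha')/k(\alpha)$) is the right one and is what the paper does, but in both of your nontrivial cases you pin that ratio down to a single value when in fact \emph{both} admissible values occur, and the distinction between them is precisely what the branch condition $(\alpha')^\ast\ne 2\alpha^\ast$ encodes. Concretely: in your case (ii) ($\alpha'=2\alpha$) you claim $k(2\alpha)=k(\alpha)$ always, but Proposition \ref{prop:countings-prime} (and, e.g., the root $\alpha_l$ of type $BCC_l^{(2)}(2)$, where $k(\alpha_l)=1$ and $k(2\alpha_l)=2$) shows that $k(2\alpha)=2k(\alpha)$ also happens; in that situation $(\alpha')^\ast=2\alpha+2k(\alpha)a=2\alpha^\ast$ and the correct value is $k^{\nr}(\alpha)=k(\alpha)$, whereas your argument outputs $\tfrac12k(\alpha)$. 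Symmetrically, in your case (i) ($\alpha'\ne 2\alpha$, $\overline{\alpha'}=2\overline{\alpha}$) you claim $k(\alpha')=4k(\alpha)$ always, but the divisibility relations only give $k(\alpha')\in\{2k(\alpha),4k(\alpha)\}$, and the value $2k(\alpha)$ genuinely occurs (e.g.\ $\alpha_l$ of type $BC_l^{(1,2)}$, where $k(\alpha_l)=1$, $k(\alpha_l')=2$ and the correct answer is $k^{\nr}(\alpha_l)=\tfrac12$); there your argument outputs $k(\alpha)$ instead of $\tfrac12 k(\alpha)$. So the proposed proof is not merely incomplete: on concrete root systems it returns the wrong branch.

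The missing idea is that the primary case split must be on the hypothesis $(\alpha')^\ast=2\alpha^\ast$ versus $(\alpha')^\ast\ne 2\alpha^\ast$ (equivalently, by the Claim in Corollary \ref{cor:nr-countings1}, on whether $\tfrac12(\alpha')^\ast\in R$), which is logically independent of whether $\alpha'=2\alpha$. The paper first disposes of $(\alpha')^\ast=2\alpha^\ast$ directly ($2\alpha+2k(\alpha)a\in R$ forces $k^{\nr}((\alpha')^\ast)=2k(\alpha)$, hence $k^{\nr}(\alpha)=k(\alpha)$ by Lemma \ref{lemma:nr-countings1}(1)), and only then, under the standing assumption $(\alpha')^\ast\ne 2\alpha^\ast$, runs your divisibility argument in the two subcases $2\alpha\in R$ and $2\alpha\not\in R$; in each subcase the assumption $(\alpha')^\ast\ne 2\alpha^\ast$ is exactly what excludes the larger ratio ($k(2\alpha)=2k(\alpha)$, resp.\ $k(\alpha')=4k(\alpha)$) and leaves $\tfrac12 k(\alpha)$. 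Note also that the subcase $2\alpha\not\in R$ relies on the normalization \eqref{condition-Pi'} of the simple system, which your write-up does not invoke. If you restructure the cases this way, the arithmetic you already set up goes through.
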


\begin{proof}
If $\overline{\alpha'}\ne 2\overline{\alpha}$, one has $\alpha'=\alpha$ as in the 
proof of the statement (2) of Lemma \ref{lemma:nr-countings1}.
Therefore, the statement is a direct consequence of the formula 
\eqref{nr-countings_alpha1}, and it is
enough to show \eqref{nr-countings_alpha2} under the assumption that 
$\overline{\alpha'}= 2\overline{\alpha}$.\\
 
Assume $(\alpha')^*=2\alpha^*$. Then, $2\alpha^*=2\alpha+2k(\alpha)a$
is a root. By the definition \eqref{defn:nr-countings_alpha'-ast}
of $k^{\nr}((\alpha')^\ast)$, we have
$k^{\nr}((\alpha')^\ast)=2k(\alpha)$.
By the statement (1) of Lemma \ref{lemma:nr-countings1}, 
we have 
\begin{equation*}
k^{\nr}(\alpha)=\dfrac12 k^{\nr}((\alpha')^\ast)=k(\alpha),
\end{equation*}
as desired. \\

Otherwise, we have $(\alpha')^*\ne 2\alpha^\ast$. By the
assumption $\overline{\alpha'}= 2\overline{\alpha}$, one has
\begin{equation}\label{nr-countings0}
\alpha'=2\alpha+pa\quad (p\in \Z_{\leq 0})\quad\text{and}\quad
(\alpha')^\ast=\alpha'+k(\alpha')a=2\alpha+(k(\alpha')+p)a.
\end{equation}
Let us consider the following two cases separately.
\vskip 1mm
{\bf Case (i)}\, :\, $\big(\pi_G^{-1}(\overline{\alpha})\cap R\big) \setminus \frac12 R_l=\emptyset$;
\vskip 1mm
{\bf Case (ii)}\, :\, $\big(\pi_G^{-1}(\overline{\alpha})\cap R\big) \setminus \frac12 R_l\ne \emptyset$.
 
\vskip 3mm
\noindent
{\bf Case (i)}. 
Assume $\big(\pi_G^{-1}(\overline{\alpha})\cap R\big)\setminus \frac12 R_l=\emptyset$. It means
that 
\begin{equation}\label{eq:case(i)-1}
\pi_G^{-1}(\overline{\alpha})\cap R=\alpha+\Z k(\alpha)a\subset \frac12 R_l. 
\end{equation}
Especially, we have $2\alpha\in R$. This means that $p=0$ in \eqref{nr-countings0},
and $\alpha'=2\alpha$. Thus, we have
\begin{equation}\label{nr-countings10}
(\alpha')^\ast=(2\alpha)^{\ast}=2\alpha+k(2\alpha)a\in R.
\end{equation}
Moreover, it follows from \eqref{eq:case(i)-1} that
$\alpha^\ast=\alpha+k(\alpha)a\in \pi_G^{-1}(\overline{\alpha})\cap R
\subset \frac12 R_l$. 
Hence, we have
\[2\alpha^*=2\alpha+2k(\alpha)a\in R.\]
This implies $2k(\alpha) \in k(2\alpha)\Z_{>1}$ by the assumption 
$(\alpha')^\ast \neq 2\alpha^\ast$. As $(-\mathrm{Id})\circ r_{(\alpha')^\ast}$ preserves 
$R$ and $(-\mathrm{Id})\circ r_{(\alpha')^\ast}(\alpha)=\alpha+k(2\alpha)a$, one sees 
that $k(2\alpha) \in k(\alpha)\Z$ since $R \cap (\alpha+\Z a)=\alpha+\Z k(\alpha)a$. 
Hence, we obtain 
\begin{equation}\label{nr-countings11}
k(\alpha)=k(2\alpha).
\end{equation}
As $\alpha'=2\alpha\in R$, the formula \eqref{nr-countings_alpha1} implies
\[k^{\mathrm{nr}}(\alpha)=\frac12 k(\alpha')=\frac12 k(2\alpha)=
\frac12 k(\alpha).\]
Thus, we have shown \eqref{nr-countings_alpha2} in this case.  
\vskip 3mm
\noindent
{\bf Case (ii)}. If $\big(\pi_G^{-1}(\overline{\alpha})\cap R\big) \setminus \frac12 R_l\ne\emptyset$,
the element $\alpha\in \Pi^{\dc}$ belongs to 
$\big(\pi_G^{-1}(\overline{\alpha})\cap R\big) \setminus \frac12 R_l$ by the 
condition \eqref{condition-Pi'}. 
Thus,  we have $2\alpha\not\in R$, and
\begin{equation}\label{(5.5.6)-II}
k^{\nr}(\alpha)=\frac14 k(\alpha')
\end{equation}
by the formula \eqref{nr-countings_alpha1}. 
On the other hand, since $2\alpha\not \in R$, we have $\alpha'\ne 2\alpha$. 
This means
\[\alpha'=2\alpha+pa\quad \text{with}\quad p\in \Z_{< 0}\] 
in \eqref{nr-countings0}.
By a similar argument to the last part 
of the proof Lemma \ref{lemma:nr-countings1} (2), it follows that 
\begin{equation}\label{p=-k(alpha'/2)}
p=-\frac12 k(\alpha').
\end{equation}
Note that
$(-\mathrm{Id})\circ r_{\alpha'}(\alpha)=\alpha+pa$
is a root. By \eqref{p=-k(alpha'/2)}, we have
\begin{equation}\label{k(alpha')-1}
p=-\dfrac12 k(\alpha')\in k(\alpha)\Z\quad \Longleftrightarrow \quad 
k(\alpha')\in 2k(\alpha)\Z. 
\end{equation}
Similarly, since 
\[(-\mathrm{Id})\circ r_{\alpha^\ast}(\alpha')=\alpha'+(-2p+4k(\alpha))a\]
is also a root, we have
\begin{equation}\label{k(alpha')-2}
-2p+4k(\alpha)=k(\alpha')+4k(\alpha)\in k(\alpha')\Z\quad \Longleftrightarrow \quad 
4k(\alpha)\in k(\alpha')\Z. 
\end{equation}
By \eqref{k(alpha')-1} and \eqref{k(alpha')-2}, it 
follows that 
\[k(\alpha')\in \{2k(\alpha),4k(\alpha)\}.\]

Suppose that $k(\alpha')=4k(\alpha)$. In this case, as $\alpha'=2\alpha-2k(\alpha)a$, hence 
$(\alpha')^\ast=2\alpha+2k(\alpha)a=2\alpha^\ast$ which contradicts to our assumption. 
Thus, we have
\begin{equation}\label{nr-countings8}
k(\alpha')=2k(\alpha)
\end{equation} 
and it follows from \eqref{(5.5.6)-II} that 
$k^{\nr}(\alpha) = \frac{1}{2}k(\alpha)$, as desired. 
\end{proof}
\vskip 3mm
Let us rewrite the conditions for $\alpha\in \Pi^{\dc}$ in the statement (2) of 
Lemma \ref{lemma:nr-countings1} and one in Proposition \ref{prop:nr-countings1}
, for later use.
\begin{cor}\label{cor:nr-countings1}
For $\alpha\in \Pi^{\dc}$, we have
\begin{align}
k^{\mathrm{nr}}((\alpha')^\ast)&=
\begin{cases}
\frac12 k((\alpha')^\ast) & \text{if }2\alpha\not\in R\text{ and }
2\overline{\alpha}\in R/G,\\
k((\alpha')^\ast) & \text{otherwise},
\end{cases}\label{nr-countings_alpha3}  
\\
k^{\mathrm{nr}}(\alpha)&=
\begin{cases}
\frac12 k(\alpha) & \text{if }\frac12(\alpha')^\ast\not\in R\text{ and }
\frac12\overline{(\alpha')^\ast}\in R/G,\\
k(\alpha) & \text{otherwise}.
\end{cases}\label{nr-countings_alpha4}
\end{align}
\end{cor}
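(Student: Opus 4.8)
The plan is to derive Corollary~\ref{cor:nr-countings1} as a direct restatement of Lemma~\ref{lemma:nr-countings1}~(2) and Proposition~\ref{prop:nr-countings1}, the point being to replace the conditions ``$\alpha'\neq 2\alpha$ and $\overline{\alpha'}=2\overline{\alpha}$'' (resp.\ ``$(\alpha')^\ast\neq 2\alpha^\ast$ and $\overline{\alpha'}=2\overline{\alpha}$'') by the more transparent conditions ``$2\alpha\notin R$ and $2\overline{\alpha}\in R/G$'' (resp.\ ``$\tfrac12(\alpha')^\ast\notin R$ and $\tfrac12\overline{(\alpha')^\ast}\in R/G$''). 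So the whole proof is a sequence of ``$\Longleftrightarrow$'' verifications, carried out separately for each of the two formulas.

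For \eqref{nr-countings_alpha3}: first I would show the equivalence of conditions. If $2\alpha\in R$, then $2\overline{\alpha}=\overline{2\alpha}\in R/G$, and by the defining property of the prime map $\alpha'$ is the unique element of $\pi_G^{-1}(2\overline{\alpha})\cap R$ with $\alpha'\in 2\alpha+\Z_{\leq 0}a$ and $(\alpha')^\ast\in 2\alpha+\Z_{>0}a$; since $2\alpha$ itself lies in $\pi_G^{-1}(2\overline{\alpha})\cap R$ and $2\alpha\in 2\alpha+\Z_{\leq 0}a$ with $(2\alpha)^\ast=2\alpha+k(2\alpha)a\in 2\alpha+\Z_{>0}a$, uniqueness forces $\alpha'=2\alpha$. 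Hence ``$2\alpha\notin R$ and $2\overline{\alpha}\in R/G$'' $\Longrightarrow$ ``$\alpha'\neq 2\alpha$ and $\overline{\alpha'}=2\overline{\alpha}$''. Conversely, if $\overline{\alpha'}=2\overline{\alpha}$ and $\alpha'\neq 2\alpha$, then certainly $2\overline{\alpha}=\overline{\alpha'}\in R/G$; and if moreover $2\alpha\in R$ were true, the uniqueness argument just given would yield $\alpha'=2\alpha$, a contradiction, so $2\alpha\notin R$. The remaining ``otherwise'' cases match up because when $\overline{\alpha'}\neq 2\overline{\alpha}$ we have $\alpha'=\alpha$ (so the condition ``$2\overline{\alpha}\in R/G$'' fails, as $\overline{\alpha'}=\overline{\alpha}$ and if $2\overline{\alpha}\in R/G$ then $\alpha'$ would be defined by case~ii), contradiction), and when $2\overline{\alpha}\in R/G$ but $2\alpha\in R$ we get $\alpha'=2\alpha$, putting us in the ``otherwise'' branch of Lemma~\ref{lemma:nr-countings1}~(2). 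Thus the two-case formulas agree verbatim.

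For \eqref{nr-countings_alpha4}: I would argue along the same lines, now comparing against Proposition~\ref{prop:nr-countings1}. The first branch there is ``$(\alpha')^\ast\neq 2\alpha^\ast$ and $\overline{\alpha'}=2\overline{\alpha}$''. Applying $\pi_G$ to $(\alpha')^\ast=\alpha'+k(\alpha')a$ and using $\overline{\alpha'}=2\overline{\alpha}$ gives $\overline{(\alpha')^\ast}=2\overline{\alpha}$ (here one uses that $\overline{(\alpha')^\ast}=\overline{\alpha'}$ since $k(\alpha')a\in G$), hence $\tfrac12\overline{(\alpha')^\ast}=\overline{\alpha}\in R/G$; conversely $\tfrac12\overline{(\alpha')^\ast}\in R/G$ together with $\overline{(\alpha')^\ast}=\overline{\alpha'}$ forces $\overline{\alpha'}\in 2(R/G)$, and combined with the prime-map normalization this is exactly $\overline{\alpha'}=2\overline{\alpha}$. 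For the condition $\tfrac12(\alpha')^\ast\notin R$ versus $(\alpha')^\ast\neq 2\alpha^\ast$: if $\tfrac12(\alpha')^\ast\in R$ and $\overline{\alpha'}=2\overline{\alpha}$, then $\tfrac12(\alpha')^\ast$ projects to $\overline{\alpha}$, so $\tfrac12(\alpha')^\ast\in\alpha+\Z a$, and — invoking, as in Case~(ii) of the proof of Proposition~\ref{prop:nr-countings1}, that $\alpha$ is chosen via \eqref{condition-Pi'} to avoid $\tfrac12 R_l$ unless forced — one checks $\tfrac12(\alpha')^\ast=\alpha^\ast$, i.e.\ $(\alpha')^\ast=2\alpha^\ast$; and in the contrary direction $(\alpha')^\ast=2\alpha^\ast$ trivially gives $\tfrac12(\alpha')^\ast=\alpha^\ast\in R$. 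Matching the ``otherwise'' branches is then automatic as before.

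I do not expect a serious obstacle here, since the corollary is purely a cosmetic reformulation; the only point requiring genuine care is the equivalence ``$2\alpha\in R \Longleftrightarrow \alpha'=2\alpha$'' (and its analogue for $(\alpha')^\ast$ and $2\alpha^\ast$), which rests on the precise uniqueness clause in the definition of the prime map together with the selection condition \eqref{condition-Pi'} on the simple system $\Pi^{\dc}$. I would therefore isolate that equivalence as the first step, state it cleanly, and then let both \eqref{nr-countings_alpha3} and \eqref{nr-countings_alpha4} drop out by substitution into Lemma~\ref{lemma:nr-countings1}~(2) and Proposition~\ref{prop:nr-countings1} respectively.
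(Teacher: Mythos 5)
Your overall strategy is exactly the paper's: the corollary is obtained from Lemma~\ref{lemma:nr-countings1}~(2) and Proposition~\ref{prop:nr-countings1} by translating their hypotheses, and the whole content is the two equivalences ``$\alpha'=2\alpha\Leftrightarrow 2\alpha\in R$'' and ``$(\alpha')^\ast=2\alpha^\ast\Leftrightarrow\frac12(\alpha')^\ast\in R$'' (under $\overline{\alpha'}=2\overline{\alpha}$), together with the observation that the conditions $\overline{\alpha'}=2\overline{\alpha}$, $2\overline{\alpha}\in R/G$ and $\frac12\overline{(\alpha')^\ast}\in R/G$ coincide (this is \eqref{nr-countings-cor1} in the paper). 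Your treatment of \eqref{nr-countings_alpha3} is complete: the uniqueness clause in the definition of the prime map does force $\alpha'=2\alpha$ whenever $2\alpha\in R$ and $2\overline{\alpha}\in R/G$, and the converse is trivial.

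The weak point is the implication $\frac12(\alpha')^\ast\in R\Rightarrow(\alpha')^\ast=2\alpha^\ast$ needed for \eqref{nr-countings_alpha4}. Knowing that $\frac12(\alpha')^\ast$ projects to $\overline{\alpha}$ only places it in $\alpha+\Z k(\alpha)a$; nothing in the uniqueness clause of the prime map or in condition \eqref{condition-Pi'} by itself excludes $\frac12(\alpha')^\ast=\alpha+2k(\alpha)a$, say. What rules out the other multiples is the quantitative information established in the proof of Proposition~\ref{prop:nr-countings1}: if $2\alpha\in R$ then \eqref{condition-Pi'} forces $2\alpha^\ast\in R$ and the reflection argument gives $k(2\alpha)\in\{k(\alpha),2k(\alpha)\}$, while if $2\alpha\notin R$ one has $\alpha'=2\alpha-\frac12 k(\alpha')a$ with $k(\alpha')\in\{2k(\alpha),4k(\alpha)\}$; in every case $\frac12(\alpha')^\ast$ is either $\alpha+\frac12 k(\alpha)a$ (not a root) or $\alpha^\ast$. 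Your ``one checks'' is hiding exactly this computation, and your closing remark that the equivalence rests only on the uniqueness clause together with \eqref{condition-Pi'} misattributes the difficulty. The paper proves the same claim by contrapositive --- assuming $(\alpha')^\ast\ne 2\alpha^\ast$ and showing $\frac12(\alpha')^\ast=\alpha+\frac12 k(\alpha)a\notin R$ --- which amounts to the same case analysis. So the route is right and the statement is true, but you must spell out the counting-number computation rather than delegate it to the definitions.
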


\begin{proof}
Since \eqref{nr-countings_alpha3} is proved in a similar way to 
\eqref{nr-countings_alpha4} with an easier argument, we only give a proof of
the latter. 
For $\alpha\in\Pi^{\dc}$, note  that the following equivalences 
are verified immediately by the definition of the prime map: 
\begin{equation}\label{nr-countings-cor1}
\overline{\alpha'}=2\overline{\alpha}\quad \Longleftrightarrow\quad 
2\overline{\alpha}\in R/G\quad \Longleftrightarrow\quad 
\frac12 \overline{(\alpha')^\ast}\in R/G.
\end{equation}

It is enough to show the following claim for proving \eqref{nr-countings_alpha4}:
\vskip 1mm
\noindent
{\bf Claim.} {\it Under the assumption that
$\overline{\alpha'}= 2\overline{\alpha}$, 
the next two conditions are equivalent.
\vskip 1mm
\noindent
{\rm (a)} $(\alpha')^\ast=2\alpha^\ast$.
\vskip 1mm
\noindent
{\rm (b)} $\frac12 (\alpha')^\ast \in R$.} 
\vskip 3mm
\noindent
Let us prove this claim. Since (a) $\Longrightarrow$ (b) is trivially verified, it is enough 
to show condition (b) implies (a). For that purpose, assume 
$(\alpha')^\ast\ne 2\alpha^\ast$. As
$\overline{\alpha'}= 2\overline{\alpha}$, we have 
\begin{equation}\label{eq:claim}
\frac12 (\alpha')^*=\alpha+\frac12 k(\alpha)a
\end{equation}
by the formulas in the proof of the last proposition.
Indeed, if $\big(\pi_G^{-1}(\overline{\alpha})\cap R\big) \setminus \frac12 R_l=\emptyset$, it follows from \eqref{nr-countings10} and \eqref{nr-countings11}.
Otherwise, it follows from \eqref{p=-k(alpha'/2)} that
$\alpha'=2\alpha-\frac12 k(\alpha')a$. Therefore, we have
\[(\alpha')^*=\alpha'+k(\alpha')a=2\alpha+\dfrac12 k(\alpha')a\quad \Longleftrightarrow
\quad \frac12 (\alpha')^\ast=\alpha+\frac14k(\alpha')a.\]
Substituting \eqref{nr-countings8} to the right hand side, we have  
\eqref{eq:claim}. 

Hence, we have 
\[\frac12\overline{(\alpha')^\ast}=\overline{\alpha}.\] 
Since $R\cap \pi_G^{-1}(\overline{\alpha})=\alpha+\Z k(\alpha)a$,  the 
element $\frac12 (\alpha')^*$ should not belong to $R$. Thus, we have the
claim, and the proof of the corollary is completed.
\end{proof}

Combining the above results, we have the following proposition.

\begin{prop}\label{prop:countings-prime}
Let $\alpha\in \Pi^{\dc}$. If $2\overline{\alpha}\in R/G\ \left(\Longleftrightarrow\ 
\frac12 \overline{(\alpha')^\ast}\in R/G\right)$, we have 
\begin{equation}\label{countings-alpha-prime}
k((\alpha')^\ast)=\begin{cases}
2k(\alpha) & \text{if } 2\alpha\not\in R\text{ and } \frac12 (\alpha')^\ast\not\in R,\\
4k(\alpha) & \text{if } 2\alpha\not\in R\text{ and }\frac12 (\alpha')^\ast\in R,\\
k(\alpha) & \text{if } 2\alpha\in R\text{ and }\frac12 (\alpha')^\ast\not\in R,\\
2k(\alpha) & \text{if } 2\alpha\in R\text{ and }\frac12 (\alpha')^\ast\in R.
\end{cases}
\end{equation}
Otherwise, 
\begin{equation}
k((\alpha')^\ast)=k(\alpha). 
\end{equation}
\end{prop}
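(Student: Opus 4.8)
The plan is to reduce everything to the explicit relations among $k(\alpha)$, $k(\alpha')$, and $k((\alpha')^\ast)$ that were already extracted in Lemma~\ref{lemma:nr-countings1}, Proposition~\ref{prop:nr-countings1}, and Corollary~\ref{cor:nr-countings1}. The key observation is that the three quantities in the two displayed defining equations \eqref{defn:nr-countings_alpha'-ast} and \eqref{defn:nr-countings_alpha} are tied together by $k^{\nr}((\alpha')^\ast)=((\alpha')^\ast:\alpha)_G\, k^{\nr}(\alpha)$ (Lemma~\ref{lemma:nr-countings1}~(1)), while the formula \eqref{nr-countings_alpha1} expresses $k^{\nr}(\alpha)$ in terms of $k(\alpha')$, and Proposition~\ref{prop:nr-countings1} (equivalently Corollary~\ref{cor:nr-countings1}) expresses $k^{\nr}(\alpha)$ in terms of $k(\alpha)$. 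Equating these two expressions for $k^{\nr}(\alpha)$ in each of the relevant cases will give the required value of $k((\alpha')^\ast)$, via $k(\alpha')=k((\alpha')^\ast)$ (that is \eqref{k=kast}).

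\textbf{First step: the easy case.} First I would dispose of the situation $2\overline{\alpha}\notin R/G$. By definition of the prime map, this forces $\alpha'=\alpha$, hence $(\alpha')^\ast=\alpha^\ast=\alpha+k(\alpha)a$, and in particular $((\alpha')^\ast:\alpha)_G=1$ and $k((\alpha')^\ast)=k(\alpha^\ast)=k(\alpha)$ by \eqref{k=kast}. This is exactly the ``otherwise'' clause.

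\textbf{Second step: the four subcases.} Now assume $2\overline{\alpha}\in R/G$, equivalently $\overline{\alpha'}=2\overline{\alpha}$ and $\tfrac12\overline{(\alpha')^\ast}\in R/G$ by \eqref{nr-countings-cor1}, so that $((\alpha')^\ast:\alpha)_G=2$. I would split according to the two binary conditions ``$2\alpha\in R$ or not'' and ``$\tfrac12(\alpha')^\ast\in R$ or not'', which together give the four rows of \eqref{countings-alpha-prime}. In each row:
\begin{itemize}
\item If $2\alpha\notin R$: then by \eqref{condition-Pi'} the element $\alpha$ lies in $\big(\pi_G^{-1}(\overline{\alpha})\cap R\big)\setminus\tfrac12 R_l$, so \eqref{nr-countings_alpha1} gives $k^{\nr}(\alpha)=\tfrac14 k(\alpha')$. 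On the other hand Corollary~\ref{cor:nr-countings1}~\eqref{nr-countings_alpha4} gives $k^{\nr}(\alpha)=\tfrac12 k(\alpha)$ when $\tfrac12(\alpha')^\ast\notin R$ and $k^{\nr}(\alpha)=k(\alpha)$ when $\tfrac12(\alpha')^\ast\in R$. Comparing and using $k(\alpha')=k((\alpha')^\ast)$ yields $k((\alpha')^\ast)=2k(\alpha)$ in the first situation and $k((\alpha')^\ast)=4k(\alpha)$ in the second.
\item If $2\alpha\in R$: then $\alpha'=2\alpha$ (the non-positive integer $p$ in $\alpha'=2\alpha+pa$ must be $0$, since $2\alpha\in R$ with $\alpha'\in 2\alpha+\Z_{\le 0}a$ and $(\alpha')^\ast\in 2\alpha+\Z_{>0}a$ forces $p=0$), so \eqref{nr-countings_alpha1} gives $k^{\nr}(\alpha)=\tfrac12 k(\alpha')=\tfrac12 k((\alpha')^\ast)$. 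Again Corollary~\ref{cor:nr-countings1}~\eqref{nr-countings_alpha4} gives $k^{\nr}(\alpha)=\tfrac12 k(\alpha)$ when $\tfrac12(\alpha')^\ast\notin R$ and $k^{\nr}(\alpha)=k(\alpha)$ when $\tfrac12(\alpha')^\ast\in R$. Comparing yields $k((\alpha')^\ast)=k(\alpha)$ and $k((\alpha')^\ast)=2k(\alpha)$ respectively.
\end{itemize}

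\textbf{Expected obstacle.} The routine algebra above is harmless; the only point requiring care is justifying, cleanly and without circularity, the two inputs in the ``$2\alpha\notin R$'' subcases: namely that $k^{\nr}(\alpha)=\tfrac14 k(\alpha')$ (which rests on the identity $p=-\tfrac12 k(\alpha')$ proved inside Proposition~\ref{prop:nr-countings1} using the automorphisms $(-\mathrm{Id})\circ r_{\alpha'}$ and $(-\mathrm{Id})\circ r_{\alpha^\ast}$), and the equivalence in the Claim inside Corollary~\ref{cor:nr-countings1} that $\tfrac12(\alpha')^\ast\in R\iff (\alpha')^\ast=2\alpha^\ast$. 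Since all of these are already established in the cited results, the present proof is essentially a bookkeeping exercise: assemble the two expressions for $k^{\nr}(\alpha)$, invoke \eqref{k=kast}, and read off the table. I would therefore present it as a short case analysis referring back to Lemma~\ref{lemma:nr-countings1}, Proposition~\ref{prop:nr-countings1}, and Corollary~\ref{cor:nr-countings1}, rather than re-deriving anything.
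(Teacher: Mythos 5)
Your proof is correct and takes essentially the same route as the paper, whose own proof is a one-line citation of Lemma \ref{lemma:nr-countings1} (2) and Corollary \ref{cor:nr-countings1}: both arguments amount to equating the two available expressions for the non-reduced counting number and reading off the four cases, your detour through \eqref{nr-countings_alpha1} instead of \eqref{nr-countings_alpha3} being an immaterial rearrangement via \eqref{k=kast}. (The appeal to \eqref{condition-Pi'} in the $2\alpha\not\in R$ subcase is superfluous, since that hypothesis alone already forces $\alpha'\ne 2\alpha$.)
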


\begin{proof}
The statement follows immediately from Lemma \ref{lemma:nr-countings1} (2) 
and Corollary \ref{cor:nr-countings1}.
\end{proof}
The following lemma is useful for later discussion. 

\begin{lemma}\label{lemma:translation}
For every $\lambda\in F$, we have
\begin{equation}\label{eq:translation}
r_{\alpha}r_{(\alpha')^\ast}(\lambda) = \lambda - I(\lambda, \alpha^\vee)
k^{\nr}(\alpha)a.
\end{equation}
\end{lemma}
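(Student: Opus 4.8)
The plan is to compute $r_{\alpha}r_{(\alpha')^\ast}(\lambda)$ directly from the definition $r_\beta(\mu)=\mu-I(\beta^\vee,\mu)\beta$, and to simplify the resulting expression using the relations between $\alpha$, $(\alpha')^\ast$ and $a$ established earlier in the subsection. First I would observe that since $a\in\rad(I)$, both reflections fix $a$ and more importantly $I(\alpha^\vee,a)=I((\alpha')^{\ast\vee},a)=0$; also $\alpha$ and $(\alpha')^\ast$ differ by a multiple of $a$ together with a scaling, so they are proportional modulo $\rad(I)$, hence $I(\alpha,(\alpha')^\ast)=((\alpha')^\ast:\alpha)_G\,I(\alpha,\alpha)$ and similar identities hold. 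In particular $r_\alpha$ and $r_{(\alpha')^\ast}$ have the same reflecting hyperplane direction modulo the radical, so $r_\alpha r_{(\alpha')^\ast}$ acts as a "translation" in the radical direction $a$ — this is the qualitative picture behind \eqref{eq:translation}.

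Concretely, I would write $(\alpha')^\ast = c\,\alpha + k^{\nr}((\alpha')^\ast)a$ with $c=((\alpha')^\ast:\alpha)_G\in\{1,2\}$, as in \eqref{defn:nr-countings_alpha'-ast}. Then $I((\alpha')^{\ast})^\vee = \frac{2}{I((\alpha')^\ast,(\alpha')^\ast)}(\alpha')^\ast$, and since $(\alpha')^\ast \equiv c\alpha \pmod{\rad I}$ we get $I((\alpha')^\ast,(\alpha')^\ast) = c^2 I(\alpha,\alpha)$, so $((\alpha')^\ast)^\vee = \frac{1}{c}\alpha^\vee + (\text{radical term})$. Computing $r_{(\alpha')^\ast}(\lambda) = \lambda - I(((\alpha')^\ast)^\vee,\lambda)(\alpha')^\ast$ and then applying $r_\alpha$, the two rank-one corrections interact so that the $\alpha$-components cancel and only the $a$-component survives. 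The coefficient works out, using $I(((\alpha')^\ast)^\vee,\lambda) = \frac1c I(\alpha^\vee,\lambda)$ and $(\alpha')^\ast - c\alpha = k^{\nr}((\alpha')^\ast)a = c\,k^{\nr}(\alpha)a$ (the last equality is Lemma \ref{lemma:nr-countings1}(1)), to exactly $-I(\lambda,\alpha^\vee)k^{\nr}(\alpha)a$.

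I would present this as a short direct calculation: expand $r_\alpha r_{(\alpha')^\ast}(\lambda)$, substitute $((\alpha')^\ast)^\vee = \frac1c\alpha^\vee$ modulo terms killed by pairing (or just note that the $\rad(I)$ part of $((\alpha')^\ast)^\vee$ contributes nothing when paired against $\lambda$ only through the coefficient — here one must be slightly careful since $\lambda$ is arbitrary, but the radical component of $((\alpha')^\ast)^\vee$ is a multiple of $a$ and $I(a,\lambda)=0$ for all $\lambda$, so indeed $I(((\alpha')^\ast)^\vee,\lambda)=\frac1c I(\alpha^\vee,\lambda)$ holds for every $\lambda\in F$), collect terms, and use $k^{\nr}((\alpha')^\ast)=c\,k^{\nr}(\alpha)$. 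The main obstacle — really the only point requiring care — is bookkeeping the scaling factor $c=((\alpha')^\ast:\alpha)_G$ consistently between the dual vectors, the pairings, and the relation $(\alpha')^\ast - c\alpha \in \Z a$; once that is handled, the cancellation of the $\alpha$-terms and the emergence of the clean coefficient $k^{\nr}(\alpha)$ is automatic. I should also double-check the edge case $\alpha'=\alpha$ (i.e. $c=1$, when $(\alpha')^\ast = \alpha + k(\alpha)a$ and $k^{\nr}(\alpha)=k(\alpha)$ by Proposition \ref{prop:nr-countings1}), where the formula reduces to the familiar fact that a product of two reflections in parallel hyperplanes is a translation.
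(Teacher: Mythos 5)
Your proposal is correct and is essentially the paper's own argument: a direct expansion of $r_{\alpha}r_{(\alpha')^\ast}(\lambda)$ using $I(\lambda,((\alpha')^\ast)^\vee)=(\alpha:(\alpha')^\ast)_G\,I(\lambda,\alpha^\vee)$ (your $\tfrac1c I(\alpha^\vee,\lambda)$) and the relation $(\alpha:(\alpha')^\ast)_G(\alpha')^\ast-\alpha=k^{\nr}(\alpha)a$, which is exactly the combination of \eqref{defn:nr-countings_alpha'-ast} with Lemma \ref{lemma:nr-countings1}(1) that you invoke. The bookkeeping of the factor $c=((\alpha')^\ast:\alpha)_G$ is handled the same way in both, so no further comment is needed.
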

\begin{proof}
The statement follows {from} direct computation. 
Indeed, by the definition,  
\begin{align*}
r_{\alpha} r_{(\alpha')^\ast}(\lambda) &= 
r_{\alpha}\left(\lambda - I(\lambda,  ((\alpha')^\ast)^\vee)(\alpha')^\ast \right)\\
&=\lambda - I(\lambda, \alpha^\vee)\alpha - I(\lambda,((\alpha')^\ast)^\vee)
\left((\alpha')^\ast - I((\alpha')^\ast, \alpha^\vee)\alpha\right). 
\end{align*}
Since 
\begin{align*}
I(\lambda,((\alpha')^\ast)^\vee) = 
&(\alpha:(\alpha')^\ast)_G\,
I(\lambda, \alpha^\vee), \\
I((\alpha')^\ast,\alpha^\vee) =
&((\alpha')^\ast:\alpha)_G \,
I(\alpha,\alpha^\vee) = 2((\alpha')^\ast:\alpha)_G, 
\end{align*}
one has
\begin{align*}
r_{\alpha} r_{(\alpha')^\ast}(\lambda)  &= 
\lambda - I(\lambda, \alpha^\vee)\left((\alpha:(\alpha')^\ast)_G\, 
(\alpha')^\ast - \alpha\right) \\
&=\lambda - I(\lambda, \alpha^\vee)k^{\nr}(\alpha)a.
\end{align*}
as desired. 
\end{proof}

\begin{rem}{\rm By Lemma \ref{lemma:nr-countings1} (1), the following formula is
verified:
\[ I(\lambda,((\alpha')^\ast)^\vee)k^{\nr}((\alpha')^\ast)=
  I(\lambda,\alpha^\vee)(\alpha:(\alpha')^\ast)_Gk^{\nr}((\alpha')^\ast)=
  I(\lambda,\alpha^\vee)k^{\nr}(\alpha). \]
}\end{rem}
\vskip 5mm
\subsection{A presentation of $R$}\label{sect:description-R}
In this subsection, we give 
an explicit description of the set $R$ of roots (see \eqref{description-R-2} in Proposition
\ref{prop:description-R} below). In this description, every element of $R$ is written
using information on the affine Weyl group $\W[\Pi^{\dc}]$ and 
counting numbers of the elements of $\Pi^{\dc}$. This fact is essentially used 
to define the ``elliptic diagram'' of a mERS $(R,G)$ in $\S$ \ref{sect_elliptic-diagram-II}.

\begin{prop}\label{prop:description-R}
Let $(\Pi^{\dc},\Pi^{\mc})$ be a paired simple system of a mERS $(R,G)$. 
\vskip 1mm
\noindent
{\rm (1)} Let $\gamma=w((\alpha')^\ast)$ for $\alpha\in\Pi^{\dc}$ and $w\in \W[\Pi^{\dc}]$. 
Then 
\begin{equation}\label{counting-set-expform}
\gamma+\Z k(\gamma)a=\begin{cases}
2w\alpha+(2\Z+1)k(\alpha)a & \text{if $2\alpha\not\in R$, 
$2\overline{\alpha}\in R/G$ and $\frac12 (\alpha')^\ast\not\in R$},\\
2(w\alpha+(2\Z+1)k(\alpha)a) & \text{if $2\alpha\not\in R$, 
$2\overline{\alpha}\in R/G$ and $\frac12 (\alpha')^\ast\in R$},\\
2w\alpha+\Z k(\alpha)a & \text{if $2\alpha\in R$ and 
$\frac12 (\alpha')^\ast\not\in R$},\\
2(w\alpha+\Z k(\alpha)a) & \text{if $2\alpha\in R$, and 
$\frac12 (\alpha')^\ast\in R$},\\
w\alpha+\Z k(\alpha)a & \text{if $2\overline{\alpha}\not\in R/G$}.\\
\end{cases}
\end{equation}
{\rm (2)} There exists the following description of the set $R$ of roots{\rm :}
\begin{equation}\label{description-R-2}
R=\Big(\bigsqcup_{\gamma\in \W[\Pi^{\dc}].\Pi^{\dc}}(\gamma+\Z k(\gamma)a)\Big)
\bigcup
\Big(\bigsqcup_{\gamma\in \W[\Pi^{\dc}].(\Pi^{\mc}\setminus \Pi^{\dc})^\ast}
(\gamma+\Z k(\gamma)a)\Big)
\end{equation}
\end{prop}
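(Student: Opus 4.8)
The plan is to prove part (1) first, since part (2) follows from it together with the already-established description \eqref{description-R1-1} of $R$ and the surjectivity statement in Corollary \ref{cor:description-R} (1). For part (2), recall that \eqref{description-R1-1} reads
\[
R=\Big(\bigsqcup_{\gamma\in \W[\Pi^{\dc}].\Pi^{\dc}}(\gamma+\Z k(\gamma)a)\Big)
\bigcup\Big(\bigsqcup_{\gamma\in \W[\Pi^{\dc}].(\Pi^{\mc}\setminus \Pi^{\dc})}
(\gamma+\Z k(\gamma)a)\Big).
\]
So I would observe that for each $\alpha\in\Pi^{\dc}$ with $\alpha'\ne\alpha$ and $w\in\W[\Pi^{\dc}]$, the coset $w\alpha'+\Z k(w\alpha')a$ coincides with $w(\alpha')^\ast+\Z k((\alpha')^\ast)a$ (because $(\alpha')^\ast=\alpha'+k(\alpha')a$, $k(\alpha')=k((\alpha')^\ast)$ by \eqref{k=kast}, and $w$ is an isometry fixing $a$, using Lemma \ref{lemma:countings1} (3), (4)); hence replacing $\Pi^{\mc}\setminus\Pi^{\dc}$ by $(\Pi^{\mc}\setminus\Pi^{\dc})^\ast$ in the second union changes nothing. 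That yields \eqref{description-R-2} at once.

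The heart of the matter is part (1): computing the coset $\gamma+\Z k(\gamma)a$ for $\gamma=w((\alpha')^\ast)$ explicitly in terms of $w\alpha$ and $k(\alpha)$. The plan is to handle the five cases in the right-hand side of \eqref{counting-set-expform} by first reducing to the case $w=\mathrm{id}$ and then applying $w$. Since $w\in\W[\Pi^{\dc}]\subset W(R)$ is an isometry with $w(a)=a$, we have $k(w((\alpha')^\ast))=k((\alpha')^\ast)$ by Lemma \ref{lemma:countings1} (3) and $w((\alpha')^\ast)+\Z k((\alpha')^\ast)a=w\big((\alpha')^\ast+\Z k((\alpha')^\ast)a\big)$; moreover $w(2\alpha)=2w\alpha$. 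Therefore it suffices to prove \eqref{counting-set-expform} with $w$ removed, i.e.\ to identify $(\alpha')^\ast+\Z k((\alpha')^\ast)a$ with the appropriate lattice coset built from $\alpha$ and $k(\alpha)$. Now if $2\overline{\alpha}\notin R/G$, then by definition of the prime map $\alpha'=\alpha$, so $(\alpha')^\ast=\alpha^\ast=\alpha+k(\alpha)a$ and $k((\alpha')^\ast)=k(\alpha)$ by \eqref{k=kast}, giving the last line. If $2\overline{\alpha}\in R/G$, then $\overline{\alpha'}=2\overline{\alpha}$, so $\alpha'=2\alpha+pa$ for a unique $p\in\Z_{\le 0}$ and $(\alpha')^\ast=2\alpha+(k(\alpha')+p)a$ with $k(\alpha')+p>0$. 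In each of the four subcases I will pin down the value of $p$ — $p=0$ when $2\alpha\in R$, and $p=-\tfrac12 k(\alpha')$ when $2\alpha\notin R$, as in the proofs of Lemma \ref{lemma:nr-countings1} (2) and Proposition \ref{prop:nr-countings1} — and combine it with the explicit value of $k((\alpha')^\ast)$ supplied by Proposition \ref{prop:countings-prime} (which already distinguishes exactly these four subcases according to whether $2\alpha\in R$ and whether $\tfrac12(\alpha')^\ast\in R$). A short arithmetic check in each subcase then shows $(\alpha')^\ast+\Z k((\alpha')^\ast)a$ equals $2\alpha+(2\Z+1)k(\alpha)a$, $2\alpha+2(2\Z+1)k(\alpha)a$, $2\alpha+\Z k(\alpha)a$, or $2\alpha+2\Z k(\alpha)a$, respectively; e.g.\ in the first subcase $k((\alpha')^\ast)=2k(\alpha)$ and $p=-k(\alpha)$, so $(\alpha')^\ast=2\alpha+k(\alpha)a$ and the coset is $2\alpha+(2\Z+1)k(\alpha)a$.

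The main obstacle I anticipate is the bookkeeping in the subcase $2\alpha\notin R$, $\tfrac12(\alpha')^\ast\in R$: here one must be careful that the relevant coset of $R$ through $\tfrac12(\alpha')^\ast$ is $\tfrac12(\alpha')^\ast+\Z k(\alpha)a$ — i.e.\ that $k\big(\tfrac12(\alpha')^\ast\big)=k(\alpha)$ — which requires the identity $\tfrac12(\alpha')^\ast=\alpha+\tfrac12 k(\alpha)a$ established in the proof of Corollary \ref{cor:nr-countings1} (the ``Claim'' there, via \eqref{eq:claim} and \eqref{nr-countings8}), together with the fact that $\alpha^\ast=\alpha+k(\alpha)a$ is itself in $\tfrac12 R_l$ so that $2\alpha^\ast\in R$ but is \emph{not} equal to $(\alpha')^\ast$ in this subcase. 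Once that identification is in hand, the factor-of-two structure ``$2(w\alpha+(2\Z+1)k(\alpha)a)$'' falls out: the coset of $R$ containing $(\alpha')^\ast=2\alpha^\ast$ is exactly the doubling of the coset $\alpha+(2\Z+1)k(\alpha)a$ — note that $k((\alpha')^\ast)=4k(\alpha)$ here by Proposition \ref{prop:countings-prime}, so $(\alpha')^\ast+\Z\cdot 4k(\alpha)a=2\alpha+(2+4\Z)k(\alpha)a=2\big(\alpha+(1+2\Z)k(\alpha)a\big)$, and applying $w$ gives the stated formula. All other subcases are strictly easier, being direct substitutions of Proposition \ref{prop:countings-prime} into $(\alpha')^\ast+\Z k((\alpha')^\ast)a$.
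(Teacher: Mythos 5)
Your proposal is correct and follows essentially the same route as the paper: part (2) is handled identically (the cosets through $w(\alpha')$ and $w((\alpha')^\ast)$ coincide by \eqref{k=kast} and Lemma \ref{lemma:countings1}), and for part (1) your explicit tracking of the offset $p$ in $\alpha'=2\alpha+pa$ is just an unpacked form of the paper's use of $(\alpha')^\ast=((\alpha')^\ast:\alpha)_G\,\alpha+k^{\nr}((\alpha')^\ast)a$ together with \eqref{nr-countings_alpha3}, after which both arguments substitute Proposition \ref{prop:countings-prime}. All five case computations check out.
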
 
\begin{proof}
\noindent
(1) By the definition \eqref{defn:nr-countings_alpha'-ast} of 
$k^{\nr}((\alpha')^\ast)$ and \eqref{nr-countings_alpha3}, 
we have 
\[\begin{aligned}
&w((\alpha')^\ast)+\Z k(w((\alpha')^\ast))a \\
=
&((\alpha')^\ast:\alpha)_G w(\alpha)+k^{\nr}((\alpha')^\ast)a
+\Z k((\alpha')^\ast)a\\
=
&\begin{cases}
2w(\alpha)+(\Z+\frac12) k((\alpha')^\ast)a
& \text{if $2\alpha\not\in R$ and $2\overline{\alpha}\in R/G$},\\
2w(\alpha)+\Z k((\alpha')^\ast)a
& \text{if $2\alpha\in R$ and $2\overline{\alpha}\in R/G$},\\
w(\alpha)+\Z k((\alpha')^\ast)a& \text{otherwise}.
\end{cases}
\end{aligned}\]
Substituting \eqref{countings-alpha-prime} to the equality above, the statement
is obtained. 
\vskip 1mm
\noindent
{\rm (2)} The description \eqref{description-R-2} is an immediate consequence of 
\eqref{description-R1-1}. Indeed, let $\gamma=w((\alpha')^\ast)\in \W[\Pi^{\dc}].
(\Pi^{\mc}\setminus \Pi^{\dc})^\ast$. For such $\gamma$, we have
\begin{align*}
&\gamma+\Z k(\gamma)a \\
=
&w((\alpha')^\ast)+\Z k\big(w((\alpha')^\ast)\big)a\\
=
&w(\alpha'+k(\alpha')a)+\Z k\big(w(\alpha')\big)a\quad (\because\ \eqref{k=kast})\\
=
&w(\alpha')+k(w(\alpha'))a+\Z k\big(w(\alpha')\big)a\quad (\because\ \W[\Pi^{\dc}]\subset
\mathrm{Aut}(R)\text{ $\&$ Lemma \ref{lemma:countings1} (3)})\\
=
&w(\alpha')+\Z k\big(w(\alpha')\big)a.
\end{align*}
Therefore, we get  
\[\bigsqcup_{\gamma\in \W[\Pi^{\dc}].(\Pi^{\mc}\setminus \Pi^{\dc})^\ast}
(\gamma+\Z k(\gamma)a)=\bigsqcup_{\gamma\in \W[\Pi^{\dc}].(\Pi^{\mc}\setminus \Pi^{\dc})}
(\gamma+\Z k(\gamma)a),\]
and the description \eqref{description-R-2} coincides with that of 
\eqref{description-R1-1}.
\end{proof}

\begin{rem}
By definition, we have 
\begin{equation}\label{description-nw}
(\Pi^{\mc}\setminus \Pi^{\dc})^\ast
=\{(\alpha')^\ast\,|\,\alpha\in \Pi^{\dc},\, 2\overline{\alpha}\in R/G\}.
\end{equation}
\end{rem}

\begin{cor}\label{cor:countings-gcd}
The number $\mathrm{g.c.d.}\{k(\alpha)\, |\, \alpha\in \Pi^{\dc}\}$ is equal to $1$.
\end{cor}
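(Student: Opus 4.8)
The plan is to deduce $\mathrm{g.c.d.}\{k(\alpha)\mid \alpha\in\Pi^{\dc}\}=1$ from the presentation \eqref{description-R-2} of $R$ together with the fact that the root lattice $Q(R)$ is a full lattice in $F$ (condition (R1)). First I would set $d:=\mathrm{g.c.d.}\{k(\alpha)\mid \alpha\in\Pi^{\dc}\}$ and argue by contradiction, assuming $d>1$. The strategy is to show that under this assumption $Q(R)$ cannot span $F$ over $\R$, or more precisely that $Q(R)$ is contained in a proper sublattice of $\rad_\Z(I_F)$-type corank, contradicting the full-rank requirement; the cleanest route is to track the $a$-component modulo $d$.

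The key computational input is Proposition \ref{prop:description-R} (2): every root $\gamma\in R$ lies either in $w(\alpha)+\Z k(\alpha)a$ for some $\alpha\in\Pi^{\dc}$, $w\in \W[\Pi^{\dc}]$, or (after applying \eqref{counting-set-expform}) in a set of the form $cw(\alpha)+(\Z k(\alpha)+\text{const}\cdot k(\alpha))a$ with $c\in\{1,2\}$, where the ``const'' is $0$ or $\frac12$ times an odd integer depending on the case. In every case, modulo $G$ the root $\gamma$ reduces to an element of $\W[\Pi^{\dc}].\Pi^{\dc}_G$ or $\W[\Pi^{\dc}].\Pi^{\mc}_G$, and by \eqref{affine_root_lattices} the lattice $Q(\W[\Pi^{\dc}].\Pi^{\dc})=\bigoplus_{i=0}^l\Z\alpha_i$. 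So I would decompose $F=L_{\Pi^{\dc}}\oplus G$ as in \eqref{eq:decomp-L'-L''} and write each root as $\gamma_L+\gamma_G a$ with $\gamma_L\in L_{\Pi^{\dc}}$. The crucial point: because each generator $\alpha_i\in\Pi^{\dc}$ has counting number $k(\alpha_i)\in d\Z$, the ``shift by $k(\alpha_i)a$'' moves only within $d\Z a$; and for the primed generators $(\alpha_i')^\ast$, formula \eqref{countings-alpha-prime} shows $k((\alpha_i')^\ast)\in\{k(\alpha_i),2k(\alpha_i),4k(\alpha_i)\}\subset d\Z$ as well, while the ``const'' term — i.e. $k^{\nr}((\alpha_i')^\ast)a$ appearing in \eqref{defn:nr-countings_alpha'-ast} — is either $k(\alpha_i)a$ or $\frac12 k(\alpha_i)a$, hence also in $\frac{d}{2}\Z a$ at worst. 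This suggests that the right invariant to track is not $d$ but possibly $\gcd$ with a factor of $2$; I expect the argument to show directly that $Q(R)\cap G\subset d'\Z a$ for some $d'>1$ dividing $d$ (perhaps $d$ or $d/2$), contradicting $Q(R)\cap G=\Z a$ from \eqref{defn:a}.

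The main obstacle will be handling the half-integer shifts $\frac12 k(\alpha)a$ that occur in the cases of \eqref{counting-set-expform} where $2\alpha\not\in R$ and $\frac12(\alpha')^\ast\in R$: there the roots genuinely involve $k(\alpha)/1$ but multiplied into $2w(\alpha)+(2\Z+1)k(\alpha)a$, so the relevant lattice generator in the $a$-direction is still an integer multiple of $k(\alpha)$, but one must be careful that the even/odd parity constraints do not secretly produce a generator coprime to $d$. I would resolve this by observing that for each $i$ the full intersection $R\cap(\alpha_i+\Z a)=\alpha_i+\Z k(\alpha_i)a$ by Lemma \ref{lemma:countings1} (1), so the $a$-shifts available from the $i$-th ``column'' of roots lie exactly in $k(\alpha_i)\Z a\subset d\Z a$; combined with the fact that differences of roots generate $Q(R)$ and that the $L_{\Pi^{\dc}}$-components already account for $\bigoplus_i\Z\alpha_i$ plus the primed contributions (which project into the same lattice by the definition of the prime map, $\alpha_i'\in\Z\alpha_i\oplus\Z a$ as used in the proof of Lemma \ref{lemma:delta_b}), we get $Q(R)\cap G\subset d\Z a$, forcing $d=1$. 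Alternatively — and this may be the slicker writeup — I would invoke Lemma \ref{lemma_rootsystem-prototype} (5) applied separately to the reduced mERSs $(R^{\dc},G)$ and note that $\Pi^{\dc}$ is a simple system of $(R^{\dc},G)$ in the sense of Definition \ref{defn:basis}, so that the counting numbers $k(\alpha)$ for $\alpha\in\Pi^{\dc}$ computed in $R$ agree with those computed in $R^{\dc}$ (both are determined by $R\cap(\alpha+\Z a)$, and $R^{\dc}$ contains all short roots, hence all of $\Pi^{\dc}$ and their $\Z a$-translates within $R^{\dc}$ equal those within $R$ since $\Pi^{\dc}\subset R^{\dc}$); then Lemma \ref{lemma_rootsystem-prototype} (5) for the reduced mERS $(R^{\dc},G)$ gives the result immediately. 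I would present this second route as the main argument, keeping the direct lattice computation as a fallback check.
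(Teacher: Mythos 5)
Your preferred route --- invoking Lemma \ref{lemma_rootsystem-prototype} (5) for the reduced mERS $(R^{\dc},G)$ --- has a genuine gap. That lemma is stated, and is only valid, under the hypothesis that the \emph{affine quotient} is reduced, not merely that the root system itself is reduced; and when $R/G$ is non-reduced the quotient $R^{\dc}/G$ is in general again non-reduced. For instance, if $R$ is of type $BCC_l^{(1)}$ then $R^{\dc}$ is of type $BC_l^{(1,1)\ast}$, whose quotient by $G=\R a$ is the non-reduced affine system $BCC_l$ (see \S \ref{sect_red-pair} and \S \ref{sect_two-affine-quot}). The paper explicitly remarks that Lemma \ref{lemma_counting-rootsystem} and Lemma \ref{lemma_rootsystem-prototype} (4) fail without reducedness of the quotient, and statement (5) of that lemma rests on the same bijection $R\cap F_a \to R/G$ that breaks down here. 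So the ``slicker'' argument is inapplicable precisely in the situation Corollary \ref{cor:countings-gcd} is designed for. (Your side claim that the counting numbers of $\alpha\in\Pi^{\dc}$ computed in $R$ and in $R^{\dc}$ agree is correct --- it is Proposition \ref{prop:Gamma(R',G)} (2) --- but it does not rescue the application of the lemma.)

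Your fallback route is essentially the paper's proof, and it is both correct and shorter than you make it. By \eqref{description-R-2} together with \eqref{counting-set-expform}, every root has $a$-coordinate an integer multiple of $k(\alpha_i)$ for some $\alpha_i\in\Pi^{\dc}$ and $L_{\Pi^{\dc}}$-component in $\bigoplus_j\Z\alpha_j$; since $L_{\Pi^{\dc}}\cap G=0$, this gives $Q(R)\cap G\subset \mathrm{g.c.d.}\{k(\alpha_i)\}\,\Z a$. Conversely $k(\alpha_i)a=\alpha_i^\ast-\alpha_i\in Q(R)\cap G$, so $\bigcup_i \Z k(\alpha_i)a$ generates $Q(R)\cap G=\Z a$ (the normalization \eqref{defn:a}), forcing the gcd to equal $1$. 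No contradiction argument or parity bookkeeping is needed: the half-integer quantities $k^{\nr}$ never appear as $a$-coordinates of actual roots, because by Lemma \ref{lemma:countings1} (1) the fibre $R\cap(\gamma+\Z a)$ is exactly $\gamma+\Z k(\gamma)a$, and \eqref{counting-set-expform} records integer multiples of $k(\alpha_i)$ throughout. Promote this route to the main argument and drop the other.
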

\begin{proof}
It follows {from} Proposition \ref{prop:description-R}
that $\bigcup_{\alpha\in \Pi^{\dc}}\Z k(\alpha)a$ 
generates the lattice $Q(R)\cap G$ of rank $1$. Thus,
the statement is verified immediately. 
\end{proof}

\subsection{Uniqueness of paired simple system} \label{sect:unique-basis}
In  this subsection, we give a solution to the ``uniqueness problem'' of 
a simple system of $(R,G)$ stated in $\S$ \ref{sect_marking} 
(see Proposition \ref{prop:nr-counting-Autom1} 
and Proposition \ref{prop:nr-counting-Autom2} below).

\begin{prop}\label{prop:nr-counting-Autom1}
For each $i=1,2$, let $(R_i,G_i)$ be a mERS, and 
$(\Pi^{\dc}_i,\Pi^{\mc}_i)$ be a paired simple system of $(R_i,G_i)$. 
Fix a generator $a_i$ of the lattice $Q(R_i)\cap G_i$ of rank $1$. 
Assume $(R_1,G_1)$ and
$(R_2,G_2)$ are isomorphic. Then, there exists an isomorphism
$\varphi:(R_1,G_1)\xrightarrow{\sim}(R_2,G_2)$ such that 
$\varphi(a_1)=a_2$, 
$\varphi(\Pi_1^{\dc})=\Pi_2^{\dc}$ and $\varphi(\Pi_1^{\mc})=\Pi_2^{\mc}$. 
\end{prop}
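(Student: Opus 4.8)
\textbf{Proof strategy for Proposition \ref{prop:nr-counting-Autom1}.}

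The plan is to reduce the problem to the known uniqueness statement for reduced mERSs (Saito's result, \cite{Saito1985}, (6.2) Corollary) applied to the reduced pair components, and then to propagate the rigidity from $\Pi^{\dc}$ to $\Pi^{\mc}$ via the prime map, using its compatibility with automorphisms (Lemma \ref{lemma:commutativity-prime}). First I would fix an isomorphism $\psi:(R_1,G_1)\xrightarrow{\sim}(R_2,G_2)$, which exists by hypothesis. Since any such $\psi$ preserves the radical and maps $Q(R_1)\cap G_1$ onto $Q(R_2)\cap G_2$, we have $\psi(a_1)=\pm a_2$; composing with $-\mathrm{id}$ on the radical if necessary (which is realised by an automorphism of $R_2$, e.g.\ $\iota_{F_f}$-type), we may and do assume $\psi(a_1)=a_2$. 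Next, $\psi$ carries the reduced subsystem $R_1^{\dc}$ isomorphically onto $R_2^{\dc}$ (reducedness is intrinsic, and $\psi$ maps divisible roots to divisible roots), and likewise $\psi(R_1^{\mc})=R_2^{\mc}$; moreover $\psi(G_1)=G_2$. Thus $\psi$ restricts to an isomorphism of the reduced mERS $(R_1^{\dc},G_1)$ onto $(R_2^{\dc},G_2)$.

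The key step is then: $\psi(\Pi_1^{\dc})$ is a simple system of $(R_2^{\dc},G_2)$ in the sense of Definition \ref{defn:basis}, and so is $\Pi_2^{\dc}$. By Saito's uniqueness theorem for reduced mERSs, there is an automorphism $\rho\in\Aut(R_2^{\dc})$ with $\rho(\psi(\Pi_1^{\dc}))=\Pi_2^{\dc}$. The point that requires care is that $\rho$ must be chosen in $\Aut(R_2^{\dc},G_2)$ (fixing the marking pointwise) and, better, with $\rho(a_2)=a_2$, so that it extends to an automorphism of the \emph{ambient} $R_2$ and is compatible with the prime map. Here I would invoke the structure of $\Aut(R_2^{\dc})$ recalled in \S\ref{sect_Auto-I} (Theorem \ref{thm:1}, Corollary \ref{cor:normal}): the subgroup $\Aut(R_2^{\dc},\rad(I))$ surjects onto the diagram automorphisms, and Saito's proof already produces the conjugating element inside $W(R_2^{\dc})\rtimes\Aut(\Gamma)$, all of whose elements fix $a_2$; one has to check that such an element genuinely extends to an automorphism of $R_2$ (not merely of $R_2^{\dc}$), which follows because $R_2$ is reconstructed from $R_2^{\dc}$ and the countings via the presentation \eqref{description-R-2}, and the countings are automorphism-invariant (Lemma \ref{lemma:countings1} (3)). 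Setting $\varphi:=\rho\circ\psi$, we then have an isomorphism $(R_1,G_1)\xrightarrow{\sim}(R_2,G_2)$ with $\varphi(a_1)=a_2$ and $\varphi(\Pi_1^{\dc})=\Pi_2^{\dc}$.

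It remains to deduce $\varphi(\Pi_1^{\mc})=\Pi_2^{\mc}$. By construction $\Pi_i^{\mc}=(\Pi_i^{\dc})^{pr}$, where the prime map for $R_i$ is defined using the generator $a_i$. Since $\varphi(a_1)=a_2$, the defining conditions of the prime map (the membership $\alpha'\in 2\alpha+\Z_{\le0}a$ and $(\alpha')^\ast\in 2\alpha+\Z_{>0}a$, together with the counting number, all of which are $\varphi$-equivariant by Lemma \ref{lemma:countings1}) transport correctly: for $\alpha\in\Pi_1^{\dc}$ one has $\varphi(\alpha')=(\varphi(\alpha))'$, exactly as in the proof of Lemma \ref{lemma:commutativity-prime} but now for an isomorphism between two root systems rather than an automorphism of one. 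Hence
\[
\varphi(\Pi_1^{\mc})=\varphi\big((\Pi_1^{\dc})^{pr}\big)=\big(\varphi(\Pi_1^{\dc})\big)^{pr}
=(\Pi_2^{\dc})^{pr}=\Pi_2^{\mc},
\]
which completes the argument. The main obstacle, as indicated, is the bookkeeping in the middle step: ensuring the conjugating automorphism $\rho$ can be taken to fix $a_2$ and to extend from $R_2^{\dc}$ to all of $R_2$; once the presentation \eqref{description-R-2} and the automorphism-invariance of countings are in hand, this is a matter of carefully tracking which subgroup of $\Aut(R_2^{\dc})$ Saito's element lies in, so I would handle it by citing Corollary \ref{cor:normal} and checking the (finitely many) diagram-automorphism cases directly.
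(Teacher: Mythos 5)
Your overall architecture is the right one — normalise $\varphi(a_1)=a_2$, match the $\Pi^{\dc}$'s by an automorphism, and then get $\varphi(\Pi_1^{\mc})=\Pi_2^{\mc}$ for free from the equivariance of the prime map — and the first and last steps are carried out essentially as in the paper. The problem is the middle step. You propose to obtain the conjugating automorphism $\rho$ by applying Saito's uniqueness theorem for simple systems to the reduced mERS $(R_2^{\dc},G_2)$. But Saito's result ((6.2) Corollary of \cite{Saito1985}) requires the \emph{affine quotient} to be reduced, not the root system itself, and in the situation this proposition is designed for the quotient $R_2^{\dc}/G_2$ is non-reduced. For instance, if $R_2$ is of type $BCC_l^{(1)}$ then $R_2^{\dc}$ is of type $BC_l^{(1,1)\ast}$ and $R_2^{\dc}/G_2$ is the non-reduced affine system $BCC_l$; the same phenomenon occurs throughout the classification (see \S\ref{sect_red-pair} and \S\ref{sect_two-affine-quot}). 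So the theorem you want to cite simply does not cover the case at hand — which is exactly why the paper proves Proposition \ref{prop:nr-counting-Autom2} from scratch rather than quoting Saito. There is no way to repair this by a more careful choice of subgroup of $\Aut(R_2^{\dc})$: the missing ingredient is the uniqueness statement itself, not its compatibility with $a_2$ or its extendability to $R_2$.

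What the paper does instead is generalise Saito's argument directly. After reducing to $(R_1,G_1)=(R_2,G_2)$ and $a_1=a_2$, one uses the affine Weyl group $W(R/G)$ and a sign $\epsilon$ (realised by the explicit map $\psi_2$ which is $\epsilon$ on $L_{\Pi_1^{\dc}}$ and the identity on $G$) to reduce to the case $\overline{\alpha_i}=\overline{\beta_i}$ for all $i$; then one writes $\beta_i=\alpha_i+m_ik(\alpha_i)a$ and constructs the shift automorphism $\psi_1$ by hand using the splitting $F=L_{\Pi_1^{\dc}}\oplus G$, verifying that it preserves $R$ via the presentation \eqref{description-R-2} together with the counting-number comparisons of Lemma \ref{lemma:nr-counting-Autom3} (which in turn uses condition \eqref{condition-Pi'} on the simple systems — a hypothesis your argument never touches but which is needed to control whether $2\alpha_i\in R$ and $\tfrac12(\alpha_i')^\ast\in R$ transfer from one simple system to the other). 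Your instinct that \eqref{description-R-2} and the automorphism-invariance of countings are the tools needed to extend maps from the down-stairs data to all of $R$ is correct; but they have to be deployed to \emph{build} the automorphism, not merely to extend one obtained from a theorem that is not available here.
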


It is enough to show the statement in the case where $(R_1,G_1)=(R_2,G_2)$
and $a_1=a_2$. Denote them by $(R,G)$ and $a$, respectively. 
Assume there exists $\varphi\in \Aut(R,G)$ such that 
$\varphi(\Pi_1^{\dc})=\Pi_2^{\dc}$. 
Then, by Lemma \ref{lemma:commutativity-prime}, we have
\[\varphi(\Pi^{\mc}_1)=\varphi((\Pi_1^{\dc})^{pr})=(\varphi(\Pi_1^{\dc}))^{pr}=(\Pi_2^{\dc})^{pr}=\Pi_2^{\mc}.\]
Therefore, the above proposition is reduced to the following.

\begin{prop}\label{prop:nr-counting-Autom2}
Let $\Pi^{\dc}_1$ and $\Pi_2^{\dc}$ be two simple systems of $(R^{\dc},G)$ which
satisfy condition \eqref{condition-Pi'}. 
Then, there exists an automorphism $\varphi\in \Aut(R,G)$ such that 
$\varphi(\Pi^{\dc}_1)=\Pi_2^{\dc}$. 
\end{prop}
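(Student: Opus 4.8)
The plan is to reduce the statement about simple systems of $(R,G)$ to the known uniqueness result for the reduced mERS $(R^{\dc},G)$, namely that any two simple systems of a reduced mERS are conjugate under $\Aut(R^{\dc},G)$ (this is the content of \cite{Saito1985}, (6.2) Corollary, recalled in the discussion after Definition \ref{defn:basis}). First I would observe that, since $\Pi^{\dc}_1$ and $\Pi^{\dc}_2$ are both simple systems of the reduced mERS $(R^{\dc},G)$, there exists $\psi\in\Aut(R^{\dc},G)$ with $\psi(\Pi^{\dc}_1)=\Pi^{\dc}_2$. So the crux of the matter is \emph{not} finding an automorphism of $R^{\dc}$, but rather showing that such a $\psi$ (or some modification of it, fixing $\Pi^{\dc}_2$) can be taken to be an automorphism of the \emph{full} root system $R$, not merely of $R^{\dc}$. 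In other words, the main obstacle is to promote an element of $\Aut(R^{\dc},G)$ to an element of $\Aut(R,G)$.

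To carry this out I would use the presentation of $R$ from Proposition \ref{prop:description-R} (2): with respect to a paired simple system $(\Pi^{\dc},\Pi^{\mc})$,
\[
R=\Big(\bigsqcup_{\gamma\in \W[\Pi^{\dc}].\Pi^{\dc}}(\gamma+\Z k(\gamma)a)\Big)
\bigcup
\Big(\bigsqcup_{\gamma\in \W[\Pi^{\dc}].(\Pi^{\mc}\setminus \Pi^{\dc})^\ast}
(\gamma+\Z k(\gamma)a)\Big),
\]
and the explicit form \eqref{counting-set-expform} of each fiber $\gamma+\Z k(\gamma)a$, which depends only on $w\alpha$, on $k(\alpha)$, and on the three Boolean data ``$2\alpha\in R$'', ``$2\overline\alpha\in R/G$'', ``$\tfrac12(\alpha')^\ast\in R$''. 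The key point is that $R$ is completely determined by the restriction of $R$ to $R^{\dc}$ \emph{together with} the function $\alpha\mapsto k^{\nr}(\alpha)$ on $\Pi^{\dc}$ (equivalently the counting numbers $k((\alpha')^\ast)$, via Proposition \ref{prop:countings-prime}). So I would argue: (i) the automorphism $\psi\in\Aut(R^{\dc},G)$ carries $\Pi^{\dc}_1$ to $\Pi^{\dc}_2$ and, by Lemma \ref{lemma_rootsystem-prototype} (3) applied to $R^{\dc}$, preserves the counting numbers $k(\alpha)$ for $\alpha\in R^{\dc}$; (ii) $\psi$ intertwines the affine Weyl groups $\W[\Pi^{\dc}_1]$ and $\W[\Pi^{\dc}_2]$ (both isomorphic to $W(R^{\dc}/G)=W(R/G)$, by Lemma \ref{lemma:paired-reduced-basis} (1)); (iii) the non-reduced counting numbers $k^{\nr}$ depend only on intrinsic data of $(R,G)$ — they are defined in \eqref{defn:nr-countings_alpha'-ast}--\eqref{defn:nr-countings_alpha} purely from the ambient root system $R$ and the projection $\pi_G$ — so the Boolean data ``$2\alpha\in R$'' etc.\ and the counting numbers $k((\alpha')^\ast)$ at $\psi(\alpha)$ agree with those at $\alpha$. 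Plugging these into \eqref{counting-set-expform} termwise and taking the union over $\W[\Pi^{\dc}]$-orbits, I would conclude $\psi(R)=R$, i.e.\ $\psi\in\Aut(R,G)$, and then take $\varphi=\psi$.

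The step I expect to be the genuine obstacle is (iii): verifying that $\psi$, which a priori only knows about $R^{\dc}$, actually respects the supplementary structure of $R$ encoded in the prime map and the half-integer counting numbers. Concretely, one must check that for $\alpha\in\Pi^{\dc}_1$ with $2\overline\alpha\in R/G$, the root $(\psi(\alpha))'$ equals $\psi(\alpha')$ — but this is exactly Lemma \ref{lemma:commutativity-prime}, \emph{provided} one already knows $\psi\in\Aut(R,G)$, which is circular. The way around the circularity is to not invoke the prime map at this stage: instead I would characterize the data $k((\alpha')^\ast)$ directly in terms of $R^{\dc}$ and the \emph{set} $R$ (via which roots of the form $2\alpha+\Z a$ or $\tfrac12$-scalings lie in $R$), observe these are counting-type invariants preserved by any $g\in GL(F)$ with $g(R^{\dc})=R^{\dc}$ and $g(R)\subseteq$ ``a root system of the same BC-type'', and then use Corollary \ref{cor:countings-gcd} (g.c.d.\ of the $k(\alpha)$ equals $1$) together with the case-by-case list in Theorem \ref{thm_classification-non-reduced} to pin down $R$ uniquely from $(R^{\dc},\{k^{\nr}(\alpha)\}_{\alpha\in\Pi^{\dc}_2})$. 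Thus $\psi(R)$ and $R$ share the same reduced part and the same non-reduced countings relative to $\Pi^{\dc}_2$, forcing $\psi(R)=R$. I would organize the final write-up so that this rigidity-of-countings lemma is isolated and proved first, after which the proposition follows in one line.
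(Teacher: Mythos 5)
There is a genuine gap, and it occurs already in your first step. You invoke Saito's (6.2) Corollary to produce $\psi\in \Aut(R^{\dc},G)$ with $\psi(\Pi_1^{\dc})=\Pi_2^{\dc}$, on the grounds that $(R^{\dc},G)$ is a reduced mERS. But Saito's transitivity theorem requires the affine \emph{quotient} to be reduced, and in every case where the present proposition has content the quotient $R^{\dc}/G$ is non-reduced (e.g.\ for $(R,G)$ of type $BCC_l^{(1)}$ one has $R^{\dc}$ of type $BC_l^{(1,1)\ast}$ with $R^{\dc}/G$ of type $BCC_l$; and if $R$ itself is reduced with non-reduced quotient then $R^{\dc}=R$ and your claimed input \emph{is} the conclusion). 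The paper flags exactly this point after Definition \ref{defn:basis}: in Saito's proof ``the reducedness of $R/G$ is essentially used,'' and the uniqueness of a simple system in the non-reduced-quotient case is precisely what Propositions \ref{prop:nr-counting-Autom1}--\ref{prop:nr-counting-Autom2} are establishing. So the existence of $\psi$ cannot be taken as given; it must be constructed.

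The second gap is in the promotion step, which you correctly identify as the crux but do not actually close. The set $R$ is recovered from $R^{\dc}$ only after one also knows, for each $\gamma\in R^{\dc}$, whether $2\gamma\in R$, and this datum is \emph{not} constant along the fiber $\gamma+\Z k(\gamma)a$ (cf.\ \eqref{counting-set-expform}, where for $\alpha\in|\Gamma_m^{\nw}|_g^b$ only the odd translates have doubles in $R$). Hence a map preserving $R^{\dc}$ and $G$ has no a priori reason to preserve $\{\gamma\in R^{\dc}:2\gamma\in R\}$, and your assertion that these are ``counting-type invariants preserved by any $g$ with $g(R^{\dc})=R^{\dc}$'' begs the question: the invariance of this data under $\psi$ is equivalent to $\psi(R)=R$. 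What actually makes the argument work — and what the paper proves in Lemma \ref{lemma:nr-counting-Autom3} — is that condition \eqref{condition-Pi'} forces the counting numbers, the divisibility data $2\alpha_i\in R$, and the membership $\tfrac12(\alpha_i')^\ast\in R$ to agree at corresponding nodes of $\Pi_1^{\dc}$ and $\Pi_2^{\dc}$; the proof of that lemma uses reflections of the full system $R$ (such as $r_{(\alpha_i')^\ast}$) and cannot be run inside $R^{\dc}$ alone. Finally, your fallback of pinning $R$ down via the list in Theorem \ref{thm_classification-non-reduced} would import Part~2 into an argument that the paper deliberately keeps independent of it. The paper's route avoids all of this by building $\varphi$ directly as a composition of a Weyl-group lift $w$, a sign map $\psi_2$, and a shear $\psi_1$ along $G$, each verified to preserve $R$ itself via the presentation \eqref{description-R-2} together with Lemma \ref{lemma:nr-counting-Autom3}; if you want to keep your two-stage outline, you would still have to reprove essentially all of that material to justify both stages.
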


In the case when $R/G$ is reduced, this statement was proved by K. Saito 
(\cite{Saito1985}, (6.2) Corollary). 
Our proof is a generalization of the original one which requires 
some adaptation because of the non-reducibility of $R/G$.\\

Before starting the proof of Proposition \ref{prop:nr-counting-Autom2}, we 
prepare the following two lemmas.
\begin{lemma}\label{lemma:nr-counting-Autom3}
Let $\Pi^{\dc}_1=\{\alpha_0,\ldots,\alpha_l\}$ and $\Pi_2^{\dc}=\{\beta_0,\ldots,\beta_l\}$ 
be two simple systems of $(R^{\dc},G)$ which satisfy condition \eqref{condition-Pi'}. 
Assume
\begin{equation}\label{assumtion:alpha=beta}
\overline{\alpha_i}=\overline{\beta_i}\quad\text{for every}\quad 0\leq i\leq l.
\end{equation}
We have the following.
\vskip 1mm
\noindent
{\rm (1)} An equality $k(\alpha_i)=k(\beta_i)$ holds for every $0\leq i\leq l$. 
\vskip 1mm
\noindent
{\rm (2)} The condition $2\alpha_i\in R$ is equivalent to $2\beta_i\in R$.
\vskip 1mm
\noindent
{\rm (3)} An equality $k((\alpha_i')^\ast)=k((\beta_i')^\ast)$ holds for every 
$0\leq i\leq l$.
\vskip 1mm
\noindent
{\rm (4)} Under the assumption  $2\overline{\alpha_i}=2\overline{\beta_i}\in R/G$, 
the following conditions are equivalent{\rm :}  
\begin{itemize}
\item[(a)] $\frac12 (\alpha_i')^\ast\in R$,
\vskip 1mm
\item[(b)] $\frac12 (\beta_i')^\ast\in R$. 
\end{itemize}
\end{lemma}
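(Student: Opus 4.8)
The plan is to prove all four assertions of Lemma \ref{lemma:nr-counting-Autom3} together, exploiting the hypothesis \eqref{assumtion:alpha=beta} that $\overline{\alpha_i}=\overline{\beta_i}$, so that $\alpha_i$ and $\beta_i$ differ only by an element of the rank-$1$ lattice $Q(R)\cap G=\Z a$: write $\beta_i=\alpha_i+m_ia$ for some $m_i\in\Z$. Statement (1) is then immediate: by Lemma \ref{lemma:countings1} (1), $K_G(\alpha_i)=\Z k(\alpha_i)a$ and $K_G(\beta_i)=\Z k(\beta_i)a$, but $K_G(\beta_i)=K_G(\alpha_i+m_ia)$, and since $\alpha_i+m_ia\in R$ forces $m_i\in k(\alpha_i)\Z$, the translate $\alpha_i+(\Z a)$ meets $R$ in exactly the same coset as $\beta_i+(\Z a)$; hence $k(\alpha_i)=k(\beta_i)$.

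For statement (2), observe that whether $2\alpha_i\in R$ depends only on the line $\alpha_i+\R a$ together with the specific representative: more precisely, $2\overline{\alpha_i}\in R/G$ iff $2\overline{\beta_i}\in R/G$ (they have the same image), and when this holds the condition ``$2\alpha_i\in R$'' (equivalently, by the definition of the prime map, $\alpha_i'=2\alpha_i$, equivalently $p_i=0$) is detected by condition \eqref{condition-Pi'}: a simple root satisfying \eqref{condition-Pi'} has $2\alpha_i\in R$ precisely when $\big(\pi_G^{-1}(\overline{\alpha_i})\cap R\big)\setminus\tfrac12 R_l=\emptyset$, and this set depends only on $\overline{\alpha_i}=\overline{\beta_i}$. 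So $2\alpha_i\in R\iff 2\beta_i\in R$. Statement (3) then follows by combining (1), (2) with Proposition \ref{prop:countings-prime}: the value of $k((\alpha_i')^\ast)$ is determined by $k(\alpha_i)$, by whether $2\overline{\alpha_i}\in R/G$, by whether $2\alpha_i\in R$, and by whether $\tfrac12(\alpha_i')^\ast\in R$; the first three are now known to agree for $\alpha_i$ and $\beta_i$, and for the last one I invoke statement (4).

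For statement (4), assuming $2\overline{\alpha_i}=2\overline{\beta_i}\in R/G$, I would use Corollary \ref{cor:nr-countings1} (or equivalently the Claim inside the proof of Corollary \ref{cor:nr-countings1}): the condition $\tfrac12(\alpha_i')^\ast\in R$ is equivalent to $(\alpha_i')^\ast=2\alpha_i^\ast$, and from the formulas \eqref{eq:claim}, \eqref{p=-k(alpha'/2)}, \eqref{nr-countings8} in the proof of Proposition \ref{prop:nr-countings1} one sees $\tfrac12(\alpha_i')^\ast=\alpha_i+\tfrac12k(\alpha_i)a$ whenever $\overline{\alpha_i'}=2\overline{\alpha_i}$; thus $\tfrac12(\alpha_i')^\ast\in R$ iff $\alpha_i+\tfrac12 k(\alpha_i)a\in R$, iff $\tfrac12\overline{(\alpha_i')^\ast}=\overline{\alpha_i}$ lifts to a root in $R\cap\pi_G^{-1}(\overline{\alpha_i})=\alpha_i+\Z k(\alpha_i)a$, which is a condition on $\overline{\alpha_i}$ and $k(\alpha_i)$ alone. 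Since $\overline{\alpha_i}=\overline{\beta_i}$ and, by (1), $k(\alpha_i)=k(\beta_i)$, the two conditions (a) and (b) coincide.

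The main obstacle I anticipate is the careful handling of statement (2): the assertion that condition \eqref{condition-Pi'} pins down whether $2\alpha_i\in R$ in terms of data visible in $R/G$ requires unwinding the definition of the prime map and the special role of \eqref{condition-Pi'} for simple roots, and one must check that the relevant set $\big(\pi_G^{-1}(\overline{\alpha_i})\cap R\big)\setminus\tfrac12 R_l$ genuinely depends only on $\overline{\alpha_i}$ (this is clear since $\pi_G^{-1}(\overline{\alpha_i})\cap R$ and $R_l$ are both intrinsic to $R$). Once (2) is secured, statements (3) and (4) are essentially bookkeeping with Proposition \ref{prop:countings-prime} and Corollary \ref{cor:nr-countings1}, and (1) is elementary. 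A minor point to watch is the case distinction $2\overline{\alpha_i}\in R/G$ versus $2\overline{\alpha_i}\notin R/G$: in the latter case $\alpha_i'=\alpha_i$, $\beta_i'=\beta_i$, and (3) reduces directly to (1) while (4) is vacuous.
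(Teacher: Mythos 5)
Your treatment of (1) and (2) is correct and coincides with the paper's: (1) is immediate from Lemma \ref{lemma:countings1} (1), and (2) is exactly the observation that the set $\big(\pi_G^{-1}(\overline{\alpha_i})\cap R\big)\setminus\tfrac12 R_l$ is intrinsic to the common image $\overline{\alpha_i}=\overline{\beta_i}$, so condition \eqref{condition-Pi'} forces $2\alpha_i\in R$ and $2\beta_i\in R$ to hold or fail together. The reduction of (3) to (1), (2) and (4) via Proposition \ref{prop:countings-prime} is also sound; in fact (3) can be had even more cheaply, since $\alpha_i'$ and $\beta_i'$ lie in the same fiber $\pi_G^{-1}(\overline{\alpha_i'})\cap R$ and therefore have equal counting numbers by Lemma \ref{lemma:countings1} (1) together with \eqref{k=kast}.

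The gap is in (4), which is the only genuinely nontrivial assertion. The identity $\tfrac12(\alpha_i')^\ast=\alpha_i+\tfrac12 k(\alpha_i)a$ that you import from the proof of Corollary \ref{cor:nr-countings1} is derived there \emph{under the hypothesis} $(\alpha_i')^\ast\ne 2\alpha_i^\ast$, i.e., precisely in the case $\tfrac12(\alpha_i')^\ast\notin R$; it is not an unconditional formula. If it held unconditionally, then since $\tfrac12 k(\alpha_i)a\notin\Z k(\alpha_i)a$ the condition $\tfrac12(\alpha_i')^\ast\in R$ could never occur, contradicting, e.g., the black starred nodes $(\alpha_l')^\ast$ in the diagrams of type $BCC_l^{(1)}$ or $C^\vee C_l^{(2)}(2)$. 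In the complementary case one has instead $\tfrac12(\alpha_i')^\ast=\alpha_i^\ast=\alpha_i+k(\alpha_i)a$, so your chain of equivalences collapses at its first step. What actually must be shown is that the distinguished element $(\alpha_i')^\ast$ of the fiber $\pi_G^{-1}(2\overline{\alpha_i})\cap R$ lies in $2R$ if and only if the a priori different element $(\beta_i')^\ast$ does, even though $\beta_i=\alpha_i+m_ik(\alpha_i)a$ shifts the reference point within the fiber. The paper handles this by a case analysis whose crucial step (the case $2\alpha_i\notin R$, $\tfrac12(\alpha_i')^\ast\notin R$) assumes $\tfrac12(\beta_i')^\ast\in R$, computes $(\beta_i')^\ast=2\alpha_i+2(m_i+1)k(\alpha_i)a$ from Proposition \ref{prop:countings-prime}, and applies the translation $\big(r_{\alpha_i}r_{(\alpha_i')^\ast}\big)^{m_i+1}$ of Lemma \ref{lemma:translation} to force $2\alpha_i\in R$, a contradiction. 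An argument of this kind is indispensable, and your proposal does not supply one; consequently (3) as you derive it is also left incomplete.
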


\begin{proof}
Note that there exist integers 
$m_i\ (0\leq i\leq l)$ such that
\begin{equation}\label{alpha-beta}
\beta_i=\alpha_i+m_i k(\alpha_i)a
\end{equation}
by condition \eqref{assumtion:alpha=beta}.

 Statement (1) is an easy consequence of  
Lemma \ref{lemma:countings1} (1).
Statement  (2) follows {from} 
\eqref{condition-Pi'} and \eqref{assumtion:alpha=beta}. Indeed, 
since $\pi_G^{-1}(\overline{\alpha_i})=\pi_G^{-1}(\overline{\beta_i})$ by
\eqref{assumtion:alpha=beta}, the
condition $\big(\pi_G^{-1}(\overline{\alpha_i})\cap R\big)\setminus \frac12 R_l\ne \emptyset$ is
equivalent to $\big(\pi_G^{-1}(\overline{\beta_i})\cap R\big)\setminus \frac12 R_l\ne \emptyset$.
Since both $\Pi_1^{\dc}$ and $\Pi_2^{\dc}$ satisfy  condition \eqref{condition-Pi'}, 
we have  statement (2). 

Let us prove statements (3) and (4). 
If $2\overline{\alpha_i}=2\overline{\beta_i}\not\in R/G$, we have 
$\alpha'_i=\alpha_i$ and $\beta_i'=\beta_i$. Thus, statement (3) is a direct
consequence of  statement (1), in this case. 
Statement (4) is not concerned in this case.
 
Assume $2\overline{\alpha_i}=2\overline{\beta_i}\in R/G$. Consider the
following three cases: 
\begin{itemize}
\item[(i)] $2\alpha_i\not\in R$ and $\frac12 (\alpha_i')^\ast \not \in R$, 
\vskip 1mm
\item[(ii)] $2\alpha_i\not\in R$ and $\frac12 (\alpha_i')^\ast \in R$, 
\vskip 1mm
\item[(iii)] $2\alpha_i\in R$. 
\end{itemize}
\vskip 1mm
\noindent
Case (i): Let us prove $\frac12 (\beta_i')^\ast\not\in R$. 
Assume $\frac12 (\beta_i')^\ast\in R$. 
Applying Proposition \ref{prop:countings-prime} to the paired simple system
$(\Pi_2^{\dc},\Pi_2^{\mc})$, we have
\[k(\beta'_i)=k((\beta_i')^\ast)=4k(\beta_i)=4k(\alpha_i).\]
Note that the last equality follows from  statement (1). 
Since $2\beta_i\not\in R$ by statement (2), we have the following description 
of $(\beta_i')^\ast\in R$:
\[(\beta_i')^\ast=2\beta_i+\frac12 k(\beta_i')a=2\alpha_i+2(m_i+1)k(\alpha_i)a.\]
Since both $\alpha_i$ and $(\alpha_i')^\ast$ are roots, the element 
$(r_{\alpha_i}r_{(\alpha_i')^\ast})^p((\beta_i')^\ast)$
is also a root for every $p\in\Z$.
By Lemma \ref{lemma:translation}, one has
\[(r_{\alpha_i}r_{(\alpha_i')^\ast})^p((\beta_i')^\ast)=(\beta_i')^\ast-2pk(\alpha_i)a
=2\alpha_i+2(m_i+1-p)k(\alpha_i)a.\] 
Setting $p=m_i+1$, the equality above indicates that $2\alpha_i$ belongs to $R$. 
This is a contradiction. Therefore, we have $\frac12 (\beta_i')^\ast\not\in R$, 
as desired. 

Furthermore,  applying Proposition \ref{prop:countings-prime} to the paired 
simple systems $(\Pi_1^{\dc},\Pi_1^{\mc})$ and $(\Pi_2^{\dc},\Pi_2^{\mc})$, we have
$k((\alpha_i')^\ast)=2k(\alpha_i)$ and $k((\beta_i')^\ast)=2k(\beta_i)$, respectively.
Hence, statement (1) implies (3), in this case.
\vskip 3mm
\noindent
Case (ii): Assume $\frac12 (\beta_i')^\ast\not\in R$. 
Exchanging the roles of
$\alpha_i$ and $\beta_i$ in the previous case, we have a contradiction. 
Thus, we get $\frac12 (\beta_i')^\ast\in R$, and 
\[k((\beta_i')^\ast)=4k(\beta_i)=4k(\alpha_i)=k((\alpha_i')^\ast).\]
Case (iii): By statement (2), we have $2\beta_i\in R$. Thus, 
\[\alpha'_i=2\alpha_i\in R\quad\text{and}\quad \beta'_i=2\beta_i\in R,\]
in this case.  Therefore, one obtains
\[\overline{\alpha_i'}=2\overline{\alpha_i}=2\overline{\beta_i}
=\overline{\beta'_i}\in R/G\]
and  
\[k((\alpha_i')^\ast)=k(\alpha_i')=k(\beta_i')=k((\beta_i')^\ast).\]
This is nothing but statement (3). 
Assume $\frac12 (\alpha_i')^\ast\in R$. By Proposition \ref{prop:countings-prime} 
for $(\Pi_1^{\dc},\Pi_1^{\mc})$, we have $k(\alpha_i')=2k(\alpha_i)$. 
Hence, by the above results, we get  
\[\frac12 (\beta_i')^\ast=\frac12\left(\beta_i'+k(\beta_i')a\right)
=\beta_i+\frac12k((\beta_i')^\ast)a=\alpha_i+(m_i+1)k(\alpha_i)a \in R.\]

Thus, we showed that the condition $\frac12 (\alpha_i)^\ast\in R$ implies
$\frac12 (\beta_i)^\ast\in R$ for every case. 
Exchanging
 the roles $\alpha_i$ and $\beta_i$, we have the 
converse
statement. Thus, the proof of statement (4) is completed.  
\end{proof}

\begin{lemma}\label{lemma:nr-counting-Autom4}
Under the same setting 
as in
the previous lemma, 
there exists an automorphism $\psi_1\in \Aut(R,G)$ such that 
\begin{equation}\label{condition-Autom-alpha=beta}
\psi_1(\alpha_i)=\beta_i\quad\text{for every}\quad 0\leq i\leq l.
\end{equation} 
\end{lemma}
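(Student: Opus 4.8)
\textbf{Proof strategy for Lemma \ref{lemma:nr-counting-Autom4}.}

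The plan is to build the desired automorphism $\psi_1\in\Aut(R,G)$ as an element of the ``translation subgroup'' $T_G(R)$, i.e. as an image under the Eichler--Siegel map $E_F$ of an element of $G\otimes(F/\rad(I))$, using the difference data $\beta_i=\alpha_i+m_ik(\alpha_i)a$ coming from \eqref{alpha-beta}. First I would recall from Lemma \ref{lemma:Saito(1.15)}, applied to the reduced mERS $(R^{\dc},G)$ with $L=L_{\Pi_1^{\dc}}$ (which satisfies conditions (i),(ii) of that lemma by Proposition \ref{prop:description-R0} and Lemma \ref{lemma:paired-reduced-basis}), that $T_G(R^{\dc})=E_F\big(a\otimes Q_{F/\rad(I),a}\big)$ where the relevant lattice is spanned by $k(\alpha)\alpha^\vee\ (\mathrm{mod}\ \rad(I))$ for $\alpha\in R^{\dc}\cap L$; equivalently, by Lemma \ref{lemma_rootsystem-prototype}(4), $\bigcup_i \Z k(\alpha_i)a$ generates $Q(R)\cap G$, so any element of $G$ of the form $\sum_i c_i k(\alpha_i)a$ with $c_i\in\Z$ is realized. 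The natural candidate is
\[
\psi_1=E_F\Big(a\otimes \sum_{i=0}^l m_i\,\varpi_i^{\dagger}\Big),
\]
where $\{\varpi_i^{\dagger}\}$ is the basis of $F/\rad(I)$ dual (under $I$) to the simple roots $\pi_{\rad(I)}(\alpha_i)$ of the finite quotient; then by the defining formula of $E_F$ and item (iii) before Lemma \ref{lemma:Saito(1.15)}, $\psi_1$ fixes $G$ pointwise, acts trivially on $\rad(I)$, and satisfies $\psi_1(\alpha_j)=\alpha_j+m_jk(\alpha_j)a=\beta_j$ once one checks $k^{\nr}$ or $k$ bookkeeping is consistent. (One should be slightly careful: the dual weights live in $F/\rad(I)$, not $F$, so I would instead work directly with the dual basis $\varpi_i^\vee\in F_f$ relative to $I_{F_f}$, exactly as in $\S$\ref{sect_Auto-I}, and set $\psi_1=E_F(a\otimes\sum_i m_i\varpi_i^\vee)$.)

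The two things that need genuine verification are: (1) $\psi_1$ indeed preserves the whole of $R$, not merely $R^{\dc}$; and (2) the affine root $\alpha_0$ (for which $n_0=1$ but $k(\alpha_0)$ need not be $1$) is handled correctly. For (1) I would use the explicit description \eqref{description-R-2} of $R$ in Proposition \ref{prop:description-R}: $R$ is the disjoint union of $\W[\Pi_1^{\dc}].\Pi_1^{\dc}$-translates by $\Z k(\cdot)a$ and of $\W[\Pi_1^{\dc}].(\Pi_1^{\mc}\setminus\Pi_1^{\dc})^\ast$-translates. Since $\psi_1$ commutes with each $r_{\alpha_i}$ modulo $T_G$ (it is central-ish in the relevant semidirect product up to translations), $\psi_1$ maps $\W[\Pi_1^{\dc}].\gamma$ into $\W[\Pi_1^{\dc}].(\psi_1\gamma)+\Z a$, and Lemma \ref{lemma:countings1}(3) plus the prime-map compatibility Lemma \ref{lemma:commutativity-prime} show the $\Z a$-shift lands inside the allowed coset $\Z k(\gamma)a$; the case distinction in \eqref{counting-set-expform} matches precisely because of Lemma \ref{lemma:nr-counting-Autom3}(2),(3),(4), which guarantee the invariants $k(\alpha_i)$, $k((\alpha_i')^\ast)$, ``$2\alpha_i\in R$'', ``$\tfrac12(\alpha_i')^\ast\in R$'' agree for the $\alpha$'s and the $\beta$'s. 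This is the step I expect to be the main obstacle: it requires carefully tracking, coset by coset, that the five cases in \eqref{counting-set-expform} are permuted consistently, and that the images $\beta_i'=\psi_1(\alpha_i')$ computed via $\psi_1$ agree with the prime-map images $(\beta_i)^{pr}$ computed intrinsically --- which is exactly where Lemma \ref{lemma:nr-counting-Autom3}(3),(4) is needed.

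Finally, I would assemble the argument: given $\psi_1$ as above, $\psi_1\in\Aut(R,G)$ by the verification of (1), and $\psi_1(\alpha_i)=\beta_i$ for all $i$ by construction, which is \eqref{condition-Autom-alpha=beta}. The point of isolating Lemma \ref{lemma:nr-counting-Autom4} is that it reduces the general uniqueness statement (Proposition \ref{prop:nr-counting-Autom2}) to the case $\overline{\alpha_i}=\overline{\beta_i}$ treated here: in the general case one first uses the known uniqueness of simple systems for the affine root system $R/G$ (so $\pi_G(\Pi_1^{\dc})$ and $\pi_G(\Pi_2^{\dc})$ differ by an element $\overline{w}\in W(R/G)$, possibly composed with a diagram automorphism coming from $\Aut(R/G)$), lifts $\overline{w}$ to $w\in\W[\Pi_1^{\dc}]\subset W(R)\subset\Aut(R,G)$, and then applies Lemma \ref{lemma:nr-counting-Autom4} to $w(\Pi_1^{\dc})$ and $\Pi_2^{\dc}$. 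No new ingredient beyond the present lemma and the already-established reduced-case results of K. Saito is needed for that last reduction.
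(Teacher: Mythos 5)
Your overall strategy --- write down an explicit linear map realizing the shifts $\beta_i=\alpha_i+m_ik(\alpha_i)a$, then verify it preserves $R$ by running through the description \eqref{description-R-2} and using Lemma \ref{lemma:nr-counting-Autom3} to match the case distinctions --- is the same as the paper's, and your identification of Lemma \ref{lemma:nr-counting-Autom3} (2)--(4) as exactly what is needed to prove $\psi_1((\alpha_i')^\ast)=(\beta_i')^\ast$ is correct. But the concrete construction of $\psi_1$ has a genuine gap. Any map of the form $E_F(a\otimes v)$ with $v\in F/\rad(I_F)$ acts by $\lambda\mapsto\lambda-I(v,\lambda)a$ and therefore fixes $\rad(I_F)$ pointwise; in particular it fixes $\delta_b=\sum_i n_i\alpha_i$, so the shifts it induces on the simple roots are constrained by $\sum_i n_i\cdot(\text{shift of }\alpha_i)=0$. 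The required shifts $m_ik(\alpha_i)$ satisfy no such relation: already for $A_1^{(1,1)}$ the pair $\{b+a-\vep,\ \vep\}$ is a legitimate second simple system with $m_0=1$, $m_1=0$, and the automorphism carrying $\Pi_1^{\dc}$ to it must send $b\mapsto b+a$, i.e.\ its modular part is the nontrivial unipotent ${}^tT\in\Gamma^+(R)$. No Eichler--Siegel translation $E_F(a\otimes v)$, and a fortiori no element of $T_G(R)$, can do this; concretely, your candidate $E_F\big(a\otimes\sum_i m_i\varpi_i^\vee\big)$ only prescribes the shifts of $\alpha_1,\ldots,\alpha_l$, and the shift of $\alpha_0$ is then forced by the relation above rather than being the required $m_0k(\alpha_0)$.

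The fix is to abandon the translation subgroup and define $\psi_1$ relative to the direct sum $F=L_{\Pi_1^{\dc}}\oplus\R a$ with $L_{\Pi_1^{\dc}}=\bigoplus_{i=0}^l\R\alpha_i$: set $\psi_1\big(\sum_i\lambda_i\alpha_i+\mu a\big)=\sum_i\lambda_i\alpha_i+\big(\mu+\sum_i\lambda_im_ik(\alpha_i)\big)a$. This fixes $a$, sends each $\alpha_i$ to $\beta_i$, and in general moves $b$ within its class modulo $\Z a$, which is all that the uniqueness of $b$ permits one to preserve. With this $\psi_1$ your verification plan goes through essentially as you describe: one checks $\psi_1((\alpha_i')^\ast)=(\beta_i')^\ast$ case by case via Proposition \ref{prop:countings-prime} and Lemma \ref{lemma:nr-counting-Autom3}, and then uses the conjugation $\psi_1 r_{\alpha_j}\psi_1^{-1}=r_{\beta_j}$ to transport each block $w_1(\gamma)+\Z k(\gamma)a$ of \eqref{description-R-2} back into $R$. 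Your closing reduction of Proposition \ref{prop:nr-counting-Autom2} to this lemma is correct in outline, except that the sign ambiguity $\epsilon=-1$ is handled by a separate explicit involution of $F$ fixing $G$ rather than by a diagram automorphism of $R/G$.
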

\begin{proof}
Recall a decomposition 
\begin{equation}\label{decomp_F-Pi_1}
F=L_{\Pi_1^{\dc}}\oplus G
\end{equation} 
of $F$, where $L_{\Pi_1^{\dc}}=\bigoplus_{i=0}^l\R\alpha_i$ is a one-codimensional 
subspace of $F$ spanned by $\Pi_1^{\dc}$. Define a linear automorphism 
$\psi_1\in\mathrm{GL}(F)$ by
\[\psi_1\ :\ \Big(\sum_{i=0}^l \lambda_i\alpha_i\Big)+\mu a\ \longmapsto
\Big(\sum_{i=0}^l \lambda_i\alpha_i\Big)+\Big(\mu+\sum_{i=0}^l \lambda_i m_i
k(\alpha_i)\Big)a.\]
By \eqref{alpha-beta},
condition \eqref{condition-Autom-alpha=beta} is 
satisfied, and $\psi_1(a)=a$. 

The remaining part is to prove that  $\psi_1$ preserves the set $R$. By the description
\eqref{description-R-2} of $R$, it is enough {to} show that
\begin{equation}\label{subset-Autom}
\psi_1\big(\gamma+p k(\gamma)a\big)\in R\quad 
\text{for $p\in\Z$ and $\gamma=w_1(\alpha_i)$ or 
$\gamma=w_1((\alpha_i')^\ast)$},
\end{equation} 
where $w_1\in \W[\Pi_1^{\dc}]$ and $\alpha_i\in \Pi_1^{\dc}$. \\
\\
{\bf Claim}. 
\begin{equation}\label{psi_1(alpha)=beta}
\psi_1((\alpha_i')^\ast)=(\beta_i')^\ast.
\end{equation}
\begin{proof}
This equality follows {from} case-by-case analysis. For example, assume 
\begin{equation}\label{gray-black}
\text{$2\alpha_i\not\in R,\quad 2\overline{\alpha_i}\in R/G$ \quad and \quad
$\frac12 (\alpha_i')^\ast \in R$. } 
\end{equation}
By this assumption and Proposition \ref{prop:countings-prime}, 
we have
\begin{equation}\label{psi_1-alpha}
\begin{aligned}
\psi_1((\alpha_i')^\ast)&=\psi_1\left(2\alpha_i+\frac12 k(\alpha'_i)a\right)
=2\beta_i+2 k(\alpha_i)a\\
&=2\beta_i+2 k(\beta_i)a.
\end{aligned}
\end{equation}
Note that we used  $k(\alpha_i)=k(\beta_i)$ in the last equality.
On the other hand, by Lemma \ref{lemma:nr-counting-Autom3} and 
the assumption \eqref{gray-black}, we have 
$2\beta_i\not\in R,\, 2\overline{\beta_i}\in R/G$ and 
$\frac12 (\beta_i')^\ast \in R$. Therefore, by Proposition 
\ref{prop:countings-prime}, we have
\begin{equation*}\label{psi_1-beta}
(\beta_i')^\ast=2\beta_i+\frac12 k(\beta_i')a=2\beta_i+2k(\beta_i)a.
\end{equation*}
Combining these two equalities, we get $\psi_1((\alpha_i')^\ast)=(\beta_i')^\ast$
as desired. For the other cases, the equality \eqref{psi_1(alpha)=beta} holds by a similar
method.
\end{proof}

Assume $\gamma=w_1((\alpha_1)^\ast)$. 
Since $\psi_1 \circ r_{\alpha_j}\circ \psi_1^{-1}=
r_{\psi_1(\alpha_j)}=r_{\beta_j}\in \W[\Pi_2^{\dc}]$ 
for every $0\leq j\leq l$, $w_2:=\psi_1\circ w_1\circ \psi_1^{-1}$ is an element of
$\W[\Pi_2^{\dc}]\subset W(R)$. Therefore, we have
\[\begin{aligned}
\psi_1\big(\gamma+p k(\gamma)a\big)
&=\psi_1(\gamma)+pk(\gamma)a\\
&=(\psi_1\circ w_1\circ \psi_1^{-1})\big(\psi_1((\alpha_i')^\ast)\big)+
p k(w_1((\alpha_i')^\ast))a\\
&=w_2((\beta_i')^\ast)+p k((\alpha_i')^\ast)a \quad \text{($\because\,  
w_1\in \Aut(R)$)}\\
&=w_2((\beta_i')^\ast)+p k((\beta_i')^\ast)a\quad 
\text{(by Lemma \ref{lemma:nr-counting-Autom3} (3))}\\
&=w_2((\beta_i')^\ast)+p k(w_2((\beta_i')^\ast))a\quad 
\text{($\because\, w_2\in \Aut(R)$)}.
\end{aligned}\]
Since $w_2((\beta_i')^\ast)$ is a root, the right hand side belongs to $R$. 
Thus, \eqref{subset-Autom} is obtained. For the case that 
$\gamma=w_1(\alpha_i)$, we have \eqref{subset-Autom} in a similar way
with an easier argument. Thus, we completed  the proof.
\end{proof}

Now, let us prove Proposition \ref{prop:nr-counting-Autom2}.
\vskip 3mm
\noindent
{\it Proof of Proposition \ref{prop:nr-counting-Autom2}}. 
Denote $\Pi_1^{\dc}=\{\alpha_0,\ldots,\alpha_l\}$ and 
$\Pi_2^{\dc}=\{\beta_0,\ldots,\beta_l\}$. Since both
$\pi_G(\Pi^{\dc}_1)=\{\overline{\alpha_0},\ldots,\overline{\alpha_l}\}$ and 
$\pi_G(\Pi^{\dc}_2)=\{\overline{\beta_0},\ldots,\overline{\beta_l}\}$ are simple systems
of the non-reduced affine root system $R/G$, there exists an element 
$\overline{w}\in W(R/G)$ and a sign $\epsilon\in\{\pm 1\}$ such that
\[\{\epsilon\overline{w}(\overline{\alpha_0}),\ldots,
\epsilon\overline{w}(\overline{\alpha_l})\}=
\{\overline{\beta_0},\ldots,\overline{\beta_l}\}.\]
After suitable change of the numbering of the elements of $\Pi_2^{\dc}$, we may 
assume that $\epsilon\overline{w}(\overline{\alpha_i})=\overline{\beta_i}$ for every 
$0\leq i\leq l$. Furthermore, take an element $w\in W(R)$ so that 
$(\pi_G)_*(w)=\overline{w}$. Then, 
$w\in \Aut(R,G)$, and it is easy to see that $w(\Pi_1^{\dc})$ is a simple system of 
$(R,G)$ such that  condition \eqref{condition-Pi'} is satisfied. 
Thus, it is enough to prove the proposition
under the assumption 
\[\epsilon(\overline{\alpha_i})=\overline{\beta_i}\quad\text{for every }0\leq i\leq l.\]

Recall the decomposition $F=L_{\Pi_1^{\dc}}\oplus G$ of $F$, where 
$L_{\Pi_1^{\dc}}=\bigoplus_{i=0}^l\R\alpha_i$ is a one-codimensional subspace of $F$
spanned by $\Pi_1^{\dc}$. With respect to this decomposition, write
$\lambda=\lambda_{L_{\Pi^{\dc}_1}}+\lambda_G$ for $\lambda\in F$, where 
$\lambda_{L_{\Pi^{\dc}_1}}\in L_{\Pi_1^{\dc}}$ and $\lambda_G\in G$. 
Define a linear map $\psi_2\in \mathrm{GL}(F)$ by
\[\psi_2\ :\ \lambda=\lambda_{L_{\Pi^{\dc}_1}}+\lambda_G\ \longmapsto \ \epsilon
\lambda_{L_{\Pi^{\dc}_1}}+\lambda_G.\]
Let us prove that $\psi_2$ belongs to $\Aut(R,G)$. Since there is nothing 
to prove if $\epsilon=1$, we may assume $\epsilon=-1$. By the definition, 
it is obvious that $\psi_2(a)=a$. Let $\alpha\in R$. 
By the expression \eqref{description-R-2} of $R$ in Proposition
\ref{prop:description-R}, there exist $w\in \W[\Pi_1^{\dc}]$,
$\alpha_i\in \Pi^{\dc}_1$ and $m\in \Z$ such that
\[\alpha=\begin{cases}
2w(\alpha_i)+(2m+1)k(\alpha_i)a & 
\text{if $2\alpha_i\not\in R,\, 2\overline{\alpha_i}\in R/G$ 
and $\frac12 (\alpha_i')^\ast\not\in R$},\\
2w(\alpha_i)+(4m+2)k(\alpha_i)a & 
\text{if $2\alpha_i\not\in R,\, 2\overline{\alpha_i}\in R/G$ 
and $\frac12 (\alpha_i')^\ast\not\in R$},\\
2w(\alpha_i)+mk(\alpha_i)a & \text{if $2\alpha_i\in R$ and
$\frac12 (\alpha_i')^\ast\not\in R$},\\
2w(\alpha_i)+2mk(\alpha_i)a & \text{if $2\alpha_i\in R$ and
$\frac12 (\alpha_i')^\ast\in R$},\\
w(\alpha_i)+mk(\alpha_i)a & \text{if $2\overline{\alpha_i}\not \in R/G$}.
\end{cases}\]
In any case, as $-w(\alpha_i)=wr_{\alpha_i}(\alpha_i)$ and 
$wr_{\alpha_i}\in W(\Pi^{\dc})$, we see that $\psi_2(\alpha)\in R$. 
Thus, $\psi_2$ is an element of $\Aut(R,G)$.  

Now, it is enough to show the proposition under the assumption  
\begin{equation*}
\overline{\alpha_i}=\overline{\beta_i}\quad\text{for every }0\leq i\leq l.
\end{equation*}
This is exactly treated in Lemma \ref{lemma:nr-counting-Autom4}. 
Thus, we have the proposition.
\hfill $\square$


\section{Main theorem}\label{sect:main}

In this section, we introduce a diagram associated with each mERS $(R,G)$ with non-
reduced affine quotient $R/G$ {which encodes the informations 
obtained in the previous section. This is an extension of the notion of elliptic diagram introduced in $\S$ \ref{sect_elliptic-diagram-I}.} Interpreting the results obtained in the previous section in terms of diagrams, the strong classification theorem of mERS 
with non-reduced affine quotient is given as an application.

\subsection{Elliptic diagrams II: non-reduced affine quotient
}\label{sect_elliptic-diagram-II} 
The aim of this subsection is to define the elliptic diagram 
{$\Gamma(R,G)$} for a 
mERS $(R,G)$ whose quotient system $R/G$ is \textit{non-reduced}.
Fix a paired simple system $(\Pi^{\dc},\Pi^{\mc})$ of $(R,G)$. 
Let $\Gamma^{\downarrow}(R,G)$ and $\Gamma^{\uparrow}(R,G)$ be the Dynkin 
diagrams of the affine root systems $\W[\Pi^{\dc}].\Pi^{\dc}  \cong(R/G)^{\dc}$ and 
$\W[\Pi^{\color{red}\dc}].\Pi^{\mc} \cong(R/G)^{\mc}$, respectively. 
We regard $\alpha \in \Pi^{\dc}$ (resp. $\alpha'\in \Pi^{\mc}$) as a node of the graph 
$\Gamma^{\downarrow}(R,G)$ (resp. $\Gamma^{\uparrow}(R,G)$) and denote the set 
of nodes by 
$\vert \Gamma^{\downarrow}(R,G)\vert$ and $\vert \Gamma^{\uparrow}(R,G)\vert$, 
respectively.  If there is no risk of confusion, we may denote 
$\Gamma^{\downarrow}=\Gamma^{\downarrow}(R,G)$
\index[notations]{g811@$\Gamma^{\downarrow}=\Gamma^{\downarrow}(R,G)$} and 
$\Gamma^{\uparrow}=\Gamma^{\uparrow}(R,G)$
\index[notations]{g812@$\Gamma^{\uparrow}=\Gamma^{\uparrow}(R,G)$} for 
simplicity.

\begin{defn}\label{defn:exponent-non-reduced}
Define the numbers 
$m_{\alpha}$\index[notations]{m811@$m_\alpha$} for 
$\alpha\in |\Gamma^{\downarrow}(R,G)|$ by
\begin{equation}\label{defn:exponent}
m_{\alpha}=\frac{I_R(\alpha, \alpha)}{2k^{nr}(\alpha)}n_{\alpha},
\end{equation}
and call it the \textbf{exponent}\index[index]{exponent} of the root $\alpha$.
\end{defn}

\begin{rem}
In  \eqref{defn:exponent}, we define the exponent $m_\alpha$ only
for $\alpha\in |\Gamma^\downarrow(R,G)|$.  By using 
$\Gamma^\uparrow(R,G)$ instead of $\Gamma^\downarrow(R,G)$, one can define
the ``exponent'' for a node in $|\Gamma^\uparrow(R,G)|$. The explicit relationship
between these two exponents is given in Section \ref{sect_str-red-pair} {\rm (}see
Proposition \ref{prop:Gamma(R'',G)}{\rm )}. 
\end{rem}


Set $m_{\max}=\max \{m_\alpha \, \vert \, \alpha \in |\Gamma^{\downarrow}(R,G)|\, \}$. 
Let $\Gamma_m^{\downarrow}=\Gamma_m^{\downarrow}(R,G)
$\index[notations]{g813@$\Gamma_m^\downarrow=\Gamma_m^{\downarrow}(R,G)$} 
be the subdiagram of 
$\Gamma^{\downarrow}(R,G)$ consisting of nodes 
\[  \vert \Gamma_m^{\downarrow}(R,G) \vert:=\{ \alpha \in \vert \Gamma^{\downarrow}(R,G)\vert \, 
\vert\, m_\alpha=m_{\max}\, \}.
\]
Set
\[ \vert \Gamma_m^{\uparrow}(R,G)^\ast \vert=\{ (\alpha')^\ast\, \vert \, 
\alpha \in \vert \Gamma_m^{\downarrow}(R,G) \vert\, \}.\]

\begin{defn}\label{defn_elliptic-diagram-II}
The \textbf{elliptic diagram}\index[index]{elliptic diagram} $\Gamma(R,G)$ for a marked elliptic root system 
$(R,G)$ is the graph whose set of nodes is 
\begin{equation}\label{eq:splitting_nodes}
\vert \Gamma(R,G)\vert :=\vert \Gamma^{\downarrow}(R,G) \vert \,\amalg\, 
\vert \Gamma_m^{\uparrow}(R,G)^\ast \vert 
\end{equation}
and is represented by
\begin{itemize}
\item[(Node1')]
\begin{tikzpicture} \fill (0,0) circle(0.1); \draw (0,0.1) node[above] {$\alpha$}; 
\end{tikzpicture}
if either $2\alpha  \in R$ or $\frac{1}{2}\alpha \in R$, 
\item[(Node2')]
\begin{tikzpicture} \draw [black,fill=gray!70] (0,0) circle(0.1); \draw (0,0.1) 
node[above] {$\alpha$}; 
\end{tikzpicture}
if $2\alpha, \frac{1}{2}\alpha \not\in R$ but either $2\overline{\alpha}\in R/G$ or 
$\frac{1}{2}\overline{\alpha} \in R/G$, 
\item[(Node3')]
\begin{tikzpicture} \draw (0,0) circle(0.1); \draw (0,0.1) node[above] {$\alpha$}; 
\end{tikzpicture}
otherwise
\end{itemize}
for $\alpha \in \vert \Gamma(R,G) \vert$, and any two nodes 
$\alpha, \beta \in \vert \Gamma(R,G)\vert$ are connected by the rules 
$($Edge0$)$ - $($Edge4$)$, with the additional cases 
\begin{itemize}
\item[(Edge5)] 
\begin{tikzpicture} 
\draw [black,fill=gray!40] (0,0) circle(0.1); \draw (0,0.1) node[above] {$\alpha$};
\draw[dashed, , double distance=2] (0.1,0) -- (0.9,0); 
{\color{red}\draw (0.45,0) -- (0.58,0.13); \draw (0.45,0) -- (0.58,-0.13);}
\draw [black,fill=gray!80] (1,0) circle(0.1); \draw (1,0.1) node[above] {$\beta$};
\end{tikzpicture}
if $I(\alpha^\vee,\beta)=4, I(\alpha,\beta^\vee)=1$ and 
$\overline{\beta}=2\overline{\alpha}$, for $\node{40}, \node{80} \in \{ \white, \gray, \black\}$.
\end{itemize}
{A node
which satisfies the condition {\rm (Node1')} {\rm (}resp. {\rm (Node2'), (Node3')}{\rm )}
is called a black {\rm (}resp. gray, white{\rm )} node.}
\index[index]{node!black@black --}
\index[index]{node!gray@gray --}
\index[index]{node!white@white --}
\end{defn}

For each $i=1,2$, let $(R_i,G_i)$ be a mERS belonging to $(F_i,I_i)$. 
Fix a paired simple system $(\Pi^{\dc}_i,\Pi^{\mc}_i)$ of $(R_i,G_i)$ and let 
$\Gamma(R_i,G_i)$ be the corresponding elliptic diagram. We say 
that two graphs $\Gamma(R_1,G_1)$ and $\Gamma(R_2,G_2)$ are \textbf{isomorphic} 
if there exists a 
bijection $\varphi:|\Gamma(R_1,G_1)|\xrightarrow{\sim} |\Gamma(R_2,G_2)|$
such that
\begin{itemize}
\item[(i)] the bijection $\varphi$ preserves the splitting \eqref{eq:splitting_nodes}
of nodes;
\vskip 1mm
\item[(ii)] $(I_1)_{R_1}(\alpha,\beta^\vee)
=(I_2)_{R_2}(\varphi(\alpha),\varphi(\beta)^\vee)$ for every 
$\alpha,\beta\in |\Gamma(R_1,G_1)|$;
\vskip 1mm
\item[(iii)] $m_\alpha=m_{\varphi(\alpha)}$ for every $\alpha\in\Pi_1^{\dc}$. \\
\end{itemize}

Note that the definition of $\Gamma(R,G)$ depends on 
a choice of a paired simple system $(\Pi^{\dc},\Pi^{\mc})$ of $(R,G)$. 
However, the following proposition holds.

\begin{prop}\label{prop:indendence-Gamma}
Recall the setting in Proposition \ref{prop:nr-counting-Autom1}. That is, 
let $(R_i,G_i)$ be a mERS, and $(\Pi^{\dc}_i,\Pi^{\mc}_i)$ be a paired  
simple system of $(R_i,G_i)$ for $i=1,2$. Fix a generator $a_i$
of the lattice $Q(R_i)\cap G_i$ of rank $1$. Assume that $(R_1,G_1)$ and
$(R_2,G_2)$ are isomorphic. Then, the graphs
$\Gamma(R_1,G_1)$ and $\Gamma(R_2,G_2)$ are isomorphic. Especially, 
the elliptic diagram $\Gamma(R,G)$ does not depend on a choice of a paired
simple system $(\Pi^{\dc},\Pi^{\mc})$ of $(R,G)$.
\end{prop}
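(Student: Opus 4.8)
The statement to prove is Proposition \ref{prop:indendence-Gamma}: if $(R_1,G_1)\cong(R_2,G_2)$, then $\Gamma(R_1,G_1)\cong\Gamma(R_2,G_2)$, and consequently the elliptic diagram is independent of the chosen paired simple system. The plan is to reduce this to Proposition \ref{prop:nr-counting-Autom1}, which gives an isomorphism $\varphi:(R_1,G_1)\xrightarrow{\sim}(R_2,G_2)$ carrying $a_1$ to $a_2$, $\Pi_1^{\dc}$ to $\Pi_2^{\dc}$ and $\Pi_1^{\mc}$ to $\Pi_2^{\mc}$. I will then verify that this $\varphi$ induces a bijection $|\Gamma(R_1,G_1)|\to|\Gamma(R_2,G_2)|$ satisfying conditions (i), (ii), (iii) of the definition of isomorphism of elliptic diagrams, and finally apply this with $(R_1,G_1)=(R_2,G_2)=(R,G)$ (and $a_1=a_2=a$) with two paired simple systems to conclude independence.

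\textbf{Step 1: construction of the bijection on nodes.} Given $\varphi$ from Proposition \ref{prop:nr-counting-Autom1}, we have $\varphi(\Pi_1^{\dc})=\Pi_2^{\dc}$, so $\varphi$ restricts to a bijection $|\Gamma^{\downarrow}(R_1,G_1)|\to|\Gamma^{\downarrow}(R_2,G_2)|$. For the $|\Gamma_m^{\uparrow}(R,G)^\ast|$ part, I will use Lemma \ref{lemma:commutativity-prime} (every automorphism fixing $G$ commutes with the prime map $(\,\cdot\,)^{pr}$) together with Lemma \ref{lemma:countings1} (4) ($\varphi(\alpha^\ast)=(\varphi(\alpha))^\ast$ for $\varphi\in\Aut(R,G)$); more precisely I apply the analogous commutation in the isomorphism setting, which follows by the same computation since $\varphi(a_1)=a_2$ forces the induced map on $R_1/G_1\to R_2/G_2$ to be compatible. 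Hence $\varphi((\alpha_1')^\ast)=((\varphi(\alpha_1))')^\ast$, so $\varphi$ maps $|\Gamma^{\uparrow}(R_1,G_1)^\ast|$ bijectively onto $|\Gamma^{\uparrow}(R_2,G_2)^\ast|$. It remains to see $\varphi$ restricts to the subdiagrams indexed by $m_{\max}$, which follows once (iii) is checked (see Step 3).

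\textbf{Step 2: conditions (i) and (ii).} Condition (i) — that the bijection preserves the splitting $|\Gamma|=|\Gamma^{\downarrow}|\amalg|\Gamma_m^{\uparrow \ast}|$ — is immediate from Step 1 since $\varphi$ respects $\Pi^{\dc}$ and the prime-map images separately. Condition (ii) requires $(I_1)_{R_1}(\alpha,\beta^\vee)=(I_2)_{R_2}(\varphi(\alpha),\varphi(\beta)^\vee)$; here I use that $\varphi$ is an isometry up to a positive scalar (it lies in $O(F_i,I_i)$ by the Saito lemma once we observe $\varphi$ sends a root system to a root system, after normalizing so that $\varphi^*I_2=c\,I_1$), and that the \emph{normalized} forms $I_{R_i}$ defined via \eqref{def_normalized-form} are intrinsic to the root system, hence preserved by any linear isomorphism of root systems. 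So $\alpha\mapsto\varphi(\alpha)$ sends dual roots to dual roots and preserves the integers $(I_i)_{R_i}(\alpha,\beta^\vee)$; in particular it matches the edge labels, the node decoration rules (Node1')–(Node3') and (Edge0)–(Edge5), because those are phrased entirely in terms of $I(\alpha,\beta^\vee)$, membership $2\alpha\in R$ / $\tfrac12\alpha\in R$, and membership $2\overline\alpha\in R/G$ / $\tfrac12\overline\alpha\in R/G$ — all preserved by $\varphi$ and by the induced map on quotients.

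\textbf{Step 3: condition (iii), and conclusion.} For (iii) I must show $m_\alpha=m_{\varphi(\alpha)}$ for $\alpha\in\Pi_1^{\dc}$, where $m_\alpha=I_R(\alpha,\alpha)n_\alpha/(2k^{\nr}(\alpha))$ by Definition \ref{defn:exponent-non-reduced}. The factor $I_{R_i}(\alpha,\alpha)$ is preserved (Step 2); the coefficient $n_\alpha$ is preserved because $\varphi$ identifies the simple systems $\pi_{G_1}(\Pi_1^{\dc})$, $\pi_{G_2}(\Pi_2^{\dc})$ of the affine root systems $R_i/G_i$ and the $n_\alpha$ are the marks of the affine Dynkin diagram; and $k^{\nr}(\alpha)=k^{\nr}(\varphi(\alpha))$ because the non-reduced counting number is expressed via Proposition \ref{prop:nr-countings1} / Corollary \ref{cor:nr-countings1} in terms of the ordinary counting numbers $k(\alpha)$ — invariant by Lemma \ref{lemma:countings1} (3) — and the conditions ``$2\alpha\in R$'', ``$\tfrac12(\alpha')^\ast\in R$'', ``$2\overline\alpha\in R/G$'', all preserved by $\varphi$. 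This gives that $\varphi$ restricts to a bijection $|\Gamma_m^{\downarrow}|\to|\Gamma_m^{\downarrow}|$ and thus to the whole $|\Gamma(R_1,G_1)|\to|\Gamma(R_2,G_2)|$, completing the verification that $\Gamma(R_1,G_1)\cong\Gamma(R_2,G_2)$. Finally, taking $(R_1,G_1)=(R_2,G_2)=(R,G)$, $a_1=a_2$, and two paired simple systems $(\Pi_1^{\dc},\Pi_1^{\mc})$, $(\Pi_2^{\dc},\Pi_2^{\mc})$, the identity $(R,G)\cong(R,G)$ yields $\Gamma(R,G)_{(\Pi_1^{\dc},\Pi_1^{\mc})}\cong\Gamma(R,G)_{(\Pi_2^{\dc},\Pi_2^{\mc})}$, i.e.\ independence.

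\textbf{Main obstacle.} The routine-looking parts (i) and the edge/node rules are genuinely routine once one has the isometry property, but the real work is checking that $k^{\nr}$ is transported correctly — i.e.\ assembling Lemma \ref{lemma:commutativity-prime}, Lemma \ref{lemma:countings1}, Proposition \ref{prop:countings-prime} and Corollary \ref{cor:nr-countings1} so that every case distinction (``$2\alpha\in R$ vs.\ not'', ``$\tfrac12(\alpha')^\ast\in R$ vs.\ not'') is manifestly $\varphi$-invariant, and in particular that $\varphi$ commutes with the prime map even when $\varphi$ is only an \emph{isomorphism} rather than an automorphism. I expect this bookkeeping — and the mild care needed to normalize $\varphi$ so that $\varphi^*I_2$ is a positive multiple of $I_1$ and hence $I_{R_i}$ is respected — to be the step demanding the most attention, though no deep new idea is needed beyond what is already assembled in $\S$\ref{sect:elliptic-diagram}.
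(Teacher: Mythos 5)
Your proposal is correct and follows essentially the same route as the paper's own proof: reduce to Proposition \ref{prop:nr-counting-Autom1}, use the Saito scaling lemma to get \eqref{isometry} for the normalized forms (hence conditions (ii) and the equality of the marks $n_\alpha$), and establish $k^{\nr}(\alpha)=k^{\nr}(\varphi(\alpha))$ to conclude $m_\alpha=m_{\varphi(\alpha)}$, before specializing to $(R_1,G_1)=(R_2,G_2)$ for independence of the paired simple system. The only cosmetic difference is that the paper packages the invariance of $k^{\nr}$ as Lemma \ref{lemma:nr-counting-Autom3}, whereas you reassemble it directly from Lemma \ref{lemma:countings1} and Corollary \ref{cor:nr-countings1}; these are the same argument.
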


\begin{proof}
Denote $\Pi^{\dc}_1=\{\alpha_0,\ldots,\alpha_l\}\subset R_1$ and 
$\Pi_2^{\dc}=\{\beta_0,\ldots,\beta_l\}\subset R_2$. 
By Proposition \ref{prop:nr-counting-Autom1}, there exists an isomorphism
$\varphi:(R_1,G_1)\xrightarrow{\sim}(R_2,G_2)$ such that 
$\varphi(a_1)=a_2$, $\varphi(\Pi_1^{\dc})=\Pi_2^{\dc}$ and 
$\varphi(\Pi_1^{\mc})=\Pi_2^{\mc}$. 
After renumbering suitably the elements of $\Pi_2^{\dc}$, 
we may assume
\[\varphi(\alpha_i)=\beta_i\quad\text{and}\quad 
\varphi((\alpha_i'))=\beta_i'\quad\text{for every }0\leq i\leq l.\]
\begin{lemma}\label{lemma:countings1-isom}
Under the same setting as Proposition \ref{prop:indendence-Gamma}, we have
the following formulae{\rm :}
\begin{enumerate}
\item $k(\varphi(\alpha))=k(\alpha)$ for every $\alpha\in R_1${\rm ;}
\vskip 1mm
\item $\varphi(\alpha)^\ast=\varphi(\alpha^\ast)$ for every  $\alpha\in R_1$.
\end{enumerate}
\end{lemma}
\begin{proof}
Similar to Proposition \ref{prop:nr-counting-Autom1}, it is enough to prove
the statements in the case where $(R_1,G_1)=(R_2,G_2)$  and $\varphi(a_1)=a_2$.
Denote them by $(R,G)$ and $a$, respectively. Then, 
statement (1) is already proved in 
Lemma \ref{lemma:countings1} (3). Statement (2) follows immediately from
statement (1). 
\end{proof}

Let us return to the proof of the proposition. By Lemma \ref{lemma:countings1-isom}, 
we have
\begin{equation}\label{equality-countings1}
k(\alpha_i)=k(\beta_i)\quad \text{and} \quad 
k((\alpha_i')^\ast)=k((\beta_i')^\ast)\quad \text{for every }
0\leq i\leq l.
\end{equation}
In addition, by Lemma \ref{lemma:nr-counting-Autom3} and the definition of
the non-reduced counting numbers, we have
\begin{equation}\label{equality-countings2}
k^{\nr}(\alpha_i)=k^{\nr}(\beta_i)\quad \text{for every }
0\leq i\leq l.
\end{equation}

On the other hand, since $\varphi$ is an isomorphism between two root 
systems $R_1$ and $R_2$, it is known that there exists a non-zero constant 
$c$ such that 
$I_1(\lambda,\mu)=c I_2(\varphi(\lambda),\varphi(\mu))$ for every 
$\lambda,\mu\in F_1$. 
(see (1.4) Lemma in \cite{Saito1985}). After taking normalizations $(I_1)_{R_1}$
$(I_2)_{R_2}$ of $I_1$ and $I_2$ respectively 
(cf. \eqref{def_normalized-form}),  we have
\begin{equation}\label{isometry}
(I_1)_{R_1}(\lambda,\mu)=
(I_2)_{R_2}(\varphi(\lambda),\varphi(\mu))\quad\text{for every}\quad 
\lambda,\mu\in F_1.
\end{equation} 
Especially, it follows that
$(I_1)_{R_1}(\alpha_i,\alpha_j^\vee)
=(I_2)_{R_2}(\beta_i,\beta_j^\vee)$
for every $0\leq i,j\leq l$.
As a by-product of this equality, we get 
\[n_{\alpha_i}=n_{\beta_i}\quad \text{for every}\quad 0\leq i\leq l.\]
Combining the above results, we have
\[m_{\alpha_i}=\frac{(I_1)_{R_1}(\alpha_i,\alpha_i)}{2k^{\nr}(\alpha_i)}n_{\alpha_i}
=\frac{(I_2)_{R_2}(\beta_i,\beta_i)}{2k^{\nr}(\beta_i)}n_{\beta_i}=m_{\beta_i}.\]
Thus, the automorphism $\varphi$ induces a bijection 
$\varphi:|\Gamma(R_1,G_1)|\xrightarrow{\sim}|\Gamma(R_2,G_2)|$ which 
satisfies the conditions (i), (ii), (iii) above. Therefore, the graphs 
$\Gamma(R_1,G_1)$ and $\Gamma(R_2,G_2)$ are isomorphic.
\end{proof}
In the next subsection, we classify the elliptic diagrams of mERSs with non-reduced
affine quotients, up to ``existence''. 
In other words,  we discuss only the possibility of these diagrams.  
The results are obtained by detailed case analysis depending on the type of $R/G$, 
and are exhibited in Table $4\sim 8$ in $\S$ \ref{sect:list_of_diagrams} (see Theorem
\ref{thm:classification-thm} and Corollary \ref{cor:classification-thm} also). 
For the existence of a mERS whose  diagram is a given one, see $\S$ 
\ref{sect_Remarks-classification} (especially Remark \ref{rem:reconstruction} below).

Until the end of the subsection, we use symbols omitting $(R,G)$ for simplicity: 
$\Gamma=\Gamma(R,G)$, $\Pi=\Pi(R,G)$, etc.
\subsection{Higher rank cases}
Throughout this subsection, we assume $(R,G)$ is a mERS 
with the non-reduced affine quotient $R/G$.
It is of type
\[BCC_l\, (l\geq 1),\quad C^\vee BC_l\, (l\geq 1),\quad 
BB^\vee _l\, (l\geq 2)\quad\text{or}\quad  C^\vee C_l\, (l\geq 1)\] 
(see $\S$ \ref{sect_non-red-affine} for explicit descriptions of such root systems). 
In the following, choose a numbering of the set
$\Pi^{\dc}=|\Gamma^{\downarrow}|=\{\alpha_0,\alpha_1,\ldots,\alpha_l\}$ 
so that $\Pi^{\dc}_G=\{\overline{\alpha_0},\overline{\alpha_1},\ldots,\overline{\alpha_l}\}$ 
is enumerated as in the list of $\S$ \ref{sect_non-red-affine}. \\

Let $(\Gamma^\downarrow)^\w$\index[notations]{g814@
$(\Gamma^\downarrow)^\w$} and 
$(\Gamma^\downarrow)^\nw$\index[notations]{g815@
$(\Gamma^\downarrow)^\nw$}
be subdiagrams of $\Gamma^{\downarrow}$ with
the set of nodes given as follows:
\[|(\Gamma^\downarrow)^\w|
:=\{\alpha_i\in|\Gamma^{\downarrow}|\,|\,
2\overline{\alpha_i}\not\in R/G\}
\quad\text{and}\quad
|(\Gamma^\downarrow)^\nw|:=|\Gamma^{\downarrow}|\setminus 
|(\Gamma^\downarrow)^\w|.\]
Set
\[\begin{array}{rlllllll}
|(\Gamma^\downarrow_m)^\w|&\!\!\!\!=
|(\Gamma^\downarrow)^\w|\cap|\Gamma_m^{\downarrow}|
&\!\!\!\! =\big\{\alpha_i\in \vert (\Gamma^\downarrow)^\w\vert \,\big|\,
m_{\alpha_i}=m_{max}\big\},\\[3pt]
|(\Gamma^\downarrow_m)^\nw|
&\!\!\!\!=|(\Gamma^\downarrow)^\nw| \cap|\Gamma_m^{\downarrow}|
&\!\!\!\!=\big\{\alpha_i\in \vert (\Gamma^\downarrow)^\nw \vert\,
\big|\,m_{\alpha_i}=m_{max}\big\}.
\end{array}\]
Then there exists a decomposition of $|\Gamma_m^{\downarrow}|$:
\[|\Gamma_m^{\downarrow}|=
|(\Gamma^\downarrow_m)^\w|\, \amalg\, 
|(\Gamma^\downarrow_m)^\nw|.\]

Similarly, set
\begin{align*}
&|((\Gamma^\uparrow_m)^\ast)^\w|
=\big\{(\alpha_i')^\ast\in|(\Gamma^\uparrow_m)^\ast|\,\big|\,
\tfrac12\overline{(\alpha_i')^\ast}\not\in R/G \big\}, \\
&
|((\Gamma^\uparrow_m)^\ast)^\nw|=|(\Gamma^\uparrow_m)^\ast|\setminus 
|((\Gamma^\uparrow_m)^\ast)^\w|,
\end{align*}
where $(\Gamma^\uparrow_m)^\ast:=\Gamma_m^\uparrow(R,G)^\ast$ 
(see Definition \ref{defn_elliptic-diagram-II}). Then, we have a decomposition of
$|(\Gamma^\uparrow_m)^\ast|$:
\begin{equation}\label{eq:decomp-upstair-1}
|(\Gamma^\uparrow_m)^\ast|=|((\Gamma^\uparrow_m)^\ast)^\w|\amalg
|((\Gamma^\uparrow_m)^\ast)^\nw|.
\end{equation}

\begin{lemma}\label{lemma:color}
{\rm (1)} A node $\alpha_i\in |(\Gamma^\downarrow)^\w|$ is a 
white node in $|\Gamma^\downarrow|$. 
\vskip 1mm
\noindent
{\rm (2)} A node $\alpha_i\in |(\Gamma^\downarrow)^\nw|$ is not a white node.
\vskip 1mm
\noindent
{\rm (3)} A node $(\alpha_i')^\ast\in |((\Gamma_m^\uparrow)^\ast)^\w|$ is 
a white node.
\vskip 1mm
\noindent
{\rm (4)} A node $(\alpha_i')^\ast\in |((\Gamma_m^\uparrow)^\ast)^\nw |$
is not a white node.  
\end{lemma}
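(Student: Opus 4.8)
The four assertions of Lemma \ref{lemma:color} all follow by unwinding the definitions of the node types (Node1'), (Node2'), (Node3') in Definition \ref{defn_elliptic-diagram-II} and comparing them with the definitions of $|(\Gamma^\downarrow)^\w|$, $|(\Gamma^\downarrow)^\nw|$, $|((\Gamma_m^\uparrow)^\ast)^\w|$, $|((\Gamma_m^\uparrow)^\ast)^\nw|$. The only nontrivial input is controlling when ``$2\alpha\in R$'' or ``$\tfrac12\alpha\in R$'' can occur for $\alpha$ a node, which is where the condition \eqref{condition-Pi'} on the simple system $\Pi^{\dc}$ and the definition of the prime map come into play. I will treat the four statements in two pairs: (1)--(2) concern nodes in $|\Gamma^\downarrow|$, i.e. elements of $\Pi^{\dc}$, and (3)--(4) concern nodes in $|(\Gamma_m^\uparrow)^\ast|$, i.e. elements of the form $(\alpha')^\ast$ for $\alpha\in|\Gamma_m^\downarrow|$ with $2\overline\alpha\in R/G$.

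First I would prove (1) and (2). Let $\alpha=\alpha_i\in\Pi^{\dc}$. By the trichotomy among (Node1'), (Node2'), (Node3'), the node $\alpha$ is \emph{white} (i.e. of type (Node3')) precisely when $2\alpha\notin R$, $\tfrac12\alpha\notin R$, $2\overline\alpha\notin R/G$ and $\tfrac12\overline\alpha\notin R/G$. For (2), suppose $\alpha\in|(\Gamma^\downarrow)^\nw|$, i.e. $2\overline\alpha\in R/G$; then the hypotheses of (Node3') fail and $\alpha$ is of type (Node1') or (Node2'), so it is not white. For (1), suppose $\alpha\in|(\Gamma^\downarrow)^\w|$, i.e. $2\overline\alpha\notin R/G$. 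Then $2\alpha\notin R$ automatically (projecting by $\pi_G$ would give $2\overline\alpha\in R/G$). Also $\alpha$ lies in $\pi_G^{-1}(\overline\alpha)\cap R$, and I claim $\tfrac12\alpha\notin R$: if $\tfrac12\alpha\in R$ then $\alpha\in\tfrac12 R_l$, so $\pi_G^{-1}(\overline\alpha)\cap R\subset\tfrac12 R_l$ is possible only if condition \eqref{condition-Pi'} is vacuous for this $\alpha$; but then the projection $\tfrac12\overline\alpha=\overline{\tfrac12\alpha}$ lies in $R/G$, while we may check directly that this forces $2\overline{(\tfrac12\alpha)}=\overline\alpha\in(R/G)^{\mc}\setminus\ldots$, and in fact the cleanest route is to observe that for $\alpha\in\Pi^{\dc}$ with $2\overline\alpha\notin R/G$, non-reducedness of $R/G$ together with the classification of non-reduced affine root systems in $\S$\ref{sect_non-red-affine} shows $\overline\alpha$ is a \emph{short} or \emph{long} simple root of $(R/G)^{\dc}$ depending on type, and in either case a short representative cannot be halved; thus $\tfrac12\alpha\notin R$ and similarly $\tfrac12\overline\alpha\notin R/G$, so all four conditions of (Node3') hold and $\alpha$ is white.

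Next I would prove (3) and (4). Let $(\alpha')^\ast\in|(\Gamma_m^\uparrow)^\ast|$, so $\alpha\in\Pi^{\dc}$ with $2\overline\alpha\in R/G$ and $\alpha'=(\alpha)^{pr}$. By \eqref{eq:paired-basis}, $\overline{\alpha'}=2\overline\alpha$, and since $\alpha'$ is a long root we have $2\alpha'\notin R$, hence $2(\alpha')^\ast\notin R$ as well. For (4): if $(\alpha')^\ast\in|((\Gamma_m^\uparrow)^\ast)^\nw|$, then by definition $\tfrac12\overline{(\alpha')^\ast}\in R/G$, so the node $(\alpha')^\ast$ satisfies neither the hypothesis of (Node3') (since $\tfrac12\overline{(\alpha')^\ast}\in R/G$) nor can it be white; it is of type (Node1') if additionally $\tfrac12(\alpha')^\ast\in R$, and of type (Node2') otherwise, but in both cases it is not white. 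For (3): if $(\alpha')^\ast\in|((\Gamma_m^\uparrow)^\ast)^\w|$, then $\tfrac12\overline{(\alpha')^\ast}=\overline\alpha$, and I must show $\tfrac12(\alpha')^\ast\notin R$ as well as $\tfrac12\overline{(\alpha')^\ast}\notin R/G$. But the defining condition of $|((\Gamma_m^\uparrow)^\ast)^\w|$ is exactly $\tfrac12\overline{(\alpha')^\ast}\notin R/G$, which contradicts $\tfrac12\overline{(\alpha')^\ast}=\overline\alpha\in R/G$ --- so I need to re-read: the correct reading is that $|((\Gamma_m^\uparrow)^\ast)^\w|$ collects those $(\alpha_i')^\ast$ with $\tfrac12\overline{(\alpha_i')^\ast}\notin R/G$, which by \eqref{eq:paired-basis} is incompatible with $2\overline{\alpha_i}\in R/G$ \emph{unless} $\alpha_i'=\alpha_i$, i.e. unless the prime map is trivial on $\alpha_i$; in that subcase $\overline{\alpha_i'}=\overline{\alpha_i}$, $2\overline{\alpha_i}\notin R/G$ contradicts membership in $|(\Gamma_m^\uparrow)^\ast|$. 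Hence I expect the genuine content of (3) to be: for $(\alpha')^\ast\in|((\Gamma_m^\uparrow)^\ast)^\w|$, both $2(\alpha')^\ast\notin R$ (shown above) and $\tfrac12(\alpha')^\ast\notin R$, the latter because $\tfrac12\overline{(\alpha')^\ast}\notin R/G$ already rules out $\tfrac12(\alpha')^\ast\in R$; and $2\overline{(\alpha')^\ast}\notin R/G$ since $(\alpha')^\ast$ is a long root of the affine system $\W[\Pi^{\dc}].\Pi^{\mc}\cong(R/G)^{\mc}$ and in $(R/G)^{\mc}$ long roots are not doublable --- this uses the classification in $\S$\ref{sect_non-red-affine}. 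Then all the conditions of (Node3') are met and $(\alpha')^\ast$ is white.

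\textbf{Main obstacle.} The bookkeeping is routine once the right dictionary between node colors and the membership conditions for $\w/\nw$ is set up; the one place that genuinely requires care is ruling out $\tfrac12\alpha\in R$ (resp. $\tfrac12(\alpha')^\ast\in R$) in cases (1) and (3), which forces me to invoke condition \eqref{condition-Pi'} on the choice of $\Pi^{\dc}$ together with the explicit list of non-reduced affine root systems in Appendix $\S$\ref{sect_non-red-affine} to see that a simple root $\overline{\alpha_i}$ of $(R/G)^{\dc}$ with $2\overline{\alpha_i}\notin R/G$ is never of the form $\tfrac12\overline\beta$ for $\overline\beta\in R/G$. I would organize this as a short preliminary observation (a ``Claim'') stating: for $\alpha\in\Pi^{\dc}$, $2\overline\alpha\notin R/G$ implies $\tfrac12\overline\alpha\notin R/G$ and $\tfrac12\alpha\notin R$ --- proved by the case-check over the four types $BCC_l$, $C^\vee BC_l$, $BB^\vee_l$, $C^\vee C_l$ --- and then the four assertions of the lemma follow immediately from Definition \ref{defn_elliptic-diagram-II}.
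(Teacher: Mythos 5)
Your overall strategy---unwinding Definition \ref{defn_elliptic-diagram-II} against the membership conditions for the $\w$/$\nw$ parts---is the same as the paper's, and your conclusions are correct, but two of your steps are wrong as written. First, in (1) you single out ruling out $\tfrac12\alpha_i\in R$ as the delicate point and propose to handle it via condition \eqref{condition-Pi'} together with a case-check over the four non-reduced affine types. This is both unnecessary and based on a mix-up: $\tfrac12\alpha_i\in R$ says that $\alpha_i$ is \emph{divisible}, i.e.\ $\alpha_i\notin R^{\dc}$; it does not say $\alpha_i\in\tfrac12 R_l$ (that condition means $2\alpha_i\in R$, which is what \eqref{condition-Pi'} governs). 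Since $\Pi^{\dc}$ is by construction a simple system of the reduced mERS $(R^{\dc},G)$ and $R^{\dc}=\{\alpha\in R\mid\tfrac12\alpha\notin R\}$, the condition $\tfrac12\alpha_i\notin R$ holds for \emph{every} node of $\Gamma^\downarrow$, white or not, with no case analysis; similarly $\tfrac12\overline{\alpha_i}\notin R/G$ because $\overline{\alpha_i}$ is a simple root of the reduced system $(R^{\dc}/G)^{\dc}=(R/G)^{\dc}$. That, plus your (correct) remark that $2\overline{\alpha_i}\notin R/G$ forces $2\alpha_i\notin R$, is the entire proof of (1); the paper does exactly this in one line. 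Your arguments for (2) and (4) are fine.

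Second, your treatment of (3) starts from a false premise: you take $(\alpha')^\ast\in|(\Gamma_m^\uparrow)^\ast|$ to mean $2\overline{\alpha}\in R/G$. In fact $|(\Gamma_m^\uparrow)^\ast|$ is indexed by \emph{all} of $|\Gamma_m^\downarrow|$, including its white nodes, for which $\alpha'=\alpha$ and $(\alpha')^\ast=\alpha^\ast$. This misreading is what leads you to conclude at one point that membership in $|((\Gamma_m^\uparrow)^\ast)^\w|$ ``contradicts membership in $|(\Gamma_m^\uparrow)^\ast|$'' --- which would make (3) vacuous, and it is not --- and to the false assertion that such an $(\alpha')^\ast$ is a long root. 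The correct reading (cf.\ the equivalences \eqref{nr-countings-cor1}) is: $\tfrac12\overline{(\alpha')^\ast}\notin R/G$ iff $2\overline{\alpha}\notin R/G$, in which case $(\alpha')^\ast=\alpha^\ast$ and $\overline{(\alpha')^\ast}=\overline{\alpha}$; then $2\overline{\alpha}\notin R/G$ and $\tfrac12\overline{\alpha}\notin R/G$ immediately give all four whiteness conditions for $\alpha^\ast$ by projecting along $\pi_G$, exactly as in (1). No appeal to the classification of non-reduced affine root systems is needed anywhere in the lemma.
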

\begin{proof}
For statement (1), it is enough to show that $2\alpha_i\not\in R$ and 
$\frac12 \alpha_i\not\in R$.
The first condition is an immediate consequence of the condition
$2\overline{\alpha_i}\not\in R/G$. Since $\Pi^{\dc}=|\Gamma^{\downarrow}|$ is a subset of $R^{\dc}$, 
the second condition is satisfied for every node in $|\Gamma^{\downarrow}|$. 

For $\alpha_i\in |\Gamma^{\downarrow}|\setminus 
|(\Gamma^\downarrow)^\w |$, we have 
$2\overline{\alpha_i}\in R/G$. If $2\alpha_i\in R$, $\alpha_i$ is a black node.
Otherwise, it is a gray node. Thus, we have statement (2). 

We have statement (3) (resp. (4)) by a similar way to the
proof of statement (1) (resp. (2)). Thus, we completed  the proof of the lemma.
\end{proof}

There is another decomposition of the set 
$|(\Gamma^\uparrow_m)^\ast|$. Set
\begin{align*}
&\big|\big(\big((\Gamma_m^\downarrow)^\w\big)^{pr}\big)^\ast\big|=
\big\{(\alpha_i')^\ast\,\big|\,\alpha_i\in 
|(\Gamma^\downarrow_m)^\w| \big\}, \\
&
\big|\big(\big((\Gamma_m^\downarrow)^\nw\big)^{pr}\big)^\ast\big|=
\big\{(\alpha_i')^\ast\,\big|\,\alpha_i\in |(\Gamma_m^\downarrow)^\nw |
\big\},
\end{align*}
respectively.  Then, we have 
\begin{equation}\label{eq:decomp-upstair-2}
|(\Gamma^\uparrow_m)^\ast|=
\big|\big(\big((\Gamma_m^\downarrow)^\w\big)^{pr}\big)^\ast\big| \amalg
\big|\big(\big((\Gamma_m^\downarrow)^\nw\big)^{pr}\big)^\ast\big|.
\end{equation}

\begin{lemma}\label{lemma:color-2}
The decomposition \eqref{eq:decomp-upstair-2} coincides with 
the decomposition \eqref{eq:decomp-upstair-1}. That is, we have
\[\big|\big(\big((\Gamma_m^\downarrow)^\w\big)^{pr}\big)^\ast\big|=
|((\Gamma^\uparrow_m)^\ast)^\w|\quad\text{and}\quad
[\big|\big(\big((\Gamma_m^\downarrow)^\nw\big)^{pr}\big)^\ast\big|=
|((\Gamma^\uparrow_m)^\ast)^\nw|.\]
In other words, for $\alpha_i\in |\Gamma_m^\downarrow|$, 
\[\begin{array}{lllllllll}
\text{$\alpha_i$ is a white node} & \Longleftrightarrow &
(\alpha_i')^\ast\in |(\Gamma_m^\uparrow)^\ast|\text{ is a white node},\\[3pt]
\text{$\alpha_i$ is not a white node} & \Longleftrightarrow &
(\alpha_i')^\ast\in |(\Gamma_m^\uparrow)^\ast|\text{ is not a white node}.
\end{array}\]
\end{lemma}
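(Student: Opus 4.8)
The claim is a bookkeeping statement about colours of nodes: for a node $\alpha_i$ in $|\Gamma_m^\downarrow|$, being white is equivalent to $(\alpha_i')^\ast$ being white in $|(\Gamma_m^\uparrow)^\ast|$. Since $|\Gamma_m^\downarrow|$ has just been decomposed as $|(\Gamma_m^\downarrow)^\w|\amalg|(\Gamma_m^\downarrow)^\nw|$ according to whether $2\overline{\alpha_i}\in R/G$, and $|(\Gamma_m^\uparrow)^\ast|$ has two decompositions \eqref{eq:decomp-upstair-1} and \eqref{eq:decomp-upstair-2}, the statement of the lemma is literally the assertion that these two decompositions of $|(\Gamma_m^\uparrow)^\ast|$ agree. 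By Lemma \ref{lemma:color} (3)--(4) the decomposition \eqref{eq:decomp-upstair-1} already separates the white nodes of $|(\Gamma_m^\uparrow)^\ast|$ from the non-white ones, and by Lemma \ref{lemma:color} (1)--(2) the decomposition of $|\Gamma_m^\downarrow|$ separates white from non-white. So everything reduces to the following local assertion: for $\alpha_i\in|\Gamma_m^\downarrow|$,
\[
2\overline{\alpha_i}\notin R/G \quad\Longleftrightarrow\quad \tfrac12\overline{(\alpha_i')^\ast}\notin R/G.
\]
Equivalently, $\overline{\alpha_i}=\overline{(\alpha_i')^\ast}$ holds exactly when $2\overline{\alpha_i}\notin R/G$, and $\overline{(\alpha_i')^\ast}=2\overline{\alpha_i}$ holds exactly when $2\overline{\alpha_i}\in R/G$; this is nothing but \eqref{eq:paired-basis} read through the fact that $k(\alpha_i')=k((\alpha_i')^\ast)$ (see \eqref{k=kast}) so that $\overline{(\alpha_i')^\ast}=\overline{\alpha_i'}$. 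Indeed, $(\alpha_i')^\ast=\alpha_i'+k(\alpha_i')a$ and $a\in G$, hence $\pi_G((\alpha_i')^\ast)=\pi_G(\alpha_i')=\overline{\alpha_i'}$, and then \eqref{eq:paired-basis} gives the equivalence above directly.

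The steps I would carry out, in order, are: (i) note $\overline{(\alpha_i')^\ast}=\overline{\alpha_i'}$ using $(\alpha_i')^\ast=\alpha_i'+k(\alpha_i')a$ and $a\in G$; (ii) invoke \eqref{eq:paired-basis} to get that $2\overline{\alpha_i}\notin R/G$ is equivalent to $\overline{(\alpha_i')^\ast}=\overline{\alpha_i}$, while $2\overline{\alpha_i}\in R/G$ is equivalent to $\overline{(\alpha_i')^\ast}=2\overline{\alpha_i}$; (iii) observe that in the first case $\tfrac12\overline{(\alpha_i')^\ast}=\tfrac12\overline{\alpha_i}\notin R/G$ (otherwise $\overline{\alpha_i}$ would be divisible and $2\overline{\alpha_i}$ non-divisible at the same time — more simply, $\tfrac12\overline{\alpha_i}\in R/G$ would give $2\cdot\tfrac12\overline{\alpha_i}=\overline{\alpha_i}\in R/G$ with $\overline{\alpha_i}$ multipliable, contradicting $2\overline{\alpha_i}\notin R/G$ only if... ), and in the second case $\tfrac12\overline{(\alpha_i')^\ast}=\overline{\alpha_i}\in R/G$; hence the displayed equivalence holds; (iv) combine with Lemma \ref{lemma:color} to identify $|(\Gamma_m^\downarrow)^\w|$ with the white nodes of $|\Gamma_m^\downarrow|$ and $|((\Gamma_m^\uparrow)^\ast)^\w|$ with the white nodes of $|(\Gamma_m^\uparrow)^\ast|$, and conclude that $((\cdot)^{pr})^\ast$ sends one to the other bijectively, so \eqref{eq:decomp-upstair-2} $=$ \eqref{eq:decomp-upstair-1}.

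The only genuinely delicate point is step (iii): one must be careful that the purely ``quotient-level'' divisibility statements are self-consistent, i.e.\ that for a root $\overline{\gamma}\in R/G$ one cannot simultaneously have $2\overline{\gamma}\notin R/G$ and $\tfrac12\overline{\gamma}\in R/G$. This follows from the structure of non-reduced affine root systems of type $BCC_l$, $C^\vee BC_l$, $BB_l^\vee$, $C^\vee C_l$ recalled in $\S$\ref{sect_non-red-affine}: in each of them the short, middle, and long $W(R/G)$-orbits are exactly the indivisible-and-nonmultipliable, the indivisible-but-multipliable, and the divisible roots respectively, so ``$2\overline{\gamma}\notin R/G$'' together with ``$\tfrac12\overline{\gamma}\in R/G$'' would force $\overline{\gamma}$ to be simultaneously non-multipliable and divisible, which is impossible. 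I would handle this by a one-line appeal to that appendix, or by the direct observation that for $\alpha_i\in\Pi^\downarrow$ one has $\alpha_i\in R^\downarrow$, so $\tfrac12\alpha_i\notin R$ and hence $\tfrac12\overline{\alpha_i}$ can only lie in $R/G$ if it is the image of some other, shorter, root — which is precisely the situation excluded when $2\overline{\alpha_i}\notin R/G$. Apart from this case-check, the proof is a short formal manipulation of \eqref{eq:paired-basis}, \eqref{k=kast}, and Lemma \ref{lemma:color}, and I expect it to occupy only a few lines.
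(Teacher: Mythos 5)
Your overall route is the same as the paper's: the lemma reduces to the single equivalence $2\overline{\alpha_i}\in R/G\Leftrightarrow \tfrac12\overline{(\alpha_i')^\ast}\in R/G$ for $\alpha_i\in|\Gamma_m^\downarrow|$, which the paper simply cites as the chain of equivalences \eqref{nr-countings-cor1} (itself read off from the definition of the prime map). Your steps (i)--(ii) re-derive this correctly from \eqref{eq:paired-basis} and the fact that $a\in G$ (note that \eqref{k=kast} is not actually needed for $\overline{(\alpha_i')^\ast}=\overline{\alpha_i'}$; translation by any element of $G$ suffices).

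However, the justification you offer for step (iii) contains a false general claim. You assert that in the affine root systems of type $BCC_l$, $C^\vee BC_l$, $BB_l^\vee$, $C^\vee C_l$ no root can be simultaneously non-multipliable and divisible; but the long roots are \emph{exactly} the roots that are non-multipliable and divisible (e.g.\ $2\vep_i$ in type $BC_l$: $\vep_i$ is a root, $4\vep_i$ is not), so the dichotomy you invoke does not hold for an arbitrary $\overline{\gamma}\in R/G$. Your fallback sentence is also left unfinished and conflates the hypothesis $2\overline{\alpha_i}\notin R/G$ with the actual reason the bad case is excluded. The correct and much shorter fix is that the nodes of $\Gamma^\downarrow$ lie in $\Pi^{\dc}$, whose image $\Pi_G^{\dc}$ is a simple system of $(R/G)^{\dc}$, the subsystem of \emph{indivisible} roots; hence $\tfrac12\overline{\alpha_i}\notin R/G$ unconditionally, for every $i$. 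With that, the case $2\overline{\alpha_i}\notin R/G$ gives $\tfrac12\overline{(\alpha_i')^\ast}=\tfrac12\overline{\alpha_i}\notin R/G$, the case $2\overline{\alpha_i}\in R/G$ gives $\tfrac12\overline{(\alpha_i')^\ast}=\overline{\alpha_i}\in R/G$, and the lemma follows as you intend. Once step (iii) is repaired in this way your argument is complete and coincides in substance with the paper's appeal to \eqref{nr-countings-cor1}.
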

\begin{proof}
The statement is an immediate consequence of the definition of the map 
$(\,\cdot\,)^{pr}:R\to R^{\mc}$, and the equivalences in 
\eqref{nr-countings-cor1}.
\end{proof}

Now, we have a decomposition of the set 
$|\Gamma(R,G)|$ of nodes of $\Gamma(R,G)$:
\begin{equation}\label{decomp-graph}
|\Gamma(R,G)|
=\big(\, |(\Gamma^\downarrow)^\w |\,\amalg\, 
|((\Gamma_m^\uparrow)^\ast)^\w |\,\big)
\,\amalg\,\big(\,|(\Gamma^\downarrow)^\nw |\,\amalg\, 
|((\Gamma_m^\uparrow)^\ast)^\nw |\,\big).
\end{equation}
The above lemmas imply that
\begin{itemize}
\item $|\Gamma^\w(R,G)|
:=|(\Gamma^\downarrow)^\w|\,\amalg\, |((\Gamma_m^\uparrow)^\ast)^\w |$ 
is the set of all white nodes in 
$|\Gamma(R,G)|$, and
\vskip 1mm
\item $|\Gamma^\nw(R,G)|
:=|(\Gamma^\downarrow)^\nw|\,\amalg\, |((\Gamma_m^\uparrow)^\ast)^\nw |$ 
is the set of all colored nodes in $|\Gamma(R,G)|$. 
\end{itemize}

{Here} and after, we denote 
\[|\Gamma^\w|=|(\Gamma^\downarrow)^\w|,\quad 
|\Gamma^\nw|=|(\Gamma^\downarrow)^\nw|,\quad 
|\Gamma_m^\w|=|(\Gamma_m^\downarrow)^\w|,\quad 
|\Gamma_m^\nw|=|(\Gamma_m^\downarrow)^\nw|\]
\index[notations]{g816@$\vert \Gamma^\w\vert$}
\index[notations]{g817@$\vert\Gamma^\nw\vert$}
\index[notations]{g818@$\vert\Gamma_m^\w\vert$}
\index[notations]{g819@$\vert\Gamma_m^\nw\vert$}
for simplicity.
\vskip 3mm

Notice that the subset $|\Gamma^\w|$ of $|\Gamma^{\downarrow}|$ is determined by 
the information on the 
affine root system $R/G$.
The explicit form of $|\Gamma^\w|$ is given as follows:
\[|\Gamma^\w|=\begin{cases}
\{\alpha_0,\alpha_1,\ldots,\alpha_{l-1}\} & 
\text{if $R/G$ is of type $BCC_l,\, C^\vee BC_l,\, BB^\vee_l$},\\
\{\alpha_1,\alpha_2,\ldots,\alpha_{l-1}\} & 
\text{if $R/G$ is of type $C^\vee C_l$}.
\end{cases}\]
Note that $|\Gamma^\w|=\emptyset$ if $R/G$ is of type $C^\vee C_1$. \\

To classify the diagrams $\Gamma(R,G)$, 
we divide the problem into the following
two pieces: 
\vskip 1mm
\begin{itemize}
\item[(a)] {\it What is the shape of $\Gamma(R,G)$?}
\vskip 1mm
\item[(b)] {\it What is the color of each node of $\Gamma(R,G)$?}
\end{itemize}
\vskip 1mm
Namely,  problem (a) is equivalent to determining the subsets 
$|\Gamma_m^\w|$ of $|\Gamma^\w|$ and $|\Gamma^\nw_m|$ of 
$|\Gamma^\nw|$, respectively. For problem (b), we already know that
$|\Gamma^\w|
$ ({\it resp}. 
$|\Gamma^\nw|
$)
is the set of all white ({\it resp}. colored) nodes in $|\Gamma^\downarrow|$.
Therefore, the remaining problem is to determine whether the color of the nodes in  
$|\Gamma^\nw|$ is gray or black. 
The goal of this subsection is to solve these problems.\\

For that purpose, we divide the set $|\Gamma^\w|$ of white nodes in 
$|\Gamma^{\downarrow}|$ into the following two pieces.   
A white node $\alpha_i\in |\Gamma^\w|$ is called 
an {\it ordinary white node}\index[index]{node!ordinary white@ordinary white --}  
if $\alpha_i$ is connected to at least two nodes in $|\Gamma^{\downarrow}|$, 
and denote the set of all ordinary white nodes by $|\Gamma^\w|_o$. 
\index[notations]{g820@$\vert\Gamma^\w\vert_o$}
Hence, we have a 
decomposition
\[|\Gamma^\w|=|\Gamma^\w|_o\amalg 
\big(|\Gamma^\w|\setminus |\Gamma^\w|_o\big).\]
A node $\alpha_j\in |\Gamma^\w|\setminus |\Gamma^\w|_o$ is called a {\it boundary
white node}.\index[index]{node!boundary white@boundary white --} 
The explicit form of $|\Gamma^\w|_o$ is given as follows:
\[|\Gamma^\w|_o=\begin{cases}
|\Gamma^\w|\setminus \{\alpha_0\} & 
\text{if $R/G$ is of type $BCC_l,\, C^\vee BC_l$},\\
|\Gamma^\w|\setminus \{\alpha_0,\alpha_1\} & 
\text{if $R/G$ is of type $BB^\vee_l$},\\
|\Gamma^\w| & \text{if $R/G$ is of type $C^\vee C_l$}.
\end{cases}\]
Note that $|\Gamma^\w|_o=\emptyset$ if $R/G$ is of 
type $BB^\vee_2$, and $|\Gamma^\w|=|\Gamma^\w|_o=\emptyset$ 
if $R/G$ is of type $C^\vee C_1$. \\

The following proposition is a first step for the above problem.

\begin{prop}\label{prop:shape-diagram1}
{\rm (1)} Assume both $\alpha_i$ and $\alpha_{i+1}$ are nodes in 
$|\Gamma^\w|_o$. One has $m_{\alpha_i}=m_{\alpha_{i+1}}$.
\vskip 1mm
\noindent
{\rm (2)} Assume $R/G$ is not of type $C^\vee C_l$. 
Let $\alpha_i\in |\Gamma^\w|_o$ and 
$\alpha_j\in |\Gamma^\w|\setminus |\Gamma^\w|_o$. 
One has $m_{\alpha_i}\geq m_{\alpha_j}$. Especially, if $R/G$ is of type 
$BB^\vee_l$, one has $m_{\alpha_i}>m_{\alpha_j}$. 
\vskip 1mm
\noindent
{\rm (3)} 
Let $\alpha_i\in |\Gamma^\w|_o$ and 
$\alpha_j\in |\Gamma^\nw|$. 
One has $m_{\alpha_i}\geq m_{\alpha_j}$.
\vskip 1mm
\noindent
{\rm (4)} Let $\alpha_i\in |\Gamma^\w|_o$ and 
$\alpha_j\in |\Gamma^\nw|$. If 
$m_{\alpha_i}> m_{\alpha_j}$, the element $2\alpha_j$ is a root. 
\end{prop}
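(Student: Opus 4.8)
\textbf{Proposal for the proof of Proposition \ref{prop:shape-diagram1}.}

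The plan is to treat all four statements as consequences of the exponent formula \eqref{defn:exponent} combined with the basic divisibility of counting numbers in Lemma \ref{lemma:countings1}~(2) and the explicit shape of the non-reduced affine Dynkin diagrams recalled in Appendix \S\ref{sect_non-red-affine}. The common mechanism is: for two adjacent nodes $\alpha,\beta\in|\Gamma^\downarrow|$ one has $I(\alpha,\beta^\vee)\in\{\pm1\}$ whenever at least one of them is a short (white) node in an appropriate sense, so Lemma \ref{lemma:countings1}~(2) forces $k(\alpha)\mid k(\beta)$ or $k(\beta)\mid k(\alpha)$; moreover for white nodes $\alpha\in|\Gamma^\w|$ one has $2\overline\alpha\notin R/G$, hence $\alpha'=\alpha$ and by Proposition \ref{prop:nr-countings1} (or \eqref{nr-countings_alpha1}) $k^{\mathrm{nr}}(\alpha)=k(\alpha)$, while $I_R(\alpha,\alpha)=2$ and $n_\alpha$ is read off from the affine diagram. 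Thus $m_\alpha = n_\alpha/k(\alpha)$ for $\alpha\in|\Gamma^\w|$, and the inequalities among exponents reduce to arithmetic with the marks $n_i$ and the countings $k(\alpha_i)$.

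For statement (1): if $\alpha_i,\alpha_{i+1}\in|\Gamma^\w|_o$ are adjacent ordinary white nodes then $I(\alpha_i,\alpha_{i+1}^\vee)=I(\alpha_{i+1},\alpha_i^\vee)=-1$, so by Lemma \ref{lemma:countings1}~(2) applied both ways $k(\alpha_i)=k(\alpha_{i+1})$; since along the chain of ordinary white nodes the marks $n_i$ are all equal (this is the portion of the affine diagram of $BCC_l,C^\vee BC_l,BB^\vee_l$ or $C^\vee C_l$ that looks like a type-$A$ string, where all $n_i$ coincide), we get $m_{\alpha_i}=n_{\alpha_i}/k(\alpha_i)=n_{\alpha_{i+1}}/k(\alpha_{i+1})=m_{\alpha_{i+1}}$. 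For statement (2), one compares an ordinary white node $\alpha_i$ with a boundary white node $\alpha_j$; by induction using (1) and the divisibility $k(\alpha_j)\mid k(\alpha_i)$ coming from the edge joining the string to the boundary (the bond there has $I(\alpha_j,\alpha_i^\vee)=-1$, so $k(\alpha_j)\mid k(\alpha_i)$), together with the fact that the mark $n_j$ at the boundary is $\le n_i$ (equal in types $BCC_l,C^\vee BC_l$, strictly smaller, namely $n_j=1<2=n_i$, in type $BB^\vee_l$), one deduces $m_{\alpha_i}=n_i/k(\alpha_i)\ge n_j/k(\alpha_j)=m_{\alpha_j}$ with strict inequality in the $BB^\vee_l$ case. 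For statements (3) and (4), $\alpha_j\in|\Gamma^\nw|$ means $2\overline{\alpha_j}\in R/G$; here $k^{\mathrm{nr}}(\alpha_j)$ is either $k(\alpha_j)$ or $\tfrac12 k(\alpha_j)$ by Proposition \ref{prop:nr-countings1}, and the latter (hence $2\alpha_j\in R$ fails to be automatic) happens precisely in the case $\tfrac12(\alpha_j')^\ast\in R$; writing $m_{\alpha_j}=I_R(\alpha_j,\alpha_j)n_{\alpha_j}/(2k^{\mathrm{nr}}(\alpha_j))$ and using that $\alpha_j$ is adjacent to $\alpha_i$ with $I(\alpha_j,\alpha_i^\vee)=-1$, Lemma \ref{lemma:countings1}~(2) gives $k(\alpha_j)\mid k(\alpha_i)$, and comparing $I_R$-lengths and marks yields $m_{\alpha_i}\ge m_{\alpha_j}$; if this inequality is strict, running through Proposition \ref{prop:nr-countings1} and Corollary \ref{cor:nr-countings1} forces the ``otherwise'' branch for $\alpha_j$ to fail in the right way, which combined with \eqref{counting-set-expform} in Proposition \ref{prop:description-R} shows $2\alpha_j\in R$.

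The main obstacle I anticipate is statement (4): it is genuinely a case analysis. One must unwind, for the node $\alpha_j\in|\Gamma^\nw|$, the four branches of Proposition \ref{prop:countings-prime} / \eqref{counting-set-expform}, and argue that the numerical gap $m_{\alpha_i}>m_{\alpha_j}$ is incompatible with $2\alpha_j\notin R$. Concretely, $2\alpha_j\notin R$ together with $2\overline{\alpha_j}\in R/G$ puts us in Case (i) or Case (ii) of the proof of Proposition \ref{prop:nr-countings1}, where one obtains $k(\alpha_j')\in\{2k(\alpha_j),4k(\alpha_j)\}$ and hence a definite value of $k^{\mathrm{nr}}(\alpha_j)\in\{\tfrac12 k(\alpha_j),\tfrac14 k(\alpha_j)\}$; plugging this into $m_{\alpha_j}=I_R(\alpha_j,\alpha_j)n_{\alpha_j}/(2k^{\mathrm{nr}}(\alpha_j))$ makes $m_{\alpha_j}$ \emph{large}, which will contradict $m_{\alpha_i}>m_{\alpha_j}$ once the divisibility $k(\alpha_j)\mid k(\alpha_i)$ and the ratio of $I_R$-lengths are taken into account. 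So the strategy for (4) is: assume $2\alpha_j\notin R$, derive the forced value of $k^{\mathrm{nr}}(\alpha_j)$, and show $m_{\alpha_j}\ge m_{\alpha_i}$, contradicting the hypothesis. I would organize this by the type of $R/G$ (the neighbour structure of a $\nw$-node differs between $BCC_l$, $C^\vee BC_l$, $BB^\vee_l$, $C^\vee C_l$) and, within each, by whether $\alpha_j$ carries a black or gray node, leaning on Lemma \ref{lemma:color} to keep track of colours; the rank-one and rank-two degenerate cases ($C^\vee C_1$, $BB^\vee_2$, where $|\Gamma^\w|_o$ may be empty) are vacuous for the statements that quantify over $\alpha_i\in|\Gamma^\w|_o$ and need only be noted.
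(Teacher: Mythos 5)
Your overall strategy (the exponent formula, the divisibility of counting numbers from Lemma \ref{lemma:countings1}\,(2), and the explicit affine diagram data) is the same as the paper's, and your argument for statement (1) is essentially correct, since all ordinary white nodes are middle-length roots with the same mark and the same counting number. But two of your key numerical claims are wrong, and they break (2) and (3). First, it is false that $I_R(\alpha,\alpha)=2$ for every white node: the ordinary white nodes $\vep_i-\vep_{i+1}$ have $I_R(\alpha,\alpha)=4$, while the boundary white node $\alpha_0$ is a long root in type $BCC_l$ (with $I_R=8$, and $n_{\alpha_0}=1\neq 2=n_{\alpha_1}$, contrary to your claim that the marks agree there) and a short root in type $C^\vee BC_l$ (with $I_R=2$). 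So $m_\alpha=n_\alpha/k(\alpha)$ is not the correct formula on $|\Gamma^{\w}|$. This matters: in type $C^\vee BC_l$ the correct exponents are $m_{\alpha_1}=2/k(\alpha_1)$ and $m_{\alpha_0}=1/k(\alpha_0)$, and the pairing data only give $k(\alpha_0)\mid k(\alpha_1)\mid 2k(\alpha_0)$; it is precisely the length ratio $4:2$ that converts this into $m_{\alpha_1}\geq m_{\alpha_0}$. With your formula the same divisibility yields $m_{\alpha_1}\leq m_{\alpha_0}$, i.e.\ the reverse of statement (2).

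The more serious gap is in (3). There the inequality $m_{\alpha_i}\geq m_{\alpha_j}$ is equivalent to $2k^{\nr}(\alpha_j)\geq k(\alpha_i)$, and the divisibility you invoke, $k(\alpha_j)\mid k(\alpha_i)$, points the wrong way: combined with $k(\alpha_i)\mid 2k(\alpha_j)$ it only gives $k(\alpha_i)\in\{k(\alpha_j),\,2k(\alpha_j)\}$, and in the branch $k^{\nr}(\alpha_j)=\tfrac12 k(\alpha_j)$ the case $k(\alpha_i)=2k(\alpha_j)$ would violate the desired inequality and is not excluded by anything you cite. The paper's essential input is Lemma \ref{lemma:translation}: since $r_{\alpha_j}r_{(\alpha_j')^\ast}(\alpha_i)=\alpha_i+2k^{\nr}(\alpha_j)a$ is a root, Lemma \ref{lemma:countings1}\,(1) forces $k(\alpha_i)\mid 2k^{\nr}(\alpha_j)$. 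It is this translation element, built from the pair $\alpha_j,(\alpha_j')^\ast$, that turns the \emph{non-reduced} counting number $k^{\nr}(\alpha_j)$ into a constraint on $k(\alpha_i)$; plain root-pairing divisibility cannot do this. Your plan for (4) (contradiction from $2\alpha_j\notin R$, using $k(\alpha_j')\mid 2k(\alpha_i)$ against the forced size of $k(\alpha_j')$) is the right shape and matches the paper, but it presupposes the equivalence $m_{\alpha_i}>m_{\alpha_j}\Longleftrightarrow 2k^{\nr}(\alpha_j)>k(\alpha_i)$ and therefore inherits both of the above defects.
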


\begin{proof}
(1) Since $2\overline{\alpha_i}\not\in R/G$ and 
$2\overline{\alpha_{i+1}}\not\in R/G$,
we have $k^\nr(\alpha_i)=k(\alpha_i)$ and $k^\nr(\alpha_{i+1})=k(\alpha_{i+1})$
by Proposition \ref{prop:nr-countings1}, respectively. Since both $\alpha_i$ and
$\alpha_{i+1}$ are middle roots, we have 
$I(\alpha_i,\alpha_{i+1}^\vee)=I(\alpha_{i+1},\alpha_{i}^\vee)=-1$, and
$I_R(\alpha_i,\alpha_{i})=I_R(\alpha_{i+1},\alpha_{i+1})=4$. It follows from the
first equality that 
\begin{equation}
k(\alpha_i)=k(\alpha_{i+1})
\end{equation}
by Lemma \ref{lemma:countings1} (2). 
In addition, we observe that $n_{\alpha_i}=n_{\alpha_{i+1}}$ for every case 
by explicit data in $\S$ \ref{sect_non-red-affine}. Thus, we  have
\[m_{\alpha_i}=\frac{I_R(\alpha_i,\alpha_i)}{2 k^\nr (\alpha_i)}n_{\alpha_i}
=\frac{I_R(\alpha_{i+1},\alpha_{i+1})}{2 k^\nr (\alpha_{i+1})}n_{\alpha_{i+1}}
=m_{\alpha_{i+1}}.\]
\noindent
(2) By the conditions $|\Gamma^\w|_o\ne\emptyset$ and 
$|\Gamma^\w|\setminus |\Gamma^\w|_o\ne \emptyset$, 
we see that $R/G$ should be of type $BCC_l \,(l\geq 2),\,  
C^\vee BC_l\,(l\geq 2)$ or $BB^\vee_l\,(l\geq 3)$. In each case, 
there exists a unique node $\alpha_{i_0}\in |\Gamma^\w|$ such that it connects 
to $\alpha_j$ by a bond in $\Gamma$. Since $m_{\alpha_i}=m_{\alpha_{i_0}}$ by 
statement (1), it is enough to show the statement under the assumption that
$\alpha_i=\alpha_{i_0}$.

By a similar way as in the proof of statement (1), we  obtain that
$k^\nr(\alpha_i)=k(\alpha_i)$ and $k^\nr(\alpha_j)=k(\alpha_j)$. 
Here, we used  the fact that $\alpha_i$ and $\alpha_j$ belongs to $|\Gamma^\w|$.
In the following table, we give the possible list of 
$\alpha_i$ and $\alpha_j$. In addition, $n_{\alpha_i}$, $n_{\alpha_j}$, 
$I_R(\alpha_i,\alpha_i)$, $I_R(\alpha_j,\alpha_j)$
$I(\alpha_i,\alpha_j^\vee)$ and $I(\alpha_i,\alpha_j^\vee)$ are exhibited also. 
\begin{center}
\begin{table}[h]
\label{table1-alpha_i-alpha_j}
\vskip -2mm
\resizebox{.98\textwidth}{!}{%
\renewcommand{\arraystretch}{1.3}
\begin{tabular}{|c||c|c|c||c|c|c||c|c|c|}
\noalign{\hrule height0.8pt}
$R/G$&  $\alpha_i$ & $n_{\alpha_i}$ & $I_R(\alpha_i,\alpha_i)$ & $\alpha_j$ &
$n_{\alpha_j}$ & $I_R(\alpha_j,\alpha_j)$ & $I(\alpha_i,\alpha_j^\vee)$ & 
$I(\alpha_j,\alpha_i^\vee)$ \\
\noalign{\hrule height0.8pt}
\hline
$BCC_l\, (l\geq 2)$ & $\alpha_1$ & $2$ & $4$ & 
$\alpha_0$ & $1$ & $8$ & $-1$ & $-2$\\
\hline
$C^\vee BC_l\, (l\geq 2)$ & $\alpha_1$ & $1$ & $4$ & 
$\alpha_0$ & $1$ & $2$ & $-2$ & $-1$\\
\hline
$BB^\vee_l\, (l\geq 3)$ & $\alpha_2$ & $2$ & $4$ & $\alpha_0$ & $1$ & 
$4$ & $-1$ & $-1$\\
\cline{5-9}
& & & & $\alpha_1$ & $1$ & $4$ & $-1$ & $-1$\\
\noalign{\hrule height0.8pt}
\end{tabular}}
\end{table}
\end{center}
\begin{rem} 
The dual $\alpha^\vee=\frac{2\alpha}{I(\alpha,\alpha)}$ of $\alpha\in R$
depends on a choice of a bilinear form $I$. On the other hand, since there exists 
a positive constant $c>0$ such that $I=cI_R$ {\rm (}see {\rm \cite{Saito1985} (1.11))}, 
we have
\[I(\alpha,\beta^\vee)=\frac{2I(\alpha,\beta)}{I(\beta,\beta)}=
\frac{2cI_R(\alpha,\beta)}{cI_R(\beta,\beta)}
=I_R(\alpha,\beta^{\vee_R})\quad \text{for every }\alpha,\beta\in R,\]
where $\beta^{\vee_R}:=\frac{2\beta}{I_R(\beta,\beta)}$.
\end{rem}
By direct computation, we have
\[m_{\alpha_i}=\begin{cases}
4/k(\alpha_i) & \text{if $BCC_l\, (l\geq 2)$},\\
2/k(\alpha_i) & \text{if $C^\vee BC_l\, (l\geq 2)$},\\
4/k(\alpha_i) & \text{if $BB^\vee_l\, (l\geq 3)$},
\end{cases}
\quad
m_{\alpha_j}=\begin{cases}
4/k(\alpha_j) & \text{if $BCC_l\, (l\geq 2)$},\\
1/k(\alpha_j) & \text{if $C^\vee BC_l\, (l\geq 2)$},\\
2/k(\alpha_j) & \text{if $BB^\vee_l\, (l\geq 3)$}.
\end{cases}
\]

First, assume $R/G$ is of type $BCC_l\, (l\geq 2)$. 
By Lemma \ref{lemma:countings1} (2), we have 
$k(\alpha_i)|k(\alpha_j)|2k(\alpha_i)$.
Therefore, $k(\alpha_j)=k(\alpha_i)$ or $2k(\alpha_i)$, and 
\[\begin{cases}
m_{\alpha_i}=m_{\alpha_j} & \text{if $k(\alpha_j)=k(\alpha_i)$},\\
m_{\alpha_i}>m_{\alpha_j} & \text{if $k(\alpha_j)=2k(\alpha_i)$}.
\end{cases}\]
Second, in the case when $R/G$ is of type $C^\vee BC_l\, (l\geq 2)$, we have
$k(\alpha_i)=2k(\alpha_j)$ or $k(\alpha_j)$, and 
\[\begin{cases}
m_{\alpha_i}=m_{\alpha_j} & \text{if $2k(\alpha_j)=k(\alpha_i)$},\\
m_{\alpha_i}>m_{\alpha_j} & \text{if $k(\alpha_j)=k(\alpha_i)$},
\end{cases}\]
by a similar method. Finally, if $R/G$ is of type $BB^\vee_l\, (l\geq 3)$, we have
$k(\alpha_i)=k(\alpha_j)$ by Lemma \ref{lemma:countings1} (2). 
Therefore, the inequality $m_{\alpha_i}>m_{\alpha_j}$ is obtained, and 
we get the desired results.
\vskip 1mm
\noindent
(3) Note that the condition $|\Gamma^\nw|\ne \emptyset$
is always satisfied. By the condition $|\Gamma^\w|_o\ne\emptyset$, $R/G$ 
should be of type $BCC_l \,(l\geq 2),\, C^\vee BC_l\,(l\geq 2),\, 
BB^\vee_l\,(l\geq 3)$ or $C^\vee C_l\,(l\geq 2)$. By the same reason as 
 in the previous case, we may assume that $\alpha_i$ connects to 
$\alpha_j$ by an edge in $\Gamma$, and we obtain that $k^\nr(\alpha_i)=k(\alpha_i)$. 
In the following table, we give the possible list of 
$\alpha_i$, $\alpha_j$, and exhibit $n_{\alpha_i}$, $n_{\alpha_j}$, 
$I_R(\alpha_i,\alpha_i)$, $I_R(\alpha_j,\alpha_j)$
$I(\alpha_i,\alpha_j^\vee)$ and $I(\alpha_i,\alpha_j^\vee)$.
\begin{center}
\begin{table}[h]
\label{table2-alpha_i-alpha_j}
\vskip -2mm
\resizebox{.98\textwidth}{!}{%
\renewcommand{\arraystretch}{1.3}
\begin{tabular}{|c||c|c|c||c|c|c||c|c|c|}
\noalign{\hrule height0.8pt}
$R/G$&  $\alpha_i$ & $n_{\alpha_i}$ & $I_R(\alpha_i,\alpha_i)$ & $\alpha_j$ &
$n_{\alpha_j}$ & $I_R(\alpha_j,\alpha_j)$ & $I(\alpha_i,\alpha_j^\vee)$ & 
$I(\alpha_j,\alpha_i^\vee)$ \\
\noalign{\hrule height0.8pt}
\hline
$BCC_l\, (l\geq 2)$ & $\alpha_{l-1}$ & $2$ & $4$ & 
$\alpha_l$ & $2$ & $2$ & $-2$ & $-1$\\
\hline
$C^\vee BC_l\, (l\geq 2)$ & $\alpha_{l-1}$ & $1$ & $4$ & 
$\alpha_l$ & $1$ & $2$ & $-2$ & $-1$\\
\hline
$BB^\vee_l\, (l\geq 3)$ & $\alpha_{l-1}$ & $2$ & $4$ & 
$\alpha_l$ & $2$ & $2$ & $-2$ & $-1$\\
\hline
$C^\vee C_l\, (l\geq 2)$ & $\alpha_1$ & $1$ & $4$ & $\alpha_0$ & $1$ & 
$2$ & $-2$ & $-1$\\
\cline{2-9}
& $\alpha_{l-1}$ & $1$ & $4$ & $\alpha_l$ & $1$ & $2$ & $-2$ & $-1$\\
\noalign{\hrule height0.8pt}
\end{tabular}}
\end{table}
\end{center}
\vskip -5mm
By direct computation, we have
\begin{equation}\label{data-exponent2}
m_{\alpha_i}=\begin{cases}
4/k(\alpha_i) & \text{if $BCC_l\, (l\geq 2)$},\\
2/k(\alpha_i) & \text{if $C^\vee BC_l\, (l\geq 2)$},\\
4/k(\alpha_i) & \text{if $BB^\vee_l\, (l\geq 3)$},\\
2/k(\alpha_i) & \text{if $C^\vee C_l \, (l\geq 2)$},\\
\end{cases}
\quad
m_{\alpha_j}=\begin{cases}
2/k^\nr(\alpha_j) & \text{if $BCC_l\, (l\geq 2)$},\\
1/k^\nr(\alpha_j) & \text{if $C^\vee BC_l\, (l\geq 2)$},\\
2/k^\nr(\alpha_j) & \text{if $BB^\vee_l\, (l\geq 3)$},\\
1/k^\nr(\alpha_j) & \text{if $C^\vee C_l \, (l\geq 2)$},\\
\end{cases}
\end{equation}
Thus, the desired inequality is equivalent to
\begin{equation}\label{inequality-goal}
2k^\nr(\alpha_j)\geq k(\alpha_i).
\end{equation} 

By the definition of the (elliptic) root system, the element 
$r_{\alpha_j}r_{(\alpha_j')^\ast}(\alpha_i)$ is a root.  
By Lemma \ref{lemma:translation}, we have
\[r_{\alpha_j}r_{(\alpha_j')^\ast}(\alpha_i)
=\alpha_i-I(\alpha_i,\alpha_j^\vee)k^\nr(\alpha_j)a=\alpha_i+2k^\nr(\alpha_j)a\in R.\]
By Lemma \ref{lemma:countings1} (1), we have 
$k(\alpha_i)|2k^\nr(\alpha_j)$.
This shows \eqref{inequality-goal}, and we get statement (3).
\vskip 1mm
\noindent
(4) The assumption $m_{\alpha_i}>m_{\alpha_j}$ is equivalent to
\begin{equation*}\label{inequality-collapse1}
2k^\nr(\alpha_j)> k(\alpha_i).
\end{equation*} 
By Lemma \ref{lemma:nr-countings1} and \eqref{nr-countings_alpha3}, we have
\begin{equation}\label{inequality-collapse2}
k(\alpha_i)<2k^\nr(\alpha_j)=k^\nr((\alpha_j')^\ast)=
\begin{cases}
\frac12 k((\alpha'_j)^\ast)=\frac12 k(\alpha'_j) & \text{if $2\alpha_j\not\in R$},\\
k((\alpha'_j)^\ast)=k(\alpha'_j) & \text{if $2\alpha_j\in R$}.
\end{cases}
\end{equation}

On the other hand, since $\alpha_j'$ is a long root, and $\alpha_i$ is a middle
root, $I(\alpha'_j,\alpha_i^\vee)=-2$. 
By Lemma \ref{lemma:countings1} (2), we have
\[k(\alpha'_j)|I(\alpha'_j,\alpha_i^\vee)k(\alpha_i)\quad\Longleftrightarrow\quad
k(\alpha_j')|2k(\alpha_i).\]
That is, there exists  a positive integer $m\in\Z_{>0}$ such that 
$m k(\alpha'_j)=2 k(\alpha_i)$. 

Assume $2\alpha_j\not\in R$. By \eqref{inequality-collapse2} and
the above consequence, we have 
\[mk(\alpha'_j)<k(\alpha'_j)\quad \text{for $m\in \Z_{>0}$}.\]
This is a contradiction. Therefore, the element $2\alpha_j$ should belong to $R$, 
and the proof of the proposition is completed.
\end{proof}

\begin{lemma}\label{lemma:9.5}
Let $\alpha_i\in |\Gamma^\w|_o$ and $\alpha_j\in |\Gamma^\nw|$. 
\vskip 1mm
\noindent
{\rm (1)} If $m_{\alpha_i}=m_{\alpha_j}$, we have 
\begin{equation}
k(\alpha_i)=\begin{cases}
k(\alpha_j) & \text{if $\frac12 (\alpha_i')^\ast\not \in R$},\\
2k(\alpha_j) & \text{if $\frac12 (\alpha_i')^\ast \in R$}.
\end{cases}
\end{equation}
{\rm (2)} If $m_{\alpha_i}>m_{\alpha_j}$,
we have
\begin{equation}\label{collapsed-black}
k(\alpha_i)=k(\alpha_j)
\end{equation}
and $\frac12(\alpha_j')^\ast$ is a root.
\end{lemma}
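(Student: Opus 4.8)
\textbf{Proof plan for Lemma \ref{lemma:9.5}.}

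The plan is to derive both statements from the translation formula in Lemma \ref{lemma:translation} together with the divisibility relations for counting numbers (Lemma \ref{lemma:countings1} (2)) and the explicit case analysis of non-reduced countings in Proposition \ref{prop:countings-prime} and Corollary \ref{cor:nr-countings1}. First I would reduce to the situation where $\alpha_i$ is \emph{adjacent} to $\alpha_j$ in $\Gamma^\downarrow$: since $\alpha_i\in |\Gamma^\w|_o$, Proposition \ref{prop:shape-diagram1} (1) gives $m_{\alpha_i}=m_{\alpha_{i'}}$ for any other ordinary white node, and by the explicit shapes of the diagrams of types $BCC_l$, $C^\vee BC_l$, $BB_l^\vee$, $C^\vee C_l$ there is a unique white node connected to a given colored node; so WLOG $\alpha_i$ bonds to $\alpha_j$. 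In this reduced situation I would also record, as in the proof of Proposition \ref{prop:shape-diagram1}, that $k^{\nr}(\alpha_i)=k(\alpha_i)$ (because $\alpha_i\in|\Gamma^\w|$ means $2\overline{\alpha_i}\notin R/G$, hence Proposition \ref{prop:nr-countings1} applies), and that $\alpha_i$ is a middle-length root while $\alpha_j'$ is a long root, so $I(\alpha_j',\alpha_i^\vee)=-2$ and $I(\alpha_i,\alpha_j^\vee)=-2$ in all cases (reading off the tables in the proof of that proposition).

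For statement (1): the hypothesis $m_{\alpha_i}=m_{\alpha_j}$ unwinds, via the formulas \eqref{data-exponent2} computed in the proof of Proposition \ref{prop:shape-diagram1} (3), to the equality $2k^{\nr}(\alpha_j)=k(\alpha_i)$, i.e.\ $k^{\nr}((\alpha_j')^\ast)=k(\alpha_i)$ by Lemma \ref{lemma:nr-countings1} (1). Now I split according to whether $2\alpha_j\in R$. By Corollary \ref{cor:nr-countings1} \eqref{nr-countings_alpha3}, $k^{\nr}((\alpha_j')^\ast)$ equals $\tfrac12 k((\alpha_j')^\ast)$ if $2\alpha_j\notin R$ and $k((\alpha_j')^\ast)$ otherwise; combining with Proposition \ref{prop:countings-prime} (which expresses $k((\alpha_j')^\ast)$ in terms of $k(\alpha_j)$ in the four sub-cases governed by $2\alpha_j\in R$ and $\tfrac12(\alpha_j')^\ast\in R$) one computes $k(\alpha_i)=2k^{\nr}(\alpha_j)$ explicitly: it comes out to $k(\alpha_j)$ when $\tfrac12(\alpha_j')^\ast\notin R$ and $2k(\alpha_j)$ when $\tfrac12(\alpha_j')^\ast\in R$, which is exactly the claimed dichotomy. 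Here one should note that the role of $(\alpha_i')^\ast$ versus $(\alpha_j')^\ast$ needs care: since $\alpha_i$ is white, $2\overline{\alpha_i}\notin R/G$ so $\alpha_i'=\alpha_i$ and $(\alpha_i')^\ast=\alpha_i^\ast$; the condition "$\tfrac12(\alpha_i')^\ast\in R$" in the statement is therefore a condition about a long root lying below $\alpha_i^\ast$, i.e.\ about the colored neighbour — this is where I expect to have to match up the indices carefully, using that the unique colored neighbour of $\alpha_i$ is $\alpha_j$ and that $\overline{\alpha_j'}=2\overline{\alpha_j}$ forces $(\alpha_j')^\ast$ and the relevant half-root to sit in $\alpha_i+\Z a$.

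For statement (2): here $m_{\alpha_i}>m_{\alpha_j}$, i.e.\ $2k^{\nr}(\alpha_j)>k(\alpha_i)$. Proposition \ref{prop:shape-diagram1} (4) already tells us $2\alpha_j\in R$, so $\alpha_j'=2\alpha_j$ and $k^{\nr}((\alpha_j')^\ast)=k((\alpha_j')^\ast)=k(\alpha_j')=k(2\alpha_j)$ by Corollary \ref{cor:nr-countings1} \eqref{nr-countings_alpha3} and \eqref{k=kast}; thus $k(\alpha_i)<k(\alpha_j')$. On the other hand $I(\alpha_j',\alpha_i^\vee)=-2$ gives $k(\alpha_j')\mid 2k(\alpha_i)$ by Lemma \ref{lemma:countings1} (2), so $k(\alpha_j')\in\{k(\alpha_i),2k(\alpha_i)\}$; the strict inequality forces $k(\alpha_j')=2k(\alpha_i)$. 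Then by Proposition \ref{prop:countings-prime}, in the sub-case $2\alpha_j\in R$ the value $k((\alpha_j')^\ast)=k(\alpha_j')=2k(\alpha_j)$ holds precisely when $\tfrac12(\alpha_j')^\ast\in R$ (the alternative $2\alpha_j\in R$, $\tfrac12(\alpha_j')^\ast\notin R$ gives $k((\alpha_j')^\ast)=k(\alpha_j)$, which is incompatible with $k(\alpha_j')=2k(\alpha_i)=2k(\alpha_j)$ once we know $k(\alpha_i)=k(\alpha_j)$) — and combining $k(\alpha_j')=2k(\alpha_j)$ with $k(\alpha_j')=2k(\alpha_i)$ yields $k(\alpha_i)=k(\alpha_j)$, which is \eqref{collapsed-black}, and simultaneously certifies that $\tfrac12(\alpha_j')^\ast$ is a root. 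The main obstacle I anticipate is the bookkeeping in statement (1): keeping straight which "half-root in $R$" condition attaches to $\alpha_i$ and which to $\alpha_j$, and verifying in each of the four $(BCC_l,C^\vee BC_l,BB_l^\vee,C^\vee C_l)$ cases that the numerology of $n_\alpha$ and $I_R(\alpha,\alpha)$ really does collapse the exponent formula to $2k^{\nr}(\alpha_j)=k(\alpha_i)$; this is routine but must be done case by case as in the proof of Proposition \ref{prop:shape-diagram1}.
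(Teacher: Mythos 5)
Your route is essentially the paper's: reduce to the case where $\alpha_i$ is the unique ordinary white neighbour of $\alpha_j$, translate $m_{\alpha_i}=m_{\alpha_j}$ (resp.\ $m_{\alpha_i}>m_{\alpha_j}$) into $2k^{\nr}(\alpha_j)=k(\alpha_i)$ (resp.\ $2k^{\nr}(\alpha_j)>k(\alpha_i)$) via the exponent formulas, and then unwind the non-reduced counting numbers. For (1) the paper just quotes the formula \eqref{nr-countings_alpha4} for $\alpha_j$, whereas you go through \eqref{nr-countings_alpha3} and Proposition \ref{prop:countings-prime}; your four-case computation does collapse to the stated dichotomy, and your suspicion about the indexing is well founded — the condition that separates the two cases is $\frac12(\alpha_j')^\ast\in R$, which is exactly what the paper's own proof uses, so the $(\alpha_i')^\ast$ in the printed statement should be read as $(\alpha_j')^\ast$.

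There is one genuinely circular step in your part (2). Having obtained $k(\alpha_j')=2k(\alpha_i)$, you rule out the sub-case $2\alpha_j\in R$, $\frac12(\alpha_j')^\ast\not\in R$ on the grounds that $k((\alpha_j')^\ast)=k(\alpha_j)$ would be ``incompatible with $k(\alpha_j')=2k(\alpha_i)=2k(\alpha_j)$ once we know $k(\alpha_i)=k(\alpha_j)$'' — but $k(\alpha_i)=k(\alpha_j)$ is the conclusion you are trying to reach, so it cannot be used to exclude the competing case. The repair is the one the paper makes and uses a tool you already listed but did not deploy here: Lemma \ref{lemma:countings1} (2) applied to the pair $(\alpha_i,\alpha_j)$, with $I(\alpha_j,\alpha_i^\vee)=-1$ and $I(\alpha_i,\alpha_j^\vee)=-2$, gives $k(\alpha_j)\mid k(\alpha_i)\mid 2k(\alpha_j)$ independently of everything else. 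In the bad sub-case one would have $2k(\alpha_i)=k(\alpha_j')=k(\alpha_j)$, contradicting $k(\alpha_j)\mid k(\alpha_i)$; hence only the sub-case $\frac12(\alpha_j')^\ast\in R$ survives, and there $2k(\alpha_i)=2k(\alpha_j)$ gives \eqref{collapsed-black}. With that one line inserted your argument closes.
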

\begin{proof}
(1) As already shown in the proof of the proposition, the condition
$m_{\alpha_i}=m_{\alpha_j}$ is equivalent to $2k^{nr}(\alpha_j)=k(\alpha_i)$.
By this equality and \eqref{nr-countings_alpha4}, statement (1) follows
immediately. 
\vskip 1mm
\noindent
(2) We already proved  that the condition $m_{\alpha_i}>m_{\alpha_j}$ implies
$2\alpha_j\in R$ in statement (4) of the above proposition.
Furthermore, by a similar argument to the above proof, we have
$mk(\alpha'_j)<2k(\alpha'_j)$ for $m\in \Z_{>0}$.
Therefore, the integer $m$ should be equal to $1$, and we get  
$k(\alpha'_j)=2 k(\alpha_i)$.  
In addition, by Proposition \ref{prop:countings-prime} and 
$2\alpha_j \in R$, we have 
\begin{align*}
2k(\alpha_i)=k(\alpha'_j)
&=\begin{cases}
k(\alpha_j) & \text{if }\frac12 (\alpha_j')^\ast\not\in R,\\
2k(\alpha_j) & \text{if }\frac12 (\alpha_j')^\ast\in R.
\end{cases}
\end{align*}

On the other hand, by {Lemma \ref{lemma:countings1}} (2), we have
$k(\alpha_j)|k(\alpha_i)|2k(\alpha_j)$. From this, we obtain
\[k(\alpha_i)=k(\alpha_j)\,\text{or}\,\, 2k(\alpha_j).\]
Therefore, the only possible case is $k(\alpha_i)=k(\alpha_j)$ and we get 
$\frac12 (\alpha_j' )^\ast\in R$, as desired.
\end{proof}
\subsection{Lowest rank cases}
Let us consider low rank exceptional cases. Namely, $(R,G)$ is assumed to be
a mERS such that the quotient affine system $R/G$ is of type $BCC_1$, 
$C^\vee BC_1$, $BB^\vee_2$ or $C^\vee C_1$. In the following table, we give 
basic data for such affine root systems. 
Let $\alpha_i\in |\Gamma^\w|$ and $\alpha_j\in |\Gamma^\nw|$. 
Note that, in the case of type $C^\vee C_1$, both $\alpha_0$ and $\alpha_1$ 
belong to $|\Gamma^\nw|$ and $|\Gamma^\w|=\emptyset$.
\begin{center}
\begin{table}[h]
\label{table1-alpha-beta}
\vskip -1mm
\resizebox{.98\textwidth}{!}{%
\renewcommand{\arraystretch}{1.3}
\begin{tabular}{|c||c|c|c||c|c|c||c|c|}
\noalign{\hrule height0.8pt}
$R/G$&  $\alpha_i$ & $n_{\alpha_i}$ & $I_R(\alpha_i,\alpha_i)$ & $\alpha_j$ & 
$n_{\alpha_j}$ & $I_R(\alpha_j,\alpha_j)$ & 
$I(\alpha_i,\alpha_j^\vee)$ & 
$I(\alpha_j,\alpha_i^\vee)$
\\
\noalign{\hrule height0.8pt}
\hline
$BCC_1$ & $\alpha_0$ & $1$ & $8$ &$\alpha_1$ & $2$ & $2$ & $-4$ & $-1$\\
\hline
$C^\vee BC_1$ & $\alpha_0$ & $1$ & $2$ & $\alpha_1$ & $1$ & $2$ & 
$-2$ & $-2$\\
\hline
$BB_2^\vee$ & $\alpha_0$ & $1$ & $4$ & $\alpha_2$ & $2$ & $2$ & 
$-2$ & $-1$\\
\cline{2-4}\cline{8-9} & $\alpha_1$ & $1$ & $4$ & & & & $-2$ & $-1$\\
\hline
$C^\vee C_1$ & $\emptyset$ &$\times$ & $\times$ & $\alpha_0$ & $1$ & $2$ & 
$-2$ & $-2$\\
\cline{5-7} && &&$\alpha_1$ & $1$ & $2$ &&\\
\hline
\noalign{\hrule height0.8pt}
\end{tabular}}
\end{table}
\end{center}
\vskip -5mm
By direct computation, one has
\begin{equation}\label{data-exponent3}
m_{\alpha_i}=\begin{cases}
4/k(\alpha_i) & \text{if $BCC_1$},\\
1/k(\alpha_i) & \text{if $C^\vee BC_1$},\\
2/k(\alpha_i) & \text{if $BB^\vee_2$},
\end{cases}
\quad
m_{\alpha_j}=\begin{cases}
2/k^\nr(\alpha_j) & \text{if $BCC_1$},\\
1/k^\nr(\alpha_j)
& \text{if $C^\vee BC_1$},\\
2/k^\nr(\alpha_j) & \text{if $BB^\vee_2$},\\
1/k^\nr(\alpha_j) & \text{if $C^\vee C_1$},\\
\end{cases}
\end{equation}

{
\begin{prop}\label{prop:shape-diagram2}
{\rm (1)} Assume $R/G$ is of type $BCC_1$ or $C^\vee BC_1$.
\begin{itemize}
\item[(a)] If $m_{\alpha_0}=m_{\alpha_1}$, one has 
\[k(\alpha_1)/k(\alpha_0)=-\frac{I(\alpha_1,\alpha_0^\vee)}{t},\quad
\text{where } t=\begin{cases}
1 &\text{if }\tfrac12 (\alpha_1')^\ast \not\in R.\\
2 & \text{if }\tfrac12 (\alpha_1')^\ast \in R.\\
\end{cases}\]
\item[(b)] If $m_{\alpha_0}<m_{\alpha_1}$, then  
\[k(\alpha_1)/k(\alpha_0)=-\frac{I(\alpha_1,\alpha_0^\vee)}{t},\quad
\text{where } t=\begin{cases}
2 &\text{if }\tfrac12 (\alpha_1')^\ast \not\in R.\\
4 & \text{if }\tfrac12 (\alpha_1')^\ast \in R.\\
\end{cases}\]
\item[(c)] If $m_{\alpha_0}>m_{\alpha_1}$, one has 
$2\alpha_1\in R,\ \frac12 (\alpha_1')^\ast\in R$ and
\[k(\alpha_1)/k(\alpha_0)=-I(\alpha_1,\alpha_0^\vee).\]
\end{itemize}
\vskip 1mm
\noindent
{\rm (2)} Assume $R/G$ is of type $BB_2^\vee$. 
\begin{itemize}
\item[(a)] One has $m_{\alpha_2}=m_{max}$.
\vskip 1mm
\item[{\rm (b)}] If $m_{\alpha_i}=m_{max}$ for $i=0$ or $1$, then 
one has
$2\alpha_2\in R,\ \frac12 (\alpha_2')^\ast\in R$ and
\[k(\alpha_i)=k(\alpha_2).\]
Furthermore, if $m_{\alpha_j}<m_{max}$ for $j\in \{0,1\}$ such that $j\ne i$, one has
\[k(\alpha_j)=2k(\alpha_2).\]
\vskip 1mm
\noindent
\item[(c)] 
If both $m_{\alpha_0}$ and $m_{\alpha_1}$ are strictly smaller than 
$m_{max}$, then 
\[
k(\alpha_2)/k(\alpha_i)=\dfrac{1}{t},\quad
\text{where }t=
\begin{cases} 
1 & \text{if }\tfrac12(\alpha_2')^\ast\not\in R,\\
2 & \text{if }\tfrac12(\alpha_2')^\ast\in R,
\end{cases}\quad\text{for }i=0,1.\] 
\end{itemize}
\vskip 1mm
\noindent
{\rm (3)} Assume $R/G$ is of type $C^\vee C_1$ and let $i,j$ be indices such that
$\{i,j\}=\{0,1\}$.
\begin{itemize}
\item[(a)] If $m_{\alpha_i}=m_{\alpha_j}$, one has 
\[k(\alpha_j)/k(\alpha_i)=\dfrac2t,\quad
\text{where } t=\begin{cases}
1 & \text{if }\tfrac12 (\alpha_i')^\ast \in R \text{ and }
\tfrac12 (\alpha_j')^\ast \not\in R,\\
4 &\text{if }\tfrac12 (\alpha_i')^\ast \not\in R \text{ and }
\tfrac12 (\alpha_j')^\ast \in R,\\
2 & \text{otherwise}.
\end{cases}\]
\item[(b)] If $m_{\alpha_i}<m_{\alpha_j}$, then one has
$2\alpha_i\in R,\ \frac12 (\alpha_i')^\ast\in R$ and
\[k(\alpha_j)/k(\alpha_i)=\dfrac2t,\quad 
\text{where } t=\begin{cases}
2 & \text{if }\tfrac12(\alpha_j')^\ast\not\in R\\
4 & \text{if }\tfrac12(\alpha_j')^\ast\in R.
\end{cases}\]
\end{itemize}

\vskip 1mm
\noindent
\end{prop}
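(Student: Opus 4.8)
The statement to prove is Proposition \ref{prop:shape-diagram2}, which exhibits, for the low-rank cases $R/G$ of type $BCC_1$, $C^\vee BC_1$, $BB_2^\vee$ and $C^\vee C_1$, explicit relations among counting numbers, together with case distinctions governed by whether $2\alpha_j \in R$ and whether $\frac12(\alpha_j')^\ast \in R$, and (for $BB_2^\vee$) which node carries $m_{max}$.

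\medskip

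\textbf{Overall approach.} The proof is entirely a case-by-case computation, closely paralleling the proofs of Proposition \ref{prop:shape-diagram1} and Lemma \ref{lemma:9.5}, but now with the higher values of $|I(\alpha_i,\alpha_j^\vee)|$ that occur in rank one (namely $-4$ for $BCC_1$, $-2$ for $C^\vee BC_1$, $-2$ for $C^\vee C_1$, and $-2$ for $BB_2^\vee$) instead of the $-1,-2$ that appear between an ordinary white node and a colored node in the higher-rank situation. The two basic tools are: (i) the divisibility relation of Lemma \ref{lemma:countings1} (2), $k(\beta)\mid I(\beta,\alpha^\vee)k(\alpha)$; and (ii) the translation formula of Lemma \ref{lemma:translation}, $r_\alpha r_{(\alpha')^\ast}(\lambda) = \lambda - I(\lambda,\alpha^\vee)k^\nr(\alpha)a$, which shows $\lambda - I(\lambda,\alpha^\vee)k^\nr(\alpha)a \in R$ whenever $\lambda\in R$, hence gives $k(\lambda)\mid I(\lambda,\alpha^\vee)k^\nr(\alpha)$. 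Combined with the formulas \eqref{nr-countings_alpha3}, \eqref{nr-countings_alpha4} for $k^\nr$ in terms of $k$, and the data tables \eqref{data-exponent3} above for the exponents $m_{\alpha_i}$, $m_{\alpha_j}$, each case reduces to elementary arithmetic on the integers $k(\alpha_i), k(\alpha_j), k((\alpha_j')^\ast)$.

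\medskip

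\textbf{Key steps, in order.} First I would treat $BCC_1$ and $C^\vee BC_1$ together (statement (1)), since $\alpha_0\in|\Gamma^\w|$ and $\alpha_1\in|\Gamma^\nw|$ in both; the computation mimics Lemma \ref{lemma:9.5}, but one must track the precise value of $I(\alpha_1,\alpha_0^\vee)$ ($-1$ for $BCC_1$, $-2$ for $C^\vee BC_1$) and $I(\alpha_1'{}^\ast$-type data. Using $k(\alpha_1)\mid 2k^\nr(\alpha_1)$ (from applying Lemma \ref{lemma:translation} with $\lambda=\alpha_0$, since $I(\alpha_0,\alpha_1^\vee)$ is as tabulated) one gets the three sub-cases (a), (b), (c) according to the ratio $2k^\nr(\alpha_1)/k(\alpha_0)$, exactly matched to the exponent comparison $m_{\alpha_0}$ vs.\ $m_{\alpha_1}$ via \eqref{data-exponent3}; then \eqref{nr-countings_alpha3}--\eqref{nr-countings_alpha4} convert $k^\nr(\alpha_1)$ and $k(\alpha_1')$ into $k(\alpha_1)$, $k((\alpha_1')^\ast)$, yielding the stated $t$-values and, in sub-case (c), forcing $2\alpha_1\in R$ and $\frac12(\alpha_1')^\ast\in R$ (this last deduction repeats the argument at the end of the proof of Proposition \ref{prop:shape-diagram1} (4) and of Lemma \ref{lemma:9.5} (2): a strict inequality $m_{\alpha_0}>m_{\alpha_1}$ leaves no room unless the half-root exists). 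Second, for $BB_2^\vee$ (statement (2)): part (a), $m_{\alpha_2}=m_{max}$, follows from a direct comparison of $m_{\alpha_0},m_{\alpha_1},m_{\alpha_2}$ using the divisibility $k(\alpha_2)\mid k(\alpha_i)$ and $k(\alpha_i)\mid 2k(\alpha_2)$ coming from $I(\alpha_i,\alpha_2^\vee)=-2$, $I(\alpha_2,\alpha_i^\vee)=-1$, so that $m_{\alpha_i}\le m_{\alpha_2}$ always; parts (b),(c) then split on whether equality holds. Third, for $C^\vee C_1$ (statement (3)): here $|\Gamma^\w|=\emptyset$ and both $\alpha_0,\alpha_1\in|\Gamma^\nw|$ with $I(\alpha_0,\alpha_1^\vee)=I(\alpha_1,\alpha_0^\vee)=-2$; applying Lemma \ref{lemma:translation} in both directions gives $k(\alpha_i)\mid 2k^\nr(\alpha_j)$ and $k(\alpha_j)\mid 2k^\nr(\alpha_i)$, and the exponent comparison together with \eqref{nr-countings_alpha4} pins down the ratio and the $t$-trichotomy in (a), while the strict case (b) again forces $2\alpha_i\in R$ and $\frac12(\alpha_i')^\ast\in R$.

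\medskip

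\textbf{Expected main obstacle.} There is no conceptual difficulty; the challenge is bookkeeping. The delicate points are: (i) making sure, in each strict-inequality sub-case, that the divisibility constraints genuinely exclude all alternatives except the asserted one — this is where one must be careful about the possibility $k(\alpha_j)=4k(\alpha_i)$ versus $2k(\alpha_i)$, as in the proof of Lemma \ref{lemma:9.5} (2), and the elimination hinges on the identity $(\alpha_j')^\ast = 2\alpha_j^\ast$ iff $\frac12(\alpha_j')^\ast\in R$ from Corollary \ref{cor:nr-countings1}'s Claim; (ii) keeping the relation between $k(\alpha_j')$, $k((\alpha_j')^\ast)$ (equal, by \eqref{k=kast}), and $k^\nr(\alpha_j)$ straight in each of the four cases of Proposition \ref{prop:countings-prime}; and (iii) in $BB_2^\vee$ part (b), correctly deriving $k(\alpha_j)=2k(\alpha_2)$ for the node $\alpha_j$ with $m_{\alpha_j}<m_{max}$ — this is the analogue of Lemma \ref{lemma:9.5} (1) with the rank-one incidence numbers. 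I would organize the write-up with one displayed arithmetic derivation per sub-case, invoking \eqref{data-exponent3} and Lemma \ref{lemma:countings1} (2), Lemma \ref{lemma:translation}, Proposition \ref{prop:countings-prime}, and the equivalences \eqref{nr-countings-cor1} as needed, and leave the most routine verifications (e.g.\ that $2\alpha_2\in R$ in case (2)(b)) to a one-line reference back to the method of Proposition \ref{prop:shape-diagram1} (4).
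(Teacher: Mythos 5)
Your proposal is correct and follows essentially the same route as the paper: case-by-case arithmetic combining the exponent data \eqref{data-exponent3}, the divisibility constraints from Lemma \ref{lemma:countings1} (2), the translation formula of Lemma \ref{lemma:translation} to rule out the extremal ratios via minimality of counting numbers, and Corollary \ref{cor:nr-countings1}/Proposition \ref{prop:countings-prime} to convert between $k^{\nr}$ and $k$; the paper likewise writes out only $BCC_1$ and $BB_2^\vee$ in detail and leaves $C^\vee BC_1$ and $C^\vee C_1$ to analogous computations. Only a small slip: the divisibility you extract from Lemma \ref{lemma:translation} with $\lambda=\alpha_0$ should read $k(\alpha_0)\mid I(\alpha_0,\alpha_1^\vee)\,k^{\nr}(\alpha_1)$, not $k(\alpha_1)\mid 2k^{\nr}(\alpha_1)$, but the comparison of $2k^{\nr}(\alpha_1)$ with $k(\alpha_0)$ that you then use is the correct one.
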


\begin{proof}
(1) Since the statements for $C^\vee BC_1$ are obtained by a similar 
way, we give only the proof for $BCC_1$. By \eqref{data-exponent3} and Corollary 
\ref{cor:nr-countings1}, we have
\begin{equation}\label{exponents-BCC1}
m_{\alpha_0}\lesseqqgtr m_{\alpha_1}\quad \Longleftrightarrow\quad
k(\alpha_1)/k(\alpha_0)\lesseqqgtr \begin{cases}
1 & \text{if }\tfrac12 (\alpha_1')^\ast\not\in R,\\
\tfrac12 & \text{if }\tfrac12 (\alpha_1')^\ast\in R.
\end{cases}
\end{equation}
On the other hand, by Lemma \ref{lemma:countings1} (2), we have
\[\begin{array}{lllll}
k(\alpha_0)| I(\alpha_0,\alpha_1^\vee)k(\alpha_1) & \Longleftrightarrow & 
k(\alpha_0) | 4k(\alpha_1),\\[3pt]
k(\alpha_1)| I(\alpha_1,\alpha_0^\vee)k(\alpha_0) & \Longleftrightarrow & 
k(\alpha_1)|k(\alpha_0).
\end{array}\]
Therefore, 
\begin{equation}\label{ratio_countings-BCC1}
\text{the number $k(\alpha_1)/k(\alpha_0)$ should be equal to $1,1/2$ or $1/4$.} 
\end{equation}
\vskip 3mm
\noindent
(a) Assume $m_{\alpha_0}=m_{\alpha_1}$. As $I(\alpha_1,\alpha_0^\vee)=-1$, 
statement (a) is obtained by \eqref{exponents-BCC1}. 
\vskip 3mm
\noindent
(b) Assume $m_{\alpha_0}<m_{\alpha_1}$. 
\vskip 1mm
\noindent
i) If $\tfrac12 (\alpha_1')^\ast\not\in R$, we have $k(\alpha_1)/k(\alpha_0)=1/2$ or 
$1/4$ by \eqref{exponents-BCC1} and \eqref{ratio_countings-BCC1}. 
Assume $k(\alpha_1)/k(\alpha_0)=1/4$. 
By Lemma \ref{lemma:translation}, we have
\begin{align*}
r_{\alpha_1}r_{(\alpha_1')^\ast}(\alpha_0)
&=\alpha_0-I(\alpha_0,\alpha_1^\vee)k^{nr}(\alpha_1)a\\
&=\alpha_0+2k(\alpha_1)a\quad (\because\ \tfrac12(\alpha_1')^\ast\not\in R)\\
&=\alpha_0+\tfrac12 k(\alpha_0)a\quad (\because\ k(\alpha_1)/k(\alpha_0)=1/4).
\end{align*}
Since $r_{\alpha_1}r_{(\alpha_1')^\ast}(\alpha_0)$
is a root, the above formula says that $\tfrac12 k(\alpha_0)a$ belongs to the 
counting set $K_G(\alpha_0)$. This contradicts the minimality of $k(\alpha_0)$.
Thus, the number $k(\alpha_1)/k(\alpha_0)$ should be equal to $1/2$. 
\vskip 1mm
\noindent
ii) If $\tfrac12 (\alpha_1')^\ast\in R$, the number $k(\alpha_1)/k(\alpha_0)$ 
should be equal to $1/4$ by \eqref{exponents-BCC1} and
\eqref{ratio_countings-BCC1}, as desired.
\vskip 3mm
\noindent
(c) Assume $m_{\alpha_0}>m_{\alpha_1}$. By \eqref{ratio_countings-BCC1}, we have
$\tfrac12 (\alpha_1')^\ast\in R$ and $k(\alpha_1)/k(\alpha_0)=1$. By Lemma 
\ref{lemma:translation}, we have
\begin{align*}
r_{\alpha_0}r_{(\alpha_0')^\ast}\big((\alpha_1')^\ast\big)
&=(\alpha_1')^\ast-I\big((\alpha_1')^\ast,\alpha_0^\vee\big)k(\alpha_0)a\\
&=(\alpha_1')^\ast+2k(\alpha_0)a\quad (\because\ 2\overline{\alpha_0}\not\in R/G)\\
&=(\alpha_1')^\ast+2 k(\alpha_1)a\quad (\because\ k(\alpha_1)/k(\alpha_0)=1).
\end{align*}
Assume $2\alpha_1\not\in R$. Then, by Proposition \ref{prop:countings-prime}, we
get  $k\big((\alpha_1')^\ast\big)=4k(\alpha_1)$. Substituting it to the above formula, 
it follows that
\[ r_{\alpha_0}r_{(\alpha_0')^\ast}
\big((\alpha_1')^\ast\big)=(\alpha_1')^\ast+\tfrac12 k\big((\alpha_1')^\ast\big)a\]
is a root. This contradicts to the minimality of the counting number 
$k\big((\alpha_1')^\ast\big)$. Thus, the element $2\alpha_1$ should 
{belong} to $R$, 
and we completed  the proof of statement (1).
\vskip 3mm
\noindent
(2) 
Similar to the previous case, we have
\begin{equation}\label{exponents-BB2}
m_{\alpha_i}\lesseqqgtr m_{\alpha_2}\quad \Longleftrightarrow\quad
k(\alpha_2)/k(\alpha_i)\lesseqqgtr \begin{cases}
2 & \text{if }\tfrac12 (\alpha_2')^\ast\not\in R,\\
1 & \text{if }\tfrac12 (\alpha_2')^\ast\in R,
\end{cases}
\quad \text{for }i=0,1,
\end{equation}
by \eqref{data-exponent3} and Corollary \ref{cor:nr-countings1}, and 
\begin{equation}\label{ratio_countings-BB2}
\text{the number $k(\alpha_2)/k(\alpha_i)$ should be equal to $1$ or $1/2$,} 
\end{equation}
by Lemma \ref{lemma:countings1} (2). 
\vskip 3mm
\noindent
(a) Assume $m_{\alpha_i}>m_{\alpha_2}$. 
By \eqref{exponents-BB2}, we have $k(\alpha_2)>k(\alpha_i)$. However, this 
contradicts \eqref{ratio_countings-BB2}. Thus, it follows that 
$m_{\alpha_i}\leq m_{\alpha_2}$ and statement (a) holds. 
\vskip 3mm
\noindent
(b) Assume $m_{\alpha_i}=m_{max}\ (=m_{\alpha_2})$. 
If $\tfrac12 (\alpha_2')^\ast\not\in R$, 
we have $k(\alpha_2)=2k(\alpha_i)$ by \eqref{exponents-BB2}. Therefore, it follows
from Lemma \ref{lemma:translation} that
\[ r_{\alpha_i}r_{(\alpha_i')^\ast}
(\alpha_2)=\alpha_2-I(\alpha_2,\alpha_i^\vee)k^{nr}(\alpha_i)a=\alpha_2+
\tfrac12 k(\alpha_2)a.\]
This contradicts the minimality of $k(\alpha_2)$. Therefore, 
$\tfrac12 (\alpha_2')^\ast\in R$ and $k(\alpha_2)=k(\alpha_i)$. Hence, we have
\[ r_{\alpha_i}r_{(\alpha_i')^\ast}\big((\alpha_2')^\ast\big)
=(\alpha_2')^\ast-I\big((\alpha_2')^\ast,\alpha_i^\vee\big)k^{nr}(\alpha_i)a
=(\alpha_2')^\ast+ 2k(\alpha_2)a.\]
Assume $2\alpha_2\not\in R$. 
Then, one has $k\big((\alpha_2')^\ast\big)=4k(\alpha_2)$ by 
Proposition \ref{prop:countings-prime}. Substituting this to the above equality, one has
\[ r_{\alpha_i}r_{(\alpha_i')^\ast}
\big((\alpha_2')^\ast\big)=(\alpha_2')^\ast+ \tfrac12 k\big((\alpha_2')^\ast\big)a.\]
This contradicts to the minimality of $k\big((\alpha_2')^\ast\big)$, 
and we have $2\alpha_2\in R$. 

Assume $m_{\alpha_j}<m_{max}$ for $j\in \{0,1\}$ such that $j\ne i$. 
Since $\tfrac12 (\alpha_2')^\ast\in R$, the number $k(\alpha_2)/k(\alpha_j)$ should
be equal to $\tfrac12$ by \eqref{exponents-BB2} and \eqref{ratio_countings-BB2}, 
as desired. 
\vskip 3mm
\noindent
(c) Assume $m_{\alpha_i}<m_{max}$ for $i=0,1$. 
\vskip 1mm
\noindent
i) If $\tfrac12 (\alpha_2')^\ast\not\in R$, we have $k(\alpha_2)/k(\alpha_i)=1/2$ or 
$1$ by \eqref{exponents-BB2} and \eqref{ratio_countings-BB2}. 
Furthermore, it follows from Corollary \ref{cor:nr-countings1} that 
$k^{nr}(\alpha_2)=\frac12 k(\alpha_2)$. 
Assume $k(\alpha_2)/k(\alpha_i)=1/2$. By Lemma \ref{lemma:translation}, we have 
\begin{align*}
r_{\alpha_2}r_{(\alpha_2')^\ast}(\alpha_i)
&=\alpha_i-I(\alpha_i,\alpha_2^\vee)k^{nr}(\alpha_2)a
=\alpha_i+k(\alpha_2)a
=\alpha_i+\frac12 k(\alpha_i)a.
\end{align*}
Since $r_{\alpha_2}r_{(\alpha_2')^\ast}(\alpha_i)\in R$, the above formula 
contradicts 
the minimality of $k(\alpha_i)$. Therefore, we get  $k(\alpha_2)/k(\alpha_i)=1$.
\vskip 1mm
\noindent
ii) If $\tfrac12 (\alpha_2')^\ast\in R$, we have $k(\alpha_2)/k(\alpha_i)=1/2$  by 
\eqref{exponents-BB2} and \eqref{ratio_countings-BB2}. 
\vskip 1mm
\noindent
Thus, we proved the statement.
\vskip 3mm
\noindent
(3) We have
\begin{equation}\label{exponents-CC1}
m_{\alpha_i}\lesseqqgtr m_{\alpha_j}\quad \Longleftrightarrow\quad
k(\alpha_j)/k(\alpha_i)\lesseqqgtr 
\begin{cases}
2 & \text{if }\tfrac12 (\alpha_i')^\ast \in R \text{ and }
\tfrac12 (\alpha_j')^\ast \not\in R,\\
\frac12 &\text{if }\tfrac12 (\alpha_i')^\ast \not\in R \text{ and }
\tfrac12 (\alpha_j')^\ast \in R,\\
 1 & \text{otherwise},
\end{cases}
\end{equation}
by \eqref{data-exponent3} and Corollary \ref{cor:nr-countings1}, and 
\begin{equation}\label{ratio_countings-CC1}
\text{the number $k(\alpha_j)/k(\alpha_i)$ should be equal to $2,1$ or $1/2$,} 
\end{equation}
by Lemma \ref{lemma:countings1} (2). By using \eqref{exponents-CC1} and 
\eqref{ratio_countings-CC1}, one has the statements by a similar argument to
the previous cases. We leave a detailed proof to the reader.
\end{proof}}

\begin{rem} 
Since there is a symmetry of exchanging $\alpha_0$ with $\alpha_1$ when
$R/G$ is of type $BB^\vee_2$ or $C^\vee C_1$, we may assume that 
$m_{\alpha_0}\leq m_{\alpha_1}$ for these cases.  
\end{rem}
\subsection{Classification in terms of diagrams}\label{sect:list_of_diagrams}
Let us translate the results of Proposition \ref{prop:shape-diagram1} and 
Proposition \ref{prop:shape-diagram2} into the 
language of the diagram $\Gamma^\downarrow$.  
We say $\alpha_i\in |\Gamma^\downarrow|$ is a 
\textbf{collapsed node}\index[index]{node!collaped@collapsed --} if 
$(\alpha_i')^\ast\not\in |\Gamma(R,G)|$. 
We note that this condition is equivalent to $m_{\alpha_i}<m_{max}$. 

Let $(R,G)$ be a mERS whose quotient root system $R/G$ is of type 
$BCC_l \,(l\geq 1),\, C^\vee BC_l\,(l\geq 1),\, BB^\vee_l\,(l\geq 2)$ or 
$C^\vee C_l\,(l\geq 1)$. A first observation obtained
from these propositions is 
\begin{itemize}
\item[(a)] {\it for every $\alpha_i\in |\Gamma^\w|_o$, we have 
$m_{\alpha_i}=m_{max}$. That is, $|\Gamma^\w|_o$ is a subset of 
$|\Gamma^\w_m|$.}
\end{itemize}
Since $|\Gamma^\w|_o=|\Gamma^\w|$ for the case when $R/G$ is of type 
$C^\vee C_l\,(l\geq 2)$, we have $|\Gamma^\w|_o=|\Gamma^\w_m|$. 
Furthermore, Proposition \ref{prop:shape-diagram1} (2) says that, if $R/G$ is of
type $BB^\vee_l\,(l\geq 3)$, a node $\alpha_j\in |\Gamma^\w|\setminus 
|\Gamma^\w|_o$ is a collapsed node. Therefore, we get 
$|\Gamma^\w|_o=|\Gamma^\w_m|$ in this case. 

The second observation is
\begin{itemize}
\item[(b)] {\it if $\alpha_j\in |\Gamma^\nw|$ is a collapsed node,
it is a black node.}
\end{itemize}
This is nothing but statement (4) of Proposition \ref{prop:shape-diagram1}, 
 statements (1) (c) and (3) (b) of Proposition \ref{prop:shape-diagram2}.  
 when $R/G$ is of type $BB^\vee_2$ is exceptional. In this case, we
observe that 
\begin{itemize}
\item[(c-1)] {\it a colored node $\alpha_2\in |\Gamma^\nw|$ is 
not a collapsed node for any case, and}
\vskip 1mm
\item[(c-2)] {\it if there is a non-collapsed node $\alpha_i\in |\Gamma^\w|$, every
colored node in $|\Gamma^\downarrow|$ is a black node.}
\end{itemize}
\vskip 3mm

Compiling the above results,  we have a ``possible list'' of 
the elliptic diagrams for mERSs with  non-reduced affine quotients.  
Furthermore, the counting numbers 
$k(\alpha)\ (\alpha\in \Pi^{\dc}
)$
are uniquely determined by the condition 
$\mathrm{g.c.d.}\{k(\alpha)\, |\, \alpha\in \Pi^{\dc}\}=1$ 
(Corollary \ref{cor:countings-gcd}). By Proposition \ref{prop:countings-prime},
$k((\alpha')^\ast)\ ((\alpha')^\ast\in (\Pi^{\mc})^\ast)$ are determined also. 
The results are exhibited in the following tables.
\vskip 5mm
\noindent
{\bf 1. The case when  $R/G$ is of type $BCC_l\, (l\geq 1)$, 
$C^\vee BC_l\ (l\geq 1)$ or $BB_l^\vee\ (l\geq 3)$.}
In this case, 
\[
\begin{aligned}
|\Gamma^\w|_o&=\begin{cases}
\{\alpha_1,\ldots,\alpha_{l-1}\} & \text{if $R/G$ is of type 
$BCC_l\, (l\geq 1)$ or $C^\vee BC_l\ (l\geq 1)$,}\\
\{\alpha_2,\ldots,\alpha_{l-1}\} & \text{if $R/G$ is of type 
$BB_l^\vee\ (l\geq 3)$,}
\end{cases}\\
|\Gamma^\w|&=\{\alpha_0,\ldots,\alpha_{l-1}\}\quad\text{and}\quad
|\Gamma_m^\downarrow|=|\Gamma^\w|_o\amalg \left\{
\begin{array}{c}
\text{non-collapsed nodes}\\
\text{in the following tables}
\end{array}
\right\}.
\end{aligned}\]
In the following tables, we exhibit (a) the color of $\alpha_l${\rm ;}
(b) the color of $(\alpha_l')^\ast${\rm ;}
(c) $k(\alpha_j)$ for $\alpha_j\in |\Gamma^\w|\setminus |\Gamma^\w|_o${\rm ;}
(d) $k(\alpha_i)$ for $\alpha_i\in |\Gamma^\w|_o${\rm ;}
(e) $k(\alpha_l)${\rm ;}
(f) $k((\alpha_l')^\ast)$.

\begin{center}
\begin{table}[h]
\caption{ $BCC_l\ (l\geq 1)$}
\label{table:BCC}
\vskip -2mm
\resizebox{.95\textwidth}{!}{%
\renewcommand{\arraystretch}{1.3}
\begin{tabular}{|c||c|c||c|c||c|c|c||c|}
\noalign{\hrule height0.8pt}
No. & $|\Gamma^\w|\setminus |\Gamma^\w|_o$ &
$|\Gamma^\nw|$ & (a) & (b) &(c) & 
(d) & (e) & (f) \\
\noalign{\hrule height0.8pt}
\hline
1 & $\alpha_0$ (not collapsed) & $\alpha_l$\ (not collapsed) &
g & g & $1$ & $1$ & $1$ & $2$\\
\cline{1-1}\cline{4-9}
2 & 
& & g& b& $2$ & $2$ & $1$ & $4$\\
\cline{1-1}\cline{4-9}
3 & 
& & b & g& $1$ & $1$ & $1$ & $1$\\
\cline{1-1}\cline{4-9}
4 & 
& & b & b & $2$ & $2$ & $1$ & $2$\\
\cline{1-1}\cline{3-9}
5 & & $\alpha_l$\ (collapsed)&b& $\times$ & $1$ & $1$ & $1$ & $2$\\
\hline
6 & $\alpha_0$ (collapsed) & $\alpha_l$\ (not collapsed) & g & g&
$2$ & $1$ & $1$ & $2$
\\
\cline{1-1}\cline{4-9}
7 & 
& & g & b & $4$ & $2$ & $1$ & $4$\\
\cline{1-1}\cline{4-9}
8 & 
& & b & g & $2$ & $1$ & $1$ & $1$\\
\cline{1-1}\cline{4-9}
9 & 
& & b & b & $4$ & $2$ & $1$ & $2$\\
\cline{1-1}\cline{3-9}
10 & & $\alpha_l$\ (collapsed)&b&$\times$& $2$ & $1$ & $1$ & $2$\\
\hline
\noalign{\hrule height0.8pt}
\end{tabular}}
\end{table}
\end{center}

\begin{center}
\begin{table}[h]
\caption{ $C^\vee BC_l\ (l\geq 1)$}
\label{table:CBC}
\vskip -2mm
\resizebox{.95\textwidth}{!}{%
\renewcommand{\arraystretch}{1.3}
\begin{tabular}{|c||c|c||c|c||c|c|c||c|}
\noalign{\hrule height0.8pt}
No. &  $|\Gamma^\w|\setminus |\Gamma^\w|_o$ &
$|\Gamma^\nw|$ & (a) & (b) &(c) & 
(d) & (e) & (f) \\
\noalign{\hrule height0.8pt}
\hline
1 & $\alpha_0$ (not collapsed) & $\alpha_l$\ (not collapsed) &
g & g & $1$ & $2$ & $2$ & $4$\\
\cline{1-1}\cline{4-9}
2 & 
& & g & b & $1$ & $2$ & $1$ & $4$\\
\cline{1-1}\cline{4-9}
3 & 
& & b & g & $1$ & $2$ & $2$ & $2$\\
\cline{1-1}\cline{4-9}
4 & 
& & b & b & $1$ & $2$ & $1$ & $2$\\
\cline{1-1}\cline{3-9}
5 & & $\alpha_l$\ (collapsed)&b& $\times$ & $1$ & $2$ & $2$ & $4$\\
\hline
6 & $\alpha_0$ (collapsed) & $\alpha_l$\ (not collapsed) & g & g&
$1$ & $1$ & $1$ & $2$
\\
\cline{1-1}\cline{4-9}
7 & 
& & g&b& $2$ & $2$ & $1$ & $4$\\
\cline{1-1}\cline{4-9}
8 & 
& & b & g & $1$ & $1$ & $1$ & $1$\\
\cline{1-1}\cline{4-9}
9 & 
& & b & b & $2$ & $2$ & $1$ & $2$\\
\cline{1-1}\cline{3-9}
10 & & $\alpha_l$\ (collapsed)&b&$\times$& $1$ & $1$ & $1$ & $2$\\
\hline
\noalign{\hrule height0.8pt}
\end{tabular}}
\end{table}
\end{center}

\begin{center}
\begin{table}[h]
\caption{ $BB^\vee_l\ (l\geq 3)$}
\label{table:BB-general}
\vskip -2mm
\resizebox{.95\textwidth}{!}{%
\renewcommand{\arraystretch}{1.3}
\begin{tabular}{|c||c|c||c|c||c|c|c||c|}
\noalign{\hrule height0.8pt}
No. & $|\Gamma^\w|\setminus |\Gamma^\w|_o$ &
$|\Gamma^\nw|$ & (a) & (b) &(c) & 
(d) & (e) & (f) \\
\noalign{\hrule height0.8pt}
\hline
1 & $\alpha_0\, \text{(collapsed)}$
 & $\alpha_l$\ (not collapsed) &
g & g & $1$ & $1$ & $1$ & $2$\\
\cline{1-1}\cline{4-9}
2 & $\alpha_1\, \text{(collapsed)}$ & 
& g & b & $2$ & $2$ & $1$ & $4$\\
\cline{1-1}\cline{4-9}
3 & & 
& b & g & $1$ & $1$ & $1$ & $1$\\
\cline{1-1}\cline{4-9}
4 & 
& & b & b & $2$ & $2$ & $1$ & $2$\\
\cline{1-1}\cline{3-9}
5 & & $\alpha_l$\ (collapsed)&b& $\times$ & $1$ & $1$ & $1$ & $2$\\
\hline
\noalign{\hrule height0.8pt}
\end{tabular}}
\end{table}
\end{center}
\newpage

\noindent
{\bf 2. The case when $R/G$ is of type $BB^\vee_2$.}  
In this case, 
\[|\Gamma^\w|_o=\emptyset\quad\text{and}\quad
|\Gamma_m^\downarrow|=\left\{
\begin{array}{c}
\text{non-collapsed nodes}\\
\text{in the following table}
\end{array}
\right\}.\]
Here, we exhibit (a) the color of $\alpha_2${\rm ;}
(b) the color of $(\alpha_2')^\ast${\rm ;}
(c) $k(\alpha_0)${\rm ;}
(d) $k(\alpha_1)${\rm ;}
(e) $k(\alpha_2)${\rm ;}
(f) $k((\alpha_2')^\ast)$.

\begin{center}
\begin{table}[h]
\caption{ $BB^\vee_2$}
\label{table:BB2}
\vskip -2mm
\resizebox{.95\textwidth}{!}{%
\renewcommand{\arraystretch}{1.3}
\begin{tabular}{|c||c|c||c|c||c|c|c||c|}
\noalign{\hrule height0.8pt}
No. & $|\Gamma^\w|$ &
$|\Gamma^\nw|$ & (a) & (b) &(c) & 
(d) & (e) & (f) \\
\noalign{\hrule height0.8pt}
\hline
1 & $\begin{array}{ll}
\alpha_0\, \text{(not collapsed)}\\
\alpha_1\, \text{(not collapsed)}
\end{array}$
& $\alpha_2$\ (not collapsed)
& b & b & $1$ & $1$ & $1$ & $2$\\
\cline{1-2}\cline{4-9}
2 & $\begin{array}{ll}
\alpha_0\, \text{(collapsed)}\\
\alpha_1\, \text{(not collapsed)}
\end{array}$
& 
& b & b & $2$ & $1$ & $1$ & $2$\\
\cline{1-2}\cline{4-9}
3 & $\begin{array}{ll}
\alpha_0\, \text{(collapsed)}\\
\end{array}$
& 
& g & g & $1$ & $1$ & $1$ & $2$\\
\cline{1-1}\cline{4-9}
4 & $\alpha_1$ (collapsed)
& 
& g & b & $2$ & $2$ & $1$ & $4$\\
\cline{1-1}\cline{4-9}
5 & 
& & b & g & $1$ & $1$ & $1$ & $1$\\
\cline{1-1}\cline{4-9}
6 & 
& & b & b & $2$ & $2$ & $1$ & $2$\\
\hline
\noalign{\hrule height0.8pt}
\end{tabular}}
\end{table}
\end{center}
\vskip -5mm
\noindent
{\bf 3. The case when $R/G$ is of type $C^\vee C_l\, (l\geq 1)$.}
In this case 
\[|\Gamma^\w|=|\Gamma^\w|_o=\{\alpha_1,\ldots,\alpha_{l-1}\}\quad\text{and}
\quad |\Gamma_m^\downarrow|=|\Gamma^\w|\amalg\left\{
\begin{array}{c}
\text{non-collapsed nodes}\\
\text{in the following table}
\end{array}
\right\}.\]
Here, we exhibit $\text{(a)}_j$ the color of $\alpha_j${\rm ;}
$\text{(b)}_j$ the color of $(\alpha_j')^\ast$ $(j=0,l)$ {\rm ;}
(c) $k(\alpha_0)${\rm ;}
(d) $k((\alpha_0')^\ast)${\rm ;}
(e) $k(\alpha_i)$ for $\alpha_i\in |\Gamma^\w|${\rm ;}
(f) $k(\alpha_l)${\rm ;}
(g) $k((\alpha_l')^\ast)$.
\vspace{-1mm}
\begin{center}
\begin{table}[h]
\caption{ $C^\vee C_l\ (l\geq 1)$}
\label{table:CC}
\vskip -2mm
\resizebox{.90\textwidth}{!}{%
\renewcommand{\arraystretch}{1.3}
\begin{tabular}{|c||c|c||c|c||c|c||c|c||c||c|c|}
\noalign{\hrule height0.8pt}
No. & $\alpha_0$ &
$\alpha_l$ & $\text{(a)}_0$ & $\text{(b)}_0$ &
$\text{(a)}_l$ & $\text{(b)}_l$ & (c) & 
(d) & (e) & (f) & (g)\\
\noalign{\hrule height0.8pt}
\hline
1 & not collapsed & not collapsed &
g & g & g & g & $1$ & $2$ & $1$ & $1$ & $2$\\
\cline{1-1}\cline{4-12}
2 & &
& g & g & g& b& $2$ & $4$ & $2$ & $1$ & $4$\\
\cline{1-1}\cline{4-12}
3 & 
& & g & g & b& g& $1$ & $2$ & $1$ & $1$ & $1$\\
\cline{1-1}\cline{4-12}
4 & 
& & g & g & b& b& $2$ & $4$ & $2$ & $1$ & $2$\\
\cline{1-1}\cline{4-12}
5 & 
& & g & b & g& b& $1$ & $4$ & $2$ & $1$ & $4$\\
\cline{1-1}\cline{4-12}
6 & 
& & g& b & b & g& $1$ & $4$ & $2$ & $2$ & $2$\\
\cline{1-1}\cline{4-12}
7 & 
& & g & b & b & b & $1$ & $4$ & $2$ & $1$ & $2$\\
\cline{1-1}\cline{4-12}
8 & 
& & b & g & b & g & $1$ & $1$ & $1$ & $1$ & $1$\\
\cline{1-1}\cline{4-12}
9 & 
& & b & g & b & b & $2$ & $2$ & $2$ & $1$ & $2$\\
\cline{1-1}\cline{4-12}
10 & 
& & b & b & b & b & $1$ & $2$ & $2$ & $1$ & $2$\\
\hline
11 & collapsed  & not collapsed &
b & $\times$ &g & g & $1$ & $2$ & $1$ & $1$ & $2$\\
\cline{1-1}\cline{4-12}
12 & 
& & b & $\times$ & g& b& $2$ & $4$  & $2$ & $1$ & $4$\\
\cline{1-1}\cline{4-12}
13 & 
& & b & $\times$ & b& g& $1$ & $2$ & $1$ & $1$ & $1$\\
\cline{1-1}\cline{4-12}
14 & 
& & b & $\times$ & b& b& $2$ & $4$ & $2$ & $1$ & $2$\\
\cline{1-1}\cline{3-12}
15 & & collapsed & b & $\times$ &b& $\times$ & $1$ & $2$ & $1$ & $1$ & 
$2$\\
\hline
\noalign{\hrule height0.8pt}
\end{tabular}}
\end{table}
\end{center}
\vspace{ -6mm}
\begin{rem}
{\rm (1)} For every case, the following equality holds{\rm :}
\begin{equation*}
k((\alpha_i')^\ast)=k(\alpha_i)\quad \text{for }\alpha_i\in |\Gamma^\w|.
\end{equation*}
Therefore, for every element of $\Pi^{\dc}\cup \Pi^{\mc}$, its counting number is given
in the above tables. 
\vskip 1mm
\noindent
{\rm (2)} In $|\Gamma^{\downarrow}|$, a non-collapsed node should exist. By this condition,
we need to add the constraint $l\geq 2$ in the following three cases{\rm :} 
No.10 in Table \ref{table:BCC}, No. 10 in Table \ref{table:CBC} and 
No. 15 in Table \ref{table:CC}. 
\end{rem}

Now, we have one of the main results of this article. 

\begin{thm}\label{thm:classification-thm}
Let $(R,G)$ be a mERS with the non-reduced quotient $R/G$ and 
$(\Pi^{\dc},\Pi^{\mc})$
a paired simple system of $(R,G)$.  
After a suitable choice of a numbering of the set $\Pi^{\dc}=\{\alpha_0,\ldots,\alpha_l\}$,
its elliptic diagram $\Gamma(R,G)$ is isomorphic to one in the above list. 
\end{thm}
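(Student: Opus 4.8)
The plan is to reduce the statement to the type-by-type bookkeeping set up in the preceding subsubsections. By Proposition \ref{prop:indendence-Gamma} the elliptic diagram $\Gamma(R,G)$ depends only on the isomorphism class of the mERS, so I may fix one paired simple system $(\Pi^{\dc},\Pi^{\mc})$ and one generator $a$ of $Q(R)\cap G$ once and for all. Since $R/G$ is a non-reduced affine root system, it is of type $BCC_l\,(l\ge1)$, $C^\vee BC_l\,(l\ge1)$, $BB_l^\vee\,(l\ge2)$ or $C^\vee C_l\,(l\ge1)$; I choose the numbering $\Pi^{\dc}=\{\alpha_0,\dots,\alpha_l\}$ so that $\Pi^{\dc}_G$ is enumerated as in Appendix \S\ref{sect_non-red-affine}. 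Then the underlying graph $\Gamma^{\downarrow}$, being the Dynkin diagram of $(R/G)^{\dc}$, is completely fixed by the type, and by Definition \ref{defn_elliptic-diagram-II} the elliptic diagram $\Gamma(R,G)$ is obtained from $\Gamma^{\downarrow}$ by adjoining the primed nodes $(\alpha')^\ast$ for those $\alpha\in|\Gamma^{\downarrow}|$ with $m_\alpha=m_{\max}$ and by coloring each node. Hence the theorem amounts to determining exactly which $\alpha_i$ are non-collapsed and the color (white/gray/black) of every node.

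A large part of this data is already pinned down. By Lemma \ref{lemma:color} the white nodes of $\Gamma^{\downarrow}$ are precisely the nodes of $|\Gamma^{\w}|$, whose explicit form depends only on $R/G$; the remaining nodes in $|\Gamma^{\nw}|$ are gray or black, and by Lemma \ref{lemma:color-2} each primed copy $(\alpha_i')^\ast$ has the same color as $\alpha_i$. Moreover, the first observation recorded in \S\ref{sect:list_of_diagrams} gives $m_{\alpha_i}=m_{\max}$ for every ordinary white node, i.e.\ $|\Gamma^{\w}|_o\subset|\Gamma^{\downarrow}_m|$, while Proposition \ref{prop:shape-diagram1}(2) forces the boundary white nodes to collapse when $R/G$ is of type $BB_l^\vee\,(l\ge3)$. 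Consequently the only genuinely free data are: the collapse status of the boundary node $\alpha_0$ (and of $\alpha_1$ when $R/G$ is of type $BB_l^\vee$) and of the distinguished colored node(s) — $\alpha_l$, or $\alpha_0$ and $\alpha_l$ for $C^\vee C_l$, or $\alpha_2$ for $BB_2^\vee$ — together with the gray-vs-black choice for each colored node.

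The core of the argument is then the finite case analysis. For each of the four types I run through all combinations of these free data and test them against the constraints established in Propositions \ref{prop:shape-diagram1} and \ref{prop:shape-diagram2}: a collapsed colored node must be black; the admissible ratios $k(\alpha_i)/k(\alpha_j)$ between neighbouring nodes are constrained by Lemma \ref{lemma:countings1}(2), by the translation identity $r_{\alpha}r_{(\alpha')^\ast}(\lambda)=\lambda-I(\lambda,\alpha^\vee)k^{\nr}(\alpha)a$ of Lemma \ref{lemma:translation}, and by Corollary \ref{cor:nr-countings1}; and these ratios are in turn tied to the exponents $m_{\alpha_i}$. Discarding the combinations that violate some constraint leaves exactly the rows of Tables \ref{table:BCC}, \ref{table:CBC}, \ref{table:BB-general}, \ref{table:BB2} and \ref{table:CC}. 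Finally, for each surviving pattern the numbers $k(\alpha_i)$ ($\alpha_i\in\Pi^{\dc}$) are determined up to a common scalar by the ratio constraints, and that scalar is fixed by $\mathrm{g.c.d.}\{k(\alpha_i)\}=1$ (Corollary \ref{cor:countings-gcd}); the remaining $k((\alpha_i')^\ast)$ are then read off from Proposition \ref{prop:countings-prime}. This simultaneously proves that no diagram outside the tables can occur and records the counting data displayed in the last columns.

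The main obstacle I anticipate is the handling of the low-rank exceptional quotients $BCC_1$, $C^\vee BC_1$, $C^\vee C_1$ and $BB_2^\vee$, where $|\Gamma^{\w}|_o$ (or even $|\Gamma^{\w}|$ itself) is empty and the clean dichotomy of Proposition \ref{prop:shape-diagram1} no longer applies; there one must work instead from the more delicate Proposition \ref{prop:shape-diagram2}, checking directly from the small tables of inner products and from Lemma \ref{lemma:translation} that the claimed constraints on ratios and colors hold, using the symmetry $\alpha_0\leftrightarrow\alpha_1$ for $BB_2^\vee$ and $C^\vee C_1$ to halve the subcases. A secondary point requiring care is to be sure that no hidden relation between \emph{non-adjacent} nodes is being missed; this is guaranteed because, by Proposition \ref{prop:description-R}, every root of $R$ lies in an orbit of the affine Weyl subgroup generated by $\Pi^{\dc}$ acting on $\Pi^{\dc}\cup(\Pi^{\mc})^\ast$, so all required relations follow from the adjacent ones via Lemma \ref{lemma:translation}.
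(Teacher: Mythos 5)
Your proposal follows essentially the same route as the paper: Theorem \ref{thm:classification-thm} is obtained there precisely by compiling Propositions \ref{prop:shape-diagram1} and \ref{prop:shape-diagram2} into the observations of \S\ref{sect:list_of_diagrams}, running the finite case analysis over collapse status and node colors type by type, and pinning down the counting numbers via $\mathrm{g.c.d.}\{k(\alpha_i)\}=1$ (Corollary \ref{cor:countings-gcd}) together with Proposition \ref{prop:countings-prime}. One small caution: Lemma \ref{lemma:color-2} only matches \emph{white} against \emph{non-white} between $\alpha_i$ and $(\alpha_i')^\ast$, not gray against black — so, as your own list of free data correctly reflects, the gray/black choices for a colored node and its starred partner must be treated independently (this is exactly why columns (a) and (b) of the tables can differ).
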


We observe that the diagrams appeared in the list (Tables \ref{table:BCC}
$\sim$ \ref{table:CC}) above are not isomorphic to
each other. Furthermore, by Proposition 
\ref{prop:indendence-Gamma}, we have the following corollary.

\begin{cor}\label{cor:classification-thm}
The isomorphism classes of mERSs with non-reduced affine quotients
are classified by their elliptic diagrams.
\end{cor}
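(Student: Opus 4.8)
The assertion is that two mERSs $(R_1,G_1)$ and $(R_2,G_2)$ with non-reduced affine quotient are isomorphic, as marked root systems, if and only if their elliptic diagrams $\Gamma(R_1,G_1)$ and $\Gamma(R_2,G_2)$ are isomorphic. One implication is precisely Proposition \ref{prop:indendence-Gamma}; the substance is the converse, which I would prove by a reconstruction argument.

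Suppose $\Gamma(R_1,G_1)\cong\Gamma(R_2,G_2)$. By Theorem \ref{thm:classification-thm} each of these is isomorphic to one of the diagrams of Tables \ref{table:BCC}--\ref{table:CC}, and since those diagrams are pairwise non-isomorphic (as observed right after Theorem \ref{thm:classification-thm}), both $\Gamma(R_1,G_1)$ and $\Gamma(R_2,G_2)$ are isomorphic to one and the same listed diagram $\Gamma$. It therefore suffices to recover the isomorphism class of $(R,G)$ from $\Gamma$ alone. From $\Gamma$ I would read off: (i) the type of the non-reduced affine root system $R/G$, determined by the underlying Dynkin diagram $\Gamma^{\downarrow}$ together with the set $|\Gamma^{\w}|$ of white nodes (the same combinatorial datum as in $\S$\ref{sect:list_of_diagrams}), and hence $R/G$ up to isomorphism; (ii) for each $\alpha_i\in\Pi^{\dc}$, which of the mutually exclusive conditions ``$2\alpha_i\in R$'', ``$2\overline{\alpha_i}\in R/G$ but $2\alpha_i\not\in R$'', ``$2\overline{\alpha_i}\not\in R/G$'' holds --- this being exactly the black/gray/white coloring of Definition \ref{defn_elliptic-diagram-II}; (iii) for each surviving starred node $(\alpha_i')^{\ast}$, whether $\tfrac12(\alpha_i')^{\ast}\in R$, again encoded by the color of $(\alpha_i')^{\ast}$ and, for collapsed nodes, by Corollary \ref{cor:nr-countings1} and Proposition \ref{prop:countings-prime}; (iv) all the counting numbers $k(\alpha)$ for $\alpha\in\Pi^{\dc}$ and $k((\alpha')^{\ast})$ for $(\alpha')^{\ast}\in(\Pi^{\mc})^{\ast}$ --- these are listed in columns (c)--(g) of Tables \ref{table:BCC}--\ref{table:CC} and are pinned down by the normalization $\mathrm{g.c.d.}\{k(\alpha)\mid\alpha\in\Pi^{\dc}\}=1$ of Corollary \ref{cor:countings-gcd}.

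Granting this data, the isomorphism is built explicitly. Fix paired simple systems $(\Pi_j^{\dc},\Pi_j^{\mc})$ of $(R_j,G_j)$ and generators $a_j$ of $Q(R_j)\cap G_j$, $j=1,2$; by Proposition \ref{prop:nr-counting-Autom1} the elliptic diagram does not depend on these choices, so a given isomorphism $\Gamma(R_1,G_1)\xrightarrow{\sim}\Gamma(R_2,G_2)$ may be realized by a bijection $\alpha_i\mapsto\beta_i$ of $\Pi_1^{\dc}$ onto $\Pi_2^{\dc}$ matching inner products and exponents, together with $(\alpha_i')^{\ast}\mapsto(\beta_i')^{\ast}$ on the surviving starred nodes. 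Define $\varphi: F_1\to F_2$ by $\varphi(\alpha_i)=\beta_i$ and $\varphi(a_1)=a_2$; this is a linear isomorphism, since $\Pi_j^{\dc}\cup\{a_j\}$ is a basis of $F_j$ (because $F_j=L_{\Pi_j^{\dc}}\oplus G_j$ with $\Pi_j^{\dc}$ spanning $L_{\Pi_j^{\dc}}$), and $\varphi(G_1)=G_2$. Because the null-root coefficients $n_{\alpha}$ attached to $\Pi_j^{\dc}$ are invariants of $R_j/G_j$ and hence match under $\varphi$, Lemma \ref{lemma:delta_b} shows that $\varphi$ carries $\mathrm{rad}_{\Z}(I_1)=\Z\delta_{b,1}\oplus\Z a_1$ onto $\mathrm{rad}_{\Z}(I_2)$; combining this with the matching of $I_R(\alpha,\beta^{\vee})$ (condition (ii) of a diagram isomorphism), $\varphi$ is an isometry up to a positive scalar, so $\varphi r_{\alpha_i}\varphi^{-1}=r_{\beta_i}$ and $\varphi$ conjugates the Weyl group generated by $\Pi_1^{\dc}$ onto the one generated by $\Pi_2^{\dc}$. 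By Proposition \ref{prop:countings-prime} together with the colorings, $\varphi$ sends $(\Pi_1^{\mc}\setminus\Pi_1^{\dc})^{\ast}$ onto $(\Pi_2^{\mc}\setminus\Pi_2^{\dc})^{\ast}$; and since counting numbers are constant on Weyl orbits (Lemma \ref{lemma:countings1}) and agree on $\Pi_j^{\dc}$ and on $(\Pi_j^{\mc}\setminus\Pi_j^{\dc})^{\ast}$, applying $\varphi$ to the presentation \eqref{description-R-2} of $R_1$ orbit by orbit --- using \eqref{counting-set-expform} to follow the starred family through its five possible branches --- yields $\varphi(R_1)=R_2$. Hence $(R_1,G_1)\cong(R_2,G_2)$, which together with Proposition \ref{prop:indendence-Gamma} proves the corollary.

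The main obstacle is the verification that the color of the nodes of $\Gamma(R,G)$ and the collapsed/non-collapsed status of the starred nodes encode \emph{every} binary alternative --- is $2\alpha_i$ a root? is $\tfrac12(\alpha_i')^{\ast}$ a root? --- that selects a branch in \eqref{counting-set-expform}, and that the entries of Tables \ref{table:BCC}--\ref{table:CC} exhaust precisely these alternatives together with the resulting countings; this is a finite case analysis running parallel to the proof of Theorem \ref{thm:classification-thm}, with the low-rank types $BCC_1$, $C^\vee BC_1$, $BB_2^\vee$, $C^\vee C_1$ treated separately. Alternatively --- and this is the route indicated in $\S$\ref{sect_Remarks-classification} --- one can avoid the explicit reconstruction: comparing Theorem \ref{thm:classification-thm} with the explicit constructions of Part \ref{chapter:new-mERS} (Theorems \ref{thm_classification-reduced} and \ref{thm_classification-non-reduced}) shows that each diagram occurring in the tables is realized by one of the mERSs listed there and that distinct listed mERSs have non-isomorphic diagrams, so $(R_1,G_1)$ and $(R_2,G_2)$ are then both isomorphic to the unique listed mERS whose diagram is $\Gamma$.
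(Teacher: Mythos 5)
Your proposal is correct and follows the same route as the paper: the forward implication is Proposition \ref{prop:indendence-Gamma}, and the converse rests on Theorem \ref{thm:classification-thm} together with the fact that the tabulated diagrams are pairwise non-isomorphic and that the diagram (with the counting numbers it forces via Corollary \ref{cor:countings-gcd} and Proposition \ref{prop:countings-prime}) determines $R$ through the presentation \eqref{description-R-2}. The paper leaves this reconstruction implicit in Proposition \ref{prop:description-R} and Tables \ref{table:BCC}--\ref{table:CC}, whereas you spell it out explicitly; the content is the same.
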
 

However, we note that Theorem \ref{thm:classification-thm} and 
Corollary \ref{cor:classification-thm} {\it does not}
give a complete solution for the 
classification problem of mERSs with non-reduced affine quotients. Indeed,  
we {\it have not} proved yet the existence of a mERS whose elliptic diagram
is isomorphic to every diagram in the above list. 
For getting the complete list of such mERSs, we need to solve the ``existence
problem''. We will discuss this problem in $\S$ \ref{sect_Remarks-classification}. \\

Finally, we give some comments on the exponents and
counting numbers. 
\begin{lemma}\label{lemma:observation-exponents}
Let $(R,G)$ be a mERSs with non-reduced affine quotient.
\vskip 1mm
\noindent
{\rm (1)} One has $m_{\alpha_i}=
\frac12 m_{max}$ for every $\alpha_i\in |\Gamma^{\downarrow}\setminus\Gamma_m|$.
\vskip 1mm
\noindent
{\rm (2)} One has $g.c.d.\{k(\alpha_i)\,|\,\alpha_i\in |\Gamma_m^\downarrow|\}=1$.
\end{lemma}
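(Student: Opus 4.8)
The plan is to read off both statements from the explicit tables (Tables \ref{table:BCC}--\ref{table:CC}) together with the formula $m_{\alpha}=\frac{I_R(\alpha,\alpha)}{2k^{\nr}(\alpha)}n_{\alpha}$ and the basic counting inequalities of Lemma \ref{lemma:countings1}. For statement (1), first I would recall that a node $\alpha_i\in|\Gamma^\downarrow|\setminus|\Gamma_m^\downarrow|$ is precisely a collapsed node, i.e. $m_{\alpha_i}<m_{max}$. The cases are governed by the type of $R/G$. For $\alpha_i$ a collapsed \emph{white} node (necessarily of type $C^\vee BC_l$, $BB_l^\vee$, or $BCC_l$ with $\alpha_i=\alpha_0$, or $BB_l^\vee$ with $\alpha_i\in\{\alpha_0,\alpha_1\}$), Proposition \ref{prop:shape-diagram1}(2) and Proposition \ref{prop:shape-diagram2} give $k(\alpha_i)=2k(\alpha_{i_0})$ for the adjacent ordinary white node $\alpha_{i_0}$, with all the relevant $n$'s and $I_R$'s equal; since $m_{\alpha_{i_0}}=m_{max}$ by observation (a) preceding the theorem, this yields $m_{\alpha_i}=\frac12 m_{max}$. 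For $\alpha_i$ a collapsed \emph{colored} node, observation (b) says it is black, so $2\alpha_i\in R$, hence $k^{\nr}(\alpha_i)=k(\alpha_i)$ by Corollary \ref{cor:nr-countings1}; combining the relevant entry of the table with the adjacent-node data (Proposition \ref{prop:shape-diagram1}(3)--(4), Lemma \ref{lemma:9.5}) gives again $m_{\alpha_i}=\frac12 m_{max}$. The exceptional type $BB_2^\vee$ is handled by Proposition \ref{prop:shape-diagram2}(2): there the colored node $\alpha_2$ is never collapsed, so the only collapsed nodes are white $\alpha_0$ (and possibly $\alpha_1$), and the stated $k$-ratios there likewise force the exponent to halve. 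In every line of every table one simply checks $m_{\alpha_i}=\frac12 m_{\alpha_j}$ whenever $\alpha_i$ is collapsed and $\alpha_j$ is not, which is the content of statement (1).

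For statement (2), I would argue as follows. By Corollary \ref{cor:countings-gcd} we already know $\mathrm{g.c.d.}\{k(\alpha)\mid\alpha\in\Pi^{\dc}\}=1$, so it suffices to show that for each collapsed node $\alpha_i$ there is a node $\beta\in|\Gamma_m^\downarrow|$ with $k(\beta)\mid k(\alpha_i)$, which then implies that removing the collapsed nodes does not change the g.c.d. This is visible in the tables: each collapsed white node $\alpha_i$ satisfies $k(\alpha_i)=2k(\alpha_{i_0})$ with $\alpha_{i_0}\in|\Gamma^\w|_o\subset|\Gamma_m^\downarrow|$, and each collapsed colored node $\alpha_l$ (where collapse can occur) has, in the corresponding line, $k(\alpha_l)=1$ (e.g. No.\,5 and No.\,10 of Table \ref{table:BCC}, No.\,5 and No.\,10 of Table \ref{table:CBC}, No.\,15 of Table \ref{table:CC}, and similarly for $BB^\vee$), so $k(\alpha_l)\mid k(\beta)$ trivially. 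Hence the g.c.d.\ over $|\Gamma_m^\downarrow|$ already equals the g.c.d.\ over all of $\Pi^{\dc}$, which is $1$.

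The only mildly delicate point — the ``main obstacle'' such as it is — is that the argument is genuinely case-by-case over the quotient type and the line of the table, so one must make sure the low-rank exceptional situations ($C^\vee C_1$ where $|\Gamma^\w|=\emptyset$, and $BB_2^\vee$ where the colored node is never collapsed) are treated separately rather than subsumed under the generic pattern. In the write-up I would state statement (1) as a uniform check ``$m_{\alpha_i}=\frac12 m_{\alpha_j}$ for $\alpha_i$ collapsed, $\alpha_j$ non-collapsed adjacent'', verify it once using Propositions \ref{prop:shape-diagram1} and \ref{prop:shape-diagram2} in the generic cases, note the $BB_2^\vee$ exception from Proposition \ref{prop:shape-diagram2}(2), and then deduce statement (2) from Corollary \ref{cor:countings-gcd} as above. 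No new calculation beyond consulting the already-established tables is required.
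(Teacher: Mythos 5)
Your proposal takes essentially the same route as the paper: the paper's own proof of this lemma is literally the one-line remark that it is ``an easy observation following from the data in Tables \ref{table:BCC} $\sim$ \ref{table:CC}'', and your plan of verifying $m_{\alpha_i}=\tfrac12 m_{max}$ and the g.c.d.\ condition line by line from those tables (using \eqref{data-exponent2}, \eqref{data-exponent3} and Corollary \ref{cor:nr-countings1} to convert counting numbers into exponents) is exactly that check. Two small inaccuracies in your intermediate commentary are worth repairing before writing it up: the uniform claim that a collapsed white node satisfies $k(\alpha_i)=2k(\alpha_{i_0})$ ``with all the relevant $n$'s and $I_R$'s equal'' fails for type $C^\vee BC_l$, where Table \ref{table:CBC} shows $k(\alpha_0)=k(\alpha_1)$ and the halving of the exponent comes instead from $I_R(\alpha_0,\alpha_0)n_{\alpha_0}=2$ versus $I_R(\alpha_1,\alpha_1)n_{\alpha_1}=4$; and in part (2) your reduction requires, for each collapsed node $\alpha_i$, a non-collapsed $\beta$ with $k(\beta)\mid k(\alpha_i)$, so for a collapsed colored node with $k(\alpha_l)=1$ the observation ``$k(\alpha_l)\mid k(\beta)$'' is the wrong divisibility direction --- what one actually needs (and what the tables do provide in every such row) is a non-collapsed node with counting number $1$.
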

This lemma is an easy observation following from  
Tables \ref{table:BCC} $\sim$ \ref{table:CC}. 



\subsection{Remarks on the classification theorem}
\label{sect_Remarks-classification}
As we already mentioned at the end of $\S$ \ref{sect:list_of_diagrams}, 
for getting the complete list of mERS with non-reduced affine quotients, 
we need to solve the following problem.
\begin{prob}\label{prob:existence}
For every diagram in Tables \ref{table:BCC} $\sim$ \ref{table:CC} 
preceding Theorem \ref{thm:classification-thm}, does there 
exist a mERS whose elliptic diagram is isomorphic to it?
\end{prob}
In conclusion, the answer is YES. 
That is, we have the main theorem of this article.
\begin{thm}\label{thm:main}
Tables \ref{table:BCC} $\sim$ \ref{table:CC} preceding Theorem 
\ref{thm:classification-thm}
give the complete list of isomorphism classes of mERSs with non-reduced affine 
quotients.
\end{thm}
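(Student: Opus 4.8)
The plan is to combine the ``weak classification'' of Part~\ref{chapter:new-mERS} with the ``strong classification'' of $\S$\ref{sect_classification-thm}. By Theorem~\ref{thm:classification-thm}, the elliptic diagram $\Gamma(R,G)$ of any mERS $(R,G)$ with non-reduced affine quotient is isomorphic to one of those in Tables~\ref{table:BCC}--\ref{table:CC}, and by Proposition~\ref{prop:indendence-Gamma} together with Corollary~\ref{cor:classification-thm} the isomorphism class of $(R,G)$ is determined by $\Gamma(R,G)$. Hence the only missing point is the affirmative answer to Problem~\ref{prob:existence}: every diagram occurring in those tables is realized, i.e.\ is the elliptic diagram of some genuine mERS with non-reduced affine quotient. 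Once this is established, the map sending an isomorphism class to its elliptic diagram is a bijection onto the set of diagrams listed in Tables~\ref{table:BCC}--\ref{table:CC}, which is precisely the assertion of the theorem.

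To settle the existence problem I would use the explicit mERSs constructed in Part~\ref{chapter:new-mERS} --- the $6$ reduced series of Theorem~\ref{thm_classification-reduced} and the $34$ series plus one exceptional type of Theorem~\ref{thm_classification-non-reduced} --- as a reservoir of examples, their root data being written out root-by-root in $\S$\ref{sect_New-DATA}. For each of these the computation is mechanical: fix the realization in $F=\bigoplus_{i}\R\vep_i\oplus\R b\oplus\R a$, choose a simple system $\Pi^{\dc}$ of $(R^{\dc},G)$ satisfying \eqref{condition-Pi'}, form the paired simple system $(\Pi^{\dc},\Pi^{\mc})$ via the prime map $(\,\cdot\,)^{pr}$, read off the counting numbers $k(\alpha)$ and $k((\alpha')^\ast)$ from \eqref{counting-number}, compute the non-reduced counting numbers $k^{\nr}(\alpha)$ by Lemma~\ref{lemma:nr-countings1} and Proposition~\ref{prop:nr-countings1}, and finally the exponents $m_\alpha$ by \eqref{defn:exponent}. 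From $\{m_\alpha\}$ one determines $|\Gamma_m^{\downarrow}|$, hence the set of collapsed nodes, and from the rules (Node1')--(Node3') and (Edge0)--(Edge5) the shape and colouring of $\Gamma(R,G)$.

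Carrying this out case by case, organized by the type of the affine quotient ($BCC_l$, $C^\vee BC_l$, $BB^\vee_l$, $C^\vee C_l$) exactly as in Tables~\ref{table:BCC}--\ref{table:CC}, I expect to find that the diagrams of the Part~\ref{chapter:new-mERS} examples exhaust every row of every table: in particular the pairs and triples of mERSs identified in $\S$\ref{sect:isom-root-sys}, together with the marked-level identifications produced by the gluing computations of $\S$\ref{sect_mers-II}, collapse precisely onto single table entries, so that the number of distinct diagrams obtained matches the number of rows. The low-rank cases $R/G$ of type $BCC_1$, $C^\vee BC_1$, $C^\vee C_1$ and $BB^\vee_2$ (handled through the rank-$1$ analysis of $\S$\ref{sect_rank-1} and the exceptional discussion around $BB_2^{\vee(2)\ast}$) must be checked separately, since there collapsed nodes and the extra-triangle phenomena behave differently from the generic rank.

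The main obstacle is the bookkeeping of this verification, compounded by the fact that the prime map, the paired simple system, and hence $\Gamma(R,G)$ depend a priori on auxiliary choices; Proposition~\ref{prop:indendence-Gamma} removes that dependence, but one still has to make compatible choices across all cases to match the tables term by term. A secondary delicate point is the grey/black colouring of the collapsed nodes in $|\Gamma^{\nw}|$ and of the node $(\alpha')^\ast$ in the $BB^\vee_2$ situation, where Proposition~\ref{prop:shape-diagram2} and the observations (b), (c-1), (c-2) of $\S$\ref{sect:list_of_diagrams} have to be applied carefully to avoid either spurious identifications or missed diagrams.
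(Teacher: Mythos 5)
Your proposal coincides with the paper's ``Method 1'': reduce the theorem to the existence problem via Theorem \ref{thm:classification-thm} and Corollary \ref{cor:classification-thm}, then realize every table row by computing the elliptic diagram (paired simple system, counting numbers, non-reduced counting numbers, exponents, colours) of one of the explicit mERSs constructed in Part \ref{chapter:new-mERS} — the paper records the outcome of exactly this case-by-case computation in the correspondence tables of $\S$\ref{sect_Remarks-classification}. One small caveat: the root-system isomorphisms of $\S$\ref{sect:isom-root-sys} relate mERSs with \emph{different} affine quotients and hence land in different tables, so they do not ``collapse onto single table entries'' as you predict — but this does not affect the argument, which only requires each row to be hit by some example.
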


There are at least two ways for proving Theorem \ref{thm:main}.
\begin{itemize}
\item[(i)] Method 1: Comparison with the results of Chapter 2;
\vskip 1mm
\item[(ii)] Method 2: Direct computation;
\end{itemize}
\vskip 3mm
\noindent
(i) In Method 1, we use our classification theorem
of such mERSs obtained in Chapter 2 (see Theorem \ref{thm_classification-reduced}
and Theorem \ref{thm_classification-non-reduced}, in detail). By direct computation,
one can check that there exists a unique mERS $(R,G)$ whose elliptic diagram
is isomorphic to one in Tables \ref{table:BCC} $\sim$ \ref{table:CC}. 
The explicit correspondence is given in the following tables.
\vskip 3mm
\noindent
{\bf 1}. The case when $R/G$ is of type $BCC_l\, (l\geq 1)$.\\[-15pt]
\begin{center}
\begin{table}[h]
\label{table:BCC-corresp}
\resizebox{.98\textwidth}{!}{%
\renewcommand{\arraystretch}{1.3}
\begin{tabular}{|c|c||c|c||c|c|c|c||c|}
\noalign{\hrule height0.8pt}
No. & Name of $(R,G)$ &
No.  & Name of $(R,G)$ & No. & Name of $(R,G)$\\
\noalign{\hrule height0.8pt}
\hline
1 & $BC_l^{(1,1)\ast}\, (l\geq 1)$ & 
5 & $BCC_l^{(1)_{{\ast}_{0'}}}\, (l\geq 1)$ & 
8 & $BCC_l^{(1)_{{\ast}_{0}}}\, (l\geq 1)$\\
\hline
2 & $BCC_l^{(2)_{{\ast}_1}}\, (l\geq 1)$ & 
6 & $BC_l^{(1,2)}\, (l\geq 1)$ & 
9 & $BCC_l^{(2)_{{\ast}_{0}}}\, (l\geq 1)$\\
\hline
3 & $BCC_l^{(1)}\, (l\geq 1)$& 
7 & $BCC_l^{(4)}\, (l\geq 1)$ & 
10 & $BCC_l^{(2)}(1)\, (l\geq 2)$\\
\hline
4 & $BCC_l^{(2)}(2)\, (l\geq 1)$\\
\cline{1-2}
\end{tabular}}
\end{table}
\end{center}
\vskip -5mm
\noindent
{\bf 2}. The case when $R/G$ is of type $C^\vee BC_l\, (l\geq 1)$.\\[-15pt]
\begin{center}
\begin{table}[h]
\label{table:CBC-corresp}
\resizebox{.95\textwidth}{!}{%
\renewcommand{\arraystretch}{1.3}
\begin{tabular}{|c|c||c|c||c|c|c|c||c|}
\noalign{\hrule height0.8pt}
No. & Name of $(R,G)$ &
No.  & Name of $(R,G)$ & No. & Name of $(R,G)$\\
\noalign{\hrule height0.8pt}
\hline
1 & $BC_l^{(4,4)\ast}\, (l\geq 1)$ & 
5 & $C^\vee BC_l^{(4)_{{\ast}_{0'}}}\, (l\geq 1)$ & 
8 & $C^\vee BC_l^{(1)}\, (l\geq 1)$\\
\hline
2 & $C^\vee BC_l^{(4)}\, (l\geq 1)$ & 
6 & $BC_l^{(4,2)}\, (l\geq 1)$ & 
9 & $C^\vee BC_l^{(2)_{{\ast}_{0}}}\, (l\geq 1)$\\
\hline
3 & $C^\vee BC_l^{(2)_{\ast_1}}\, (l\geq 1)$& 
7 & $C^\vee BC_l^{(4)_{\ast_0}}\, (l\geq 1)$ & 
10 & $C^\vee BC_l^{(2)}(1)\, (l\geq 2)$\\
\hline
4 & $C^\vee BC_l^{(2)}(2)\, (l\geq 1)$\\
\cline{1-2}
\end{tabular}}
\end{table}
\end{center}
\vskip -5mm
\noindent
{\bf 3}. The case when $R/G$ is of type $BB_l^\vee\, (l\geq 3)$.\\[-15pt]
\begin{center}
\begin{table}[h]
\label{table:BBl-corresp}
\resizebox{.95\textwidth}{!}{%
\renewcommand{\arraystretch}{1.3}
\begin{tabular}{|c|c||c|c||c|c|c|c||c|}
\noalign{\hrule height0.8pt}
No. & Name of $(R,G)$ &
No.  & Name of $(R,G)$ & No. & Name of $(R,G)$\\
\noalign{\hrule height0.8pt}
\hline
1 & $BC_l^{(2,2)\sigma}(1)\, (l\geq 3)$ & 
3 & $BB_l^{\vee (1)}\, (l\geq 3)$ & 
5 & $BB^{\vee (2)}_l(1)\, (l\geq 3)$\\
\hline
2 & $BB_l^{\vee (4)}\, (l\geq 3)$ & 
4 & $BB_l^{\vee (2)}(2)\, (l\geq 3)$ 
\\
\cline{1-4}
\end{tabular}}
\end{table}
\end{center}
\vskip -5mm
\noindent
{\bf 4}. The case when $R/G$ is of type $BB_2^\vee$.\\[-15pt]
\begin{center}
\begin{table}[h]
\label{table:BB2-corresp}
\resizebox{.95\textwidth}{!}{%
\renewcommand{\arraystretch}{1.3}
\begin{tabular}{|c|c||c|c||c|c|c|c||c|}
\noalign{\hrule height0.8pt}
No. & Name of $(R,G)$ &
No.  & Name of $(R,G)$ & No. & Name of $(R,G)$\\
\noalign{\hrule height0.8pt}
\hline
1 & $BB_2^{\vee (2)}(1)$ & 
3 & $BC_2^{(2,2)\sigma}(1)$ &
5 & $BB_2^{\vee (1)}$\\
\hline
2 & $BB_2^{\vee (2)\ast}$ & 
4 & $BB_2^{\vee (4)}$ &
6 & $BB_2^{\vee (2)}(2)$ \\
\hline
\end{tabular}}
\end{table}
\end{center}
\newpage
\vskip -5mm
\noindent
{\bf 5}. The case when $R/G$ is of type $C^\vee C_l\, (l\geq 1)$.\\[-15pt]
\begin{center}
\begin{table}[h]
\label{table:CC-corresp}
\resizebox{.95\textwidth}{!}{%
\renewcommand{\arraystretch}{1.3}
\begin{tabular}{|c|c||c|c||c|c|c|c||c|}
\noalign{\hrule height0.8pt}
No. & Name of $(R,G)$ &
No.  & Name of $(R,G)$ & No. & Name of $(R,G)$\\
\noalign{\hrule height0.8pt}
\hline
1 & $BC_l^{(2,2)\sigma}(2)\, (l\geq 1)$ & 
6 & $C^\vee C_l^{(2)_{{\ast}_{1}}}\, (l\geq 1)$ & 
11 & $C^\vee C_l^{(2)\diamond}\, (l\geq 1)$\\
\hline
2 & $C^\vee C_l^{(4)_{\ast_1}}\, (l\geq 1)$ & 
7 & $C^\vee C_l^{(2)_{\ast_l}}\, (l\geq 1)$ & 
12 & $C^\vee C_l^{(4)_{{\ast}_{0}}}\, (l\geq 1)$\\
\hline
3 & $C^\vee C_l^{(1)_{\ast_1}}\, (l\geq 1)$& 
8 & $C^\vee C_l^{(1)}\, (l\geq 1)$ & 
13 & $C^\vee C_l^{(1)_{\ast_0}}\, (l\geq 1)$\\
\hline
4 & $C^\vee C_l^{(2)_{\ast_{1'}}}\, (l\geq 1)$ &
9 & $C^\vee C_l^{(2)_{\ast_{s}}}\, (l\geq 1)$&
14 & $C^\vee C_l^{(2)_{\ast_0}}\, (l\geq 1)$\\
\hline
5 & $C^\vee C_l^{(4)}\, (l\geq 1)$ &
10 & $C^\vee C_l^{(2)}(2)\, (l\geq 1)$&
15 & $C^\vee C_l^{(2)}(1)\, (l\geq 2)$\\
\hline
\end{tabular}}
\end{table}
\end{center}
\vskip -5mm
\noindent
(ii) In Method 2, we regard \eqref{description-R3} as the ``definition'' of the set $R$
from the data in Tables \ref{table:BCC} $\sim$ \ref{table:CC} by the following way. 
For example, we explain the case of type $BCC_l$ (Table \ref{table:BCC}).
\begin{itemize}
\item[1.] First, we construct a vector space $F$ and a symmetric bilinear form 
$I_F$ on $F$. Set $F_a=\oplus_{i=0}^l \R \alpha_i$, and let
$I_{F_a}:F_a\times F_a\to\R$ be a symmetric bilinear form on $F_a$ such that 
$\Pi^{\dc}=\{\alpha_0,\ldots,\alpha_l\}$ gives
a simple system of type $BC_l^{(2)}$ (in the sense of \cite{Saito1985}). 
Define $F=F_a \oplus \R a$, and let 
$I_F:F\times F\to \R$ be a symmetric bilinear form on $F$ such that
$I_F|_{F_a\times F_a}=I_{F_a}$ and $\R a\subset \mathrm{rad}(I_F)$.
\item[2.] Let $r_{\alpha_i}:F\to F$ be the reflection attached
to $\alpha_i\in\Pi^{\dc}$, and $\W[\Pi^{\dc}]$ be the group generated by the reflections
$r_{\alpha_i}\ (\alpha_i\in\Pi^{\dc})$.    
\item[3.] From the data in Table \ref{table:BCC}, one can construct the 
corresponding diagrams 
(see $\S$ \ref{sect_root-data-non-reduced-1} below). 
Note that there are ten different diagrams for each 
$l\geq 2$, and nine different diagrams for $l=1$. 
\item[4.] {Substituting} the data on counting numbers in Tables \ref{table:BCC}
to the right hand sides of \eqref{counting-set-expform2}, the subsets $R(\alpha_i),
\, R(\alpha_j)$ and $R((\alpha_j')^\ast)$ of $F$ are obtained. Furthermore, 
substituting these results to the right hand side of \eqref{description-R3}, the
subset $R$ of $F$ is obtained.
\end{itemize}
For each ten (or nine) cases in Table \ref{table:BCC}, one can check that this 
$R$ satisfies the axioms (R1) $\sim$ (R5) in Definition \ref{defn_root-sys} by 
direct computation. Hence, Problem \ref{prob:existence} is solved affirmatively
in the case of type $BCC_l$. For the other cases, one can solve the problem
in a similar way.\\

\begin{rem}\label{rem:reconstruction}{\rm
There is the third way for proving Theorem \ref{thm:main}.
\begin{itemize}
\item[(iii)] Method 3: reconstruction theorem.
\end{itemize}
As we explained above, both in Method 1 and in Method 2, 
Theorem \ref{thm:main} is proved
by case-by-case direct computation. 
On the other hand, in Method 3, we can show the theorem 
in a uniformed way. 

In this method, starting with a given diagram $\Gamma=\Gamma(R,G)$ in Tables 
\ref{table:BCC} $\sim$ \ref{table:CC} preceding Theorem \ref{thm:classification-thm},
we reconstruct the mERS $(R,G)$ as follows.
\begin{itemize}
\item[1.] Consider the real vector space $\widehat{F}$ spanned by all the nodes in 
$\Gamma$ equipped with the symmetric bilinear form $\widehat{I}$ on $\widehat{F}$
defined from the information of the edges in $\Gamma$. 
Note that the dimension of $\widehat{F}$ is nothing but the cardinality of the set 
$|\Gamma|$ of all nodes in $\Gamma$. 
Let $\hat{r}_\alpha\in O(\widehat{F},\widehat{I})$ be the reflection attached
to $\alpha\in |\Gamma|$, $\widehat{W}$ be the group generated by 
$\hat{r}_\alpha\ (\alpha\in |\Gamma|)$ and $\widehat{R}:=\widehat{W}.|\Gamma|$. 
\item[2.] Let $\hat{c}$ be a product of all reflections 
$\hat{r}_\alpha\ (\alpha\in |\Gamma|)$ in a suitable order\footnote{An element
$\hat{c}$ is called a {\it pre-Coxeter element}.}.  Take the Jordan decomposition 
$\hat{c}=SU$ of $\hat{c}$, 
where $S$ is the semisimple part of $\hat{c}$ and $U$ is the unipotent part, 
respectively. Set 
$F_{\Gamma}=\widehat{F}/ \mathrm{Im}(U-\mathrm{id})$ and let $I_\Gamma$
be a bilinear form on $F_\Gamma$ induced from $\widehat{I}$. 
Let $\widehat{G}$ be the subspace of $\widehat{F}$ spanned by vectors 
$\alpha_i^\ast-\alpha_i\ (\alpha_i\in |\Gamma_m^\w|)$ and 
$(\alpha_j')^\ast-2\alpha_j\ (\alpha_j\in |\Gamma_m^\nw|)$. Then, 
the image $G_\Gamma:=\pi_{\mathrm{Im}(U-\mathrm{id})}(\widehat{G})$ is 
a one dimensional subspace of $\mathrm{rad}(I_\Gamma)$. 
\item[3.] 
It can be shown that the image
$R_\Gamma$ of $\widehat{R}$ under the canonical projection 
$\pi_{\mathrm{Im}(U-\mathrm{id})}:\widehat{F}\to F_\Gamma$ 
is an elliptic root system belonging to $(F_\Gamma,G_\Gamma)$.
Furthermore, the pair $(R_\Gamma,G_\Gamma)$ is
a mERS whose elliptic diagram is isomorphic to $(R,G)$.
\end{itemize}
Thus, it is shown that, for every diagram $\Gamma$ in Tables \ref{table:BCC} 
$\sim$ \ref{table:CC} preceding Theorem \ref{thm:classification-thm}, a mERS whose 
elliptic diagram is $\Gamma$, exists, and 
Theorem \ref{thm:main} is proved.

Note that the above method (Method 3) is a non-reduced analogue of the result of 
K. Saito for $(R,G)$ with the reduced affine quotient $R/G$ (\cite{Saito1985}, $\S$ 9), 
and the detail for our case will be explained in our forthcoming paper. 
}
\end{rem}
\vskip 5mm
\section{Structure of reduced pairs}\label{sect_str-red-pair}

Let $(R,G)$ be a mERS with the non-reduced affine quotient $R/G$,
and $(R^{\dc},R^{\mc})$ a reduced pair of $R$. The types of the reduced mERSs
$(R^{\dc},G)$ and $(R^{\mc},G)$ are already given in 
$\S$ \ref{sect_red-pair}. 
On the other hand, the isomorphism classes of mERSs are classified in 
Theorems \ref{thm:classification-red} and \ref{thm:main} by their elliptic diagrams. 
Therefore, we arrive at the following problem.
\begin{prob}
Can we extract  the types of $(R^{\dc},G)$ and $(R^{\mc},G)$ from the elliptic
diagram $\Gamma(R,G)$?
\end{prob}
The aim of this subsection is to solve this problem. 
After detailed study on the structures of $(R^{\dc},G)$ and $(R^{\mc},G)$, we will give
an answer for this problem in $\S$ \ref{sect:ans-Problem2}
(see Proposition \ref{prop:red-pair}).
\subsection{Reduced pair and Elliptic diagram}
Let us rewrite the results of Proposition \ref{prop:description-R} in terms of the elliptic 
diagrams.  
Recall the description \eqref{description-R-2} of $R$ and \eqref{description-nw}:
\begin{equation*}
R=\Big(\bigsqcup_{\gamma\in \W[\Pi^{\dc}].\Pi^{\dc}}(\gamma+\Z k(\gamma)a)\Big)
\bigcup
\Big(\bigsqcup_{\gamma\in \W[\Pi^{\dc}].(\Pi^{\mc}\setminus \Pi^{\dc})^\ast}
(\gamma+\Z k(\gamma)a)\Big),
\end{equation*}
\begin{equation*}
(\Pi^{\mc}\setminus \Pi^{\dc})^\ast
=\{(\alpha')^\ast\,|\,\alpha\in \Pi^{\dc},\, 2\overline{\alpha}\in R/G\},
\end{equation*}
respectively. Hence, we have
\[(\Pi^{\mc}\setminus \Pi^{\dc})^\ast=\{(\alpha_j')^\ast\,|\,\alpha_j\in |\Gamma^\nw|\}.\]
Note that, if $\alpha_j\in |\Gamma^\nw|$ is a collapsed node, 
the node $(\alpha_j')^\ast$ does not appear in $\Gamma(R,G)$. 

For $\alpha_i\in \Pi^{\dc}=|\Gamma^{\downarrow}|$, set
\begin{align*}
R(\alpha_i)=&\bigcup_{w\in \W[\Pi^{\dc}]}(w(\alpha_i)+\Z k(\alpha_i)a), \\
R((\alpha_i')^\ast)=
&\bigcup_{w\in \W[\Pi^{\dc}]}(w((\alpha_i')^\ast)+\Z k((\alpha_i')^\ast)a).
\end{align*}
\index[notations]{r911@$R(\alpha_i)$}
\index[notations]{r912@$R((\alpha_i')^\ast)$}
Thus, we can rewrite \eqref{description-R-2} as 
\begin{equation}\label{description-R3}
R=\Big(\bigcup_{\alpha_i\in |\Gamma^\w|}R(\alpha_i)\Big)
\bigcup \Big(\bigcup_{\alpha_j\in |\Gamma^\nw|}R(\alpha_j)\Big)
\bigcup \Big(\bigcup_{\alpha_j\in |\Gamma^\nw|}R((\alpha_j')^\ast)\Big).
\end{equation}
Set
\begin{align*}
|\Gamma^\nw_m|_g^g & =\{\alpha_j\in |\Gamma^\nw_m|\,|\,
\text{$\alpha_j$ is a gray node, $(\alpha_j')^\ast$ is a gray node}\},\\
|\Gamma^\nw_m|_g^b & =\{\alpha_j\in |\Gamma^\nw_m|\,|\,
\text{$\alpha_j$  is a gray node, $(\alpha_j')^\ast$ is a black node}\},\\
|\Gamma^\nw_m|_b^g & =\{\alpha_j\in |\Gamma^\nw_m|\,|\,
\text{$\alpha_j$ is a black node, $(\alpha_j')^\ast$ is a gray node}\},\\
|\Gamma^\nw_m|_b^b & =\{\alpha_j\in |\Gamma^\nw_m|\,|\,
\text{$\alpha_j$ is a black node, $(\alpha_j')^\ast$ is a black node}\}
\end{align*}
\index[notations]{g911@$\vert\Gamma^\nw_m\vert_g^g$}
\index[notations]{g912@$\vert\Gamma^\nw_m\vert_g^b$}
\index[notations]{g913@$\vert\Gamma^\nw_m\vert_b^g$}
\index[notations]{g914@$\vert\Gamma^\nw_m\vert_b^b$}
and
\begin{align*}
|(\Gamma^{\downarrow}\setminus \Gamma_m^\downarrow)^\nw| & 
=|\Gamma^\nw|\setminus |\Gamma_m^\nw|.
\end{align*}
\index[notations]{g915@$\vert(\Gamma^{\downarrow}\setminus 
\Gamma_m^\downarrow)^\nw\vert$}
By the observation (b) in $\S$ \ref{sect:list_of_diagrams}, we know that
$\alpha_j\in |(\Gamma^{\downarrow}\setminus\Gamma_m)^\nw|$ is a black node.
By using these notations, \eqref{counting-set-expform} can be rewritten as 
\begin{equation}\label{counting-set-expform2}
\begin{split}
& R((\alpha_j')^\ast) \\
=
&\begin{cases}
\bigcup_{w\in \W[\Pi^{\dc}]}
(2w(\alpha_j)+(2\Z+1)k(\alpha_j)a) & \text{if $\alpha_j\in |\Gamma^\nw_m|_g^g$},\\
\bigcup_{w\in \W[\Pi^{\dc}]}
(2w(\alpha_j)+(4\Z+2)k(\alpha_j)a) & \text{if $\alpha_j\in |\Gamma^\nw_m|_g^b$},\\
\bigcup_{w\in \W[\Pi^{\dc}]}
(2w(\alpha_j)+\Z k(\alpha_j)a) & \text{if $\alpha_j\in |\Gamma^\nw_m|_b^g$},\\
\bigcup_{w\in \W[\Pi^{\dc}]}
(2w(\alpha_j)+2\Z k(\alpha_j)a) & \text{if $\alpha_j\in |\Gamma^\nw_m|_b^b$ or 
$|(\Gamma^{\downarrow}\setminus\Gamma_m^\downarrow)^\nw|$}.
\end{cases}
\end{split}
\end{equation}
\vskip 3mm
\begin{prop}\label{prop:description-R'-R''}
Let $(R,G)$ be a mERS with the non-reduced quotient $R/G$ and $(R^{\dc},R^{\mc})$ be
the reduced pair of $R$. We have
\begin{equation}\label{description-R'}
R^{\dc}=\Big(\bigcup_{\alpha_i\in |\Gamma^w|}R^{\dc}(\alpha_i)\Big)
\bigcup \Big(\bigcup_{\alpha_j\in |\Gamma^\nw|}R^{\dc}(\alpha_j)\Big)
\bigcup \Big(\bigcup_{\alpha_j\in |\Gamma^\nw|}R^{\dc}((\alpha_j')^\ast)\Big),
\end{equation}
\begin{equation}\label{description-R''}
R^{\mc}=\Big(\bigcup_{\alpha_i\in |\Gamma^w|}R^{\mc}(\alpha_i)\Big)
\bigcup \Big(\bigcup_{\alpha_j\in |\Gamma^\nw|}R^{\mc}(\alpha_j)\Big)
\bigcup \Big(\bigcup_{\alpha_j\in |\Gamma^\nw|}R^{\mc}((\alpha_j')^\ast)\Big),
\end{equation}
where, for $\alpha_i\in |\Gamma^\w|$ and $\alpha_j\in |\Gamma^\nw|$, 
\[R^{\dc}(\alpha_i):=R(\alpha_i),\quad  R^{\dc}(\alpha_j):=R(\alpha_j),\]
\[R^{\dc}((\alpha_j')^\ast):=\begin{cases}
R((\alpha_j')^\ast) & \text{if $\alpha_j\in |\Gamma_m^\nw|_g^g$},\\
\emptyset & \text{if $\alpha_j\in |\Gamma_m^\nw|_g^b$},\\
\bigcup_{w\in \W[\Pi^{\dc}]}
(2w(\alpha_j)+(2\Z +1)k(\alpha_j)a)  & \text{if $\alpha_j\in |\Gamma_m^\nw|_b^g$},\\
\emptyset & \begin{array}{l}
\!\!\!\text{if $\alpha_j\in |\Gamma_m^\nw|_b^b$}\\ 
\; \text{or $|(\Gamma^{\downarrow}\setminus\Gamma_m^\downarrow)^\nw|$},
\end{array}
\end{cases}\]
\[R^{\mc}(\alpha_i):=R(\alpha_i),\]
\[ 
R^{\mc}(\alpha_j):=\begin{cases}
R(\alpha_j), & \text{if $\alpha_j\in |\Gamma_m^\nw|_g^g$},\\
\bigcup_{w\in \W[\Pi^{\dc}]}
(w(\alpha_j)+2\Z k(\alpha_j)a) & \text{if $\alpha_j\in |\Gamma_m^\nw|_g^b$},\\
\emptyset & 
\text{if $\alpha_j\in |\Gamma^\nw|_b^g$},\\
\emptyset & \begin{array}{l}
\!\!\!\text{if $\alpha_j\in |\Gamma_m^\nw|_b^b$}\\ 
\; \text{or $|(\Gamma^{\downarrow}\setminus\Gamma_m^\downarrow)^\nw|$},
\end{array}
\end{cases}\]
\[R^{\mc}((\alpha_j')^\ast):=R((\alpha_j')^\ast).\]
\end{prop}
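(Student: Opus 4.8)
The plan is to intersect the partition \eqref{description-R3} of $R$ with the defining conditions $\frac12\alpha\notin R$ (for $R^{\dc}$) and $2\alpha\notin R$ (for $R^{\mc}$), one constituent at a time. Writing, for $\star\in\{\alpha_i,\alpha_j,(\alpha_j')^\ast\}$, $R^{\dc}(\star):=\{\alpha\in R(\star)\mid \tfrac12\alpha\notin R\}$ and $R^{\mc}(\star):=\{\alpha\in R(\star)\mid 2\alpha\notin R\}$, the formulas \eqref{description-R'} and \eqref{description-R''} follow immediately from \eqref{description-R3} once each of these sets has been computed; so the proof reduces to a case analysis keyed on the colours of the nodes.

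First I would dispose of the constituents that lie entirely in $R^{\dc}$ or $R^{\mc}$. For $\alpha_i\in|\Gamma^\w|$ the set $\pi_G(R(\alpha_i))$ is the $W(R/G)$-orbit of the middle root $\overline{\alpha_i}$, so for every $\alpha\in R(\alpha_i)$ one has $2\overline{\alpha}\notin R/G$ and $\tfrac12\overline{\alpha}\notin R/G$, whence $2\alpha\notin R$ and $\tfrac12\alpha\notin R$; thus $R^{\dc}(\alpha_i)=R^{\mc}(\alpha_i)=R(\alpha_i)$. For $\alpha_j\in|\Gamma^\nw|$, $\pi_G(R(\alpha_j))$ consists of short roots of $R/G$, so $\tfrac12\overline{\alpha}\notin R/G$ and hence $R^{\dc}(\alpha_j)=R(\alpha_j)$; likewise $\pi_G(R((\alpha_j')^\ast))$ consists of long roots of $R/G$, so $2\overline{\alpha}\notin R/G$ and hence $R^{\mc}((\alpha_j')^\ast)=R((\alpha_j')^\ast)$. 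Here I invoke Lemma \ref{lemma:reduced_pair1} to pass between $R/G$ and $R$.

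The substantive part is to compute $R^{\mc}(\alpha_j)$ and $R^{\dc}((\alpha_j')^\ast)$ for $\alpha_j\in|\Gamma^\nw|$. The central tool is Lemma \ref{lemma:countings1} (1): for any $\beta\in R$ one has $R\cap(\beta+\Z a)=\beta+\Z k(\beta)a$, so for a root of the form $\delta\, w(\alpha_j)+ca$ (with $\delta\in\{1,2\}$ and $w\in\W[\Pi^{\dc}]$) the conditions $2\gamma\in R$, resp. $\tfrac12\gamma\in R$, become congruence conditions on $c$ modulo the relevant counting number. Using that $\alpha_j'=2\alpha_j$ when $\alpha_j$ is black — so that $R\cap(w(2\alpha_j)+\Z a)$ is exactly the $w$-component of $R((\alpha_j')^\ast)$ — together with the explicit values of $k((\alpha_j')^\ast)$ from Proposition \ref{prop:countings-prime} and the explicit shapes of $R(\alpha_j)$ and $R((\alpha_j')^\ast)$ given in \eqref{counting-set-expform}/\eqref{counting-set-expform2}, one then runs through the cases $\alpha_j\in|\Gamma^\nw_m|_g^g$, $|\Gamma^\nw_m|_g^b$, $|\Gamma^\nw_m|_b^g$, $|\Gamma^\nw_m|_b^b$ and the collapsed case $\alpha_j\in|(\Gamma^{\downarrow}\setminus\Gamma_m^\downarrow)^\nw|$. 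In the collapsed case one notes, via Lemma \ref{lemma:9.5} (2), that $\alpha_j$ is black and $\tfrac12(\alpha_j')^\ast\in R$, so it falls under the same computation as $|\Gamma^\nw_m|_b^b$. Each case is a one-line parity check: e.g. for $\alpha_j\in|\Gamma^\nw_m|_g^g$ every element $2w(\alpha_j)+(2\Z+1)k(\alpha_j)a$ of $R((\alpha_j')^\ast)$ has $\tfrac12$-translate $w(\alpha_j)+\tfrac12(2\Z+1)k(\alpha_j)a$, which is never a root since its $a$-coefficient is an odd multiple of $\tfrac12 k(\alpha_j)$, giving $R^{\dc}((\alpha_j')^\ast)=R((\alpha_j')^\ast)$; whereas for $\alpha_j\in|\Gamma^\nw_m|_g^b$ the coefficients $(4\Z+2)k(\alpha_j)$ all lie in $2\Z k(\alpha_j)$, so every $\tfrac12$-translate is a root and $R^{\dc}((\alpha_j')^\ast)=\emptyset$. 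The analogous checks for $R^{\mc}(\alpha_j)$ produce the stated subsets.

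The main obstacle is purely organizational: keeping the bookkeeping of counting numbers straight across the colour cases (in particular the identity $\alpha_j'=2\alpha_j$ for black $\alpha_j$ and the four possible values of $k((\alpha_j')^\ast)$ in terms of $k(\alpha_j)$ from \eqref{countings-alpha-prime}), and folding the collapsed node into the $|\Gamma^\nw_m|_b^b$ pattern by Lemma \ref{lemma:9.5}. No idea beyond Lemma \ref{lemma:countings1}, Proposition \ref{prop:countings-prime}, Proposition \ref{prop:description-R} and the node-colour conventions of Definition \ref{defn_elliptic-diagram-II} is required.
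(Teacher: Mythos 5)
Your proposal is correct and follows the same route as the paper: the paper's proof simply observes that $R^{\dc}(\star)=R(\star)\cap R^{\dc}$ and $R^{\mc}(\star)=R(\star)\cap R^{\mc}$ and declares the resulting case computations immediate, which is exactly the intersection-and-parity-check argument you carry out in detail via Lemma \ref{lemma:countings1} and \eqref{counting-set-expform2}. (Only a cosmetic slip: for $R/G$ of type $BCC_l$ the white node $\overline{\alpha_0}$ is a long, not middle, root of $R/G$, but all you use is its whiteness, so nothing is affected.)
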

\begin{proof}
Recalling the definitions of $R^{\dc}$ and $R^{\mc}$, it is immediate to see the following
equalities: 
\begin{align*}
R^{\dc}(\alpha_i)=
&R(\alpha_i)\cap R^{\dc}, \qquad R^{\dc}(\alpha_j)=R(\alpha_j)\cap R^{\dc}, \\
R^{\dc}((\alpha_j')^\ast)=
&R((\alpha_j')^\ast)\cap R^{\dc},\\
R^{\mc}(\alpha_i)=
&R(\alpha_i)\cap R^{\mc}, \qquad 
R^{\mc}(\alpha_j)=R(\alpha_j)\cap R^{\mc},\\
R^{\mc}(\alpha_j')^\ast)=
&R((\alpha_j')^\ast)\cap R^{\mc}.
\end{align*}
Hence, the proposition is obtained.
\end{proof} 

\begin{cor}\label{cor:reducedness}
For a mERS $(R,G)$, the following are equivalent{\rm :}
\begin{itemize}
\item[(a)] $R$ is reduced.
\item[(b)] There is no black node in its elliptic diagram $\Gamma(R,G)$. 
\end{itemize}.
\end{cor}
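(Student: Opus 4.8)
\textbf{Proof strategy for Corollary \ref{cor:reducedness}.}
The plan is to prove the equivalence by unwinding the structural description of $R$ provided in Proposition \ref{prop:description-R'-R''}, together with the node-coloring rules of Definition \ref{defn_elliptic-diagram-II}. The key observation is that $R$ fails to be reduced precisely when there exists a root $\alpha\in R$ with $2\alpha\in R$ (equivalently, with $\tfrac12\alpha\in R$), and that this phenomenon is exactly what the black-node convention (Node1') is designed to detect.

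First I would prove the implication (b) $\Rightarrow$ (a) in contrapositive form: assume $R$ is non-reduced, so $R^{\dc}\subsetneq R$, and produce a black node. Since $R^{\dc}$ is a proper subset, there is a root $\alpha\in R$ with $2\alpha\in R$; such an $\alpha$ is a short root (in the language of the sets $R_s,R_m,R_l$), so its image $\overline\alpha$ lies in $(R/G)^{\dc}$ and we may move it by $W(R)\cong \W[\Pi^{\dc}]$ (acting via reflections attached to $\Pi^{\dc}$, using $W(R)=W(R^{\dc})$ and Lemma \ref{lemma:countings1}(3)) into the standard enumeration. Combining the description \eqref{description-R3} with the fact that $W(R)$ preserves the property ``$2(\cdot)\in R$'', one shows that there must be a node $\alpha_i\in|\Gamma^\downarrow|=\Pi^{\dc}$ (or a starred node $(\alpha_j')^\ast\in(\Pi^{\mc}\setminus\Pi^{\dc})^\ast$) lying in the $W(R)$-orbit carrying the divisibility, hence satisfying $2\alpha_i\in R$ or $\tfrac12\alpha_i\in R$; by rule (Node1') this node is black. (The slightly delicate point is that the divisibility witnessed somewhere in $R$ may a priori be witnessed only by a root not lying on the hyperplane $L_{\Pi^{\dc}}$; here one uses that the formula \eqref{counting-set-expform2} for $R((\alpha_j')^\ast)$ already encodes whether $\tfrac12(\alpha_j')^\ast\in R$, i.e. whether $(\alpha_j')^\ast\in|\Gamma^\nw_m|_b^b$ or $|\Gamma^\nw_m|_g^b$, and that by the case analysis of Tables \ref{table:BCC}$\sim$\ref{table:CC} every such divisibility is registered at a diagram node.)

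Then I would prove (a) $\Rightarrow$ (b), again contrapositively: if $\Gamma(R,G)$ contains a black node $\beta$, then by (Node1') either $2\beta\in R$ or $\tfrac12\beta\in R$, and in either case $R$ contains a root together with its double, so $R$ is non-reduced. This direction is essentially immediate from the definition once one notes that a black node of $\Gamma(R,G)$ is, by construction, an element of $\Pi^{\dc}\cup(\Pi^{\mc})^\ast\subset R$.

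The main obstacle, as indicated above, is the bookkeeping in the first implication: one must be sure that non-reducedness of the whole set $R$ — which is a priori a statement about arbitrary roots — is detected by the finitely many distinguished roots that become nodes of the diagram. The cleanest way to dispatch this is to invoke Proposition \ref{prop:description-R'-R''}: the sets $R^{\dc}$ and $R$ differ only in the contributions $R^{\dc}((\alpha_j')^\ast)\subsetneq R((\alpha_j')^\ast)$ and $R^{\dc}(\alpha_j)\subsetneq R(\alpha_j)$ for nodes $\alpha_j\in|\Gamma^\nw|$, and inspecting the four cases $|\Gamma^\nw_m|_g^g$, $|\Gamma^\nw_m|_g^b$, $|\Gamma^\nw_m|_b^g$, $|\Gamma^\nw_m|_b^b$ (together with $|(\Gamma^\downarrow\setminus\Gamma_m^\downarrow)^\nw|$), one sees that $R^{\dc}=R$ holds if and only if none of $\alpha_j$ or $(\alpha_j')^\ast$ is black, i.e. if and only if $\Gamma(R,G)$ has no black node. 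This reduces the entire statement to a direct reading of \eqref{description-R'} and \eqref{description-R''} against the coloring conventions, with no further computation required.
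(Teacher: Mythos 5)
Your proposal is correct and, in its final form, is essentially the paper's own argument: the implication from a black node to non-reducedness is immediate from (Node1'), and the converse is read off from the case formulas of Proposition \ref{prop:description-R'-R''} against the coloring. The only cosmetic difference is that you verify $R^{\dc}=R$ while the paper verifies $R^{\dc}=R^{\mc}$ (the same inspection of the same proposition); the preliminary orbit-chasing sketch in your first paragraph is superfluous once you invoke that reduction.
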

\begin{proof}
If $\Gamma(R,G)$ has a black node, it is obvious that $R$ is not reduced. 
Thus, we have (a) $\Longrightarrow$ (b). Conversely, assume $\Gamma(R,G)$ 
has no black node. By the proposition above, we have
\[R^{\dc}(\alpha_i)=R^{\mc}(\alpha_i)\quad \text{for every }\alpha_i\in |\Gamma^\w|,\]
\[R^{\dc}(\alpha_j)=R^{\mc}(\alpha_j)\quad \text{and}\quad 
R^{\dc}((\alpha_j')^\ast)=R^{\mc}((\alpha_j')^\ast)\quad 
\text{for every }\alpha_j\in |\Gamma^\nw|.\]
Thus, it is obtained that $R^{\dc}=R^{\mc}$. This means $R$ is reduced. 
\end{proof}

By this corollary and the list of the isomorphism classes on mERSs 
in the previous subsection, the list of reduced mERS is obtained:
\begin{center}
No.1 and No. 6 in Table \ref{table:BCC}, \quad 
No.1 and No. 6 in Table \ref{table:CBC},\\[3pt]
No.1 in Table \ref{table:BB-general}, \quad  No.3 in Table \ref{table:BB2},\quad
No.1 in Table \ref{table:CC}.
\end{center}
Note that we have not proved the existence of such root systems yet (c.f. 
Problem \ref{prob:existence} in the next subsection).

In the rest of this subsection, assume $(R,G)$ is non-reduced. 
Note that this assumption is equivalent to  that
there is a black node in $\Gamma(R,G)$ by Corollary \ref{cor:reducedness}.
In the following, we study the reduced mERSs 
$(R^{\dc},G)$ and $(R^{\mc},G)$ individually. 
For describing the elliptic diagrams
$\Gamma(R^{\dc},G)$ and $\Gamma(R^{\mc},G)$, we need to determine
the following data: 
\begin{itemize}
\item the shape of these diagrams (nodes and edges);
\vskip 1mm
\item the color of nodes in them.
\end{itemize}
Especially, to determine the shapes of them, we need to compute the exponents 
of the nodes in 
$|\Gamma^{\downarrow}(R^{\dc},G)|$ and $|\Gamma^{\downarrow}(R^{\mc},G)|$, 
respectively. Furthermore, to compute the exponents, we have to determine 
the non-reduced counting numbers for these nodes 
(c.f. Definition \ref{defn:exponent-non-reduced}). 
For $(R^{\dc},G)$, every data is determined in $\S$ \ref{sect_R'}
(see {Proposition} \ref{prop:Gamma(R',G)}), and for $(R^{\mc},G)$, 
they are done in $\S$ \ref{sect_R''} (see Proposition \ref{prop:Gamma(R'',G)}). 
Compiling these results, we have an answer for Problem 2 in  
$\S$ \ref{sect:ans-Problem2} (Proposition \ref{prop:red-pair}). 
\vskip 5mm

\subsection{ Structure of $R^{\dc}$}\label{sect_R'}
First, we treat the reduced mERS $(R^{\dc},G)$. 
Let $(\Pi_{R^{\dc}}^{\dc},\Pi_{R^{\dc}}^{\mc})$ be a paired simple system of 
$(R^{\dc},G)$ and $\Gamma(R^{\dc},G)$ be its elliptic diagram.
Denote $\Pi_{R^{\dc}}^{\dc}=\{\beta_0,\ldots,\beta_l\}$. 
By the definition, the linearly independent subset 
$\Pi_{R^{\dc}}^{\mc}=\big\{(\beta_0)_{R^{\dc}}',\ldots,
(\beta_l)_{R^{\dc}}'\big\}$ is given by
\[(\beta_i)_{R^{\dc}}'=\big(\beta_i\big)^{pr}_{R^{\dc}}
\quad \text{for }0\leq i\leq l,\]
where $(\,\cdot\,)^{pr}_{R^{\dc}}:R^{\dc}\to R^{\dc}$ is the prime map for $R^{\dc}$. 

The subdiagram of $\Gamma(R^{\dc},G)$ whose nodes consist of all elements of 
$\Pi_{R^{\dc}}^{\dc}$ is denoted by $\Gamma^{\downarrow}(R^{\dc},G)$. Then, we 
have
\begin{equation}\label{decomp_Gamma_R'}
|\Gamma(R^{\dc},G)|=|\Gamma^{\downarrow}(R^{\dc},G)|\amalg 
|\Gamma_m^{\uparrow}(R^{\dc},G)^\ast|.
\end{equation}

On the other hand, let $(\Pi^{\dc},\Pi^{\mc})$ be a paired simple system of 
$(R,G)$ and denote $\Pi^{\dc}=\{\alpha_0,\ldots,\alpha_l\}$. Note that
$\Pi^{\dc}$ is a simple system of $(R^{\dc},G)$. Since $R^{\dc}$ is reduced, one has 
$R^{\dc} \cap \frac12 (R^{\dc})_l=\emptyset$, and
$\Pi^{\dc}$ satisfies the condition \eqref{condition-Pi'} as a simple 
system of $(R^{\dc},G)$, automatically. Therefore, by 
Proposition \ref{prop:nr-counting-Autom1}, 
we may assume 
\[\Pi^{\dc}_{R^{\dc}}=\Pi^{\dc}\quad \text{and}\quad 
\beta_i=\alpha_i\quad \text{for }0\leq i\leq l.\]
That is, for a fixed simple system 
$\Pi^{\dc}=\{\alpha_0,\ldots,\alpha_l\}$, we may assume that  
a paired simple system $(\Pi^{\dc}_{R^{\dc}},\Pi^{\dc}_{R^{\mc}})$ has the following forms:
\[\Pi^{\dc}_{R^{\dc}}=\{\alpha_0,\ldots,\alpha_l\}\quad\text{and}\quad
\Pi^{\dc}_{R^{\mc}}=\big\{(\alpha_0)_{R^{\dc}}',\ldots,(\alpha_l)_{R^{\dc}}'\big\}.\]
Note that, since $(\alpha_i)'_{R^{\dc}}=(\alpha_i)^{pr}_{R^{\dc}}$ 
is the image of $\alpha_i\in \Pi^{\dc}$ under the prime map 
$(\,\cdot\,)^{pr}_{R^{\dc}}:R^{\dc}\to R^{\dc}$ for $R^{\dc}$, 
\index[index]{prime map!prime map dc@-- for $R^{\dc}$}
it {\it does not} coincide
with $\alpha_i'=(\alpha_i)^{pr}\in R^{\mc}$ in general, 
where $(\,\cdot\,)^{pr}:R\to R^{\mc}$ is the prime map for $R$.
{Here and after, when we regard $\alpha_i\in \Pi^{\dc}\, (0\leq i\leq l)$ 
as an element of $\Pi_{R^{\dc}}^{\dc}$, we denote it $(\alpha_i)_{R^{\dc}}$.}
\index[notations]{a911@$(\alpha_i)_{R^{\dc}}$}
\\

The goal of $\S$ \ref{sect_R'} is the following proposition.

\begin{prop}\label{prop:Gamma(R',G)}
{\rm (1)} The explicit forms of $(\alpha_i)_{R^{\dc}}'\in \Pi^{\mc}_{R^{\dc}}\, (0\leq i\leq l)$ are 
given by
\[(\alpha_i)_{R^{\dc}}'=\begin{cases}
2\alpha_i-k(\alpha_i)a & \text{if $\frac12 (\alpha_i')^\ast\not\in R$ and 
$\frac12 \overline{(\alpha_i')^\ast}\in R/G$},\\
\alpha_i & \text{otherwise}.
\end{cases}\]
{\rm (2)} Let $k_{R^{\dc}}(\alpha)$ be the counting number of 
$\alpha\in R^{\dc}$ as an element of the reduced mERS $(R^{\dc},G)$. One has
\[\begin{aligned}
k_{R^{\dc}}\big((\alpha_i)_{R^{\dc}}\big)&=k(\alpha_i)\quad\text{for every }0\leq i\leq l, \\
k_{R^{\dc}}\big(\big((\alpha_i)_{R^{\dc}}'\big)^\ast\big)&=\begin{cases}
2k(\alpha_i) & \text{if $\frac12 (\alpha_i')^\ast\not\in R$ and 
$\frac12 \overline{(\alpha_i')^\ast}\in R/G$},\\
k(\alpha_i) & \text{otherwise}.
\end{cases}
\end{aligned}\]
{\rm (3)} The explicit forms of $\big((\alpha_i)_{R^{\dc}}'\big)^\ast\in 
\big(\Pi^{\mc}_{R^{\dc}}\big)^\ast\, (0\leq i\leq l)$ are given by
\[\big((\alpha_i)_{R^{\dc}}'\big)^\ast=\begin{cases}
\frac12 (\alpha_i')^\ast & \text{if $\frac12 (\alpha_i')^\ast\in R$},\\
(\alpha_i')^\ast & \text{otherwise},
\end{cases} \]
{\rm (4)} For $0\leq i\leq l$, let $m_{(\alpha_i)_{R^{\dc}}}$ be the exponent of 
$(\alpha_i)_{R^{\dc}}$. One has
\[m_{(\alpha_i)_{R^{\dc}}}=m_{\alpha_i}\quad \text{for every }0\leq i\leq l.\]
As a by-product, it follows that the correspondence
\[\begin{array}{lllllllll}
\alpha_i & \longmapsto &  (\alpha_i)_{R^{\dc}}\quad &\text{for }\alpha_i\in|\Gamma^{\downarrow}(R,G)|,
\\[5pt]
(\alpha_j')^\ast & \longmapsto & \big((\alpha_j)_{R^{\dc}}'\big)^\ast \quad &\text{for }
(\alpha_j')^\ast\in |\Gamma_m^{\uparrow}(R,G)^\ast|
\end{array}\]
gives a bijection from $|\Gamma(R,G)|$ to $|\Gamma(R^{\dc},G)|$.
\vskip 1mm
\noindent
{\rm (5)} The color of a node in $|\Gamma(R^{\dc},G)|$ is determined by the following 
rules. 
\begin{itemize}
\item For a white node in $|\Gamma(R,G)|$, the corresponding node in 
$|\Gamma(R^{\dc},G)\vert$ is a white node also.
\item If $\alpha_i\in |\Gamma_m^\nw|_\sharp^g\,\, (\sharp=g \text{ or }b)$, both 
$(\alpha_i)_{R^{\dc}}$ and $\big((\alpha_i)_{R^{\dc}}'\big)^\ast$ are gray nodes.
\item If $\alpha_i\in |\Gamma_m^\nw|_\sharp^b\,\, (\sharp=g \text{ or }b)$, both 
$(\alpha_i)_{R^{\dc}}$ and $\big((\alpha_i)_{R^{\dc}}'\big)^\ast$ are white nodes. 
\item If $\alpha_i\in |(\Gamma^\downarrow
\setminus\Gamma_m^\downarrow)^\nw|$,  
$(\alpha_i)_{R^{\dc}}$ is a white node. 
\end{itemize}
\end{prop}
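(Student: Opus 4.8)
$\textbf{Proof strategy for Proposition \ref{prop:Gamma(R',G)}.}$ The plan is to reduce everything to the explicit description of $R$ given in Proposition \ref{prop:description-R} together with the formulas for non-reduced countings in Proposition \ref{prop:countings-prime} and Corollary \ref{cor:nr-countings1}, and then simply ``intersect with $R^{\dc}$.'' More precisely, for each $\alpha_i \in \Pi^{\dc}$, I would first read off from \eqref{counting-set-expform} the subsets $R(\alpha_i)$ and $R((\alpha_i')^\ast)$ inside $R$. By Proposition \ref{prop:description-R'-R''} (applied in the present setting, with $(R^{\dc},R^{\mc})$ now playing the role of the reduced pair), $R^{\dc}(\alpha_i) = R(\alpha_i)\cap R^{\dc}$, $R^{\dc}((\alpha_i')^\ast) = R((\alpha_i')^\ast)\cap R^{\dc}$; these intersections are computed case-by-case according to whether $\alpha_i$ lies in $|\Gamma^\w|$, $|\Gamma^\nw_m|^g_g$, $|\Gamma^\nw_m|^g_b$, $|\Gamma^\nw_m|^b_g$, $|\Gamma^\nw_m|^b_b$, or $|(\Gamma^\downarrow\setminus\Gamma^\downarrow_m)^\nw|$. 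This is exactly the information already packaged in Proposition \ref{prop:description-R'-R''}, so statements (1)--(3) are a matter of translating that data into the language of the prime map $(\,\cdot\,)^{pr}_{R^{\dc}}$ and the counting $k_{R^{\dc}}$.

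For statement (1), I would argue as follows. If $2\overline{\alpha_i}\notin R/G$ then $\alpha_i'=\alpha_i$ and $\overline{(\alpha_i')^\ast}$ is never half a root in $R/G$, so $2\overline{\alpha_i}\notin R/G$ forces $(\alpha_i)_{R^{\dc}}'=\alpha_i$ trivially. If $2\overline{\alpha_i}\in R/G$, then in $R^{\dc}$ the root $2\overline{\alpha_i}$ still lies in $R^{\dc}/G = (R^{\dc}/G)^{\dc}$ or not, and the key point is that $2\overline{\alpha_i}\in R^{\dc}/G$ iff there is a short root of $R$ above $\overline{\alpha_i}$, i.e.\ essentially iff $2\alpha_i \in R$ is false and $\tfrac12(\alpha_i')^\ast$ \emph{is} a root (so that $\alpha_i$ is ``half of'' the long root $(\alpha_i')^\ast$). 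When $\tfrac12(\alpha_i')^\ast\in R$, that short root $\tfrac12(\alpha_i')^\ast$ is no longer present in $R^{\dc}$ (it is divisible), so in $R^{\dc}$ the class $2\overline{\alpha_i}$ acquires a representative and $(\alpha_i)_{R^{\dc}}' = 2\alpha_i + p a$ for the appropriate $p$; using the formula $\tfrac12(\alpha_i')^\ast=\alpha_i+\tfrac12 k(\alpha_i)a$ from the proof of Corollary \ref{cor:nr-countings1} (equation \eqref{eq:claim}) I get $(\alpha_i)_{R^{\dc}}' = 2\alpha_i - k(\alpha_i)a$, matching the claim. In the remaining subcase $2\alpha_i\in R$, the long root $(\alpha_i')^\ast = (2\alpha_i)^\ast$ is already visible above $\overline{\alpha_i}$ and divisible, so $R^{\dc}$ behaves reducedly there and $(\alpha_i)_{R^{\dc}}' = \alpha_i$. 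Statement (2) then follows from statement (1) by computing $k_{R^{\dc}}$ of $(\alpha_i)_{R^{\dc}}'$ via $R^{\dc}(\alpha_i') = R((\alpha_i')^\ast)\cap R^{\dc}$ in each subcase, reading the spacing off \eqref{counting-set-expform2} and the ``$R^{\dc}((\alpha_j')^\ast)$'' entries in Proposition \ref{prop:description-R'-R''}; statement (3) is immediate by combining (1), (2) and the definition $\beta^\ast=\beta+k_{R^{\dc}}(\beta)a$.

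For statement (4), the exponent is $m_{(\alpha_i)_{R^{\dc}}} = \frac{I_{R^{\dc}}\!\big((\alpha_i)_{R^{\dc}}, (\alpha_i)_{R^{\dc}}\big)}{2 k^{\nr}_{R^{\dc}}\big((\alpha_i)_{R^{\dc}}\big)}\, n_{(\alpha_i)_{R^{\dc}}}$. Since $(\alpha_i)_{R^{\dc}}=\alpha_i$ and $\pi_G(\Pi^{\dc})$ is the same affine simple system whether we view it inside $R/G$ or inside $R^{\dc}/G=(R/G)^{\dc}$, the integers $n_{(\alpha_i)_{R^{\dc}}}=n_{\alpha_i}$ agree; and the normalized forms $I_{R^{\dc}}$ and $I_R$ agree on $F_a$ up to the global normalization convention, so the length factors agree. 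The only substantive point is $k^{\nr}_{R^{\dc}}(\alpha_i)=k^{\nr}(\alpha_i)$, which I would get by applying Proposition \ref{prop:nr-countings1} (or \eqref{nr-countings_alpha4}) inside $(R^{\dc},G)$ and comparing with the value inside $(R,G)$: using statement (2) and the case analysis, one checks $k^{\nr}_{R^{\dc}}(\alpha_i) = (\alpha_i : ((\alpha_i)'_{R^{\dc}})^\ast)_G\, k^{\nr}_{R^{\dc}}(((\alpha_i)'_{R^{\dc}})^\ast)$ reduces to $k^{\nr}(\alpha_i)$ in every subcase. This gives $m_{(\alpha_i)_{R^{\dc}}}=m_{\alpha_i}$, hence $m_{max}$ and the set $|\Gamma_m^\downarrow|$ are the same, so collapsed nodes match and the displayed bijection $|\Gamma(R,G)|\to|\Gamma(R^{\dc},G)|$ is well-defined and a bijection. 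Finally, statement (5) is a direct consequence of (1)--(3) together with the coloring rules (Node1')--(Node3'): for a white node $2\alpha_i,\tfrac12\alpha_i\notin R$ and $2\overline{\alpha_i},\tfrac12\overline{\alpha_i}\notin R/G$, all of which persist in $R^{\dc}$; when $(\alpha_i')^\ast$ is a gray node ($|\Gamma^\nw_m|^\sharp_g$), one checks $2\,(\alpha_i)_{R^{\dc}},\,\tfrac12(\alpha_i)_{R^{\dc}}\notin R^{\dc}$ but $2\overline{\alpha_i}\in R^{\dc}/G$, giving gray; when $(\alpha_i')^\ast$ is black ($|\Gamma^\nw_m|^\sharp_b$), the divisible long root gets removed in $R^{\dc}$ and the node becomes white; and for $\alpha_i\in|(\Gamma^\downarrow\setminus\Gamma^\downarrow_m)^\nw|$ the same removal makes $(\alpha_i)_{R^{\dc}}$ white.

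$\textbf{Main obstacle.}$ The delicate part is the bookkeeping in statement (1)--(2): one must be careful about \emph{which} representative above $\overline{\alpha_i}$ survives in $R^{\dc}$ and about the distinction between the two prime maps $(\,\cdot\,)^{pr}$ (for $R$, landing in $R^{\mc}$) and $(\,\cdot\,)^{pr}_{R^{\dc}}$ (for $R^{\dc}$, landing in $R^{\dc}$), which genuinely differ. I expect the proof of Proposition \ref{prop:countings-prime}, Corollary \ref{cor:nr-countings1}, and formula \eqref{eq:claim} to supply exactly the identities needed, so once the case split is set up correctly the rest is a (somewhat lengthy) verification; the conceptual content is entirely carried by Proposition \ref{prop:description-R'-R''}.
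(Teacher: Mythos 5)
Your overall architecture (describe $R$ via Proposition \ref{prop:description-R}, intersect with $R^{\dc}$ using Proposition \ref{prop:description-R'-R''}, and translate into the language of the prime map for $R^{\dc}$) is sound and close in spirit to the paper's argument, and your treatment of the first half of (2) and of (4) is essentially the paper's. However, the central case analysis behind (1)--(3) is wrong, and the error is not cosmetic: you have the two branches of the dichotomy swapped. The source is a confusion about which roots $R^{\dc}$ discards. By definition $R^{\dc}=\{\alpha\in R\mid \frac12\alpha\notin R\}$, so $R^{\dc}$ contains \emph{every} short root of $R$ and removes exactly those \emph{long} roots $\gamma$ with $\frac12\gamma\in R$. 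Your sentence ``when $\frac12(\alpha_i')^\ast\in R$, that short root $\frac12(\alpha_i')^\ast$ is no longer present in $R^{\dc}$'' describes $R^{\mc}$, not $R^{\dc}$; in $R^{\dc}$ it is the long root $(\alpha_i')^\ast$ itself that disappears in that case. Consequently $2\overline{\alpha_i}$ survives in $R^{\dc}/G$ exactly when some long root over $2\overline{\alpha_i}$ is indivisible, i.e.\ when $\frac12(\alpha_i')^\ast\notin R$ --- the opposite of what you assert. Running the correct criterion through the four cases of Proposition \ref{prop:countings-prime} gives $(\alpha_i)_{R^{\dc}}'=2\alpha_i-k(\alpha_i)a$ precisely when $\frac12(\alpha_i')^\ast\notin R$ and $\frac12\overline{(\alpha_i')^\ast}\in R/G$ (this includes the subcase $2\alpha_i\in R$, $\frac12(\alpha_i')^\ast\notin R$, i.e.\ $\alpha_i\in|\Gamma_m^\nw|_b^g$, which you incorrectly send to the ``$=\alpha_i$'' branch), and $(\alpha_i)_{R^{\dc}}'=\alpha_i$ when $\frac12(\alpha_i')^\ast\in R$, since then every long root over $2\overline{\alpha_i}$ is divisible and is deleted from $R^{\dc}$. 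Your invocation of \eqref{eq:claim} is also misplaced: that formula is established under the hypothesis $(\alpha_i')^\ast\ne 2\alpha_i^\ast$ and is used precisely to show $\frac12(\alpha_i')^\ast\notin R$ there; when $\frac12(\alpha_i')^\ast\in R$ one has instead $\frac12(\alpha_i')^\ast=\alpha_i^\ast=\alpha_i+k(\alpha_i)a$.

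Note that your own discussion of (5) is internally inconsistent with your derivation of (1): there you correctly say that a gray $(\alpha_i')^\ast$ (i.e.\ $\frac12(\alpha_i')^\ast\notin R$) produces a gray node in $\Gamma(R^{\dc},G)$ because $2\overline{\alpha_i}\in R^{\dc}/G$, while a black $(\alpha_i')^\ast$ produces a white node because the divisible long root is removed --- which is exactly the criterion that contradicts your case split in (1). Once the dichotomy is corrected, your plan does go through; for comparison, the paper organizes the deductions in the opposite order: it first proves the color statement (Lemma \ref{lemma:color-Gamma-R'}) and the counting and star formulas (Lemma \ref{lemma:data-Gamma-R'}), and then recovers statement (1) from (2) and (3) via $(\alpha_i)_{R^{\dc}}'=\big((\alpha_i)_{R^{\dc}}'\big)^\ast-k_{R^{\dc}}\big(\big((\alpha_i)_{R^{\dc}}'\big)^\ast\big)a$, rather than computing the prime map for $R^{\dc}$ directly.
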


After some preparations, we will prove this proposition at the end of 
$\S$ \ref{sect_R'}. 
Let us start with the next lemma. 

\begin{lemma}\label{lemma:black-ast}
For $\alpha_i\in |\Gamma^\downarrow|$, the following two conditions are equivalent.
\begin{itemize}
\item[(a)] $\frac12 (\alpha_i')^\ast\in R$.
\vskip 1mm
\item[(b)] $\alpha_i\in |\Gamma^\nw_m|_g^b\amalg |\Gamma^\nw_m|_b^b\amalg 
|(\Gamma^{\downarrow}\setminus \Gamma^{\downarrow}_m)^\nw|$.
\end{itemize}
\end{lemma}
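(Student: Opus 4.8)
The plan is to split according to where $\alpha_i$ lies in the partition $|\Gamma^\downarrow|=|\Gamma^\w|\amalg|\Gamma_m^\nw|\amalg|(\Gamma^\downarrow\setminus\Gamma_m^\downarrow)^\nw|$ and to check that the three resulting outcomes for the condition $\tfrac12(\alpha_i')^\ast\in R$ match the three pieces forming the set in (b).

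The preliminary step is the observation that $2(\alpha_i')^\ast\notin R$ for every $\alpha_i\in|\Gamma^\downarrow|$. Indeed $\overline{(\alpha_i')^\ast}=\overline{\alpha_i'}$, which by the definition of the prime map equals $\overline{\alpha_i}$ when $2\overline{\alpha_i}\notin R/G$ and equals $2\overline{\alpha_i}$ otherwise; in either case $2\overline{\alpha_i'}$ fails to be a root of the non-reduced affine root system $R/G$, since $R/G$ has $BC_l$-type finite quotient and hence contains no chain of three roots in a fixed direction. Applying the node rules (Node1')--(Node3') of Definition \ref{defn_elliptic-diagram-II} to $(\alpha_i')^\ast$, it follows that, whenever $(\alpha_i')^\ast$ occurs in $\Gamma(R,G)$ (that is, whenever $\alpha_i$ is not collapsed), the node $(\alpha_i')^\ast$ is black precisely when $\tfrac12(\alpha_i')^\ast\in R$; moreover, if $\alpha_i\in|\Gamma^\nw|$ then $\tfrac12\overline{(\alpha_i')^\ast}=\overline{\alpha_i}\in R/G$ by \eqref{nr-countings-cor1}, so $(\alpha_i')^\ast$ is never white and is therefore gray exactly when $\tfrac12(\alpha_i')^\ast\notin R$.

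The white case is immediate: if $\alpha_i\in|\Gamma^\w|$ then $\alpha_i'=\alpha_i$ and $\overline{(\alpha_i')^\ast}=\overline{\alpha_i}$ is an indivisible root of $R/G$, so $\tfrac12\overline{(\alpha_i')^\ast}\notin R/G$ and a fortiori $\tfrac12(\alpha_i')^\ast\notin R$; as $\alpha_i$ lies in none of the three sets of (b) either, both sides of the equivalence fail. If $\alpha_i\in|\Gamma_m^\nw|$, the node $(\alpha_i')^\ast$ is present in $\Gamma(R,G)$, so the preliminary step makes $\tfrac12(\alpha_i')^\ast\in R$ equivalent to $(\alpha_i')^\ast$ being black, which by the definitions of the sets $|\Gamma^\nw_m|_g^b$ and $|\Gamma^\nw_m|_b^b$ means $\alpha_i\in|\Gamma^\nw_m|_g^b\amalg|\Gamma^\nw_m|_b^b$; since such an $\alpha_i$ lies outside $|(\Gamma^\downarrow\setminus\Gamma_m^\downarrow)^\nw|$, this is exactly membership in (b).

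The last case, $\alpha_i\in|(\Gamma^\downarrow\setminus\Gamma_m^\downarrow)^\nw|$, is the delicate one and I expect it to be the main obstacle, because here $(\alpha_i')^\ast$ has been collapsed out of the diagram and its colour is no longer available as an input; now $\alpha_i$ is automatically in (b), so I must prove $\tfrac12(\alpha_i')^\ast\in R$. Since $\alpha_i$ is collapsed, $m_{\alpha_i}<m_{\max}$. When $R/G$ is of type $BCC_l$ or $C^\vee BC_l$ with $l\ge 2$, $BB^\vee_l$ with $l\ge 3$, or $C^\vee C_l$ with $l\ge 2$, the set $|\Gamma^\w|_o$ of ordinary white nodes is nonempty, and any $\beta\in|\Gamma^\w|_o$ satisfies $m_\beta=m_{\max}>m_{\alpha_i}$ by observation (a) of \S\ref{sect:list_of_diagrams}; then Lemma \ref{lemma:9.5}(2), with $\beta$ in the role of the white node and $\alpha_i$ in the role of the colored node, yields $\tfrac12(\alpha_i')^\ast\in R$. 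For the low-rank quotients $BCC_1$, $C^\vee BC_1$ and $C^\vee C_1$, where there is no ordinary white node, the same conclusion is read directly off Proposition \ref{prop:shape-diagram2}(1)(c) and (3)(b); and when $R/G$ is of type $BB^\vee_2$ there is nothing to check, since $\alpha_2$ is never collapsed by Proposition \ref{prop:shape-diagram2}(2)(a). Assembling the four cases gives the asserted equivalence; the work in this last case consists precisely in descending from the combinatorial language of colors and nodes to the counting-number estimates of Propositions \ref{prop:shape-diagram1} and \ref{prop:shape-diagram2}, handling the exceptional low-rank types separately.
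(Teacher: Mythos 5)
Your proof is correct and follows essentially the same route as the paper's: both reduce conditions (a) and (b) to the colour of the node $(\alpha_i')^\ast$ via the observation that $2(\alpha_i')^\ast\notin R$, and both settle the only nontrivial implication --- that a collapsed coloured node satisfies (a) --- by invoking Lemma \ref{lemma:9.5} (2) and Proposition \ref{prop:shape-diagram2}. The only cosmetic difference is that you split off the white nodes as a separate case and dispose of them by indivisibility of $\overline{\alpha_i}$ in $R/G$, where the paper instead appeals to \eqref{nr-countings-cor1} to show that condition (a) forces $\alpha_i$ to be coloured.
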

\begin{proof}
If $\alpha_i\in |\Gamma_m^\downarrow|$, the condition (a) is equivalent to 
$\alpha_i\in |\Gamma^\nw_m|_g^b\amalg |\Gamma^\nw_m|_b^b$. 
Otherwise, $\alpha_i$ belongs to 
$|\Gamma^\downarrow\setminus \Gamma_m^\downarrow|$. 
By \eqref{nr-countings-cor1}, condition (a) 
implies $\overline{\alpha_i'}=2\overline{\alpha_i}\in R/G$. 
Therefore, we have 
$\alpha_i\in |\Gamma^\downarrow\setminus \Gamma_m^\downarrow|\cap 
|\Gamma^\nw|=
|(\Gamma^{\downarrow}\setminus \Gamma^{\downarrow}_m)^\nw|$. Conversely,
assume $\alpha_i\in 
|(\Gamma^{\downarrow}\setminus \Gamma^{\downarrow}_m)^\nw|$. 
By Lemma \ref{lemma:9.5} (2), Proposition \ref{prop:shape-diagram2} (1) (c) and 
(3) (b), we have $\frac12 (\alpha_i')^\ast\in R$. Thus, we completed  the proof.
\end{proof}

\begin{lemma}\label{lemma:color-Gamma-R'}
Assume $\frac12 \overline{(\alpha_i')^\ast}\in R/G$. 
If $\frac12 (\alpha_i')^\ast\not\in R$,
the node $(\alpha_i)_{R^{\dc}}\in |\Gamma^{\downarrow}(R^{\dc},G)|$ is a gray node in $\Gamma^\downarrow(R^{\dc},G)$. 
Otherwise, it is a white node. 
\end{lemma}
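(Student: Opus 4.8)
The plan is to unwind the definitions of $(\alpha_i)_{R^{\dc}}$ and its colour directly from the prime map $(\,\cdot\,)^{pr}_{R^{\dc}}$ for the reduced mERS $(R^{\dc},G)$, and match them against the colour rules (Node1)--(Node3) in Definition~\ref{defn_elliptic-diagram-I} (equivalently the reduced analogue of (Node1')--(Node3') in Definition~\ref{defn_elliptic-diagram-II}; since $R^{\dc}$ is reduced, the gray node case (Node2) is the only genuinely new possibility). By part~(1) of Proposition~\ref{prop:Gamma(R',G)} we already have the explicit form
\[
(\alpha_i)_{R^{\dc}}'=\begin{cases}
2\alpha_i-k(\alpha_i)a & \text{if $\tfrac12(\alpha_i')^\ast\not\in R$ and $\tfrac12\overline{(\alpha_i')^\ast}\in R/G$},\\
\alpha_i & \text{otherwise},
\end{cases}
\]
so under the hypothesis of the lemma, $\tfrac12\overline{(\alpha_i')^\ast}\in R/G$, the two subcases $\tfrac12(\alpha_i')^\ast\notin R$ and $\tfrac12(\alpha_i')^\ast\in R$ correspond exactly to $(\alpha_i)_{R^{\dc}}'=2\alpha_i-k(\alpha_i)a\neq \alpha_i$ and $(\alpha_i)_{R^{\dc}}'=\alpha_i$, respectively. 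The colour of $(\alpha_i)_{R^{\dc}}=\alpha_i$ as a node of $\Gamma^\downarrow(R^{\dc},G)$ is decided by whether $2\alpha_i\in R^{\dc}$ (black), $2\alpha_i\notin R^{\dc}$ but $2\overline{\alpha_i}\in R^{\dc}/G$ (gray), or neither (white).

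First I would handle the case $\tfrac12(\alpha_i')^\ast\notin R$. I claim $2\alpha_i\notin R^{\dc}$: this follows because $(\alpha_i)_{R^{\dc}}'\neq \alpha_i$ forces, by the very definition of the prime map for $R^{\dc}$, that $2\overline{\alpha_i}\in R^{\dc}/G$ and $\alpha_i$ is \emph{not} a long root of $R^{\dc}$, hence $2\alpha_i\notin R^{\dc}$ (if $2\alpha_i$ were a root we would be in case i) of the prime map, giving $(\alpha_i)_{R^{\dc}}'=\alpha_i$, contradiction). On the other hand $2\overline{\alpha_i}\in R^{\dc}/G$: indeed $\overline{(\alpha_i)_{R^{\dc}}'}=\overline{2\alpha_i-k(\alpha_i)a}=2\overline{\alpha_i}$, and $(\alpha_i)_{R^{\dc}}'\in R^{\dc}$, so $2\overline{\alpha_i}\in R^{\dc}/G$. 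Combining, $\alpha_i$ is a gray node, as asserted. Conversely in the case $\tfrac12(\alpha_i')^\ast\in R$ I would show $(\alpha_i)_{R^{\dc}}$ is white: here $(\alpha_i)_{R^{\dc}}'=\alpha_i$, meaning that in the prime-map definition for $R^{\dc}$ we landed in case i), i.e.\ $2\overline{\alpha_i}\notin R^{\dc}/G$. That is exactly the condition forcing $\alpha_i$ to be a white node of $\Gamma^\downarrow(R^{\dc},G)$ — since $2\overline{\alpha_i}\notin R^{\dc}/G$ already rules out both the black and gray possibilities. (Note that in this branch $\alpha_i$ can still satisfy $\tfrac12\overline{(\alpha_i')^\ast}\in R/G$ without $2\overline{\alpha_i}\in R^{\dc}/G$, precisely because $(\alpha_i')^\ast$ is a long root of $R$ whose half need not survive into $R^{\dc}/G$; this is where Lemma~\ref{lemma:reduced_pair2}(1) and the description of $R^{\dc}$ in Proposition~\ref{prop:description-R'-R''} come in.)

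The step I expect to be the main obstacle is verifying the claim ``$(\alpha_i)_{R^{\dc}}'=\alpha_i \iff 2\overline{\alpha_i}\notin R^{\dc}/G$'' cleanly, i.e.\ correctly bookkeeping the difference between the prime map $(\,\cdot\,)^{pr}$ for $R$ and the prime map $(\,\cdot\,)^{pr}_{R^{\dc}}$ for $R^{\dc}$. Concretely: $\tfrac12(\alpha_i')^\ast\in R$ is a statement about $R$, while the colour of the node is a statement about $R^{\dc}/G$, and the bridge between them is the identification $(R^{\dc}/G)^{\dc}=(R/G)^{\dc}$ of Lemma~\ref{lemma:reduced_pair2}(1) together with the explicit form of $(\alpha_i)_{R^{\dc}}'$ from Proposition~\ref{prop:Gamma(R',G)}(1). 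I would therefore isolate as a preliminary sublemma the equivalence
\[
2\overline{\alpha_i}\in R^{\dc}/G \quad\Longleftrightarrow\quad \tfrac12(\alpha_i')^\ast\notin R \ \text{ and }\ \tfrac12\overline{(\alpha_i')^\ast}\in R/G,
\]
deduce it from the definition of $(\,\cdot\,)^{pr}_{R^{\dc}}$ and part~(1) of Proposition~\ref{prop:Gamma(R',G)}, and then the colour statement is immediate from the three-way colour rule. The remaining verifications (that $2\alpha_i\notin R^{\dc}$ in the gray case, and that nothing promotes $\alpha_i$ to a coloured node in the white case) are short and follow the reducedness of $R^{\dc}$ plus $R^{\dc}\supset R_s$.
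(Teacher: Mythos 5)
Your argument is circular as written. The explicit form of $(\alpha_i)_{R^{\dc}}'$ that you take as your starting point, part (1) of Proposition \ref{prop:Gamma(R',G)}, is proved in the paper only \emph{after} this lemma, and its proof depends on it: statement (1) of that proposition is deduced from its statements (2) and (3), which are Lemma \ref{lemma:data-Gamma-R'} (1) and (2), and the proof of Lemma \ref{lemma:data-Gamma-R'} (2) explicitly invokes Lemma \ref{lemma:color-Gamma-R'} to evaluate the ratio $\big(\big((\alpha_i)_{R^{\dc}}'\big)^\ast:(\alpha_i)_{R^{\dc}}\big)_G\big/\big((\alpha_i')^\ast:\alpha_i\big)_G$. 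The same circularity infects the ``preliminary sublemma'' you isolate, namely that $2\overline{\alpha_i}\in R^{\dc}/G$ holds if and only if $\tfrac12(\alpha_i')^\ast\notin R$ and $\tfrac12\overline{(\alpha_i')^\ast}\in R/G$: this equivalence is essentially the lemma itself restated, and you propose to deduce it from the very proposition whose proof rests on the lemma. Deciding whether $2\overline{\alpha_i}$ survives into $R^{\dc}/G$, i.e.\ whether some root in $2\alpha_i+\Z a$ is indivisible, is exactly the content that must be established from scratch here; it cannot be imported from downstream results.

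The non-circular route, which is the paper's, stays entirely inside $R$ and the counting formulas already available. Under the hypotheses $\tfrac12\overline{(\alpha_i')^\ast}\in R/G$ and $\tfrac12(\alpha_i')^\ast\notin R$, the formulas \eqref{defn:nr-countings_alpha'-ast}, \eqref{nr-countings_alpha3} and \eqref{countings-alpha-prime} yield $(\alpha_i')^\ast=2\alpha_i+k(\alpha_i)a$; since its half is not a root, $(\alpha_i')^\ast$ lies in $R^{\dc}$ and projects to $2\overline{\alpha_i}$, so $2\overline{(\alpha_i)_{R^{\dc}}}\in R^{\dc}/G$, and reducedness of $R^{\dc}$ excludes black, forcing gray. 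In the complementary case $\tfrac12(\alpha_i')^\ast\in R$ one must check that no element of $R\cap(2\alpha_i+\Z a)=(\alpha_i')^\ast+\Z k\big((\alpha_i')^\ast\big)a$ is indivisible (using Proposition \ref{prop:countings-prime} and the minimality of $k(\alpha_i)$), so that $2\overline{\alpha_i}\notin R^{\dc}/G$ and the node is white. If you insist on your prime-map formulation, you would have to carry out this computation first and only then read off the explicit form of $(\alpha_i)_{R^{\dc}}'$ from it, at which point you have reproduced the paper's argument rather than bypassed it.
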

\begin{proof}
First, assume $\frac12(\alpha_i')^\ast\not\in R$ and 
$\frac12 \overline{(\alpha_i')^\ast}\in R/G$.
By \eqref{defn:nr-countings_alpha'-ast}, \eqref{nr-countings_alpha3} and 
\eqref{countings-alpha-prime}, we have
\[(\alpha_i')^\ast=2\alpha_i+k(\alpha_i)a.\]
This formula says that $(\alpha_i')^\ast$ belongs to $R^{\dc}$. As a 
by-product, we get 
\[2\overline{(\alpha_i)_{R^{\dc}}}=2\overline{\alpha_i}=\overline{(\alpha_i')^\ast}\in R^{\dc}/G.\]
Therefore, the node $(\alpha_{i})_{R^{\dc}}\in |\Gamma^{\downarrow}(R^{\dc},G)|$ 
is a colored node in $|\Gamma^\downarrow(R^{\dc},G)|$. Since $R^{\dc}$ is reduced, 
it should be a gray node. 

Second, assume $\frac12(\alpha_i')^\ast\in R$. Since 
$(\alpha_i)_{R^{\dc}}=\alpha_i\in R^{\dc}$ and $R^{\dc}$ is reduced, we have
$2(\alpha_i)_{R^{\dc}}=2\alpha_i\not\in R^{\dc}$.
Therefore, $(\alpha_{i})_{R^{\dc}}=\alpha_i$ is a white node
in $|\Gamma^{\downarrow}(R^{\dc},G)|$, and the lemma is obtained.
\end{proof}

\begin{lemma}\label{lemma:data-Gamma-R'}
{\rm (1)} One has $k_{R^{\dc}}(\alpha_i)=k(\alpha_i)$ for every $0\leq i\leq l$.
\vskip 1mm
\noindent
{\rm (2)} For every $0\leq i\leq l$,  
\begin{equation}\label{stared-node-R'}
\big((\alpha_i)_{R^{\dc}}'\big)^\ast=\begin{cases}
\frac12 (\alpha_i')^\ast & \text{if $\frac12 (\alpha_i')^\ast\in R$},\\
(\alpha_i')^\ast & \text{otherwise}.
\end{cases}
\end{equation}
{\rm (3)} One has $m_{(\alpha_i)_{R^{\dc}}}=m_{\alpha_i}$ for every $0\leq i\leq l$. 
\end{lemma}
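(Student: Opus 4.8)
\textbf{Proof plan for Lemma \ref{lemma:data-Gamma-R'}.}

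The plan is to read off each of the three assertions from the description \eqref{description-R-2} of $R$ together with the formulas for non-reduced counting numbers established in Proposition \ref{prop:countings-prime} and Corollary \ref{cor:nr-countings1}. The key point is that since $R^{\dc}\subset R$ is a \emph{reduced} mERS, its counting sets are the intersections $K_G^{R^{\dc}}(\alpha)=K_G(\alpha)\cap\{x\mid \alpha+x\in R^{\dc}\}$, and these can be computed explicitly from \eqref{counting-set-expform} case by case according to whether $\alpha_i\in|\Gamma^\w|$, $|\Gamma_m^\nw|_\sharp^\flat$ or $|(\Gamma^\downarrow\setminus\Gamma_m^\downarrow)^\nw|$.

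First I would prove (1). For $\alpha_i\in\Pi^{\dc}$ we have $\alpha_i\in R^{\dc}$, and $(\alpha_i+\Z a)\cap R=\alpha_i+\Z k(\alpha_i)a$ by Lemma \ref{lemma:countings1} (1). Since $R^{\dc}\subset R$, the counting number $k_{R^{\dc}}(\alpha_i)$ is the smallest positive integer $k$ with $\alpha_i+ka\in R^{\dc}$, so it is a multiple of $k(\alpha_i)$; conversely, using \eqref{counting-set-expform} one checks in each of the five cases of that formula that $\alpha_i+k(\alpha_i)a$ itself lies in $R^{\dc}$ — indeed $\alpha_i+k(\alpha_i)a$ is never a doubled root, because it is conjugate under $\W[\Pi^{\dc}]$ (acting trivially on $G$) to an element of the fiber $\pi_G^{-1}(\overline{\alpha_i})\cap R$, and $\Pi^{\dc}$ satisfies \eqref{condition-Pi'}. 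Hence $k_{R^{\dc}}(\alpha_i)=k(\alpha_i)$.

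Next I would prove (2). By Proposition \ref{prop:Gamma(R',G)} (1) (whose proof I may invoke, since it precedes this lemma in the logical order — or, if one wants a self-contained argument, by the defining property of the prime map $(\,\cdot\,)^{pr}_{R^{\dc}}$ applied inside the reduced root system $R^{\dc}$), the element $(\alpha_i)_{R^{\dc}}'$ equals $\alpha_i$ unless $\tfrac12(\alpha_i')^\ast\notin R$ and $\tfrac12\overline{(\alpha_i')^\ast}\in R/G$, in which case it equals $2\alpha_i-k(\alpha_i)a$. In the first case $\big((\alpha_i)_{R^{\dc}}'\big)^\ast=\alpha_i^{\ast_{R^{\dc}}}$; but when $\tfrac12(\alpha_i')^\ast\in R$ the formula $\tfrac12(\alpha_i')^\ast=\alpha_i+\tfrac12 k(\alpha_i)a$ from \eqref{eq:claim} in the proof of Corollary \ref{cor:nr-countings1} shows that $\alpha_i$ is \emph{not} a doubled root while $(\alpha_i')^\ast=2\alpha_i+\tfrac12 k(\alpha_i')a$, and a direct computation with \eqref{countings-alpha-prime} and \eqref{nr-countings8} gives $\big((\alpha_i)_{R^{\dc}}'\big)^\ast=\alpha_i=\tfrac12(\alpha_i')^\ast$ after noting $(\alpha_i)_{R^{\dc}}'=\alpha_i$ here. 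In the second case $(\alpha_i)_{R^{\dc}}'=2\alpha_i-k(\alpha_i)a$, and since $(\alpha_i')^\ast=2\alpha_i+k(\alpha_i)a\in R^{\dc}$ by the computation in Lemma \ref{lemma:color-Gamma-R'}, one gets $k_{R^{\dc}}\big((\alpha_i)_{R^{\dc}}'\big)=2k(\alpha_i)$ and hence $\big((\alpha_i)_{R^{\dc}}'\big)^\ast=(\alpha_i')^\ast$; this matches \eqref{stared-node-R'}. Finally, for (3), the exponent is $m_{(\alpha_i)_{R^{\dc}}}=\dfrac{I_{R^{\dc}}\big((\alpha_i)_{R^{\dc}},(\alpha_i)_{R^{\dc}}\big)}{2k^{\nr}_{R^{\dc}}\big((\alpha_i)_{R^{\dc}}\big)}n_{(\alpha_i)_{R^{\dc}}}$; since $\pi_G(\Pi^{\dc})$ is the same affine simple system for $(R^{\dc},G)$ as for $(R,G)$ we have $n_{(\alpha_i)_{R^{\dc}}}=n_{\alpha_i}$, the normalized forms agree up to the global scalar that also rescales $I_R$, and by part (1) together with Proposition \ref{prop:nr-countings1} applied in $R^{\dc}$ one has $k^{\nr}_{R^{\dc}}\big((\alpha_i)_{R^{\dc}}\big)=k^{\nr}(\alpha_i)$ (the "halving" condition $\tfrac12\overline{(\alpha_i')^\ast_{R^{\dc}}}\in R^{\dc}/G$ matches the "halving" condition for $\alpha_i$ in $R$); hence $m_{(\alpha_i)_{R^{\dc}}}=m_{\alpha_i}$. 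The main obstacle will be bookkeeping: verifying the equality $k^{\nr}_{R^{\dc}}\big((\alpha_i)_{R^{\dc}}\big)=k^{\nr}(\alpha_i)$ requires tracking which of the conditions "$2\alpha_i\in R$", "$\tfrac12(\alpha_i')^\ast\in R$", "$2\overline{\alpha_i}\in R/G$" survive when one passes from $R$ to $R^{\dc}$, and this is exactly the case analysis organized by the partition of $|\Gamma^\nw|$ — so I would carry it out line by line against Corollary \ref{cor:nr-countings1} and Lemma \ref{lemma:black-ast}.
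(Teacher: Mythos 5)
Your outline for (1) and (3) tracks the paper's, but part (2) as written does not go through. The route via Proposition \ref{prop:Gamma(R',G)} (1) is circular: that proposition does \emph{not} precede Lemma \ref{lemma:data-Gamma-R'} in the logical order. In the paper, Proposition \ref{prop:Gamma(R',G)} (1) is deduced \emph{from} statements (2) and (3) of this lemma, by writing $(\alpha_i)_{R^{\dc}}'=\big((\alpha_i)_{R^{\dc}}'\big)^\ast-k_{R^{\dc}}\big(\big((\alpha_i)_{R^{\dc}}'\big)^\ast\big)a$ and substituting \eqref{stared-node-R'}. Your fallback (``the defining property of the prime map applied inside $R^{\dc}$'') is the right idea but is precisely the computation that has to be carried out, and you do not carry it out. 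The paper avoids computing $(\alpha_i)_{R^{\dc}}'$ altogether: it expresses $\big((\alpha_i)_{R^{\dc}}'\big)^\ast=\big(\big((\alpha_i)_{R^{\dc}}'\big)^\ast:(\alpha_i)_{R^{\dc}}\big)_G\big\{(\alpha_i)_{R^{\dc}}+k^{\nr}_{R^{\dc}}\big((\alpha_i)_{R^{\dc}}\big)a\big\}$ via \eqref{defn:nr-countings_alpha'-ast} and Lemma \ref{lemma:nr-countings1} (1) applied to $(R^{\dc},G)$, so that both (2) and (3) reduce to the single identity $k^{\nr}_{R^{\dc}}\big((\alpha_i)_{R^{\dc}}\big)=k^{\nr}(\alpha_i)$ plus a comparison of the ratios $(\,\cdot:\cdot\,)_G$, and both of these are read off from the node colors via Lemma \ref{lemma:color-Gamma-R'} and Lemma \ref{lemma:black-ast}.

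Within your case analysis there is a second, independent error: in the sub-case $\tfrac12(\alpha_i')^\ast\in R$ you invoke \eqref{eq:claim} and \eqref{nr-countings8}, but both are established under the hypothesis $(\alpha_i')^\ast\ne 2\alpha_i^\ast$, which by the Claim in the proof of Corollary \ref{cor:nr-countings1} is equivalent to $\tfrac12(\alpha_i')^\ast\notin R$ --- the opposite of your case assumption. In that sub-case one has $(\alpha_i')^\ast=2\alpha_i^\ast$, hence $\tfrac12(\alpha_i')^\ast=\alpha_i+k(\alpha_i)a$ rather than $\alpha_i+\tfrac12 k(\alpha_i)a$, and your displayed conclusion $\big((\alpha_i)_{R^{\dc}}'\big)^\ast=\alpha_i$ is false; the correct identity is $\big((\alpha_i)_{R^{\dc}}'\big)^\ast=\alpha_i+k_{R^{\dc}}(\alpha_i)a=\alpha_i^\ast=\tfrac12(\alpha_i')^\ast$, as \eqref{stared-node-R'} asserts. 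A smaller slip of the same flavor occurs in (1): condition \eqref{condition-Pi'} controls whether $2\gamma\in R$ (membership in $R^{\mc}$) and says nothing about whether $\tfrac12\big(\alpha_i+k(\alpha_i)a\big)\in R$; the reason the whole fiber $\alpha_i+\Z k(\alpha_i)a$ lies in $R^{\dc}$ is that its image under $\pi_G$ lies in $(R/G)^{\dc}$, so Lemma \ref{lemma:reduced_pair1} (1) applies --- this is exactly Proposition \ref{prop:description-R'-R''}, which the paper cites to settle (1) in one line. Your part (3) coincides with the paper's argument and is sound once the identity $k^{\nr}_{R^{\dc}}\big((\alpha_i)_{R^{\dc}}\big)=k^{\nr}(\alpha_i)$ is actually verified through the color matching of Lemma \ref{lemma:color-Gamma-R'}.
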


\begin{proof}
(1) Since $R(\alpha_i)=R^{\dc}(\alpha_i)$ for every $0\leq i\leq l$ (see Proposition 
\ref{prop:description-R'-R''}), we have statement (1), immediately. 
\vskip 1mm
\noindent
(2) By \eqref{defn:nr-countings_alpha'-ast} and Lemma
\ref{lemma:nr-countings1} (1), we have
\begin{equation}\label{nr-countings-R'1}
\big((\alpha_i)_{R^{\dc}}'\big)^\ast
=\big(\big((\alpha_i)_{R^{\dc}}'\big)^\ast:(\alpha_i)_{R^{\dc}}\big)_G\big\{
(\alpha_i)_{R^{\dc}}+k_{R^{\dc}}^\nr \big((\alpha_i)_{R^{\dc}}\big)a\big\}.
\end{equation}
On the other hand, we get 
\[\begin{aligned}
&(\alpha_i)_{R^{\dc}}+k_{R^{\dc}}^\nr \big((\alpha_i)_{R^{\dc}}\big)a \\
=
&\alpha_i+k^\nr(\alpha_i)a\quad (\because\ \eqref{nr-counting-number})\\
=
&\dfrac{1}{((\alpha_i')^\ast:\alpha_i)_G}\, (\alpha_i')^\ast
\quad (\because\ \text{\eqref{defn:nr-countings_alpha'-ast} and Lemma
\ref{lemma:nr-countings1} (1)}).
\end{aligned}\]
Hence, by \eqref{nr-countings-R'1}, we have
\[\big((\alpha_i)_{R^{\dc}}'\big)^\ast= 
\dfrac{\big(\big((\alpha_i)_{R^{\dc}}'\big)^\ast:(\alpha_i)_{R^{\dc}}\big)_G}
{((\alpha_i')^\ast:\alpha_i)_G}\, (\alpha_i')^\ast.\]
By the definition of the two prime maps and Lemma 
\ref{lemma:color-Gamma-R'}, we get  
\begin{align*}
\dfrac{\big(\big((\alpha_i)_{R^{\dc}}'\big)^\ast:(\alpha_i)_{R^{\dc}}\big)_G}
{((\alpha_i')^\ast:\alpha_i)_G}
&=\begin{cases}
\frac12 & \!\!\! \begin{array}{ll}
\text{if $(\alpha_i)_{R^{\dc}}\in |\Gamma^\downarrow (R^{\dc},G)|$ is a white node}\\
\text{\quad and $\alpha_i\in |\Gamma^\downarrow (R,G)|$ is not a white node},
\end{array}
\\
1 & \text{otherwise}
\end{cases}\\
&=\begin{cases}
\frac12 & \text{if $\alpha_i\in |\Gamma^\nw_m|_g^b,\, 
|\Gamma^\nw_m|_b^b$ or 
$|(\Gamma^{\downarrow}\setminus \Gamma^{\downarrow}_m)^\nw|$
},\\
1 & \text{otherwise}.
\end{cases}
\end{align*}
By Lemma \ref{lemma:black-ast}, we have the desired result.
\vskip 1mm
\noindent
(3) Note that $I_{R^{\dc}}\big((\alpha_i)_{R^{\dc}},(\alpha_i)_{R^{\dc}}\big)
=I_R(\alpha_i,\alpha_i)$. By the definition of the exponents, we have
\begin{align*}
m_{(\alpha_i)_{R^{\dc}}}
=
&\dfrac{I_{R^{\dc}}\big((\alpha_i)_{R^{\dc}},(\alpha_i)_{R^{\dc}}\big)}
{2 k_{R^{\dc}}^\nr \big((\alpha_i)_{R^{\dc}}\big)}n_{\alpha_i} \\
=
&\dfrac{I_R(\alpha_i,\alpha_i)}{2 k_{R^{\dc}}^\nr \big((\alpha_i)_{R^{\dc}}\big)}
n_{\alpha_i}
=\dfrac{k_{R^{\dc}}^\nr \big((\alpha_i)_{R^{\dc}}\big)}{k^\nr(\alpha_i)}
m_{\alpha_i}.
\end{align*}
Here, $k_{R^{\dc}}^\nr \big((\alpha_i)_{R^{\dc}}\big)$ is the non-reduced counting number
of $(\alpha_i)_{R^{\dc}}=\alpha_i$. 
Therefore, for proving statement (2), it is enough to show that
\begin{equation}\label{nr-counting-number}
k_{R^{\dc}}^\nr \big((\alpha_i)_{R^{\dc}}\big)=k^\nr(\alpha_i).
\end{equation}

Applying Corollary \ref{cor:nr-countings1} to the mERSs $(R^{\dc},G)$, we get 
\begin{align*}
& k_{R^{\dc}}^\nr \big((\alpha_i)_{R^{\dc}}\big) \\
=
&\begin{cases}
\frac12 k_{R^{\dc}}\big((\alpha_i)_{R^{\dc}}\big)
& \text{if $\frac12\big((\alpha_i)_{R^{\dc}}'\big)^\ast\not\in R^{\dc}$ and 
$\frac12 \overline{\big((\alpha_i)_{R^{\dc}}'\big)^\ast}\in R^{\dc}/G$},\\
k_{R^{\dc}}\big((\alpha_i)_{R^{\dc}}\big)
& \text{otherwise},
\end{cases}\\
=
&\begin{cases}
\frac12 k(\alpha_i)
& \text{if $\frac12\big((\alpha_i)_{R^{\dc}}'\big)^\ast\not\in R^{\dc}$ and 
$\frac12 \overline{\big((\alpha_i)_{R^{\dc}}'\big)^\ast}\in R^{\dc}/G$},\\
k(\alpha_i)
& \text{otherwise}.
\end{cases}
\end{align*}
Note that the second equality follows from statement (1). 
\vskip 3mm
\noindent
{\bf Claim.} {\it For $(\alpha_i)_{R^{\dc}}\in |\Gamma^{\downarrow}(R^{\dc},G)|$,
the following two conditions are equivalent}:
\begin{itemize}
\item[(a)] $\frac12\big((\alpha_i)_{R^{\dc}}'\big)^\ast\not\in R^{\dc}$ {\it and} 
$\frac12 \overline{\big((\alpha_i)_{R^{\dc}}'\big)^\ast}\in R^{\dc}/G$.
\vskip 1mm
\item[(b)] {\it $(\alpha_i)_{R^{\dc}}$ is a gray node.}
\end{itemize}
\begin{proof}
First, since $R^{\dc}$ is reduced, note that the condition 
$\frac12\big((\alpha_i)_{R^{\dc}}'\big)^\ast\not\in R^{\dc}$
is satisfied for every $(\alpha_i)_{R^{\dc}}\in |\Gamma^{\downarrow}(R^{\dc},G)|$.
Second, by 
the equivalence \eqref{nr-countings-cor1} for
$(R^{\dc},G)$, we have
\[\frac12 \overline{\big((\alpha_i)_{R^{\dc}}'\big)^\ast}\in R^{\dc}/G\quad 
\Longleftrightarrow\quad 2\overline{(\alpha_i)_{R^{\dc}}}
\in R^{\dc}/G.\]
The right hand side means that the node 
$(\alpha_i)_{R^{\dc}}\in |\Gamma^{\downarrow}(R^{\dc},G)|$ is not 
a white node. Since $R^{\dc}$ is reduced, it should be a gray node. 
\end{proof}

Lemma \ref{lemma:color-Gamma-R'} asserts that the following 
two conditions
are equivalent:
\begin{itemize}
\item[(i)] $(\alpha_i)_{R^{\dc}}\in |\Gamma^{\downarrow}(R^{\dc},G)|$ is a gray node;
\vskip 1mm
\item[(ii)] $(\alpha_i')^\ast\in |(\Gamma_m^{\nw})^*|$ is a gray node.
\end{itemize}
Since the later means that $\frac12(\alpha_i')^\ast\not\in R$ and 
$\frac12 \overline{(\alpha_i')^\ast}\in R/G$, we get  
\begin{align*}
 k_{R^{\dc}}^\nr \big((\alpha_i)_{R^{\dc}}\big)
&=\begin{cases}
\frac12 k(\alpha_i)
& \text{if $\frac12(\alpha_i')^\ast\not\in R$ and 
$\frac12 \overline{(\alpha_i')^\ast}\in R/G$},\\
k(\alpha_i) & \text{otherwise}.
\end{cases}
\end{align*}
By Corollary \ref{cor:nr-countings1} for the mERS $(R,G)$, one has that 
the right hand side
of the quality above coincides with $k^\nr (\alpha_i)$. 
Thus,  \eqref{nr-counting-number} is obtained, and the proof of statement
(3) is completed. 
\end{proof}

Now, let us prove Proposition \ref{prop:Gamma(R',G)}. 
\vskip 3mm
\noindent
{\it Proof of Proposition \ref{prop:Gamma(R',G)}}. 
The first half of statement (2), statements (3) and (4) are already proved 
in Lemma \ref{lemma:data-Gamma-R'} (1), (2) and (3), respectively.  Statement 
(5) is an immediate consequence of Lemma \ref{lemma:color-Gamma-R'}. 
Applying Proposition \ref{prop:countings-prime} to the reduced mERS $(R^{\dc},G)$, 
the second half of statement (2) is obtained. 
Thus, the remaining is to prove  statement (1). 

By the definition of the
$\ast$-operation, we have
\begin{equation}\label{prime-R'}
(\alpha_i)_{R^{\dc}}'=\big((\alpha_i)_{R^{\dc}}'\big)^\ast-k_{R^{\dc}}
\big(\big((\alpha_i)_{R^{\dc}}'\big)^\ast\big)a.
\end{equation}
The explicit form of the first term of the right hand side is already known in 
 statement (3). 
The explicit form of the second term is also known in 
statement (2). 
Substituting them to \eqref{prime-R'}, statement (1) is obtained
by Corollary \ref{cor:nr-countings1} and Proposition 
\ref{prop:countings-prime}. 
Thus, we completed  the proof of the proposition. \hfill $\square$
\vskip 5mm
\subsection{ Structure of $R^{\mc}$}\label{sect_R''} 
Next, we study the reduced mERS $(R^{\mc},G)$. Define
\[\begin{array}{lll}
|\Gamma^\nw|_b&\!\!\!\!=|\Gamma_m^\nw|_b^g\amalg 
|\Gamma_m^\nw|_b^b\amalg
|(\Gamma^\downarrow\setminus\Gamma_m^\downarrow)^\nw|&
\text{(the set of all black nodes in $|\Gamma^\downarrow|$)},\\[3pt]
|\Gamma^\nw|_g&\!\!\!\!=|\Gamma_m^\nw|_g^g\amalg 
|\Gamma^\nw_m|_g^b&
\text{(the set of all gray nodes in $|\Gamma^\downarrow|$)}.
\end{array}\]
\index[notations]{g916@$\vert\Gamma^\nw\vert_b$}
\index[notations]{g917@$\vert\Gamma^\nw\vert_g$}
Here, we recall that $\Gamma^\downarrow=
\Gamma^{\downarrow}(R,G)$.
Then, the following decomposition of $|\Gamma^\downarrow|$ is obtained:
\begin{equation}\label{decomp-Gamma}
|\Gamma^\downarrow|
=|\Gamma^\w|\amalg |\Gamma^\nw|_b\amalg |\Gamma^\nw|_g.
\end{equation}
Set
\[\begin{array}{lllll}
R^{\mc}(i)&\!\!\!\!=R^{\mc}(\alpha_i) & \text{if $\alpha_i\in|\Gamma^\w|$},\\[3pt]
R^{\mc}(j)&\!\!\!\!=R^{\mc}(\alpha_j)\cup R^{\mc}((\alpha_j')^\ast) & 
\text{if $\alpha_j\in|\Gamma^\nw|=|\Gamma^\nw|_b\amalg |\Gamma^\nw|_g$},
\end{array}\] 
where $R^{\mc}(\alpha_i)$, $R^{\mc}(\alpha_j)$ and 
$R^{\mc}((\alpha_j')^\ast)$ are subsets of $R$ introduced  in Proposition 
\ref{prop:description-R'-R''}.
By \eqref{description-R''}, we have
\begin{equation}\label{eq:description-Rmc}
R^{\mc}=\Big(\bigcup_{\alpha_i\in |\Gamma^\w|}R^{\mc}(i)\Big)\bigcup
\Big(\bigcup_{\alpha_j\in |\Gamma^\nw|_b}R^{\mc}(j)\Big)\bigcup
\Big(\bigcup_{\alpha_j\in |\Gamma^\nw|_g}R^{\mc}(j)\Big).
\end{equation}
Take the image of both sides under the canonical projection $\pi_G:F\to F/G$.
By using the explicit descriptions of $R^{\mc}(i)$ and $R^{\mc}(j)$ in Proposition
\ref{prop:description-R'-R''}, we have
\begin{equation}\label{description-R''/G}
\begin{aligned}
& R^{\mc}/G \\
=
&\Big(\bigcup_{\alpha_i\in |\Gamma^\w|}\overline{R^{\mc}(i)}\Big)\bigcup
\Big(\bigcup_{\alpha_j\in |\Gamma^\nw|_b}\overline{R^{\mc}(j)}\Big)\bigcup
\Big(\bigcup_{\alpha_j\in |\Gamma^\nw|_g}\overline{R^{\mc}(j)}\Big)\\
=&\Big(\bigcup_{\alpha_i\in |\Gamma^\w|}W(R/G)(\overline{\alpha_i})\Big)\bigcup
\Big(\bigcup_{\alpha_j\in |\Gamma^\nw|_b}W(R/G)(2\overline{\alpha_j})\Big)\\
&\quad  \;  \bigcup
\Big(\bigcup_{\alpha_j\in |\Gamma^\nw|_g}
\Big(W(R/G)(\overline{\alpha_j})
\bigcup W(R/G)(2\overline{\alpha_j})\Big)\Big)
\quad (\because\ \text{Lemma \ref{lemma:paired-reduced-basis} (1)}).
\end{aligned}
\end{equation}
Recall the reduced pair $((R^{\mc}/G)^{\dc},(R^{\mc}/G)^{\mc})$ of the affine root system $R^{\mc}/G$. 
The formula \eqref{description-R''/G} implies that
\begin{itemize}
\item the set $\{\overline{\alpha_i}\}_{\alpha_i\in|\Gamma^\w|}\amalg 
\{2\overline{\alpha_j}\}_{\alpha_j\in|\Gamma^\nw|_b}\amalg
\{\overline{\alpha_j}\}_{\alpha_j\in|\Gamma^\nw|_g}$ gives a simple system of 
$(R^{\mc}/G)^{\dc}$;
\item the set $\{\overline{\alpha_i}\}_{\alpha_i\in|\Gamma^\w|}\amalg 
\{2\overline{\alpha_j}\}_{\alpha_j\in|\Gamma^\nw|_b}\amalg
\{2\overline{\alpha_j}\}_{\alpha_j\in|\Gamma^\nw|_g}$ gives a simple system of 
$(R^{\mc}/G)^{\mc}$.
\end{itemize}

Motivated by these facts, we define a linearly independent subset 
$\{(\alpha_i)_{R^{\mc}}\}_{0\leq i\leq l}$ of $R^{\mc}$ by
\begin{equation}\label{eq:defn-(alpha)R''}
(\alpha_i)_{R^{\mc}}=\begin{cases}
2\alpha_i & \text{if $\alpha_i\in |\Gamma^\nw|_b$},\\
\alpha_i & \text{otherwise}. 
\end{cases}
\end{equation}
\index[notations]{a912@$(\alpha_i)_{R^{\mc}}$}
The above facts assert that the image 
$\{\overline{(\alpha_i)_{R^{\mc}}}\}_{0\leq i\leq l}\subset R^{\mc}/G$ 
is a simple system of $R^{\mc}/G$. Similar to the case of $R^{\dc}$, one has 
$R^{\mc} \cap \frac12 (R^{\mc})_l=\emptyset$, because $R^{\mc}$ is reduced. 
Therefore, the set $\{(\alpha_i)_{R^{\mc}}\}_{0\leq i\leq l}$ is a simple system of 
$R^{\mc}$ which satisfies condition \eqref{condition-Pi'}. Define
\[(\alpha_i)_{R^{\mc}}'=\big((\alpha_i)_{R^{\mc}}\big)^{pr}_{R^{\mc}}
\quad \text{for }0\leq i\leq l,\]
where $(\,\cdot\,)^{pr}_{R^{\mc}}:R^{\mc}\to R^{\mc}$ 
is the prime map for $R^{\mc}$. 
\index[index]{prime map!prime map mc@-- for $R^{\mc}$}

Then, the pair
\[\big(\{(\alpha_i)_{R^{\mc}}\}_{0\leq i\leq l},
\{(\alpha_i)_{R^{\mc}}'\}_{0\leq i\leq l}\big)\] 
is a paired simple system of $(R^{\mc},G)$. 
By Proposition \ref{prop:nr-counting-Autom1}, any paired simple 
system $(\Pi^{\dc}_{R^{\mc}},\Pi^{\mc}_{R^{\mc}})$ of $(R^{\mc},G)$
is the image of the pair $\big(\{(\alpha_i)_{R^{\mc}}\}_{0\leq i\leq l}, \\
\{(\alpha_i)_{R^{\mc}}'\}_{0\leq i\leq l}\big)$ under an automorphism of $(R^{\mc},G)$.
Thus, we may assume that 
\[(\Pi^{\dc}_{R^{\mc}},\Pi^{\mc}_{R^{\mc}})
=\big(\{(\alpha_i)_{R^{\mc}}\}_{0\leq i\leq l},
\{(\alpha_i)_{R^{\mc}}'\}_{0\leq i\leq l}\big).\]

Let $a^{R^{\mc}}$ be a generator of the lattice $Q(R^{\mc})\cap G$ of rank $1$. 
We may assume that $a^{R^{\mc}}\in \Z_{>0}a$. 
More precisely, we have
\begin{equation}\label{eq:a-Rmc}
a^{R^{\mc}}=[Q(R)\cap G:Q(R^{\mc})\cap G]a.
\end{equation}
\vskip 5mm

In the following, we determine the elliptic diagram $\Gamma(R^{\mc},G)$ of 
$(R^{\mc},G)$. Let us start with the following lemma.

\begin{lemma}\label{lemma:Gamma(R'',G)}
{\rm (1)} Let $k_{R^{\mc}}(\alpha)$ be the counting number of 
$\alpha\in R^{\mc}$ as an element of the reduced mERS $(R^{\mc},G)$. One has
\begin{equation}\label{eq:counting-Rmc-1}
[Q(R)\cap G:Q(R^{\mc})\cap G]\,
k_{R^{\mc}}\big((\alpha_i)_{R^{\mc}}\big)=\begin{cases}
2k(\alpha_i) & \text{if $\frac12 (\alpha_i')^\ast\in R$},\\
k(\alpha_i) & \text{otherwise},
\end{cases}
\end{equation}
\begin{equation}\label{eq:counting-Rmc-2}
[Q(R)\cap G:Q(R^{\mc})\cap G]\,
k_{R^{\mc}}\big(\big((\alpha_i)_{R^{\mc}}'\big)^\ast\big)=
k(\alpha_i')\quad\text{for every}\quad 0\leq i\leq l, 
\end{equation}
{\rm (2)} The explicit forms of $\big((\alpha_i)_{R^{\mc}}'\big)^\ast\in 
\big(\Pi^{\mc}_{R^{\mc}}\big)^\ast\, (0\leq i\leq l)$ are given by
\[\big((\alpha_i)_{R^{\mc}}'\big)^\ast=
(\alpha_i')^\ast.\]
{\rm (3)} The explicit forms of 
$(\alpha_i)_{R^{\mc}}'\in \Pi^{\mc}_{R^{\mc}}\, (0\leq i\leq l)$ 
are given by
\[(\alpha_i)_{R^{\mc}}'=\alpha_i'.\]
\end{lemma}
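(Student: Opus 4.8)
\textbf{Plan of proof for Lemma \ref{lemma:Gamma(R'',G)}.}
The strategy is to treat the three statements in a single package, exactly as was done for $(R^{\dc},G)$ in Proposition \ref{prop:Gamma(R',G)} and its supporting Lemmas \ref{lemma:color-Gamma-R'}--\ref{lemma:data-Gamma-R'}, but now using the description \eqref{eq:description-Rmc} of $R^{\mc}$ and the explicit choice \eqref{eq:defn-(alpha)R''} of $(\alpha_i)_{R^{\mc}}$. The key structural input is that a paired simple system of $(R^{\mc},G)$ is, up to $\Aut(R^{\mc},G)$, the pair $\big(\{(\alpha_i)_{R^{\mc}}\},\{(\alpha_i)'_{R^{\mc}}\}\big)$, so Proposition \ref{prop:nr-counting-Autom1} and Lemma \ref{lemma:countings1} (3) guarantee that all quantities we compute are genuine invariants. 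Throughout I will write $e:=[Q(R)\cap G:Q(R^{\mc})\cap G]$, so that $a^{R^{\mc}}=ea$ by \eqref{eq:a-Rmc}, and all counting numbers relative to $a^{R^{\mc}}$ must be multiplied by $e$ to be compared with those relative to $a$ — this is the origin of the factor $e$ in \eqref{eq:counting-Rmc-1}, \eqref{eq:counting-Rmc-2}.

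First I would prove statement (1). For $\alpha_i\in|\Gamma^\w|$ we have $(\alpha_i)_{R^{\mc}}=\alpha_i$ and $R^{\mc}(\alpha_i)=R(\alpha_i)$ by Proposition \ref{prop:description-R'-R''}, so $e\,k_{R^{\mc}}((\alpha_i)_{R^{\mc}})=k(\alpha_i)$ and, since $\frac12(\alpha_i')^\ast\notin R$ for white nodes (Lemma \ref{lemma:black-ast}), this matches the ``otherwise'' branch. For $\alpha_j\in|\Gamma^\nw|$ I would split along the four subtypes $|\Gamma_m^\nw|_g^g$, $|\Gamma_m^\nw|_g^b$, $|\Gamma_m^\nw|_b^g$, $|\Gamma_m^\nw|_b^b\amalg|(\Gamma^{\downarrow}\setminus\Gamma_m^{\downarrow})^\nw|$, read off $R^{\mc}(j)=R^{\mc}(\alpha_j)\cup R^{\mc}((\alpha_j')^\ast)$ from Proposition \ref{prop:description-R'-R''}, and determine the minimal positive multiple of $a$ by which the relevant root of $R^{\mc}$ can be shifted. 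In the gray cases $(\alpha_i)_{R^{\mc}}=\alpha_i$ and one uses the explicit shift lattices $(2\Z+1)k(\alpha_j)a$ resp.\ $(4\Z+2)k(\alpha_j)a$ together with $\alpha_j+\Z k(\alpha_j)a\subset R^{\mc}(\alpha_j)$; in the black cases $(\alpha_i)_{R^{\mc}}=2\alpha_i$ and one reads the counting off the long-root part. Then \eqref{eq:counting-Rmc-1} follows by matching against the condition $\frac12(\alpha_i')^\ast\in R$ via Lemma \ref{lemma:black-ast}, and \eqref{eq:counting-Rmc-2} follows because $R^{\mc}((\alpha_i')^\ast)=R((\alpha_i')^\ast)$ unconditionally (Proposition \ref{prop:description-R'-R''}), so $e\,k_{R^{\mc}}(((\alpha_i)_{R^{\mc}}')^\ast)=k((\alpha_i')^\ast)=k(\alpha_i')$ by \eqref{k=kast}.

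Next, statements (2) and (3) together amount to showing that the prime map for $R^{\mc}$, applied to $(\alpha_i)_{R^{\mc}}$, reproduces $\alpha_i'\in R^{\mc}$ (and hence $((\alpha_i)_{R^{\mc}}')^\ast=(\alpha_i')^\ast$). Here I would argue directly from the defining conditions of the prime map in $\S$\ref{subsubsect:map-prime}: if $2\overline{\alpha_i}\notin R/G$ then $\alpha_i'=\alpha_i=(\alpha_i)_{R^{\mc}}$ and there is nothing to do; if $2\overline{\alpha_i}\in R/G$, then $\overline{(\alpha_i)_{R^{\mc}}}$ is a short root of $R^{\mc}/G=(R/G)^{\mc}$ whose double lies in $R^{\mc}/G$, and the unique long root in $2(\alpha_i)_{R^{\mc}}+\Z a^{R^{\mc}}$ lying in the prescribed half-lines is, by \eqref{eq:defn-(alpha)R''} and the shift-lattice data from step (1), precisely $\alpha_i'$. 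The subtlety is that in the black case $(\alpha_i)_{R^{\mc}}=2\alpha_i$, so ``$2(\alpha_i)_{R^{\mc}}$'' is $4\alpha_i$, not $2\alpha_i$; one must check the normalizations $((\alpha_i')^\ast:(\alpha_i)_{R^{\mc}})_G$ carefully and verify that the sign/half-line conditions $\alpha_i'\in 2(\alpha_i)_{R^{\mc}}+\Z_{\le0}a^{R^{\mc}}$, $(\alpha_i')^\ast\in 2(\alpha_i)_{R^{\mc}}+\Z_{>0}a^{R^{\mc}}$ are the same as the ones defining $\alpha_i'$ in $R$ — this is where the index $e$ and the parity of the shift lattices interact. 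I expect this case-bookkeeping — keeping straight, for each of the (at most) four colour-subtypes, which of $\alpha_i$, $2\alpha_i$ is the reduced simple root, what the shift lattice is relative to $a$ versus relative to $a^{R^{\mc}}$, and whether $\frac12(\alpha_i')^\ast\in R$ — to be the main obstacle; once the dictionary is tabulated it reduces to inspection of the data in Tables \ref{table:BCC}--\ref{table:CC} (or equivalently the explicit root data of Part \ref{chapter:new-mERS}), exactly parallel to the proof of Lemma \ref{lemma:data-Gamma-R'}.
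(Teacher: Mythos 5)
Your plan is correct and follows the same general architecture as the paper's proof (reduce everything to the description \eqref{eq:description-Rmc} of $R^{\mc}$ via Proposition \ref{prop:description-R'-R''}, and use the index $e=[Q(R)\cap G:Q(R^{\mc})\cap G]$ to translate between counting numbers relative to $a$ and to $a^{R^{\mc}}$), but the routes through statements (2), (3) and through \eqref{eq:counting-Rmc-2} differ. The paper proves \eqref{eq:counting-Rmc-2} \emph{structurally}: it applies Proposition \ref{prop:countings-prime} to the reduced mERS $(R^{\mc},G)$ (no black nodes, so only the factor $1$ or $2$ occurs, governed by the color of $(\alpha_i)_{R^{\mc}}$ in $\Gamma(R^{\mc},G)$), combines this with \eqref{eq:counting-Rmc-1} and Lemma \ref{lemma:black-ast}, and matches the result against Proposition \ref{prop:countings-prime} for $(R,G)$; it then derives (2) from the formula $\big((\alpha_i)_{R^{\mc}}'\big)^\ast=\big(\big((\alpha_i)_{R^{\mc}}'\big)^\ast:(\alpha_i)_{R^{\mc}}\big)_G(\alpha_i)_{R^{\mc}}+k_{R^{\mc}}\big((\alpha_i)_{R^{\mc}}\big)a^{R^{\mc}}$ and deduces (3) last. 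You instead get \eqref{eq:counting-Rmc-2} by identifying the counting set directly, and get (2) as a consequence of (3), which you prove by comparing the two prime maps on the fiber $\pi_G^{-1}(2\overline{\alpha_i})$. Both routes work; yours is arguably more geometric, the paper's avoids ever having to discuss half-line normalizations for $a^{R^{\mc}}$ versus $a$.

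Two points to tighten. First, as written your derivation of \eqref{eq:counting-Rmc-2} quietly uses that $\big((\alpha_i)_{R^{\mc}}'\big)^\ast$ lies in the same fiber $\pi_G^{-1}(2\overline{\alpha_i})\cap R$ as $(\alpha_i')^\ast$ — which is statement (2), proved only afterwards in your plan. This is not circular, because the fiber membership follows already from the definition of the prime map for $R^{\mc}$ together with the observation that $\pi_G^{-1}(2\overline{\alpha_i})\cap R^{\mc}=\pi_G^{-1}(2\overline{\alpha_i})\cap R$ (every element of that fiber is a long root, and $R^{\mc}\supset R_l$); but you should either state that observation explicitly or reorder so that (3) precedes \eqref{eq:counting-Rmc-2}. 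Second, the assertion ``$R^{\mc}/G=(R/G)^{\mc}$'' is false in general: Lemma \ref{lemma:reduced_pair2} only gives $(R^{\mc}/G)^{\mc}=(R/G)^{\mc}$, and $R^{\mc}/G$ is typically non-reduced (e.g.\ for $C^\vee C_l^{(4)}$ one has $R^{\mc}$ of type $BC_l^{(2,2)\sigma}(2)$ with $R^{\mc}/G$ of type $C^\vee C_l$). The argument survives — all you need is that $2\overline{(\alpha_i)_{R^{\mc}}}\in R^{\mc}/G$ exactly when $\alpha_i$ is a gray node — but the statement should be corrected. Finally, no appeal to Tables \ref{table:BCC}--\ref{table:CC} is needed: the case bookkeeping closes using only Proposition \ref{prop:description-R'-R''} and Lemma \ref{lemma:black-ast}, as in the paper.
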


\begin{proof}
(1) The formula \eqref{eq:counting-Rmc-1} is an easy consequence of the description 
\eqref{eq:description-Rmc} of $R^{\mc}$. Let us prove \eqref{eq:counting-Rmc-2}. 
Note that, since $(R^{\mc},G)$ is reduced, there is no black node in 
$|\Gamma(R^{\mc},G)|$. 
Applying Proposition \ref{prop:countings-prime} for $(R^{\mc},G)$, 
we have
\begin{equation}\label{eq:couting-Rmc-ast}
\begin{split}
&k_{R^{\mc}}\big(\big((\alpha_i)_{R^{\mc}}'\big)^\ast\big) \\
=
&
\begin{cases}
k_{R^{\mc}}\big((\alpha_i)_{R^{\mc}}\big) & 
\text{if $(\alpha_i)_{R^{\mc}}$ is a white node in $|\Gamma(R^{\mc},G)|$},\\
2k_{R^{\mc}}\big((\alpha_i)_{R^{\mc}}\big) & 
\text{if $(\alpha_i)_{R^{\mc}}$ is a gray node in $|\Gamma(R^{\mc},G)|$}.
\end{cases}
\end{split}
\end{equation}
By the definition of $(\alpha_i)_{R^{\mc}}$, we have
\begin{equation}\label{eq:color-Rmc}
\left\{\begin{array}{lllllll}
\text{$(\alpha_i)_{R^{\mc}}$ is a white node in $\Gamma(R^{\mc},G)$} 
& \Longleftrightarrow & \alpha_i\in |\Gamma^\w|\amalg |\Gamma^\nw|_b,\\[3pt]
\text{$(\alpha_i)_{R^{\mc}}$ is a gray node in $\Gamma(R^{\mc},G)$} 
& \Longleftrightarrow & \alpha_i\in |\Gamma^\nw|_g.
\end{array}\right.
\end{equation}
Therefore, by \eqref{eq:couting-Rmc-ast} and
\eqref{eq:color-Rmc}, we have
\begin{align*}
&[Q(R)\cap G:Q(R^{\mc})\cap G]\,
k_{R^{\mc}}\big(\big((\alpha_i)_{R^{\mc}}'\big)^\ast\big)\\
&\quad =[Q(R)\cap G:Q(R^{\mc})\cap G]\,
k_{R^{\mc}}\big((\alpha_i)_{R^{\mc}}\big)\times
\begin{cases}
1 & \text{if $\alpha_i\in |\Gamma^\w|\amalg |\Gamma^\nw|_b$},\\
2 & \text{if $\alpha_i\in |\Gamma^\nw|_g$}.
\end{cases}
\end{align*}
Furthermore, \eqref{eq:counting-Rmc-1} and Lemma \ref{lemma:black-ast}
imply that
\begin{align*}
&\text{the right hand side} =\begin{cases}
k(\alpha_i) & \text{if } \alpha_i\in |\Gamma^\w|\amalg |\Gamma_m^\nw|_b^g ,\\
2k(\alpha_i) & \text{if } \alpha_i\in |\Gamma_m^\nw|_g^g\amalg |\Gamma_m^\nw|_b^b
\amalg |(\Gamma^{\downarrow}\setminus\Gamma^{\downarrow}_m)^\nw|,\\
4k(\alpha_i) & \text{if }\alpha_i\in|\Gamma_m^\nw|_g^b.
\end{cases}
\end{align*}
By Proposition \ref{prop:countings-prime} for $(R,G)$, this coincides with 
$k((\alpha_i')^\ast)=k(\alpha_i')$. Thus, we have \eqref{eq:counting-Rmc-2}.
\vskip 1mm
\noindent
(2) Note that the formula
\begin{equation}\label{eq:n-red-coungting_in_Rmc}
k_{R^{\mc}}^{nr}\big(\big((\alpha_i)_{R^{\mc}}'\big)^\ast\big)=
k_{R^{\mc}}\big((\alpha_i)_{R^{\mc}}\big)
\end{equation}
holds. Indeed, by Corollary \ref{cor:nr-countings1} for $(R^{\mc},G)$, we have
\begin{align*}
&k_{R^{\mc}}\big(\big((\alpha_i)_{R^{\mc}}'\big)^\ast\big) \\
=
&\begin{cases}
k_{R^{\mc}}^{nr}\big(\big((\alpha_i)_{R^{\mc}}'\big)^\ast\big) & 
\text{if $(\alpha_i)_{R^{\mc}}$ is a white node in $\Gamma(R^{\mc},G)$},\\
2k_{R^{\mc}}^{nr}\big(\big((\alpha_i)_{R^{\mc}}'\big)^\ast\big) &
\text{if $(\alpha_i)_{R^{\mc}}$ is a gray node in $\Gamma(R^{\mc},G)$}.
\end{cases}
\end{align*}
This formula and \eqref{eq:couting-Rmc-ast} imply
\eqref{eq:n-red-coungting_in_Rmc}.

Hence, by \eqref{defn:nr-countings_alpha'-ast} and \eqref{eq:n-red-coungting_in_Rmc},
we get 
\begin{equation}\label{eq:prime-ast-Rmc}
\begin{aligned}
\big((\alpha_i)_{R^{\mc}}'\big)^*=
\big(\big((\alpha_i)_{R^{\mc}}'\big)^*:(\alpha_i)_{R^{mc}}\big)_G(\alpha_i)_{R^{\mc}}
+k_{R^{\mc}}\big((\alpha_i)_{R^{\mc}}\big)a^{R^{\mc}}.
\end{aligned}
\end{equation}
By \eqref{eq:color-Rmc} and the definition \eqref{eq:defn-(alpha)R''} 
of $(\alpha_i)_{R^{\mc}}$, we have
\begin{align*}
\big(\big((\alpha_i)_{R^{\mc}}'\big)^*:(\alpha_i)_{R^{mc}}\big)_G(\alpha_i)_{R^{\mc}}
&=\begin{cases}
(\alpha_i)_{R^{\mc}} & \text{if $(\alpha_i)_{R^{\mc}}$ is a white node},\\
2(\alpha_i)_{R^{\mc}} & \text{if $(\alpha_i)_{R^{\mc}}$ is a gray node},
\end{cases}\\
&=\begin{cases}
\alpha_i & \text{if $\alpha_i\in |\Gamma^\w|$},\\
2\alpha_i & \text{otherwise},
\quad 
\end{cases}
\end{align*}
Furthermore, by \eqref{eq:a-Rmc} and \eqref{eq:counting-Rmc-1}, we get 
\begin{align*}
k_{R^{\mc}}\big((\alpha_i)_{R^{\mc}}\big)a^{R^{\mc}} 
&=[Q(R)\cap G:Q(R^{\mc})\cap G]k_{R^{\mc}}\big((\alpha_i)_{R^{\mc}}\big)a\\
&=\begin{cases}
2k(\alpha_i)a & \text{if $\alpha_i\in 
|\Gamma_m^\nw|_g^b
\amalg |\Gamma_m^\nw|_b^b\amalg 
|(\Gamma^{\downarrow}\setminus 
\Gamma_m^{\downarrow})^\nw|$},\\
k(\alpha_i)a & \text{if $|\Gamma^\w|\amalg 
|\Gamma_m^\nw|_g^g\amalg |\Gamma_m^\nw|_b^g$}.
\end{cases}
\end{align*}
Substituting the above two formulas to \eqref{eq:prime-ast-Rmc}, we have
\begin{align*}
\big((\alpha_i)_{R^{\mc}}'\big)^*
&=
\begin{cases}
\alpha_i+k(\alpha_i)a & \text{if $\alpha_i\in |\Gamma^\w|$},\\
2\alpha_i+k(\alpha_i)a & \text{if $\alpha_i\in 
|\Gamma_m^\nw|_b^g\amalg |\Gamma_m^\nw|_g^g$},\\
2\alpha_i+2k(\alpha_i)a & \text{if $\alpha_i\in 
|\Gamma_m^\nw|_g^b\amalg |\Gamma_m^\nw|_b^b \amalg
|(\Gamma^{\downarrow}\setminus \Gamma_m^{\downarrow})^\nw|$}.\\
\end{cases}
\end{align*}
This coincides with $(\alpha_i')^\ast$, as desired.
\vskip 1mm
\noindent
(3) 
By the definition of the $\ast$-operation and \eqref{k=kast}, we have
\[\alpha_i'=(\alpha_i')^\ast-k((\alpha_i')^\ast)a\quad{and}\quad
(\alpha_i)_{R^{\mc}}'
=\big((\alpha_i)_{R^{\mc}}'\big)^\ast
-k_{R^{\mc}}\big(\big((\alpha_i)_{R^{\mc}}'\big)^\ast\big)a^{R^{\mc}}.\]
Moreover, by statement (2), \eqref{eq:a-Rmc} and 
\eqref{eq:counting-Rmc-2}, we get 
\[\big((\alpha_i)_{R^{\mc}}'\big)^\ast=(\alpha_i')^\ast\quad\text{and}\quad
k_{R^{\mc}}\big(\big((\alpha_i)_{R^{\mc}}'\big)^\ast\big)a^{R^{\mc}}
=k((\alpha_i')^\ast)a.\]
Compiling these four formulas, we have the desired result.
 \end{proof}

For $0\leq i\leq l$, let $m_{(\alpha_i)_{R^{\mc}}}^{R^{\mc}}$ be the exponent 
of $(\alpha_i)_{R^{\mc}}\in\Pi_{R^{\mc}}^{\dc}$, as an element of 
the reduced root system $(R^{\mc},G)$. Its explicit form is given by
\begin{equation}\label{eq:exponent-Rmc}
m_{(\alpha_i)_{R^{\mc}}}^{R^{\mc}}
=\dfrac{I_{R^{^mc}}\big((\alpha_i)_{R^{\mc}},(\alpha_i)_{R^{\mc}}\big)}
{2k_{R^{\mc}}^{nr}\big((\alpha_i)_{R^{\mc}}\big)}n_{(\alpha_i)_{R^{\mc}}}^{R^{\mc}},
\end{equation}
where $\big\{n_{(\alpha_i)_{R^{\mc}}}^{R^{\mc}}\big\}_{0\leq i\leq l}$ is the family
of positive integers such that
\[\delta_b^{R^{\mc}}:=\sum_{0\leq i\leq l} 
n_{(\alpha_i)_{R^{\mc}}}^{R^{\mc}} (\alpha_i)_{R^{\mc}}
\in Q(R^{\mc})\cap \mathrm{rad}(I)\]
is primitive. To determine the shape of the elliptic diagram $\Gamma(R^{\mc},G)$, 
we need to compute the exponents $m_{(\alpha_i)_{R^{\mc}}}^{R^{\mc}}\ (0\leq l\leq l)$
explicitly. \\

Our goal is the following proposition.
\begin{prop}\label{prop:Gamma(R'',G)}
{\rm (1)} One has
\[m^{R^{\mc}}_{(\alpha_i)_{R^{\mc}}}=
\dfrac{[Q(R)\cap \mathrm{rad}(I):Q(R^{\mc})\cap\mathrm{rad}(I)]}{(I_R:I_{R^{\mc}})}\,
m_{\alpha_i}\quad \text{for every }0\leq i\leq l. \]
As a by-product, it follows that the correspondence 
\[\begin{array}{lllllllll}
\alpha_i & \longmapsto &  (\alpha_i)_{R^{\mc}}\quad &\text{for }\alpha_i\in
|\Gamma^{\downarrow}(R,G)|,
\\[5pt]
(\alpha_j')^\ast & \longmapsto & \big((\alpha_j)_{R^{\mc}}'\big)^\ast \quad &\text{for }
(\alpha_j')^\ast\in |\Gamma_m^{\uparrow}(R,G)^\ast|
\end{array}\]
gives a bijection from $|\Gamma(R,G)|$ to $|\Gamma(R^{\mc},G)|$.
\vskip 1mm
\noindent
{\rm (2)} The color of a node in $|\Gamma(R^{\mc},G)|$ is determined by the following 
rules. 
\begin{itemize}
\item For a white node in $|\Gamma(R,G)|$, the corresponding node in 
$|\Gamma(R^{\mc},G)|$ is a white node also.
\item If $\alpha_i\in |\Gamma_m^\nw|_g^\sharp\,\, (\sharp=g \text{ or }b)$, both 
$(\alpha_i)_{R^{\mc}}$ and $\big((\alpha_i)_{R^{\mc}}'\big)^\ast$ are gray nodes.
\item If $\alpha_i\in |\Gamma_m^\nw|_b^\sharp\,\, (\sharp=g \text{ or }b)$, both 
$(\alpha_i)_{R^{\mc}}$ and $\big((\alpha_i)_{R^{\mc}}'\big)^\ast$ are white nodes. 
\item If $\alpha_i\in |(\Gamma^{\downarrow}\setminus 
\Gamma_m^{\downarrow})^\nw|$,  
$(\alpha_i)_{R^{\mc}}$ is a white node. 
\end{itemize}
\end{prop}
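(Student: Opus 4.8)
The plan is to prove Proposition \ref{prop:Gamma(R'',G)} in close parallel with the already-established Proposition \ref{prop:Gamma(R',G)}, but replacing the role of $R^{\dc}$ by $R^{\mc}$. The main new feature is that the generator $a^{R^{\mc}}$ of $Q(R^{\mc})\cap G$ need not coincide with $a$ (it may be a proper multiple), and that the normalized bilinear forms $I_R$ and $I_{R^{\mc}}$ may differ by a scalar; both discrepancies are already packaged in Lemma \ref{lemma:Gamma(R'',G)} via the index $[Q(R)\cap G:Q(R^{\mc})\cap G]$ and in the statement via $[Q(R)\cap \mathrm{rad}(I):Q(R^{\mc})\cap \mathrm{rad}(I)]$ and $(I_R:I_{R^{\mc}})$. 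So the first thing I would do is compute $k^{nr}_{R^{\mc}}((\alpha_i)_{R^{\mc}})$, the non-reduced counting number of $(\alpha_i)_{R^{\mc}}$ in $(R^{\mc},G)$, by applying Corollary \ref{cor:nr-countings1} to the reduced mERS $(R^{\mc},G)$ together with Lemma \ref{lemma:Gamma(R'',G)} (2) and (3); since $R^{\mc}$ is reduced, the half-integer case in that corollary is governed purely by whether $(\alpha_i)_{R^{\mc}}$ is a gray node, and \eqref{eq:color-Rmc} translates this into membership of $\alpha_i$ in $|\Gamma^\nw|_g$.

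Next I would compute the exponent itself. Starting from the defining formula \eqref{eq:exponent-Rmc}, I need three ingredients: (i) the ratio $I_{R^{\mc}}((\alpha_i)_{R^{\mc}},(\alpha_i)_{R^{\mc}})/I_R(\alpha_i,\alpha_i)$, which by the definition \eqref{eq:defn-(alpha)R''} of $(\alpha_i)_{R^{\mc}}$ (either $\alpha_i$ or $2\alpha_i$) and the scaling relation between $I_{R^{\mc}}$ and $I_R$ contributes a factor of $1/(I_R:I_{R^{\mc}})$ times either $1$ or $4$; (ii) the ratio $k^{nr}_{R^{\mc}}((\alpha_i)_{R^{\mc}})/k^{nr}(\alpha_i)$, obtained from the first step combined with Corollary \ref{cor:nr-countings1} for $(R,G)$ and \eqref{eq:counting-Rmc-1}; and (iii) the ratio $n^{R^{\mc}}_{(\alpha_i)_{R^{\mc}}}/n_{\alpha_i}$ of marks, which I would pin down using Lemma \ref{lemma:delta_b} applied to both $(R,G)$ and $(R^{\mc},G)$: the primitive radical vector $\delta_b^{R^{\mc}}=\sum n^{R^{\mc}}_{(\alpha_i)_{R^{\mc}}}(\alpha_i)_{R^{\mc}}$ is a specific rational multiple of $\delta_b=\sum n_{\alpha_i}\alpha_i$, and comparing coefficients (using that $(\alpha_i)_{R^{\mc}}$ is $\alpha_i$ or $2\alpha_i$ according to \eqref{decomp-Gamma}) gives $n^{R^{\mc}}_{(\alpha_i)_{R^{\mc}}}$ in terms of $n_{\alpha_i}$. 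Assembling these three ratios, all the case-dependent factors of $2$ and $4$ should cancel against each other and against the index $[Q(R)\cap\mathrm{rad}(I):Q(R^{\mc})\cap\mathrm{rad}(I)]$, leaving the clean formula $m^{R^{\mc}}_{(\alpha_i)_{R^{\mc}}}=\tfrac{[Q(R)\cap\mathrm{rad}(I):Q(R^{\mc})\cap\mathrm{rad}(I)]}{(I_R:I_{R^{\mc}})}\,m_{\alpha_i}$, which is a fixed scalar independent of $i$. Since scaling every exponent by the same positive constant does not change which nodes attain the maximum, the map $\alpha_i\mapsto(\alpha_i)_{R^{\mc}}$, $(\alpha_j')^\ast\mapsto((\alpha_j)_{R^{\mc}}')^\ast$ sends $|\Gamma_m^\downarrow(R,G)|$ to $|\Gamma_m^\downarrow(R^{\mc},G)|$ bijectively, and combined with Lemma \ref{lemma:Gamma(R'',G)} (2), (3) it gives the claimed bijection $|\Gamma(R,G)|\xrightarrow{\sim}|\Gamma(R^{\mc},G)|$, proving statement (1).

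For statement (2), the colors: a white node $\alpha_i\in|\Gamma^\w|$ of $\Gamma(R,G)$ satisfies $2\overline{\alpha_i}\notin R/G$, hence $2\overline{(\alpha_i)_{R^{\mc}}}=2\overline{\alpha_i}\notin R^{\mc}/G$, so $(\alpha_i)_{R^{\mc}}$ is white in $\Gamma(R^{\mc},G)$; similarly for white nodes among $|\Gamma_m^\uparrow(R,G)^\ast|$ using Lemma \ref{lemma:Gamma(R'',G)} (2). For a colored node $\alpha_i\in|\Gamma^\nw|$, the color of $(\alpha_i)_{R^{\mc}}$ is precisely \eqref{eq:color-Rmc}: it is gray iff $\alpha_i\in|\Gamma^\nw|_g=|\Gamma_m^\nw|_g^g\amalg|\Gamma_m^\nw|_g^b$ and white iff $\alpha_i\in|\Gamma^\nw|_b$, i.e. $\alpha_i\in|\Gamma_m^\nw|_b^g\amalg|\Gamma_m^\nw|_b^b\amalg|(\Gamma^\downarrow\setminus\Gamma_m^\downarrow)^\nw|$. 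For the starred node $((\alpha_i)_{R^{\mc}}')^\ast$, since $((\alpha_i)_{R^{\mc}}')^\ast=(\alpha_i')^\ast$ by Lemma \ref{lemma:Gamma(R'',G)} (2) and this lies in $R^{\mc}$ which is reduced, applying the equivalence \eqref{nr-countings-cor1} for $(R^{\mc},G)$ shows its color matches that of $(\alpha_i)_{R^{\mc}}$. The case $\alpha_i\in|(\Gamma^\downarrow\setminus\Gamma_m^\downarrow)^\nw|$ falls under the white subcase of \eqref{eq:color-Rmc}, as recorded.

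I expect the main obstacle to be the bookkeeping in the mark computation (ingredient (iii)): unlike the $R^{\dc}$ case, where $a^{R^{\dc}}=a$ and $I_{R^{\dc}}=I_R$, here one must track simultaneously the change of radical generator, the change of normalized form, and the doubling $\alpha_i\rightsquigarrow 2\alpha_i$ on the subset $|\Gamma^\nw|_b$, and verify that the combination produces exactly one global scalar rather than an $i$-dependent one. The cleanest way to control this is to express $\delta_b^{R^{\mc}}$ directly: one has $Q(R^{\mc})\cap\mathrm{rad}(I)$ of rank $2$ with basis $(\delta_b^{R^{\mc}},a^{R^{\mc}})$ and I would show $\delta_b^{R^{\mc}}$ equals $\delta_b$ or $2\delta_b$ according to whether the "long" simple root $\alpha_l$ (the unique node outside $|\Gamma^\w|$ up to the $BB_l^\vee$/$C^\vee C_l$ exceptions) is doubled, and then read off the marks by the substitution $\alpha_i\mapsto(\alpha_i)_{R^{\mc}}$; a short table over the cases of Tables \ref{table:BCC}--\ref{table:CC} confirms consistency. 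All remaining steps are routine substitutions into formulas already established in $\S$ \ref{sect:NRCN} and $\S$ \ref{sect_R'}.
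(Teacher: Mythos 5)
Your proposal is correct and follows essentially the same route as the paper's proof: the two preparatory computations you describe (the ratio of marks via comparing $\delta_b^{R^{\mc}}$ with $\delta_b$, and the ratio of non-reduced counting numbers via Lemma \ref{lemma:Gamma(R'',G)} and Corollary \ref{cor:nr-countings1}) are exactly the two lemmas the paper proves before assembling the exponent formula, and the final identification $[Q(R)\cap G:Q(R^{\mc})\cap G]\cdot(\delta_b^{R^{\mc}}:\delta_b)_G=[Q(R)\cap\mathrm{rad}(I):Q(R^{\mc})\cap\mathrm{rad}(I)]$ is used in the same way. The only cosmetic difference is that the paper derives the mark relation uniformly by expressing both $\delta_b$ and $\delta_b^{R^{\mc}}$ in terms of the primed roots $\alpha_i'=(\alpha_i)_{R^{\mc}}'$ modulo $G$, which avoids the case table you anticipate needing.
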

 
Before proving the proposition, we need some preparations. Take the image 
$\overline{\delta_b^{R^{\mc}}}=\pi_G(\delta_b^{R^{\mc}})$
of $\delta_b^{R^{\mc}}$ under the canonical projection $\pi_G:F\longrightarrow F/G$:
\[\overline{\delta_b^{R^{\mc}}}=\sum_{0\leq i\leq l}n_{(\alpha_i)_{R^{\mc}}}^{R^{\mc}} 
\overline{(\alpha_i)_{R^{\mc}}}\in Q(R^{\mc}/G)\cap \mathrm{rad}(I_{F/G}).\]
Recall that the element $\delta_b=\sum_{0\leq i\leq l}n_{\alpha_i} \alpha_i
\in Q(\W[\Pi^{\dc}].\Pi^{\dc})\cap \mathrm{rad}(I)$ introduced in 
\eqref{defn:delta_b},  and its image 
$\overline{\delta_b}=\pi_G(\delta_b)$ of $\delta_b$ under the 
canonical projection $\pi_G:$\footnote{In $\S$\ref{sect:radZ}, 
$\overline{\delta_b}=\pi_G(\delta_b)$ is denoted by $\delta$.}
\[\overline{\delta_b}=\sum_{0\leq i\leq l}n_{\alpha_i} \overline{\alpha_i}
\in Q(R/G)\cap \mathrm{rad}(I_{F/G}).\]
Since $Q(R^{\mc}/G)\cap \mathrm{rad}(I_{F/G})$ is a 
sublattice of the lattice $Q(R^{\mc}/G)\cap \mathrm{rad}(I_{F/G})$ of rank $1$, 
there exists a positive integer $(\delta_b^{R^{\mc}}:\delta_b)_G$
such that 
\begin{equation}\label{eq:delta-Rmc}
\delta_b^{R^{\mc}}\equiv \big(\delta_b^{R^{\mc}}:\delta_b\big)_G\, \delta_b\
(\mathrm{mod}\, G).
\end{equation}
\vskip 3mm
\begin{lemma}
For every $0\leq i\leq l$, we have
\begin{equation}\label{eq:Kac-label-Rmc}
n_{(\alpha_i)_{R^{\mc}}}^{R^{\mc}}=\dfrac{\big(\delta_b^{R^{\mc}}:\delta_b\big)_G}
{\big((\alpha_i)_{R^{\mc}}:\alpha_i\big)_G}\, n_{\alpha_i}.
\end{equation}
\end{lemma}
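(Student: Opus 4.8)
The plan is to pin down the coefficients $n^{R^{\mc}}_{(\alpha_i)_{R^{\mc}}}$ by projecting the defining relation $\delta_b^{R^{\mc}}=\sum_{i=0}^l n^{R^{\mc}}_{(\alpha_i)_{R^{\mc}}}\,(\alpha_i)_{R^{\mc}}$ to $F/G$ and comparing it, coefficient by coefficient, with the projection of the analogous relation \eqref{defn:delta_b} for $\delta_b$. The key point is that $\pi_G$ turns both $(\alpha_i)_{R^{\mc}}$ and $\delta_b^{R^{\mc}}$ into explicit scalar multiples of $\overline{\alpha_i}$ and $\overline{\delta_b}$, respectively, so the identity \eqref{eq:Kac-label-Rmc} drops out of the linear independence of the $\overline{\alpha_i}$.

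First I would use the definition \eqref{eq:defn-(alpha)R''}: since $(\alpha_i)_{R^{\mc}}$ is either $\alpha_i$ or $2\alpha_i$, one has $\overline{(\alpha_i)_{R^{\mc}}}=\big((\alpha_i)_{R^{\mc}}:\alpha_i\big)_G\,\overline{\alpha_i}$ with $\big((\alpha_i)_{R^{\mc}}:\alpha_i\big)_G\in\{1,2\}$ a well-defined nonzero constant. Applying $\pi_G$ to the definition of $\delta_b^{R^{\mc}}$ then gives
\[
\overline{\delta_b^{R^{\mc}}}=\sum_{i=0}^l n^{R^{\mc}}_{(\alpha_i)_{R^{\mc}}}\,\big((\alpha_i)_{R^{\mc}}:\alpha_i\big)_G\,\overline{\alpha_i}.
\]
On the other hand, \eqref{eq:delta-Rmc} says $\overline{\delta_b^{R^{\mc}}}=\big(\delta_b^{R^{\mc}}:\delta_b\big)_G\,\overline{\delta_b}$, and by \eqref{defn:delta_b} we have $\overline{\delta_b}=\sum_{i=0}^l n_{\alpha_i}\,\overline{\alpha_i}$, so
\[
\overline{\delta_b^{R^{\mc}}}=\big(\delta_b^{R^{\mc}}:\delta_b\big)_G\sum_{i=0}^l n_{\alpha_i}\,\overline{\alpha_i}.
\]

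Finally, $\Pi^{\dc}_G=\{\overline{\alpha_0},\ldots,\overline{\alpha_l}\}$ is a simple system of the affine root system $R/G$ (more precisely of $(R/G)^{\dc}$, cf. Lemma \ref{lemma:reduced_pair2} (1)), hence a basis of the $(l+1)$-dimensional space $F/G$, so these vectors are linearly independent. Comparing the coefficients of $\overline{\alpha_i}$ in the two displayed expressions for $\overline{\delta_b^{R^{\mc}}}$ gives $n^{R^{\mc}}_{(\alpha_i)_{R^{\mc}}}\,\big((\alpha_i)_{R^{\mc}}:\alpha_i\big)_G=\big(\delta_b^{R^{\mc}}:\delta_b\big)_G\,n_{\alpha_i}$, and dividing by the nonzero factor $\big((\alpha_i)_{R^{\mc}}:\alpha_i\big)_G$ yields \eqref{eq:Kac-label-Rmc}. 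There is no real obstacle here: the statement is essentially bookkeeping once one observes that projection to $F/G$ rescales each $\alpha_i$ by the factor $\big((\alpha_i)_{R^{\mc}}:\alpha_i\big)_G$ and $\delta_b$ by the factor $\big(\delta_b^{R^{\mc}}:\delta_b\big)_G$; the only thing to check carefully is that these two ratios are genuinely defined, which follows from \eqref{eq:defn-(alpha)R''} and from the fact that $\overline{\delta_b^{R^{\mc}}}$ lies in the rank-one lattice $Q(R/G)\cap\mathrm{rad}(I_{F/G})$ generated by $\overline{\delta_b}$.
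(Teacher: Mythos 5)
Your proof is correct and follows essentially the same strategy as the paper's: reduce the two defining relations for $\delta_b$ and $\delta_b^{\mc}$ modulo $G$ and compare coefficients in a basis of $F/G$, using the scaling factors $\big((\alpha_i)_{R^{\mc}}:\alpha_i\big)_G$ and $\big(\delta_b^{R^{\mc}}:\delta_b\big)_G$. The only (cosmetic) difference is that the paper routes both expressions through the primed roots $\alpha_i'$ via Lemma \ref{lemma:Gamma(R'',G)} (3), whereas you compare coefficients directly in the basis $\{\overline{\alpha_i}\}$ using \eqref{eq:defn-(alpha)R''}, which is slightly more economical.
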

\begin{proof}
Since $\alpha_i'\equiv (\alpha_i':\alpha_i)_G\alpha_i \, (\mathrm{mod}\, G)$ 
for every $0\leq i\leq l$, we have
\begin{align*}
\delta_b&\equiv \sum_{0\leq i\leq l}\dfrac{n_{\alpha_i}}{(\alpha_i':\alpha_i)_G}
\alpha_i'\ (\mathrm{mod}\, G).
\end{align*}
Similarly, we have
\begin{align*}
\delta_b^{R^{\mc}}&\equiv 
\sum_{0\leq i\leq l}\dfrac{n_{(\alpha_i)_{R^{\mc}}}^{R^{\mc}}}
{\big((\alpha_i)_{R^{\mc}}':(\alpha_i)_{R^{\mc}}\big)_G}(\alpha_i)'_{R^{\mc}} \ 
(\mathrm{mod}\, G)\\
&=\sum_{0\leq i\leq l}\dfrac{n_{(\alpha_i)_{R^{\mc}}}^{R^{\mc}}}
{\big(\alpha_i':(\alpha_i)_{R^{\mc}}\big)_G}\alpha_i'\quad 
(\because\, \text{Lemma \ref{lemma:Gamma(R'',G)} (3)}).
\end{align*}
These two equalities and \eqref{eq:delta-Rmc} imply 
\begin{equation*}
n_{(\alpha_i)_{R^{\mc}}}^{R^{\mc}}=\big(\delta_b^{R^{\mc}}:\delta_b\big)_G
\dfrac{\big(\alpha_i':(\alpha_i)_{R^{\mc}}\big)_G}{(\alpha_i':\alpha_i)_G}\, n_{\alpha_i}
=\dfrac{\big(\delta_b^{R^{\mc}}:\delta_b\big)_G}
{\big((\alpha_i)_{R^{\mc}}:\alpha_i\big)_G}\, n_{\alpha_i},
\end{equation*}
as desired.
\end{proof}
\vskip 3mm
\noindent
\begin{lemma}
For every $0\leq i\leq l$, we have
\begin{equation}\label{eq:nr-counging-Rmc}
\dfrac{k_{R^{\mc}}^{nr}\big((\alpha_i)_{R^{\mc}}\big)}
{\big((\alpha_i)_{R^{\mc}}:\alpha_i\big)_G}
=\dfrac{k^{nr}(\alpha_i)}{[Q(R)\cap G:Q(R^{\mc})\cap G]}.
\end{equation}
\end{lemma}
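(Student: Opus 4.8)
The plan is to establish \eqref{eq:nr-counging-Rmc} by directly comparing the two normalized bilinear forms $I_R$ and $I_{R^{\mc}}$, together with the description of $k_{R^{\mc}}$ obtained in Lemma \ref{lemma:Gamma(R'',G)} (1). First I would unwind the non-reduced counting number on the left-hand side using Corollary \ref{cor:nr-countings1} applied to the reduced mERS $(R^{\mc},G)$: since $R^{\mc}$ is reduced, a node $(\alpha_i)_{R^{\mc}}\in|\Gamma(R^{\mc},G)|$ is either white or gray, and by \eqref{eq:color-Rmc} it is gray precisely when $\alpha_i\in|\Gamma^\nw|_g$. Hence
\[k_{R^{\mc}}^{nr}\big((\alpha_i)_{R^{\mc}}\big)=\begin{cases}
\tfrac12 k_{R^{\mc}}\big((\alpha_i)_{R^{\mc}}\big) & \text{if }\alpha_i\in|\Gamma^\nw|_g,\\
k_{R^{\mc}}\big((\alpha_i)_{R^{\mc}}\big) & \text{otherwise}.
\end{cases}\]
On the right-hand side, I would similarly use Corollary \ref{cor:nr-countings1} for $(R,G)$ to replace $k^{nr}(\alpha_i)$ by $\tfrac12 k(\alpha_i)$ when $\alpha_i\in|\Gamma_m^\nw|_g^g\amalg|\Gamma_m^\nw|_g^b$ (that is, when $\tfrac12(\alpha_i')^\ast\notin R$ and $\tfrac12\overline{(\alpha_i')^\ast}\in R/G$, which by the equivalences \eqref{nr-countings-cor1} and Lemma \ref{lemma:black-ast} is exactly $\alpha_i\in|\Gamma^\nw|_g$), and by $k(\alpha_i)$ otherwise.

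With both non-reduced counting numbers reduced to ordinary ones, the identity becomes a bookkeeping statement about $k_{R^{\mc}}\big((\alpha_i)_{R^{\mc}}\big)$, $k(\alpha_i)$, the index $[Q(R)\cap G:Q(R^{\mc})\cap G]$ and the factor $\big((\alpha_i)_{R^{\mc}}:\alpha_i\big)_G$ (which is $2$ when $\alpha_i\in|\Gamma^\nw|_b$ by \eqref{eq:defn-(alpha)R''} and $1$ otherwise). Here I would invoke \eqref{eq:counting-Rmc-1}, which gives
\[[Q(R)\cap G:Q(R^{\mc})\cap G]\,k_{R^{\mc}}\big((\alpha_i)_{R^{\mc}}\big)=\begin{cases}2k(\alpha_i)&\text{if }\tfrac12(\alpha_i')^\ast\in R,\\ k(\alpha_i)&\text{otherwise},\end{cases}\]
and then by Lemma \ref{lemma:black-ast} the case $\tfrac12(\alpha_i')^\ast\in R$ is precisely $\alpha_i\in|\Gamma_m^\nw|_g^b\amalg|\Gamma_m^\nw|_b^b\amalg|(\Gamma^{\downarrow}\setminus\Gamma_m^{\downarrow})^\nw|$. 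The verification then splits into the five strata $|\Gamma^\w|$, $|\Gamma_m^\nw|_g^g$, $|\Gamma_m^\nw|_g^b$, $|\Gamma_m^\nw|_b^g$, $|\Gamma_m^\nw|_b^b\amalg|(\Gamma^{\downarrow}\setminus\Gamma_m^{\downarrow})^\nw|$, and in each one the left and right sides of \eqref{eq:nr-counging-Rmc} both collapse to $k(\alpha_i)/[Q(R)\cap G:Q(R^{\mc})\cap G]$ after cancelling the factors of $2$; this is a short case check using the three displayed formulas above.

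The main obstacle I anticipate is not conceptual but organizational: one must keep straight three independent binary alternatives (white/colored on the $R$-side downstairs, the "prime with a gray target" condition $\tfrac12(\alpha_i')^\ast\in R$, and the index-two phenomenon $\big((\alpha_i)_{R^{\mc}}:\alpha_i\big)_G=2$) and confirm that the factors of $2$ introduced by the non-reduced counting normalizations on the two sides match up in every stratum. The key is that Lemma \ref{lemma:black-ast} pins down exactly when $\tfrac12(\alpha_i')^\ast\in R$ in terms of the stratum, and \eqref{eq:color-Rmc} pins down the color of $(\alpha_i)_{R^{\mc}}$ in the same terms; once both are in hand, each of the five cases is a one-line arithmetic identity. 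I would present the proof as a single displayed case split keyed to the decomposition \eqref{decomp-Gamma} of $|\Gamma^\downarrow|$ refined by the reduced-pair data, and leave the routine stratum-by-stratum arithmetic as an explicit but brief computation.
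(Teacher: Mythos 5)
Your overall architecture is the paper's: unwind both non-reduced counting numbers to ordinary ones, feed in \eqref{eq:counting-Rmc-1}, and check strata. The left-hand side is handled correctly. The gap is in your identification of where $k^{\nr}(\alpha_i)=\tfrac12 k(\alpha_i)$. Corollary \ref{cor:nr-countings1} halves $k(\alpha_i)$ exactly when $\tfrac12(\alpha_i')^\ast\notin R$ and $\tfrac12\overline{(\alpha_i')^\ast}\in R/G$, i.e.\ when $\alpha_i$ is a colored node whose \emph{starred partner} $(\alpha_i')^\ast$ is gray. By the very Lemma \ref{lemma:black-ast} you cite, that set is $|\Gamma_m^\nw|_g^g\amalg|\Gamma_m^\nw|_b^g$ (superscript $g$: the upstairs node is gray), not $|\Gamma^\nw|_g=|\Gamma_m^\nw|_g^g\amalg|\Gamma_m^\nw|_g^b$ (subscript $g$: the downstairs node is gray). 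Indeed Lemma \ref{lemma:black-ast} places $|\Gamma_m^\nw|_g^b$ inside the locus where $\tfrac12(\alpha_i')^\ast\in R$, which directly contradicts your parenthetical.

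This is not cosmetic: with your set the identity fails on the two strata where subscript and superscript disagree. On $|\Gamma_m^\nw|_g^b$ one has $\big((\alpha_i)_{R^{\mc}}:\alpha_i\big)_G=1$, the node $(\alpha_i)_{R^{\mc}}$ is gray, and $N\,k_{R^{\mc}}\big((\alpha_i)_{R^{\mc}}\big)=2k(\alpha_i)$ with $N=[Q(R)\cap G:Q(R^{\mc})\cap G]$, so the left side equals $k(\alpha_i)/N$ while your right side would be $\tfrac12 k(\alpha_i)/N$; on $|\Gamma_m^\nw|_b^g$ the discrepancy is reversed. (Relatedly, your claim that both sides collapse to $k(\alpha_i)/N$ in every stratum is also off: on $|\Gamma_m^\nw|_g^g$ and $|\Gamma_m^\nw|_b^g$ both sides equal $\tfrac12 k(\alpha_i)/N$.) Once the half-set is corrected to $|\Gamma_m^\nw|_g^g\amalg|\Gamma_m^\nw|_b^g$, the five-case check does close, and the resulting argument coincides with the paper's, which arrives at the same case split by writing $k_{R^{\mc}}^{\nr}\big((\alpha_i)_{R^{\mc}}\big)=k_{R^{\mc}}\big((\alpha_i)_{R^{\mc}}\big)\big/\big((\alpha_i')^\ast:(\alpha_i)_{R^{\mc}}\big)_G$ and factoring the denominator, then invoking Proposition \ref{prop:nr-countings1} at the end.
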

\begin{proof}
By Lemma \ref{lemma:nr-countings1} (1) for 
the mERS $(R^{\mc},G)$ and \eqref{eq:n-red-coungting_in_Rmc}, we have
\begin{align*}
k_{R^{\mc}}^{nr}\big((\alpha_i)_{R^{\mc}}\big)&=
\dfrac{k_{R^{\mc}}^{nr}\big(\big((\alpha_i)_{R^{\mc}}'\big)^\ast\big)}
{\big(\big((\alpha_i)_{R^{\mc}}'\big)^\ast:(\alpha_i)_{R^{\mc}}\big)_G}
=\dfrac{k_{R^{\mc}}\big((\alpha_i)_{R^{\mc}}\big)}
{\big(\big((\alpha_i)_{R^{\mc}}'\big)^\ast:(\alpha_i)_{R^{\mc}}\big)_G}\\
&=\dfrac{k_{R^{\mc}}\big((\alpha_i)_{R^{\mc}}\big)}
{\big((\alpha_i')^\ast:(\alpha_i)_{R^{\mc}}\big)_G} \quad
(\because \ \text{Lemma \ref{lemma:Gamma(R'',G)} (2)}).
\end{align*}
Note that
\begin{align*}
\big((\alpha_i')^\ast:(\alpha_i)_{R^{\mc}}\big)_G=
&
\big((\alpha_i')^\ast:\alpha_i\big)_G
\big(\alpha_i:(\alpha_i)_{R^{\mc}}\big)_G, \\
\big((\alpha_i')^\ast : \alpha_i\big)_G
=
&\begin{cases}
1 & \text{if }\alpha_i\in |\Gamma^\w|,\\
2 & \text{otherwise}.
\end{cases}
\end{align*}
Therefore, by \eqref{eq:counting-Rmc-1}, we have
\begin{align*}
&\dfrac{k_{R^{\mc}}^{nr}\big((\alpha_i)_{R^{\mc}}\big)}
{\big((\alpha_i)_{R^{\mc}}:\alpha_i\big)_G}
=
\dfrac{k_{R^{\mc}}\big((\alpha_i)_{R^{\mc}}\big)}{\big((\alpha_i')^\ast,\alpha_i\big)_G}\\
=&\dfrac{k(\alpha_i)}{[Q(R)\cap G:Q(R^{\mc})\cap G]}\times
\begin{cases}
1&
\text{if $\alpha_i\in$} \begin{array}{l} |\Gamma^\w|\amalg
|\Gamma_m^\nw|_g^b\amalg |\Gamma_m^\nw|_b^b \\
\amalg |(\Gamma^\downarrow\setminus \Gamma_m^\downarrow)^\nw|,
\end{array}\\[3pt]
1/2 & 
\text{if $\alpha_i\in |\Gamma_m^\nw|_g^g \amalg |\Gamma_m^\nw|_b^g,$}
\end{cases}\\
=&\dfrac{k^{nr}(\alpha_i)}{[Q(R)\cap G:Q(R^{\mc})\cap G]}
\quad (\because\ \text{Proposition \ref{prop:nr-countings1}}),
\end{align*}
as desired. 
\end{proof}
\vskip 3mm

Now, let us prove Proposition \ref{prop:Gamma(R'',G)}.
\vskip 3mm
\noindent
{\it Proof of Proposition \ref{prop:Gamma(R'',G)}}. 
(1) One has
\begin{align*}
I_{R^{^mc}}\big((\alpha_i)_{R^{\mc}},(\alpha_i)_{R^{\mc}}\big)
&=\big((\alpha_i)_{R^{\mc}}:\alpha_i\big)_G^2I_{R^{^mc}}(\alpha_i,\alpha_i)\\
&=\big((\alpha_i)_{R^{\mc}}:\alpha_i\big)_G^2(I_{R^{\mc}}:I_R)I_R(\alpha_i,\alpha_i).
\end{align*}
Substitute this formula and \eqref{eq:Kac-label-Rmc}
to the right hand side of \eqref{eq:exponent-Rmc}, we have
\begin{align*}
&m_{(\alpha_i)_{R^{\mc}}}^{R^{\mc}} \\
=
&\dfrac{\big((\alpha_i)_{R^{\mc}}:\alpha_i\big)^2 \cdot I_R(\alpha_i,\alpha_i)}
{(I_R:I_{R^{\mc}})}
\times 
\dfrac{1}{2k_{R^{\mc}}^{nr}\big((\alpha_i)_{R^{\mc}}\big)}
\times
\dfrac{\big(\delta_b^{R^{\mc}}:\delta_b\big)_G}
{\big((\alpha_i)_{R^{\mc}}:\alpha_i\big)_G}\, n_{\alpha_i}\\
=
&\dfrac{[Q(R)\cap G:Q(R^{\mc})\cap G]\cdot\big(\delta_b^{R^{\mc}}:\delta_b\big)_G}
{(I_R:I_{R^{\mc}})}\cdot
\dfrac{I_R(\alpha_i,\alpha_i)}
{2k^{nr}(\alpha_i)}\,n_{\alpha_i}\quad (\because \ \eqref{eq:nr-counging-Rmc})\\
=
&\dfrac{[Q(R)\cap G:Q(R^{\mc})\cap G]\cdot\big(\delta_b^{R^{\mc}}:\delta_b\big)_G}
{(I_R:I_{R^{\mc}})}\, m_{\alpha_i}.
\end{align*}
Since
\[ 
[Q(R)\cap G:Q(R^{\mc})\cap G]\cdot\big(\delta_b^{R^{\mc}}:\delta_b\big)_G
=[Q(R)\cap \mathrm{rad}(I):Q(R^{\mc})\cap \mathrm{rad}(I)], 
\] 
we have statement (1). 
\vskip 1mm
\noindent 
(2) We already know that 
\begin{itemize}
\item[(i)] the color of $(\alpha_i)_{R^{\mc}}$ in 
$|\Gamma(R^{\mc},G)|$ is determined according to the rule in  statement (2);
\item[(ii)] the diagram $\Gamma(R^{\mc},G)$ has the same shape as $\Gamma(R,G)$ 
(by (1));
\item[(iii)] there is no black node in $|\Gamma(R^{\mc},G)|$. 
\end{itemize}
Under these conditions, the diagram $\Gamma(R^{\mc},G)$ is uniquely determined, 
with the rule in  statement (2). 
\hfill $\square$

\vskip 5mm
\subsection{Diagrammatic interpretation}\label{sect:ans-Problem2}
The results of Propositions  \ref{prop:Gamma(R',G)} and  \ref{prop:Gamma(R'',G)} are translated in diagrammatic language as
follows.
\begin{prop}\label{prop:red-pair} 
{\rm (1)} Both of the elliptic diagrams $\Gamma(R^{\dc},G)$ and 
$\Gamma(R^{\mc},G)$ have the same shape as $\Gamma(R,G)$.
\vskip 1mm
\noindent
{\rm (2)} The colors of nodes of them are given as follows{\rm :} 
transform the black nodes and the gray nodes by the following $4$ rules.
\begin{enumerate}
\item[(D0)] If a black node 
\begin{tikzpicture} 
\fill (0,0) circle(0.1); \draw (0,0.1) node[above] {\scriptsize $\alpha_i$};
\end{tikzpicture}
is only connected to white nodes, 
replace it 
\begin{enumerate}
\item[i)] with a white node 
\begin{tikzpicture} 
\draw (0,0) circle(0.1); \draw (0,0.1) node[above] {\scriptsize $\alpha_i$};
\end{tikzpicture}
 for $R^{\dc}$ and
\item[ii)] with a white node 
\begin{tikzpicture} 
\draw (0,0) circle(0.1); \draw (0,0.1) node[above] {\scriptsize $2\alpha_i$};
\end{tikzpicture}
for $R^{\mc}$.
\end{enumerate}
\vskip 1mm
\item[(D1)] Replace the unit
\begin{tikzpicture} 
\fill (0,0) circle(0.1); \draw (0,0.15) node[above] {\scriptsize $\alpha_i$};
\draw[dashed, double distance=2] (0.1,0) -- (0.9,0); 
{\color{red}\draw (0.45,0) -- (0.58,0.13); \draw (0.45,0) -- (0.58,-0.13);}
\fill (1,0) circle(0.1); \draw (1.1,0.1) node[above] {\scriptsize $(\alpha_i')^\ast$};
\end{tikzpicture} 
\begin{enumerate}
\item[i)] with 
\begin{tikzpicture} 
\draw (0,0) circle(0.1); \draw (0,0.15) node[above] {\scriptsize $\alpha_i$};
\draw[dashed, double distance=2] (0.1,0) -- (0.9,0); 
\draw (1,0) circle(0.1); 
\draw (1.05,0.05) node[above] {\scriptsize $\frac12 (\alpha_i')^\ast$};
\end{tikzpicture}
for $R^{\dc}$ and
\item[ii)] with
\begin{tikzpicture} 
\draw (0,0) circle(0.1); \draw (0,0.15) node[above] {\scriptsize $2\alpha_i$};
\draw[dashed, double distance=2] (0.1,0) -- (0.9,0); 
\draw (1,0) circle(0.1); \draw (1.1,0.1) node[above] {\scriptsize $(\alpha_i')^\ast$};
\end{tikzpicture}
for $R^{\mc}$.
\end{enumerate}
\vskip 1mm
\item[(D2)] Replace the unit 
\begin{tikzpicture} 
\fill (0,0) circle(0.1); \draw (0,0.15) node[above] {\scriptsize $\alpha_i$};
\draw[dashed, double distance=2] (0.1,0) -- (0.9,0); 
{\color{red}\draw (0.45,0) -- (0.58,0.13); \draw (0.45,0) -- (0.58,-0.13);}
\draw [black,fill=gray!70] (1,0) circle(0.1); 
\draw (1.1,0.1) node[above] {\scriptsize $(\alpha_i')^\ast$};
\end{tikzpicture} 
\begin{enumerate}
\item[i)] with 
\begin{tikzpicture} 
\draw [black,fill=gray!70]  (0,0) circle(0.1); 
\draw (0,0.15) node[above] {\scriptsize $\alpha_i$};
\draw[dashed, double distance=2] (0.1,0) -- (0.9,0); 
{\color{red}\draw (0.45,0) -- (0.58,0.13); \draw (0.45,0) -- (0.58,-0.13);}
\draw [black,fill=gray!70] (1,0) circle(0.1); 
\draw (1.1,0.1) node[above] {\scriptsize $(\alpha_i')^\ast$};
\end{tikzpicture}
for $R^{\dc}$ and 
\item[ii)] with
\begin{tikzpicture} 
\draw (0,0) circle(0.1); \draw (0,0.15) node[above] {\scriptsize $2\alpha_i$};
\draw[dashed, double distance=2] (0.1,0) -- (0.9,0); 
\draw (1,0) circle(0.1); \draw (1.1,0.1) node[above] {\scriptsize $(\alpha_i')^\ast$};
\end{tikzpicture}
for $R^{\mc}$.
\end{enumerate}
\vskip 1mm
\item[(D3)] Replace the unit 
\begin{tikzpicture} 
\draw [black,fill=gray!70]  (0,0) circle(0.1); 
\draw (0,0.15) node[above] {\scriptsize $\alpha_i$};
\draw[dashed, double distance=2] (0.1,0) -- (0.9,0); 
{\color{red}\draw (0.45,0) -- (0.58,0.13); \draw (0.45,0) -- (0.58,-0.13);}
\fill (1,0) circle(0.1); \draw (1.1,0.1) node[above] {\scriptsize $(\alpha_i')^\ast$};
\end{tikzpicture} 
\begin{enumerate}
\item[i)] with
\begin{tikzpicture} 
\draw (0,0) circle(0.1); \draw (0,0.15) node[above] {\scriptsize $\alpha_i$};
\draw[dashed, double distance=2] (0.1,0) -- (0.9,0); 
\draw (1,0) circle(0.1); \draw (1.1,0.05) node[above] 
{\scriptsize $\frac12 (\alpha_i')^\ast$};
\end{tikzpicture}
 for $R^{\dc}$ and 
\item[ii)] with 
\begin{tikzpicture} 
\draw [black,fill=gray!70]  (0,0) circle(0.1); 
\draw (0,0.15) node[above] {\scriptsize $\alpha_i$};
\draw[dashed, double distance=2] (0.1,0) -- (0.9,0); 
{\color{red}\draw (0.45,0) -- (0.58,0.13); \draw (0.45,0) -- (0.58,-0.13);}
\draw [black,fill=gray!70] (1,0) circle(0.1); 
\draw (1.1,0.1) node[above] {\scriptsize $(\alpha_i')^\ast$};
\end{tikzpicture}
for $R^{\mc}$.
\end{enumerate}
\end{enumerate}
Note that the newly obtained white nodes represent short roots for 
$R^{\dc}$ and long roots for $R^{\mc}$.
\end{prop}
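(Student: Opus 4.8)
The plan is to read off Proposition \ref{prop:red-pair} directly from Propositions \ref{prop:Gamma(R',G)} and \ref{prop:Gamma(R'',G)}, which together already contain all the structural information needed. Part (1), that $\Gamma(R^{\dc},G)$ and $\Gamma(R^{\mc},G)$ have the same shape as $\Gamma(R,G)$, is immediate: by Proposition \ref{prop:Gamma(R',G)} (4) the map $\alpha_i\mapsto (\alpha_i)_{R^{\dc}}$, $(\alpha_j')^\ast\mapsto((\alpha_j)_{R^{\dc}}')^\ast$ is a bijection $|\Gamma(R,G)|\xrightarrow{\sim}|\Gamma(R^{\dc},G)|$ which preserves exponents ($m_{(\alpha_i)_{R^{\dc}}}=m_{\alpha_i}$), hence preserves the subset $|\Gamma_m^{\downarrow}|$ of non-collapsed nodes; and by Proposition \ref{prop:Gamma(R'',G)} (1) the analogous map for $R^{\mc}$ is a bijection with $m^{R^{\mc}}_{(\alpha_i)_{R^{\mc}}}$ a fixed positive scalar multiple of $m_{\alpha_i}$, so again the non-collapsed nodes correspond. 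Since the bilinear form restricted to the hyperplanes $L_{\Pi^{\dc}}$ is unchanged up to the normalization constants, the edges (Edge0)--(Edge5) between corresponding nodes coincide. So only part (2), the recoloring rules (D0)--(D3), requires work.

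For part (2) the key observation is that a node of $|\Gamma(R,G)|$ which is \emph{white} (resp. \emph{gray}, resp. \emph{black}) is classified by the conditions in Definition \ref{defn_elliptic-diagram-II}, and by Proposition \ref{prop:Gamma(R',G)} (5) and Proposition \ref{prop:Gamma(R'',G)} (2) the color of the corresponding node in $\Gamma(R^{\dc},G)$ and $\Gamma(R^{\mc},G)$ is determined by which of the refined sets $|\Gamma^\w|$, $|\Gamma^\nw_m|_g^g$, $|\Gamma^\nw_m|_g^b$, $|\Gamma^\nw_m|_b^g$, $|\Gamma^\nw_m|_b^b$, $|(\Gamma^\downarrow\setminus\Gamma^\downarrow_m)^\nw|$ the node belongs to. The proof then proceeds by matching each of the four pictorial ``units'' in the statement with one of these membership classes:
\begin{itemize}
\item a black node connected only to white nodes is precisely a node of $|(\Gamma^\downarrow\setminus\Gamma^\downarrow_m)^\nw|$ (a collapsed black node; observation (b) in \S\ref{sect:list_of_diagrams}), giving (D0);
\item the unit ``black---(Edge5)---black'' is a node $\alpha_i$ with $(\alpha_i')^\ast$ appearing and both $\alpha_i,(\alpha_i')^\ast$ black, i.e. $\alpha_i\in|\Gamma^\nw_m|_b^b$, giving (D1);
\item ``black---(Edge5)---gray'' is $\alpha_i\in|\Gamma^\nw_m|_b^g$, giving (D2);
\item ``gray---(Edge5)---black'' is $\alpha_i\in|\Gamma^\nw_m|_g^b$, giving (D3);
\end{itemize}
and, for each, the asserted replacement is exactly what Proposition \ref{prop:Gamma(R',G)} (1),(3),(5) (for $R^{\dc}$: $(\alpha_i)_{R^{\dc}}=\alpha_i$ or $2\alpha_i-k(\alpha_i)a$, $((\alpha_i)'_{R^{\dc}})^\ast=\tfrac12(\alpha_i')^\ast$ or $(\alpha_i')^\ast$, with the stated colors) and Proposition \ref{prop:Gamma(R'',G)} (2) together with Lemma \ref{lemma:Gamma(R'',G)} (2),(3) (for $R^{\mc}$: $(\alpha_i)_{R^{\mc}}=2\alpha_i$ on $|\Gamma^\nw|_b$, $((\alpha_i)'_{R^{\mc}})^\ast=(\alpha_i')^\ast$, $(\alpha_i)'_{R^{\mc}}=\alpha_i'$) produce. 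The remaining units --- a gray---gray (Edge5) pair ($\alpha_i\in|\Gamma^\nw_m|_g^g$), and any white node --- are left unchanged, which is consistent with the two propositions and with the fact that $(R^{\dc},G)$ and $(R^{\mc},G)$ are reduced so that their diagrams contain no black nodes (Corollary \ref{cor:reducedness}). One also records the final remark: the new white node is a short root for $R^{\dc}$ (it is $\tfrac12(\alpha_i')^\ast$ or $\alpha_i$, of minimal length among the roots in its $W$-orbit of $R^{\dc}$) and a long root for $R^{\mc}$ (it is $2\alpha_i$ or $(\alpha_i')^\ast$), which follows from the explicit formulas and the fact that $I_R$ of these roots is $4$ times, resp. $\tfrac14$ times, that of the shortest roots.

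The only genuine obstacle I anticipate is bookkeeping: one must verify that the case decomposition in the four ``units'' (D0)--(D3) together with the unchanged cases \emph{exhausts} every colored node of every diagram in Tables \ref{table:BCC}--\ref{table:CC}, i.e. that a colored node of $\Gamma(R,G)$ is always either (i) a gray or black node carrying an (Edge5) bond to a $(\alpha_i')^\ast$-node, or (ii) a collapsed black node connected only to whites. This is true because a non-collapsed node $\alpha_i\in|\Gamma^\nw|$ has $(\alpha_i')^\ast\in|\Gamma(R,G)|$ by definition of ``collapsed'', and the $\alpha_i$--$(\alpha_i')^\ast$ edge is (Edge5) precisely by the computation $I(\alpha_i^\vee,(\alpha_i')^\ast)=4$, $I(\alpha_i,((\alpha_i')^\ast)^\vee)=1$ following from \eqref{defn:nr-countings_alpha'-ast} and $((\alpha_i')^\ast:\alpha_i)_G=2$; while a collapsed node of $|\Gamma^\nw|$ is black (observation (b), \S\ref{sect:list_of_diagrams}) and, being collapsed, is connected in $\Gamma^\downarrow$ only to white nodes. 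So the verification reduces to citing the tables and the earlier propositions, with no new computation --- the ``hard part'' is merely presenting this exhaustive matching cleanly enough to be convincing.
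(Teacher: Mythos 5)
Your proposal is correct and follows essentially the same route as the paper: Proposition \ref{prop:red-pair} is stated there precisely as the diagrammatic translation of Propositions \ref{prop:Gamma(R',G)} and \ref{prop:Gamma(R'',G)}, and your matching of the units (D0)--(D3) with the classes $|(\Gamma^\downarrow\setminus\Gamma_m^\downarrow)^\nw|$, $|\Gamma_m^\nw|_b^b$, $|\Gamma_m^\nw|_b^g$, $|\Gamma_m^\nw|_g^b$ is exactly the intended reading. The only slight looseness is the claim that the edges ``coincide'' --- the edge type between $\alpha_i$ and the node above it can change (e.g.\ from (Edge5) to (Edge4) under (D1)) --- but this does not affect the shape assertion and is already reflected correctly in your case-by-case replacements.
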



\part{Summary of root data}
\section{Root data of mERSs with non-reduced affine quotient}\label{chapter:root-data}
Here, we collect several important root data for non-isomorphic mERS $(R,G)$ with non-reduced quotient $R/G$, belonging to 
$(F,I_F)$ where $I_F$ is a symmetric bilinear form with signature $(\overset{+}{l},\overset{0}{2},\overset{-}{0})$. 
Here, we fix a basis $(\vep_1, \vep_2,\cdots,\vep_l,b,a)$ of $F$ as in \eqref{def_base-F}.
\vskip 5mm
\begin{enumerate}
\item The tier numbers $t_1(R,G)$ and $t_2(R,G)$ and dual root system $(R^\vee,G)$,
\item Real Root system,
\item A paired simple system $(\Pi^{\dc}, \Pi^{\mc})$, 
\begin{enumerate} 
\item[i)] roots in $\Pi^{\dc}$,
\item[ii)] roots in $\Pi^{\mc} \setminus \Pi^{\dc}$.
\end{enumerate}
\item (Non-reduced) counting numbers: 
\begin{enumerate} 
\item[i)] $k(\alpha)$ for $\alpha \in \Pi^{\dc}$,
\item[ii)] $k^{\nr}((\alpha')^\ast)$ for $\alpha' \in \Pi^{\mc} \setminus \Pi^{\dc}$. 
\end{enumerate}
\item Exponents 
\item Elliptic diagram.
\item $W$-orbits on $R$.
\end{enumerate}
\vskip 0.2in 

To make the elliptic diagrams clearer, we use light blue edge to indicate 
\begin{tikzpicture} \draw [fill=gray!50] (0,0) circle(0.1); 
                                     \draw (0.1,0) -- (0.9,0); \draw (0.5,0.03) node[above] {\tiny{$\infty$}};
                                     \draw [fill=gray!50] (1,0) circle(0.1);
\end{tikzpicture} 
for some $\node{50} \in \{ \white, \gray, \black\}$
(cf. (Edge3) in Definition \ref{defn_Dynkin-diag}), dark blue edge to indicate
\begin{tikzpicture} \draw [fill=gray!50] (0,0) circle(0.1); 
                                     \draw (0.1,0) -- (0.9,0);  \draw (0.5,0.05) node[above] {\tiny{$2$}};
                                     \draw (0.55,0.1) -- (0.45,0); \draw (0.55,-0.1) -- (0.45,0); 
                                     \draw [fill=gray!50] (1,0) circle(0.1); \end{tikzpicture} 
for some $\node{50} \in \{ \white, \gray, \black\}$
(cf. $t=2$ in (Edge2) in Definition \ref{defn_Dynkin-diag})
red edge to indicate 
\begin{tikzpicture} \draw [fill=gray!50] (0,0) circle(0.1); 
                                     \draw (0.1,0) -- (0.9,0);  \draw (0.5,0.05) node[above] {\tiny{$4$}};
                                     \draw (0.55,0.1) -- (0.45,0); \draw (0.55,-0.1) -- (0.45,0); 
                                     \draw [fill=gray!50] (1,0) circle(0.1); \end{tikzpicture} 
for some $\node{50} \in \{ \white, \gray, \black\}$
(cf. $t=4$ in (Edge2) in Definition \ref{defn_Dynkin-diag}).


\subsection{Reduced mERSs}\label{sect_root-data-red}




\subsubsection{Type $BC_l^{(1,2)}\; (l\geq 1)$}\label{data:BC-1}
\begin{enumerate}
\item $t_1(BC_l^{(1,2)})=1$, \; $t_2(BC_l^{(1,2)})=2$ and 
$(BC_l^{(1,2)})^\vee=BC_l^{(4,2)}$,
\item $R(BC_l^{(1,2)})=(R(BC_l)_s+\Z a+\Z b) \cup (R(BC_l)_m+\Z a+\Z b)$ \\
\phantom{$R(BC_l^{(1,2)})=$} $\cup (R(BC_l)_l+(1+2\Z)a+\Z b)$. 
\item 
\begin{enumerate}
\item[i)] $\alpha_0=a+b-2\vep_1, \, \alpha_i=\vep_i-\vep_{i+1}\, (1\leq i\leq l-1), \, \alpha_l=\vep_l$, 
\item[ii)] $\alpha_l'= -a+2\vep_l$.
\end{enumerate}
\item
\begin{enumerate}
\item[i)] $k(\alpha_0)=2,\, k(\alpha_i)=1\, (1\leq i\leq l)$, 
\item[ii)] $k^{\nr}((\alpha_l')^\ast)=1$. 
\end{enumerate}
\item $m_{\alpha_0}=2, \; m_{\alpha_i}=4\, (1\leq i\leq l)$.
\item Elliptic diagram:
\input{Figure-new/diagram-BC12-red}
\item $W$-orbits: $R(BC_l^{(1,2)})_l=W(\alpha_0) \amalg W((\alpha_l')^\ast)$, \\
\phantom{$W$-orbits:} $R(BC_l^{(1,2)})_m$: single $W$-orbit, \quad $R(BC_l^{(1,2)})_s=W(\alpha_l)$.
\end{enumerate}

\subsubsection{Type $BC_l^{(1,1)\ast}\; (l\geq 1)$}\label{data:BC-2}
\begin{enumerate}
\item $t_1(BC_l^{(1,1)\ast})=1$, \; $t_2(BC_l^{(1,1)\ast})=1$ and 
$(BC_l^{(1,1)\ast})^\vee=BC_l^{(4,4)\ast}$,
\item $R(BC_l^{(1,1)\ast})=(R(BC_l)_s+\Z a+\Z b) \cup (R(BC_l)_m+\Z a+\Z b)$ \\
\phantom{$R(BC_l^{(1,1)\ast})=$} $\cup (R(BC_l)_l+\{\, ma+nb\,\vert\, (m-1)(n-1) \equiv 0\, [2]\, \})$.
\item 
\begin{enumerate}
\item[i)] $\alpha_0=b-2\vep_1, \, \alpha_i=\vep_i-\vep_{i+1}\, 
(1\leq i\leq l-1), \, \alpha_l=\vep_l$,
\item[ii)] $\alpha_l'= -a+2\vep_l$.
\end{enumerate}
\item
\begin{enumerate}
\item[i)] $k(\alpha_i)=1\, (0\leq i\leq l)$, 
\item[ii)]$k^{\nr}((\alpha_l')^\ast)=1$. 
\end{enumerate}
\item $m_{\alpha_i}=4\, (0\leq i\leq l)$.
\item Elliptic diagram:
\input{Figure-new/diagram-BC11st-red}
\item $W$-orbits: $R(BC_l^{(1,1)\ast})_l=W(\alpha_0) \amalg W(\alpha_0^\ast) \amalg W((\alpha_l')^\ast)$, \\
\phantom{$W$-orbits:} $R(BC_l^{(1,1)\ast})_m$: single $W$-orbit, \quad $R(BC_l^{(1,1)\ast})_s=W(\alpha_l)$.
\end{enumerate}

\subsubsection{Type $BC_l^{(4,2)}\; (l\geq 1)$}\label{data:BC-3}
\begin{enumerate}
\item $t_1(BC_l^{(4,2)})=4$, \; $t_2(BC_l^{(4,2)})=2$ and  
$(BC_l^{(4,2)})^\vee=BC_l^{(1,2)}$,
\item $R(BC_l^{(4,2)})=(R(BC_l)_s+\Z a+\Z b) \cup (R(BC_l)_m+\Z a+2\Z b)$ \\
\phantom{$R(BC_l^{(4,2)})=$} $\cup (R(BC_l)_l+(1+2\Z)a+4\Z b)$.
\item 
\begin{enumerate}
\item[i)] $\alpha_0=b-\vep_1, \, \alpha_i=\vep_i-\vep_{i+1}\, 
(1\leq i\leq l-1), \, \alpha_l=\vep_l$, 
\item[ii)] $\alpha_l'= -a+2\vep_l$.
\end{enumerate}
\item
\begin{enumerate}
\item[i)] $k(\alpha_i)=1\, (0\leq i\leq l)$, 
\item[ii)] $k^{\nr}((\alpha_l')^\ast)=1$. 
\end{enumerate}
\item $m_{\alpha_0}=1, \; m_{\alpha_i}=2\, (1\leq i\leq l)$.
\item Elliptic diagram:
\input{Figure-new/diagram-BC42-red}
\item $W$-orbits:  $R(BC_l^{(4,2)})_l=W((\alpha_l')^\ast)$, \\ 
\phantom{$W$-orbits:} $R(BC_l^{(4,2)})_m$: single $W$-orbit,  \\
\phantom{$W$-orbits:} $R(BC_l^{(4,2)})_s=W(\alpha_0) \amalg W(\alpha_l)$.
\end{enumerate}

\subsubsection{Type $BC_l^{(4,4)\ast}\; (l\geq 1)$}\label{data:BC-4}
\begin{enumerate}
\item $t_1(BC_l^{(4,4)\ast})=4$, \; $t_2(BC_l^{(4,4)\ast})=4$ and  
$(BC_l^{(4,4)\ast})^\vee=BC_l^{(1,1)\ast}$,
\item $R(BC_l^{(4,4)\ast})=(R(BC_l)_s+\{\, ma+nb\, \vert\, (m-1)(n-1)\equiv 0\, [2]\, \})$ \\
\phantom{$R(BC_l^{(4,4)\ast})=$} $\cup (R(BC_l)_m+2\Z a+2\Z b) \cup (R(BC_l)_l+4\Z a+4\Z b)$. 
\item 
\begin{enumerate}
\item[i)] $\alpha_0=b-\vep_1, \, \alpha_i=\vep_i-\vep_{i+1} 
(1\leq i\leq l-1), \, \alpha_l=a+\vep_l$,
\item[ii)] $\alpha_l'=2\vep_l$.
\end{enumerate}
\item 
\begin{enumerate}
\item[i)] $k(\alpha_0)=1, \; k(\alpha_i)=2\,  (1\leq i\leq l)$, 
\item[ii)] $k^{\nr}((\alpha_l')^\ast)=2$. 
\end{enumerate}
\item $m_{\alpha_i}=1\, (0\leq i\leq l)$.
\item Elliptic diagram:
\input{Figure-new/diagram-BC44st-red}
\item $W$-orbits: $R(BC_l^{(4,4)\ast})_l=W((\alpha_l')^\ast)$, \\
\phantom{$W$-orbits:}  $R(BC_l^{(4,4)\ast})_m$: single $W$-orbit, \\
\phantom{$W$-orbits:} 
$R(BC_l^{(4,4)\ast})_s=W(\alpha_0) \amalg W(\alpha_0^\ast) \amalg W(\alpha_l)$.
\end{enumerate}

\subsubsection{Type $BC_l^{(2,2)\sigma}(1)\; (l\geq 2)$}\label{data:BC-5}
\begin{enumerate}
\item $t_1(BC_l^{(2,2)\sigma}(1))=2$, \; $t_2(BC_l^{(2,2)\sigma}(1))=2$ and  \\
$(BC_l^{(2,2)\sigma}(1))^\vee=BC_l^{(2,2)\sigma}(1)$,
\item $R(BC_l^{(2,2)\sigma}(1))=(R(BC_l)_s+\Z a+\Z b) \cup (R(BC_l)_m+\Z a+\Z b)$ \\
\phantom{$R(BC_l^{(2,2)\sigma}(1))=$} $\cup (R(BC_l)_l+(1+2\Z) a+2\Z b)$. 
\item 
\begin{enumerate}
\item[i)] $\alpha_0=b-(\vep_1+\vep_2), \, \alpha_i=\vep_i-\vep_{i+1}\, 
(1\leq i\leq l-1), \, \alpha_l=\vep_l$, 
\item[ii)] $\alpha_l'= -a+2\vep_l$.
\end{enumerate}
\item 
\begin{enumerate}
\item[i)] $k(\alpha_i)=1\, (0\leq i\leq l)$, 
\item[ii)] $k^{\nr}((\alpha_l')^\ast)=1$. 
\end{enumerate}
\item $m_{\alpha_0}=m_{\alpha_1}=2, \; m_{\alpha_i}=4\, 
(2\leq i\leq l)$.
\item Elliptic diagram:
\input{Figure-new/diagram-BC22s1-red}
\item $W$-orbits:
\begin{enumerate}
\item[i)] $l=2$: $R(BC_2^{(2,2)\sigma}(1))_l=W((\alpha_2')^\ast),\quad R(BC_2^{(2,2)\sigma}(1))_s=W(\alpha_2)$, \\
\phantom{$l=2$:} $R(BC_2^{(2,2)\sigma}(1))_m=W(\alpha_0) \amalg W(\alpha_1)$.
\item[ii)] $l\geq 3$: $R(BC_l^{(2,2)\sigma}(1))_l=W((\alpha_l')^\ast),\quad   R(BC_l^{(2,2)\sigma}(1))_s=W(\alpha_l)$. \\
\phantom{$l\geq 3$:} $R(BC_l^{(2,2)\sigma}(1))_m$: single $W$-orbit.
\end{enumerate}
\end{enumerate}

\subsubsection{Type $BC_l^{(2,2)\sigma}(2)\; (l\geq 1)$}\label{data:BC-6}
\begin{enumerate}
\item $t_1(BC_l^{(2,2)\sigma}(2))=2$, \; $t_2(BC_l^{(2,2)\sigma}(2))=2$ and \\
$(BC_l^{(2,2)\sigma}(2))^\vee=BC_l^{(2,2)\sigma}(2)$,
\item $R(BC_l^{(2,2)\sigma}(2))=(R(BC_l)_s+\Z a+\Z b) \cup (R(BC_l)_m+\Z a+2\Z b)$ \\
\phantom{$R(BC_l^{(2,2)\sigma}(2))=$} $\cup (R(BC_l)_l+(1+2\Z) a+2\Z b)$. 
\item 
\begin{enumerate}
\item[i)] $\alpha_0=b-\vep_1, \, \alpha_i=\vep_i-\vep_{i+1}\, 
(1\leq i\leq l-1), \, \alpha_l=\vep_l$, 
\item[ii)] $\alpha_0'= -a+2b-2\vep_1$, \; $\alpha_l'=-a+2\vep_l$.
\end{enumerate}
\item 
\begin{enumerate}
\item[i)] $k(\alpha_i)=1\, (0\leq i\leq l)$, 
\item[ii)] $k^{\nr}((\alpha_0')^\ast)=k^{\nr}((\alpha_l')^\ast)=1$. 
\end{enumerate}
\item $m_{\alpha_i}=2\, (0 \leq i\leq l)$.
\item Elliptic diagram:

\input{Figure-new/diagram-BC22s2-red}

\item $W$-orbits: $R(BC_l^{(2,2)\sigma}(2))_l=W((\alpha_0')^\ast) \amalg W((\alpha_l')^\ast)$, \\
\phantom{$W$-orbits:} $R(BC_l^{(2,2)\sigma}(2))_m$: single $W$-orbit, \\
\phantom{$W$-orbits:} $R(BC_l^{(2,2)\sigma}(2))_s=W(\alpha_0) \amalg W(\alpha_l)$.
\end{enumerate}
\bigskip

\subsection{Non-Reduced mERSs with $R/G$ of type $\mathbf{BCC_l}$}
\label{sect_root-data-non-reduced-1}
\subsubsection{Type $BCC_l^{(1)}$ $(l\geq 1)$}\label{data:BCC-1}
\begin{enumerate}
\item $t_1(BCC_l^{(1)})=1,$ \quad $t_2(BCC_l^{(1)})=1,$ \quad $(BCC_l^{(1)})^\vee = C^\vee BC_l^{(4)}.$
\item $R(BCC_l^{(1)})=R(BC_l)+\Z a+\Z b$.
\item 
\begin{enumerate}
\item[i)]$\alpha_0=b-2\vep_1$, $\alpha_i=\vep_i-\vep_{i+1}\; 
(1\leq i\leq l-1)$, $\alpha_l=\vep_l$. 
\item[ii)] $\alpha_l'=2\vep_l$.
\end{enumerate}
\item 
\begin{enumerate}
\item[i)] $k(\alpha_i)=1\; (0\leq i\leq l)$,
\item[ii)] $k^{\nr}((\alpha_l')^\ast)=1$. 
\end{enumerate}
\item $m_{\alpha_i}=4\; (0\leq i\leq l)$.
\item Elliptic diagram:
\input{Figure-new/diagram-1-nr}
\item $W$-orbits: $R(BCC_l^{(1)})_l=W(\alpha_0) \amalg W(\alpha_0^\ast) \amalg
W(\alpha_l') \amalg W((\alpha_l')^\ast)$, \\
\phantom{$W$-orbits:} $R(BCC_l^{(1)})_m$: single $W$-orbit, \quad $R(BCC_l^{(1)})_s=W(\alpha_l)$.
\end{enumerate}

\subsubsection{Type $BCC_l^{(1)\ast_0}$ $(l\geq 1)$}\label{data:BCC-2}
\begin{enumerate}
\item $t_1(BCC_l^{(1)\ast_{0}})=1,$ \quad $t_2(BCC_l^{(1)\ast_{0}})=1,$ \quad $(BCC_l^{(1)\ast_{0}})^\vee = C^\vee BC_l^{(4)\ast_{0}}.$
\item $R(BCC_l^{(1)\ast_0})=(R(BC_l)_s+\Z a+\Z b) \cup (R(BC_l)_m+\Z a+\Z b)$ \\
\phantom{$R(BCC_l^{(1)\ast_0})=$} $\cup (R(BC_l)_l+\{\, ma+nb\, \vert \, mn \equiv 0\, [2]\, \})$.
\item 
\begin{enumerate}
\item[i)]$\alpha_0=b-2\vep_1$, $\alpha_i=\vep_i-\vep_{i+1}\; 
(1\leq i\leq l-1)$, $\alpha_l=\vep_l$. 
\item[ii)] $\alpha_l'=2\vep_l$.
\end{enumerate}
\item 
\begin{enumerate}
\item[i)] $k(\alpha_0)=2$, \; $k(\alpha_i)=1\; (1\leq i\leq l)$, 
\item[ii)] $k^{\nr}((\alpha_l')^\ast)=1$. 
\end{enumerate}
\item  $m_{\alpha_0}=2$, \; $m_{\alpha_i}=4\; (1\leq i\leq l)$.
\item Elliptic diagram:
\input{Figure-new/diagram-2-nr} 
\item $W$-orbits: $R(BCC_l^{(1)\ast_{0}})_l=W(\alpha_0) \amalg 
W(\alpha_l') \amalg W((\alpha_l')^\ast)$, \\
\phantom{$W$-orbits:} $R(BCC_l^{(1)\ast_0})_m$: single $W$-orbit, \quad $R(BCC_l^{(1)\ast_0})_s=W(\alpha_l)$.
\end{enumerate}

\subsubsection{Type $BCC_l^{(1)\ast_{0'}}$ $(l\geq 1)$}\label{data:BCC-3}
\begin{enumerate}
\item $t_1(BCC_l^{(1)\ast_{0'}})=1,$ \quad $t_2(BCC_l^{(1)\ast_{0'}})=1,$ \quad $(BCC_l^{(1)\ast_{0'}})^\vee = C^\vee BC_l^{(4)\ast_{0'}}.$
\item $R(BCC_l^{(1)\ast_{0'}})=(R(BC_l)_s+\Z a+\Z b) \cup (R(BC_l)_m+\Z a+\Z b)$ \\
\phantom{$R(BCC_l^{(1)\ast_{0'}})=$} $\cup (R(BC_l)_l+\{\, ma+nb\, \vert \, m(n-1) \equiv 0\, [2]\, \})$.
\item 
\begin{enumerate}
\item[i)] $\alpha_0=b-2\vep_1$, $\alpha_i=\vep_i-\vep_{i+1}\; 
(1\leq i\leq l-1)$, $\alpha_l=\vep_l$. 
\item[ii)] $\alpha_l'=2\vep_l$.
\end{enumerate}
\item 
\begin{enumerate}
\item[i)] $k(\alpha_i)=1\; (0\leq i\leq l)$, 
\item[ii)] $k^{\nr}((\alpha_l')^\ast)=2$. 
\end{enumerate}
\item $m_{\alpha_i}=4\; (0\leq i\leq l-1)$, \;  $m_{\alpha_l}=2$.
\item Elliptic diagram:
\input{Figure-new/diagram-3-nr}
\item $W$-orbits: $R(BCC_l^{(1)\ast_{0'}})_l=W(\alpha_0) \amalg W(\alpha_0^\ast) \amalg
W(\alpha_l')$, \\
\phantom{$W$-orbits:} $R(BCC_l^{(1)\ast_{0'}})_m$: single $W$-orbit, \quad $R(BCC_l^{(1)\ast_{0'}})_s=W(\alpha_l)$.
\end{enumerate}

\subsubsection{Type $BCC_l^{(2)}(1)$ $(l\geq 2)$}\label{data:BCC-4}
\begin{enumerate}
\item $t_1(BCC_l^{(2)}(1))=1,$ \quad $t_2(BCC_l^{(2)}(1))=2,$ \quad $(BCC_l^{(2)}(1))^\vee = C^\vee BC_l^{(2)}(1).$
\item $R(BCC_l^{(2)}(1))=(R(BC_l)_s+\Z a+\Z b) \cup (R(BC_l)_m+\Z a+\Z b)$ \\
\phantom{$R(BCC_l^{(2)}(1))=$} $\cup (R(BC_l)_l+2\Z a+\Z b)$.
\item 
\begin{enumerate}
\item[i)] $\alpha_0=b-2\vep_1$, $\alpha_i=\vep_i-\vep_{i+1}\; 
(1\leq i\leq l-1)$, $\alpha_l=\vep_l$. 
\item[ii)] $\alpha_l'=2\vep_l$.
\end{enumerate}
\item 
\begin{enumerate}
\item[i)] $k(\alpha_0)=2$, \; $k(\alpha_i)=1\; (1\leq i\leq l)$, 
\item[ii)] $k^{\nr}((\alpha_l')^\ast)=2$. 
\end{enumerate}
\item $m_{\alpha_0}=2$, \; $m_{\alpha_i}=4\; 
(1\leq i\leq l-1)$, \; $m_{\alpha_l}=2$.
\item Elliptic diagram:
\input{Figure-new/diagram-4-nr}
\item $W$-orbits: 
\begin{enumerate}
\item[i)] $l=2$: $R(BCC_2^{(2)}(1))_l=W(\alpha_0) 
\amalg W(\alpha_2')$, \\
\phantom{$l=2$:} $R(BCC_2^{(2)}(1))_m=W(\alpha_1) \amalg W(\alpha_1^\ast)$, \\
\phantom{$l=2$:} $R(BCC_2^{(2)}(1))_s=W(\alpha_2)$.
\item[ii)] $l\geq 3$: $R(BCC_l^{(2)}(1))_l=W(\alpha_0) 
\amalg W(\alpha_l')$, \\
\phantom{$l=2$:} $R(BCC_l^{(2)}(1))_m$:  single $W$-orbit, \\
\phantom{$l=2$:} $R(BCC_2^{(2)}(1))_s=W(\alpha_l)$.
\end{enumerate}
\end{enumerate}

\subsubsection{Type $BCC_l^{(2)}(2)$ $(l\geq 1)$}\label{data:BCC-5}
\begin{enumerate}
\item $t_1(BCC_l^{(2)}(2))=1,$ \quad $t_2(BCC_l^{(2)}(2))=2,$ \quad $(BCC_l^{(2)}(2))^\vee = C^\vee BC_l^{(2)}(2).$
\item $R(BCC_l^{(2)}(2))=(R(BC_l)_s+\Z a+\Z b) \cup (R(BC_l)_m+2\Z a+\Z b)$ \\
\phantom{$R(BCC_l^{(2)}(2))=$} $\cup (R(BC_l)_l+2\Z a+\Z b)$.
\item 
\begin{enumerate}
\item[i)] $\alpha_0=b-2\vep_1$, $\alpha_i=\vep_i-\vep_{i+1}\; 
(1\leq i\leq l-1)$, $\alpha_l=\vep_l$. 
\item[ii)] $\alpha_l'=2\vep_l$.
\end{enumerate}
\item 
\begin{enumerate}
\item[i)] $k(\alpha_i)=2\; (0\leq i\leq l-1)$, \; $k(\alpha_l)=1$, 
\item[ii)] $k^{\nr}((\alpha_l')^\ast)=2$. 
\end{enumerate}
\item $m_{\alpha_i}=2\; (0\leq i\leq l)$.
\item Elliptic diagram:
\input{Figure-new/diagram-5-nr}
\item $W$-orbits: $R(BCC_l^{(2)}(2))_l=W(\alpha_0) \amalg W(\alpha_0^\ast)
\amalg W(\alpha_l') \amalg W((\alpha_l')^\ast)$, \\
\phantom{$W$-orbits:} $R(BCC_l^{(2)}(2))_m$: single $W$-orbit, \\
\phantom{$W$-orbits:} $R(BCC_l^{(2)}(2))_s=W(\alpha_l) \amalg W(\alpha_l^\ast)$.
\end{enumerate}

\subsubsection{Type $BCC_l^{(2)\ast_0}$} \label{data:BCC-6}
\begin{enumerate}
\item $t_1(BCC_l^{(2)\ast_0})=1,$ \quad $t_2(BCC_l^{(2)\ast_0})=2,$ \quad 
$(BCC_l^{(2)\ast_0})^\vee = C^\vee BC_l^{(2)\ast_0}.$
\item $R(BCC_l^{(2)\ast_0})=(R(BC_l)_s+\Z a+\Z b) \cup (R(BC_l)_m+2\Z a+\Z b)$ \\
\phantom{$R(BCC_l^{(2)\ast_0})=$} $\cup (R(BC_l)_l+\{\, 2ma+nb\, \vert\, mn 
\equiv 0\, [2]\, \})$. 
\item 
\begin{enumerate}
\item[i)] $\alpha_0=b-2\vep_1$, $\alpha_i=\vep_i-\vep_{i+1}\; 
(1\leq i\leq l-1)$, 
$\alpha_l=\vep_l$. 
\item[ii)] $\alpha_l'=2\vep_l$.
\end{enumerate}
\item 
\begin{enumerate}
\item[i)] $k(\alpha_0)=4$, \; $k(\alpha_i)=2\; (1\leq i\leq l-1)$, \; 
$k(\alpha_l)=1$, 
\item[ii)] $k^{\nr}((\alpha_l')^\ast)=2$.
\end{enumerate}
\item $m_{\alpha_0}=1$, \; $m_{\alpha_i}=2\; (1\leq i\leq l)$.
\item Elliptic diagram:
\input{Figure-new/diagram-6-nr}
\item $W$-orbits: $R(BCC_l^{(2)\ast_0})_l=W(\alpha_0) \amalg W(\alpha_l') \amalg
W((\alpha_l')^\ast)$, \\
\phantom{$W$-orbits:} $R(BCC_l^{(2)\ast_0})_m$: single $W$-orbit, \\
\phantom{$W$-orbits:} $R(BCC_l^{(2)\ast_0})_s=W(\alpha_l) \amalg W(\alpha_l^\ast)$.
\end{enumerate}

\subsubsection{Type $BCC_l^{(2)\ast_1}$ $(l\geq 1)$}\label{data:BCC-7}
\begin{enumerate}
\item $t_1(BCC_l^{(2)\ast_1})=1,$ \quad $t_2(BCC_l^{(2)\ast_1})=2,$ \quad $(BCC_l^{(2)\ast_1})^\vee = C^\vee BC_l^{(2)\ast_1}.$
\item $R(BCC_l^{(2)\ast_1})=(R(BC_l)_s+\Z a+\Z b) \cup (R(BC_l)_m+2\Z a+\Z b)$ \\
\phantom{$R(BCC_l^{(2)\ast_1})=$} $\cup (R(BC_l)_l+\{\, 2ma+nb\, \vert\, (m-1)(n-1) \equiv 0\, [2]\, \})$. 
\item 
\begin{enumerate}
\item[i)] $\alpha_0=-2a+b-2\vep_1$, $\alpha_i=\vep_i-\vep_{i+1}\; 
(1\leq i\leq l-1)$, $\alpha_l=\vep_l$. 
\item[ii)] $\alpha_l'= -2a+2\vep_l$.
\end{enumerate}
\item 
\begin{enumerate}
\item[i)] $k(\alpha_i)=2\; (0\leq i\leq l-1)$, \; $k(\alpha_l)=1$, 
\item[ii)] $k^{\nr}((\alpha_l')^\ast)=2$. 
\end{enumerate}
\item $m_{\alpha_i}=2\; (0\leq i\leq l)$.
\item Elliptic diagram:
\input{Figure-new/diagram-7-nr}
\item $W$-orbits: $R(BCC_l^{(2)\ast_1})_l=W(\alpha_0) \amalg W(\alpha_0^\ast) \amalg W((\alpha_l')^\ast)$, \\
\phantom{$W$-orbits:} $R(BCC_l^{(2)\ast_1})_m$: single $W$-orbit, \\
\phantom{$W$-orbits:}  $R(BCC_l^{(2)\ast_1})_s=W(\alpha_l) \amalg W(\alpha_l^\ast)$.
\end{enumerate}


\subsubsection{Type $BCC_l^{(4)}$ $(l\geq 1)$}\label{data:BCC-8}
\begin{enumerate}
\item $t_1(BCC_l^{(4)})=1,$ \quad $t_2(BCC_l^{(4)})=4,$ \quad $(BCC_l^{(4)})^\vee = C^\vee BC_l^{(1)}.$
\item $R(BCC_l^{(4)})=(R(BC_l)_s+\Z a+\Z b) \cup (R(BC_l)_m+2\Z a+\Z b)$ \\
\phantom{$R(BCC_l^{(4)})=$} $\cup (R(BC_l)_l+4\Z a+\Z b)$. 
\item 
\begin{enumerate}
\item[i)] $\alpha_0=b-2\vep_1$, $\alpha_i=\vep_i-\vep_{i+1}\; 
(1\leq i\leq l-1)$, $\alpha_l=a+\vep_l$. 
\item[ii)] $\alpha_l'=2\vep_l$.
\end{enumerate}
\item 
\begin{enumerate}
\item[i)] $k(\alpha_0)=4$, \; $k(\alpha_i)=2\; (1\leq i\leq l-1)$, \; 
$k(\alpha_l)=1$, 
\item[ii)] $k^{\nr}((\alpha_l')^\ast)=2$. 
\end{enumerate}
\item $m_{\alpha_0}=1$, \; $m_{\alpha_i}=2\; (1\leq i\leq l)$.
\item Elliptic diagram:
\input{Figure-new/diagram-8-nr}
\item $W$-orbits: $R(BCC_l^{(4)})_l=W(\alpha_0) \amalg W((\alpha_l')^\ast)$, \\
\phantom{$W$-orbits:} $R(BCC_l^{(4)})_m$: single $W$-orbit, \\
\phantom{$W$-orbits:} $R(BCC_l^{(4)})_s=W(\alpha_l) \amalg W(\alpha_l^\ast)$.
\end{enumerate}
\bigskip

\subsection{Non-Reduced mERSs with $R/G$ of type $\mathbf{C^\vee BC_l}$}
\label{sect_root-data-non-reduced-2}
\subsubsection{Type $C^\vee BC_l^{(1)}$ $(l\geq 1)$}\label{data:CBC-1}
\begin{enumerate}
\item $t_1(C^\vee BC_l^{(1)})=4,$ \quad $t_2(C^\vee BC_l^{(1)})=1,$ \quad $(C^\vee BC_l^{(1)})^\vee = BCC_l^{(4)}.$
\item $R(C^\vee BC_l^{(1)})=(R(BC_l)_s+\Z a+\Z b) \cup (R(BC_l)_m+\Z a+2\Z b)$ \\
\phantom{$R(C^\vee BC_l^{(1)})=$}$\cup (R(BC_l)_l+\Z a+4\Z b)$. 
\item 
\begin{enumerate}
\item[i)] $\alpha_0=b-\vep_1$, $\alpha_i=\vep_i-\vep_{i+1}\; 
(1\leq i\leq l-1)$, $\alpha_l=\vep_l$. 
\item[ii)] $\alpha_l'=2\vep_l$.
\end{enumerate}
\item 
\begin{enumerate}
\item[i)] $k(\alpha_i)=1\; (0\leq i\leq l)$,
\item[ii)] $k^{\nr}((\alpha_l')^\ast)=1$. 
\end{enumerate}
\item $m_{\alpha_0}=1$, \; $m_{\alpha_i}=2\; (1\leq i \leq l)$.
\item Elliptic diagram:
\input{Figure-new/diagram-9-nr}
\item $W$-orbits: $R(C^\vee BC_l^{(1)})_l=W(\alpha_l') \amalg W((\alpha_l')^\ast)$, \\
\phantom{$W$-orbits:} $R(C^\vee BC_l^{(1)})_m$: single $W$-orbit, \\
\phantom{$W$-orbits:}  $R(C^\vee BC_l^{(1)})_s=W(\alpha_0) \amalg W(\alpha_l)$.
\end{enumerate}

\subsubsection{Type $C^\vee BC_l^{(2)}(1)$ $(l\geq 2)$}\label{data:CBC-2}
\begin{enumerate}
\item $t_1(C^\vee BC_l^{(2)}(1))=4,$ \quad $t_2(C^\vee BC_l^{(2)}(1))=2,$ \\ $(C^\vee BC_l^{(2)}(1))^\vee = BCC_l^{(2)}(1).$
\item $R(C^\vee BC_l^{(2)}(1))=(R(BC_l)_s+\Z a+\Z b) \cup (R(BC_l)_m+\Z a+2\Z b)$
\phantom{$R(C^\vee BC_l^{(2)}(1))=$} $\cup (R(BC_l)_l+2\Z a+4\Z b)$. 
\item 
\begin{enumerate}
\item[i)] $\alpha_0=b-\vep_1$, $\alpha_i=\vep_i-\vep_{i+1}\; 
(1\leq i\leq l-1)$, $\alpha_l=\vep_l$. 
\item[ii)] $\alpha_l'=2\vep_l$.
\end{enumerate}
\item 
\begin{enumerate}
\item[i)] $k(\alpha_i)=1\; (0\leq i\leq l)$,
\item[ii)] $k^{\nr}((\alpha_l')^\ast)=2$. 
\end{enumerate}
\item $m_{\alpha_0}=1$, \; $m_{\alpha_i}=2\; 
(1\leq i\leq l-1)$, \;  $m_{\alpha_l}=1$.
\item Elliptic diagram:
\input{Figure-new/diagram-10-nr}
\item $W$-orbits: \begin{enumerate}
\item[i)] $l=2$: $R(C^\vee BC_2^{(2)}(1))_l=W(\alpha_2')$, \\
\phantom{$l=2$:}  $R(C^\vee BC_2^{(2)}(1))_m=W(\alpha_1) \amalg W(\alpha_1^\ast)$, \\
\phantom{$l=2$:}  $R(C^\vee BC_2^{(2)}(1))_s=W(\alpha_0) \amalg W(\alpha_2)$.
\item[ii)] $l\geq 3$: $R(C^\vee BC_l^{(2)}(1))_l=W(\alpha_l')$, \\ 
\phantom{$l\geq 3$:} $R(C^\vee BC_l^{(2)}(1))_m$: single $W$-orbit, \\
\phantom{$l\geq 3$:} $R(C^\vee BC_l^{(2)}(1))_s=W(\alpha_0) \amalg W(\alpha_l)$.
\end{enumerate}
\end{enumerate}

\subsubsection{Type $C^\vee BC_l^{(2)}(2)$ $(l\geq 1)$}\label{data:CBC-3}
\begin{enumerate}
\item $t_1(C^\vee BC_l^{(2)}(2))=4,$ \quad $t_2(C^\vee BC_l^{(2)}(2))=2,$ \\ $(C^\vee BC_l^{(2)}(2))^\vee = BCC_l^{(2)}(2).$
\item $R(C^\vee BC_l^{(2)}(2))=(R(BC_l)_s+\Z a+\Z b) \cup (R(BC_l)_m+2\Z a+2\Z b)$  \\
\phantom{$R(C^\vee BC_l^{(2)}(2))=$} $\cup (R(BC_l)_l+2\Z a+4\Z b)$.
\item 
\begin{enumerate}
\item[i)] $\alpha_0=b-\vep_1$, $\alpha_i=\vep_i-\vep_{i+1}\; 
(1\leq i\leq l-1)$, $\alpha_l=\vep_l$. 
\item[ii)] $\alpha_l'=2\vep_l$.
\end{enumerate}
\item 
\begin{enumerate}
\item[i)] $k(\alpha_0)=1$, \; $k(\alpha_i)=2\; 
(1\leq i\leq l-1)$,\; $k(\alpha_l)=1$, 
\item[ii)] $k^{\nr}((\alpha_l')^\ast)=2$.
\end{enumerate}
\item $m_{\alpha_i}=1\; (0\leq i \leq l)$. 
\item Elliptic diagram:
\input{Figure-new/diagram-11-nr}
\item $W$-orbits: $R(C^\vee BC_l^{(2)}(2))_l=W(\alpha_l') \amalg W((\alpha_l')^\ast)$, \\
\phantom{$W$-orbits:}  $R(C^\vee BC_l^{(2)}(2))_m$: single $W$-orbit, \\
\phantom{$W$-orbits:} 
$R(C^\vee BC_l^{(2)}(2))_s=W(\alpha_0) \amalg W(\alpha_0^\ast)
\amalg W(\alpha_l) \amalg W(\alpha_l^\ast)$.
\end{enumerate}

\subsubsection{Type $C^\vee BC_l^{(2)\ast_0}$ $(l\geq 1)$}\label{data:CBC-4}
\begin{enumerate}
\item $t_1(C^\vee BC_l^{(2)\ast_0})=4,$ \quad $t_2(C^\vee BC_l^{(2)\ast_0})=2,$ \quad $(C^\vee BC_l^{(2)\ast_0})^\vee = BCC_l^{(2)\ast_0}.$
\item $R(C^\vee BC_l^{(2)\ast_0})=(R(BC_l)_s+\{\, ma+nb\, \vert\, mn \equiv 0\, [2]\, \})$ \\
\phantom{$R(C^\vee BC_l^{(2)\ast_0})=$} $\cup (R(BC_l)_m+2\Z a+2\Z b) \cup (R(BC_l)_l+2\Z a+4\Z b)$. 
\item 
\begin{enumerate}
\item[i)] $\alpha_0=b-\vep_1$, $\alpha_i=\vep_i-\vep_{i+1}\; 
(1\leq i\leq l-1)$, $\alpha_l=\vep_l$. 
\item[ii)] $\alpha_l'=2\vep_l$.
\end{enumerate}
\item 
\begin{enumerate}
\item[i)] $k(\alpha_i)=2\; 
(0\leq i\leq l-1)$, \; $k(\alpha_l)=1$, 
\item[ii)] $k^{\nr}((\alpha_l')^\ast)=2$. 
\end{enumerate}
\item $m_{\alpha_0}=\frac{1}{2}$, \; $m_{\alpha_i}=1\; (1\leq i\leq l)$.
\item Elliptic diagram:
\input{Figure-new/diagram-12-nr}
\item $W$-orbits: 
$R(C^\vee BC_l^{(2)\ast_0})_l=W(\alpha_l') \amalg W((\alpha_l')^\ast)$, \\
\phantom{$W$-orbits:}  $R(C^\vee BC_l^{(2)\ast_0})_m$: single $W$-orbit, \\
\phantom{$W$-orbits:} 
$R(C^\vee BC_l^{(2)\ast_0})_s=W(\alpha_0) \amalg W(\alpha_l) \amalg W(\alpha_l^\ast)$.
\end{enumerate}

\subsubsection{Type $C^\vee BC_l^{(2)\ast_1}$ $(l\geq 1)$}\label{data:CBC-5}
\begin{enumerate}
\item $t_1(C^\vee BC_l^{(2)\ast_1})=4,$ \quad $t_2(C^\vee BC_l^{(2)\ast_1})=2,$ \quad $(C^\vee BC_l^{(2)\ast_1})^\vee = BCC_l^{(2)\ast_1}.$
\item $R(C^\vee BC_l^{(2)\ast_1})=(R(BC_l)_s+\{\, ma+nb\, \vert\, (m-1)(n-1) \equiv 0\, [2]\, \})$ \\
\phantom{$R(C^\vee BC_l^{(2)\ast_0})=$} $\cup (R(BC_l)_m+2\Z a+2\Z b) \cup (R(BC_l)_l+2\Z a+4\Z b)$. 
\item
\begin{enumerate}
\item[i)] $\alpha_0=-a+b-\vep_1$, $\alpha_i=\vep_i-\vep_{i+1}\; 
(1\leq i\leq l-1)$, $\alpha_l=a+\vep_l$. 
\item[ii)] $\alpha_l'=2a+2\vep_l$.
\end{enumerate}
\item 
\begin{enumerate}
\item[i)] $k(\alpha_0)=1$, \; $k(\alpha_i)=2\; (1\leq i\leq l)$, 
\item[ii)] $k^{\nr}((\alpha_l')^\ast)=2$. 
\end{enumerate}
\item $m_{\alpha_i}=1\; (0\leq i \leq l)$.
\item Elliptic diagram:
\input{Figure-new/diagram-13-nr}
\item $W$-orbits: 
$R(C^\vee BC_l^{(2)\ast_1})_l=W(\alpha_l') \amalg W((\alpha_l')^\ast)$, \\
\phantom{$W$-orbits:} $R(C^\vee BC_l^{(2)\ast_1})_m$: single $W$-orbit, \\
\phantom{$W$-orbits:} 
$R(C^\vee BC_l^{(2)\ast_1})_s=W(\alpha_0) \amalg W(\alpha_0^\ast)\amalg W(\alpha_l)$.
\end{enumerate}

\subsubsection{Type $C^\vee BC_l^{(4)}$ $(l\geq 1)$}\label{data:CBC-6}
\begin{enumerate}
\item $t_1(C^\vee BC_l^{(4)})=4,$ \quad $t_2(C^\vee BC_l^{(4)})=4,$ \quad $(C^\vee BC_l^{(4)})^\vee = BCC_l^{(1)}.$
\item $R(C^\vee BC_l^{(4)})=(R(BC_l)_s+\Z a+\Z b) \cup (R(BC_l)_m+2\Z a+2\Z b)$ \\ 
\phantom{$R(C^\vee BC_l^{(4)})=$} $\cup (R(BC_l)_l+4\Z a+4\Z b)$. 
\item 
\begin{enumerate}
\item[i)] $\alpha_0=b-\vep_1$, $\alpha_i=\vep_i-\vep_{i+1}\; 
(1\leq i\leq l-1)$, $\alpha_l=a+\vep_l$. 
\item[ii)] $\alpha_l'=2\vep_l$.
\end{enumerate}
\item 
\begin{enumerate}
\item[i)] $k(\alpha_0)=1$, \; $k(\alpha_i)=2\; 
(1\leq i\leq l-1)$, \; $k(\alpha_l)=1$, 
\item[ii)] $k^{\nr}((\alpha_l')^\ast)=2$.
\end{enumerate}
\item $m_{\alpha_i}=1\; (0\leq i\leq l)$.
\item Elliptic diagram: 
\input{Figure-new/diagram-14-nr}
\item $W$-orbits:  $R(C^\vee BC_l^{(4)})_l=W((\alpha_l')^\ast)$, \\
\phantom{$W$-orbits:}  $R(C^\vee BC_l^{(4)})_m$: single $W$-orbit, \\
\phantom{$W$-orbits:} 
$R(C^\vee BC_l^{(4)})_s=W(\alpha_0) \amalg W(\alpha_0^\ast)
\amalg W(\alpha_l) \amalg W(\alpha_l^\ast)$.
\end{enumerate}

\subsubsection{Type $C^\vee BC_l^{(4)\ast_0}$ $(l\geq 1)$}\label{data:CBC-7}
\begin{enumerate}
\item $t_1(C^\vee BC_l^{(4)\ast_{0}})=4,$ \quad $t_2(C^\vee BC_l^{(4)\ast_{0}})=4,$ \quad $(C^\vee BC_l^{(4)\ast_{0}})^\vee = BCC_l^{(1)\ast_{0}}.$
\item $R(C^\vee BC_l^{(4)\ast_0})=(R(BC_l)_s+\{\, ma+nb\, \vert\, mn \equiv 0\, [2]\, \})$ \\
\phantom{$R(C^\vee BC_l^{(4)\ast_0})=$} $\cup (R(BC_l)_m+2\Z a+2\Z b) \cup (R(BC_l)_l+4\Z a+4\Z b)$. 
\item 
\begin{enumerate}
\item[i)] $\alpha_0=b-\vep_1$, $\alpha_i=\vep_i-\vep_{i+1}\; 
(1\leq i\leq l-1)$, $\alpha_l=a+\vep_l$. 
\item[ii)] $\alpha_l'=2\vep_l$.
\end{enumerate}
\item 
\begin{enumerate}
\item[i)] $k(\alpha_i)=2\; (0\leq i \leq l-1)$, \; $k(\alpha_l)=1$, 
\item[ii)] $k^{\nr}((\alpha_l')^\ast)=2$. 
\end{enumerate}
\item $m_{\alpha_0}=\frac{1}{2}$, \; $m_{\alpha_i}=1\; (1\leq i \leq l)$.
\item Elliptic diagram:
\input{Figure-new/diagram-15-nr}
\item $W$-orbits: $R(C^\vee BC_l^{(4)\ast_{0}})_l=W((\alpha_l')^\ast)$, \\
\phantom{$W$-orbits:}  $R(C^\vee BC_l^{(4)\ast_{0}})_m$: single $W$-orbit, \\
\phantom{$W$-orbits:} $R(C^\vee BC_l^{(4)\ast_{0}})_s=W(\alpha_0) \amalg 
W(\alpha_l) \amalg W(\alpha_l^\ast)$. 
\end{enumerate}

\subsubsection{Type $C^\vee BC_l^{(4)\ast_{0'}}$ $(l\geq 1)$}\label{data:CBC-8}
\begin{enumerate}
\item $t_1(C^\vee BC_l^{(4)\ast_{0'}})=4,$ \quad $t_2(C^\vee BC_l^{(4)\ast_{0'}})=4,$ \\ $(C^\vee BC_l^{(4)\ast_{0'}})^\vee = BCC_l^{(1)\ast_{0'}}.$
\item $R(C^\vee BC_l^{(4)\ast_{0'}})=(R(BC_l)_s+\{\, ma+nb\, \vert\, m(n-1) \equiv 0\, [2]\, \})$ \\
\phantom{$R(C^\vee BC_l^{(4)\ast_{0'}})=$} $\cup (R(BC_l)_m+2\Z a+2\Z b) \cup (R(BC_l)_l+4\Z a+4\Z b)$. 
\item 
\begin{enumerate}
\item[i)] $\alpha_0=b-\vep_1$, $\alpha_i=\vep_i-\vep_{i+1}\; 
(1\leq i\leq l-1)$, $\alpha_l=\vep_l$. 
\item[ii)] $\alpha_l'=2\vep_l$.
\end{enumerate}
\item 
\begin{enumerate}
\item[i)] $k(\alpha_0)=1$, \; $k(\alpha_i)=2 \; (1\leq i\leq l)$, 
\item[ii)] $k^{\nr}((\alpha_l')^\ast)=4$. 
\end{enumerate}
\item $m_{\alpha_i}=1\; (0\leq i\leq l-1)$, \; $m_{\alpha_l}=\frac{1}{2}$.
\item Elliptic diagram:
\input{Figure-new/diagram-16-nr}
\item $W$-orbits:  $R(C^\vee BC_l^{(4)\ast_{0'}})_l=W(\alpha_l')$, \\
\phantom{$W$-orbits:}  $R(C^\vee BC_l^{(4)\ast_{0'}})_m$: single $W$-orbit, \\
\phantom{$W$-orbits:} $R(C^\vee BC_l^{(4)\ast_{0'}})_s=W(\alpha_0) \amalg W(\alpha_0^\ast) \amalg
W(\alpha_l)$. 
\end{enumerate}
\bigskip
\subsection{Non-Reduced mERSs with $R/G$ of type $\mathbf{BB_l^\vee}$}
\label{sect_root-data-non-reduced-3}
\subsubsection{Type $BB_l^{\vee (1)}$ $(l\geq 2)$}\label{data:BB-1}
\begin{enumerate}
\item $t_1(BB_l^{\vee (1)})=2,$ \quad $t_2(BB_l^{\vee (1)})=1,$ \quad $(BB_l^{\vee (1)})^\vee = BB_l^{\vee (4)}.$
\item $R(BB_l^{\vee (1)})=(R(BC_l)_s+\Z a+\Z b) \cup (R(BC_l)_m+\Z a+\Z b)$ \\
\phantom{$R(BB_l^{\vee (1)})=$} $\cup (R(BC_l)_l+\Z a+2\Z b)$. 
\item 
\begin{enumerate}
\item[i)] $\alpha_0=b-\vep_1-\vep_2$, $\alpha_i=\vep_i-\vep_{i+1}\; 
(1\leq i \leq l-1)$, $\alpha_l=\vep_l$. 
\item[ii)] $\alpha_l'=2\vep_l$.
\end{enumerate}
\item 
\begin{enumerate}
\item[i)] $k(\alpha_i)=1\; (0\leq i\leq l)$, 
\item[ii)] $k^{\nr}((\alpha_l')^\ast)=1$. 
\end{enumerate}
\item $m_{\alpha_0}=m_{\alpha_1}=2$, \; $m_{\alpha_i}=4\; 
(2\leq i \leq l)$.
\item Elliptic diagram:
\input{Figure-new/diagram-17-nr}
\item $W$-orbits:
\begin{enumerate}
\item[i)] $l=2$: $R(BB_2^{\vee (1)})_l=W(\alpha_2') \amalg W((\alpha_2')^\ast)$, 
\\ 
\phantom{$l=2$:} $R(BB_2^{\vee (1)})_m=W(\alpha_0) \amalg W(\alpha_1)$, \\
\phantom{$l=2$:} $R(BB_2^{\vee (1)})_s=W(\alpha_2)$.
\item[ii)] $l\geq 3$: $R(BB_l^{\vee (1)})_l=W(\alpha_l') \amalg W((\alpha_l')^\ast)$, \\
\phantom{$l\geq 3$:} $R(BB_l^{\vee (1)})_m$: single $W$-orbit, \\
\phantom{$l\geq 3$:} $R(BB_l^{\vee (1)})_s=W(\alpha_l)$.
\end{enumerate}
\end{enumerate}

\subsubsection{Type $BB_l^{\vee (2)}(1)$ $(l\geq 2)$}\label{data:BB-2}
\begin{enumerate}
\item $t_1(BB_l^{\vee (2)}(1))=2,$ \quad $t_2(BB_l^{\vee (2)}(1))=2,$ \quad $(BB_l^{\vee (2)}(1))^\vee = BB_l^{\vee (2)}(1).$
\item $R(BB_l^{\vee (2)}(1))=(R(BC_l)_s+\Z a+\Z b) \cup (R(BC_l)_m+\Z a+\Z b)$ \\
\phantom{$R(BB_l^{\vee (2)}(1))=$} $\cup (R(BC_l)_l+2\Z a+2\Z b)$. 
\item 
\begin{enumerate}
\item[i)] $\alpha_0=b-\vep_1-\vep_2$, $\alpha_i=\vep_i-\vep_{i+1}\; 
(1\leq i \leq l-1)$, $\alpha_l=\vep_l$. 
\item[ii)] $\alpha_l'=2\vep_l$.
\end{enumerate}
\item 
\begin{enumerate}
\item[i)] $k(\alpha_i)=1\; (0\leq i\leq l)$, 
\item[ii)] $k^{\nr}((\alpha_l')^\ast)=2$. 
\end{enumerate}
\item $m_{\alpha_0}=m_{\alpha_1}=2$, \; $m_{\alpha_i}=4\; 
(2\leq i\leq l-1)$, \;  $m_{\alpha_l}=2$.
\item Elliptic diagram:
\input{Figure-new/diagram-18-nr}
\item $W$-orbits:
\begin{enumerate}
\item[i)] $l=2$: $R(BB_2^{\vee (2)}(1))_l=W((\alpha_2')^\ast), \quad R(BB_2^{\vee (2)}(1))_s=W(\alpha_2)$, \\
\phantom{$l=2$:} $R(BB_2^{\vee (2)}(1))_m=W(\alpha_0) \amalg W(\alpha_0^\ast) \amalg W(\alpha_1) \amalg W(\alpha_1^\ast)$.
\item[ii)] $l\geq 3$:  $R(BB_l^{\vee (2)}(1))_l=W(\alpha_l'), \quad R(BB_l^{\vee (2)}(1))_s=W(\alpha_l)$, \\
\phantom{$l\geq 3$:} $R(BB_l^{\vee (2)}(1))_m$: single $W$-orbit.
\end{enumerate}
\end{enumerate}

\subsubsection{Type $BB_l^{\vee (2)}(2)$ $(l\geq 2)$}\label{data:BB-3}
\begin{enumerate}
\item $t_1(BB_l^{\vee (2)}(2))=2,$ \quad $t_2(BB_l^{\vee (2)}(2))=2,$ \quad $(BB_l^{\vee (2)}(2))^\vee = BB_l^{\vee (2)}(2).$
\item $R(BB_l^{\vee (2)}(2))=(R(BC_l)_s+\Z a+\Z b) \cup (R(BC_l)_m+2\Z a+\Z b)$ \\
\phantom{$R(BB_l^{\vee (2)}(2))=$} $\cup (R(BC_l)_l+2\Z a+2\Z b)$. 
\item 
\begin{enumerate}
\item[i)] $\alpha_0=b-\vep_1-\vep_2$, $\alpha_i=\vep_i-\vep_{i+1}\; 
(1\leq i\leq l-1)$, $\alpha_l=\vep_l$. 
\item[ii)] $\alpha_l'=2\vep_l$.
\end{enumerate}
\item 
\begin{enumerate}
\item[i)] $k(\alpha_i)=2\; (0\leq i\leq l-1)$, \; $k(\alpha_l)=1$, 
\item[ii)] $k^{\nr}((\alpha_l')^\ast)=2$. 
\end{enumerate}
\item $m_{\alpha_0}=m_{\alpha_1}=1$, \; $m_{\alpha_i}=2\; 
(2\leq i \leq l)$.
\item Elliptic diagram:
\input{Figure-new/diagram-19-nr}
\item $W$-orbits:
\begin{enumerate}
\item[i)] $l=2$: $R(BB_2^{\vee (2)}(2))_l=W(\alpha_2') \amalg W((\alpha_2')^\ast)$, \\
\phantom{$l=2$:} $R(BB_2^{\vee (2)}(2))_m=W(\alpha_0) \amalg W(\alpha_1)$, \\
\phantom{$l=2$:} $R(BB_2^{\vee (2)}(2))_s=W(\alpha_2) \amalg W(\alpha_2^\ast)$.
\item[ii)] $l\geq 3$: $R(BB_l^{\vee (2)}(2))_l =W(\alpha_l') \amalg W((\alpha_l')^\ast)$, \\
\phantom{$l\geq 3$:}  $R(BB_l^{\vee (2)}(2))_m$: single $W$-orbit, \\
\phantom{$l\geq 3$:}  $R(BB_l^{\vee (2)}(2))_s=W(\alpha_l) \amalg W(\alpha_l^\ast)$.
\end{enumerate}
\end{enumerate}
\subsubsection{Type $BB_2^{\vee (2) \ast}$}\label{data:BB-4}
\begin{enumerate}
\item $t_1(BB_2^{\vee (2)\ast})=2,$ \quad $t_2(BB_2^{\vee (2)\ast})=2,$ \quad $(BB_2^{\vee (2)\ast})^\vee = BB_2^{\vee (2)\ast}.$
\item $R(BB_2^{\vee (2)\ast})=(R(BC_2)_s+\Z a+\Z b)$ \\
\phantom{$R(BB_2^{\vee ()\ast})=\,$}$\cup (R(BC_2)_m+\{\, ma+nb\, \vert\, mn \equiv 0\, [2]\, \})$ \\
\phantom{$R(BB_2^{\vee ()\ast})=\,$}$\cup (R(BC_2)_l+2\Z a+2\Z b)$. 
\item 
\begin{enumerate}
\item[i)] $\alpha_0=b-\vep_1-\vep_2$, $\alpha_1=\vep_1-\vep_2$, $\alpha_2=\vep_2$. 
\item[ii)] $\alpha_2'=2\vep_2$.
\end{enumerate}
\item 
\begin{enumerate}
\item[i)] $k(\alpha_0)=2$, \; $k(\alpha_i)=1\; (i=1,2)$, 
\item[ii)] $k^{\nr}((\alpha_2')^\ast)=2$. 
\end{enumerate}
\item $m_{\alpha_0}=1$, \; $m_{\alpha_1}=m_{\alpha_2}=2$.
\item Elliptic diagram:
\input{Figure-new/diagram-20-nr}
\item $W$-orbits: $R(BB_2^{\vee (2)\ast})_l=W((\alpha_2')^\ast)$, \quad $R(BB_2^{\vee (2)\ast})_s=W(\alpha_2)$, \\
\phantom{$W$-orbits:\,} $R(BB_2^{\vee (2)\ast})_m=W(\alpha_0)\amalg W(\alpha_1) \amalg W(\alpha_1^\ast).$
\end{enumerate}
\subsubsection{Type $BB_l^{\vee (4)}$ $(l\geq 2)$}\label{data:BB-5}
\begin{enumerate}
\item $t_1(BB_l^{\vee (4)})=2,$ \quad $t_2(BB_l^{\vee (4)})=4,$ \quad $(BB_l^{\vee (4)})^\vee = BB_l^{\vee (1)}.$
\item $R(BB_l^{\vee (4)})=(R(BC_l)_s+\Z a+\Z b) \cup (R(BC_l)_m+2\Z a+\Z b)$ \\
\phantom{$R(BB_l^{\vee (4)})=$} $\cup (R(BC_l)_l+4\Z a+2\Z b)$. 
\item 
\begin{enumerate}
\item[i)] $\alpha_0=b-\vep_1-\vep_2$, $\alpha_i=\vep_i-\vep_{i+1}\; 
(1\leq i\leq l-1)$, $\alpha_l=a+\vep_l$. 
\item[ii)] $\alpha_l'=2\vep_l$.
\end{enumerate}
\item 
\begin{enumerate}
\item[i)] $k(\alpha_i)=2\; (0\leq i\leq l-1)$, \; $k(\alpha_l)=1$, 
\item[ii)] $k^{\nr}((\alpha_l')^\ast)=2$. 
\end{enumerate}
\item $m_{\alpha_0}=m_{\alpha_1}=1$, \; $m_{\alpha_i}=2\; 
(2\leq i \leq l)$.
\item Elliptic diagram:
\input{Figure-new/diagram-21-nr}
\item $W$-orbits:
\begin{enumerate}
\item[i)] $l=2$: $R(BB_2^{\vee (4)})_l=W((\alpha_2')^\ast)$, \\
\phantom{$l=2$:} $R(BB_2^{\vee (4)})_m=W(\alpha_0) \amalg W(\alpha_1)$, \\
\phantom{$l=2$:} $R(BB_2^{\vee (4)})_s=W(\alpha_2) \amalg W(\alpha_2^\ast)$.
\item[ii)] $l\geq 3$: $R(BB_l^{\vee (4)})_l=W((\alpha_l')^\ast)$, \\
 \phantom{$l\geq 3$:} $R(BB_l^{\vee (4)})_m$: single $W$-orbit, \\
\phantom{$l\geq 3$:}  $R(BB_l^{\vee (4)})_s=W(\alpha_l) \amalg W(\alpha_l^\ast)$.
\end{enumerate}
\end{enumerate}
\bigskip 
\subsection{Non-Reduced mERSs with $R/G$ of type $\mathbf{C^\vee C_l}$}
\label{sect_root-data-non-reduced-4}

\subsubsection{Type $C^\vee C_l^{(1)}$ $(l\geq 1)$}\label{data:CC-1}
\begin{enumerate}
\item $t_1(C^\vee C_l^{(1)})=2,$ \quad $t_2(C^\vee C_l^{(1)})=1,$ \quad $(C^\vee C_l^{(1)})^\vee = C^\vee C_l^{(4)}.$
\item $R(C^\vee C_l^{(1)})=(R(BC_l)_s+\Z a+\Z b) \cup (R(BC_l)_m+\Z a+2\Z b)$\\
\phantom{$R(C^\vee C_l^{(1)})=$}  $\cup (R(BC_l)_l+\Z a+2\Z b)$. 
\item 
\begin{enumerate}
\item[i)] $\alpha_0=b-\vep_1$, $\alpha_i=\vep_i-\vep_{i+1}\; 
(1\leq i \leq l-1)$, $\alpha_l=\vep_l$. 
\item[ii)] $\alpha_0'=2b-2\vep_1$, \; $\alpha_l'=2\vep_l$.
\end{enumerate}
\item 
\begin{enumerate}
\item[i)] $k(\alpha_i)=1\; (0\leq i\leq l)$, 
\item[ii)] $k^{\nr}((\alpha_0')^\ast)=k^{\nr}((\alpha_l')^\ast)=1$. 
\end{enumerate}
\item $m_{\alpha_i}=2\; (0\leq i \leq l)$. 
\item Elliptic diagram: 
\input{Figure-new/diagram-22-nr}
\item $W$-orbits: $R(C^\vee C_l^{(1)})_l=W(\alpha_0') \amalg W((\alpha_0')^\ast)
\amalg W(\alpha_l') \amalg W((\alpha_l')^\ast)$, \\
\phantom{$W$-orbits:}
$R(C^\vee C_l^{(1)})_m$: single $W$-orbit, \\
\phantom{$W$-orbits:}
$R(C^\vee C_l^{(1)})_s=W(\alpha_0) \amalg W(\alpha_l)$.
\end{enumerate}

\subsubsection{Type $C^\vee C_l^{(1)\ast_0}$ $(l\geq 1)$}\label{data:CC-2}
\begin{enumerate}
\item $t_1(C^\vee C_l^{(1)\ast_0})=2,$ \quad $t_2(C^\vee C_l^{(1)\ast_0})=1,$ \quad 
$(C^\vee C_l^{(1)\ast_0})^\vee = C^\vee C_l^{(4)\ast_0}.$
\item $R(C^\vee C_l^{(1)\ast_0})=(R(BC_l)_s+\Z a+\Z b) \cup 
(R(BC_l)_m+\Z a+2\Z b)$ \\
\phantom{$R(C^\vee C_l^{(1)\ast_0})=$} 
$\cup (R(BC_l)_l+\{ \,ma+2nb\, \vert\, mn \equiv 0\, [2]\,\})$. 
\item 
\begin{enumerate}
\item[i)] $\alpha_0=b-\vep_1$, $\alpha_i=\vep_i-\vep_{i+1}\; 
(1\leq i\leq l-1)$, $\alpha_l=\vep_l$. 
\item[ii)] $\alpha_0'=2b-2\vep_1$, \; $\alpha_l'=2\vep_l$.
\end{enumerate}
\item 
\begin{enumerate}
\item[i)] $k(\alpha_i)=1\; (0\leq i\leq l)$, 
\item[ii)] $k^{\nr}((\alpha_0')^\ast)=2, \; k^{\nr}((\alpha_l')^\ast)=1$. 
\end{enumerate}
\item $m_{\alpha_0}=1$, \; $m_{\alpha_i}=2\; (1\leq i \leq l)$. 
\item Elliptic diagram: 
\input{Figure-new/diagram-23-nr}
\item $W$-orbits: $R(C^\vee C_l^{(1)\ast_0})_l=W(\alpha_0') 
\amalg W(\alpha_l') \amalg W((\alpha_l')^\ast)$, \\
\phantom{$W$-orbits:}
$R(C^\vee C_l^{(1)\ast_0})_m$: single $W$-orbit, \\
\phantom{$W$-orbits:}
$R(C^\vee C_l^{(1)\ast_0})_s=W(\alpha_0) \amalg W(\alpha_l)$.
\end{enumerate}

\subsubsection{Type $C^\vee C_l^{(1)\ast_1}$ $(l\geq 1)$}\label{data:CC-3}
\begin{enumerate}
\item $t_1(C^\vee C_l^{(1)\ast_1})=2,$ \quad $t_2(C^\vee C_l^{(1)\ast_1})=1,$ \quad $(C^\vee C_l^{(1)\ast_1})^\vee = C^\vee C_l^{(4)\ast_1}.$
\item $R(C^\vee C_l^{(1)\ast_1})=(R(BC_l)_s+\Z a+\Z b) \cup (R(BC_l)_m+\Z a+2\Z b)$ \\
\phantom{$R(C^\vee C_l^{(1)\ast_1})=$} $\cup (R(BC_l)_l+\{ \,ma+2nb\, \vert\, (m-1)(n-1) \equiv 0\, [2]\,\})$. 
\item 
\begin{enumerate}
\item[i)] $\alpha_0=b-\vep_1$, $\alpha_i=\vep_i-\vep_{i+1}\; 
(1\leq i\leq l-1)$, $\alpha_l=\vep_l$. 
\item[ii)] $\alpha_0'=2b-2\vep_1$, \; $\alpha_l'= -a+2\vep_l$.
\end{enumerate}
\item 
\begin{enumerate}
\item[i)] $k(\alpha_i)=1\; (0\leq i\leq l)$, 
\item[ii)] $k^{\nr}((\alpha_0')^\ast) = k^{\nr}((\alpha_l')^\ast)=1$. 
\end{enumerate}
\item $m_{\alpha_i}=2\; (0 \leq i \leq l)$. 
\item Elliptic diagram: 
\input{Figure-new/diagram-24-nr}
\item $W$-orbits: $R(C^\vee C_l^{(1)\ast_1})_l=W(\alpha_0') \amalg W((\alpha_0')^\ast)
\amalg W((\alpha_l')^\ast)$, \\
\phantom{$W$-orbits:}
$R(C^\vee C_l^{(1)\ast_1})_m$: single $W$-orbit, \\
\phantom{$W$-orbits:}
$R(C^\vee C_l^{(1)\ast_1})_s=W(\alpha_0) \amalg W(\alpha_l)$.
\end{enumerate}

\subsubsection{Type $C^\vee C_l^{(2)}(1)$ $(l\geq 2)$}\label{data:CC-4}
\begin{enumerate}
\item $t_1(C^\vee C_l^{(2)}(1))=2,$ \quad $t_2(C^\vee C_l^{(2)}(1))=2,$ \quad $(C^\vee C_l^{(2)}(1))^\vee = C^\vee C_l^{(2)}(1).$
\item $R(C^\vee C_l^{(2)}(1))=(R(BC_l)_s+\Z a+\Z b) \cup (R(BC_l)_m+\Z a+2\Z b)$ \\
\phantom{$R(C^\vee C_l^{(2)}(1))=$} $\cup (R(BC_l)_l+2\Z a+2\Z b)$. 
\item 
\begin{enumerate}
\item[i)] $\alpha_0=b-\vep_1$, $\alpha_i=\vep_i-\vep_{i+1}\; 
(1\leq i\leq l-1)$, $\alpha_l=\vep_l$. 
\item[ii)] $\alpha_0'=2b-2\vep_1$, \; $\alpha_l'=2\vep_l$.
\end{enumerate}
\item 
\begin{enumerate}
\item[i)] $k(\alpha_i)=1\; (0\leq i\leq l)$, 
\item[ii)] $k^{\nr}((\alpha_0')^\ast)=k^{\nr}((\alpha_l')^\ast)=2$. 
\end{enumerate}
\item $m_{\alpha_0}=1$, \; $m_{\alpha_i}=2\; 
(1\leq i\leq l-1)$, \; $m_{\alpha_l}=1$. 
\item Elliptic diagram: 
\input{Figure-new/diagram-25-nr}
\item $W$-orbits:
\begin{enumerate}
\item[i)] $l=2$: $R(C^\vee C_2^{(2)}(1))_l=W(\alpha_0') \amalg W(\alpha_2')$, \\
\phantom{$l=2$:} $R(C^\vee C_2^{(2)}(1))_m=W(\alpha_1) \amalg W(\alpha_1^\ast)$, \\
\phantom{$l=2$:} $R(C^\vee C_2^{(2)}(1))_s=W(\alpha_0) \amalg W(\alpha_2)$. 
\item[ii)] $l\geq 3$: $R(C^\vee C_l^{(2)}(1))_l=W(\alpha_0') \amalg W(\alpha_l')$, \\ 
\phantom{$l\geq 3$:} $R(C^\vee C_l^{(2)}(1))_m$: single $W$-orbit, \\
\phantom{$l\geq 3$:} $R(C^\vee C_l^{(2)}(1))_s=W(\alpha_0) \amalg W(\alpha_l)$.
\end{enumerate}
\end{enumerate}

\subsubsection{Type $C^\vee C_l^{(2)}(2)$ $(l\geq 1)$}\label{data:CC-5}
\begin{enumerate}
\item $t_1(C^\vee C_l^{(2)}(2))=2,$ \quad $t_2(C^\vee C_l^{(2)}(2))=2,$ \quad $(C^\vee C_l^{(2)}(2))^\vee = C^\vee C_l^{(2)}(2).$
\item $R(C^\vee C_l^{(2)}(2))=(R(BC_l)_s+\Z a+\Z b) \cup (R(BC_l)_m+2\Z a+2\Z b)$ \\
\phantom{$R(C^\vee C_l^{(2)}(2))=$} $\cup (R(BC_l)_l+2\Z a+2\Z b)$. 
\item 
\begin{enumerate}
\item[i)] $\alpha_0=b-\vep_1$, $\alpha_i=\vep_i-\vep_{i+1}\; 
(1\leq i\leq l-1)$, $\alpha_l=\vep_l$.
\item[ii)] $\alpha_0'=2b-2\vep_1$, \; $\alpha_l'=2\vep_l$.
\end{enumerate} 
\item 
\begin{enumerate}
\item[i)] $k(\alpha_0)=1$, \; $k(\alpha_i)=2\; 
(1\leq i \leq l-1)$, \; $k(\alpha_l)=1$, 
\item[ii)] $k^{\nr}((\alpha_0')^\ast)=k^{\nr}((\alpha_l')^\ast)=2$. 
\end{enumerate}
\item $m_{\alpha_i}=1\; (0 \leq i \leq l)$. 
\item Elliptic diagram: 
\input{Figure-new/diagram-26-nr}
\item $W$-orbits: $R(C^\vee C_l^{(2)}(2))_l=W(\alpha_0') \amalg W((\alpha_0')^\ast) \amalg W(\alpha_l') \amalg W((\alpha_l')^\ast)$, \\
\phantom{$W$-orbits:} $R(C^\vee C_l^{(2)}(2))_m$: single $W$-orbit, \\
\phantom{$W$-orbits:} $R(C^\vee C_l^{(2)}(2))_s=W(\alpha_0) \amalg W(\alpha_0^\ast) \amalg W(\alpha_l) \amalg W(\alpha_l^\ast)$.
\end{enumerate}

\subsubsection{Type $C^\vee C_l^{(2)\ast_s}$ $(l\geq 1)$} \label{data:CC-6}
\begin{enumerate}
\item $t_1(C^\vee C_l^{(2)\ast_s})=2,$ \quad $t_2(C^\vee C_l^{(2)\ast_s})=2,$ \quad $(C^\vee C_l^{(2)\ast_s})^\vee = C^\vee C_l^{(2)\ast_l}.$
\item $R(C^\vee C_l^{(2)\ast_s})=(R(BC_l)_s+\{ \,ma+nb\, \vert\, mn \equiv 0\, [2]\,\})$ \\ 
\phantom{$R(C^\vee C_l^{(2)\ast_s})=$} $\cup (R(BC_l)_m+2\Z a+2\Z b) \cup (R(BC_l)_l+2\Z a+2\Z b)$. 
\item 
\begin{enumerate}
\item[i)] $\alpha_0=b-\vep_1$, $\alpha_i=\vep_i-\vep_{i+1}\; 
(1\leq i \leq l-1)$, $\alpha_l=\vep_l$. 
\item[ii)] $\alpha_0'=2b-2\vep_1$, \; $\alpha_l'=2\vep_l$.
\end{enumerate}
\item 
\begin{enumerate}
\item[i)] $k(\alpha_i)=2\; (0\leq i\leq l-1)$, \; $k(\alpha_l)=1$, 
\item[ii)] $k^{\nr}((\alpha_0')^\ast)=k^{\nr}((\alpha_l')^\ast)=2$. 
\end{enumerate}
\item $m_{\alpha_i}=1\; (0 \leq i \leq l)$. 
\item Elliptic diagram: 
\input{Figure-new/diagram-27-nr}
\item $W$-orbits: $R(C^\vee C_l^{(2)\ast_s})_l=W(\alpha_0') \amalg W((\alpha_0')^\ast) \amalg W(\alpha_l') \amalg W((\alpha_l')^\ast)$,
 \\
\phantom{$W$-orbits:} $R(C^\vee C_l^{(2)\ast_s})_m$: single $W$-orbit, \\
\phantom{$W$-orbits:} $R(C^\vee C_l^{(2)\ast_s})_s=W(\alpha_0) \amalg W(\alpha_l) \amalg W(\alpha_l^\ast)$.
\end{enumerate}

\subsubsection{Type $C^\vee C_l^{(2)\ast_l}$ $(l\geq 1)$}\label{data:CC-7}
\begin{enumerate}
\item $t_1(C^\vee C_l^{(2)\ast_l})=2,$ \quad $t_2(C^\vee C_l^{(2)\ast_l})=2,$ \quad $(C^\vee C_l^{(2)\ast_l})^\vee = C^\vee C_l^{(2)\ast_s}.$
\item $R(C^\vee C_l^{(2)\ast_l})=(R(BC_l)_s+\Z a+\Z b)\cup (R(BC_l)_m+2\Z a+2\Z b) $ \\ 
\phantom{$R(C^\vee C_l^{(2)\ast_l})=$} $\cup (R(BC_l)_l+\{ \,2ma+2nb\, \vert\, mn \equiv 0\, [2]\,\})$. 
\item 
\begin{enumerate}
\item[i)] $\alpha_0= a+b-\vep_1$, $\alpha_i=\vep_i-\vep_{i+1}\; 
(1\leq i \leq l-1)$, $\alpha_l=\vep_l$. 
\item[ii)] $\alpha_0'=2b-2\vep_1$, \; $\alpha_l'=2\vep_l$.
\end{enumerate}
\item 
\begin{enumerate}
\item[i)] $k(\alpha_0)=1$, \; $k(\alpha_i)=2\; 
(1\leq i \leq l-1)$, \; $k(\alpha_l)=1$, 
\item[ii)] $k^{\nr}((\alpha_0')^\ast)= k^{\nr}((\alpha_l')^\ast)=2$. 
\end{enumerate}
\item $m_{\alpha_i}=1\; (0 \leq i \leq l)$. 
\item Elliptic diagram: 
\input{Figure-new/diagram-28-nr}
\item $W$-orbits: $R(C^\vee C_l^{(2)\ast_l})_l=W((\alpha_0')^\ast) \amalg W(\alpha_l') \amalg W((\alpha_l')^\ast)$,  \\
\phantom{$W$-orbits:} $R(C^\vee C_l^{(2)\ast_l})_m$: single $W$-orbit, \\
\phantom{$W$-orbits:} $R(C^\vee C_l^{(2)\ast_l})_s=W(\alpha_0) \amalg W(\alpha_0^\ast) \amalg W(\alpha_l) \amalg W(\alpha_l^\ast)$.
\end{enumerate}

\subsubsection{Type $C^\vee C_l^{(2)\ast_{0}}$ $(l\geq 1)$}\label{data:CC-8}
\begin{enumerate}
\item $t_1(C^\vee C_l^{(2)\ast_{0}})=2,$ \quad $t_2(C^\vee C_l^{(2)\ast_{0}})=2,$ \quad $(C^\vee C_l^{(2)\ast_{0}})^\vee = C^\vee C_l^{(2)\ast_{0}}.$
\item $R(C^\vee C_l^{(2)\ast_{0}})=(R(BC_l)_s+\{ \,ma+nb\, \vert\, mn \equiv 0\, [2]\,\})$ \\ 
\phantom{$R(C^\vee C_l^{(2)\ast_{0}})=$} $\cup (R(BC_l)_m+2\Z a+2\Z b)$ \\
\phantom{$R(C^\vee C_l^{(2)\ast_{0}})=$} $\cup (R(BC_l)_l+\{ \,2ma+2nb\, \vert\, mn \equiv 0\, [2]\,\})$. 
\item 
\begin{enumerate}
\item[i)] $\alpha_0=b-\vep_1$, $\alpha_i=\vep_i-\vep_{i+1}\; 
(1\leq i \leq l-1)$, $\alpha_l=\vep_l$. 
\item[ii)] $\alpha_0'=2b-2\vep_1$, \; $\alpha_l'=2\vep_l$.
\end{enumerate}
\item 
\begin{enumerate}
\item[i)] $k(\alpha_i)=2\; (0\leq  i \leq l-1)$, \; $k(\alpha_l)=1$,
\item[ii)] $k^{\nr}((\alpha_0')^\ast)=4$, \; $k^{\nr}((\alpha_l')^\ast)=2$. 
\end{enumerate}
\item $m_{\alpha_0}=\frac{1}{2}$, \; $m_{\alpha_i}=1\; (1\leq i \leq l)$. 
\item Elliptic diagram: 
\input{Figure-new/diagram-29-nr}
\item $W$-orbits: $R(C^\vee C_l^{(2)\ast_{0}})_l=W(\alpha_0') \amalg W(\alpha_l') \amalg W((\alpha_l')^\ast)$, \\
\phantom{$W$-orbits:} $R(C^\vee C_l^{(2)\ast_{0}})_m$: single $W$-orbit,  \\ 
\phantom{$W$-orbits:} $R(C^\vee C_l^{(2)\ast_{0}})_s=W(\alpha_0) \amalg W(\alpha_l) \amalg W(\alpha_l^\ast)$.
\end{enumerate}

\subsubsection{Type $C^\vee C_l^{(2)\ast_{1'}}$ $(l\geq 1)$}\label{data:CC-9}
\begin{enumerate}
\item $t_1(C^\vee C_l^{(2)\ast_{1'}})=2,$ \quad $t_2(C^\vee C_l^{(2)\ast_{1'}})=2,$ \quad $(C^\vee C_l^{(2)\ast_{1'}})^\vee = C^\vee C_l^{(2)\ast_{1'}}.$
\item  $R(C^\vee C_l^{(2)\ast_{1'}})=(R(BC_l)_s+\{ \,ma+nb\, \vert\, mn \equiv 0\, [2]\,\})$ \\
\phantom{$R(C^\vee C_l^{(2)\ast_{1'}})=$} $\cup (R(BC_l)_m+2\Z a+2\Z b)$ \\
\phantom{$R(C^\vee C_l^{(2)\ast_{1'}})=$} $\cup (R(BC_l)_l+\{ \,2ma+2nb\, \vert\, (m-1)n \equiv 0\, [2]\,\})$. 
\item 
\begin{enumerate}
\item[i)] $\alpha_0=b-\vep_1$, $\alpha_i=\vep_i-\vep_{i+1}\; 
(1\leq i\leq l-1)$, $\alpha_l=\vep_l$. 
\item[ii)] $\alpha_0'= -2a+2b-2\vep_1$, \; $\alpha_l'=2\vep_l$.
\end{enumerate} 
\item 
\begin{enumerate}
\item[i)] $k(\alpha_i)=2\; (0\leq  i \leq l-1)$, \; $k(\alpha_l)=1$, 
\item[ii)] $k^{\nr}((\alpha_0')^\ast)=k^{\nr}((\alpha_l')^\ast)=2$. 
\end{enumerate}
\item $m_{\alpha_i}=1\; (0 \leq i \leq l)$.  
\item Elliptic diagram: 
\input{Figure-new/diagram-30-nr}
\item $W$-orbits: $R(C^\vee C_l^{(2)\ast_{1'}})_l=W((\alpha_0')^\ast) \amalg W(\alpha_l') \amalg W((\alpha_l')^\ast)$, \\
\phantom{$W$-orbits:} $R(C^\vee C_l^{(2)\ast_{1'}})_m$: single $W$-orbit, \\
\phantom{$W$-orbits:} $R(C^\vee C_l^{(2)\ast_{1'}})_s=W(\alpha_0) \amalg W(\alpha_l) \amalg W(\alpha_l^\ast)$.
\end{enumerate}

\subsubsection{Type $C^\vee C_l^{(2)\ast_{1}}$ $(l\geq 1)$}\label{data:CC-10}
\begin{enumerate}
\item $t_1(C^\vee C_l^{(2)\ast_{1}})=2,$ \quad $t_2(C^\vee C_l^{(2)\ast_{1}})=2,$ \quad $(C^\vee C_l^{(2)\ast_{1}})^\vee = C^\vee C_l^{(2)\ast_{1}}.$
\item $R(C^\vee C_l^{(2)\ast_{1}})=(R(BC_l)_s+\{ \,ma+nb\, \vert\, mn \equiv 0\, [2]\,\})$ \\
\phantom{$R(C^\vee C_l^{(2)\ast_{1}})=$} $\cup (R(BC_l)_m+2\Z a+2\Z b)$ \\
\phantom{$R(C^\vee C_l^{(2)\ast_{1}})=$} $\cup (R(BC_l)_l+\{ \,2ma+2nb\, \vert\, (m-1)(n-1) \equiv 0\, [2]\,\})$. 
\item 
\begin{enumerate}
\item[i)] $\alpha_0=b-\vep_1$, $\alpha_i=\vep_i-\vep_{i+1}\; 
(1\leq i \leq l-1)$, $\alpha_l=\vep_l$. 
\item[ii)] $\alpha_0'=2b-2\vep_1$, \; $\alpha_l'= -2a+2\vep_l$.
\end{enumerate}
\item 
\begin{enumerate}
\item[i)] $k(\alpha_i)=2\; (0\leq  i \leq l-1)$, \; $k(\alpha_l)=1$, 
\item[ii)] $k^{\nr}((\alpha_0')^\ast)=k^{\nr}((\alpha_l')^\ast)=2$. 
\end{enumerate}
\item $m_{\alpha_i}=1\; (0 \leq i \leq l)$. 
\item Elliptic diagram: 
\input{Figure-new/diagram-31-nr}
\item $W$-orbits: $R(C^\vee C_l^{(2)\ast_{1}})_l=W(\alpha_0') \amalg W((\alpha_0')^\ast) \amalg W((\alpha_l')^\ast)$, \\
\phantom{$W$-orbits:} $R(C^\vee C_l^{(2)\ast_{1}})_m$: single $W$-orbit, \\
\phantom{$W$-orbits:} $R(C^\vee C_l^{(2)\ast_{1}})_s=W(\alpha_0) \amalg W(\alpha_l) \amalg W(\alpha_l^\ast)$.
\end{enumerate}

\subsubsection{Type $C^\vee C_l^{(2)\diamond}$ $(l\geq 1)$}\label{data:CC-11}
\label{subsubsec:diamond}
\begin{enumerate}
\item $t_1(C^\vee C_l^{(2)\diamond})=2,$ \quad $t_2(C^\vee C_l^{(2)\diamond})=2,$ \quad $(C^\vee C_l^{(2)\diamond})^\vee = C^\vee C_l^{(2)\diamond}.$
\item $R(C^\vee C_l^{(2)\diamond})=(R(BC_l)_s+\Z a+\Z b) \cup (R(BC_l)_m+\Z a+2\Z b)$ \\
\phantom{$R(C^\vee C_l^{(2)\diamond})=$} $\cup (R(BC_l)_l+\{ \,ma+2nb\, \vert\, m-n \equiv 0\, [2]\,\})$. 
\item 
\begin{enumerate}
\item[i)] $\alpha_0=b-\vep_1$, $\alpha_i=\vep_i-\vep_{i+1}\; 
(1\leq i \leq l-1)$, $\alpha_l=\vep_l$. 
\item[ii)] $\alpha_0'= -a+2b-2\vep_1$, \; $\alpha_l'=2\vep_l$.
\end{enumerate}
\item 
\begin{enumerate}
\item[i)] $k(\alpha_i)=1\; (0\leq i\leq l)$, 
\item[ii)] $k^{\nr}((\alpha_0')^\ast)=1$, \; $k^{\nr}((\alpha_l')^\ast)=2$. 
\end{enumerate}
\item $m_{\alpha_i}=2\; (0 \leq i \leq l-1)$, \;  $m_{\alpha_l}=1$. 
\item Elliptic diagram: 
\input{Figure-new/diagram-32-nr}
\item $W$-orbits: $R(C^\vee C_l^{(2)\diamond})_l=W((\alpha_0')^\ast) \amalg W(\alpha_l')$, \\
\phantom{$W$-orbits:}
$R(C^\vee C_l^{(2)\diamond})_m$: single $W$-orbit, \\
\phantom{$W$-orbits:}
$R(C^\vee C_l^{(2)\diamond})_s=W(\alpha_0) \amalg W(\alpha_l)$.
\end{enumerate}

\subsubsection{Type $C^\vee C_l^{(4)}$ $(l\geq 1)$}\label{data:CC-12}
\begin{enumerate}
\item $t_1(C^\vee C_l^{(4)})=2,$ \quad $t_2(C^\vee C_l^{(4)})=4,$ \quad $(C^\vee C_l^{(4)})^\vee = C^\vee C_l^{(1)}.$
\item $R(C^\vee C_l^{(4)})=(R(BC_l)_s+\Z a+\Z b) \cup (R(BC_l)_m+2\Z a+2\Z b)$ \\
\phantom{$R(C^\vee C_l^{(4)})=$} $\cup (R(BC_l)_l+4\Z a+2\Z b)$. 
\item 
\begin{enumerate}
\item[i)] $\alpha_0=a+b-\vep_1$,  \; $\alpha_i=\vep_i-\vep_{i+1}\; 
(1\leq i \leq l-1)$, $\alpha_l= a+\vep_l$. 
\item[ii)] $\alpha_0'=2b-2\vep_1$, \; $\alpha_l'=2\vep_l$.
\end{enumerate}
\item 
\begin{enumerate}
\item[i)] $k(\alpha_0)=1$, \; $k(\alpha_i)=2\; 
(1\leq i \leq l-1)$, \; $k(\alpha_l)=1$, 
\item[ii)] $k^{\nr}((\alpha_0')^\ast)=k^{\nr}((\alpha_l')^\ast)=2$. 
\end{enumerate}
\item $m_{\alpha_i}=1\; (0 \leq i \leq l)$. 
\item Elliptic diagram: 
\input{Figure-new/diagram-33-nr}
\item $W$-orbits: $R(C^\vee C_l^{(4)})_l=W((\alpha_0')^\ast) \amalg W((\alpha_l')^\ast)$, \\
\phantom{$W$-orbits:} $R(C^\vee C_l^{(4)})_m$: single $W$-orbit, \\
\phantom{$W$-orbits:} $R(C^\vee C_l^{(4)})_s=W(\alpha_0) \amalg W(\alpha_0^\ast) \amalg W(\alpha_l) \amalg W(\alpha_l^\ast)$.
\end{enumerate}

\subsubsection{Type $C^\vee C_l^{(4)\ast_0}$ $(l\geq 1)$}\label{data:CC-13}
\begin{enumerate}
\item $t_1(C^\vee C_l^{(4)\ast_0})=2,$ \quad $t_2(C^\vee C_l^{(4)\ast_0})=4,$ \quad $(C^\vee C_l^{(4)\ast_0})^\vee = C^\vee C_l^{(1)\ast_0}.$
\item $R(C^\vee C_l^{(4)\ast_{0}})=(R(BC_l)_s+\{ \,ma+nb\, \vert\, mn \equiv 0\, [2]\,\})$ \\
\phantom{$R(C^\vee C_l^{(4)\ast_{0}})=$} $ \cup (R(BC_l)_m+2\Z a+2\Z b) \cup (R(BC_l)_l+4\Z a+2\Z b)$. 
\item 
\begin{enumerate}
\item[i)] $\alpha_0=b-\vep_1$, $\alpha_i=\vep_i-\vep_{i+1}\; 
(1\leq i \leq l-1)$, $\alpha_l=a+\vep_l$. 
\item[ii)] $\alpha_0'=2b-2\vep_1$, \; $\alpha_l'=2\vep_l$.
\end{enumerate}
\item 
\begin{enumerate}
\item[i)] $k(\alpha_i)=2\; (0\leq i\leq l-1)$, \; $k(\alpha_l)=1$, 
\item[ii)] $k^{\nr}((\alpha_0')^\ast)=4$, \; $k^{\nr}((\alpha_l')^\ast)=2$. 
\end{enumerate}
\item $m_{\alpha_0}=\frac{1}{2}$, \; $m_{\alpha_i}=1\; (1\leq i \leq l)$. 
\item Elliptic diagram: 
\input{Figure-new/diagram-34-nr}
\item $W$-orbits: $R(C^\vee C_l^{(4)\ast_0})_l=W(\alpha_0') \amalg W((\alpha_l')^\ast)$, \\
\phantom{$W$-orbits:}  $R(C^\vee C_l^{(4)\ast_0})_m$: single $W$-orbit, \\
\phantom{$W$-orbits:} $R(C^\vee C_l^{(4)\ast_0})_s=W(\alpha_0) \amalg W(\alpha_l) \amalg W(\alpha_l^\ast)$.
\end{enumerate}

\subsubsection{Type $C^\vee C_l^{(4)\ast_1}$ $(l\geq 1)$}\label{data:CC-14}
\begin{enumerate}
\item $t_1(C^\vee C_l^{(4)\ast_1})=2,$ \quad $t_2(C^\vee C_l^{(4)\ast_1})=4,$ \quad $(C^\vee C_l^{(4)\ast_1})^\vee = C^\vee C_l^{(1)\ast_1}.$
\item $R(C^\vee C_l^{(4)\ast_{1}})=(R(BC_l)_s+\{ \,ma+nb\, \vert\, (m-1)(n-1) \equiv 0\, [2]\,\})$ \\
\phantom{$R(C^\vee C_l^{(4)\ast_{1}})=$} $ \cup (R(BC_l)_m+2\Z a+2\Z b) \cup (R(BC_l)_l+4\Z a+2\Z b)$. 
\item 
\begin{enumerate}
\item[i)] $\alpha_0= a+b-\vep_1$, $\alpha_i=\vep_i-\vep_{i+1}\; 
(1\leq i \leq l-1)$, $\alpha_l=a+\vep_l$. 
\item[ii)] $\alpha_0'=2b-2\vep_1$, \; $\alpha_l'=2\vep_l$.
\end{enumerate}
\item 
\begin{enumerate}
\item[i)]  $k(\alpha_0)=1$, \; $k(\alpha_i)=2\; (1\leq i \leq l)$, 
\item[ii)] $k^{\nr}((\alpha_0')^\ast)=k^{\nr}((\alpha_l')^\ast)=2$. 
\end{enumerate}
\item $m_{\alpha_i}=1\; (0 \leq i \leq l)$. 
\item Elliptic diagram: 
\input{Figure-new/diagram-35-nr}
\item $W$-orbits: $R(C^\vee C_l^{(4)\ast_1})_l=W((\alpha_0')^\ast) \amalg W((\alpha_l')^\ast)$, \\
\phantom{$W$-orbits:} $R(C^\vee C_l^{(4)\ast_1})_m$: single $W$-orbit, \\
\phantom{$W$-orbits:} $R(C^\vee C_l^{(4)\ast_1})_s=W(\alpha_0) \amalg W(\alpha_0^\ast) \amalg W(\alpha_l)$.
\end{enumerate}

\setcounter{section}{0}
\renewcommand{\thesection}{\Alph{section}}

\part*{Appendix}

In this appendix, for the sake of reader's convenience, we collect some root data of finite and affine root systems.
In addition, we also provide some root data of a marked elliptic root system $(R,G)$ 
whose affine quotient is reduced.
For an affine and elliptic root system $R$, we also describe the decomposition of $R$ into $W(R)$-orbits. Such information might be useful in applications. 

\section{Finite root systems} \label{sect_finite-root-system}
In this section, we provide necessary informations on finite root systems, including the unique non-reduced one. 

Each root system of rank $l$ is a subset of the vector space $\R^l$ with the scalar product $I$, and let ${\vep_i, 1\leq i \leq l}$ be an orthonormal basis with respect to $I$.
\subsection{Reduced types}
\subsubsection{$A_l$\; $(l\geq 1)$}\label{sect_A}
\begin{enumerate}
\item Roots: 
\[ R(A_l)=\{\pm (\vep_i- \vep_j)\, \vert 1\leq i<j\leq l\}. \]
\item Simple roots: \; $\alpha_i=\vep_i-\vep_{i+1}\; (1\leq i\leq l)$. 
\item Highest root: $\theta=\vep_1-\vep_{l+1}=\sum_{i=1}^l \alpha_i$.
\item Dynkin diagram:
\begin{center}
\scalebox{0.9}{
\begin{tikzpicture}
\draw (1,0) circle (0.1);  \draw (1,-0.1) node[below] {$\alpha_1$};
\draw (1.1,0) -- (1.9,0); 
\draw (2,0) circle (0.1);  \draw (2,-0.1) node[below] {$\alpha_2$};
\draw [dashed] (2.1,0) -- (3.9,0);
\draw (4,0) circle (0.1); 
\draw (4.1,0) -- (4.9,0); 
\draw (5,0) circle (0.1); \draw (5,-0.1) node[below] {$\alpha_{l-1}$};
\draw (5.1, 0) -- (5.9,0); 
\draw (6,0) circle (0.1);  \draw (6,-0.1) node[below] {$\alpha_{l}$};
\end{tikzpicture}}
\end{center}
\end{enumerate}

\subsubsection{$B_l$\; $(l\geq 2)$}\label{sect_B}
\begin{enumerate}
\item Roots: 
\[ R(B_l)=\{\pm\vep_i\pm \vep_j\, \vert 1\leq i<j\leq l\} 
\cup \{\pm\vep_i \, \vert\, 1\leq i\leq l\}, \]
\item Simple roots: \; $\alpha_i=\vep_i-\vep_{i+1}\; (1\leq i<l)$ and $\alpha_l=\vep_l$. 
\item Highest root: $\theta=\vep_1+\vep_{2}=\alpha_1+2(\alpha_2+\cdots+\alpha_l)$.
\item Dynkin diagram:
\begin{center}
\scalebox{0.9}{
\begin{tikzpicture}
\draw (1,0) circle (0.1);  \draw (1,-0.1) node[below] {$\alpha_1$};
\draw (1.1,0) -- (1.9,0); 
\draw (2,0) circle (0.1);  \draw (2,-0.1) node[below] {$\alpha_2$};
\draw [dashed] (2.1,0) -- (3.9,0);
\draw (4,0) circle (0.1); 
\draw (4.1,0) -- (4.9,0); 
\draw (5,0) circle (0.1); \draw (5,-0.1) node[below] {$\alpha_{l-1}$};
\draw (5.1, 0) -- (5.9,0); 
\draw (6,0) circle (0.1);  \draw (6,-0.1) node[below] {$\alpha_{l}$};
\draw (5.55,0) -- (5.45,0.1) ;
\draw (5.55,0) -- (5.45,-0.1) ;
\draw (5.5,0.15) node[above] {$2$};
\end{tikzpicture}}
\end{center}
\end{enumerate}

\subsubsection{$C_l$\; $(l\geq 2)$}\label{sect_C}
\begin{enumerate}
\item Roots: 
\[ R(C_l)=\{\pm\vep_i\pm \vep_j\, \vert 1\leq i<j\leq l\} 
\cup \{\pm2\vep_i\, \vert\, 1\leq i\leq l\}, \]
\item Simple roots: \; $\alpha_i=\vep_i-\vep_{i+1}\; (1\leq i<l)$ and $\alpha_l=2\vep_l$. 
\item Highest root: $\theta=2\vep_1=2(\alpha_1+\cdots+\alpha_{l-1})+\alpha_l$.
\item Dynkin diagram:
\begin{center}
\scalebox{0.9}{
\begin{tikzpicture}
\draw (1,0) circle (0.1);  \draw (1,-0.1) node[below] {$\alpha_1$};
\draw (1.1,0) -- (1.9,0); 
\draw (2,0) circle (0.1);  \draw (2,-0.1) node[below] {$\alpha_2$};
\draw [dashed] (2.1,0) -- (3.9,0);
\draw (4,0) circle (0.1); 
\draw (4.1,0) -- (4.9,0); 
\draw (5,0) circle (0.1); \draw (5,-0.1) node[below] {$\alpha_{l-1}$};
\draw (5.1, 0) -- (5.9,0); 
\draw (6,0) circle (0.1);  \draw (6,-0.1) node[below] {$\alpha_{l}$};
\draw (5.45,0) -- (5.55,0.1) ;
\draw (5.45,0) -- (5.55,-0.1) ;
\draw (5.5,0.15) node[above] {$2$};
\end{tikzpicture}}
\end{center}
\end{enumerate}

\subsubsection{$D_l$\; $(l\geq 3)$}\label{sect_D}
\begin{enumerate}
\item Roots: 
\[ R(D_l)=\{\pm\vep_i\pm \vep_j\, \vert 1\leq i<j\leq l\}, \]
\item Simple roots: \; $\alpha_i=\vep_i-\vep_{i+1}\; (1\leq i<l)$ and $\alpha_l=\vep_{l-1}+\vep_l$. 
\item Highest root: $\theta=\vep_1+\vep_2=\alpha_1+2(\alpha_1+\cdots+\alpha_{l-2})+\alpha_{l-1}+\alpha_l$.
\item Dynkin diagram:
\begin{center}
\scalebox{0.9}{
\begin{tikzpicture}
\draw (1,0) circle (0.1);  \draw (1,-0.1) node[below] {$\alpha_1$};
\draw (1.1,0) -- (1.9,0); 
\draw (2,0) circle (0.1);  \draw (2,-0.1) node[below] {$\alpha_2$};
\draw [dashed] (2.1,0) -- (3.9,0);
\draw (4,0) circle (0.1); 
\draw (4.1,0) -- (4.9,0); 
\draw (5,0) circle (0.1); \draw (5,-0.1) node[below] {$\alpha_{l-2}$};
\draw (5.09,0.045) -- (5.91,0.455);
\draw (5.09,-0.045) -- (5.91,-0.455);
\draw (6,0.5) circle (0.1);  \draw (6.1,0.5) node[right] {$\alpha_{l-1}$};
\draw (6,-0.5) circle (0.1);  \draw (6.1,-0.5) node[right] {$\alpha_{l}$};
\end{tikzpicture}}
\end{center}
\end{enumerate}

\subsubsection{$E_6$}\label{sect_E6}
\begin{enumerate}
\item Roots: 
\begin{align*}
R(E_6)=
&\{\pm\vep_i\pm \vep_j\, \vert 1\leq i<j\leq 5\}  \\
&\cup
\left\{ \frac{1}{2}\left(\vep_8-\vep_7-\vep_6+\sum_{i=1}^5(-1)^{\nu(i)}\vep_i\right) \, 
\left\vert \, \sum_{i=1}^5 \nu(i):\, \text{even}\, \right\}, \right.
\end{align*}
\item Simple roots: \; $\alpha_1=\frac{1}{2}(\vep_1+\vep_8)-\frac{1}{2}(\vep_2+\vep_3+\cdots+\vep_7), \; \alpha_2=\vep_1+\vep_2, $ \\
\phantom{simple r roots \; }$\alpha_i=\vep_{i-1}-\vep_{i-2}\quad (3\leq i\leq 6)$.
\item Highest root: 
\begin{align*}
\theta=
&\frac{1}{2}(\vep_1+\vep_2+\vep_3+\vep_4+\vep_5-\vep_6-\vep_7+\vep_8) \\
=
&\alpha_1+2\alpha_2+2\alpha_3+3\alpha_4+2\alpha_5+\alpha_6.
\end{align*}
\item Dynkin diagram:
\begin{center}
\scalebox{0.9}{
\begin{tikzpicture}
\draw (1,0) circle (0.1);  \draw (1,-0.1) node[below] {$\alpha_1$};
\draw (1.1,0) -- (1.9,0); 
\draw (2,0) circle (0.1);  \draw (2,-0.1) node[below] {$\alpha_3$};
\draw (2.1, 0) -- (2.9,0);
\draw (3,0) circle (0.1); \draw (3,-0.1) node[below] {$\alpha_4$};
\draw (3.1,0) -- (3.9,0); 
\draw (4,0) circle (0.1); \draw (4,-0.1) node[below] {$\alpha_5$};
\draw (4.1,0)-- (4.9,0);
\draw (5,0) circle (0.1); \draw (5,-0.1) node[below] {$\alpha_6$};
\draw (3,0.1) -- (3,0.9); \draw (3,1) circle (0.1); 
\draw (3,1.1) node[above] {$\alpha_2$};
\end{tikzpicture}}
\end{center}
\end{enumerate}

\subsubsection{$E_7$}\label{sect_E7}

\begin{enumerate}
\item Roots: 
\begin{align*}
R(E_7)=
&\{\pm\vep_i\pm \vep_j\, \vert 1\leq i<j\leq 6\} \cup \{ \pm (\vep_7-\vep_8)\}  \\
&
\cup \left\{ \frac{1}{2}\left(\vep_8-\vep_7+\sum_{i=1}^6(-1)^{\nu(i)}\vep_i\right) \, 
\left\vert \, \sum_{i=1}^6 \nu(i):\, \text{odd}\, \right\}, \right.
\end{align*}
\item Simple roots: \; $\alpha_1=\frac{1}{2}(\vep_1+\vep_8)-\frac{1}{2}(\vep_2+\vep_3+\cdots+\vep_7), \; \alpha_2=\vep_1+\vep_2, $ \\
\phantom{simple r roots \; }$\alpha_i=\vep_{i-1}-\vep_{i-2}\quad (3\leq i\leq 7)$.
\item Highest root: 
$
\theta=\vep_8-\vep_7
=
2\alpha_1+2\alpha_2+3\alpha_3+4\alpha_4+3\alpha_5+2\alpha_6+\alpha_7.
$
\item Dynkin diagram:
\begin{center}
\scalebox{0.9}{
\begin{tikzpicture}
\draw (1,0) circle (0.1);  \draw (1,-0.1) node[below] {$\alpha_1$};
\draw (1.1,0) -- (1.9,0); 
\draw (2,0) circle (0.1);  \draw (2,-0.1) node[below] {$\alpha_3$};
\draw (2.1, 0) -- (2.9,0);
\draw (3,0) circle (0.1); \draw (3,-0.1) node[below] {$\alpha_4$};
\draw (3.1,0) -- (3.9,0); 
\draw (4,0) circle (0.1); \draw (4,-0.1) node[below] {$\alpha_5$};
\draw (4.1,0)-- (4.9,0);
\draw (5,0) circle (0.1); \draw (5,-0.1) node[below] {$\alpha_6$};
\draw (5.1,0)-- (5.9,0);
\draw (6,0) circle (0.1); \draw (6,-0.1) node[below] {$\alpha_7$};
\draw (3,0.1) -- (3,0.9); \draw (3,1) circle (0.1); 
\draw (3,1.1) node[above] {$\alpha_2$};
\end{tikzpicture}}
\end{center}
\end{enumerate}

\subsubsection{$E_8$}\label{sect_E8}

\begin{enumerate}
\item Roots: 
\begin{align*}
R(E_8)=
&\{\pm\vep_i\pm \vep_j\, \vert 1\leq i<j\leq 8\} \\
&\cup  
\left\{ \frac{1}{2}\sum_{i=1}^8(-1)^{\nu(i)}\vep_i \, 
\left\vert \, \sum_{i=1}^8 \nu(i):\, \text{even}\, \right\}. \right.
\end{align*}
\item Simple roots: \; $\alpha_1=\frac{1}{2}(\vep_1+\vep_8)-\frac{1}{2}(\vep_2+\vep_3+\cdots+\vep_7), \; \alpha_2=\vep_1+\vep_2, $ \\
\phantom{simple r roots \; }$\alpha_i=\vep_{i-1}-\vep_{i-2}\quad (3\leq i\leq 8)$.
\item Highest root: 
$
\theta=\vep_7+\vep_8
=
2\alpha_1+3\alpha_2+4\alpha_3+6\alpha_4+5\alpha_5+4\alpha_6+3\alpha_7+2\alpha_8.
$
\item Dynkin diagram:
\begin{center}
\scalebox{0.9}{
\begin{tikzpicture}
\draw (1,0) circle (0.1);  \draw (1,-0.1) node[below] {$\alpha_1$};
\draw (1.1,0) -- (1.9,0); 
\draw (2,0) circle (0.1);  \draw (2,-0.1) node[below] {$\alpha_3$};
\draw (2.1, 0) -- (2.9,0);
\draw (3,0) circle (0.1); \draw (3,-0.1) node[below] {$\alpha_4$};
\draw (3.1,0) -- (3.9,0); 
\draw (4,0) circle (0.1); \draw (4,-0.1) node[below] {$\alpha_5$};
\draw (4.1,0)-- (4.9,0);
\draw (5,0) circle (0.1); \draw (5,-0.1) node[below] {$\alpha_6$};
\draw (5.1,0)-- (5.9,0);
\draw (6,0) circle (0.1); \draw (6,-0.1) node[below] {$\alpha_7$};
\draw (6.1,0)-- (6.9,0);
\draw (7,0) circle (0.1); \draw (7,-0.1) node[below] {$\alpha_8$};
\draw (3,0.1) -- (3,0.9); \draw (3,1) circle (0.1); 
\draw (3,1.1) node[above] {$\alpha_2$};
\end{tikzpicture}}
\end{center}
\end{enumerate}

\subsubsection{$F_4$}\label{sect_F4}

\begin{enumerate}
\item Roots: 
\begin{align*}
R(F_4)
=
&
\{\pm\vep_i\, \vert\, 1\leq i\leq 4\, \} \cup \{\pm\vep_i\pm \vep_j\, \vert 1\leq i<j\leq 4\} \\
&\cup  \left\{ \frac{1}{2}(\pm \vep_1\pm \vep_2\pm \vep_3\pm \vep_4) \right\}. 
\end{align*}
\item Simple roots: \; $\alpha_1=\vep_2-\vep_3, \; \alpha_2=\vep_3-\vep_4, \; \alpha_3=\vep_4$, \\ 
\phantom{Simple roots: \;\,} $\alpha_4=\frac{1}{2}(\vep_1-\vep_2-\vep_3-\vep_4)$. 
\item Highest root: 
$
\theta=\vep_1+\vep_2
=
2\alpha_1+3\alpha_2+4\alpha_3+2\alpha_4.
$
\item Dynkin diagram:
\begin{center}
\scalebox{0.9}{
\begin{tikzpicture}
\draw (1,0) circle (0.1);  \draw (1,-0.1) node[below] {$\alpha_1$};
\draw (1.1,0) -- (1.9,0); 
\draw (2,0) circle (0.1);  \draw (2,-0.1) node[below] {$\alpha_2$};
\draw (2.1, 0) -- (2.9,0);
\draw (3,0) circle (0.1); \draw (3,-0.1) node[below] {$\alpha_3$};
\draw (3.1,0) -- (3.9,0); 
\draw (4,0) circle (0.1); \draw (4,-0.1) node[below] {$\alpha_4$};
\draw (2.55,0) -- (2.45,0.1);
\draw (2.55,0) -- (2.45,-0.1); \draw (2.5,0.1) node[above] {$2$};
\end{tikzpicture}}
\end{center}
\end{enumerate}

\subsubsection{$G_2$}\label{sect_G2}

\begin{enumerate}
\item Roots: 
\begin{align*}
R(G_2)=
&\{\pm(\vep_i- \vep_j)\, \vert 1\leq i<j\leq 3\} \\
&\cup  
\left\{ \left. \pm \left(3\vep_i -\sum_{k=1}^3 \vep_k\right) \, 
\right\vert \, 1\leq i\leq 3 \, \right\}. 
\end{align*}
\item Simple roots: \; $\alpha_1=\vep_1-\vep_2, \; \alpha_2=-2\vep_1+\vep_2+\vep_3.$ \item Highest root: 
$
\theta=-\vep_1-\vep_2+2\vep_3
=
3\alpha_1+2\alpha_2.
$
\item Dynkin diagram:
\begin{center}
\scalebox{0.9}{
\begin{tikzpicture}
\draw (1,0) circle (0.1);  \draw (1,-0.1) node[below] {$\alpha_1$};
\draw (1.1,0) -- (1.9,0); 
\draw (2,0) circle (0.1);  \draw (2,-0.1) node[below] {$\alpha_2$};
\draw (1.45,0) -- (1.55,0.1);
\draw (1.45,0) -- (1.55,-0.1); \draw (1.5,0.1) node[above] {$3$};
\end{tikzpicture}}
\end{center}
\end{enumerate}

\subsection{Non-reduced type}
\subsubsection{$BC_l$\; $(l\geq 1)$}\label{sect_BC}

\begin{enumerate}
\item Roots: 
\[ R(BC_l)=\{\pm\vep_i\pm \vep_j\, \vert 1\leq i<j\leq l\} 
\cup \{\pm\vep_i, \; \pm2\vep_i\, \vert\, 1\leq i\leq l\}, \]
\item Simple roots: \; $\alpha_i=\vep_i-\vep_{i+1}\; (1\leq i<l)$ and $\alpha_l=\vep_l$. 
\item Dynkin diagram:
\begin{center}
\scalebox{0.9}{
\begin{tikzpicture}
\draw (1,0) circle (0.1);  \draw (1,-0.1) node[below] {$\alpha_1$};
\draw (1.1,0) -- (1.9,0); 
\draw (2,0) circle (0.1);  \draw (2,-0.1) node[below] {$\alpha_2$};
\draw [dashed] (2.1,0) -- (3.9,0);
\draw (4,0) circle (0.1); 
\draw (4.1,0) -- (4.9,0); 
\draw (5,0) circle (0.1); \draw (5,-0.1) node[below] {$\alpha_{l-1}$};
\draw (5.1,0) -- (5.9,0); 
\draw (5.45, -0.1) -- (5.55,0); \draw (5.45, 0.1) -- (5.55,0); 
\fill (6,0) circle (0.1);  \draw (6,-0.1) node[below] {$\alpha_{l}$};
\draw (5.5,0.1)  node[above] {$2$};
\end{tikzpicture}}
\end{center}
\end{enumerate}


\section{Affine root systems} \label{sect_affine-root-system}
In this section, we provide necessary informations on the affine root systems.
In particular, this includes the four non-reduced cases classified by I. Macdonald \cite{Macdonald1972}. For the reduced affine root systems, we follow the nomenclatures due to Moody \cite{Moody1969}, except for the case $BC_1^{(2)}$ which was called $A_1^{(2)}$ there.

Each affine root system of rank ${l+1}$ is a subset of the vector space $\R^{l+1}$ with the symmetric bilinear form $I$ with signature $(l,1,0)$. Let $\vep_i \,(1 \leq i \leq l)\,  \text{and}\,  b$ be a basis of $\R^{l+1}$ with the pairing given by
\[
I(\vep_i,\vep_j)=\delta_{i,j}\,, I(\vep_i,b)=0\,, I(b,b)=0.
\]

\newpage
\subsection{Reduced types}\label{sect_red-affine}

\subsubsection{$A_l^{(1)}$\; $(l\geq 1)$}\label{sect_A^1}
\begin{enumerate}
\item Real roots: $R(A_l^{(1)})=R(A_l)+\Z b$. 
\item Simple roots: \; $\alpha_0=b-\vep_1+\vep_{l+1}, \; \alpha_i=\vep_i-\vep_{i+1}\; (1\leq i\leq l)$. 
\item $b=\sum_{i=0}^l \alpha_i$.
\item Dynkin diagram:
\begin{enumerate}
\item[i)] $l=1$:
\begin{center}
\begin{tikzpicture}
\draw (0,0) circle (0.1); \draw (0,-0.1) node[below] {$\alpha_0$};
\draw (0.1, 0) -- (0.9,0); 
\draw (0.5, 0.1) node[above] {$\infty$}; 
\draw (1,0) circle (0.1);  \draw (1,-0.1) node[below] {$\alpha_1$};
\end{tikzpicture}
\end{center}
\item[ii)] $l\geq 2$:
\begin{center}
\begin{tikzpicture}
\draw (0,0) circle (0.1);  \draw (0,-0.1) node[below] {$\alpha_1$};
\draw (0.1,0) -- (0.9,0); 
\draw (1,0) circle (0.1);  \draw (1,-0.1) node[below] {$\alpha_2$};
\draw (1.1,0) -- (1.9,0); 
\draw (2,0) circle (0.1);  
\draw [dashed] (2.1,0) -- (3.9,0);
\draw (4,0) circle (0.1); 
\draw (4.1,0) -- (4.9,0); 
\draw (5,0) circle (0.1); \draw (5,-0.1) node[below] {$\alpha_{l-1}$};
\draw (5.1, 0) -- (5.9,0); 
\draw (6,0) circle (0.1);  \draw (6,-0.1) node[below] {$\alpha_{l}$};
\draw (3,1.5) circle (0.1); \draw (3,1.6) node[above] {$\alpha_0$};
\draw (0.09,0.045) -- (2.91,1.455); 
\draw (5.91,0.045) -- (3.09,1.455); 
\end{tikzpicture}
\end{center}
\end{enumerate}
\item $W$-orbits:
\begin{enumerate}
\item[i)] $l=1$: $R(A_1^{(1)})=W(\alpha_0)\amalg W(\alpha_1)$.
\item[ii)] $l\geq 2$: single $W$-orbit.
\end{enumerate} 
\end{enumerate}

\subsubsection{$B_l^{(1)}$\; $(l\geq 3)$}\label{sect_B^1}

\begin{enumerate}
\item Real roots: $R(B_l^{(1)})=R(B_l)+\Z b$. 
\item Simple roots: \; $\alpha_0=b-\vep_1-\vep_2, \; \alpha_i=\vep_i-\vep_{i+1}\; (1\leq i<l)$ \\
\phantom{Simple roots: \;\,}  $\alpha_l=\vep_l$. 
\item $b=\alpha_0+\alpha_1+2(\alpha_2+\cdots+\alpha_l)$.
\item Dynkin diagram:
\begin{center}
\begin{tikzpicture}
\draw (1,0.5) circle (0.1);  \draw (0.9,0.5) node[left] {$\alpha_1$};
\draw (1,-0.5) circle (0.1);  \draw (0.9,-0.5) node[left] {$\alpha_0$};
\draw (1.09,0.455) -- (1.91,0.045); \draw (1.09,-0.455) -- (1.91,-0.045); 
\draw (2,0) circle (0.1);  \draw (2.1,-0.1) node[below] {$\alpha_2$};
\draw [dashed] (2.1,0) -- (3.9,0);
\draw (4,0) circle (0.1); 
\draw (4.1,0) -- (4.9,0); 
\draw (5,0) circle (0.1); \draw (5,-0.1) node[below] {$\alpha_{l-1}$};
\draw (5.1, 0) -- (5.9,0); 
\draw (6,0) circle (0.1);  \draw (6,-0.1) node[below] {$\alpha_{l}$};
\draw (5.55,0) -- (5.45,0.1) ;
\draw (5.55,0) -- (5.45,-0.1) ;
\draw (5.5,0.15) node[above] {$2$};
\end{tikzpicture}
\end{center}
\item $W$-orbits: $R(B_l^{(1)})_l \amalg R(B_l^{(1)})_s$.
\end{enumerate}

\subsubsection{$C_l^{(1)}$\; $(l\geq 2)$}\label{sect_C^1}

\begin{enumerate}
\item Real roots: $R(C_l^{(1)})=R(C_l)+\Z b$.
\item Simple roots: \; $\alpha_0=b-2\vep_1, \; \alpha_i=\vep_i-\vep_{i+1}\; (1\leq i<l)$, \\
\phantom{Simple roots: \;\,} $\alpha_l=2\vep_l$. 
\item $b=\alpha_0+2(\alpha_1+\cdots+\alpha_{l-1})+\alpha_l$.
\item Dynkin diagram:
\begin{center}
\begin{tikzpicture}
\draw (0,0) circle (0.1); \draw (0,-0.1) node[below] {$\alpha_0$};
\draw (0.1,0) -- (0.9,0);
\draw (0.45,0.1) -- (0.55,0);
\draw (0.45,-0.1) -- (0.55,0);
\draw (0.5,0.1) node[above] {$2$};
\draw (1,0) circle (0.1);  \draw (1,-0.1) node[below] {$\alpha_1$};
\draw (1.1,0) -- (1.9,0); 
\draw (2,0) circle (0.1);  \draw (2,-0.1) node[below] {$\alpha_2$};
\draw [dashed] (2.1,0) -- (3.9,0);
\draw (4,0) circle (0.1); 
\draw (4.1,0) -- (4.9,0); 
\draw (5,0) circle (0.1); \draw (5,-0.1) node[below] {$\alpha_{l-1}$};
\draw (5.1, 0) -- (5.9,0); 
\draw (6,0) circle (0.1);  \draw (6,-0.1) node[below] {$\alpha_{l}$};
\draw (5.45,0) -- (5.55,0.1) ;
\draw (5.45,0) -- (5.55,-0.1) ;
\draw (5.5,0.15) node[above] {$2$};
\end{tikzpicture}
\end{center}
\item $W$-orbits: $R(C_l^{(1)})_l=W(\alpha_0) \amalg W(\alpha_l),$ \\
\phantom{$W$-orbits:} $R(C_l^{(1)})_s$: single $W$-orbit.
\end{enumerate}


\subsubsection{$D_l^{(1)}$\; $(l\geq 4)$}\label{sect_D^1}

\begin{enumerate}
\item Real roots: $R(D_l^{(1)})=R(D_l)+\Z b$.
\item Simple roots: \; $\alpha_0=b-\vep_1-\vep_2, \; \alpha_i=\vep_i-\vep_{i+1}\; (1\leq i<l)$,  \\
\phantom{Simple roots: \;\,} $\alpha_l=\vep_{l-1}+\vep_l$. 
\item $b=\alpha_0+\alpha_1+2(\alpha_1+\cdots+\alpha_{l-2})+\alpha_{l-1}+\alpha_l$.
\item Dynkin diagram:
\begin{center}
\begin{tikzpicture}
\draw (0,0.5) circle (0.1);  \draw (-0.1,0.5) node[left] {$\alpha_1$};
\draw (0,-0.5) circle (0.1);  \draw (-0.1,-0.5) node[left] {$\alpha_0$};
\draw (0.09,0.455) -- (0.91,0.045); \draw (0.09,-0.455) -- (0.91,-0.045); 
\draw (1,0) circle (0.1);  \draw (1.1,-0.1) node[below] {$\alpha_2$};
\draw (1.1,0) -- (1.9,0); \draw (2,0) circle (0.1);
\draw [dashed] (2.1,0) -- (3.9,0);
\draw (4,0) circle (0.1); 
\draw (4.1,0) -- (4.9,0); 
\draw (5,0) circle (0.1); \draw (5,-0.1) node[below] {$\alpha_{l-2}$};
\draw (5.09,0.045) -- (5.91,0.455);
\draw (5.09,-0.045) -- (5.91,-0.455);
\draw (6,0.5) circle (0.1);  \draw (6.1,0.5) node[right] {$\alpha_{l-1}$};
\draw (6,-0.5) circle (0.1);  \draw (6.1,-0.5) node[right] {$\alpha_{l}$};
\end{tikzpicture}
\end{center}
\item $W$-orbits: single $W$-orbit.
\end{enumerate}

\subsubsection{$E_6^{(1)}$}\label{sect_E6^1}

\begin{enumerate}
\item Real roots: $R(E_6^{(1)})=R(E_6)+\Z b$.
\item Simple roots: \; $\alpha_0=b-\frac{1}{2}(\vep_1+\vep_2+\vep_3+\vep_4+\vep_5-\vep_6-\vep_7+\vep_8)$, \\
\phantom{simple r roots \; }$\alpha_1=\frac{1}{2}(\vep_1+\vep_8)-\frac{1}{2}(\vep_2+\vep_3+\cdots+\vep_7)$, \\ 
\phantom{simple r roots \; }$\alpha_2=\vep_1+\vep_2, \; \alpha_i=\vep_{i-1}-\vep_{i-2}\quad (3\leq i\leq 6)$.
\item $b=\alpha_0+\alpha_1+2\alpha_2+2\alpha_3+3\alpha_4+2\alpha_5+\alpha_6.$
\item Dynkin diagram:
\begin{center}
\begin{tikzpicture}
\draw (1,0) circle (0.1);  \draw (1,-0.1) node[below] {$\alpha_1$};
\draw (1.1,0) -- (1.9,0); 
\draw (2,0) circle (0.1);  \draw (2,-0.1) node[below] {$\alpha_3$};
\draw (2.1, 0) -- (2.9,0);
\draw (3,0) circle (0.1); \draw (3,-0.1) node[below] {$\alpha_4$};
\draw (3.1,0) -- (3.9,0); 
\draw (4,0) circle (0.1); \draw (4,-0.1) node[below] {$\alpha_5$};
\draw (4.1,0)-- (4.9,0);
\draw (5,0) circle (0.1); \draw (5,-0.1) node[below] {$\alpha_6$};
\draw (3,0.1) -- (3,0.9); \draw (3,1) circle (0.1); 
\draw (3.1,1) node[right] {$\alpha_2$};
\draw (3,1.1) -- (3,1.9); \draw (3,2) circle (0.1); 
\draw (3.1,2) node[right] {$\alpha_0$};
\end{tikzpicture}
\end{center}
\item $W$-orbits: single $W$-orbit.
\end{enumerate}

\subsubsection{$E_7^{(1)}$}\label{sect_E7^1}

\begin{enumerate}
\item Real roots: $R(E_7^{(1)})=R(E_7)+\Z b$.
\item Simple roots: \; $\alpha_0=b+\vep_7-\vep_8$, \\ 
\phantom{simple rroots \;\,} $\alpha_1=\frac{1}{2}(\vep_1+\vep_8)-\frac{1}{2}(\vep_2+\vep_3+\cdots+\vep_7)$,  \\
\phantom{simple rroots \;\,} $\alpha_2=\vep_1+\vep_2, \; \alpha_i=\vep_{i-1}-\vep_{i-2}\quad (3\leq i\leq 7)$.
\item 
$
b=
\alpha_0+2\alpha_1+2\alpha_2+3\alpha_3+4\alpha_4+3\alpha_5+2\alpha_6+\alpha_7.
$
\item Dynkin diagram:
\begin{center}
\begin{tikzpicture}
\draw (0,0) circle (0.1); \draw (0,-0.1) node[below] {$\alpha_0$};
\draw (0.1,0) -- (0.9,0);
\draw (1,0) circle (0.1);  \draw (1,-0.1) node[below] {$\alpha_1$};
\draw (1.1,0) -- (1.9,0); 
\draw (2,0) circle (0.1);  \draw (2,-0.1) node[below] {$\alpha_3$};
\draw (2.1, 0) -- (2.9,0);
\draw (3,0) circle (0.1); \draw (3,-0.1) node[below] {$\alpha_4$};
\draw (3.1,0) -- (3.9,0); 
\draw (4,0) circle (0.1); \draw (4,-0.1) node[below] {$\alpha_5$};
\draw (4.1,0)-- (4.9,0);
\draw (5,0) circle (0.1); \draw (5,-0.1) node[below] {$\alpha_6$};
\draw (5.1,0)-- (5.9,0);
\draw (6,0) circle (0.1); \draw (6,-0.1) node[below] {$\alpha_7$};
\draw (3,0.1) -- (3,0.9); \draw (3,1) circle (0.1); 
\draw (3,1.1) node[above] {$\alpha_2$};
\end{tikzpicture}
\end{center}
\item $W$-orbits: single $W$-orbit.
\end{enumerate}


\subsubsection{$E_8^{(1)}$}\label{sect_E8^1}

\begin{enumerate}
\item Real roots: $R(E_8^{(1)})=R(E_8)+\Z b$.
\item Simple roots: \; $\alpha_0=b-\vep_7-\vep_8$, \\ 
\phantom{SImple roots: \;} $\alpha_1=\frac{1}{2}(\vep_1+\vep_8)-\frac{1}{2}(\vep_2+\vep_3+\cdots+\vep_7),$ \\
\phantom{simple r roots \;} $\alpha_2=\vep_1+\vep_2, \; \alpha_i=\vep_{i-1}-\vep_{i-2}\quad (3\leq i\leq 8)$.
\item 
$
b=\alpha_0+2\alpha_1+3\alpha_2+4\alpha_3+6\alpha_4+5\alpha_5+4\alpha_6+3\alpha_7+2\alpha_8.
$
\item Dynkin diagram:
\begin{center}
\begin{tikzpicture}
\draw (1,0) circle (0.1);  \draw (1,-0.1) node[below] {$\alpha_1$};
\draw (1.1,0) -- (1.9,0); 
\draw (2,0) circle (0.1);  \draw (2,-0.1) node[below] {$\alpha_3$};
\draw (2.1, 0) -- (2.9,0);
\draw (3,0) circle (0.1); \draw (3,-0.1) node[below] {$\alpha_4$};
\draw (3.1,0) -- (3.9,0); 
\draw (4,0) circle (0.1); \draw (4,-0.1) node[below] {$\alpha_5$};
\draw (4.1,0)-- (4.9,0);
\draw (5,0) circle (0.1); \draw (5,-0.1) node[below] {$\alpha_6$};
\draw (5.1,0)-- (5.9,0);
\draw (6,0) circle (0.1); \draw (6,-0.1) node[below] {$\alpha_7$};
\draw (6.1,0)-- (6.9,0);
\draw (7,0) circle (0.1); \draw (7,-0.1) node[below] {$\alpha_8$};
\draw (7.1,0) -- (7.9,0); 
\draw (8,0) circle (0.1); \draw (8,-0.1) node[below] {$\alpha_0$};
\draw (3,0.1) -- (3,0.9); \draw (3,1) circle (0.1); 
\draw (3,1.1) node[above] {$\alpha_2$};
\end{tikzpicture}
\end{center}
\item $W$-orbits: single $W$-orbit.
\end{enumerate}


\subsubsection{$F_4^{(1)}$}\label{sect_F4^1}

\begin{enumerate}
\item Real roots: $R(F_4^{(1)})=R(F_4)+\Z b$.
\item Simple roots: \; $\alpha_0=b-\vep_1-\vep_2, \; \alpha_1=\vep_2-\vep_3, \; \alpha_2=\vep_3-\vep_4, \; \alpha_3=\vep_4,$ \\
\phantom{Simple roots\; r\,\,}$\alpha_4=\frac{1}{2}(\vep_1-\vep_2-\vep_3-\vep_4)$. 
\item 
$
b=
\alpha_0+2\alpha_1+3\alpha_2+4\alpha_3+2\alpha_4.
$
\item Dynkin diagram:
\begin{center}
\begin{tikzpicture}
\draw (0,0) circle (0.1);  \draw (0,-0.1) node[below] {$\alpha_0$};
\draw (0.1,0) -- (0.9,0);
\draw (1,0) circle (0.1);  \draw (1,-0.1) node[below] {$\alpha_1$};
\draw (1.1,0) -- (1.9,0); 
\draw (2,0) circle (0.1);  \draw (2,-0.1) node[below] {$\alpha_2$};
\draw (2.1, 0) -- (2.9,0);
\draw (3,0) circle (0.1); \draw (3,-0.1) node[below] {$\alpha_3$};
\draw (3.1,0) -- (3.9,0); 
\draw (4,0) circle (0.1); \draw (4,-0.1) node[below] {$\alpha_4$};
\draw (2.55,0) -- (2.45,0.1);
\draw (2.55,0) -- (2.45,-0.1); \draw (2.5,0.1) node[above] {$2$};
\end{tikzpicture}
\end{center}
\item $W$-orbits: $R(F_4^{(1)})_l \amalg R(F_4^{(1)})_s$.
\end{enumerate}

\subsubsection{$G_2^{(1)}$}\label{sect_G2^1}

\begin{enumerate}
\item Real roots: $R(G_2^{(1)})=R(G_2)+\Z b$. 
\item Simple roots: \; $\alpha_0=b+\vep_1+\vep_2-2\vep_3, \; \alpha_1=\vep_1-\vep_2$, \\ 
\phantom{Simple roots: \;\,} $\alpha_2=-2\vep_1+\vep_2+\vep_3.$ 
\item 
$
b=
\alpha_0+3\alpha_1+2\alpha_2.
$
\item Dynkin diagram:
\begin{center}
\begin{tikzpicture}
\draw (1,0) circle (0.1);  \draw (1,-0.1) node[below] {$\alpha_1$};
\draw (1.1,0) -- (1.9,0); 
\draw (2,0) circle (0.1);  \draw (2,-0.1) node[below] {$\alpha_2$};
\draw (1.45,0) -- (1.55,0.1);
\draw (1.45,0) -- (1.55,-0.1); \draw (1.5,0.1) node[above] {$3$};
\draw (2.1,0) -- (2.9,0);
\draw (3,0) circle (0.1); \draw (3,-0.1) node[below] {$\alpha_0$};
\end{tikzpicture}
\end{center}
\item $W$-orbits: $R(G_2^{(1)})_l \amalg R(G_2^{(1)})_s$.
\end{enumerate}

\subsubsection{$B_l^{(2)}$\; $(l\geq 2)$}\label{sect_B^2}

\begin{enumerate}
\item Real roots: $R(B_l^{(2)})=(R(B_l)_s+\Z b) \cup (R(B_l)_l+2\Z b)$.
\item Simple roots: \; $\alpha_0=b-\vep_1, \; \alpha_i=\vep_i-\vep_{i+1}\; (1\leq i<l)$ and $\alpha_l=\vep_l$. 
\item $b=\sum_{i=0}^l\alpha_i$.
\item Dynkin diagram:
\begin{center}
\begin{tikzpicture}
\draw (0,0) circle (0.1);  \draw (0,-0.1) node[below] {$\alpha_0$};
\draw (0.1,0) -- (0.9,0); 
\draw (0.45,0) -- (0.55,0.1) ;
\draw (0.45,0) -- (0.55,-0.1) ;
\draw (0.5,0.15) node[above] {$2$};
\draw (1,0) circle (0.1);  \draw (1,-0.1) node[below] {$\alpha_1$};
\draw (1.1,0) -- (1.9,0); 
\draw (2,0) circle (0.1);  \draw (2,-0.1) node[below] {$\alpha_2$};
\draw [dashed] (2.1,0) -- (3.9,0);
\draw (4,0) circle (0.1); 
\draw (4.1,0) -- (4.9,0); 
\draw (5,0) circle (0.1); \draw (5,-0.1) node[below] {$\alpha_{l-1}$};
\draw (5.1, 0) -- (5.9,0); 
\draw (6,0) circle (0.1);  \draw (6,-0.1) node[below] {$\alpha_{l}$};
\draw (5.55,0) -- (5.45,0.1) ;
\draw (5.55,0) -- (5.45,-0.1) ;
\draw (5.5,0.15) node[above] {$2$};
\end{tikzpicture}
\end{center}
\item $W$-orbits: $R(B_l^{(2)})_l$: single $W$-orbit, \\
\phantom{$W$-orbits:} $R(B_l^{(2)})_s=W(\alpha_0) \amalg W(\alpha_l)$.
\end{enumerate}

\subsubsection{$C_l^{(2)}$\; $(l\geq 3)$}\label{sect_C^2}

\begin{enumerate}
\item Real roots: $R(C_l^{(2)})=(R(C_l)_s+\Z b) \cup (R(C_l)_l+2\Z b)$.
\item Simple roots: \; $\alpha_0=b-\vep_1-\vep_2, \; \alpha_i=\vep_i-\vep_{i+1}\; (1\leq i<l)$, \\
\phantom{Simple roots:\;\, } $\alpha_l=2\vep_l$. 
\item $b=\alpha_0+\alpha_1+2(\alpha_2+\cdots+\alpha_{l-1})+\alpha_l$.
\item Dynkin diagram:
\begin{center}
\begin{tikzpicture}
\draw (1,0.5) circle (0.1);  \draw (0.9,0.5) node[left] {$\alpha_1$};
\draw (1,-0.5) circle (0.1);  \draw (0.9,-0.5) node[left] {$\alpha_0$};
\draw (1.09,0.455) -- (1.91,0.045); 
\draw (1.09,-0.455) -- (1.91,-0.045); 
\draw (2,0) circle (0.1);  \draw (2.1,-0.1) node[below] {$\alpha_2$};
\draw [dashed] (2.1,0) -- (3.9,0);
\draw (4,0) circle (0.1); 
\draw (4.1,0) -- (4.9,0); 
\draw (5,0) circle (0.1); \draw (5,-0.1) node[below] {$\alpha_{l-1}$};
\draw (5.1, 0) -- (5.9,0); 
\draw (6,0) circle (0.1);  \draw (6,-0.1) node[below] {$\alpha_{l}$};
\draw (5.45,0) -- (5.55,0.1) ;
\draw (5.45,0) -- (5.55,-0.1) ;
\draw (5.5,0.15) node[above] {$2$};
\end{tikzpicture}
\end{center}
\item $W$-orbits: $R(C_l^{(2)})_l \amalg R(C_l^{(2)})_s$.
\end{enumerate}

\subsubsection{$F_4^{(2)}$}\label{sect_F4^2}

\begin{enumerate}
\item Real roots: $R(F_4^{(2)})=(R(F_4)_s+\Z b) \cup( R(F_4)_l+2\Z b)$.
\item Simple roots: \; $\alpha_0=b-\vep_1, \; \alpha_1=\vep_2-\vep_3, \; \alpha_2=\vep_3-\vep_4, \; \alpha_3=\vep_4,$ \\
\phantom{Simple roots:  rr}$\alpha_4=\frac{1}{2}(\vep_1-\vep_2-\vep_3-\vep_4)$. 
\item 
$
b=\alpha_0+\alpha_1+2\alpha_2+3\alpha_3+2\alpha_4.
$
\item Dynkin diagram:
\begin{center}
\begin{tikzpicture}
\draw (1,0) circle (0.1);  \draw (1,-0.1) node[below] {$\alpha_1$};
\draw (1.1,0) -- (1.9,0); 
\draw (2,0) circle (0.1);  \draw (2,-0.1) node[below] {$\alpha_2$};
\draw (2.1, 0) -- (2.9,0);
\draw (3,0) circle (0.1); \draw (3,-0.1) node[below] {$\alpha_3$};
\draw (3.1,0) -- (3.9,0); 
\draw (4,0) circle (0.1); \draw (4,-0.1) node[below] {$\alpha_4$};
\draw (4.1,0) -- (4.9,0);
\draw (5,0) circle (0.1); \draw (5,-0.1) node[below] {$\alpha_0$};
\draw (2.55,0) -- (2.45,0.1);
\draw (2.55,0) -- (2.45,-0.1); \draw (2.5,0.1) node[above] {$2$};
\end{tikzpicture}
\end{center}
\item $W$-orbits: $R(F_4^{(2)})_l \amalg R(F_4^{(2)})_s$.
\end{enumerate}


\subsubsection{$G_2^{(3)}$}\label{sect_G2^3}

\begin{enumerate}
\item Real roots: $R(G_2^{(3)})=(R(G_2)_s+\Z b) \cup (R(G_2)_l+3\Z b)$. 
\item Simple roots: \; $\alpha_0=b+\vep_2-\vep_3, \; \alpha_1=\vep_1-\vep_2, \; \alpha_2=-2\vep_1+\vep_2+\vep_3.$ 
\item 
$
b=
\alpha_0+2\alpha_1+\alpha_2.
$
\item Dynkin diagram:
\begin{center}
\begin{tikzpicture}
\draw (0,0) circle (0.1); \draw (0,-0.1) node[below] {$\alpha_0$};
\draw (0.1,0) -- (0.9,0);
\draw (1,0) circle (0.1);  \draw (1,-0.1) node[below] {$\alpha_1$};
\draw (1.1,0) -- (1.9,0); 
\draw (2,0) circle (0.1);  \draw (2,-0.1) node[below] {$\alpha_2$};
\draw (1.45,0) -- (1.55,0.1);
\draw (1.45,0) -- (1.55,-0.1); \draw (1.5,0.1) node[above] {$3$};
\end{tikzpicture}
\end{center}
\item $W$-orbits: $R(G_2^{(3)})_l \amalg R(G_2^{(3)})_s$.
\end{enumerate}


\subsubsection{$BC_l^{(2)}$\; $(l\geq 1)$}\label{sect_BC^2}

\begin{enumerate}
\item Real roots: 
\[ R(BC_l^{(2)})=
\begin{cases} \;  \begin{array}{l} (R(BC_l)_s+\Z b) \;\cup \; (R(BC_l)_m+\Z b) \\ 
                                                     \phantom{(R(BC_l)_s+\Z b)} \; \cup \; (R(BC_l)_l+(1+2\Z)b) \end{array} \quad  & l>1, \\
\; (R(BC_1)_s+\Z b) \cup (R(BC_1)_l+(1+2\Z)b) \qquad & l=1. \end{cases}
\]
\item Simple roots: \; $\alpha_0=b-2\vep_1, \; \alpha_i=\vep_i-\vep_{i+1}\; (1\leq i<l)$ and $\alpha_l=\vep_l$. 
\item $b=\alpha_0+2(\alpha_1+\cdots +\alpha_l)$.
\item Dynkin diagram:
\begin{enumerate}
\item[i)] $l=1$:
\begin{center}
\begin{tikzpicture}
\draw (0,0) circle (0.1); \draw (0,-0.1) node[below] {$\alpha_0$};
\draw (0.1, 0) -- (0.9,0); 
\draw (0.45, 0.1) -- (0.55,0); 
\draw (0.45, -0.1) -- (0.55,0); 
\draw (0.5, 0.1) node[above] {$4$}; 
\draw (1,0) circle (0.1);  \draw (1,-0.1) node[below] {$\alpha_1$};
\end{tikzpicture}
\end{center}
\item[ii)] $l\geq 2$:
\begin{center}
\begin{tikzpicture}
\draw (0,0) circle (0.1); \draw (0,-0.1) node[below] {$\alpha_0$};
\draw (0.1,0) -- (0.9,0); 
\draw (0.45,0.1) -- (0.55,0); 
\draw (0.45, -0.1) -- (0.55,0); \draw (0.5,0.1) node[above] {$2$};
\draw (1,0) circle (0.1);  \draw (1,-0.1) node[below] {$\alpha_1$};
\draw (1.1,0) -- (1.9,0); 
\draw (2,0) circle (0.1);  \draw (2,-0.1) node[below] {$\alpha_2$};
\draw [dashed] (2.1,0) -- (3.9,0);
\draw (4,0) circle (0.1); 
\draw (4.1,0) -- (4.9,0); 
\draw (5,0) circle (0.1); \draw (5,-0.1) node[below] {$\alpha_{l-1}$};
\draw (5.1,0) -- (5.9,0); 
\draw (5.45, -0.1) -- (5.55,0); \draw (5.45, 0.1) -- (5.55,0); 
\draw (6,0) circle (0.1);  \draw (6,-0.1) node[below] {$\alpha_{l}$};
\draw (5.5,0.1)  node[above] {$2$};
\end{tikzpicture}
\end{center}
\end{enumerate}
\item $W$-orbits:
\begin{enumerate}
\item[i)] $l=1$: $R(BC_1^{(2)})_l \amalg R(BC_1^{(2)})_s$.
\item[ii)] $l\geq 2$: $R(BC_l^{(2)})_l \amalg R(BC_l^{(2)})_m \amalg R(BC_l^{(2)})_s$.
\end{enumerate}
\end{enumerate}


\subsection{Non-reduced types}\label{sect_non-red-affine}

\subsubsection{$BCC_l\; (l\geq 1)$}\label{sect_data-BCC}

\begin{enumerate}
\item Real roots: $R(BCC_l)=R(BC_l)+\Z b$. 
\item Simple roots: \; $\alpha_0=b-2\vep_1$, $\alpha_i=\vep_i-\vep_{i+1}\; (1\leq i<l)$ and $\alpha_l=\vep_l$. 
\item $b=\alpha_0+2(\alpha_1+\cdots +\alpha_l)$ 
\item Dynkin diagram
\begin{enumerate}
\item[i)] $l=1$:
\begin{center}
\begin{tikzpicture}
\draw (0,0) circle (0.1); \draw (0,-0.1) node[below] {$\alpha_0$};
\draw (0.1, 0) -- (0.9,0); 
\draw (0.45, 0.1) -- (0.55,0); 
\draw (0.45, -0.1) -- (0.55,0); 
\draw (0.5, 0.1) node[above] {$4$}; 
\fill (1,0) circle (0.1);  \draw (1,-0.1) node[below] {$\alpha_1$};
\end{tikzpicture}
\end{center}
\item[ii)] $l\geq 2$:
\begin{center}
\begin{tikzpicture}
\draw (0,0) circle (0.1); \draw (0,-0.1) node[below] {$\alpha_0$};
\draw (0.45, 0.1) -- (0.55,0); 
\draw (0.45, -0.1) -- (0.55,0); 
\draw (0.5,0.1)  node[above] {$2$};
\draw (0.1,0) -- (0.9,0) ;
\draw (1,0) circle (0.1);  \draw (1,-0.1) node[below] {$\alpha_1$};
\draw (1.1,0) -- (1.9,0); 
\draw (2,0) circle (0.1); 
\draw [dashed] (2.1,0) -- (3.9,0);
\draw (4,0) circle (0.1); 
\draw (4.1,0) -- (4.9,0); 
\draw (5,0) circle (0.1); \draw (5,-0.1) node[below] {$\alpha_{l-1}$};
\draw (5.1,0) -- (5.9,0);
\fill (6,0) circle (0.1);  \draw (6,-0.1) node[below] {$\alpha_{l}$};
\draw (5.45,0.1) -- (5.55,0) ;
\draw (5.45,-0.1) -- (5.55,0) ;
\draw (5.5,0.1) node[above] {$2$};
\end{tikzpicture}
\end{center}
\end{enumerate}
\item $W$-orbits:
\begin{enumerate}
\item[i)] $l=1$: $R(BCC_1)_l=W(\alpha_0) \amalg W(2\alpha_1),$ \\
 \phantom{$l=1$\, } 
 $R(BCC_1)_s$: single $W$-orbit.
\item[ii)] $l\geq 2$: $R(BCC_l)_l=W(\alpha_0) \amalg W(2\alpha_l),$ \\
\phantom{$l\geq 2$\;\, }$R(BCC_l)_m, \; R(BCC_l)_s$: single $W$-orbits.
\end{enumerate}
\end{enumerate}

\subsubsection{$C^\vee BC_l\; (l\geq 1)$}

\begin{enumerate}
\item Real roots: $R(C^\vee BC_l)=(R(BC_l)_s+\Z b) \cup (R(BC_l)_m+2\Z b)$ \\
\phantom{Real roots: $R(C^\vee BC_l)=$} $\cup (R(BC_l)_l+4\Z b)$.
\item Simple roots: \; $\alpha_0=b-\vep_1$, $\alpha_i=\vep_i-\vep_{i+1}\; (1\leq i<l)$ and $\alpha_l=\vep_l$. 
\item $b=\alpha_0+\alpha_1+\cdots +\alpha_l$ 
\item Dynkin diagram
\begin{enumerate}
\item[i)] $l=1$:
\begin{center}
\begin{tikzpicture}
\draw (0,0) circle (0.1); \draw (0,-0.1) node[below] {$\alpha_0$};
\draw (0.1, 0) -- (0.9,0); 
\draw (0.5, 0.1) node[above] {$\infty$}; 
\fill (1,0) circle (0.1);  \draw (1,-0.1) node[below] {$\alpha_1$};
\end{tikzpicture}
\end{center}
\item[ii)] $l\geq 2$:
\begin{center}
\begin{tikzpicture}
\draw (0,0) circle (0.1); \draw (0,-0.1) node[below] {$\alpha_0$};
\draw (0.1, 0) -- (0.9,0); 
\draw (0.45, 0) -- (0.55,-0.1); 
\draw (0.45, 0) -- (0.55,0.1) ;
\draw (0.5,0.1) node[above] {$2$} ;
\draw (1,0) circle (0.1);  \draw (1,-0.1) node[below] {$\alpha_1$};
\draw (1.1,0) -- (1.9,0); 
\draw (2,0) circle (0.1); 
\draw [dashed] (2.1,0) -- (3.9,0);
\draw (4,0) circle (0.1); 
\draw (4.1,0) -- (4.9,0); 
\draw (5,0) circle (0.1); \draw (5,-0.1) node[below] {$\alpha_{l-1}$};
\draw (5.1, 0) -- (5.9,0); 
\draw (5.45, -0.1) -- (5.55,0); 
\draw (5.45, 0.1) -- (5.55,0); 
\draw (5.5,0.1) node[above] {$2$};
\fill (6,0) circle (0.1);  \draw (6,-0.1) node[below] {$\alpha_{l}$};
\end{tikzpicture}
\end{center}
\end{enumerate}
\item $W$-orbits:
\begin{enumerate}
\item[i)] $l=1$: $R(C^\vee BC_1)_l$: single $W$-orbit, \\
\phantom{$l=1$:\;} $R(C^\vee BC_1)_s=W(\alpha_0) \amalg W(\alpha_1)$.
\item[ii)] $l\geq 2$: $R(C^\vee BC_l)_l, \; R(C^\vee BC_l)_m$: single $W$-orbits, \\
\phantom{$l=1$\, } $R(C^\vee BC_l)_s=W(\alpha_0) \amalg W(\alpha_l)$.
\end{enumerate}
\end{enumerate}

\subsubsection{$BB_l^\vee\; (l\geq 2)$}

\begin{enumerate}
\item Real roots: $R(BB_l^\vee)=(R(BC_l)_s+\Z b) \cup (R(BC_l)_m+\Z b)$ \\
\phantom{Real roots: $R(BB_l^\vee)=$} $\cup (R(BC_l)_l+2\Z b)$.
\item Simple roots: \; $\alpha_0=b-\vep_1-\vep_2$, $\alpha_i=\vep_i-\vep_{i+1}\; (1\leq i<l)$, \\
          \phantom{Simple roots:\;\, }    $\alpha_l=\vep_l$. 
\item $b=\alpha_0+\alpha_1+2(\alpha_2+\cdots +\alpha_l)$ 
\item Dynkin diagram
\begin{enumerate}
\item[i)] $l=2$:
\begin{center}
\begin{tikzpicture}
\draw (0,0) circle (0.1); \draw (0,-0.1) node[below] {$\alpha_0$};
\draw (0.1,0) -- (0.9,0);
\draw (0.45, 0.1) -- (0.55,0); 
\draw (0.45, -0.1) -- (0.55,0); 
\draw (0.5,0.1) node[above] {$2$} ;
\fill (1,0) circle (0.1); \draw (1,-0.1) node[below] {$\alpha_2$};
\draw (1.1, 0) -- (1.9,0); 
\draw (1.45, 0) -- (1.55,-0.1); 
\draw (1.45, 0) -- (1.55,0.1); 
\draw (1.5,0.1) node[above] {$2$};
\draw (2,0) circle (0.1);  \draw (2,-0.1) node[below] {$\alpha_1$};
\end{tikzpicture}
\end{center}
\item[ii)] $l\geq 3$:
\begin{center}
\begin{tikzpicture}
\draw (0,0.5) circle (0.1); \draw (-0.1,0.5) node[left] {$\alpha_0$};
\draw (0,-0.5) circle (0.1); \draw (-0.1,-0.5) node[left] {$\alpha_1$};
\draw (1,0) circle (0.1);  \draw (0.9,-0.1) node[below] {$\alpha_2$};
\draw (1.1,0) -- (1.9,0);
\draw (0.09,0.455) -- (0.91,0.045) ;
\draw (0.09,-0.455) -- (0.91,-0.045) ; 
\draw (2,0) circle (0.1); 
\draw [dashed] (2.1,0) -- (3.9,0);
\draw (4,0) circle (0.1); \draw (4.1,0) -- (4.9,0); 
\draw (5,0) circle (0.1); \draw (5,-0.1) node[below] {$\alpha_{l-1}$};
\draw (5.1, 0) -- (5.9,0); 
\draw (5.45,0.1) -- (5.55,0);
\draw (5.45,-0.1) -- (5.55,0);
\draw (5.5,0.1) node[above] {$2$}; 
\fill (6,0) circle (0.1);  \draw (6,-0.1) node[below] {$\alpha_{l}$};
\end{tikzpicture}
\end{center}
\end{enumerate}
\item $W$-orbits: 
\begin{enumerate}
\item[i)] $l=2$: $R(BB_2^\vee)_m=W(\alpha_0) \amalg W(\alpha_1)$, \\
\phantom{$l=2$\, } $R(BB_2^{\vee})_l, \; R(BB_2^{\vee})_s$:\; single $W$-orbits.
\item[ii)] $l\geq 3$: $R(BB_l^{\vee})_l \amalg R(BB_l^{\vee})_m \amalg R(BB_l^{\vee})_s$.
\end{enumerate}
\end{enumerate}

\subsubsection{$C^\vee C_l\; (l\geq 1)$}\label{sect:CveeC_l}

\begin{enumerate}
\item Real roots: $R(C^\vee C_l)=(R(BC_l)_s+\Z b) \cup (R(BC_l)_m+2\Z b)$ \\
\phantom{Real roots: $R(C^\vee C_l)=$} $\cup (R(BC_l)_l+2\Z b)$.
\item Simple roots: \; $\alpha_0=b-\vep_1$, $\alpha_i=\vep_i-\vep_{i+1}\; (1\leq i<l)$ and $\alpha_l=\vep_l$. 
\item $b=\alpha_0+\alpha_1+\cdots +\alpha_l$ 
\item Dynkin diagram
\begin{enumerate}
\item[i)] $l=1$:
\begin{center}
\begin{tikzpicture}
\fill (0,0) circle (0.1); \draw (0,-0.1) node[below] {$\alpha_0$};
\draw (0.1, 0) -- (0.9,0); 
\draw (0.5, 0.1) node[above] {$\infty$}; 
\fill (1,0) circle (0.1);  \draw (1,-0.1) node[below] {$\alpha_1$};
\end{tikzpicture}
\end{center}
\item[ii)] $l\geq 2$:
\begin{center}
\begin{tikzpicture}
\fill (0,0) circle (0.1); \draw (0,-0.1) node[below] {$\alpha_0$};
\draw (0.1,0) -- (0.9,0);
\draw (0.45, 0) -- (0.55,0.1); 
\draw (0.45, 0) -- (0.55,-0.1); 
\draw (0.5,0.1) node[above] {$2$} ;
\draw (1,0) circle (0.1);  \draw (1,-0.1) node[below] {$\alpha_1$};
\draw (1.1,0) -- (1.9,0); 
\draw (2,0) circle (0.1); 
\draw [dashed] (2.1,0) -- (3.9,0);
\draw (4,0) circle (0.1); 
\draw (4.1,0) -- (4.9,0); 
\draw (5,0) circle (0.1); \draw (5,-0.1) node[below] {$\alpha_{l-1}$};
\draw (5.1,0) -- (5.9,0);
\draw (5.45, 0.1) -- (5.55,0); 
\draw (5.45, -0.1) -- (5.55,0); 
\draw (5.5,0.1) node[above] {$2$};
\fill (6,0) circle (0.1);  \draw (6,-0.1) node[below] {$\alpha_{l}$};
\end{tikzpicture}
\end{center}
\end{enumerate}
\item $W$-orbits:
\begin{enumerate}
\item[i)] $l=1$: $R(C^\vee C_1)_l=W(2\alpha_0) \amalg W(2\alpha_1)$, \\ 
\phantom{$l=1$:} $R(C^\vee C_1)_s=W(\alpha_0) \amalg W(\alpha_1)$.
\item[ii)] $l\geq 2$: $R(C^\vee C_l)_l \,= W(2\alpha_0) \amalg W(2\alpha_l)$, \\
\phantom{$l\geq 2$\;\, }$R(C^\vee C_l)_m$: single $W$-orbit, \\
\phantom{$l\geq 2$\;\, }$R(C^\vee C_l)_s =W(\alpha_0) \amalg W(\alpha_l)$. 
\end{enumerate}
\end{enumerate}


\section{MERSs with reduced affine quotient}\label{sect_clasfficiation-red-quot}
We obtained the classification of mERSs $(R,G)$, belonging to $(F,I)$, with reduced $R/G$. As a consequence, one sees that the classification due to K. Saito \cite{Saito1985} is complete in this case. For the sake of completeness, let us collect several important root data for each marked elliptic root system $(R,G)$:
\begin{enumerate}
\item The tier numbers $t_1(R,G)$ and $t_2(R,G)$ and the marked dual root system $(R^\vee,G)$,
\item Root system $R$,
\item Simple roots of $(R,G)$,
\item Counting and Exponents,
\item Elliptic diagram,
\item $W$-orbits.
\end{enumerate}
Here, we fix a basis $(\vep_1, \vep_2,\cdots,\vep_l,b,a)$ of $F$ as in \eqref{def_base-F}. 
\setcounter{subsection}{1}

\subsubsection{Type $A_l^{(1,1)}\; (l\geq 1)$}
\begin{enumerate}
\item $t_1(A_l^{(1,1)})=1$, \;  $t_2(A_l^{(1,1)})=1$ and 
$(A_l^{(1,1)})^\vee=A_l^{(1,1)}$.
\item $R(A_l^{(1,1)})=R(A_l)+\Z a+\Z b$.
\item $\alpha_0=-\vep_1+\vep_{l+1}+b, \; \alpha_i=\vep_i-\vep_{i+1}\, (1\leq i\leq l)$.
\item $k(\alpha_i)=1$ \quad ($0\leq i\leq l$). \\
         $m_i=1$ \quad ($0\leq i\leq l$).
\item Elliptic diagram:
\input{Figure-red/diagram-A11}
\item $W$-orbits:
\begin{enumerate}
\item[i)] $l=1$: $R(A_1^{(1,1)})=\amalg_{i \in \{0,1\}} \left(W(\alpha_i) \amalg W(\alpha_i^\ast)\right)$.
\item[ii)] $l\geq 2$: single $W$-orbit.
\end{enumerate}
\end{enumerate}

\subsubsection{Type $A_1^{(1,1)\ast}$}
\begin{enumerate}
\item $t_1(A_l^{(1,1)\ast})=1$, \; $t_2(A_l^{(1,1)\ast})=1$ and 
$(A_1^{(1,1)\ast})^\vee=A_1^{(1,1)\ast}$.
\item $R(A_1^{(1,1)\ast})=R(A_1)+\{\, ma+nb\,\vert\, m,n \in \Z, \, mn \equiv 0\, [2]\, \}$.
\item $\alpha_0=-\vep+b, \; \alpha_1=\vep$.  \quad   ($\vep \in F$ is a vector with $I(\vep,\vep)=2$.)
\item $k(\alpha_0)=2, \quad k(\alpha_1)=1$. \\
         $m_0=\dfrac{1}{2}, \quad m_1=1$.
\item Elliptic diagram:
\input{Figure-red/diagram-A11st}
\item $W$-orbits: $R(A_1^{(1,1)\ast})=W(\alpha_0) \amalg W(\alpha_1) \amalg W(\alpha_1^\ast)$.
\end{enumerate}

\subsubsection{Type $B_l^{(1,1)}\; (l\geq 3)$}
\begin{enumerate}
\item $t_1(B_l^{(1,1)})=1$, \;  $t_2(B_l^{(1,1)})=1$ and  
$(B_l^{(1,1)})^\vee=C_l^{(2,2)}$,
\item $R(B_l^{(1,1)})=R(B_l)+\Z a+\Z b$. 
\item $\alpha_0=-\vep_1-\vep_2+b, \; \alpha_i=\vep_i-\vep_{i+1}\, (1\leq i< l), \; \alpha_l=\vep_l$.
\item $k(\alpha_i)=1 \; (0\leq i\leq l).$ \\
         $m_0=m_1=2,  \quad m_i=4\; (1< i<l), \quad m_l=2$.
\item Elliptic diagram:
\input{Figure-red/diagram-B11}
\item $W$-orbits: $R(B_l^{(1,1)})_l \amalg R(B_l^{(1,1)})_s$.
\end{enumerate}

\subsubsection{Type $B_l^{(1,2)}\; (l\geq 3)$}
\begin{enumerate}
\item $t_1(B_l^{(1,2)})=1$, \; $t_2(B_l^{(1,2)})=2$ and  
$(B_l^{(1,2)})^\vee=C_l^{(2,1)}$,
\item $R(B_l^{(1,2)})=(R(B_l)_s+\Z a+\Z b)\cup (R(B_l)_l+2\Z a+\Z b)$.
\item $\alpha_0=-\vep_1-\vep_2+b, \; \alpha_i=\vep_i-\vep_{i+1}\, (1\leq i< l), \; \alpha_l=\vep_l$.
\item $k(\alpha_i)=2 \; (0\leq i< l), \quad k(\alpha_l)=1$. \\
         $m_0=m_1=1,  \quad m_i=2\; (1< i\leq l)$.
\item Elliptic diagram:
\input{Figure-red/diagram-B12}
\item $W$-orbits:
$R(B_l^{(1,2)})_l$: single $W$-orbit, \\ 
\phantom{$W$-orbits:}
$R(B_l^{(1,2)})_s=W(\alpha_l) \amalg W(\alpha_l^\ast)$.
\end{enumerate}

\subsubsection{Type $B_l^{(2,1)}\; (l\geq 2)$}
\begin{enumerate}
\item $t_1(B_l^{(2,1)})=2$, \; $t_2(B_l^{(2,1)})=1$ and  $(B_l^{(2,1)})^\vee=C_l^{(1,2)}$,
\item $R(B_l^{(2,1)})=(R(B_l)_s+\Z a+\Z b) \cup (R(B_l)_l+\Z a+2\Z b)$.
\item $\alpha_0=-\vep_1+b, \; \alpha_i=\vep_i-\vep_{i+1}\, (1\leq i< l), \; \alpha_l=\vep_l$.
\item $k(\alpha_i)=1 \; (0\leq i\leq  l)$. \\
         $m_0=1,  \quad m_i=2\; (0< i< l), \quad m_l=1$.
\item Elliptic diagram:
\input{Figure-red/diagram-B21}
\item $W$-orbits:
\begin{enumerate}
\item[i)] $l=2$: $R(B_2^{(2,1)})_l=W(\alpha_1) \amalg W(\alpha_1^\ast)$, \\ 
\phantom{$l=2$:} $R(B_2^{(2,1)})_s=W(\alpha_0) \amalg W(\alpha_2)$.
\item[ii)] $l\geq 3$: $R(B_l^{(2,1)})_l$: single $W$-orbit, \\ 
\phantom{$l\geq 3$:} $R(B_l^{(2,1)})_s=W(\alpha_0) \amalg W(\alpha_l)$.
\end{enumerate}
\end{enumerate}

\subsubsection{Type $B_l^{(2,2)}\; (l\geq 2)$}
\begin{enumerate}
\item $t_1(B_l^{(2,2)})=2$, \;  $t_2(B_l^{(2,2)})=2$ and $(B_l^{(2,2)})^\vee=C_l^{(1,1)}$.
\item $R(B_l^{(2,2)})=(R(B_l)_s+\Z a+\Z b)\cup (R(B_l)_l+2\Z a+2\Z b)$.
\item $\alpha_0=-\vep_1+b, \; \alpha_i=\vep_i-\vep_{i+1}\, (1\leq i< l), \; \alpha_l=\vep_l$.
\item $k(\alpha_0)=1, \quad k(\alpha_i)=2 \; (0<i<l), \quad k(\alpha_l)=1$. \\
         $m_i=1\; (0\leq  i\leq  l)$.
\item Elliptic diagram:
\input{Figure-red/diagram-B22}
\item $W$-orbits: $R(B_l^{(2,2)})_l$: single $W$-orbit, \\
\phantom{$W$-orbits:}
$R(B_l^{(2,2)})_s=\amalg_{i \in \{0,l\}} \left( W(\alpha_i) \amalg W(\alpha_i^\ast)\right)$.
\end{enumerate}

\subsubsection{Type $C_l^{(1,1)}\; (l\geq 2)$}
\begin{enumerate}
\item $t_1(C_l^{(1,1)})=1$, \;  $t_2(C_l^{(1,1)})=1$ and  $(C_l^{(1,1)})^\vee=B_l^{(2,2)}$.
\item $R(C_l^{(1,1)})=R(C_l)+\Z a+\Z b.$
\item $\alpha_0=-2\vep_1+b, \; \alpha_i=\vep_i-\vep_{i+1}\, (1\leq i< l), \; \alpha_l=2\vep_l$.
\item $k(\alpha_i)=1 \; (0\leq i\leq l)$. \\
         $m_i=2\; (0\leq  i\leq  l)$.
\item Elliptic diagram:
\input{Figure-red/diagram-C11}
\item $W$-orbits: $R(C_l^{(1,1)})_l=\amalg_{i \in \{0,l\}} \left( W(\alpha_i) \amalg W(\alpha_i^\ast)\right)$, \\
\phantom{$W$-orbits:}
$R(C_l^{(1,1)})_s$: : single $W$-orbit.
\end{enumerate}

\subsubsection{Type $C_l^{(1,2)}\; (l\geq 2)$}
\begin{enumerate}
\item $t_1(C_l^{(1,2)})=1$, \;  $t_2(C_l^{(1,2)})=2$ and  $(C_l^{(1,2)})^\vee=B_l^{(2,1)}$.
\item $R(C_l^{(1,2)})=(R(C_l)_s+\Z a+\Z b)\cup (R(C_l)_l+2\Z a+\Z b)$.
\item $\alpha_0=-2\vep_1+b, \; \alpha_i=\vep_i-\vep_{i+1}\, (1\leq i< l), \; \alpha_l=2\vep_l$.
\item $k(\alpha_0)=2, \quad k(\alpha_i)=1 \; (0< i< l), \quad k(\alpha_l)=2$. \\
         $m_0=1, \quad m_i=2\; (0<  i<  l), \quad m_l=1$.
\item Elliptic diagram:
\input{Figure-red/diagram-C12}
\item $W$-orbits:
\begin{enumerate}
\item[i)] $l=2$: $R(C_2^{(1,2)})_l=W(\alpha_0) \amalg W(\alpha_2)$, \\ 
\phantom{$l=2$:} $R(C_2^{(1,2)})_s=W(\alpha_1) \amalg W(\alpha_1^\ast)$. 
\item[ii)] $l\geq 3$: $R(C_l^{(1,2)})_l=W(\alpha_0) \amalg W(\alpha_l)$, \\
\phantom{$l\geq 3$:} $R(C_l^{(1,2)})_s$: single $W$-orbit.
\end{enumerate}
\end{enumerate}

\subsubsection{Type $C_l^{(2,1)}\; (l\geq 3)$}
\begin{enumerate}
\item $t_1(C_l^{(2,1)})=2$, \; $t_2(C_l^{(2,1)})=1$ and $(C_l^{(2,1)})^\vee=B_l^{(1,2)}$.
\item $R(C_l^{(2,1)})=(R(C_l)_s+\Z a+\Z b) \cup (R(C_l)_l+\Z a+2\Z b)$.
\item $\alpha_0=-\vep_1-\vep_2+b, \; \alpha_i=\vep_i-\vep_{i+1}\, (1\leq i< l), \; \alpha_l=2\vep_l$.
\item $k(\alpha_i)=1 \; (0\leq  i\leq  l)$. \\
         $m_0=m_1=1, \quad m_i=2\; (1<  i\leq   l)$.
\item Elliptic diagram:
\input{Figure-red/diagram-C21}
\item $W$-orbits:
$R(C_l^{(2,1)})_l=W(\alpha_l) \amalg W(\alpha_l^\ast)$, \\
\phantom{$W$-orbits:} 
$R(C_l^{(2,1)})_s$: single $W$-orbit.
\end{enumerate}

\subsubsection{Type $C_l^{(2,2)}\; (l\geq 3)$}
\begin{enumerate}
\item $t_1(C_l^{(2,2)})=2$,  \; $t_2(C_l^{(2,2)})=2$ and  $(C_l^{(2,2)})^\vee=B_l^{(1,1)}$.
\item $R(C_l^{(2,2)})=(R(C_l)_s+\Z a+\Z b) \cup (R(C_l)_l+2\Z a+2\Z b)$.
\item $\alpha_0=-\vep_1-\vep_2+b, \; \alpha_i=\vep_i-\vep_{i+1}\, (1\leq i< l), \; \alpha_l=2\vep_l$.
\item $k(\alpha_i)=1 \; (0\leq  i< l), \quad k(\alpha_l)=2$. \\
         $m_0=m_1=1, \quad m_i=2\; (1<  i<   l), \quad m_l=1$.
\item Elliptic diagram:
\input{Figure-red/diagram-C22}
\item $W$-orbits: $R(C_l^{(2,2)})_l \amalg R(C_l^{(2,2)})_s$.
\end{enumerate}

\subsubsection{Type $B_l^{(2,2)\ast}\; (l\geq 2)$}
\begin{enumerate}
\item $t_1(B_l^{(2,2)\ast})=2$, \; $t_2(B_l^{(2,2)\ast})=2$ and $(B_l^{(2,2)\ast})^\vee=C_l^{(1,1)\ast}$.
\item $R(B_l^{(2,2)\ast})=(R(B_l)_s+\{\,ma+nb\,\vert\, mn \equiv 0\, [2]\, \})$ \\
\phantom{$R(B_l^{(2,2)\ast})=$} $\cup (R(B_l)_l+2\Z a+2\Z b)$.
\item $\alpha_0=-\vep_1+b, \; \alpha_i=\vep_i-\vep_{i+1}\, (1\leq i< l), \; \alpha_l=\vep_l$.
\item $k(\alpha_i)=2 \; (0\leq i<l), \quad k(\alpha_l)=1$. \\
         $m_0=\dfrac{1}{2}, \quad m_i=1\; (0<  i\leq  l)$.
\item Elliptic diagram:
\input{Figure-red/diagram-B22st}
\item $W$-orbits: $R(B_l^{(2,2)\ast})_l$: single $W$-orbit, \\
\phantom{$W$-orbits:} 
$R(B_l^{(2,2)\ast})_s=W(\alpha_0) \amalg W(\alpha_l) \amalg W(\alpha_l^\ast)$.
\end{enumerate}

\subsubsection{Type $C_l^{(1,1)\ast}\; (l\geq 2)$}
\begin{enumerate}
\item $t_1(C_l^{(1,1)\ast})=1$, \; $t_2(C_l^{(1,1)\ast})=1$ and  $(C_l^{(1,1)\ast})^\vee=B_l^{(2,2)\ast}$,
\item $R(C_l^{(1,1)\ast})=(R(C_l)_s+\Z a+\Z b) \cup (R(C_l)_l+\{\, ma+nb\, \vert\, mn \equiv 0\, [2]\, \})$.
\item $\alpha_0=-2\vep_1+b, \; \alpha_i=\vep_i-\vep_{i+1}\, (1\leq i< l), \; \alpha_l=2\vep_l$.
\item $k(\alpha_0)=2, \quad k(\alpha_i)=1 \; (0< i\leq l)$. \\
         $m_0=1, \quad m_i=2\; (0<  i\leq  l)$.
\item Elliptic diagram:
\input{Figure-red/diagram-C11st}
\item $W$-orbits: $R(C_l^{(1,1)\ast})_l=W(\alpha_0) \amalg W(\alpha_l) \amalg W(\alpha_l^\ast)$, \\
\phantom{$W$-orbits:} 
$R(C_l^{(1,1)\ast})_s$: single $W$-orbit.
\end{enumerate}

\subsubsection{Type $BC_l^{(2,1)}\; (l\geq 1)$}
\begin{enumerate}
\item $t_1(BC_l^{(2,1)})=2$, \;  $t_2(BC_l^{(2,1)})=1$ and $(BC_l^{(2,1)})^\vee=BC_l^{(2,4)}$,
\item $R(BC_l^{(2,1)})=(R(BC_l)_s+\Z a+\Z b) \cup (R(BC_l)_m+\Z a+\Z b)$ \\
\phantom{$R(BC_l^{(2,1)})=$} $\cup (R(BC_l)_l+\Z a+(1+2\Z)b)$.
\item $\alpha_0=-2\vep_1+b, \; \alpha_i=\vep_i-\vep_{i+1}\, (1\leq i< l), \; \alpha_l=\vep_l$.
\item $k(\alpha_i)=1 \; (0\leq i\leq l)$. \\
         $m_i=4\; (0\leq  i<  l), \quad m_l=2$.
\item Elliptic diagram:
\input{Figure-red/diagram-BC21}
\item $W$-orbits: $R(BC_l^{(2,1)})_l=W(\alpha_0) \amalg W(\alpha_0^\ast)$, \\
\phantom{$W$-orbits:} $R(BC_l^{(2,1)})_m$: single $W$-orbit, \;\; $R(BC_l^{(2,1)})_s=W(\alpha_l)$.
\end{enumerate}

\subsubsection{Type $BC_l^{(2,4)}\; (l\geq 1)$}
\begin{enumerate}
\item $t_1(BC_l^{(2,4)})=2$, \;  $t_2(BC_l^{(2,4)})=4$ and $(BC_l^{(2,4)})^\vee=BC_l^{(2,1)}$.
\item $R(BC_l^{(2,4)})=(R(BC_l)_s+\Z a+\Z b) \cup (R(BC_l)_m+2\Z a+\Z b)$ \\
\phantom{$R(BC_l^{(2,4)})=$} $\cup (R(BC_l)_l+4\Z a+(1+2\Z)b)$.
\item $\alpha_0=-2\vep_1+b, \; \alpha_i=\vep_i-\vep_{i+1}\, (1\leq i< l), \; \alpha_l=\vep_l$.
\item $k(\alpha_0)=4, \quad k(\alpha_i)=2 \; (0< i< l), \quad k(\alpha_l)=1$. \\
         $m_0=1, \quad m_i=2\; (0<  i\leq l)$.
\item Elliptic diagram:
\input{Figure-red/diagram-BC24}
\item $W$-orbits: $R(BC_l^{(2,4)})_l= W(\alpha_0)$, \;\; $R(BC_l^{(2,4)})_m$: single $W$-orbit,  \\
\phantom{$W$-orbits:} $R(BC_l^{(2,4)})_s=W(\alpha_l) \amalg W(\alpha_l^\ast)$.
\end{enumerate}

\subsubsection{Type $BC_l^{(2,2)}(1)\; (l\geq 2)$}
\begin{enumerate}
\item $t_1(BC_l^{(2,2)}(1))=2$, \; $t_2(BC_l^{(2,2)}(1))=2$ and $(BC_l^{(2,2)}(1))^\vee=BC_l^{(2,2)}(1)$.
\item $R(BC_l^{(2,2)}(1))=(R(BC_l)_s+\Z a+\Z b) \cup (R(BC_l)_m+\Z a+\Z b)$ \\
\phantom{$R(BC_l^{(2,2)}(1))=$} $\cup (R(BC_l)_l+2\Z a+(1+2\Z)b)$.
\item $\alpha_0=-2\vep_1+b, \; \alpha_i=\vep_i-\vep_{i+1}\, (1\leq i< l), \; \alpha_l=\vep_l$.
\item $k(\alpha_0)=2, \quad k(\alpha_i)=1 \; (0< i\leq  l)$. \\
         $m_0=2, \quad m_i=4\; (0<  i< l), \quad m_l=2$.
\item Elliptic diagram:
\input{Figure-red/diagram-BC221}
\item $W$-orbits:
\begin{enumerate}
\item[i)] $l=2$: $R(BC_2^{(2,2)}(1))_l=W(\alpha_0)$,\;\;  $R(BC_2^{(2,2)}(1))_s=W(\alpha_2)$, \\
\phantom{$l=2$:} $R(BC_2^{(2,2)}(1))_m=W(\alpha_1) \amalg W(\alpha_1^\ast)$.
\item[ii)] $l\geq 3$: $R(BC_2^{(2,2)}(1))_l=W(\alpha_0)$,\;\;  $R(BC_2^{(2,2)}(1))_s=W(\alpha_l)$, \\ 
\phantom{$l\geq 3$:} $R(BC_l^{(2,2)}(1))_m$: single $W$-orbit. 
\end{enumerate}
\end{enumerate}

\subsubsection{Type $BC_l^{(2,2)}(2)\; (l\geq 1)$}
\begin{enumerate}
\item $t_1(BC_l^{(2,2)}(2))=2$, \;  $t_2(BC_l^{(2,2)}(2))=2$ and $(BC_l^{(2,2)}(2))^\vee=BC_l^{(2,2)}(2)$.
\item $R(BC_l^{(2,2)}(2))=(R(BC_l)_s+\Z a+\Z b) \cup (R(BC_l)_m+2\Z a+\Z b)$ \\ 
\phantom{$R(BC_l^{(2,2)}(2))=$} $\cup (R(BC_l)_l+2\Z a+(1+2\Z)b)$.
\item $\alpha_0=-2\vep_1+b, \; \alpha_i=\vep_i-\vep_{i+1}\, (1\leq i< l), \; \alpha_l=\vep_l$.
\item $k(\alpha_i)=2 \; (0\leq  i<  l), \quad k(\alpha_l)=1$. \\
         $m_i=2\; (0\leq  i\leq  l)$.
\item Elliptic diagram:
\input{Figure-red/diagram-BC222}
\item $W$-orbits: $R(BC_l^{(2,2)}(2))_l=W(\alpha_0) \amalg W(\alpha_0^\ast)$, \\
\phantom{$W$-orbits:} $R(BC_l^{(2,2)}(2))_m$: single $W$-orbit, \\
\phantom{$W$-orbits:} $R(BC_l^{(2,2)}(2))_s=W(\alpha_l) \amalg W(\alpha_l^\ast)$.
\end{enumerate}

\subsubsection{Type $D_l^{(1,1)}\; (l\geq 4)$}
\begin{enumerate}
\item $t_1(D_l^{(1,1)})=1$, \; $t_2(D_l^{(1,1)})=1$ and $(D_l^{(1,1)})^\vee=D_l^{(1,1)}$.
\item $R(D_l^{(1,1)})=R(D_l)+\Z a+\Z b$.
\item $\alpha_0=-\vep_1-\vep_2+b, \; \alpha_i=\vep_i-\vep_{i+1}\, (1\leq i< l), \; \alpha_l=\vep_{l-1}+\vep_l$.
\item $k(\alpha_i)=1 \; (0\leq  i\leq   l)$. \\
         $m_0=m_1=1, \quad m_i=2\; (1<  i< l-1), \quad m_{l-1}=m_l=1$.
\item Elliptic diagram:
\input{Figure-red/diagram-D11}
\item $W$-orbits: single $W$-orbit. 
\end{enumerate}

\subsubsection{Type $E_6^{(1,1)}$}
\begin{enumerate}
\item $t_1(E_6^{(1,1)})=1$, \; $t_2(E_6^{(1,1)})=1$ and $(E_6^{(1,1)})^\vee=E_6^{(1,1)}$,
\item $R(E_6^{(1,1)})=R(E_6)+\Z a+\Z b$. 
\item $\alpha_0=-\frac{1}{2}(\vep_1+\vep_2+\vep_3+\vep_4+\vep_5-\vep_6-\vep_7+\vep_8) +b,$\\
$\alpha_1=\frac{1}{2}(\vep_1+\vep_8)-\frac{1}{2}(\vep_2+\vep_3+\cdots+\vep_7), \; \alpha_2=\vep_1+\vep_2$, \\ $\alpha_i=\vep_{i-1}-\vep_{i-2}\quad (3\leq i\leq 6)$.
\item $k(\alpha_i)=1 \; (0\leq  i\leq   6)$. \\
         $m_0=m_1=1, \quad m_2=m_3=2, \quad m_4=3, \quad m_5=2, \quad m_6=1$.
\item Elliptic diagram:
\input{Figure-red/diagram-E611-cl}
\item $W$-orbits: single $W$-orbit. 
\end{enumerate}

\subsubsection{Type $E_7^{(1,1)}$}
\begin{enumerate}
\item $t_1(E_7^{(1,1)})=1$, \;  $t_2(E_7^{(1,1)})=1$ and $(E_7^{(1,1)})^\vee=E_7^{(1,1)}$.
\item $R(E_7^{(1,1)})=R(E_7)+\Z a+\Z b$. 
\item $\alpha_0=-(\vep_8-\vep_7)+b, \; \alpha_1=\frac{1}{2}(\vep_1+\vep_8)-\frac{1}{2}(\vep_2+\vep_3+\cdots+\vep_7)$, \\
$\alpha_2=\vep_1+\vep_2, \; \alpha_i=\vep_{i-1}-\vep_{i-2}\quad (3\leq i\leq  7)$. 
\item $k(\alpha_i)=1 \; (0\leq  i\leq   7)$. \\
         $m_0=1, \quad m_1=m_2=2 \quad m_3=3, \quad m_4=4, \quad m_5=3$, \\ $m_6=2, \quad m_7=1$.
\item Elliptic diagram:
\input{Figure-red/diagram-E711-cl}
\item $W$-orbits: single $W$-orbit. 
\end{enumerate}

\subsubsection{Type $E_8^{(1,1)}$}
\begin{enumerate}
\item $t_1(E_8^{(1,1)})=1$, \;  $t_2(E_8^{(1,1)})=1$ and $(E_8^{(1,1)})^\vee=E_8^{(1,1)}$.
\item $R=R(E_8)+\Z a+\Z b$.
\item $\alpha_0=-(\vep_7+\vep_8)+b, \; \alpha_1=\frac{1}{2}(\vep_1+\vep_8)-\frac{1}{2}(\vep_2+\vep_3+\cdots+\vep_7)$, \\
$\alpha_2=\vep_1+\vep_2, \; \alpha_i=\vep_{i-1}-\vep_{i-2}\quad (3\leq i\leq 8)$.
\item $k(\alpha_i)=1 \; (0\leq  i\leq  8)$. \\
         $m_0=1, \quad m_1=2, \quad m_2=3, \quad m_3=4, \quad m_4=6$, \\ $m_5=5, \quad m_6=4, \quad m_7=3, \quad m_8=2$.
\item Elliptic diagram:
\input{Figure-red/diagram-E811-cl}
\item $W$-orbits: single $W$-orbit. 
\end{enumerate}

\subsubsection{Type $F_4^{(1,1)}$}
\begin{enumerate}
\item $t_1(F_4^{(1,1)})=1$, \; $t_2(F_4^{(1,1)})=1$ and $(F_4^{(1,1)})^\vee=F_4^{(2,2)}$.
\item $R(F_4^{(1,1)})=R(F_4)+\Z a+\Z b$. 
\item $\alpha_0=-\vep_1-\vep_2+b, \; \alpha_1=\vep_2-\vep_3, \; \alpha_2=\vep_3-\vep_4, \; \alpha_3=\vep_4$, \\ $\alpha_4=\dfrac{1}{2}(\vep_1-\vep_2-\vep_3-\vep_4)$. 
\item $k(\alpha_i)=1 \; (0\leq  i\leq  4)$. \\
         $m_0=2, \quad m_1=4, \quad m_2=6 \quad m_3=4, \quad m_4=2$.
\item Elliptic diagram:
\input{Figure-red/diagram-F411}
\item $W$-orbits: $R(F_4^{(1,1)})_l \amalg R(F_4^{(1,1)})_s$.
\end{enumerate}

\subsubsection{Type $F_4^{(1,2)}$}
\begin{enumerate}
\item $t_1(F_4^{(1,2)})=1$, \;  $t_2(F_4^{(1,2)})=2$ and $(F_4^{(1,2)})^\vee=F_4^{(2,1)}$.
\item $R(F_4^{(1,2)})=(R(F_4)_s+\Z a+\Z b)  \cup (R(F_4)_l+2\Z a+\Z b)$.
\item $\alpha_0=-\vep_1-\vep_2+b, \; \alpha_1=\vep_2-\vep_3, \; \alpha_2=\vep_3-\vep_4, \; \alpha_3=\vep_4$, \\ $\alpha_4=\dfrac{1}{2}(\vep_1-\vep_2-\vep_3-\vep_4)$. 
\item $k(\alpha_i)=2 \; (0\leq  i\leq  2), \quad k(\alpha_3)=k(\alpha_4)=1$. \\
         $m_0=1, \quad m_1=2, \quad m_2=3 \quad m_3=4, \quad m_4=2$.
\item Elliptic diagram:
\input{Figure-red/diagram-F412}
\item $W$-orbits: $R(F_4^{(1,2)})_l \amalg R(F_4^{(1,2)})_s$.
\end{enumerate}

\subsubsection{Type $F_4^{(2,1)}$}
\begin{enumerate}
\item $t_1(F_4^{(2,1)})=2$, \; $t_2(F_4^{(2,1)})=1$ and $(F_4^{(2,1)})^\vee=F_4^{(1,2)}$.
\item $R(F_4^{(2,1)})=(R(F_4)_s+\Z a+\Z b) \cup (R(F_4)_l+\Z a+2\Z b)$.
\item $\alpha_0=-\vep_1+b, \; \alpha_1=\vep_2-\vep_3, \; \alpha_2=\vep_3-\vep_4, \; \alpha_3=\vep_4$, \\ $\alpha_4=\dfrac{1}{2}(\vep_1-\vep_2-\vep_3-\vep_4)$.
\item $k(\alpha_i)=1 \; (0\leq  i\leq  4)$. \\
         $m_0=1, \quad m_1=2, \quad m_2=4 \quad m_3=3, \quad m_4=2$.
\item Elliptic diagram:
\input{Figure-red/diagram-F421-tr}
\item $W$-orbits: $R(F_4^{(2,1)})_l \amalg R(F_4^{(2,1)})_s$.
\end{enumerate}

\subsubsection{Type $F_4^{(2,2)}$}
\begin{enumerate}
\item $t_1(F_4^{(2,2)})=2$, \;  $t_2(F_4^{(2,2)})=2$ and $(F_4^{(2,2)})^\vee=F_4^{(1,1)}$,
\item $R(F_4^{(2,2)})=(R(F_4)_s+\Z a+\Z b) \cup (R(F_4)_l+2\Z a+2\Z b)$.
\item $\alpha_0=-\vep_1+b, \; \alpha_1=\vep_2-\vep_3, \; \alpha_2=\vep_3-\vep_4, \; \alpha_3=\vep_4$, \\ $\alpha_4=\dfrac{1}{2}(\vep_1-\vep_2-\vep_3-\vep_4)$.
\item  $k(\alpha_0)=1, \quad k(\alpha_1)=k(\alpha_2)=2,\quad k(\alpha_3)=k(\alpha_4)=1$. \\
         $m_0=1, \quad m_1=2, \quad m_2=3 \quad m_3=2, \quad m_4=1$.
\item Elliptic diagram:
\input{Figure-red/diagram-F422-tr}
\item $W$-orbits: $R(F_4^{(2,2)})_l \amalg R(F_4^{(2,2)})_s$.
\end{enumerate}

\subsubsection{Type $G_2^{(1,1)}$}
\begin{enumerate}
\item $t_1(G_2^{(1,1)})=1$, \; $t_2(G_2^{(1,1)})=1$ and  $(G_2^{(1,1)})^\vee=G_2^{(3,3)}$.
\item  $R(G_2^{(1,1)})=R(G_2)+\Z a+\Z b$.
\item $\alpha_0=\vep_1+\vep_2-2\vep_3+b, \; \alpha_1=\vep_1-\vep_2, \; \alpha_2=-2\vep_1+\vep_2+\vep_3$.
\item $k(\alpha_i)=1 \; (0\leq  i\leq  2)$, \; 
         $m_0=3, \; m_1=3, \; m_2=6$.
\item Elliptic diagram:
\input{Figure-red/diagram-G211-tr}
\item $W$-orbits: $R(G_2^{(1,1)})_l \amalg R(G_2^{(1,1)})_s$.
\end{enumerate}

\subsubsection{Type $G_2^{(1,3)}$}
\begin{enumerate}
\item $t_1(G_2^{(1,3)})=1$, \; $t_2(G_2^{(1,3)})=3$ and $(G_2^{(1,3)})^\vee=G_2^{(3,1)}$.
\item $R(G_2^{(1,3)})=(R(G_2)_s+\Z a+\Z b) \cup (R(G_2)_l+3\Z a+\Z b)$.
\item $\alpha_0=\vep_1+\vep_2-2\vep_3+b, \; \alpha_1=\vep_1-\vep_2, \; \alpha_2=-2\vep_1+\vep_2+\vep_3$.
\item $k(\alpha_0)=k(\alpha_2)=3, \; k(\alpha_1)=1$, \; 
         $m_0=1, \quad m_1=3, \; m_2=2$.
\item Elliptic diagram:
\input{Figure-red/diagram-G213-tr}
\item $W$-orbits: $R(G_2^{(1,3)})_l \amalg R(G_2^{(1,3)})_s$.
\end{enumerate}

\subsubsection{Type $G_2^{(3,1)}$}
\begin{enumerate}
\item $t_1(G_2^{(3,1)})=3$, \; $t_2(G_2^{(3,1)})=1$ and $(G_2^{(3,1)})^\vee=G_2^{(1,3)}$.
\item $R(G_2^{(3,1)})=(R(G_2)_s+\Z a+\Z b) \cup (R(G_2)_l+\Z a+3\Z b)$.
\item $\alpha_0=\vep_2-\vep_3+b, \; \alpha_1=\vep_1-\vep_2, \; \alpha_2=-2\vep_1+\vep_2+\vep_3$.
\item $k(\alpha_i)=1\; (0\leq i\leq 2)$, \; 
         $m_0=1, \quad m_1=2, \quad m_2=3$.
\item Elliptic diagram:
\input{Figure-red/diagram-G231}
\item $W$-orbits: $R(G_2^{(3,1)})_l \amalg R(G_2^{(3,1)})_s$.
\end{enumerate}

\subsubsection{Type $G_2^{(3,3)}$}
\begin{enumerate}
\item $t_1(G_2^{(3,3)})=3$, \; $t_2(G_2^{(3,3)})=3$ and $(G_2^{(3,3)})^\vee=G_2^{(1,1)}$.
\item $R(G_2^{(3,3)})=(R(G_2)_s+\Z a+\Z b) \cup (R(G_2)_l+3\Z a+3\Z b)$.
\item $\alpha_0=\vep_2-\vep_3+b, \; \alpha_1=\vep_1-\vep_2, \; \alpha_2=-2\vep_1+\vep_2+\vep_3$.
\item $k(\alpha_0)=k(\alpha_1)=1, \; k(\alpha_2)=3,$ \; 
         $m_0=1, \; m_1=2, \; m_2=1$.
\item Elliptic diagram:
\input{Figure-red/diagram-G233}
\item $W$-orbits: $R(G_2^{(3,3)})_l \amalg R(G_2^{(3,3)})_s$.
\end{enumerate}

\part*{}


\printindex[notations]
\printindex[index]

\end{document}